      \def\@setcopyright{}
      \def\serieslogo@{}
\newcommand{\Complex}{\mathbb C}
\newcommand{\Real}{\mathbb R}
\newcommand{\N}{\mathbb N}
\newcommand{\ddbar}{\overline\partial}
\newcommand{\pr}{\partial}
\newcommand{\ol}{\overline}
\newcommand{\Td}{\widetilde}
\newcommand{\norm}[1]{\left\Vert#1\right\Vert}
\newcommand{\abs}[1]{\left\vert#1\right\vert}
\newcommand{\set}[1]{\left\{#1\right\}}
\newcommand{\To}{\rightarrow}
\newcommand{\mU}{\mathcal{U}}
\newcommand{\mV}{\mathcal{V}}
\theoremstyle{plain}
\newtheorem{thm}{Theorem}[section]
\newtheorem{lem}[thm]{Lemma}
\newtheorem{prop}[thm]{Proposition}
\theoremstyle{definition}
\newtheorem{defn}[thm]{Definition}
\theoremstyle{remark}
\newtheorem{rem}[thm]{Remark}
\newtheorem{que}[thm]{Question}
\numberwithin{equation}{section}
\begin{document}
\title[]{Szeg\"{o} kernel asymptotics for high power of CR line bundles and Kodaira embedding theorems on %generalized Sasakian% 
CR manifolds}
\author[]{Chin-Yu Hsiao}
\address{Institute of Mathematics, Academia Sinica, 6F, Astronomy-Mathematics Building, No.1, Sec.4, Roosevelt Road, Taipei 10617, Taiwan}
\thanks{The author was partially supported by Taiwan Ministry of Science of Technology project 
103-2115-M-001-001, the Golden-Jade fellowship of Kenda Foundation and the DFG funded project MA 2469/2-1}
\email{chsiao@math.sinica.edu.tw or chinyu.hsiao@gmail.com} 

\begin{abstract}
Let $X$ be an abstract not necessarily compact orientable CR manifold of dimension $2n-1$, $n\geqslant2$, and let $L^k$ be the $k$-th tensor power of a CR complex line bundle $L$ over $X$. Given $q\in\set{0,1,\ldots,n-1}$, let $\Box^{(q)}_{b,k}$ be the Gaffney extension of Kohn Laplacian for $(0,q)$ forms with values in $L^k$. For $\lambda\geq0$, let $\Pi^{(q)}_{k,\leq\lambda}:=E((-\infty,\lambda])$, where $E$ denotes the spectral measure of $\Box^{(q)}_{b,k}$.  
In this work, we prove that $\Pi^{(q)}_{k,\leq k^{-N_0}}F^*_k$,  $F_k\Pi^{(q)}_{k,\leq k^{-N_0}}F^*_k$, $N_0\geq1$, admit asymptotic expansions with respect to $k$ on the non-degenerate part of the characteristic manifold of $\Box^{(q)}_{b,k}$, where $F_k$ is some kind of microlocal cut-off function. Moreover, we show that $F_k\Pi^{(q)}_{k,\leq0}F^*_k$ admits a full asymptotic expansion with respect to $k$ if $\Box^{(q)}_{b,k}$ has small spectral gap property with respect to $F_k$ and $\Pi^{(q)}_{k,\leq0}$ is $k$-negligible away the diagonal with respect to $F_k$. By using these asymptotics, we establish almost Kodaira embedding theorems on CR manifolds and Kodaira embedding theorems on CR manifolds with transversal CR $S^1$ action. 
\end{abstract}

\maketitle \tableofcontents 

\section{Introduction and statement of the main results} \label{s-intro}

The problem of local and global embedding CR manifolds is prominent in areas such as complex analysis, partial differential equations and differential geometry.
Consider $X$ a compact CR manifold of dimension $2n-1$, $n\geq 2$. When $X$ is strongly pseudoconvex and dimension of $X$ is greater than five, a classical theorem of 
L. Boutet de Monvel~\cite{BdM1:74b} asserts that $X$ can be globally CR embedded into $\Complex^N$, for some $N\in\mathbb N$. When the Levi form of $X$ has mixed signature, then the space of global CR functions is finite dimensional (could be even trivial) and moreover many interesting examples live in the projective space (e.\,g.\ the quadric $\{[z]\in\Complex\mathbb{P}^{N-1};\, |z_1|^2+\ldots+|z_q|^2-|z_{q+1}|^2-\ldots-|z_N|^2=0\}$). 
It is thus natural to consider a setting analogue to the Kodaira embedding theorem and ask if $X$ can be embedded into the projective space by means of CR sections of a CR line bundle of positive curvature. For this purpose it is important to study the asymptotic behaviour of the associated Szeg\"o kernel and study if there are a lot of CR sections in high powers of the line bundle. This was initiated in~\cite{HM09}, ~\cite{Hsiao12}(see also Marinescu~\cite{Ma96}). 

Other developments recently concerned the Bergman kernel for a high power of a holomorphic line bundle.
The Bergman kernel is the smooth kernel of the orthogonal projection onto the space of $L^2$-integrable holomorphic sections. The study of the asymptotic behaviour of the Bergman kernel is an active research subject in complex geometry and is closely related to topics like the structure of algebraic manifolds, the existence of canonical K\"ahler metrics , Berezin-Toeplitz quantization and equidistribution of zeros of holomorphic sections(see~\cite{MM07}).  It is quite interesting to consider CR analogue of the Bergman kernel asymptotic expansion and to study the influence of the asymptotics in CR geometry as in the complex case. This direction could become a research area in CR geometry.

The purpose of this work is to completely study the asymptotic behaviour of the Szeg\"o kernel associated to a hypoelliptic operator $\Box^{(q)}_{b,k}$ with respect to a high power of a CR line bundle. The difficulty of this problem comes from the presence of positive eigenvalues of the curvature of the line bundle and negative eigenvalues of the Levi form of $X$ and hence the semi-classical characteristic manifold of $\Box^{(q)}_{b,k}$ is always degenerate at some point. 
This difficulty is also closely related to the fact that in the global $L^2$-estimates for the $\ddbar_b$-operator of Kohn-H\"{o}rmander there is a curvature term from the line bundle as well from the Levi form and, in general, it is very difficult to control the sign of the total curvature contribution.  In this work, we introduce some kind of microlocal cut-off function $F_k$ and we prove that $\Pi^{(q)}_{k,\leq k^{-N_0}}F^*_k$,  $F_k\Pi^{(q)}_{k,\leq k^{-N_0}}F^*_k$, $N_0\geq1$, admit asymptotic expansions on the non-degenerate part of the characteristic manifold of $\Box^{(q)}_{b,k}$, where for $\lambda\geq0$, $\Pi^{(q)}_{k,\leq\lambda}:=E((-\infty,\lambda])$, $E$ is the spectral measure of $\Box^{(q)}_{b,k}$. Moreover, we show that $F_k\Pi^{(q)}_{k,\leq0}F^*_k$ admits a full asymptotic expansion if $\Box^{(q)}_{b,k}$ has small spectral gap property with respect to $F_k$ and $\Pi^{(q)}_{k,\leq0}$ is $k$-negligible away the diagonal with respect to $F_k$. By using these asymptotics, we establish almost Kodaira embedding theorems on CR manifolds and  Kodaira embedding theorems on CR manifolds with transversal CR $S^1$ action. 
From the analytic view point, this work can be seen as a completely semi-classical study of some kind of hypoelliptic operators.

We now formulate the main results. We refer to section~\ref{s:prelim} for some standard notations and terminology used here. 

Let $(X,T^{1,0}X)$ be a paracompact orientable not necessarily compact CR manifold of dimension $2n-1$, $n\geqslant2$, with a Hermitian metric $\langle\,\cdot\,|\,\cdot\,\rangle$ on $\Complex TX$ such that $T^{1,0}X$
is orthogonal to $T^{0,1}X$ and $\langle\,u\,|\,v\,\rangle$ is real if $u$, $v$ are real tangent vectors. For every $q=0,1,\ldots,n-1$, the Hermitian metric $\langle\,\cdot\,|\,\cdot\,\rangle$ on $\Complex TX$ induces a Hermitian metric $\langle\,\cdot\,|\,\cdot\,\rangle$ on $T^{*0,q}X$ the bundle of $(0,q)$ forms of $X$.
Let $(L,h^L)$ be a CR Hermitian line bundle over $X$, where
the Hermitian fiber metric on $L$ is denoted by $h^L$. We will denote by
$\phi$ the local weights of the Hermitian metric(see \eqref{e-suV}). For $k>0$, let $(L^k,h^{L^k})$ be the $k$-th tensor power of the line bundle $(L,h^L)$. 
We denote by $dv_X=dv_X(x)$ the volume form on $X$ induced by the fixed 
Hermitian metric $\langle\,\cdot\,|\,\cdot\,\rangle$ on $\Complex TX$. Then we get natural global $L^2$ inner products $(\,\cdot\,|\,\cdot\,)_{h^{L^k}}$, $(\,\cdot\,|\,\cdot\,)$
on $\Omega^{0,q}_0(X, L^k)$ and $\Omega^{0,q}_0(X)$ respectively. We denote by $L^2_{(0,q)}(X,L^k)$ and $L^2_{(0,q)}(X)$ the completions of $\Omega^{0,q}_0(X, L^k)$ and $\Omega^{0,q}_0(X)$ with respect to $(\,\cdot\,|\,\cdot\,)_{h^{L^k}}$ and $(\,\cdot\,|\,\cdot\,)$ respectively. 
Let 
\[\Box^{(q)}_{b,k}:{\rm Dom\,}\Box^{(q)}_{b,k}\subset L^2_{(0,q)}(X,L^k)\To L^2_{(0,q)}(X,L^k)\] 
be the Gaffney extension of the Kohn Laplacian(see \eqref{e-suIX}). 
By a result of Gaffney, for every $q=0,1,\ldots,n-1$, $\Box^{(q)}_{b,k}$ is a positive self-adjoint operator (see Proposition 3.1.2 in Ma-Marinescu~\cite{MM07}). That is, $\Box^{(q)}_{b,k}$ is self-adjoint and the spectrum of $\Box^{(q)}_{b,k}$ is contained in $\ol\Real_+$, $q=0,1,\ldots,n-1$. For a Borel set $B\subset\Real$ we denote by $E(B)$ the spectral projection of $\Box^{(q)}_{b,k}$ corresponding to the set $B$, where $E$ is the spectral measure of $\Box^{(q)}_{b,k}$ (see section 2 in Davies~\cite{Dav95}, for the precise meanings of spectral projection and spectral measure). For $\lambda\geq0$, we set 
\begin{equation} \label{e-suX}
\begin{split}
&H^q_{b,\leq\lambda}(X, L^k):={\rm Ran\,}E\bigr((-\infty,\lambda]\bigr)\subset L^2_{(0,q)}(X,L^k)\,,\\
&H^q_{b,>\lambda}(X,L^k):={\rm Ran\,}E\bigr((\lambda,\infty)\bigr)\subset L^2_{(0,q)}(X,L^k).
\end{split}
\end{equation}  
For $\lambda=0$, we denote 
\begin{equation} \label{e-suXI}
H_b^q(X,L^k):=H^q_{b,\leq0}(X,L^k)={\rm Ker\,}\Box^{(q)}_{b,k}.
\end{equation}
For $\lambda\geq0$, let 
\begin{equation}\label{e-suXI-I}
\begin{split}
&\Pi^{(q)}_{k,\leq\lambda}:L^2_{(0,q)}(X,L^k)\To H^q_{b,\leq\lambda}(X,L^k),\\
&\Pi^{(q)}_{k,>\lambda}:L^2_{(0,q)}(X,L^k)\To H^q_{b,>\lambda}(X,L^k),
\end{split}
\end{equation}
be the orthogonal projections with respect to $(\,\cdot\,|\,\cdot\,)_{h^{L^k}}$ and let $\Pi^{(q)}_{k,\leq\lambda}(x,y)\in\mathscr D'(X\times X,(T^{*0,q}_yX\otimes L^k_y)\boxtimes(T^{*0,q}_xX\otimes L^k_x))$ and $\Pi^{(q)}_{k,>\lambda}(x,y)\in\mathscr D'(X\times X,(T^{*0,q}_yX\otimes L^k_y)\boxtimes(T^{*0,q}_xX\otimes L^k_x))$ denote the distribution kernels of $\Pi^{(q)}_{k,\leq\lambda}$ and $\Pi^{(q)}_{k,>\lambda}$ respectively. For $\lambda=0$, we denote $\Pi^{(q)}_k:=\Pi^{(q)}_{k,\leq0}$, $\Pi^{(q)}_k(x,y):=\Pi^{(q)}_{k,\leq0}(x,y)$.

Let $s$ be a local trivialization of $L$ on an open set $D\Subset X$, $\abs{s}^2_{h^L}=e^{-2\phi}$. We assume that $Y(q)$ holds at each point of $D$(see Definition~\ref{d-suII}, for the precise meaning of condition $Y(q)$). By the $L^2$ estimates of Kohn (see Folland-Kohn~\cite{FK72} and Chen-Shaw~\cite{CS01}), we see that for every $\lambda\geq0$, 
\begin{equation} \label{e-suXIII-I}
\Pi^{(q)}_{k,\leq\lambda}(x,y)\in C^\infty(D\times D,(T^{*0,q}_yX\otimes L^k_y)\boxtimes(T^{*0,q}_xX\otimes L^k_x)). 
\end{equation} 
Let $\Sigma$ be the semi-classical characteristic manifold of $\Box^{(q)}_{b,k}$ on $D$(see \eqref{e-crmiII}). We have 
\[\Sigma=\set{(x, \xi)\in T^*D;\, \xi=\lambda\omega_0(x)-2{\rm Im\,}\ddbar_b\phi(x),\lambda\in\Real}\]
(see Proposition~\ref{s2-pcrmiI}), where $\omega_0\in C^\infty(X,T^*X)$ is the uniquely determined global $1$-form(see the discussion before \eqref{e-suI}). For $x\in D$, let $M^\phi_x$ be the Hermitian quadratic form on $T^{1,0}_xX$ given by Definition~\ref{d-suIII-I} and let $\mathcal{L}_x$ be the Levi form at $x$ with respect to $\omega_0$(see Definition~\ref{d-suI}).  Let $\sigma$ denote the canonical two form on $T^*D$. We can show that $\sigma$ is non-degenerate at $\rho=(p, \lambda_0\omega_0(p)-2{\rm Im\,}\ddbar_b\phi(p))\in\Sigma$, $p\in D$, if and only if the Hermitian quadratic form $M^\phi_p-2\lambda_0\mathcal{L}_p$ is non-degenerate(see Theorem~\ref{t-dhI}). From this, it is easy to see that if $M^\phi_p$ has $q$ negative and $n-1-q$ positive eigenvalues, then $\sigma$ is degenerate at $(p, \lambda\omega_0(p)-2{\rm Im\,}\ddbar_b\phi(p))\in\Sigma$, for some $\lambda\in\Real$. Fix $(n_-,n_+)\in\mathbb N^2_0$, $n_-+n_+=n-1$. Put 
\begin{equation}\label{e-dhmpXIa}
\begin{split}
\Sigma'=&\{(x,\lambda\omega_0(x)-2{\rm Im\,}\ddbar_b\phi(x))\in T^*D\bigcap\Sigma;\,\\ &\quad\mbox{$M^\phi_x-2\lambda\mathcal{L}_x$ is non-degenerate of constant signature $(n_-,n_+)$}\}.
\end{split}
\end{equation} 

Let $s$ be a local trivializing section of $L$ on an open subset $D\Subset X$ and $\abs{s}^2_{h^L}=e^{-2\phi}$. 
Let $B_k:\Omega^{0,q}_0(D)\To\mathscr D'(D,T^{*0,q}X)$ be a $k$-dependent continuous operator. We write $B_k(x,y)$ to denote the distribution kernel of $B_k$. $B_k$ is called $k$-negligible (on $D$)
if $B_k$ is smoothing and the kernel $B_k(x, y)$ of $B_k$ satisfies
$\abs{\pr^\alpha_x\pr^\beta_yB_k(x, y)}=O(k^{-N})$ locally uniformly
on every compact set in $D\times D$, for all multi-indices $\alpha$,
$\beta$ and all $N\in\mathbb N$. Let $C_k:\Omega^{0,q}_0(D)\To\mathscr D'(D,T^{*0,q}X)$ be another $k$-dependent continuous operator. 
We write $B_k\equiv C_k\mod O(k^{-\infty})$ (on $D$) or $B_k(x,y)\equiv C_k(x,y)\mod O(k^{-\infty})$ (on $D$) if $B_k-C_k$ is $k$-negligible on $D$.
Let $A_k:L^2_{(0,q)}(X,L^k)\To L^2_{(0,q)}(X,L^k)$ be a continuous operator. We define the localized operator (with respect to the trivializing section $s$) of $A_k$ by
\begin{equation} \label{e-gue130820}
\begin{split}
\hat A_{k,s}:L^2_{(0,q)}(D)\bigcap\mathscr E'(D,T^{*0,q}X)&\To L^2_{(0,q)}(D),\\
u&\To e^{-k\phi}s^{-k}A_k(s^ke^{k\phi}u).
\end{split}
\end{equation} 
We write $A_k\equiv0\mod O(k^{-\infty})$ on $D$ if $\hat A_{k,s}\equiv0\mod O(k^{-\infty})$ on $D$. For $\lambda\geq0$, we write $\hat\Pi^{(q)}_{k,\leq\lambda,s}$ to denote the localized operator of $\Pi^{(q)}_{k,\leq\lambda}$. We denote $\hat\Pi^{(q)}_{k,s}:=\hat\Pi^{(q)}_{k,\leq0,s}$.

%We refer the reader to Definition~\ref{d-gue13628I} and Definition~\ref{d-gue13628} for the definition of classical semi-classical pseudodifferential operators and the precise meanings of the H\"ormander symbol spaces $S^0_{{\rm loc\,},{\rm cl\,}}$ and $S^0_{{\rm loc\,}}$.

\subsection{Main results: Szeg\"o kernel asymptotics for lower energy forms and almost Kodaira embedding Theorems on CR manifolds} \label{s-gue140123}

One of the main results of this work is the following 

\begin{thm}\label{t-gue140121}
With the notations and assumptions used above, let $s$ be a local trivializing section of $L$ on an open subset $D\Subset X$ and $\abs{s}^2_{h^L}=e^{-2\phi}$. Fix $q\in\set{0,1,\ldots,n-1}$ and suppose that $Y(q)$ holds on $D$. We assume that there exist a $\lambda_0\in\Real$ and $x_0\in D$ such that $M^\phi_{x_0}-2\lambda_0\mathcal{L}_{x_0}$ is non-degenerate of constant signature $(n_-,n_+)$. 
We fix $D_0\Subset D$, $D_0$ open, and let $V$ be any bounded open set of $T^*D$ with  $\ol V\subset T^*D$, $\ol V\bigcap\Sigma\subset\Sigma'$, where $\Sigma'$ is given by \eqref{e-dhmpXIa}. 
Let 
\begin{equation}\label{e-gue151209}
\mbox{$\hat{\mathcal{I}}_k\equiv\frac{k^{2n-1}}{(2\pi)^{2n-1}}\int e^{ik<x-y,\eta>}\alpha(x,\eta,k)d\eta\mod O(k^{-\infty})$ at $T^*D_0\bigcap\Sigma$}
\end{equation}
be a properly supported classical semi-classical pseudodifferential operator on $D$ of order $0$ from sections of $T^{*0,q}X$ to sections of $T^{*0,q}X$
%, where 
%\begin{equation}\label{e-gue140123}
%\begin{split}
%&\mbox{$\alpha(x,\eta,k)\sim\sum_{j=0}\alpha_j(x,\eta)k^{-j}$ in $S^0_{{\rm loc\,}}(1;T^*D,T^{*0,q}X\boxtimes T^{*0,q}X)$},\\
%&\alpha_j(x,\eta)\in C^\infty(T^*D,T^{*0,q}D\boxtimes T^{*0,q}D),\ \ j=0,1,\ldots,
%\end{split}\end{equation}
with $\alpha(x,\eta,k)=0$ if $\abs{\eta}>M$, for some large $M>0$ and ${\rm Supp\,}\alpha(x,\eta,k)\bigcap T^*D_0\Subset V$. Let $\hat{\mathcal{I}}^*_k$ be the adjoint of $\hat{\mathcal{I}}_k$ with respect to $(\,\cdot\,|\,\cdot\,)$. Then for every $N_0\geq1$ and every $D'\Subset D_0$, $\alpha, \beta\in\mathbb N^{2n-1}_0$, there is a constant $C_{D',\alpha,\beta,N_0}>0$ independent of $k$, such that 
\begin{equation} \label{e-gue130819IIm}
\begin{split}
&\abs{\pr^\alpha_x\pr^\beta_y\big((\hat\Pi^{(q)}_{k,\leq k^{-N_0},s}\hat{\mathcal{I}}^*_k)(x,y)-\int e^{ik\varphi(x,y,s)}a(x,y,s,k)ds\big)}\\
&\leq C_{D',\alpha,\beta,N_0}k^{3n+2\abs{\beta}+\abs{\alpha}-N_0-2}\ \ \mbox{on $D'\times D'$},\\
&\abs{\pr^\alpha_x\pr^\beta_y\big((\hat{\mathcal{I}}_k\hat\Pi^{(q)}_{k,\leq k^{-N_0},s}\hat{\mathcal{I}}^*_k)(x,y)-(\int e^{ik\varphi(x,y,s)}g(x,y,s,k)ds\big)}\\
&\leq C_{D',\alpha,\beta,N_0}k^{3n+2\abs{\beta}+\abs{\alpha}-N_0-2}\ \ \mbox{on $D'\times D'$},
\end{split}
\end{equation}
where $a(x,y,s,k)=g(x,y,s,k)=0$ if $q\neq n_-$, $a(x,y,s,k),\ g(x,y,s,k)\in S^{n}_{{\rm loc\,}}\big(1;\Omega,T^{*0,q}X\boxtimes T^{*0,q}X\big)\bigcap C^\infty_0\big(\Omega,T^{*0,q}X\boxtimes T^{*0,q}X\big)$ are as in \eqref{e-gue130819} and \eqref{e-gue130819I} if $q=n_-$ and 
\begin{equation}\label{e-gue140121I}
\begin{split}
&\varphi(x,y,s)\in C^\infty(\Omega),\ \ {\rm Im\,}\varphi(x,y,s)\geq0,\ \ \forall (x,y,s)\in\Omega,\\
&d_x\varphi(x,y,s)|_{x=y}=-2{\rm Im\,}\ddbar_b\phi(x)+s\omega_0(x),\ \ d_y\varphi(x,y,s)|_{x=y}=2{\rm Im\,}\ddbar_b\phi(x)-s\omega_0(x),\\
&\mbox{${\rm Im\,}\varphi(x,y,s)+\abs{\frac{\pr\varphi}{\pr s}(x,y,s)}\geq c\abs{x-y}^2$, $c>0$ is a constant, $\forall (x,y,s)\in\Omega$},\\
&\mbox{$\varphi(x,y,s)=0$ and $\frac{\pr\varphi}{\pr s}(x,y,s)=0$ if and only if $x=y$}.
\end{split}
\end{equation}
%(see Theorem~\ref{t-gue140121I}, Theorem~\ref{t-gue140121II} and Theorem~\ref{t-gue140121III}, for more properties for the phase $\varphi$), and
%\begin{equation}\label{e-gue130819m}
%\begin{split}
%&a(x,y,s,k),\ g(x,y,s,k)\in S^{n}_{{\rm loc\,}}\big(1;\Omega,T^{*0,q}X\boxtimes T^{*0,q}X\big)\bigcap C^\infty_0\big(\Omega,T^{*0,q}X\boxtimes T^{*0,q}X\big),\\
%&a(x,y,s,k)=g(x,y,s,k)=0\ \ \mbox{if $q\neq n_-$},\\
%&a(x,y,s,k)\sim\sum^\infty_{j=0}a_j(x,y,s)k^{n-j}\text{ in }S^{n}_{{\rm loc\,}}
%\big(1;\Omega,T^{*0,q}X\boxtimes T^{*0,q}X\big), \\
%&g(x,y,s,k)\sim\sum^\infty_{j=0}g_j(x,y,s)k^{n-j}\text{ in }S^{n}_{{\rm loc\,}}
%\big(1;\Omega,T^{*0,q}X\boxtimes T^{*0,q}X\big), \\
%&a_j(x,y,s),\ g_j(x,y,s)\in C^\infty_0\big(\Omega,T^{*0,q}X\boxtimes T^{*0,q}X\big),\ \ j=0,1,2,\ldots,\\
%&\mbox{if $q=n_-$, then for every $(x,x,s)\in\Omega$, $x\in D_0$,}\\
%&\mbox{$a_0(x,x,s)$ and $g_0(x,x,s)$ are given by \eqref{e-gue140121f} and \eqref{e-gue140121fI} respectively}.
%\end{split}
%\end{equation}
Here 
\begin{equation}\label{e-gue140121}
\begin{split}
\Omega:=&\{(x,y,s)\in D\times D\times\Real;\, (x,-2{\rm Im\,}\ddbar_b\phi(x)+s\omega_0(x))\in V\bigcap\Sigma,\\
&\quad\mbox{$(y,-2{\rm Im\,}\ddbar_b\phi(y)+s\omega_0(y))\in V\bigcap\Sigma$, $\abs{x-y}<\varepsilon$, for some $\varepsilon>0$}\}.
\end{split}\end{equation}
\end{thm}

We refer the reader to Definition~\ref{d-gue13628I}, Definition~\ref{d-gue130816} and Definition~\ref{d-gue13628} for the definition of classical semi-classical pseudodifferential operators and the precise meanings of \eqref{e-gue151209} and the H\"ormander symbol spaces $S^0_{{\rm loc\,},{\rm cl\,}}$ and $S^0_{{\rm loc\,}}$.

For more properties for the phase $\varphi$, see  Theorem~\ref{t-gue140121I}, Theorem~\ref{t-gue140121II} and Theorem~\ref{t-gue140121III}. 

Basically, Theorem~\ref{t-gue140121} says that $(\hat\Pi^{(q)}_{k,\leq k^{-N_0},s}\hat{\mathcal{I}}^*_k)(x,y)$, $(\hat{\mathcal{I}}_k\hat\Pi^{(q)}_{k,\leq k^{-N_0},s}\hat{\mathcal{I}}^*_k)(x,y)$ are asymptotically close to the complex Fourier integral operators $\int e^{ik\varphi(x,y,s)}a(x,y,s,k)ds$, $\int e^{ik\varphi(x,y,s)}g(x,y,s,k)ds$ if $N_0$ large. We will show in section~\ref{s-gue130820} that under certain conditions,  $(\hat{\mathcal{I}}_k\hat\Pi^{(q)}_{k,s}\hat{\mathcal{I}}^*_k)(x,y)$ is a complex Fourier integral operator $\int e^{ik\varphi(x,y,s)}g(x,y,s,k)ds$(see Theorem~\ref{t-gue140123I}). 

Since ${\rm Im\,}\varphi(x,y,s)+\abs{\frac{\pr\varphi}{\pr s}(x,y,s)}\geq c\abs{x-y}^2$, $c>0$ is a constant, we can integrate by parts with respect to $s$ and conclude that the integral $\int e^{ik\varphi(x,y,s)}b(x,y,s,k)ds$, $b(x,y,s,k)\in S^{n}_{{\rm loc\,}}\big(1;\Omega,T^{*0,q}X\boxtimes T^{*0,q}X\big)\bigcap C^\infty_0\big(\Omega,T^{*0,q}X\boxtimes T^{*0,q}X\big)$, is $k$-negligible away $x=y$. Thus, we can take $\varepsilon>0$ in \eqref{e-gue140121} to be any small positive constant.

\begin{defn}\label{d-gue130918}
We say that $L$ is positive if for every $x\in X$ there is a $\eta\in\Real$ such that the Hermitian quadratic form $M^\phi_x-2\eta\mathcal{L}_x$ is positive definite. 
\end{defn}

In view of Proposition~\ref{p-suI}, we see that Definition~\ref{d-gue130918} is well-defined.

\begin{defn}\label{d-gue131109}
Let $(X,T^{1,0}X)$ be a compact orientable CR manifold of dimension $2n-1$, $n\geqslant2$. 
We say that $X$ can be almost CR embedded into projective space 
if for every $m\in\mathbb N_0$ and $\varepsilon>0$  there is an embedding $\Phi_{\varepsilon,m}:X\To\Complex\mathbb P^N$, for some $N\in\mathbb N$, $N\geq2$, such that 
\[\norm{d\Phi_{\varepsilon,m}(T^{1,0}X)-\Complex T\Phi_{\varepsilon,m}(X)\bigcap T^{1,0}\Complex\mathbb P^N}_{C^m(\Complex\mathbb P^N,\Complex T\Complex\mathbb P^N)}<\varepsilon.\]
\end{defn}

We recall that a smooth map $\Phi:X\To\Complex\mathbb P^N$, $N\in\mathbb N$, $N\geq2$, is a CR embedding if $\Phi$ is an embedding and $d\Phi(T^{1,0}X)=\Complex T\Phi(X)\bigcap T^{1,0}\Complex\mathbb P^N$.

By using Theorem~\ref{t-gue140121}, we establish in section~\ref{s-aket} the almost Kodaira embedding theorems on CR manifolds (see Theorem~\ref{t-gue131108})

\begin{thm}\label{t-gue131109}
Let $(X,T^{1,0}X)$ be a compact orientable CR manifold of dimension $2n-1$, $n\geqslant2$. 
If $X$ admits a positive CR line bundle $L$ over $X$, then 
$X$ can be almost CR embedded into projective space. 
\end{thm}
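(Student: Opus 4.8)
The plan is to realise the embeddings of Definition~\ref{d-gue131109} by the Kodaira maps $\Phi_{N_0,k}$ attached to $H^0_{b,\leq k^{-N_0}}(X,L^k)$, for $N_0$ and then $k$ chosen large, and to read off all the quantitative information from Theorem~\ref{t-gue140121} applied with $q=0$. Since $L$ is positive, at every $x$ there is $\lambda$ with $M^\phi_x-2\lambda\mathcal L_x>0$, i.e. non-degenerate of signature $(n_-,n_+)=(0,n-1)$, so $q=0=n_-$ is the admissible degree and the curvature hypothesis of Theorem~\ref{t-gue140121} is met on a small chart about each point (the one point that may require treating the Levi-definite locus of $X$ separately is the standing assumption $Y(0)$). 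Cover the compact $X$ by finitely many trivialising charts $D_j\Supset D_j'$ with $\bigcup_j D_j'=X$, $\abs{s_j}^2_{h^L}=e^{-2\phi_j}$, and on each $D_j$ apply Theorem~\ref{t-gue140121} with $\hat{\mathcal I}_k$ a properly supported microlocal cut-off whose symbol equals the identity near $\Sigma\cap T^*D_j'$ and is supported in the set $V$ of the theorem. Because $\hat\Pi^{(0)}_{k,\leq k^{-N_0},s_j}$ is microlocally concentrated on $\Sigma$, one obtains on $D_j'\times D_j'$ the near-diagonal expansion
\[
\hat\Pi^{(0)}_{k,\leq k^{-N_0},s_j}(x,y)=\int e^{ik\varphi(x,y,s)}a(x,y,s,k)\,ds+O\big(k^{3n-N_0-2}\big),
\]
in the $C^0$ sense (with the analogous $C^m$ estimates carrying the extra factor $k^{2\abs\beta+\abs\alpha}$), where $a\sim\sum_j a_jk^{n-j}$ and $\varphi$ is as in \eqref{e-gue140121I}.

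Setting $x=y$ and using $\varphi(x,x,s)=0$ together with Theorem~\ref{t-gue140123} for $q=n_-=0$ — so $\mathcal N(x,s,0)=\Complex$, $\pi_{(x,s,0)}=\mathrm{id}$, and one arranges the symbol of $\hat{\mathcal I}_k$ to be the identity on the relevant bounded part of $\Sigma$ — one gets $a_0(x,x,s)=(2\pi)^{-n}\abs{\det(M^\phi_x-2s\mathcal L_x)}$, strictly positive on the non-empty bounded interval $\{s:M^\phi_x-2s\mathcal L_x>0\}$. Hence $\Pi^{(0)}_{k,\leq k^{-N_0}}(x,x)\geq c_0k^n$ uniformly on $X$ once $N_0>2n-2$ and $k$ is large, so $H^0_{b,\leq k^{-N_0}}(X,L^k)$ has no common zeros, $\Phi_{N_0,k}$ is a well-defined smooth map $X\to\Complex\mathbb P^{d_k-1}$, and integrating the diagonal estimate gives $d_k\geq c_0k^n\mathrm{Vol}(X)\to\infty$, so $d_k\geq2$. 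Differentiating the near-diagonal expansion (Theorem~\ref{t-gue140121} controls $\pr^\alpha_x\pr^\beta_y$) and inserting the tangential Hessian of $\varphi$ from Theorem~\ref{t-gue140121II} — positive definite in the CR directions because $\lambda_j(s_0)\neq0$ and $\mathrm{Im}\,\varphi\geq c\abs{x'-y'}^2$ by Theorem~\ref{t-gue140121I} — one shows that the full $1$-jet of $x\mapsto(\Td f_1(x),\ldots,\Td f_{d_k}(x))$ at each point is non-degenerate for $N_0,k$ large, whence $d\Phi_{N_0,k}(x)$ is injective for every $x$.

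For the almost-CR estimate, each $f_j\in H^0_{b,\leq k^{-N_0}}(X,L^k)$ satisfies $\norm{\ddbar_bf_j}^2_{h^{L^k}}\leq(\Box^{(0)}_{b,k}f_j\,|\,f_j)_{h^{L^k}}\leq k^{-N_0}$; feeding this into the subelliptic a priori estimates for $\Box^{(0)}_{b,k}$ with explicit $k$-dependence, together with the uniform bound $\abs{f_j}_{h^{L^k}}=O(k^{n/2})$ from the diagonal asymptotics, yields $\norm{\ddbar_b\Td f_j}_{C^m(D'_l)}=O\big(k^{-N_0/2+c(m+n)}\big)$. Since the failure of $d\Phi_{N_0,k}(T^{1,0}X)$ to coincide with $\Complex T\Phi_{N_0,k}(X)\cap T^{1,0}\Complex\mathbb P^{d_k-1}$ is measured, in the $C^m$ norm of Definition~\ref{d-gue131109}, by these quantities, for $N_0$ large relative to $m$ and $n$ this discrepancy tends to $0$ as $k\to\infty$.

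Finally, by the Cauchy–Schwarz inequality for the reproducing kernel, $\abs{\Pi^{(0)}_{k,\leq k^{-N_0}}(x,y)}^2_{h^{L^k}}\leq\Pi^{(0)}_{k,\leq k^{-N_0}}(x,x)\,\Pi^{(0)}_{k,\leq k^{-N_0}}(y,y)$, with equality precisely when $\Phi_{N_0,k}(x)=\Phi_{N_0,k}(y)$. For $d(x,y)$ below a fixed $\delta$, immersivity plus the uniform jet estimates force $\Phi_{N_0,k}$ to be injective on $\delta$-balls; for $d(x,y)\geq\delta$, integration by parts in $s$ in $\int e^{ik\varphi}a\,ds$ — legitimate since $\mathrm{Im}\,\varphi+\abs{\pr\varphi/\pr s}\geq c\abs{x-y}^2$ — shows this term is $O(k^{-\infty})$ there, so with the remainder and a patching over the cover one gets $\abs{\Pi^{(0)}_{k,\leq k^{-N_0}}(x,y)}_{h^{L^k}}=O(k^{3n-N_0-2})$, incompatible for $N_0>2n-2$ and $k$ large with the equality case in view of $\Pi^{(0)}_{k,\leq k^{-N_0}}(x,x)\geq c_0k^n$; hence $\Phi_{N_0,k}$ is injective, and with immersivity it is an embedding. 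Given $m$ and $\varepsilon$, fix $N_0=N_0(m,n)$ large, then $k=k(m,\varepsilon,N_0)$ large, and set $\Phi_{\varepsilon,m}:=\Phi_{N_0,k}$. The step I expect to be the main obstacle is the quantitative transfer of the $L^2$ smallness of $\ddbar_bf_j$ to $C^m$ smallness with explicitly controlled powers of $k$ — this is what forces $N_0$ to grow with $m$ and uses the hypoellipticity of $\Box^{(0)}_{b,k}$ in an essential, $k$-uniform way — together with upgrading the purely local estimate of Theorem~\ref{t-gue140121} to the global off-diagonal decay of $\Pi^{(0)}_{k,\leq k^{-N_0}}(x,y)$ needed for injectivity at far-apart points.
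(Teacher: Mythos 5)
There is a genuine gap, and it sits at the foundation of your argument: you assert that because $\hat\Pi^{(0)}_{k,\leq k^{-N_0},s_j}$ is ``microlocally concentrated on $\Sigma$'' you may drop the cut-off and obtain a near-diagonal (and, later, off-diagonal) expansion of $\hat\Pi^{(0)}_{k,\leq k^{-N_0},s_j}(x,y)$ itself. Theorem~\ref{t-gue140121} gives asymptotics only for the compositions $\hat\Pi^{(q)}_{k,\leq k^{-N_0},s}\hat{\mathcal I}^*_k$ and $\hat{\mathcal I}_k\hat\Pi^{(q)}_{k,\leq k^{-N_0},s}\hat{\mathcal I}^*_k$, never for the projector alone, and the cut-off cannot be removed: the low-energy spectral space is not known to be microsupported only on the positive part $\Sigma'$ of $\Sigma$. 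This is exactly why the theorem is an \emph{almost} embedding theorem and why the genuine Kodaira embedding (Theorem~\ref{t-gue140124I}) requires the transversal $S^1$ action to manufacture an admissible $F_k$. Everything downstream in your proposal inherits this gap: the diagonal lower bound $\Pi^{(0)}_{k,\leq k^{-N_0}}(x,x)\geq c_0k^n$, the jet computations for immersivity, and the $O(k^{3n-N_0-2})$ off-diagonal bound you invoke for injectivity are all read off from an expansion that is not available. The paper instead obtains the lower bound by building explicit approximate peak sections $v_k=\chi\cdot\int e^{ik\varphi(x,p,s)}k^{-n/2}a(x,p,s,k)\,ds$ annihilated by $\Box^{(0)}_{s,k}$ to $O(k^{-\infty})$, projecting them into $H^0_{b,\leq k^{-N_0}}$ with negligible pointwise loss via the spectral gap and the scaling estimates of Lemma~\ref{l-gue13717}; the matching \emph{upper} bounds on $\Pi(x,x)$ and its derivatives come from the scaling technique (Theorem~\ref{t-gue13718}, Proposition~\ref{p-gue13717I}), not from any FIO expansion; immersivity comes from $1$-jets of the modified peak sections $\sqrt k\,(z_j+\beta_j)v_k$ and $k\,(x_{2n-1}+\beta_n)v_k$.

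Two further steps would fail as written. First, ``immersivity plus uniform jet estimates force $\Phi_{N_0,k}$ to be injective on $\delta$-balls'' for a fixed $\delta$ is false: the differential of $\Phi_{N_0,k}$ degenerates anisotropically at rates $\sqrt k$ and $k$, so the injectivity radius shrinks with $k$. The hard part of injectivity is precisely the regime of two sequences $x_k,y_k\to p$; the paper splits this into three cases according to the rates $\sqrt k\,|x_k'-y_k'|$ and $k|x_k^{2n-1}-y_k^{2n-1}|$, and the closest regime requires the $\delta\to0$ family of cut-offs $R_{\delta,k}$, the scaled-limit Fourier analysis of Theorem~\ref{t-gue131029} (Propositions~\ref{p-gue131029}--\ref{p-gue131103}), and a second-derivative convexity argument for $H_k(t)=A_k(t)/B_k(t)$ along the segment joining $x_k$ to $y_k$ — none of which is replaceable by Cauchy--Schwarz plus off-diagonal decay of the bare projector. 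Second, note that the paper's Section~\ref{s-aket} carries the standing hypothesis that $Y(0)$ holds (needed for smoothness of the spectral kernels); you flag this but do not resolve it, and your proof must either assume it or explain why positivity of $L$ dispenses with it. Your sketch of the almost-CR estimate itself (transferring $\|\ddbar_{b,k}f_j\|^2\leq k^{-N_0}$ to $C^m$ smallness, with $N_0$ chosen after $m$) is consistent with what the paper intends.
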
 

It should be mentioned that Theorem~\ref{t-gue131109} is in the spirit of the almost
symplectic and almost isometric embedding of compact symplectic manifolds by Borthwick-Uribe~\cite{BU}, Ma-Marinescu~\cite{MM08} and Shiffman-Zelditch~\cite{SS02}. Especially, in Ma-Marinescu~\cite{MM08} the spectral spaces of the Bochner Laplacian are used to obtain the embedding.

\subsection{Main results: Szeg\"o kernel asymptotics}\label{s-gue140123I}

In view of Theorem~\ref{t-gue140121}, we see that if $\Box^{(q)}_{b,k}$ has spectral gap $\geq k^{-M}$, for some $M>0$, the operator $(\hat{\mathcal{I}}_k\hat\Pi^{(q)}_{k,s}\hat{\mathcal{I}}^*_k)(x,y)$ admits a full asymptotic expansion. But in general, it is very difficult to see that if $\Box^{(q)}_{b,k}$ has spectral gap. We then impose some mild semi-classical local conditions and we show that a certain conjugation of the Szeg\"o projection by some kind of pseudodifferential operator 
is a Fourier integral operator under these semi-classical local conditions. More precisely, we have 
%\begin{defn}\label{d-gue130820m}
%Fix $q\in\set{0,1,\ldots,n-1}$. Let $A_k:L^2_{(0,q)}(X,L^k)\To L^2_{(0,q)}(X,L^k)$ be a continuous operator. Let $D\Subset X$. We say that $\Box^{(q)}_{b,k}$ has $O(k^{-n_0})$ small spectral gap on $D$ with respect to $A_k$ if for every $D'\Subset D$, there exist constants $C_{D'}>0$,  $n_0, p\in\mathbb N$, $k_0\in\mathbb N$, such that for all $k\geq k_0$ and $u\in\Omega^{0,q}_0(D',L^k)$, we have  
%\[\norm{A_k(I-\Pi^{(q)}_k)u}_{h^{L^k}}\leq C_{D'}\,k^{n_0}\sqrt{(\,(\Box^{(q)}_{b,k})^pu\,|\,u\,)_{h^{L^k}}}.\] 
%\end{defn}

%\begin{defn}\label{d-gue131205m}
%Let $A_k:L^2_{(0,q)}(X,L^k)\To L^2_{(0,q)}(X,L^k)$ be a continuous operator.
%We say that $\Pi^{(q)}_k$ is $k$-negligible away the diagonal with respect to $A_k$ 
%on $D\Subset X$ if for any $\chi, \chi_1\in C^\infty_0(D)$ with $\chi_1=1$ on some neighbourhood of ${\rm Supp\,}\chi$, we have 
%\[\Bigr(\chi A_k(1-\chi_1)\Bigr)\Pi^{(q)}_k\Bigr(\chi A_k(1-\chi_1)\Bigr)^*\equiv0\mod O(k^{-\infty})\ \ \mbox{ on $D$},\]
%where 
%\[\Bigr(\chi A_k(1-\chi_1)\Bigr)^*:L^2_{(0,q)}(X,L^k)\To L^2_{(0,q)}(X,L^k)\]
%is the Hilbert space adjoint of $\chi A_k(1-\chi_1)$ with respect to $(\,\cdot\,|\,\cdot\,)_{h^{L^k}}$. 
%\end{defn}

%One of the main results of this work is the following 

\begin{thm}\label{t-gue140123I}
With the notations and assumptions used above, let $s$ be a local trivializing section of $L$ on an open subset $D\Subset X$ and $\abs{s}^2_{h^L}=e^{-2\phi}$. Fix $q\in\set{0,1,\ldots,n-1}$ and suppose that $Y(q)$ holds on $D$. We assume that there exist a $\lambda_0\in\Real$ and $x_0\in D$ such that $M^\phi_{x_0}-2\lambda_0\mathcal{L}_{x_0}$ is non-degenerate of constant signature $(n_-,n_+)$.  Let $F_k:L^2_{(0,q)}(X,L^k)\To L^2_{(0,q)}(X,L^k)$ be a continuous operator and let $F^*_k:L^2_{(0,q)}(X,L^k)\To L^2_{(0,q)}(X,L^k)$ be the Hilbert space adjoint of $F_k$ with respect to $(\,\cdot\,|\,\cdot\,)_{h^{L^k}}$. Let
$\hat F_{k,s}$ and $\hat F^*_{k,s}$ be the localized operators of $F_{k}$ and $F^*_{k}$ respectively.
We fix $D_0\Subset D$, $D_0$ open and let $V$ be any bounded open set of $T^*D$ with $\ol V\subset T^*D$, $\ol V\bigcap\Sigma\subset\Sigma'$, where $\Sigma'$ is as in \eqref{e-dhmpXIa}. Assume that 
\[\hat F_{k,s}-A_k=O(k^{-\infty}):H^s_{{\rm comp\,}}(D,T^{*0,q}X)\To H^s_{{\rm loc\,}}(D,T^{*0,q}X),\ \ \forall s\in\mathbb N_0,\]
where
$\mbox{$A_k\equiv\frac{k^{2n-1}}{(2\pi)^{2n-1}}\int e^{ik<x-y,\eta>}\alpha(x,\eta,k)d\eta\mod O(k^{-\infty})$ at $T^*D_0\bigcap\Sigma$}$
is a classical semi-classical pseudodifferential operator on $D$ of order $0$ from sections of $T^{*0,q}X$ to sections of $T^{*0,q}X$ with $\alpha(x,\eta,k)=0$ if $\abs{\eta}>M$, for some large $M>0$ and ${\rm Supp\,}\alpha(x,\eta,k)\bigcap T^*D_0\Subset V$. Put $P_k:=F_k\Pi^{(q)}_kF^*_k$ and let $\hat P_{k,s}$ be the localized operator of $P_k$. Assume that $\Box^{(q)}_{b,k}$ has $O(k^{-n_0})$ small spectral gap on $D$ with respect to $F_k$ and $\Pi^{(q)}_k$ is $k$-negligible away the diagonal with respect to $F_k$ on $D$. If $q\neq n_-$, then 
\[\mbox{$\hat P_{k,s}(x,y)\equiv0\mod O(k^{-\infty})$ on $D_0$}.\]
If $q=n_-$, then 
\[
\mbox{$\hat P_{k,s}(x,y)\equiv\int e^{ik\varphi(x,y,s)}g(x,y,s,k)ds\mod O(k^{-\infty})$ on $D_0$},\] 
where $\varphi(x,y,s)\in C^\infty(\Omega)$ is as in Theorem~\ref{t-gue140121} and 
\[g(x,y,s,k)\in S^{n}_{{\rm loc\,}}\big(1;\Omega,T^{*0,q}X\boxtimes T^{*0,q}X\big)\bigcap C^\infty_0\big(\Omega,T^{*0,q}X\boxtimes T^{*0,q}X\big)\] 
is as in \eqref{e-gue130819} and \eqref{e-gue130819I}. Here $\Omega$ is given by \eqref{e-gue140121}. 
%\[
%\begin{split}
%&g(x,y,s,k)\in S^{n}_{{\rm loc\,}}\big(1;\Omega,T^{*0,q}X\boxtimes T^{*0,q}X\big)\bigcap C^\infty_0\big(\Omega,T^{*0,q}X\boxtimes T^{*0,q}X\big),\\
%&g(x,y,s,k)\sim\sum^\infty_{j=0}g_j(x,y,s)k^{n-j}\text{ in }S^{n}_{{\rm loc\,}}
%\big(1;\Omega,T^{*0,q}X\boxtimes T^{*0,q}X\big), \\
%&g_j(x,y,s)\in C^\infty_0\big(\Omega,T^{*0,q}X\boxtimes T^{*0,q}X\big),\ \ j=0,1,2,\ldots,
%\end{split}\]
%and for every $(x,x,s)\in\Omega$, $x\in D_0$, $g_0(x,x,s)$ is given by \eqref{e-gue130819I}. Here $\Omega$ is given by \eqref{e-gue140121}. 
\end{thm}

We refer the reader to Definition~\ref{d-gue130820} and Definition~\ref{d-gue131205} for the precise meanings of "$O(k^{-n_0})$ small spectral gap on $D$ with respect to $F_k$" and "$k$-negligible away the diagonal with respect to $F_k$ on $D$". 

\begin{rem}\label{r-gue140123}
With the assumptions and notations used in Theorem~\ref{t-gue140123I}, since we only assume some local properties of $F_k$ on $D$, we don't know what is $F_k$ outside $D$. In order to get an asymptotic expansion for the Szeg\"o kernel, we need to assume that $\Pi^{(q)}_k$ is $k$-negligible away the diagonal with respect to $F_k$ on $D$. If $X$ is compact and $F_k$ is a global classical semi-classical pseudodifferential operator on $X$(see Definition~\ref{d-gue131205I}), we can show that(see Proposition~\ref{p-gue131205}) $\Pi^{(q)}_k$ is $k$-negligible away the diagonal with respect to $F_k$ on every local trivialization $D\Subset X$. Furthermore, if $X$ is non-compact and $F_k$ is properly supported on $D\Subset X$, then $\Pi^{(q)}_k$ is $k$-negligible away the diagonal with respect to $F_k$ on $D$(see also Proposition~\ref{p-gue131205}). 
\end{rem} 

\begin{rem}\label{r-gue140126}
With the assumptions and notations used in Theorem~\ref{t-gue140123I}, let
\[\set{f_1\in H^0_{b}(X,L^k),\ldots,f_{d_k}\in H^0_{b}(X,L^k)}\] 
be an orthonormal frame of the space $H^0_{b}(X,L^k)$, where $d_k\in\mathbb N_0\bigcup\set{\infty}$. It is easy to see that 
\[\hat P_{k,s}(x,x)=\sum^{d_k}_{j=1}\abs{(F_kf_j)(x)}^2_{h^{L^k}},\ \ \forall x\in D.\] 
Theorem~\ref{t-gue140123I} implies that if $\Box^{(q)}_{b,k}$ has $O(k^{-n_0})$ small spectral gap on $D$ with respect to $F_k$ and $\Pi^{(q)}_k$ is $k$-negligible away the diagonal with respect to $F_k$ on $D$, then 
\[\mbox{$\sum^{d_k}_{j=1}\abs{(F_kf_j)(x)}^2_{h^{L^k}}\equiv0\mod O(k^{-\infty})$ on $D_0$ when $q\neq n_-$}\]
and
\[\mbox{$\sum^{d_k}_{j=1}\abs{(F_kf_j)(x)}^2_{h^{L^k}}\equiv\sum^{\infty}_{j=1}k^{n-j}b_j(x)\mod O(k^{-\infty})$ on $D_0$ when $q=n_-$},\]
where $b_j(x)\in C^\infty_0(D)$, $j=0,1,\ldots$, and for every $x\in D_0$, $b_0(x)=\int g_0(x,x,s)ds$, $g_0(x,y,s)$ is given by \eqref{e-gue130819I}. 
\end{rem} 

After proving Theorem~\ref{t-gue131109} and Theorem~\ref{t-gue140123I}, we asked the following two questions: When we can get  "true" Kodaira embedding Theorems on CR manifolds? Can we find some non-trivial examples for Theorem~\ref{t-gue140123I}? In order to answer these questions, let's study carefully some CR submanifolds of projective space. We consider $\Complex\mathbb P^{N-1}$, $N\geq4$. Let $[z]=[z_1,\ldots,z_N]$ be the homogeneous coordinates of $\Complex\mathbb P^{N-1}$. Put 
\[X:=\set{[z_1,\ldots,z_N]\in\Complex\mathbb P^{N-1};\, \lambda_1\abs{z_1}^2+\cdots+\lambda_m\abs{z_m}^2+\lambda_{m+1}\abs{z_{m+1}}^2+\cdots+\lambda_N\abs{z_N}^2=0},\]
where $m\in\mathbb N$ and $\lambda_j\in\Real$, $j=1,\ldots,N$. Then, $X$ is a compact CR manifold of dimension $2(N-1)-1$ with CR structure $T^{1,0}X:=T^{1,0}\Complex\mathbb P^{N-1}\bigcap\Complex TX$. Now, we assume that $\lambda_1<0,\lambda_2<0,\ldots,\lambda_m<0$, $\lambda_{m+1}>0,\lambda_{m+2}>0,\ldots,\lambda_N>0$, where $m\geq2$, $N-m\geq2$. Then, it is easy to see that the Levi form has at least one negative and one positive eigenvalues at each point of $X$. Thus, $Y(0)$ holds at each point of $X$. $X$ admits a $S^1$ action: 
\begin{equation}\label{e-gue131218IIIm}
\begin{split}
S^1\times X&\To X,\\
e^{i\theta}\circ[z_1,\ldots,z_m,z_{m+1},\ldots,z_N]&\To[e^{i\theta}z_1,\ldots,e^{i\theta}z_m,z_{m+1},\ldots,z_N],\ \ \theta\in[-\pi,\pi[.
\end{split}
\end{equation}
Since $(z_1,\ldots,z_m)\neq0$ on $X$, this $S^1$ action is well-defined. Let $T\in C^\infty(X,TX)$ be the real vector field given by 
\begin{equation}\label{e-gue131205Im}
Tu=\frac{\pr}{\pr\theta}(u(e^{i\theta}x))|_{\theta=0},\ \ u\in C^\infty(X).
\end{equation}
It is easy to see that $[T,C^\infty(X,T^{1,0}X)]\subset C^\infty(X,T^{1,0}X)$ and $T(x)\oplus T^{1,0}_xX\oplus T^{0,1}_xX=\Complex T_xX$(we say that the $S^1$ action is CR and transversal, see Definition~\ref{d-gue131205II}). 

Let $E\To\Complex\mathbb P^{N-1}$ be the canonical line bundle with respect to the standard Fubini-Study metric. 
Consider $L:=E|_X$. Then, it is easy to see that(see section~\ref{s-cmips}) $L$ is a $T$-rigid positive CR line bundle over $(X,T^{1,0}X)$(see Definition~\ref{d-gue131209}, for the meaning of "positive $T$-rigid CR line bundle"). Thus, we ask the following question

\begin{que}\label{q-gue140123}
Let $(X, T^{1,0}X)$ be a compact CR manifold with a transversal CR $S^1$ action and let $T$ be the global vector field induced by the $S^1$ action. If there is a $T$-rigid positive CR line bundle over $X$, then can $X$ be CR embedded into $\mathbb C\mathbb P^N$, for some $N\in\mathbb N$? 
\end{que}

In section~\ref{s-saak}, we study "CR manifolds with transversal CR $S^1$ actions" and fortunately, we found that if $Y(q)$ holds and $M^\phi_x$ is non-degenerate of constant signature, then we can always find a continuous operator $F_k:L^2_{(0,q)}(X,L^k)\To L^2_{(0,q)}(X,L^k)$ such that the assumptions in Theorem~\ref{t-gue140123I} hold and by using  Theorem~\ref{t-gue140123I}, we solve Question~\ref{q-gue140123} completely.

\subsection{Main results: Sezg\"o kernel asymptotics and Kodairan embedding Theorems on CR manifolds with transversal CR $S^1$ actions}

Let $(X, T^{1,0}X)$ be a CR manifold. We assume that $X$ admits a transversal CR $S^1$ action: $S^1\times X\To X$ (see Definition~\ref{d-gue131205II}). We write $e^{i\theta}$, $0\leq\theta<2\pi$, to denote the $S^1$ action and we let $T$ be the global vector field induced by the $S^1$ action(see \eqref{e-gue131205I}). Note that we don't assume that the $S^1$ action is globally free.

We show in Theorem~\ref{t-gue131206I} that there is a $T$-rigid Hermitian metric $\langle\,\cdot\,|\,\cdot\,\rangle$ on $\Complex TX$  (see Definition~\ref{d-gue131205III} and Definition~\ref{d-gue131206}) such that $T^{1,0}X\perp T^{0,1}X$, $T\perp (T^{1,0}X\oplus T^{0,1}X)$, $\langle\,T\,|\,T\,\rangle=1$ and $\langle\,u\,|v\,\rangle$ is real if $u, v$ are real tangent vectors. Until further notice, we fix a $T$-rigid Hermitian metric $\langle\,\cdot\,|\,\cdot\,\rangle$ on $\Complex TX$ such that $T^{1,0}X\perp T^{0,1}X$, $T\perp (T^{1,0}X\oplus T^{0,1}X)$, $\langle\,T\,|\,T\,\rangle=1$ and $\langle\,u\,|v\,\rangle$ is real if $u, v$ are real tangent vectors. 

Let $L$ be a $T$-rigid CR line bundle over $(X,T^{1,0}X)$ with a $T$-rigid Hermitian fiber metric $h^L$ on $L$(see Definition~\ref{d-gue131206II} and Definition~\ref{d-gue131207}). For $k>0$, as before, we shall consider $(L^k,h^{L^k})$ and we will use the same notations as before. Since $L$ is $T$-rigid, $Tu$ is well-defined, for every $u\in\Omega^{0,q}(X,L^k)$. For $m\in\mathbb Z$, put 
\begin{equation}\label{e-gue131207Im}
A^{0,q}_m(X,L^k):=\set{u\in\Omega^{0,q}(X,L^k);\, Tu=imu}
\end{equation}
and let $\mathcal{A}^{0,q}_m(X,L^k)\subset L^2_{(0,q)}(X,L^k)$ be the completion of $A^{0,q}_m(X,L^k)$ with respect to $(\,\cdot\,|\,\cdot\,)_{h^{L^k}}$. 
For $m\in\mathbb Z$, let 
\begin{equation}\label{e-gue131207IIIm}
Q^{(q)}_{m,k}:L^2_{(0,q)}(X,L^k)\To\mathcal{A}^{0,q}_m(X,L^k)
\end{equation}
be the orthogonal projection with respect to $(\,\cdot\,|\,\cdot\,)_{h^{L^k}}$. Fix $\delta>0$. Take $\tau_\delta(x)\in C^\infty_0(]-\delta,\delta[)$, 
$0\leq\tau_\delta\leq1$ and $\tau_\delta=1$ on $[-\frac{\delta}{2},\frac{\delta}{2}]$. Let $F^{(q)}_{\delta,k}:L^2_{(0,q)}(X,L^k)\To L^2_{(0,q)}(X,L^k)$ be the continuous map given by 
\begin{equation}\label{e-gue131207IVm}
\begin{split}
F^{(q)}_{\delta,k}:L^2_{(0,q)}(X,L^k)&\To L^2_{(0,q)}(X,L^k),\\
u&\To\sum_{m\in\mathbb Z}\tau_\delta(\frac{m}{k})(Q^{(q)}_{m,k}u).
\end{split}
\end{equation}

One of the main results of this work is the following

\begin{thm}\label{t-gue140124}
Let $(X, T^{1,0}X)$ be a compact CR manifold with a transversal CR $S^1$ action and let $T$ be the global vector field induced by the $S^1$ action. Let $L$ be a $T$-rigid  CR line bundle over $X$ with a $T$-rigid Hermitian fiber metric $h^L$. We assume that $Y(q)$ holds at each point of $X$ and $M^{\phi}_x$ is non-degenerate of constant signature $(n_-,n_+)$, for every $x\in X$.   
Let $s$ be a local trivializing section of $L$ on an  open set $D\subset X$, $\abs{s}^2_{h^L}=e^{-2\phi}$. Fix $D_0\Subset D$. Let $F^{(q)}_{\delta,k}:L^2_{(0,q)}(X,L^k)\To L^2_{(0,q)}(X,L^k)$ be the continuous operator given by \eqref{e-gue131207IVm} and let $F^{(q),*}_{\delta,k}:L^2_{(0,q)}(X,L^k)\To L^2_{(0,q)}(X,L^k)$ be the adjoint of $F^{(q)}_{\delta,k}$ with respect to $(\,\cdot\,|\,\cdot\,)_{h^{L^k}}$. Put $P_k:=F^{(q)}_{\delta,k}\Pi^{(q)}_kF^{(q),*}_{\delta,k}$ and let $\hat P_{k,s}$ be the localized operator of $P_k$. Assume that $\delta>0$ is small. If $q\neq n_-$, then $\hat P_{k,s}\equiv0\mod O(k^{-\infty})$ on $D_0$. 
If $q=n_-$, then 
\[
\mbox{$\hat P_{k,s}(x,y)\equiv\int e^{ik\varphi(x,y,s)}g(x,y,s,k)ds\mod O(k^{-\infty})$ on $D_0$},\] 
where $\varphi(x,y,s)\in C^\infty(\Omega)$ is as in Theorem~\ref{t-gue140121} and 
\[
\begin{split}
&g(x,y,s,k)\in S^{n}_{{\rm loc\,}}\big(1;\Omega,T^{*0,q}X\boxtimes T^{*0,q}X\big)\bigcap C^\infty_0\big(\Omega,T^{*0,q}X\boxtimes T^{*0,q}X\big),\\
&g(x,y,s,k)\sim\sum^\infty_{j=0}g_j(x,y,s)k^{n-j}\text{ in }S^{n}_{{\rm loc\,}}
\big(1;\Omega,T^{*0,q}X\boxtimes T^{*0,q}X\big), \\
&g_j(x,y,s)\in C^\infty_0\big(\Omega,T^{*0,q}X\boxtimes T^{*0,q}X\big),\ \ j=0,1,2,\ldots,
\end{split}\]
and for every $(x,x,s)\in\Omega$, $x\in D_0$, $g_0(x,x,s)=(2\pi)^{-n}\abs{\det\bigr(M^\phi_x-2s\mathcal{L}_x\bigr)}\abs{\tau_\delta(s)}^2\mathcal{\pi}_{(x,s,n_-)}$. 
Here $\Omega$ is given by \eqref{e-gue140121}, $\mathcal{\pi}_{(x,s,n_-)}$ is as in Theorem~\ref{t-gue130816} and $\tau_\delta$ is as in the discussion after \eqref{e-gue131207IIIm}. 
\end{thm}

\begin{defn}\label{d-gue131209}
Let $(X, T^{1,0}X)$ be a CR manifold with a transversal CR $S^1$ action and let $T$ be the global vector field induced by the $S^1$ action. We say that $L$ is a $T$-rigid positive CR line bundle over $X$ 
if $L$ is a $T$-rigid CR line bundle over $X$ and there is a $T$-rigid Hermitian fiber metric $h^L$ on $L$ such that $M^\phi_x$ is positive, for every $x\in X$, where $\phi$ is the local weights of $h^L$.
\end{defn} 

By using Theorem~\ref{t-gue140124}, we can repeat the proof of Theorem~\ref{t-gue131109} and establish Kodaira embedding theorems on CR manifolds with transversal CR $S^1$ actions

\begin{thm}\label{t-gue140124I}
Let $(X, T^{1,0}X)$ be a compact CR manifold with a transversal CR $S^1$ action and let $T$ be the global vector field induced by the $S^1$ action. If there is a $T$-rigid positive CR line bundle over $X$, then $X$ can be CR embedded into $\mathbb C\mathbb P^N$, for some $N\in\mathbb N$. 
\end{thm}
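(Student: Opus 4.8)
The plan is to derive Theorem~\ref{t-gue140124I} from Theorem~\ref{t-gue140124} by following the Kodaira-type argument already used for Theorem~\ref{t-gue131109}, but now exploiting the richer structure available in the presence of a transversal CR $S^1$ action. First I would apply Theorem~\ref{t-gue140124} with $q=0$: since $L$ is $T$-rigid positive, we may pick a $T$-rigid Hermitian fiber metric $h^L$ with $M^\phi_x$ positive definite for every $x\in X$, so $M^\phi_x$ is non-degenerate of constant signature $(0,n-1)$, hence $n_-=0$ and $Y(0)$ holds at every point (this uses $n\geq2$, so the Levi form condition is automatic once $M^\phi$ is positive and one varies $s$). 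For small $\delta>0$ the operator $F_k:=F^{(0)}_{\delta,k}$ then satisfies all hypotheses of Theorem~\ref{t-gue140123I}, and the conclusion gives, on each local trivialization $D$, that $\hat P_{k,s}(x,y)\equiv\int e^{ik\varphi(x,y,s)}g(x,y,s,k)\,ds\mod O(k^{-\infty})$ with $g_0(x,x,s)=(2\pi)^{-n}\abs{\det(M^\phi_x-2s\mathcal{L}_x)}\abs{\tau_\delta(s)}^2\pi_{(x,s,0)}$, and here $\pi_{(x,s,0)}=\mathrm{Id}$ on the line $T^{*0,0}X=\Complex$.

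Next I would extract the pointwise size of the Szeg\"o kernel along the diagonal. By Remark~\ref{r-gue140126} (applied with $q=0$), $\hat P_{k,s}(x,x)=\sum_{j}\abs{(F_kf_j)(x)}^2_{h^{L^k}}$ where $\set{f_j}$ is an orthonormal basis of $H^0_b(X,L^k)$, and this equals $k^n b_0(x)+O(k^{n-1})$ with $b_0(x)=\int g_0(x,x,s)\,ds=(2\pi)^{-n}\int\abs{\det(M^\phi_x-2s\mathcal{L}_x)}\abs{\tau_\delta(s)}^2\,ds>0$ for every $x$, since the integrand is positive on a set of positive measure (for $s$ near $0$ the form $M^\phi_x-2s\mathcal{L}_x$ remains positive definite, so its determinant is bounded below). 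The strict positivity of $b_0$, uniform on the compact $X$, is what guarantees base-point freeness: the modified Kodaira map $\Phi_k:X\To\Complex\mathbb{P}^{d_k-1}$ built from an orthonormal basis of the finite-dimensional space $F_k H^0_b(X,L^k)$ (or, more precisely, from the CR sections $F_kf_j$, noting that the $\tau_\delta$-truncation keeps us among genuine CR sections in $H^0_b$) is well-defined for $k$ large.

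Then I would prove that $\Phi_k$ is an immersion and injective for $k$ large, arguing exactly as in the proof of Theorem~\ref{t-gue131109} in section~\ref{s-aket}: the full off-diagonal asymptotic expansion of $\hat P_{k,s}(x,y)$ with the explicit phase $\varphi$ satisfying $\mathrm{Im}\,\varphi\geq0$, $\mathrm{Im}\,\varphi+\abs{\pr\varphi/\pr s}\geq c\abs{x-y}^2$, and vanishing exactly on the diagonal (equations \eqref{e-gue140121I} and Theorem~\ref{t-gue140121I}) lets one compute the first and second order jets of $\Phi_k$ at each point, show $d\Phi_k$ is injective, and control $\abs{\Phi_k(x)-\Phi_k(y)}$ from below for $x,y$ in a fixed small coordinate ball; a standard compactness/partition-of-unity globalization then rules out distinct points mapping to the same image. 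Crucially, the full (not merely "almost") embedding now follows because the $S^1$-symmetry forces $d\Phi_k(T^{1,0}X)$ to lie exactly in $\Complex T\Phi_k(X)\cap T^{1,0}\Complex\mathbb{P}^N$: each $F_kf_j$ is a finite sum of components $Q^{(0)}_{m,k}f_j\in A^{0,0}_m(X,L^k)$, which are $T$-rigid CR sections, so in suitable coordinates the component functions $\widetilde{f_j}$ are genuinely CR (not just CR modulo negligible terms), and hence the Cauchy–Riemann equations satisfied by the $\widetilde{f_j}$ transfer exactly to $\Phi_k(X)$. I expect this last point — verifying that the $\tau_\delta$-cutoff in $F^{(0)}_{\delta,k}$ produces honest CR sections so that one gets a true CR embedding rather than an approximate one, together with the uniformity in $k$ of the lower bounds needed for injectivity over all of the compact $X$ — to be the main technical obstacle; the rest is a faithful rerun of the almost-embedding argument with $\varepsilon$ set to $0$.
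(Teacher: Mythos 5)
Your overall route is the same as the paper's: apply the Szeg\"o kernel asymptotics of Theorem~\ref{t-gue140124} with $q=0$, observe that the relevant sections are honest CR sections, and rerun the section~\ref{s-aket} machinery for base-point freeness, immersion and injectivity. The point you single out as the "main technical obstacle" is in fact already settled in section~\ref{s-saak}: by \eqref{e-gue131207VI} the operator $F^{(0)}_{\delta,k}$ commutes with $\ddbar_{b,k}$ and with $\Box^{(0)}_{b,k}$, and \eqref{e-gue131216} shows it annihilates every eigenspace with eigenvalue in $]0,k\delta]$, so $F^{(0)}_{\delta,k}$ maps $H^0_{b,\leq k^{-N_0}}(X,L^k)$ into $H^0_b(X,L^k)={\rm Ker\,}\Box^{(0)}_{b,k}$. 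The paper exploits exactly this to push the peak sections of Lemma~\ref{l-gue131001} into the space of genuine CR sections, whereas you read off base-point freeness from the diagonal expansion of $F_k\Pi^{(0)}_kF^*_k$ as in Remark~\ref{r-gue140126}; that is a harmless variation, and for the immersion step you will in any case need the peak sections with prescribed jets, as in section~\ref{s-aket}.

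The genuine gap is your parenthetical claim that $Y(0)$ "holds at every point \dots automatic once $M^\phi$ is positive and one varies $s$". This is false. Condition $Y(0)$ (Definition~\ref{d-suII}) constrains the Levi form $\mathcal{L}_x$ alone: for $q=0$ it demands either $n$ eigenvalues of the same sign (impossible, since $\dim_{\Complex}T^{1,0}_xX=n-1$) or one pair of eigenvalues of opposite sign, i.e.\ $\mathcal{L}_x$ must be indefinite; in particular $Y(0)$ can never hold when $n=2$. It is logically independent of the positivity of $M^\phi_x$, and also of the positivity of $M^\phi_x-2s\mathcal{L}_x$ for some $s$: a Levi-flat $X$ carrying a metric with $M^\phi>0$ meets your hypotheses yet violates $Y(0)$. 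Since $Y(0)$ is an explicit hypothesis of Theorem~\ref{t-gue140124} and underlies everything you invoke afterwards --- hypoellipticity of $\Box^{(0)}_{b,k}$, discreteness of its spectrum, finite-dimensionality of $H^0_b(X,L^k)$, smoothness of the Szeg\"o kernel, and all of section~\ref{s-aket} --- you must carry it as a standing hypothesis rather than derive it. (The printed statement of the theorem omits $Y(0)$, but the paper's own proof rests on Theorem~\ref{t-gue140124}, which assumes it, and every example in section~\ref{s-saak} verifies it separately.) As written, your first step --- applying Theorem~\ref{t-gue140124} --- is not licensed.
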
 

In section~\ref{s-skaohg}, we also establish Szeg\"o kernel asymptotis on some non-compact CR manifolds (see Theorem~\ref{t-gue131227}).

The layout of this paper is as follows. In section~\ref{s-gue151208}, we collect some properties for the phase $\varphi(x,y,s)$. In section~\ref{s:prelim}, we collect some notations, definitions and statements we use throughout. In section~\ref{s-scb}, we write down $\Box^{(q)}_{b,k}$ in a local trivialization and we get the formula for the characteristic manifold for $\Box^{(q)}_{b,k}$ and we prove that the canonical two form $\sigma$ is non-degenerate at $\rho=(p, \lambda_0\omega_0(p)-2{\rm Im\,}\ddbar_b\phi(p))\in\Sigma$ if and only if the Hermitian quadratic form $M^\phi_p-2\lambda_0\mathcal{L}_p$ is non-degenerate(see Theorem~\ref{t-dhI}). In section~\ref{s-thef}, by using the identity $k=e^{-ikx_{2n}}\bigr(-i\frac{\pr}{\pr x_{2n}}(e^{ikx_{2n}})\bigr)$, we introduce the local operator $\Box^{(q)}_s$ defined on some open set of $\Real^{2n}$ and by using the heat equation method, we establish microlocal Hodge decomposition theorems for $\Box^{(q)}_s$. 
Moreover, we study the phase $\varphi$ carefully and we prove Theorem~\ref{t-gue140121I} and Theorem~\ref{t-gue140121II}. In section~\ref{s-sch}, we reduce the semi-classical analysis of $\Box^{(q)}_{b,k}$ to the microlocal analysis of $\Box^{(q)}_s$ and we establish semi-classical Hodge decomposition theorems for $\Box^{(q)}_{b,k}$ in non-degenerate part of $\Sigma$ and we calculate the leading terms of the asymptotic expansions in \eqref{e-gue130819}. We notice that it is possible to consider semi-classical heat equation for $\Box^{(q)}_{b,k}$ and  establish semi-classical Hodge decomposition theorems for $\Box^{(q)}_{b,k}$ directly. In a further publication, we will study Kohn Laplacian on a CR manifold with codimension$\geq2$ and the local operator $\Box^{(q)}_s$ is similar to some kind of Kohn Laplacian on a CR manifold with codimension $2$. Hence, we decide to reduce the semi-classical analysis of $\Box^{(q)}_{b,k}$ to the microlocal analysis of $\Box^{(q)}_s$. In section~\ref{s-safle}, by using the scaling technique introduced in~\cite{HM09} and the semi-classical Hodge decomposition theorems in section~\ref{s-sch}, we prove Theorem~\ref{t-gue140121}. In section~\ref{s-aket}, by using Theorem~\ref{t-gue140121}, we establish almost Kodaira embedding theorems on CR manifolds and therefore we prove Theorem~\ref{t-gue131109}. It should be mentioned that the method in section~\ref{s-aket} works well in complex case(the complex case is much more simpler than CR case) and we can give a pure analytic proof of classical Kodaira embedding theorem.
In section~\ref{s-gue130820}, we prove Theorem~\ref{t-gue140123I}.  
In section~\ref{s-saak}, we introduce and study CR manifolds with transversal CR $S^1$ action. We show that there is a $T$-rigid Hermitian metric $\langle\,\cdot\,|\,\cdot\,\rangle$ on $\Complex TX$(see  Theorem~\ref{t-gue131206I}) and we prove Theorem~\ref{t-gue140124}. Moreover, by using Theorem~\ref{t-gue140124}, we can repeat the proof of Theorem~\ref{t-gue131109} and establish Kodaira embedding theorems on CR manifolds with transversal CR $S^1$ action. For simplicity, we only give the outline of the proof of Theorem~\ref{t-gue140124I}. In section~\ref{s-skaohg}, by using H\"ormander's $L^2$ estimates, we establish the small spectral gap property for $\Box^{(0)}_{b,k}$ with respect to $F^{(0)}_{\delta,k}$ and we prove Theorem~\ref{t-gue131227}. Finally, in section~\ref{sub-pf}, by using global theory of complex Fourier integral operators of Melin-Sj\"ostrand~\cite{MS74}, we prove Theorem~\ref{t-gue140121III}. 

{\emph{
\textbf{Acknowledgements.}
Some part of this paper has been carried out when the author was a research fellow at Universit{\"a}t zu K{\"o}ln,  Mathematisches Institut.  I am grateful to Universit{\"a}t zu K{\"o}ln,  Mathematisches Institut, for offering excellent working conditions. The methods of microlocal analysis used in this work are marked by the influence of Professor Johannes Sj\"{o}strand.  This paper is dedicated to him for his retirement. 
}} 

\section{More properties of the phase $\varphi(x,y,s)$}\label{s-gue151208}

In this section, we collect some properties of the phase $\varphi(x,y,s)$ for the convenience of the reader. 

We can estimate ${\rm Im\,}\varphi(x,y,s)$ in some local coordinates

\begin{thm}\label{t-gue140121I}
With the assumptions and notations used in Theorem~\ref{t-gue140121}, fix $p\in D$. We take local coordinates $x=(x_1,\ldots,x_{2n-1})$ defined in a small neighbourhood of $p$ so that $\omega_0(p)=dx_{2n-1}$, $T^{1,0}_pX\oplus T^{0,1}_pX=\set{\sum^{2n-2}_{j=1}a_j\frac{\pr}{\pr x_j};\, a_j\in\Real, j=1,\ldots,2n-2}$. If $D$ is small enough, then there is a constant $c>0$ such that 
\begin{equation}\label{e-gue140121II}\begin{split}
&{\rm Im\,}\varphi(x,y,s)\geq c\abs{x'-y'}^2,\ \ \forall (x,y,s)\in\Omega,\\
&{\rm Im\,}\varphi(x,y,s)+\abs{\frac{\pr\varphi}{\pr s}(x,y,s)}\geq c\bigr(\abs{x_{2n-1}-y_{2n-1}}+\abs{x'-y'}^2\bigr),\ \ \forall (x,y,s)\in\Omega,\end{split}\end{equation}
where $x'=(x_1,\ldots,x_{2n-2})$, $y'=(y_1,\ldots,y_{2n-2})$, $\abs{x'-y'}^2=\sum^{2n-2}_{j=1}\abs{x_j-y_j}^2$. 
\end{thm}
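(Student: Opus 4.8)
The plan is to exploit the explicit knowledge of the phase $\varphi$ coming from Theorem~\ref{t-gue140121}: the first-order Taylor data $d_x\varphi|_{x=y}=-2{\rm Im\,}\ddbar_b\phi(x)+s\omega_0(x)$, $d_y\varphi|_{x=y}=2{\rm Im\,}\ddbar_b\phi(x)-s\omega_0(x)$, together with ${\rm Im\,}\varphi\geq0$ and the separating property ``$\varphi=0$ and $\pr_s\varphi=0$ iff $x=y$'', and to upgrade the crude quadratic lower bound ${\rm Im\,}\varphi+|\pr_s\varphi|\geq c|x-y|^2$ to the sharper, anisotropic one. First I would work in the chosen local coordinates $x=(x',x_{2n-1})$ adapted at $p$ so that $\omega_0(p)=dx_{2n-1}$ and $T^{1,0}_pX\oplus T^{0,1}_pX$ is spanned by $\pr/\pr x_1,\ldots,\pr/\pr x_{2n-2}$; since $-2{\rm Im\,}\ddbar_b\phi$ annihilates the Reeb direction (it is a section of the contact subbundle), and $\omega_0(p)=dx_{2n-1}$, the $x_{2n-1}$-component of $d_x\varphi|_{x=y}$ at $p$ equals $s$, while its $x'$-component vanishes at $p$ (it is the contact-direction part of $-2{\rm Im\,}\ddbar_b\phi(p)+s\omega_0(p)$, which is $-2{\rm Im\,}\ddbar_b\phi(p)$, and one arranges coordinates so this is consistent; in any case shrinking $D$ one controls it).

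Next I would Taylor-expand ${\rm Im\,}\varphi(x,y,s)$ around the diagonal $x=y$ in the variable $x-y$. Because ${\rm Im\,}\varphi\geq0$ everywhere and ${\rm Im\,}\varphi=0$ on the diagonal, the linear term in $x-y$ of ${\rm Im\,}\varphi$ vanishes on $x=y$, and the Hessian of ${\rm Im\,}\varphi$ in $x-y$ is positive semi-definite there. The content of the first inequality in \eqref{e-gue140121II} is that this Hessian is in fact \emph{positive definite in the $x'-y'$ directions}, uniformly for $(x,y,s)$ in the (relatively compact in the relevant variables) set $\Omega$. To see the transverse nondegeneracy I would use that $\varphi$ is (up to the standard equivalence of phase functions) the phase attached to the Fourier integral operator description of $\hat\Pi^{(q)}_{k,\leq k^{-N_0},s}$, whose Hessian in the CR (contact) directions is governed by the Hermitian form $M^\phi_x-2s\mathcal L_x$, which by the definition of $\Sigma'$ is nondegenerate of signature $(n_-,n_+)$ throughout $\ol V\cap\Sigma\subset\Sigma'$; its contribution to ${\rm Im\,}\varphi$ is the \emph{positive} part, giving exactly $c|x'-y'|^2$. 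One then uses a standard positivity/compactness argument (à la Melin--Sj\"ostrand / Boutet de Monvel--Sj\"ostrand for the Heisenberg-type phase) to get the uniform constant $c$ on $\Omega$ after shrinking $D$.

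For the second inequality I would add the missing control in the Reeb direction $x_{2n-1}-y_{2n-1}$, which ${\rm Im\,}\varphi$ alone does not see (it degenerates there, as it must since $\Sigma$ is a line in that conormal direction). Here $\pr\varphi/\pr s$ enters: from $d_x\varphi|_{x=y}$ the $s$-derivative of $\varphi$ has $x$-derivative $\omega_0(x)$ at $x=y$, hence $\pr_s\varphi$ vanishes on the diagonal and its differential in $x-y$ along the Reeb direction is nonzero (equal to $\omega_0(p)=dx_{2n-1}$ at $p$, since the contact-direction parts cancel). Thus $|\pr_s\varphi(x,y,s)|\geq c_1|x_{2n-1}-y_{2n-1}|-C|x'-y'|$ near the diagonal by Taylor's theorem, and combining with ${\rm Im\,}\varphi\geq c|x'-y'|^2$ and absorbing the cross term $C|x'-y'|$ via $C|x'-y'|\leq \tfrac12 c_1|x_{2n-1}-y_{2n-1}|$-type splitting when $|x'-y'|$ is small, or by $|x'-y'|\leq \tfrac{c}{2C}|x'-y'|^2 + \mathrm{const}$ arguments on the compact set, yields ${\rm Im\,}\varphi+|\pr_s\varphi|\geq c(|x_{2n-1}-y_{2n-1}|+|x'-y'|^2)$ after relabeling $c$. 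The separating property of \eqref{e-gue140121I} guarantees there is no other vanishing locus to worry about.

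The main obstacle is the transverse nondegeneracy claim, i.e.\ showing the $x'-y'$-Hessian of ${\rm Im\,}\varphi$ is uniformly positive definite on $\Omega$. This is where one must genuinely use the structure of $\varphi$ as the phase of the FIO built in the proof of Theorem~\ref{t-gue140121} (the model being the Szeg\"o-type phase whose imaginary part in the CR directions is the positive-definite part of $M^\phi_x-2s\mathcal L_x$), rather than just the first-order data listed in \eqref{e-gue140121I}; one has to know that the higher-order stationary-phase construction of $\varphi$ did not spoil this positivity, which follows because $\ol V\cap\Sigma\subset\Sigma'$ forces the relevant Hermitian form to stay nondegenerate of \emph{constant} signature, so the eigenvalues are bounded away from $0$ uniformly and a compactness/shrinking-$D$ argument closes it. Everything else is Taylor expansion and elementary estimates.
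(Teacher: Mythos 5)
Your identification of the relevant first--order data and of where the real difficulty sits (uniform transverse nondegeneracy, and the missing Reeb--direction control) is correct, but both halves of your argument have genuine gaps. For the first inequality, positive semi--definiteness of the Hessian of ${\rm Im\,}\varphi$ on the diagonal together with positive definiteness of its restriction to the $x'-y'$ block does \emph{not} imply ${\rm Im\,}\varphi\geq c\abs{x'-y'}^2$: the semi--definite form $Q(v_1,t)=(v_1+t)^2$ is positive definite on $\set{t=0}$ yet vanishes at $(1,-1)$, and here cross terms between $x'-y'$ and the degenerate direction $x_{2n-1}-y_{2n-1}$ are present in general (compare the term $(x_{2n-1}-y_{2n-1})f(x,y,s_0)$ in \eqref{e-guew13627}). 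The paper does not argue through the Hessian at all: it returns to the stationary--phase construction of $\varphi$ as a critical value and uses Proposition~\ref{p-dgudmgeI}, which bounds ${\rm Im\,}\varphi$ from below by $\inf_{w'}\bigl({\rm Im\,}\psi(\infty,\cdot)+\abs{d_{w'}(\psi(\infty,\cdot)-\langle\hat y,\cdot\rangle)}^2\bigr)$; combining this with ${\rm Im\,}\psi(\infty,\cdot)\gtrsim{\rm dist\,}(\cdot,\hat\Sigma)^2$ from Theorem~\ref{t-dhlkmimII} and with $d_{w'}(\cdots)=x'-y'+O(\abs{w'-w_0'})$, a two--case argument (gradient term dominant versus distance--to--$\hat\Sigma$ term dominant) gives the uniform bound. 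Note also that your heuristic ``its contribution to ${\rm Im\,}\varphi$ is the positive part of $M^\phi_x-2s\mathcal{L}_x$'' is not the mechanism: the Hessian of ${\rm Im\,}\varphi$ in the CR directions is $\tfrac{1}{2}\sum_j\abs{\lambda_j(s)}\,\abs{z_j-w_j}^2$, so the \emph{absolute values} of all eigenvalues enter; if only the positive eigenvalues contributed, the bound would fail in the $n_-$ negative directions, and it is nondegeneracy of constant signature on $\Sigma'$, not positivity, that is used.

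For the second inequality, your Taylor bound $\abs{\pr_s\varphi}\geq c_1\abs{x_{2n-1}-y_{2n-1}}-C\abs{x'-y'}$ carries a \emph{linear} error in $\abs{x'-y'}$, and this cannot be absorbed. In the regime $\abs{x'-y'}^2\ll\abs{x_{2n-1}-y_{2n-1}}\leq\tfrac{c_1}{2C}\abs{x'-y'}$ your two estimates give only ${\rm Im\,}\varphi+\abs{\pr_s\varphi}\gtrsim\abs{x'-y'}^2$, while the right--hand side of \eqref{e-gue140121II} is comparable to $\abs{x_{2n-1}-y_{2n-1}}\gg\abs{x'-y'}^2$; and the proposed absorption ``$\abs{x'-y'}\leq\tfrac{c}{2C}\abs{x'-y'}^2+{\rm const}$'' is false exactly near the diagonal, which is where the estimate has content. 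What the paper proves (see \eqref{e-gue1373VIIa} and the end of the proof of Theorem~\ref{t-dgugeI}) is the stronger intermediate bound $\abs{\pr_s\varphi(x,y,s)}\geq c_2\abs{x_{2n-1}-y_{2n-1}}-c_3\abs{x'-y'}^2$, with a \emph{quadratic} error, obtained from $\pr_s\varphi=\langle\omega_0(x),x-y\rangle+O(\abs{x-y}^2)$ together with the normalization $\beta_{2n-1}(x)\geq\tfrac{1}{2}$ on the shrunken coordinate patch and $\varepsilon$ small; once the error is quadratic, adding a suitable multiple of the first inequality finishes the proof. The missing ingredient in your write--up is therefore the upgrade of the error term from linear to quadratic in $\abs{x'-y'}$, i.e.\ the control of the $x'$--components of $\pr_x\pr_s\varphi\vert_{x=y}=\omega_0$ in the adapted coordinates; without it the case analysis does not close.
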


In section~\ref{s-cth}, we determine the tangential Hessian of $\varphi(x,y,s)$ 

\begin{thm}\label{t-gue140121II}
With the assumptions and notations used in Theorem~\ref{t-gue140121}, fix $(p,p,s_0)\in\Omega$, and let $\ol Z_{1,s_0},\ldots,\ol Z_{n-1,s_0}$ be an orthonormal frame of $T^{1,0}_xX$ varying smoothly with $x$ in a neighbourhood of $p$, for which the Hermitian quadratic form $M^\phi_x-2s_0\mathcal{L}_x$ is diagonalized at $p$. 
That is, 
\[M^\phi_p\bigr(\ol Z_{j,s_0}(p),Z_{t,s_0}(p)\bigr)-2s_0\mathcal{L}_p\bigr(\ol Z_{j,s_0}(p),Z_{t,s_0}(p)\bigr)
=\lambda_j(s_0)\delta_{j,t},\ \ j,t=1,\ldots,n-1.\]
Assume that $\lambda_j(s_0)<0$, $j=1,\ldots,n_-$, 
$\lambda_j(s_0)>0$, $j=n_-+1,\ldots,n-1$. Let $x=(x_1,\ldots,x_{2n-1})$, $z_j=x_{2j-1}+ix_{2j}$, $j=1,\ldots,n-1$, be local coordinates of $X$ defined in some small neighbourhood of $p$ such that 
\begin{equation}\label{e-geusw13623}
\begin{split}
&x(p)=0,\ \ \omega_0(p)=dx_{2n-1},\ \ T(p)=-\frac{\pr}{\pr x_{2n-1}}(p),\\
&\langle\,\frac{\pr}{\pr x_j}(p)\,|\,\frac{\pr}{\pr x_t}(p)\,\rangle=2\delta_{j,t},\ \ j,t=1,\ldots,2n-2,\\
&\ol Z_{j,s_0}(p)=\frac{\pr}{\pr z_j}+i\sum^{n-1}_{t=1}\tau_{j,t}\ol z_t\frac{\pr}{\pr x_{2n-1}}+c_jx_{2n-1}\frac{\pr}{\pr x_{2n-1}}+O(\abs{x}^2),\ \ j=1,\ldots,n-1,\\
&\phi(x)=\beta x_{2n-1}+\sum^{n-1}_{j=1}\bigr(\alpha_jz_j+\ol\alpha_j\ol z_j\bigr)+\frac{1}{2}\sum^{n-1}_{l,t=1}\mu_{t,l}z_t\ol z_l+\sum^{n-1}_{l,t=1}\bigr(a_{l,t}z_lz_t+\ol a_{l,t}\ol z_l\ol z_t\bigr)\\
&\quad+\sum^{n-1}_{j=1}\bigr(d_jz_jx_{2n-1}+\ol d_j\ol z_jx_{2n-1}\bigr)+O(\abs{x_{2n-1}}^2)+O(\abs{x}^3),
\end{split}
\end{equation}
where $\beta\in\Real$, $\tau_{j,t}, c_j, \alpha_j,  \mu_{j,t}, a_{j,t}, d_j\in\Complex$, $\mu_{j,t}=\ol\mu_{t,j}$, $j, t=1,\ldots,n-1$. (This is always possible, see \cite[p.\,157--160]{BG88}).
We also write
$y=(y_1,\ldots,y_{2n-1})$, $w_j=y_{2j-1}+iy_{2j}$, $j=1,\ldots,n-1$. Then, in some small neighbourhood of $(p,p)$, 
\begin{equation}\label{e-guew13627}
\begin{split}
&\varphi(x,y,s_0)\\
&=-i\sum^{n-1}_{j=1}\alpha_j(z_j-w_j)+i\sum^{n-1}_{j=1}\ol\alpha_j(\ol z_j-\ol w_j)+s_0(x_{2n-1}-y_{2n-1})
-\frac{i}{2}\sum^{n-1}_{j,l=1}(a_{l,j}+a_{j,l})(z_jz_l-w_jw_l)\\
&\quad+\frac{i}{2}\sum^{n-1}_{j,l=1}(\ol a_{l,j}+\ol a_{j,l})(\ol z_j\ol z_l-\ol w_j\ol w_l)+\frac{1}{2}\sum^{n-1}_{j,l=1}\Bigr(is_0(\ol\tau_{l,j}-\tau_{j,l})+(\ol\tau_{l,j}+\tau_{j,l})\beta\Bigr)(z_j\ol z_l-w_j\ol w_l)\\
&\quad+\sum^{n-1}_{j=1}(-ic_j\beta-s_0c_j-id_j)(z_jx_{2n-1}-w_jy_{2n-1})+\sum^{n-1}_{j=1}(i\ol c_j\beta-s_0\ol c_j+i\ol d_j)(\ol z_jx_{2n-1}-\ol w_jy_{2n-1})\\
&\quad-\frac{i}{2}\sum^{n-1}_{j=1}\lambda_j(s_0)(z_j\ol w_j-\ol z_jw_j)+\frac{i}{2}\sum^{n-1}_{j=1}\abs{\lambda_j(s_0)}\abs{z_j-w_j}^2+(x_{2n-1}-y_{2n-1})f(x,y,s_0)+O(\abs{(x, y)}^3),\\ 
&\quad\quad\quad f\in C^\infty,\ \  f(0,0)=0.
\end{split}
\end{equation}
\end{thm}

\begin{defn}\label{d-gue140121}
With the assumptions and notations used in Theorem~\ref{t-gue140121}, let $\varphi_1(x,y,s), \varphi_2(x,y,s)\in C^\infty(\Omega)$. We assume that $\varphi_1(x,y,s)$ and $\varphi_2(x,y,s)$ satisfy 
\eqref{e-gue140121I} and \eqref{e-gue140121II}. We say that $\varphi_1(x,y,s)$ and $\varphi_2(x,y,s)$ are equivalent on $\Omega$ if for any  
\[b_1(x,y,s,k)\in S^{m}_{{\rm loc\,}}\big(1;\Omega,T^{*0,q}X\boxtimes T^{*0,q}X\big)\bigcap C^\infty_0\big(\Omega,T^{*0,q}X\boxtimes T^{*0,q}X\big),\ \ m\in\mathbb Z,\] 
we can find 
\[b_2(x,y,s,k)\in S^{m}_{{\rm loc\,}}\big(1;\Omega,T^{*0,q}X\boxtimes T^{*0,q}X\big)\bigcap C^\infty_0\big(\Omega,T^{*0,q}X\boxtimes T^{*0,q}X\big)\] 
such that 
\[\int e^{ik\varphi_1(x,y,s)}b_1(x,y,s,k)ds\equiv \int e^{ik\varphi_1(x,y,s)}b_2(x,y,s,k)ds\mod O(k^{-\infty})\ \ \mbox{on $D$}\]
and vise versa. 
\end{defn}

Let $\varphi_1(x,y,s)\in C^\infty(\Omega)$. We assume that $\varphi_1(x,y,s)$ satisfies 
\eqref{e-gue140121I} and \eqref{e-gue140121II}.
Fix $p\in D$. We take local coordinates $x=(x_1,\ldots,x_{2n-1})$ defined in a small neighbourhood of $p$ so that $\omega_0(p)=dx_{2n-1}$, $T^{1,0}_pX\oplus T^{0,1}_pX=\set{\sum^{2n-2}_{j=1}a_j\frac{\pr}{\pr x_j};\, a_j\in\Real, j=1,\ldots,2n-2}$. Put 
\[-2{\rm Im\,}\ddbar_b\phi(x)=\sum^{2n-1}_{j=1}\alpha_j(x)dx_j,\ \ \omega_0(x)=\sum^{2n-1}_{j=1}\beta_j(x)dx_j.\] 
We assume that $D$ is a small open neighbourhood of $p$. In Lemma~\ref{l-gue130804}, we will show that if $D$ is small enough then we can find $\hat\varphi_1(x,y,s)\in C^\infty(\Omega)$ such that $\hat\varphi_1(x,y,s)$ satisfies \eqref{e-gue140121I}, \eqref{e-gue140121II}, $\frac{\pr\hat\varphi_1}{\pr y_{2n-1}}(x,y,s)-(\alpha_{2n-1}(y)+s\beta_{2n-1}(y))$ vanishes to infinite order at $x=y$ and $\hat\varphi_1(x,y,s)$ and $\varphi_1(x,y,s)$ are equivalent on $\Omega$ in the sense of Definition~\ref{d-gue140121}. We characterize the phase $\varphi$(see section~\ref{s-mhdt} and section~\ref{sub-pf})

\begin{thm}\label{t-gue140121III}
With the assumptions and notations used in Theorem~\ref{t-gue140121}, put $\Td\varphi(x,y,s):=-\ol\varphi(y,x,s)$. Then, $\Td\varphi(x,y,s)$ and $\varphi(x,y,s)$ are equivalent on $\Omega$ in the sense of Definition~\ref{d-gue140121}. Moreover, let $\varphi_1(x,y,s)\in C^\infty(\Omega)$. We assume that $\varphi_1(x,y,s)$ satisfies \eqref{e-gue140121I} and \eqref{e-gue140121II}. If $D$ is small enough, then $\varphi$ and $\varphi_1$ are equivalent on $\Omega$ in the sense of Definition~\ref{d-gue140121} if and only if there are functions $f\in C^\infty(\Omega)$, $g_j\in C^\infty(\Omega)$, $j=0,1,\ldots,2n-1$, $p_j\in C^\infty(\Omega)$, $j=1,\ldots,2n-1$, such that
\[
\begin{split}
&\frac{\pr\hat\varphi}{\pr s}(x,y,s)-f(x,y,s)\frac{\pr\hat\varphi_1}{\pr s}(x,y,s),\\
&\hat\varphi(x,y,s)-\hat\varphi_1(x,y,s)=g_0(x,y,s)\frac{\pr\hat\varphi}{\pr s}(x,y,s),\\
&\frac{\pr\hat\varphi}{\pr x_j}(x,y,s)-\frac{\pr\hat\varphi_1}{\pr x_j}(x,y,s)=g_j(x,y,s)\frac{\pr\hat\varphi}{\pr s}(x,y,s),\ \ j=1,2,\ldots,2n-1, \\
&\frac{\pr\hat\varphi}{\pr y_j}(x,y,s)-\frac{\pr\hat\varphi_1}{\pr y_j}(x,y,s)=p_j(x,y,s)\frac{\pr\hat\varphi}{\pr s}(x,y,s),\ \ j=1,2,\ldots,2n-1,
\end{split}\]
vanish to infinite order on $x=y$, for every $(x,y,s)\in\Omega$, where $\hat\varphi(x,y,s)\in C^\infty(\Omega)$, $\hat\varphi_1(x,y,s)\in C^\infty(\Omega)$ are as in the discussion after Definition~\ref{d-gue140121}. 
\end{thm}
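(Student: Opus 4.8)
The plan is to read Theorem~\ref{t-gue140121III} as the uniqueness statement that accompanies Theorem~\ref{t-gue140121}: the equivalence class of the complex phase produced there, modulo reparametrization of the fibre variable $s$, is an invariant of the oscillatory integral, and the symmetry $\Td\varphi\sim\varphi$ records the self-adjointness of the localized spectral projector. Throughout I would first replace $\varphi$, $\varphi_1$ by the normalizations $\hat\varphi$, $\hat\varphi_1$ of Lemma~\ref{l-gue130804} (so $\varphi\sim\hat\varphi$, $\varphi_1\sim\hat\varphi_1$, and $\pr\hat\varphi/\pr y_{2n-1}$ is pinned to infinite order on $x=y$), reducing everything to a comparison of two normalized phases, and I would use repeatedly the elementary lemma — a consequence of the bound ${\rm Im\,}\varphi+\abs{\pr\varphi/\pr s}\geq c\abs{x-y}^2$ together with integration by parts in $s$ — that two phases satisfying \eqref{e-gue140121I}, \eqref{e-gue140121II} and agreeing to infinite order on $x=y$ yield oscillatory integrals that coincide modulo $O(k^{-\infty})$ on $D$, as well as the fact that altering an amplitude off a neighbourhood of the diagonal is harmless.

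\emph{The ``if'' direction.} The content of the four families of relations is exactly that $\hat\varphi_1(x,y,s)$ equals, modulo a function flat on $x=y$, the composition $\hat\varphi(x,y,\Theta(x,y,s))$ for a smooth reparametrization $\Theta$ of the fibre variable with $\Theta|_{x=y}={\rm id}$: the second relation $\hat\varphi-\hat\varphi_1=g_0\,\pr\hat\varphi/\pr s$ determines $\Theta$ by a convergent iteration in $\abs{x-y}$ with leading term $s-g_0$, the first (with $f\neq0$) gives invertibility of $\Theta$ near the diagonal, and the relations for the $x_j$- and $y_j$-derivatives express compatibility of this reparametrization with all the variables. Given such a $\Theta$, the change of variables $s\mapsto\Theta(x,y,s)$ turns $\int e^{ik\hat\varphi}b_1\,ds$ into $\int e^{ik\hat\varphi_1}b_2\,ds$ modulo $O(k^{-\infty})$ — the flat error being absorbed by the elementary lemma — with $b_2=(b_1\circ\Theta)\abs{\pr_s\Theta}\in S^m_{{\rm loc\,}}\cap C^\infty_0$; exchanging the roles of $\hat\varphi$ and $\hat\varphi_1$ gives the converse, so $\hat\varphi\sim\hat\varphi_1$.

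\emph{The ``only if'' direction.} Here I would prove the sharper statement that one equivalence $\int e^{ik\hat\varphi}b_1\,ds\equiv\int e^{ik\hat\varphi_1}b_2\,ds\mod O(k^{-\infty})$ with $b_1$ having non-vanishing leading term on the diagonal already forces the four families of relations. Fixing $p\in D$, localizing near $(p,p)$, and discarding via \eqref{e-gue140121II} the contribution of $\abs{x-y}$ bounded below, I would apply the method of stationary phase in the single variable $s$, in the complex/almost-analytic sense of Melin--Sj\"ostrand since the phases are only of positive type; the bound \eqref{e-gue140121II} makes the relevant Hessian $\pr^2\hat\varphi/\pr s^2$ elliptic along the (almost-analytic) critical set, so each integral reduces to an honest semiclassical symbol times $e^{ik\Phi}$, resp.\ $e^{ik\Phi_1}$, with $\Phi,\Phi_1$ the reduced phases. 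Matching the resulting full expansions forces the almost-analytic extensions of $\hat\varphi$, $\hat\varphi_1$ to have the same restriction and the same $x$- and $y$-differentials along a common critical set (ellipticity of $b_1$, hence of the reduced principal symbols, excluding a branch discrepancy); translating ``vanishing along the almost-analytic critical set of $\hat\varphi$'', whose real locus is precisely the diagonal, back to the real category yields divisibility by $\pr\hat\varphi/\pr s$ modulo functions flat on $x=y$, i.e.\ produces $f$, $g_0,\ldots,g_{2n-1}$, $p_1,\ldots,p_{2n-1}$.

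\emph{The symmetry and the main obstacle.} That $\Td\varphi(x,y,s)=-\ol\varphi(y,x,s)$ again satisfies \eqref{e-gue140121I}, \eqref{e-gue140121II} is immediate, these conditions being invariant under $(x,y)\mapsto(y,x)$ followed by $\varphi\mapsto-\ol\varphi$; indeed a glance at the explicit quadratic part \eqref{e-guew13627} already shows that $\Td\varphi$ and $\varphi$ agree there up to a term $g(x,y,s)\,\pr\varphi/\pr s$ with $g$ vanishing on $x=y$. For the full statement: since $\hat{\mathcal{I}}_k\hat\Pi^{(q)}_{k,\leq k^{-N_0},s}\hat{\mathcal{I}}^*_k$ is self-adjoint with respect to $(\,\cdot\,|\,\cdot\,)$, taking fibrewise adjoints and exchanging $x$, $y$ in the second estimate of \eqref{e-gue130819IIm}, and using that it holds for every $N_0$, gives $\int e^{ik\varphi}g\,ds\equiv\int e^{ik\Td\varphi}\Td g\,ds\mod O(k^{-\infty})$ on $D$ with $\Td g$ obtained from $g(y,x,s,k)$ by fibrewise adjoints; by \eqref{e-gue140121fI} the amplitude $g$ is elliptic for $q=n_-$, so the ``only if'' step gives the relations between $\hat\varphi$ and the normalization of $\Td\varphi$, and the ``if'' step yields $\Td\varphi\sim\varphi$ (for $q\neq n_-$ this is built into the symbol-level construction of $\varphi$ in section~\ref{s-mhdt}, which is invariant under $\varphi\mapsto-\ol\varphi(y,x,\cdot)$). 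The step I expect to be the main obstacle is the ``only if'' uniqueness: running complex stationary phase in one variable for phases merely of positive type, with the critical ``manifold'' degenerating onto the diagonal, and extracting the divisibility relations from the matching expansions — this is where all the almost-analytic book-keeping of Melin--Sj\"ostrand and the handling of bundle-valued, only $S^m_{{\rm loc\,}}$ amplitudes is concentrated.
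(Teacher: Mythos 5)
Your ``only if'' step contains a genuine gap that the rest of the argument (including the symmetry $\Td\varphi\sim\varphi$, which you route through it) then inherits. You propose to reduce each oscillatory integral $\int e^{ik\hat\varphi(x,y,s)}b\,ds$ by complex stationary phase in the single variable $s$, claiming that \eqref{e-gue140121II} makes $\pr^2\hat\varphi/\pr s^2$ elliptic along the critical set. It does not: \eqref{e-gue140121II} controls ${\rm Im\,}\varphi$ and $\abs{\pr\varphi/\pr s}$ only, and the explicit tangential Hessian \eqref{e-guew13627} shows the $s$-dependence of the phase is $s(x_{2n-1}-y_{2n-1})$ plus terms vanishing to second order at $x=y$; in particular $\pr_s\varphi(x,x,s)=0$ for \emph{every} $s$ and $\pr^2_s\varphi(x,x,s)=0$ (this is recorded in \eqref{e-gue13711III}, where $\pr^2\Xi/\pr\sigma^2=0$ on the diagonal). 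So near the diagonal the phase is essentially linear in $s$, the $s$-integral behaves like a partial Fourier transform in $x_{2n-1}-y_{2n-1}$ rather than a stationary-phase integral, and there is no reduced phase $\Phi$ to extract by matching expansions. This is precisely why the paper does not work with the single fibre variable $s$: it lifts $\varphi$ to the homogeneous phase $t\Phi(\hat x,\hat y,s)=t(x_{2n}-y_{2n}+\varphi(x,y,s))$ on $\hat D\times\hat D$ (Remark~\ref{r-gudhanmpI}), invokes the Melin--Sj\"ostrand global equivalence criterion for the associated positive almost-analytic Lagrangians $\Lambda_{\Td{t\hat\Phi}}$, $\Lambda_{\Td{t\hat\Phi}_1}$, parametrizes those Lagrangians by solving $\pr_{\Td s}\Td{\hat\varphi}=0$ for $\Td x_{2n-1}$ after a Malgrange preparation $\pr_s\hat\varphi=(x_{2n-1}-\beta(x',y,s))g(x,y,s)$, and converts ``flat in $\abs{{\rm Im\,}z}$'' into ``flat at $x'=y'$'' via the key two-sided estimate \eqref{e-gmedmdhXI}. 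Without some substitute for this lift (or a genuinely degenerate stationary-phase analysis you have not supplied), the ``only if'' direction does not go through, and consequently neither does your derivation of $\Td\varphi\sim\varphi$ from self-adjointness of $\hat{\mathcal I}_k\hat\Pi^{(q)}_{k,\leq k^{-N_0},s}\hat{\mathcal I}^*_k$; the paper instead obtains the symmetry upstream, from Theorem~\ref{t-dhlkmimIII} for the heat-equation phase $\psi(\infty,\cdot)$, and transfers it by Theorem~\ref{t-dgudmgeaI}.

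Your ``if'' direction is a legitimately different and essentially viable route: interpreting the divisibility relations as a fibre reparametrization $s\mapsto\Theta(x,y,s)$ with $\hat\varphi_1=\hat\varphi\circ\Theta$ modulo a function flat at $x=y$, and absorbing the flat error by the near/far diagonal splitting using ${\rm Im\,}\varphi+\abs{\pr\varphi/\pr s}\geq c\abs{x-y}^2$. This is the classical equivalence-of-phase-functions argument and, if you carry out the Borel summation defining $\Theta$ and check the symbol class and support of $(b_1\circ\Theta)\abs{\pr_s\Theta}$, it gives the same conclusion as the paper's Lagrangian comparison while avoiding the complexification. But it only covers one implication; the substance of Theorem~\ref{t-gue140121III} and of section~\ref{sub-pf} lies in the converse, and that is where your proposal breaks down.
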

\section{Preliminaries}\label{s:prelim}

\subsection{Some standard notations} \label{s-ssn}

We shall use the following notations: $\Real$ is the set of real numbers, $\ol\Real_+:=\set{x\in\Real;\, x\geq0}$, $\mathbb N=\set{1,2,\ldots}$, $\mathbb N_0=\mathbb N\bigcup\set{0}$. An element $\alpha=(\alpha_1,\ldots,\alpha_n)$ of $\mathbb N_0^n$ will be called a multiindex, the size of $\alpha$ is: $\abs{\alpha}=\alpha_1+\cdots+\alpha_n$ and the length of $\alpha$ is $l(\alpha)=n$. For $m\in\mathbb N$, we write $\alpha\in\set{1,\ldots,m}^n$ if $\alpha_j\in\set{1,\ldots,m}$, $j=1,\ldots,n$. We say that $\alpha$ is strictly increasing if $\alpha_1<\alpha_2<\cdots<\alpha_n$. We write $x^\alpha=x_1^{\alpha_1}\cdots x^{\alpha_n}_n$, 
$x=(x_1,\ldots,x_n)$,
$\pr^\alpha_x=\pr^{\alpha_1}_{x_1}\cdots\pr^{\alpha_n}_{x_n}$, $\pr_{x_j}=\frac{\pr}{\pr x_j}$, $\pr^\alpha_x=\frac{\pr^{\abs{\alpha}}}{\pr x^\alpha}$, $D^\alpha_x=D^{\alpha_1}_{x_1}\cdots D^{\alpha_n}_{x_n}$, $D_x=\frac{1}{i}\pr_x$, $D_{x_j}=\frac{1}{i}\pr_{x_j}$. 
Let $z=(z_1,\ldots,z_n)$, $z_j=x_{2j-1}+ix_{2j}$, $j=1,\ldots,n$, be coordinates of $\Complex^n$.  
We write $z^\alpha=z_1^{\alpha_1}\cdots z^{\alpha_n}_n$, $\ol z^\alpha=\ol z_1^{\alpha_1}\cdots\ol z^{\alpha_n}_n$,
$\frac{\pr^{\abs{\alpha}}}{\pr z^\alpha}=\pr^\alpha_z=\pr^{\alpha_1}_{z_1}\cdots\pr^{\alpha_n}_{z_n}$, $\pr_{z_j}=
\frac{\pr}{\pr z_j}=\frac{1}{2}(\frac{\pr}{\pr x_{2j-1}}-i\frac{\pr}{\pr x_{2j}})$, $j=1,\ldots,n$. 
$\frac{\pr^{\abs{\alpha}}}{\pr\ol z^\alpha}=\pr^\alpha_{\ol z}=\pr^{\alpha_1}_{\ol z_1}\cdots\pr^{\alpha_n}_{\ol z_n}$, $\pr_{\ol z_j}=
\frac{\pr}{\pr\ol z_j}=\frac{1}{2}(\frac{\pr}{\pr x_{2j-1}}+i\frac{\pr}{\pr x_{2j}})$, $j=1,\ldots,n$.

Let $M$ be a $C^\infty$ paracompact manifold. 
We let $TM$ or $T(M)$ and $T^*M$ or $T^*(M)$ denote the tangent bundle of $M$ and the cotangent bundle of $M$ respectively.
The complexified tangent bundle of $M$ and the complexified cotangent bundle of $M$ will be denoted by $\Complex TM$ and $\Complex T^*M$ respectively. We write $\langle\,\cdot\,,\cdot\,\rangle$ to denote the pointwise duality between $TM$ and $T^*M$.
We extend $\langle\,\cdot\,,\cdot\,\rangle$ bilinearly to $\Complex TM\times\Complex T^*M$.
Let $E$ be a $C^\infty$ vector bundle over $M$. The fiber of $E$ at $x\in M$ will be denoted by $E_x$.
Let $F$ be another vector bundle over $M$. We write 
$E\boxtimes F$ to denote the vector bundle over $M\times M$ with fiber over $(x, y)\in M\times M$ 
consisting of the linear maps from $E_x$ to $F_y$.  Let $Y\subset M$ be an open set. From now on, the spaces of
smooth sections of $E$ over $Y$ and distribution sections of $E$ over $Y$ will be denoted by $C^\infty(Y, E)$ and $\mathscr D'(Y, E)$ respectively.
Let $\mathscr E'(Y, E)$ be the subspace of $\mathscr D'(Y, E)$ whose elements have compact support in $Y$.
For $m\in\Real$, we let $H^m(Y, E)$ denote the Sobolev space
of order $m$ of sections of $E$ over $Y$. Put
\begin{gather*}
H^m_{\rm loc\,}(Y, E)=\big\{u\in\mathscr D'(Y, E);\, \varphi u\in H^m(Y, E),
      \, \forall\varphi\in C^\infty_0(Y)\big\}\,,\\
      H^m_{\rm comp\,}(Y, E)=H^m_{\rm loc}(Y, E)\cap\mathscr E'(Y, E)\,.
\end{gather*} 

Let $E$ and $F$ be $C^\infty$ vector
bundles over a paracompact $C^\infty$ manifold $M$ equipped with a smooth density of integration. If
$A: C^\infty_0(M,E)\To \mathscr D'(M,F)$
is continuous, we write $K_A(x, y)$ or $A(x, y)$ to denote the distribution kernel of $A$.
The following two statements are equivalent
\begin{enumerate}
\item $A$ is continuous: $\mathscr E'(M,E)\To C^\infty(M,F)$,
\item $K_A\in C^\infty(M\times M,E_y\boxtimes F_x)$.
\end{enumerate}
If $A$ satisfies (a) or (b), we say that $A$ is smoothing. Let
$B: C^\infty_0(M,E)\to \mathscr D'(M,F)$ be a continuous operator. 
We write $A\equiv B$ if $A-B$ is a smoothing operator. We say that $A$ is properly supported if ${\rm Supp\,}K_A\subset M\times M$ is proper. That is, the two projections: $t_x:(x,y)\in{\rm Supp\,}K_A\To x\in M$, $t_y:(x,y)\in{\rm Supp\,}K_A\To y\in M$ are proper (i.e. the inverse images of $t_x$ and $t_y$ of all compact subsets of $M$ are compact). 

Let $H(x,y)\in\mathscr D'(M\times M,E_y\boxtimes F_x)$. We write $H$ to denote the unique continuous operator $C^\infty_0(M,E)\To\mathscr D'(M,F)$ with distribution kernel $H(x,y)$. In this work, we identify $H$ with $H(x,y)$.

\subsection{Set up and Terminology} \label{s-su}
Let $(X,T^{1,0}X)$ be a paracompact orientable not necessarily compact CR manifold of dimension $2n-1$, $n\geqslant2$, 
where $T^{1,0}X$ is a 
CR structure of $X$. That is, $T^{1,0}X$ is a complex $n-1$ dimensional subbundle of the complexified tangent bundle
$\Complex TX$, satisfying $T^{1,0}X\bigcap T^{0,1}X=\set{0}$, where $T^{0,1}X=\ol{T^{1,0}X}$, 
and $[\mathcal{V},\mathcal{V}]\subset\mathcal{V}$, where $\mathcal{V}=C^\infty(X,T^{1,0}X)$. 

Fix a smooth Hermitian metric $\langle\,\cdot\,|\,\cdot\,\rangle$ on $\Complex TX$ so that $T^{1,0}X$
is orthogonal to $T^{0,1}X:=\ol{T^{1,0}X}$ and $\langle\,u\,|\,v\,\rangle$ is real if $u$, $v$ are real tangent vectors. 
Then locally there is a real non-vanishing vector field $T$ of length one which is pointwise orthogonal to
$T^{1, 0}X\oplus T^{0, 1}X$. $T$ is unique up to the choice of sign. For $u\in\Complex TX$, we write $\abs{u}^2:=\langle\,u\,|\,u\,\rangle$. 
Denote by $T^{*1,0}X$ and $T^{*0,1}X$ the dual bundles of $T^{1,0}X$ and $T^{0,1}X$, respectively.
They can be identified with subbundles of the complexified cotangent bundle $\Complex T^*X$.
Define the vector bundle of $(0, q)$ forms by $T^{*0,q}X:=\Lambda^{q}T^{*0,1}X$. 
The Hermitian metric $\langle\,\cdot\,|\,\cdot\,\rangle$ on $\Complex TX$ induces,
by duality, a Hermitian metric on $\Complex T^*X$ and also on the bundles of $(0,q)$ forms $T^{*0,q}X$, $q=0,1,\ldots,n-1$. We shall also denote all these induced metrics by $\langle\,\cdot\,|\,\cdot\,\rangle$. For $v\in T^{*0,q}X$, we write $\abs{v}^2:=\langle\,v\,|\,v\,\rangle$, and for any $p=0,1,2,\ldots,n-1$, let $v^{\wedge,*}:T^{*0,q+p}X\To T^{*0,p}X$ be the adjoint of $v^{\wedge}:T^{*0,p}X\To T^{*0,p+q}X$ with respect to $\langle\,\cdot\,|\,\cdot\,\rangle$. That is, $\langle\,v\wedge u\,|\,g\,\rangle=\langle\,u\,|\,v^{\wedge,*}g\,\rangle$, $\forall u\in T^{*0,p}X$, $g\in T^{*0,p+q}X$. Let $D\subset X$ be an open set. Let $\Omega^{0,q}(D)$ denote the space of smooth sections of $T^{*0,q}X$ over $D$ and let $\Omega^{0,q}_0(D)$ be the subspace of
$\Omega^{0,q}(D)$ whose elements have compact support in $D$. Similarly, if $E$ is a vector bundle over $D$, then we let $\Omega^{0,q}(D, E)$
denote the space of smooth sections of $T^{*0,q}X\otimes E$ over $D$ and let $\Omega^{0,q}_0(D, E)$ be the subspace of $\Omega^{0,q}(D, E)$ whose elements have compact support in $D$. 
%For $f\in\Omega^{0,q}(X)$, we denote the pointwise norm $\abs{f(x)}^2:=\langle\,f(x)\,|\,f(x)\,\rangle$.

Locally we can choose an orthonormal frame $\omega_1,\ldots,\omega_{n-1}$
of the bundle $T^{*1,0}X$. Then $\ol\omega_1,\ldots,\ol\omega_{n-1}$
is an orthonormal frame of the bundle $T^{*0,1}X$. The real $(2n-2)$ form
$\omega=i^{n-1}\omega_1\wedge\ol\omega_1\wedge\cdots\wedge\omega_{n-1}\wedge\ol\omega_{n-1}$
is independent of the choice of the orthonormal frame. Thus $\omega$ is globally
defined. Locally there is a real $1$-form $\omega_0$ of length one which is orthogonal to
$T^{*1,0}X\oplus T^{*0,1}X$. The form $\omega_0$ is unique up to the choice of sign.
Since $X$ is orientable, there is a nowhere vanishing $(2n-1)$ form $Q$ on $X$.
Thus, $\omega_0$ can be specified uniquely by requiring that $\omega\wedge\omega_0=fQ$,
where $f$ is a positive function. Therefore $\omega_0$, so chosen, is globally defined.
We call $\omega_0$
the uniquely determined global real $1$-form. 
We choose a vector field $T$ so that
\begin{equation}\label{e-suI}
\abs{T}=1\,,\quad \langle\,T\,,\,\omega_0\,\rangle=-1\,.
\end{equation}
Therefore $T$ is uniquely determined. We call $T$ the uniquely determined global real vector field. We have the
pointwise orthogonal decompositions:
\begin{equation} \label{e-suII}\begin{split}
\Complex T^*X&=T^{*1,0}X\oplus T^{*0,1}X\oplus\set{\lambda\omega_0;\,
\lambda\in\Complex},  \\
\Complex TX&=T^{1,0}X\oplus T^{0,1}X\oplus\set{\lambda T;\,\lambda\in\Complex}.
\end{split}\end{equation} 

We recall  

\begin{defn} \label{d-suI}
For $p\in X$, the Levi form $\mathcal{L}_p$ is the Hermitian quadratic form on $T^{1,0}_pX$ defined as follows. For any $U,\ V\in T^{1,0}_pX$, pick $\mathcal{U},\mathcal{V}\in
C^\infty(X,T^{1,0}X)$ such that
$\mathcal{U}(p)=U$, $\mathcal{V}(p)=V$. Set
\begin{equation} \label{e-suIII}
\mathcal{L}_p(U,\ol V)=\frac{1}{2i}\big\langle\big[\mathcal{U}\ ,\ol{\mathcal{V}}\,\big](p)\ ,\omega_0(p)\big\rangle\,,
\end{equation}
where $\big[\mathcal{U}\ ,\ol{\mathcal{V}}\,\big]=\mathcal{U}\ \ol{\mathcal{V}}-\ol{\mathcal{V}}\ \mathcal{U}$ denotes the commutator of $\mathcal{U}$ and $\ol{\mathcal{V}}$.
Note that $\mathcal{L}_p$ does not depend of the choices of $\mathcal{U}$ and $\mathcal{V}$.
\end{defn} 

Locally there is an orthonormal basis $\{\mathcal{U}_1,\ldots,\mathcal{U}_{n-1}\}$
of $T^{1,0}X$ with respect to $\langle\,\cdot\,|\,\cdot\,\rangle$ such that $\mathcal{L}_p$ is diagonal in this basis, $\mathcal{L}_p(\mathcal{U}_j,\ol{\mathcal{U}}_l)=\delta_{j,l}\lambda_j(p)$, where $\delta_{j,l}=1$ if $j=l$, $\delta_{j,l}=0$ if $j\neq l$. 
The entries $\{\lambda_1(p),\ldots,\lambda_{n-1}(p)\}$ are called the eigenvalues of the Levi form
at $p\in X$ with respect to $\langle\,\cdot\,|\,\cdot\,\rangle$.

\begin{defn} \label{d-suII}
Given $q\in\{0,\ldots,n-1\}$, the Levi form is said to satisfy condition $Y(q)$ at $p\in X$, if $\mathcal{L}_p$ has at least either $\max{(q+1, n-q)}$ eigenvalues of the same sign or $\min{(q+1,n-q)}$ pairs of eigenvalues with opposite signs. Note that the sign of the eigenvalues does not depend on the choice of the metric $\langle\,\cdot\,|\,\cdot\,\rangle$. 
\end{defn} 

Let
\begin{equation} \label{e-suIV}
\ddbar_b:\Omega^{0,q}(X)\To\Omega^{0,q+1}(X)
\end{equation}
be the tangential Cauchy-Riemann operator. We say that a function $u\in C^\infty (X)$ is Cauchy-Riemann (CR for short) if $\ddbar_b u=0$.

\begin{defn} \label{d-suIII}
Let $L$ be a complex line bundle over $X$. We say that $L$ is a Cauchy-Riemann (CR) complex line bundle over $X$
if its transition functions are CR.
\end{defn}

From now on, we let $(L,h^L)$ be a CR Hermitian line bundle over $X$, where
the Hermitian fiber metric on $L$ is denoted by $h^L$. We will denote by
$\phi$ the local weights of the Hermitian metric. More precisely, if
$s$ is a local trivializing
section of $L$ on an open subset $D\subset X$, then the local weight of $h^L$ with respect to $s$ is the function $\phi\in C^\infty(D,\Real)$ for which
\begin{equation} \label{e-suV}
\abs{s(x)}^2_{h^L}=e^{-2\phi(x)}\,,\quad x\in D.
\end{equation}

Let $L^k$, $k>0$, be the $k$-th tensor power of the line bundle $L$. The Hermitian fiber metric on $L$ induces a Hermitian fiber metric on $L^k$ that we shall denote by $h^{L^k}$. If $s$ is a local trivializing section
of $L$ then $s^k$ is a local trivializing section of $L^k$. The Hermitian metrics $\langle\,\cdot\,|\,\cdot\,\rangle$ on $T^{*0,q}X$ and $h^{L^k}$ induce
Hermitian metrics on $T^{*0,q}X\otimes L^k$, $q=0,1,\ldots,n-1$. We shall denote these induced metrics by $\langle\,\cdot\,|\,\cdot\,\rangle_{h^{L^k}}$.
For $f\in\Omega^{0,q}(X,L^k)$, we denote the pointwise norm $\abs{f(x)}^2_{h^{L^k}}:=\langle\,f(x)\,|\,f(x)\rangle_{h^{L^k}}$. We write $\ddbar_{b,k}$ to denote the tangential
Cauchy-Riemann operator acting on forms with values in $L^k$, defined locally by:
\begin{equation} \label{e-suVI}
\ddbar_{b,k}:\Omega^{0,q}(X, L^k)\To\Omega^{0,q+1}(X, L^k)\,,\quad \ddbar_{b,k}(s^ku):=s^k\ddbar_bu,
\end{equation}
where $s$ is a local trivialization of $L$ on an open subset $D\subset X$ and $u\in\Omega^{0,q}(D)$. 
We denote by $dv_X=dv_X(x)$ the volume form on $X$ induced by the fixed 
Hermitian metric $\langle\,\cdot\,|\,\cdot\,\rangle$ on $\Complex TX$. Then we get natural global $L^2$ inner products $(\,\cdot\,|\,\cdot\,)_{h^{L^k}}$, $(\,\cdot\,|\,\cdot\,)$
on $\Omega^{0,q}_0(X, L^k)$ and $\Omega^{0,q}_0(X)$ respectively. We denote by $L^2_{(0,q)}(X,L^k)$ and $L^2_{(0,q)}(X)$ the completions of $\Omega^{0,q}_0(X, L^k)$ and $\Omega^{0,q}_0(X)$ with respect to $(\,\cdot\,|\,\cdot\,)_{h^{L^k}}$ and $(\,\cdot\,|\,\cdot\,)$ respectively. We extend $(\,\cdot\,|\,\cdot\,)_{h^{L^k}}$ and $(\,\cdot\,|\,\cdot\,)$ to $L^2_{(0,q)}(X,L^k)$ and $L^2_{(0,q)}(X)$ in the standard way respectively. For $f\in\Omega^{0,q}(X,L^k)$, we denote $\norm{f}^2_{h^{L^k}}:=(\,f\,|\,f\,)_{h^{L^k}}$. Similarly, for $f\in\Omega^{0,q}(X)$, we denote $\norm{f}^2:=(\,f\,|\,f\,)$.
We extend 
$\ddbar_{b,k}$ to $L^2_{(0,r)}(X,L^k)$, $r=0,1,\ldots,n-1$, by 
\begin{equation}\label{e-suVII}
\ddbar_{b,k}:{\rm Dom\,}\ddbar_{b,k}\subset L^2_{(0,r)}(X,L^k)\To L^2_{(0,r+1)}(X,L^k)\,,
\end{equation}
where ${\rm Dom\,}\ddbar_{b,k}:=\{u\in L^2_{(0,r)}(X, L^k);\, \ddbar_{b,k}u\in L^2_{(0,r+1)}(X, L^k)\}$, where for any $u\in L^2_{(0,r)}(X,L^k)$, $\ddbar_{b,k} u$ is defined in the sense of distribution. 
We also write 
\begin{equation}\label{e-suVIII}
\ol{\pr}^{*}_{b,k}:{\rm Dom\,}\ol{\pr}^{*}_{b,k}\subset L^2_{(0,r+1)}(X, L^k)\To L^2_{(0,r)}(X, L^k)
\end{equation}
to denote the Hilbert space adjoint of $\ddbar_{b,k}$ in the $L^2$ space with respect to $(\,\cdot\,|\,\cdot\, )_{h^{L^k}}$.
Let $\Box^{(q)}_{b,k}$ denote the (Gaffney extension) of the Kohn Laplacian given by 
\begin{equation}\label{e-suIX}
\begin{split}
{\rm Dom\,}\Box^{(q)}_{b,k}=\{s\in L^2_{(0,q)}(X,L^k);\, s\in{\rm Dom\,}\ddbar_{b,k}\cap{\rm Dom\,}\ol{\pr}^{*}_{b,k},\ \ddbar_{b,k}s\in{\rm Dom\,}\ol{\pr}^{*}_{b,k},\ \ol{\pr}^{*}_{b,k}s\in{\rm Dom\,}\ddbar_{b,k}\}\,,
 \end{split}
\end{equation}
and $\Box^{(q)}_{b,k}s=\ddbar_{b,k}\ol{\pr}^{*}_{b,k}s+\ol{\pr}^{*}_{b,k}\ddbar_{b,k}s$ for $s\in {\rm Dom\,}\Box^{(q)}_{b,k}$. 

We need 

\begin{defn} \label{d-suIII-I}
Let $s$ be a local trivializing section of $L$ on an open subset $D\subset X$ and $\phi$ the corresponding local weight as in \eqref{e-suV}.
For $p\in D$, we define the Hermitian quadratic form $M^\phi_p$ on $T^{1,0}_pX$ by
\begin{equation} \label{e-suXIII-II}
M^\phi_p(U, \ol V)=\Big\langle U\wedge\ol V, d\big(\ddbar_b\phi-\pr_b\phi\big)(p)\Big\rangle,\ \ U, V\in T^{1,0}_pX,
\end{equation}
where $d$ is the usual exterior derivative and $\ol{\pr_b\phi}=\ddbar_b\ol\phi$.
\end{defn}

The definition of $M^\phi_p$ depends on the choice of local trivializations.
The following is well-known (see Proposition 4.2 of~\cite{HM09})

\begin{prop} \label{p-suI}
Let $\Td D$ be another local trivialization with
$D\cap\Td D\neq\emptyset$. Let $\Td s$ be a local trivializing section of $L$ on $\Td D$, $\abs{\Td s(x)}_{h^L}^2=e^{-2\Td\phi(x)}$, $\Td\phi\in C^\infty(\Td D,\Real)$, and $\Td s=gs$ on $D\cap\Td D$, for some non-zero CR function $g$. For $p\in D\bigcap\Td D$, we have
\begin{equation} \label{e-suXIII-III}
M^\phi_p=M^{\Td\phi}_p+i\Big(\,\frac{Tg}{g}-\frac{T\,\ol g}{\ol g}\,\Bigr)(p)\mathcal{L}_p.
\end{equation}
\end{prop}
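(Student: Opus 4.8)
The plan is to compare the two Hermitian quadratic forms by tracing through Definition~\ref{d-suIII-I} and using the relation $\Td\phi=\phi+\frac{1}{2}\log\abs{g}^2=\phi+\frac{1}{2}(\log g+\log\ol g)$ on $D\cap\Td D$, which holds since $\abs{\Td s}^2_{h^L}=\abs{g}^2\abs{s}^2_{h^L}$. First I would compute the difference $d(\ddbar_b\Td\phi-\pr_b\Td\phi)-d(\ddbar_b\phi-\pr_b\phi)=d\big(\ddbar_b(\tfrac12\log g+\tfrac12\log\ol g)-\pr_b(\tfrac12\log g+\tfrac12\log\ol g)\big)$. Since $g$ is CR we have $\ddbar_b g=0$, hence $\ddbar_b\log g=0$ and $\pr_b\log\ol g=\ol{\ddbar_b\log g}=0$; therefore the difference reduces to $d\big(\tfrac12\ddbar_b\log\ol g-\tfrac12\pr_b\log g\big)=\tfrac12 d(\ddbar_b\log\ol g)-\tfrac12 d(\pr_b\log g)$.

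Next I would pair this $2$-form with $U\wedge\ol V$ for $U,V\in T^{1,0}_pX$. The key computation is to evaluate $\langle U\wedge\ol V,d\alpha\rangle$ for a $1$-form $\alpha$ using the Cartan formula $d\alpha(\mathcal U,\ol{\mathcal V})=\mathcal U(\langle\alpha,\ol{\mathcal V}\rangle)-\ol{\mathcal V}(\langle\alpha,\mathcal U\rangle)-\langle\alpha,[\mathcal U,\ol{\mathcal V}]\rangle$, where $\mathcal U,\mathcal V\in C^\infty(X,T^{1,0}X)$ extend $U,V$. For $\alpha=\ddbar_b\log\ol g$ we have $\langle\alpha,\mathcal U\rangle=\mathcal U(\log\ol g)=0$ since $\mathcal U$ is of type $(1,0)$ and $\log\ol g$ is anti-CR (because $\ol g$ is anti-CR, i.e.\ $\ol{\mathcal V}$ kills $g$ so $\mathcal U$ kills $\ol g$), while $\langle\alpha,\ol{\mathcal V}\rangle=\ol{\mathcal V}(\log\ol g)=\frac{\ol{\mathcal V}\ol g}{\ol g}$; similarly $\langle\pr_b\log g,\mathcal U\rangle=\frac{\mathcal U g}{g}$ and $\langle\pr_b\log g,\ol{\mathcal V}\rangle=0$. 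Plugging into Cartan's formula and decomposing $[\mathcal U,\ol{\mathcal V}]$ via \eqref{e-suII} as a combination of $T^{1,0}X$, $T^{0,1}X$ and $\lambda T$ terms — noting the $T$-component of $[\mathcal U,\ol{\mathcal V}]$ at $p$ is $-2i\mathcal L_p(U,\ol V)$ by \eqref{e-suIII} and \eqref{e-suI} — and using that $g$ is CR (so $\ol{\mathcal V}g$, evaluated against these components, can be handled: the $T^{0,1}$ and $T^{1,0}$ parts annihilate $g$ or $\ol g$ appropriately, and only the $T$-part survives), I would collect terms. The bookkeeping should yield $\tfrac12 d(\ddbar_b\log\ol g)(U,\ol V)-\tfrac12 d(\pr_b\log g)(U,\ol V)=i\big(\frac{Tg}{g}-\frac{T\ol g}{\ol g}\big)(p)\mathcal L_p(U,\ol V)$, which is exactly \eqref{e-suXIII-III}.

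The main obstacle I anticipate is the careful extension and decomposition step: one must choose the extensions $\mathcal U,\mathcal V$, expand $[\mathcal U,\ol{\mathcal V}]$ along the decomposition $\Complex TX=T^{1,0}X\oplus T^{0,1}X\oplus\Complex T$, and verify that all contributions except the Levi-form ($T$-direction) term cancel or vanish because of the CR condition $\ddbar_b g=0$ and because $\ddbar_b\log\ol g$, $\pr_b\log g$ are pure-type forms. Some care is also needed that the answer is independent of the choices of $\mathcal U$, $\mathcal V$ and of the local orthonormal frame used to define $T$ — but this follows since both $M^\phi_p$ and $\mathcal L_p$ are already known to be well defined up to the sign ambiguity of $T$ (and $Tg/g-T\ol g/\ol g$ is odd in that sign, matching $\mathcal L_p$). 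Once the local identity is established pointwise at $p$, the proposition follows immediately.
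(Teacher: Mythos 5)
Your overall strategy is the right one, and in fact the paper does not prove this proposition at all (it is quoted from Proposition 4.2 of \cite{HM09}), so the direct computation you outline is exactly what is needed. The core of your argument checks out: since $g$ is CR, $\ddbar_b\log g=0$ and $\pr_b\log\ol g=0$, so the difference of the two forms reduces to $\tfrac12 d(\ddbar_b\log\ol g)-\tfrac12 d(\pr_b\log g)$; and writing $[\mU,\ol{\mV}]=W+\ol{W}'+\lambda T$ with $W\in T^{1,0}X$, $\ol W'\in T^{0,1}X$, the Cartan formula gives $d(\ddbar_b\log\ol g)(\mU,\ol{\mV})=\lambda\,T\ol g/\ol g$ and $d(\pr_b\log g)(\mU,\ol{\mV})=\lambda\,Tg/g$ after the cancellation you describe (the $\ol W'\ol g/\ol g$ and $Wg/g$ contributions from the commutator term cancel against the second-order terms). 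With $\lambda=-2i\mathcal{L}_p(U,\ol V)$, which is the correct $T$-component by \eqref{e-suIII} and \eqref{e-suI}, your displayed identity $\tfrac12 d(\ddbar_b\log\ol g)(U,\ol V)-\tfrac12 d(\pr_b\log g)(U,\ol V)=i\bigl(\tfrac{Tg}{g}-\tfrac{T\ol g}{\ol g}\bigr)(p)\mathcal{L}_p(U,\ol V)$ is true.

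The one genuine error is the sign in your opening relation between the weights. From $\abs{\Td s}^2_{h^L}=\abs{g}^2\abs{s}^2_{h^L}$ together with $\abs{s}^2_{h^L}=e^{-2\phi}$ and $\abs{\Td s}^2_{h^L}=e^{-2\Td\phi}$ you get $e^{-2\Td\phi}=\abs{g}^2e^{-2\phi}$, hence $\Td\phi=\phi-\tfrac12\log\abs{g}^2$, not $\phi+\tfrac12\log\abs{g}^2$. Therefore $d(\ddbar_b\Td\phi-\pr_b\Td\phi)-d(\ddbar_b\phi-\pr_b\phi)$ is \emph{minus} the quantity you go on to compute, and your argument as written yields $M^{\Td\phi}_p=M^{\phi}_p+i\bigl(\tfrac{Tg}{g}-\tfrac{T\ol g}{\ol g}\bigr)(p)\mathcal{L}_p$, which is the negative of \eqref{e-suXIII-III} (note the two sides are not interchangeable: swapping $s$ and $\Td s$ replaces $g$ by $g^{-1}$ and hence flips the sign of $Tg/g-T\ol g/\ol g$). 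The fix is immediate: compute $M^{\phi}_p-M^{\Td\phi}_p=\langle U\wedge\ol V,\ d(\ddbar_b-\pr_b)\tfrac12\log\abs{g}^2\rangle$ instead, and your own identity then gives \eqref{e-suXIII-III} exactly. Everything else, including the remark on independence of the extensions $\mU,\mV$ and of the sign of $T$, is fine.
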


\section{Semi-classical $\Box^{(q)}_{b,k}$ and the characteristic manifold for $\Box^{(q)}_{b,k}$} \label{s-scb}

As before, let $s(x)$ be a local trivializing of $L$ on some open subset $D\subset X$. We have the unitary identification
\begin{equation} \label{s2-emsmilkI} 
\left\{\begin{aligned}  
C^\infty_0(D,T^{*0,q}X)&\longleftrightarrow C^\infty_0(D,L^k\otimes T^{*0,q}X) \\
u&\longleftrightarrow \Td u=(e^\phi s)^ku, \ \ u=e^{-k\phi}s^{-k}\Td u\\
\ddbar_{s,k}&\longleftrightarrow\ddbar_{b,k},\ \ \ddbar_{s,k}u=e^{-k\phi}s^{-k}\ddbar_{b,k}((e^\phi s)^ku),\\
\ol{\pr}^*_{s,k}&\longleftrightarrow\ol{\pr}^*_{b,k},\ \ \ol{\pr}^*_{s,k}u=e^{-k\phi}s^{-k}\ol{\pr}^*_{b,k}((e^\phi s)^ku),\\
\Box^{(q)}_{s,k}&\longleftrightarrow\Box^{(q)}_{b,k},\ \ \Box^{(q)}_{s,k}u=e^{-k\phi}s^{-k}\Box^{(q)}_{b,k}((e^\phi s)^ku).
\end{aligned}
\right.
\end{equation} 
%Moreover, we have 
%\[\ddbar_{b,k}\Td u=\Td{\ddbar_{s,k}u}=(e^\phi s)^k\ddbar_{s,k}u,\] 
We can check that 
\begin{equation} \label{s2-emsmilkII}
\ddbar_{s,k}=\ddbar_b+k(\ddbar_b\phi)^\wedge, 
\end{equation} 
\begin{equation} \label{s2-emsmilkIII} 
\ol{\pr}^*_{s,k}=\ol{\pr}^*_b+k(\ddbar_b\phi)^{\wedge,*}
\end{equation} 
and 
\begin{equation} \label{e-msmilkVI}
\Box^{(q)}_{s,k}=\ddbar_{s,k}\ol{\pr}^*_{s,k}+\ol{\pr}^*_{s,k}\ddbar_{s,k}.
\end{equation}
Here $\ol{\pr}^*_b:C^\infty(X,T^{*0,q+1}X)\To C^\infty(X,T^{*0,q}X)$ is the formal 
adjoint of $\ddbar_b$ with respect to $(\,\cdot\,|\,\cdot\,)$.

For $p\in X$, we can choose an orthonormal frame
$e_1(x),\ldots,e_{n-1}(x)$
for $T^{*0,1}_xX$ varying smoothly with $x$ in a neighbourhood of $p$.
Let $Z_j(x)$, $j=1,\ldots,n-1$, denote the basis of $T^{0,1}_xX$,
which is dual to $e_j(x)$, $j=1,\ldots,n-1$. Let $Z^*_j$ be the formal adjoint of $Z_j$ with respect to $(\,\cdot\,|\,\cdot\,)$, $j=1,\ldots,n-1$. That is,
$(Z_jf\ |\ h)=(f\ |\ Z_j^*h), f, h\in C^\infty(X)$. On scalar functions, we have 
\[\ddbar_{s,k}=\sum^{n-1}_{j=1}e_j^\wedge\circ (Z_j+kZ_j(\phi)).\] 
If $f(x)e_{j_1}\wedge\cdots\wedge e_{j_q}$ is a typical term in a general $(0, q)$ form, we get 
\begin{equation*} 
\begin{split} 
&\ddbar_{s,k}(f(x)e_{j_1}\wedge\cdots\wedge e_{j_q}) \\
&=\sum^{n-1}_{j=1}\bigr(Z_j(f)+kZ_j(\phi)\bigr)e^\wedge_je_{j_1}\wedge\cdots\wedge e_{j_q} \\
&+\sum^q_{t=1}(-1)^{t-1}f(z)e_{j_1}\wedge\cdots\wedge(\ddbar_b e_{j_t})\wedge\cdots\wedge e_{j_q} \\
&=\Bigr(\sum^{n-1}_{j=1}\bigr(Z_j(f)+kZ_j(\phi)\bigr)e^\wedge_j\Bigr)e_{j_1}\wedge\cdots\wedge e_{j_q}\\
&+\Bigr(\sum^{n-1}_{j=1}(\ddbar_be_j)^\wedge e^{\wedge,*}_j\Bigr)(f(z)e_{j_1}\wedge\cdots\wedge e_{j_q}).
\end{split} 
\end{equation*} 
So for the given orthonormal frame we have the identification 
\begin{equation} \label{e-msmilkIV} 
\ddbar_{s,k}=\sum^{n-1}_{j=1}\Bigr(e^\wedge_j\circ(Z_j+kZ_j(\phi))+
(\ddbar_b e_j)^\wedge e^{\wedge,*}_j\Bigr)
\end{equation} 
and correspondingly 
\begin{equation} \label{e-msmilkV} 
\ol{\pr}^*_{s,k}=\sum^{n-1}_{j=1}\Bigr(e^{\wedge,*}_j\circ(Z^*_j+k\ol Z_j(\phi))+
e^\wedge_j(\ddbar_b e_j)^{\wedge,*}\Bigr).
\end{equation} 
 
We have 

\begin{equation} \label{e-msmilkVII} 
\begin{split} 
\Box^{(q)}_{s,k}&=\ddbar_{s,k}\ol{\pr}^*_{s,k}+\ol{\pr}^*_{s,k}\ddbar_{s,k}\\
&=\sum^{n-1}_{j,t=1}\Bigr[\bigr(e^\wedge_j\circ(Z_j+kZ_j(\phi))+
(\ddbar_be_j)^\wedge e^{\wedge,*}_j\bigr)\bigr(e^{\wedge,*}_t\circ(Z^*_t+k\ol Z_t(\phi))+
e^\wedge_t(\ddbar_be_t)^{\wedge,*}\bigr)\\
&\quad+\bigr(e^{\wedge,*}_t\circ(Z^*_t+k\ol Z_t(\phi))+
e^\wedge_t(\ddbar_be_t)^{\wedge,*}\bigr)\bigr(e^\wedge_j
\circ(Z_j+kZ_j(\phi))+
(\ddbar_be_j)^\wedge e^{\wedge,*}_j\bigr)\Bigr]\\
&=\sum^{n-1}_{j,t=1}\Bigr[\bigr(e^\wedge_j\circ(Z_j+kZ_j(\phi))\bigr)
\bigr(e^{\wedge,*}_t\circ(Z^*_t+k\ol Z_t(\phi))\bigr)
+\bigr(e^{\wedge,*}_t\circ(Z^*_t+k\ol Z_t(\phi))\bigr)\\
&\bigr(e^\wedge_j\circ(Z_j+kZ_j(\phi)))\Bigr]+\varepsilon(Z+kZ(\phi))+\varepsilon(Z^*+k\ol Z(\phi))+f\\
&=\sum^{n-1}_{j,t=1}\Bigr[e^\wedge_je^{\wedge,*}_t\circ(Z_j+kZ_j(\phi))(Z^*_t+k\ol Z_t(\phi))+e^{\wedge,*}_te^\wedge_j\circ(Z^*_t+k\ol Z_t(\phi))\\
&(Z_j+kZ_j(\phi))\Bigr]+\varepsilon(Z+kZ(\phi))+\varepsilon(Z^*+k\ol Z(\phi))+f\\
&=\sum^{n-1}_{j,t=1}(e^\wedge_je^{\wedge,*}_t+e^{\wedge,*}_te^\wedge_j)
\circ(Z^*_t+k\ol Z_t(\phi))(Z_j+kZ_j(\phi))\\
&+\sum^{n-1}_{j,t=1}e^\wedge_je^{\wedge,*}_t\circ[Z_j+kZ_j(\phi),
Z^*_t+k\ol Z_t(\phi)]\\
&+\varepsilon(Z+kZ(\phi))+\varepsilon(Z^*+k\ol Z(\phi))+f,
\end{split}
\end{equation}
where $\varepsilon(Z+kZ(\phi))$ denotes remainder terms of the form $\sum a_j(Z_j+k Z_j\phi)$ with $a_j$ smooth, matrix-valued and independent of $k$, for all $j$, and similarly for $\varepsilon(Z^*+k\ol Z(\phi))$ and $f$ is a smooth function independent of $k$.

Note that 
\begin{equation} \label{e-msmilkVIII} 
e^\wedge_je^{\wedge,*}_t+e^{\wedge,*}_te^\wedge_j=\delta_{j,t}.
\end{equation} 
Combining \eqref{e-msmilkVII} with \eqref{e-msmilkVIII}, we get the following 

\begin{prop} \label{s2-pmsmilkI} 
With the notations used before, using the identification \eqref{s2-emsmilkI}, we can identify the Kohn Laplacian $\Box^{(q)}_{b,k}$ with
\begin{equation} \label{e-msmilkIX} 
\begin{split}
\Box^{(q)}_{s,k}&=\ddbar_{s,k}\ol{\pr}^*_{s,k}+\ol{\pr}^*_{s,k}\ddbar_{s,k} \\
       &=\sum^{n-1}_{j=1}(Z^*_j+k\ol Z_j(\phi))(Z_j+kZ_j(\phi))\\
       &+\sum^{n-1}_{j,t=1}e^\wedge_je^{\wedge,*}_t\circ[Z_j+kZ_j(\phi), Z^*_t+k\ol Z_t(\phi)]\\
&+\varepsilon(Z+kZ(\phi))+\varepsilon(Z^*+k\ol Z(\phi))+f,
\end{split}
\end{equation}
where $\varepsilon(Z+kZ(\phi))$ denotes remainder terms of the form $\sum a_j(Z_j+k Z_j(\phi))$ with $a_j$ smooth, matrix-valued and independent of $k$, for all $j$, and similarly for $\varepsilon(Z^*+k\ol Z(\phi))$ and $f$ is a smooth function independent of $k$. 
\end{prop}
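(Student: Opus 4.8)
The plan is to establish the identification in two stages: first check that \eqref{s2-emsmilkI} is a legitimate unitary conjugation and deduce the formulas \eqref{s2-emsmilkII}, \eqref{s2-emsmilkIII}; then expand $\Box^{(q)}_{s,k}=\ddbar_{s,k}\ol{\pr}^*_{s,k}+\ol{\pr}^*_{s,k}\ddbar_{s,k}$ in a local orthonormal coframe and collect terms by their order in the derivatives and in $k$.

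For the first stage, I would observe that $(e^\phi s)^k$ is a local trivializing section of $L^k$ of unit length, since $\abs{(e^\phi s)^k}^2_{h^{L^k}}=e^{2k\phi}\abs{s}^{2k}_{h^L}=1$; hence $u\mapsto(e^\phi s)^ku$ is a pointwise, and therefore $L^2$, isometry from $(0,q)$ forms to $(0,q)$ forms with values in $L^k$, which transports $\ddbar_{b,k}$, $\ol{\pr}^*_{b,k}$, $\Box^{(q)}_{b,k}$ to operators on scalar-valued forms with the same mapping and domain properties. Using $\ddbar_{b,k}(s^kv)=s^k\ddbar_bv$ together with the Leibniz rule, $\ddbar_{b,k}((e^\phi s)^ku)=s^k\ddbar_b(e^{k\phi}u)=(e^\phi s)^k\big(\ddbar_bu+k(\ddbar_b\phi)^\wedge u\big)$, which upon multiplying by $e^{-k\phi}s^{-k}$ gives \eqref{s2-emsmilkII}; formula \eqref{s2-emsmilkIII} then follows by taking the formal adjoint with respect to $(\,\cdot\,|\,\cdot\,)$, noting that the pointwise adjoint of $(\ddbar_b\phi)^\wedge$ is $(\ddbar_b\phi)^{\wedge,*}$, while on the Gaffney extension the Hilbert-space adjoint agrees with the formal one. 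This yields \eqref{e-msmilkVI}.

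For the second stage, fix $p$ and take the frame $e_1,\ldots,e_{n-1}$ of $T^{*0,1}X$ near $p$ with dual frame $Z_1,\ldots,Z_{n-1}$ of $T^{0,1}X$, so that $\ddbar_bf=\sum_j(Z_jf)e_j$ on functions; on a typical term $fe_{j_1}\wedge\cdots\wedge e_{j_q}$ the Leibniz rule produces the curvature-type term $\sum_j(\ddbar_be_j)^\wedge e^{\wedge,*}_j$, giving \eqref{e-msmilkIV}, and \eqref{e-msmilkV} follows by taking formal adjoints. Writing $D_j:=Z_j+kZ_j(\phi)$ and $D^*_j:=Z^*_j+k\ol Z_j(\phi)$, I would then multiply out $\ddbar_{s,k}\ol{\pr}^*_{s,k}+\ol{\pr}^*_{s,k}\ddbar_{s,k}$. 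Every product containing at most one factor among the $D_j$, $D^*_j$ contributes either a term $\sum a_jD_j$ with smooth, matrix-valued, $k$-independent coefficients (an $\varepsilon(Z+kZ(\phi))$ remainder), a corresponding $\varepsilon(Z^*+k\ol Z(\phi))$ term, or a zeroth-order term $f$; the products carrying two such first-order factors are exactly $\sum_{j,t}e^\wedge_je^{\wedge,*}_t\,D_jD^*_t+\sum_{j,t}e^{\wedge,*}_te^\wedge_j\,D^*_tD_j$, which is \eqref{e-msmilkVII}. Commuting $D_jD^*_t=D^*_tD_j+[D_j,D^*_t]$ in the first sum (the $k^2$-contribution $[kZ_j(\phi),k\ol Z_t(\phi)]$ vanishes, since these are multiplication operators), regrouping, and invoking the Clifford relation \eqref{e-msmilkVIII} in the form $\sum_{j,t}(e^\wedge_je^{\wedge,*}_t+e^{\wedge,*}_te^\wedge_j)D^*_tD_j=\sum_jD^*_jD_j$, gives \eqref{e-msmilkIX}.

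The only point requiring genuine care — the main obstacle — is the bookkeeping in the expansion: one must verify that all commutators between the $Z_j$ (or $Z^*_j$) and the wedge operators $e^\wedge_j$, $e^{\wedge,*}_j$, together with the terms $(\ddbar_be_j)^\wedge$ and their adjoints, really do assemble into precisely the three advertised types of remainder, with coefficients that are smooth, matrix-valued and independent of $k$. This amounts to a careful application of the Leibniz rule combined with the observation that $k$ enters only through the mutually commuting multiplication operators $kZ_j(\phi)$, $k\ol Z_j(\phi)$; the rest is a purely algebraic manipulation of the relations \eqref{e-msmilkVIII}.
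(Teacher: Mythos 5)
Your proposal is correct and follows essentially the same route as the paper: establish the unitary conjugation and the formulas \eqref{s2-emsmilkII}--\eqref{s2-emsmilkIII}, derive \eqref{e-msmilkIV}--\eqref{e-msmilkV} on a typical term of a $(0,q)$ form, expand the sum of products, absorb everything with at most one factor $Z_j+kZ_j(\phi)$ or $Z^*_j+k\ol Z_j(\phi)$ into the remainders, substitute $D_jD^*_t=D^*_tD_j+[D_j,D^*_t]$, and apply \eqref{e-msmilkVIII}. Your explicit remark that the $k^2$ part of the commutator vanishes because $kZ_j(\phi)$ and $k\ol Z_t(\phi)$ are multiplication operators is a point the paper leaves implicit, but the arguments are otherwise identical.
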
 

We work with some real local coordinates $x=(x_1,\ldots,x_{2n-1})$ defined on $D$. 
Let $\xi=(\xi_1,\ldots,\xi_{2n-1})$ denote the dual variables of $x$. Then $(x, \xi)$ are local coordinates of the cotangent bundle $T^*D$.
Let $q_j(x, \xi)$ be the semi-classical principal symbol of $Z_j+kZ_j(\phi)$, $j=1,\ldots,n-1$. Then the semi-classical principal symbol of $\Box^{(q)}_{s,k}$ is given by 
\begin{equation} \label{e-crmiI}
p_0=\sum^{n-1}_{j=1}\ol q_jq_j.
\end{equation}
The characteristic manifold $\Sigma$ of $\Box^{(q)}_{s,k}$ is 
\begin{equation} \label{e-crmiII}
\begin{split}
\Sigma&=\set{(x, \xi)\in T^*D;\,  p_0(x, \xi)=0}\\
&=\set{(x, \xi)\in T^*D;\, q_1(x, \xi)=\cdots=q_{n-1}(x,\xi)=\ol q_1(x, \xi)=\cdots=\ol q_{n-1}(x, \xi)=0}.
\end{split}
\end{equation}
From \eqref{e-crmiII}, we see that $p_0$ vanishes to second order at $\Sigma$. We have the following

\begin{prop} \label{s2-pcrmiI} 
We have 
\begin{equation} \label{e-crmiIII} 
\Sigma=\set{(x, \xi)\in T^*D;\, \xi=\lambda\omega_0(x)-2{\rm Im\,}\ddbar_b\phi(x),\lambda\in\Real}.
\end{equation}
\end{prop}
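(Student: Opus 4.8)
The plan is to describe $\Sigma$ directly in terms of the covectors $q_j(x,\xi)$ that define it and to identify the common zero set with the affine line through $-2\,{\rm Im}\,\ddbar_b\phi(x)$ directed by $\omega_0(x)$. First I would compute the semi-classical principal symbol of $Z_j+kZ_j(\phi)$. Writing $Z_j=\sum_\ell a_{j,\ell}(x)\frac{\pr}{\pr x_\ell}$ in the local coordinates, the semi-classical principal symbol of $Z_j$ is $i\langle\,Z_j(x),\xi\,\rangle=i\sum_\ell a_{j,\ell}(x)\xi_\ell$, while $kZ_j(\phi)$ contributes $Z_j(\phi)(x)=\langle\,Z_j(x),d\phi(x)\,\rangle$ at semi-classical order $0$ (recall the semi-classical quantization attaches a factor $k$ to each derivative, so $k\cdot(\text{order }0\text{ multiplier})$ is order $0$ and $\frac{1}{k}\cdot k\,\pr$ is order $0$ as well). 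Hence
\[
q_j(x,\xi)=\langle\,Z_j(x),\,i\xi+d\phi(x)\,\rangle,\qquad j=1,\ldots,n-1.
\]
Then $\ol q_j(x,\xi)=\langle\,\ol Z_j(x),\,-i\xi+d\phi(x)\,\rangle$ since $\xi$ is real. By \eqref{e-crmiII}, $(x,\xi)\in\Sigma$ if and only if $\langle\,Z_j(x),i\xi+d\phi(x)\,\rangle=0$ and $\langle\,\ol Z_j(x),i\xi+d\phi(x)\,\rangle=0$ for all $j$; here I used that $\langle\,\ol Z_j,-i\xi+d\phi\,\rangle=\overline{\langle\,Z_j,i\xi+\overline{d\phi}\,\rangle}=\overline{\langle\,Z_j,i\xi+d\phi\,\rangle}$ as $\phi$ is real.

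Next I would interpret these vanishing conditions via the orthogonal decomposition \eqref{e-suII} of $\Complex T^*X$. The covector $i\xi+d\phi(x)\in\Complex T^*_xX$ is annihilated by every $Z_j(x)$ and every $\ol Z_j(x)$ — equivalently by all of $T^{1,0}_xX\oplus T^{0,1}_xX$ — precisely when its component in $T^{*1,0}X\oplus T^{*0,1}X$ vanishes, i.e. when $i\xi+d\phi(x)=\mu\,\omega_0(x)$ for some $\mu\in\Complex$. Taking real and imaginary parts (and using that $\omega_0$ is a real form and $\xi,d\phi$ are real) forces ${\rm Re}\,\mu=0$, so $\mu=i\lambda$ with $\lambda\in\Real$, giving $d\phi(x)=0$ in the $\omega_0$-direction... more carefully: $i\xi+d\phi(x)=i\lambda\omega_0(x)$ yields $\xi+\,\cdot\,$. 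I would instead split cleanly: decompose $d\phi(x)=\ddbar_b\phi(x)+\pr_b\phi(x)+c\,\omega_0(x)$ along \eqref{e-suII} (with $c=-\langle\,T,d\phi\,\rangle$), so the $T^{*1,0}\oplus T^{*0,1}$ part of $i\xi+d\phi$ is $i\,(\xi)_{T^{*1,0}\oplus T^{*0,1}}+\ddbar_b\phi+\pr_b\phi$; setting this to zero and using $\pr_b\phi=\overline{\ddbar_b\phi}$ gives $(\xi)_{T^{*1,0}\oplus T^{*0,1}}=i(\ddbar_b\phi+\overline{\ddbar_b\phi})-\,$, whence the $T^{*1,0}\oplus T^{*0,1}$-component of $\xi$ equals $-2\,{\rm Im}\,\ddbar_b\phi(x)$ and the $\omega_0$-component is free, i.e. $\xi=\lambda\omega_0(x)-2\,{\rm Im}\,\ddbar_b\phi(x)$, $\lambda\in\Real$. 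Conversely, any such $\xi$ makes $i\xi+d\phi$ proportional to $\omega_0(x)$ and hence lies in $\Sigma$.

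I expect the only delicate point to be the bookkeeping of the orthogonal decomposition: one must check that $-2\,{\rm Im}\,\ddbar_b\phi(x)$ — which a priori is a section of $T^{*1,0}X\oplus T^{*0,1}X$ viewed inside $\Complex T^*X$ — is in fact a real covector, so that the description of $\Sigma$ as a subset of the (real) cotangent bundle $T^*D$ makes sense; this follows because ${\rm Im}\,\ddbar_b\phi=\frac{1}{2i}(\ddbar_b\phi-\pr_b\phi)=\frac{1}{2i}(\ddbar_b\phi-\overline{\ddbar_b\phi})$ is manifestly real-valued. The rest is the routine identification of $q_j$'s common kernel with the $\omega_0$-line through $-2\,{\rm Im}\,\ddbar_b\phi$, exactly as indicated above, and the two inclusions are immediate once the symbol computation and the decomposition are in place.
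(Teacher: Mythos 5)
Your computation of the semi-classical principal symbol, $q_j(x,\xi)=\langle\,Z_j(x),\,i\xi+d\phi(x)\,\rangle$, is correct and agrees with the paper (which simply evaluates it in special coordinates centred at each point $p$, while you work invariantly with the decomposition \eqref{e-suII}). The genuine gap is in the next step. The condition $\ol q_j(x,\xi)=0$ reads $\langle\,\ol Z_j(x),\,-i\xi+d\phi(x)\,\rangle=0$ --- note the $-i\xi$ --- and, as your own conjugation identity shows, this is exactly the complex conjugate of $q_j(x,\xi)=0$, hence carries no additional information; it does \emph{not} say that $i\xi+d\phi(x)$ annihilates $\ol Z_j(x)$. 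Your claim that on $\Sigma$ the covector $i\xi+d\phi(x)$ is annihilated by all of $T^{1,0}_xX\oplus T^{0,1}_xX$, i.e.\ is a multiple of $\omega_0(x)$, is therefore false: imposing it together with $q_j=0$ and the reality of $\xi$ forces $\pr_b\phi(x)=0$, and on the actual characteristic set one finds $i\xi+d\phi(x)=(i\lambda+c)\,\omega_0(x)+2\pr_b\phi(x)$ (with $c$ the $\omega_0$-component of $d\phi$), which is not proportional to $\omega_0$ unless $\pr_b\phi(x)=0$. This is also why your first attempt at ``taking real and imaginary parts'' produced the absurd conclusion $d\phi=0$ in some direction.

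The correct use of your own setup is: $q_j=0$ for all $j$ forces only the $T^{*0,1}$-component of $i\xi+d\phi$ to vanish, i.e.\ $\xi^{0,1}=i\,\ddbar_b\phi(x)$; the $T^{*1,0}$-component of $\xi$ is then pinned down not by any further vanishing condition but by the reality of $\xi$, namely $\xi^{1,0}=\ol{\xi^{0,1}}=-i\,\pr_b\phi(x)$. Hence $\xi^{1,0}+\xi^{0,1}=i(\ddbar_b\phi-\pr_b\phi)=-2\,{\rm Im\,}\ddbar_b\phi(x)$, the $\omega_0$-component is free and real, and \eqref{e-crmiIII} follows. Note that your ``clean split'' also contains an algebra error that masks the problem: $i(\ddbar_b\phi+\ol{\ddbar_b\phi})$ equals $2i\,{\rm Re\,}\ddbar_b\phi$, not $-2\,{\rm Im\,}\ddbar_b\phi$, and it is only because this sign slip offsets the wrong vanishing condition that you land on the correct final formula. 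With these two corrections the invariant argument does go through and is a clean alternative to the paper's pointwise coordinate computation.
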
 

\begin{proof} 
For $p\in D$, we may choose local coordinates
$x=(x_1,\ldots,x_{2n-1})$,
such that $x(p)=0$, $\omega_0(p)=\sqrt{2}dx_{2n-1}$,
$(\frac{\pr}{\pr x_j}(p)\ |\ \frac{\pr}{\pr x_k}(p))=2\delta_{j,k}$, $j, k=1,\ldots,2n-1$
and $Z_j=\frac{1}{2}(\frac{\pr}{\pr x_{2j-1}}+i\frac{\pr}{\pr x_{2j}})$ at $p$, $j=1,\ldots,n-1$,
where $Z_j$, $j=1,\ldots,n-1$, are as in \eqref{e-msmilkIX}.
Then the principal symbol of $Z_j$ is 
$\frac{1}{2}i(\xi_{2j-1}+i\xi_{2j})$ at $p$, $j=1,\ldots,n-1$. Thus 
\begin{equation} \label{e-patI} 
q_j(p, \xi)=\frac{1}{2}i(\xi_{2j-1}+i\xi_{2j})+\frac{1}{2}(\frac{\pr\phi}{\pr x_{2j-1}}(p)+i\frac{\pr\phi}{\pr x_{2j}}(p)),
\end{equation} 
$j=1,\ldots,n-1$. From \eqref{e-patI}, we see that $(p, \xi)\in\Sigma$ if and only if 
\begin{equation} \label{e-patII}
\xi_{2j-1}=-\frac{\pr\phi}{\pr x_{2j}}(p),\ \ \xi_{2j}=\frac{\pr\phi}{\pr x_{2j-1}}(p),\ \ j=1,\ldots,n-1. 
\end{equation}
Note that 
\begin{equation*} 
(\ddbar_b\phi)(p)=\sum^{n-1}_{j=1}\Bigr(\frac{1}{2}\bigr(\frac{\pr\phi}{\pr x_{2j-1}}(p)+i\frac{\pr\phi}{\pr x_{2j}}(p)\bigr)(dx_{2j-1}-idx_{2j})\Bigr).
\end{equation*} 
Hence 
\begin{equation} \label{e-patIII} 
{\rm Im\,}(\ddbar_b\phi)(p)=\sum^{n-1}_{j=1}\Bigr(\frac{1}{2}\bigr(-\frac{\pr\phi}{\pr x_{2j-1}}(p)dx_{2j}+\frac{\pr\phi}{\pr x_{2j}}(p)dx_{2j-1}\bigr)\Bigr).
\end{equation} 
From \eqref{e-patII} and \eqref{e-patIII}, the proposition follows.
\end{proof}

Let $\sigma=d\xi\wedge dx$ denote the canonical two form on $T^*D$. We are 
therefore interested in whether $\sigma$ is non-degenerate at $\rho\in\Sigma$. 
We recall that $\sigma$ is non-degenerate at $\rho\in\Sigma$ if $\sigma(u,v)=0$ 
for all $v\in\Complex T_\rho\Sigma$, where $u\in\Complex T_\rho\Sigma$, then $u=0$. 

From now on, for any $f\in C^\infty(T^*D,\Complex)$, we write $H_f$ to denote the Hamilton field of $f$. That is, in local symplectic coordinates $(x, \xi)$, 
\[H_f=\sum^{2n-1}_{j=1}\Bigr(\frac{\pr f}{\pr\xi_j}\frac{\pr}{\pr x_j}-\frac{\pr f}{\pr x_j}\frac{\pr}{\pr\xi_j}\Bigr).\] 
For $f, g\in C^\infty(T^*D,\Complex)$, 
$\set{f, g}$ denotes the Poisson bracket of $f$ and $g$. We recall that
$\set{f, g}=\sum^{2n-1}_{s=1}(\frac{\pr f}{\pr \xi_s}\frac{\pr g}{\pr x_s}-\frac{\pr f}{\pr x_s}\frac{\pr g}{\pr\xi_s})$. 

First, we need 
 
\begin{lem} \label{l-patI}
For $\rho=(p, \lambda_0\omega_0(p)-2{\rm Im\,}\ddbar_b\phi(p))\in\Sigma$, we have 
\begin{equation} \label{e-patIV} 
\sigma(H_{q_j}, H_{q_t})|_\rho=0,\ \ j, t=1,\ldots,n-1,
\end{equation} 
\begin{equation} \label{e-patV} 
\sigma(H_{\ol q_j}, H_{\ol q_t})|_\rho=0,\ \ j, t=1,\ldots,n-1,
\end{equation}
and 
\begin{equation} \label{e-patVI} 
\begin{split}
\sigma(H_{\ol q_j}, H_{q_t})|_\rho&=2i\lambda_0\mathcal{L}_p(\ol Z_j, Z_t)+i<[\ol Z_j, Z_t](p), \ddbar_b\phi(p)-\pr_b\phi(p)>\\
&-i(\ol Z_j Z_t+Z_t\ol Z_j)\phi(p),\ \ j, t=1,\ldots,n-1,
\end{split}
\end{equation} 
where $Z_j$, $j=1,\ldots,n-1$, are as in \eqref{e-msmilkIX} and $q_j$ is the semi-classical principal symbol of $Z_j+kZ_j(\phi)$, $j=1,\ldots,n-1$.
\end{lem}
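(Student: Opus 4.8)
The plan is to rewrite each of the three quantities as a Poisson bracket and then evaluate it on $\Sigma$. With the conventions fixed just above, it is immediate from the coordinate formulas for $\sigma$, $H_f$ and $\set{\cdot,\cdot}$ that $\sigma(H_f,H_g)=\set{f,g}$ for all $f,g\in C^\infty(T^*D)$, so \eqref{e-patIV}, \eqref{e-patV} and \eqref{e-patVI} amount to computing $\set{q_j,q_t}$, $\set{\ol q_j,\ol q_t}$ and $\set{\ol q_j,q_t}$ at $\rho$. For a vector field $V$ write $p_V$ for its (semi-classical) principal symbol, so that $p_V(x,\xi)=i\langle\xi,V(x)\rangle$; note that conjugation changes its sign, $\ol{p_V}=-p_{\ol V}$ (for $\xi$ real). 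By the proof of Proposition~\ref{s2-pcrmiI} one has $q_j=p_{Z_j}+h_j$ with $h_j(x):=Z_j(\phi)$, and hence $\ol q_j=-p_{\ol Z_j}+\ol h_j$ with $\ol h_j(x)=\ol Z_j(\phi)$.

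I would then use two elementary identities: $\set{p_V,p_W}=i\,p_{[V,W]}$ for vector fields $V,W$, and $\set{p_V,g}=iV(g)=-\set{g,p_V}$ for a function $g=g(x)$. Expanding each bracket by bilinearity, and using $\set{h_j,h_t}=0$, gives the identities on $T^*D$
\[\set{q_j,q_t}=i\,p_{[Z_j,Z_t]}+i\,[Z_j,Z_t](\phi),\qquad \set{\ol q_j,\ol q_t}=\ol{\set{q_j,q_t}},\]
\[\set{\ol q_j,q_t}=-i\,p_{[\ol Z_j,Z_t]}-i\,(\ol Z_jZ_t+Z_t\ol Z_j)(\phi).\]
The minus sign in front of $p_{[\ol Z_j,Z_t]}$ and the symmetric combination $\ol Z_jZ_t+Z_t\ol Z_j$ (rather than a commutator) are both caused by the conjugation sign carried by $-p_{\ol Z_j}$, which is the one point where care is needed.

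Finally I evaluate at $\rho=(p,\lambda_0\omega_0(p)-2{\rm Im\,}\ddbar_b\phi(p))$. Since $-2{\rm Im\,}\ddbar_b\phi=i(\ddbar_b\phi-\pr_b\phi)$, for every vector field $V$ one has the single identity
\[p_V(\rho)=i\lambda_0\langle\omega_0(p),V(p)\rangle-\langle(\ddbar_b\phi-\pr_b\phi)(p),V(p)\rangle.\]
For $V=[Z_j,Z_t]$: integrability of $T^{1,0}X$ (hence of $T^{0,1}X$) gives $[Z_j,Z_t]\in C^\infty(X,T^{0,1}X)$, so $\langle\omega_0,[Z_j,Z_t]\rangle=0$ and $\langle\ddbar_b\phi-\pr_b\phi,[Z_j,Z_t]\rangle=[Z_j,Z_t](\phi)$, whence $p_{[Z_j,Z_t]}(\rho)=-[Z_j,Z_t](\phi)(p)$ and $\set{q_j,q_t}|_\rho=0$; taking complex conjugates gives $\set{\ol q_j,\ol q_t}|_\rho=0$, which proves \eqref{e-patIV} and \eqref{e-patV}. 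For the mixed bracket $[\ol Z_j,Z_t]$ is not tangent to $T^{1,0}X$ or $T^{0,1}X$, so I apply the displayed identity to $V=[\ol Z_j,Z_t]$ directly and use the definition of the Levi form, which yields $\langle\omega_0(p),[\ol Z_j,Z_t](p)\rangle=2i\mathcal{L}_p(\ol Z_j,Z_t)$; substituting this back into $\set{\ol q_j,q_t}$ reproduces exactly the right-hand side of \eqref{e-patVI}. The only genuine obstacle is keeping the several sign conventions consistent — the orientation $\sigma=d\xi\wedge dx$, the normalization $p_{\pr/\pr x_j}=i\xi_j$, and above all $\ol{p_{Z_j}}=-p_{\ol Z_j}$ — since these are precisely what fix the sign of the $\lambda_0$-term and the form of the last term in \eqref{e-patVI}.
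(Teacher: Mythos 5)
Your proof is correct and follows essentially the same route as the paper's: identify $\sigma(H_f,H_g)$ with the Poisson bracket, compute $\set{q_j,q_t}$ and $\set{\ol q_j,q_t}$, and then evaluate at $\rho$ using that $[Z_j,Z_t]\in T^{0,1}X$ (so the $\omega_0$- and $\pr_b\phi$-pairings vanish) together with the definition of the Levi form for the mixed bracket. The only differences are organizational — you package the computation through the symbol map $p_V$ and general bracket identities, and you obtain \eqref{e-patV} by conjugation where the paper simply repeats the argument — and your sign bookkeeping (in particular $\ol{p_{Z_j}}=-p_{\ol Z_j}$) checks out against \eqref{s1-e33pat} and \eqref{s1-e38pat}.
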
 

\begin{proof} 
We write $\rho=(p, \xi_0)$. It is straightforward to see that 
\begin{equation} \label{s1-e33pat} 
\sigma(H_{q_j}, H_{q_t})|_\rho=\set{q_j, q_t}(\rho)=-<[Z_j, Z_t](p), \xi_0>
+i[Z_j,Z_t]\phi(p).
\end{equation} 
We have 
\begin{equation} \label{s1-e34pat}
\begin{split}
&<[Z_j, Z_t](p), \xi_0>=<[Z_j, Z_t](p), \lambda_0\omega_0(p)-2{\rm Im\,}\ddbar_b\phi(p)>\\
&=\lambda_0<[Z_j, Z_t](p), \omega_0(p)>+i<[Z_j, Z_t](p), \ddbar_b\phi(p)-\pr_b\phi(p)>.
\end{split}
\end{equation} 
Since $[Z_j, Z_t](p)\in T^{0,1}_pX$, we have 
\begin{equation} \label{s1-e35pat}
<[Z_j, Z_t](p), \omega_0(p)>=0
\end{equation} 
and 
\begin{equation} \label{s1-e36pat} 
<[Z_j, Z_t](p), \pr_b\phi(p)>=0.
\end{equation}
Thus, 
\begin{equation} \label{s1-e37pat}
<[Z_j, Z_t](p), \ddbar_b\phi(p)-\pr_b\phi(p)>=<[Z_j, Z_t](p), \ddbar_b\phi(p)>=[Z_j, Z_t]\phi(p). 
\end{equation} 
From \eqref{s1-e34pat}, \eqref{s1-e35pat} and \eqref{s1-e37pat}, we get 
\[<[Z_j, Z_t](p), \xi_0>=i[Z_j, Z_t]\phi(p).\]
Combining this with \eqref{s1-e33pat}, we get \eqref{e-patIV}. The proof of \eqref{e-patV} is the same.

As \eqref{s1-e33pat}, it is straightforward to see that
\begin{equation} \label{s1-e38pat} 
\sigma(H_{\ol q_j}, H_{q_t})|_\rho=\set{\ol q_j, q_t}(\rho)=<[\ol Z_j, Z_t](p), \xi_0>
-i(\ol Z_jZ_t+Z_t\ol Z_j)\phi(p),
\end{equation} 
where $j, t=1,\ldots,n-1$. We have 
\begin{equation} \label{s1-e39pat} 
\begin{split}
&<[\ol Z_j, Z_t](p), \xi_0>=<[\ol Z_j, Z_t](p), \lambda_0\omega_0(p)-2{\rm Im\,}\ddbar_b\phi(p)>\\
&=\lambda_0<[\ol Z_j, Z_t](p), \omega_0(p)>+i<[\ol Z_j, Z_t](p), \ddbar_b\phi(p)-\pr_b\phi(p)>\\
&=2i\lambda_0\mathcal{L}_p(\ol Z_j, Z_t)+
i<[\ol Z_j, Z_t](p), \ddbar_b\phi(p)-\pr_b\phi(p)>.
\end{split}
\end{equation} 
Combining \eqref{s1-e39pat} with \eqref{s1-e38pat}, \eqref{e-patVI} follows.
\end{proof}

The following is well-known (see Lemma 4.1 in~\cite{HM09}) 

\begin{lem}\label{l-dhI}
For any $U, V\in T^{1, 0}_pX$, pick $\mU, \mV\in C^\infty(D,T^{1, 0}X)$ that satisfy $\mU(p)=U$,
$\mV(p)=V$. Then,
\begin{equation} \label{e-dhI}
M^\phi_p(U, \ol V)=-\big\langle\big[\,\mU, \ol{\mV}\,\big](p), \ddbar_b\phi(p)-\pr_b\phi(p)\big\rangle
+\big(\mU\ol{\mV}+\ol{\mV}\mU\big)\phi(p).
\end{equation}
\end{lem}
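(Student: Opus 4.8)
The plan is to compute $M^\phi_p(U,\ol V)$ directly from its definition \eqref{e-suXIII-II} by means of the intrinsic (Cartan) formula for the exterior derivative of a $1$-form. Set $\alpha:=\ddbar_b\phi-\pr_b\phi\in C^\infty(D,\Complex T^*X)$; since $\phi$ is real we have $\ol{\pr_b\phi}=\ddbar_b\phi$, so $\alpha=\ddbar_b\phi-\ol{\ddbar_b\phi}$ is purely imaginary, which is consistent with $M^\phi_p$ being Hermitian. With the convention that $\langle Y_1\wedge Y_2,\beta\rangle=\beta(Y_1,Y_2)$ for a $2$-form $\beta$, definition \eqref{e-suXIII-II} reads $M^\phi_p(U,\ol V)=d\alpha(p)(U,\ol V)$. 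Extending $U,V$ to $\mU,\mV\in C^\infty(D,T^{1,0}X)$ with $\mU(p)=U$, $\mV(p)=V$, the standard formula for $d$ on a $1$-form gives
\begin{equation*}
d\alpha(\mU,\ol{\mV})=\mU\bigl(\langle\ol{\mV},\alpha\rangle\bigr)-\ol{\mV}\bigl(\langle\mU,\alpha\rangle\bigr)-\langle[\mU,\ol{\mV}],\alpha\rangle.
\end{equation*}

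Next I would evaluate the first two terms using the bidegree of $\mU$ and $\ol{\mV}$. Since $\ddbar_b\phi$ is a $(0,1)$-form and $\pr_b\phi$ is a $(1,0)$-form, and since on the scalar function $\phi$ one has $\langle\mU,\pr_b\phi\rangle=\mU\phi$, $\langle\mU,\ddbar_b\phi\rangle=0$, $\langle\ol{\mV},\pr_b\phi\rangle=0$, $\langle\ol{\mV},\ddbar_b\phi\rangle=\ol{\mV}\phi$, we get $\langle\mU,\alpha\rangle=-\mU\phi$ and $\langle\ol{\mV},\alpha\rangle=\ol{\mV}\phi$. Substituting,
\begin{equation*}
d\alpha(\mU,\ol{\mV})=\mU\ol{\mV}\phi+\ol{\mV}\mU\phi-\langle[\mU,\ol{\mV}],\ddbar_b\phi-\pr_b\phi\rangle,
\end{equation*}
and evaluating at $p$ yields precisely \eqref{e-dhI}.

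Finally I would remark that, although each of $[\mU,\ol{\mV}](p)$, $\mU\ol{\mV}\phi(p)$ and $\ol{\mV}\mU\phi(p)$ depends on the chosen extensions $\mU,\mV$, their combination on the right-hand side of \eqref{e-dhI} is forced to equal the intrinsically defined quantity $M^\phi_p(U,\ol V)=d\alpha(p)(U,\ol V)$, so independence of the extensions is automatic (it can also be checked directly from the Leibniz rule). There is no genuine analytic obstacle here: the entire argument is a pointwise identity of $1$-jets, and the only point requiring care is consistent bookkeeping with the pairing convention between $\Lambda^2\Complex T_pX$ and $\Lambda^2\Complex T^*_pX$ together with the identifications of $\pr_b\phi$ and $\ddbar_b\phi$ acting on the scalar $\phi$.
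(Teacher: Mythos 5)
Your proof is correct. Note that the paper itself gives no argument for this lemma (it only cites Lemma 4.1 of \cite{HM09}); your computation via the intrinsic formula $d\alpha(\mU,\ol{\mV})=\mU\langle\ol{\mV},\alpha\rangle-\ol{\mV}\langle\mU,\alpha\rangle-\langle[\mU,\ol{\mV}],\alpha\rangle$ for $\alpha=\ddbar_b\phi-\pr_b\phi$, together with the bidegree identities $\langle\mU,\alpha\rangle=-\mU\phi$ and $\langle\ol{\mV},\alpha\rangle=\ol{\mV}\phi$, is exactly the standard argument used there, and your stated pairing convention is the one consistent with the paper's other uses of $M^\phi_p$ (e.g.\ \eqref{e-geusw13623I}).
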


Now, we can prove 

\begin{thm} \label{t-dhI} 
$\sigma$ is non-degenerate at $\rho=(p, \lambda_0\omega_0(p)-2{\rm Im\,}\ddbar_b\phi(p))\in\Sigma$ if and only if the Hermitian quadratic form $M^\phi_p-2\lambda_0\mathcal{L}_p$ is non-degenerate.
\end{thm}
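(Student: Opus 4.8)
The strategy is to compute the restriction of the canonical two‑form $\sigma$ to the tangent space $\Complex T_\rho\Sigma$ in a convenient frame and reduce its non‑degeneracy to that of an explicitly identified Hermitian matrix. First I would observe that $\Sigma$ is cut out by the $2(n-1)$ functions $q_1,\ldots,q_{n-1},\ol q_1,\ldots,\ol q_{n-1}$ together with the condition that these are $C^\infty$‑independent defining functions near $\rho$, so that $\Complex T_\rho\Sigma$ is the symplectic orthogonal (with respect to $\sigma$) of the span of the Hamilton fields $H_{q_1},\ldots,H_{q_{n-1}},H_{\ol q_1},\ldots,H_{\ol q_{n-1}}$. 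Hence $\sigma|_{\Complex T_\rho\Sigma}$ is non‑degenerate if and only if the "complementary" pairing — the Gram matrix of $\sigma$ on the span of those Hamilton fields — is non‑degenerate; this is the standard linear‑symplectic fact that for a coisotropic (here in fact the characteristic variety of a Hörmander‑type operator vanishing to second order, which is involutive on the relevant piece) or more simply for any subspace $W$ with $W^{\sigma\sigma}=W$, $\sigma|_W$ is non‑degenerate iff $\sigma|_{W^\sigma}$ is non‑degenerate, applied to $W=\Complex T_\rho\Sigma$, $W^\sigma=\mathrm{span}(H_{q_j},H_{\ol q_j})$.

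Next I would assemble the Gram matrix of $\sigma$ on $\mathrm{span}(H_{q_1},\ldots,H_{q_{n-1}},H_{\ol q_1},\ldots,H_{\ol q_{n-1}})$ using Lemma~\ref{l-patI}: the $(q,q)$ and $(\ol q,\ol q)$ blocks vanish by \eqref{e-patIV} and \eqref{e-patV}, so the matrix has block form $\begin{pmatrix}0 & -B^{\mathrm t}\\ B & 0\end{pmatrix}$ (up to conjugation/sign conventions) where $B_{jt}=\sigma(H_{\ol q_j},H_{q_t})|_\rho$ is given by \eqref{e-patVI}. Such an off‑diagonal block matrix is non‑degenerate precisely when $B$ is invertible. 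So the whole problem reduces to showing that the $(n-1)\times(n-1)$ matrix with entries
\[
2i\lambda_0\mathcal{L}_p(\ol Z_j,Z_t)+i\langle[\ol Z_j,Z_t](p),\ddbar_b\phi(p)-\pr_b\phi(p)\rangle-i(\ol Z_jZ_t+Z_t\ol Z_j)\phi(p)
\]
is invertible if and only if $M^\phi_p-2\lambda_0\mathcal{L}_p$ is non‑degenerate.

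The final step is the identification of that matrix with $-i\bigl(M^\phi_p-2\lambda_0\mathcal{L}_p\bigr)$ evaluated on the frame $\ol Z_j,Z_t$. Here I would invoke Lemma~\ref{l-dhI}, which says exactly $M^\phi_p(U,\ol V)=-\langle[\mathcal U,\ol{\mathcal V}](p),\ddbar_b\phi(p)-\pr_b\phi(p)\rangle+(\mathcal U\ol{\mathcal V}+\ol{\mathcal V}\mathcal U)\phi(p)$; applying this with $\mathcal U,\ol{\mathcal V}$ chosen so that the commutators match (being careful about the order $[\ol Z_j,Z_t]$ versus $[\mathcal U,\ol{\mathcal V}]$, which amounts to renaming $\mathcal U=\ol Z_j$‑type and $\ol{\mathcal V}=Z_t$, i.e. reading $M^\phi_p$ as a Hermitian form on $T^{1,0}_pX$ paired against the conjugate frame) shows that the last two terms of \eqref{e-patVI} combine to $-iM^\phi_p(\cdot,\cdot)$ on the frame, while the first term is $+2i\lambda_0\mathcal{L}_p(\ol Z_j,Z_t)=-i\cdot(-2\lambda_0)\mathcal{L}_p$. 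Thus $B=-i\bigl(M^\phi_p-2\lambda_0\mathcal{L}_p\bigr)$ in the frame, and invertibility of $B$ is equivalent to non‑degeneracy of $M^\phi_p-2\lambda_0\mathcal{L}_p$, completing the proof.

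The main obstacle I anticipate is the first step — justifying rigorously that $\Complex T_\rho\Sigma$ is precisely the $\sigma$‑orthogonal complement of $\mathrm{span}(H_{q_j},H_{\ol q_j})$ and that non‑degeneracy of $\sigma$ on $\Complex T_\rho\Sigma$ is equivalent to non‑degeneracy of the Gram matrix $B$. This requires knowing that the $q_j,\ol q_j$ are independent defining functions of $\Sigma$ (so that $\Complex T_\rho\Sigma=\{v:dq_j(v)=d\ol q_j(v)=0\}=\mathrm{span}(H_{q_j},H_{\ol q_j})^{\sigma}$) and then applying the elementary linear‑algebra lemma: for a finite‑dimensional symplectic vector space $(E,\sigma)$ and a subspace $W$ with $W^{\sigma\sigma}=W$, $\sigma|_{W^\sigma}$ is non‑degenerate iff $\sigma|_W$ is non‑degenerate iff $E=W\oplus W^\sigma$. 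Once that reduction is in place, everything else is a direct substitution of Lemma~\ref{l-patI} and Lemma~\ref{l-dhI}, with only bookkeeping of signs and of the Hermitian‑form conventions for $M^\phi_p$ and $\mathcal{L}_p$ to watch.
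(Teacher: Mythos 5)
Your proposal is correct and follows essentially the same route as the paper: identify $\mathrm{span}(H_{q_j},H_{\ol q_j})$ as $T_\rho\Sigma^\bot$, use \eqref{e-patIV}--\eqref{e-patVI} together with Lemma~\ref{l-dhI} to identify $\sigma(H_{\ol q_j},H_{q_t})|_\rho$ with $-i\bigl(M^\phi_p-2\lambda_0\mathcal{L}_p\bigr)(\ol Z_j,Z_t)$, and reduce everything to the invertibility of that matrix. The paper merely unpacks your Gram-matrix/block-matrix step into the elementwise statement $\Complex T_\rho\Sigma\bigcap T_\rho\Sigma^\bot=0$, proving the two implications separately, which is the same linear algebra.
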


\begin{proof} 
Note that
\[\Sigma=\set{(x, \xi)\in T^*D;\, q_j(x, \xi)=\ol q_j(x, \xi)=0,\ \ j=1,\ldots,n-1}.\]
Let $\Complex T_\rho\Sigma$ and $\Complex T_\rho(T^*D)$ be the
complexifications of $T_\rho\Sigma$ and $T_\rho(T^*D)$ respectively. 
Let $T_\rho\Sigma^\bot$ be the orthogonal to
$\Complex T_\rho\Sigma$ in $\Complex T_\rho(T^*D)$ with
respect to the canonical two form $\sigma$. We notice that ${\rm dim}_\Complex T_\rho\Sigma^\bot=2n-2$. It is easy to check that 
\[\sigma(v,H_{q_j})|_\rho=<dq_j(\rho), v>,\ \ \sigma(v,H_{\ol q_j})|_\rho=<d\ol q_j(\rho), v>,\] 
$j=1,\ldots,n-1$, $v\in\Complex T_\rho(T^*D)$. Thus, if $v\in \Complex T_\rho\Sigma$, we get 
$\sigma(H_{q_j}, v)|_\rho=0$, $\sigma(H_{\ol q_j}, v)|_\rho=0$, $j=1,\ldots,n-1$. We conclude that
$H_{q_1},\ldots,H_{q_{n-1}},H_{\ol q_1},\ldots,H_{\ol q_{n-1}}$
is a basis for $T_\rho\Sigma^\bot$. 

Now, we assume that $M^\phi_p-2\lambda_0\mathcal{L}_p$ is non-degenerate. Let $\nu\in \Complex T_\rho\Sigma\bigcap T_\rho\Sigma^\bot$. We write $\nu=\sum^{n-1}_{j=1}(\alpha_jH_{q_j}(\rho)+\beta_jH_{\ol q_j}(\rho))$. Since $\nu\in\Complex T_\rho\Sigma$, we have 
\[\sigma(\nu, H_{q_t})|_\rho=\sigma(\nu, H_{\ol q_t})|_\rho=0,\] 
$t=1,\ldots,n-1$. In view of \eqref{e-patIV}, \eqref{e-patV}, \eqref{e-patVI} and \eqref{e-dhI}, we see that 
\begin{equation} \label{s1-e45dh} 
\begin{split}
\sigma(\nu, H_{q_t})|_\rho&=\sum^{n-1}_{j=1}\beta_j\Bigr(2i\lambda_0\mathcal{L}_p(\ol Z_j, Z_t)-iM^\phi_p(\ol Z_j, Z_t)\Bigr)\\
&=2i\lambda_0\mathcal{L}_p(Y,Z_t)-iM^\phi_p(Y,Z_t)=0, 
\end{split}
\end{equation} 
for all $t=1,\ldots,n-1$, where $Y=\sum^{n-1}_{j=1}\beta_j\ol Z_j(p)\in T^{1,0}_pX$. Since $-M^\phi_p+2\lambda_0\mathcal{L}_p$ is non-degenerate, we get 
$Y=0$. Thus, $\beta_j=0$, $j=1,\ldots,n-1$. Similarly, we can repeat the process above to show that $\alpha_j=0$, $j=1,\ldots,n-1$. We conclude that $\Complex T_\rho\Sigma\bigcap T_\rho\Sigma^\bot=0$. Hence $\sigma$ is non-degenerate at 
$\rho$.

Conversely, we assume that $\sigma$ is non-degenerate at $\rho$. If for some $Y\in T^{1,0}_pX$, we have $M^\phi_p(Y, \ol Z)-2\lambda_0\mathcal{L}_p(Y,\ol Z)=0$ for all 
$Z\in T^{1,0}_pX$. We write $Y=\sum^{n-1}_{j=1}\beta_j\ol Z_j(p)$.
As before, we can show that $\sigma(\sum^{n-1}_{j=1}\beta_jH_{\ol q_j}, H_{q_t})|_\rho=0$
and $\sigma(\sum^{n-1}_{j=1}\beta_jH_{\ol q_j}, H_{\ol q_t})|_\rho=0$
for all $t=1,\ldots,n-1$. Thus, $\sum^{n-1}_{j=1}\beta_jH_{\ol q_j}\in(T_\rho\Sigma^{\bot})^{\bot}=\Complex T_\rho\Sigma$. Hence, $\sum^{n-1}_{j=1}\beta_jH_{\ol q_j}\in\Complex T_\rho\Sigma\bigcap T_\rho\Sigma^\bot$. 
Since $\sigma$ is non-degenerate at $\rho$, we get $\beta_j=0$, $j=1,\ldots,n-1$. 
Thus, $M^\phi_p-2\lambda_0\mathcal{L}_p$ is non-degenerate. The theorem follows.
\end{proof} 

\section{The heat equation for the local operatot $\Box^{(q)}_s$} \label{s-thef}

In this section, we will introduce the local operator $\Box^{(q)}_s$. 
The goal of this section is to find a microlocal partial inverse and an approximate kernel for $\Box^{(q)}_s$ in some non-degenerate part of the characteristic manifold of $\Box^{(q)}_s$. In the next section, we will reduce the semi-classical analysis of the Kohn Laplacian $\Box^{(q)}_{b,k}$ to the microlocal analysis of the local operator $\Box^{(q)}_s$. 

\subsection{$\Box^{(q)}_s$ and the eikonal equation for $\Box^{(q)}_s$}\label{s-bat}

We first introduce some notations. Let $\Omega$ be an open set in $\Real^N$ and let $f$, $g$ be positive continuous functions on $\Omega$. We write $f\asymp g$ if for every compact set $K\subset\Omega$ there is a constant $c_K>0$ such that $f\leq c_Kg$ and $g\leq c_Kf$ on $K$. 

Let $s$ be a local trivializing section of $L$ on an open subset $D\Subset X$ and $\abs{s}^2_{h^L}=e^{-2\phi}$. In this section, we work with some real local coordinates $x=(x_1,\ldots,x_{2n-1})$ defined on $D$. We write $\xi=(\xi_1,\ldots,\xi_{2n-1})$ or $\eta=(\eta_1,\ldots,\eta_{2n-1})$ to denote the dual coordinates of $x$. We consider the domain $\hat D:=D\times\Real$. We write $\hat x:=(x,x_{2n})=(x_1,x_2,\ldots,x_{2n-1},x_{2n})$ to denote the coordinates of $D\times\Real$, where $x_{2n}$ is the coordinate of $\Real$. We write $\hat\xi:=(\xi,\xi_{2n})$ or $\hat\eta:=(\eta,\eta_{2n})$ to denote the dual coordinates of $\hat x$, where $\xi_{2n}$ and $\eta_{2n}$ denote the dual coordinate of $x_{2n}$. We shall use the following notations: $<x,\eta>:=\sum^{2n-1}_{j=1}x_j\eta_j$, $<x,\xi>:=\sum^{2n-1}_{j=1}x_j\xi_j$, 
$<\hat x,\hat\eta>:=\sum^{2n}_{j=1}x_j\eta_j$, $<\hat x,\hat\xi>:=\sum^{2n}_{j=1}x_j\xi_j$. 

Let $T^{*0,q}\hat D$ be the bundle with fiber 
\[T^{*0,q}_{\hat x}\hat D:=\set{u\in T^{*0,q}_xD, \hat x=(x,x_{2n})}\] 
at $\hat x\in\hat D$. From now on, for every point $\hat x=(x,x_{2n})\in\hat D$, we identify $T^{*0,q}_{\hat x}\hat D$ with $T^{*0,q}_xX$. 
Let $\langle\,\cdot\,|\,\cdot\,\rangle$ be the Hermitian metric on $\Complex T^*\hat D$ given by $\langle\,\hat\xi\,|\,\hat\eta\,\rangle=\langle\,\xi\,|\,\eta\,\rangle+\xi_{2n}\ol{\eta_{2n}}$, $(\hat x,\hat\xi), (\hat x,\hat\eta)\in\Complex T^*\hat D$. Let $\Omega^{0,q}(\hat D)$ denote the space of smooth sections of $T^{*0,q}\hat D$ over $\hat D$ and put $\Omega^{0,q}_0(\hat D):=\Omega^{0,q}(\hat D)\bigcap\mathscr E'(\hat D,T^{*0,q}\hat D)$. Using the identification 
\[ku(x)=e^{-ikx_{2n}}(-i\frac{\pr}{\pr x_{2n}}(e^{ikx_{2n}}u(x)),\ \ \forall u\in\Omega^{0,q}(D),\]
we consider the following operators
\begin{equation}\label{s3-lkmidhI}
\begin{split}
&\ddbar_s:\Omega^{0,r}(\hat D)\To\Omega^{0,r+1}(\hat D),\ \ \ddbar_{s,k}u=e^{-ikx_{2n+1}}\ddbar_s(ue^{ikx_{2n}}),\ \ \forall u\in\Omega^{0,r}(D),\\
&\ol{\pr}^*_s:\Omega^{0,r+1}(\hat D)\To\Omega^{0,r}(\hat D),\ \ \ol{\pr}^*_{s,k}u=e^{-ikx_{2n+1}}\ol{\pr}^*_s(ue^{ikx_{2n}}),\ \ \forall u\in\Omega^{0,r+1}(D),
\end{split}
\end{equation}
where $r=0,1,\ldots,n-1$ and $\ddbar_{s,k}$, $\ol{\pr}^*_{s,k}$ are given by \eqref{s2-emsmilkI}. From \eqref{e-msmilkIV} and \eqref{e-msmilkV}, it is easy to see that 
\begin{equation}\label{e-dhmpI}
\begin{split}
&\ddbar_s=\sum^{n-1}_{j=1}\Bigr(e^\wedge_j\circ(Z_j-iZ_j(\phi)\frac{\pr}{\pr x_{2n}})+
(\ddbar_b e_j)^\wedge e^{\wedge,*}_j\Bigr),\\
&\ol{\pr}^*_{s}=\sum^{n-1}_{j=1}\Bigr(e^{\wedge,*}_j\circ(Z^*_j-i\ol Z_j(\phi)\frac{\pr}{\pr x_{2n}})+
e^\wedge_j(\ddbar_b e_j)^{\wedge,*}\Bigr),
\end{split}
\end{equation}
where $Z_1,\ldots,Z_{n-1}$, $Z^*_1,\ldots,Z^*_{n-1}$ and $e_1,\ldots,e_{n-1}$ are as in Proposition~\ref{s2-pmsmilkI}. Put 
\begin{equation}\label{e-dhmpII}
\Box^{(q)}_s:=\ddbar_s\ol{\pr}^*_s+\ol{\pr}^*_s\ddbar_s:\Omega^{0,q}(\hat D)\To\Omega^{0,q}(\hat D).
\end{equation}
From \eqref{s3-lkmidhI}, we have 
\begin{equation}\label{e-dhmpIII}
\Box^{(q)}_{s,k}u=e^{-ikx_{2n+1}}\Box^{(q)}_s(ue^{ikx_{2n}}),\ \ \forall u\in\Omega^{0,q}(D),
\end{equation}
where $\Box^{(q)}_{s,k}$ is given by \eqref{e-msmilkVI}. 

Let $u(x)\in\Omega^{0,q}_0(\hat D)$.
Note that
\[k\int e^{-ikx_{2n}}u(x)dx_{2n}=\int i\frac{\pr}{\pr x_{2n}}(e^{-ikx_{2n}})u(x)dx_{2n}=\int e^{-ikx_{2n}}\bigr(-i\frac{\pr u}{\pr x_{2n}}(x)\bigr)dx_{2n}.\]
From this observation and explicit formulas of $\ddbar_{s,k}$,
$\ol\pr^*_{s,k}$, $\ddbar_s$ and $\ol\pr^*_s$ (see \eqref{e-msmilkIV},
\eqref{e-msmilkV} and \eqref{e-dhmpI}), we conclude that
\begin{equation} \label{s3-e9bis}
\Box^{(q)}_{s,k}\bigr(\int e^{-ikx_{2n}}u(x)dx_{2n}\bigr)=\int e^{-ikx_{2n}}(\Box^{(q)}_su)(x)dx_{2n},
\end{equation}
for all $u(x)\in\Omega^{0,q}_0(\hat D)$.

From \eqref{e-dhmpI} and \eqref{e-dhmpII}, we can repeat the proof 
of Proposition~\ref{s2-pmsmilkI} and conclude that

\begin{prop} \label{p-dhmpI} 
With the notations used before, we have
\begin{equation} \label{e-dhmpIV} 
\begin{split}
\Box^{(q)}_{s}&=\ddbar_{s}\ol{\pr}^*_{s}+\ol{\pr}^*_{s}\ddbar_{s} \\
       &=\sum^{n-1}_{j=1}(Z^*_j-i\ol Z_j(\phi)\frac{\pr}{\pr x_{2n}})(Z_j-iZ_j(\phi)\frac{\pr}{\pr x_{2n}})\\
       &+\sum^{n-1}_{j,t=1}e^\wedge_je^{\wedge,*}_t\circ[Z_j-iZ_j(\phi)\frac{\pr}{\pr x_{2n}}, Z^*_t-i\ol Z_t(\phi)\frac{\pr}{\pr x_{2n}}]\\
&+\varepsilon(Z-iZ(\phi)\frac{\pr}{\pr x_{2n}})+\varepsilon(Z^*-i\ol Z(\phi)\frac{\pr}{\pr x_{2n}})+\mbox{zero order terms},
\end{split}
\end{equation}
where $\varepsilon(Z-iZ(\phi)\frac{\pr}{\pr x_{2n}})$ denotes remainder terms of the form $\sum a_j(Z_j-iZ_j(\phi)\frac{\pr}{\pr x_{2n}})$ with $a_j$ smooth, matrix-valued, for all $j$, and similarly for $\varepsilon(Z^*-i\ol Z(\phi)\frac{\pr}{\pr x_{2n}})$.
\end{prop}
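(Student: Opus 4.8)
The plan is to repeat, essentially verbatim, the computation that produced \eqref{e-msmilkVII} and Proposition~\ref{s2-pmsmilkI}, with the scalar $k$ replaced throughout by the operator $-i\frac{\pr}{\pr x_{2n}}$. First I would start from the explicit expressions \eqref{e-dhmpI} for $\ddbar_s$ and $\ol{\pr}^*_s$ and expand $\Box^{(q)}_s=\ddbar_s\ol{\pr}^*_s+\ol{\pr}^*_s\ddbar_s$ exactly as in the derivation of \eqref{e-msmilkVII}: multiply out the two sums, single out the products $\bigl(e^\wedge_j\circ(Z_j-iZ_j(\phi)\frac{\pr}{\pr x_{2n}})\bigr)\bigl(e^{\wedge,*}_t\circ(Z^*_t-i\ol Z_t(\phi)\frac{\pr}{\pr x_{2n}})\bigr)$ and its partner with $j$ and $t$ interchanged, and collect the remaining terms (those involving a factor $(\ddbar_b e_j)^\wedge$ or $(\ddbar_b e_t)^{\wedge,*}$) into remainders of the form $\varepsilon(Z-iZ(\phi)\frac{\pr}{\pr x_{2n}})$, $\varepsilon(Z^*-i\ol Z(\phi)\frac{\pr}{\pr x_{2n}})$ and zero order terms.

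The one point that must be checked, and which makes the substitution legitimate, is that $\frac{\pr}{\pr x_{2n}}$ commutes with every coefficient appearing in \eqref{e-dhmpI}: with the vector fields $Z_j$, $Z^*_j$, with the forms $e_j$ and hence with $e^\wedge_j$, $e^{\wedge,*}_j$, $(\ddbar_b e_j)^\wedge$, $(\ddbar_b e_j)^{\wedge,*}$, and with the functions $Z_j(\phi)$, $\ol Z_j(\phi)$, $\phi$. This is immediate because all of these objects are pulled back from $D$, i.e.\ depend only on $x=(x_1,\dots,x_{2n-1})$ and not on $x_{2n}$. Consequently, in all the algebraic manipulations, $-i\frac{\pr}{\pr x_{2n}}$ behaves exactly as the central scalar $k$ did; in particular $\bigl[Z_j-iZ_j(\phi)\frac{\pr}{\pr x_{2n}},\,Z^*_t-i\ol Z_t(\phi)\frac{\pr}{\pr x_{2n}}\bigr]$ has the same shape as $\bigl[Z_j+kZ_j(\phi),\,Z^*_t+k\ol Z_t(\phi)\bigr]$, the bracket of the two $\frac{\pr}{\pr x_{2n}}$-terms vanishing since $Z_j(\phi)$ and $\ol Z_t(\phi)$ are functions that commute with each other and with $\frac{\pr}{\pr x_{2n}}$.

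I would then invoke the relation \eqref{e-msmilkVIII}, $e^\wedge_je^{\wedge,*}_t+e^{\wedge,*}_te^\wedge_j=\delta_{j,t}$, exactly as in the passage from \eqref{e-msmilkVII} to \eqref{e-msmilkIX}, to collapse the sum $\sum_{j,t}(e^\wedge_je^{\wedge,*}_t+e^{\wedge,*}_te^\wedge_j)\circ(Z^*_t-i\ol Z_t(\phi)\frac{\pr}{\pr x_{2n}})(Z_j-iZ_j(\phi)\frac{\pr}{\pr x_{2n}})$ to $\sum_{j}(Z^*_j-i\ol Z_j(\phi)\frac{\pr}{\pr x_{2n}})(Z_j-iZ_j(\phi)\frac{\pr}{\pr x_{2n}})$, leaving the curvature-type double sum $\sum_{j,t}e^\wedge_je^{\wedge,*}_t\circ[Z_j-iZ_j(\phi)\frac{\pr}{\pr x_{2n}},\,Z^*_t-i\ol Z_t(\phi)\frac{\pr}{\pr x_{2n}}]$ together with the first and zero order remainders. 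This is exactly \eqref{e-dhmpIV}.

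There is essentially no hard step here: the content of the statement is that $\Box^{(q)}_s$ is obtained from $\Box^{(q)}_{s,k}$ by the formal replacement of $k$ with $-i\frac{\pr}{\pr x_{2n}}$, a replacement already built into the definitions \eqref{s3-lkmidhI}--\eqref{e-dhmpI}. The only thing one must be a little careful about is the bookkeeping of commutators in the expansion of $\Box^{(q)}_s$ --- making sure that moving $\frac{\pr}{\pr x_{2n}}$ past the $x$-coefficients generates no spurious terms and that all genuinely lower order contributions land in the $\varepsilon(\cdot)$ and zero order buckets --- but this is the same bookkeeping already carried out for Proposition~\ref{s2-pmsmilkI}.
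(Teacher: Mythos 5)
Your proposal is correct and follows exactly the route the paper takes: the paper simply states that one repeats the proof of Proposition~\ref{s2-pmsmilkI} using \eqref{e-dhmpI} and \eqref{e-dhmpII}, which is precisely your substitution of $-i\frac{\pr}{\pr x_{2n}}$ for $k$. Your explicit remark that $\frac{\pr}{\pr x_{2n}}$ commutes with all the coefficients pulled back from $D$ is the (implicit) justification the paper relies on, so nothing is missing.
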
  

In this section, we will study the heat equation $\pr_t+\Box^{(q)}_s$. Until further notice, we fix $q\in\set{0,1,\ldots,n-1}$. First, we consider the problem 
\begin{equation} \label{e-dhmpV}
\left\{ \begin{array}{ll}
(\pr_t+\Box^{(q)}_s)u(t,\hat x)=0  & \text{ in }\Real_+\times\hat D,  \\
u(0,\hat x)=v(\hat x). \end{array}\right.
\end{equation}

We need

\begin{defn} \label{d-dhikbmiI}
Let $0\leq q_1\leq n-1$, $q_1\in\mathbb N_0$. We say that $a(t,\hat x,\hat \eta)\in C^\infty(\ol\Real_+\times T^*\hat D,T^{*0,q_1}\hat D\boxtimes T^{*0,q}\hat D)$
is quasi-homogeneous of
degree $j$ if $a(t,\hat x,\lambda\hat\eta)=\lambda^ja(\lambda t,\hat x,\hat\eta)$ for all $\lambda>0$.
\end{defn} 

\begin{defn} \label{d-dhikbmiII}
Let $0\leq q_1\leq n-1$, $q_1\in\mathbb N_0$.
We say that $b(\hat x,\hat \eta)\in C^\infty(T^*\hat D,T^{*0,q_1}\hat D\boxtimes T^{*0,q}\hat D)$
is positively homogeneous of degree $j$ if $b(\hat x,\lambda\hat\eta)=\lambda^jb(\hat x,\hat\eta)$ for all $\lambda>0$.
\end{defn} 

Let $0\leq q_1\leq n-1$, $q_1\in\mathbb N_0$.
We look for an approximate solution of \eqref{e-dhmpV} of the form $u(t,\hat x)=A(t)v(\hat x)$,
\begin{equation} \label{e-dhmpVI}
A(t)v(\hat x)=\frac{1}{(2\pi)^{2n}}\iint\!\! e^{i(\psi(t,\hat x,\hat\eta)-<\hat y,\hat \eta>)}a(t,\hat x,\hat\eta)v(\hat y)d\hat yd\hat\eta
\end{equation}
where formally $a(t,\hat x,\hat \eta)\sim\sum^\infty_{j=0}a_j(t,\hat x,\hat \eta)$, 
$a_j(t, \hat x, \hat\eta)\in C^\infty(\ol\Real_+\times T^*\hat D,T^{*0,q_1}\hat D\boxtimes T^{*0,q}\hat D)$, $a_j(t,\hat x,\hat \eta)$ is a quasi-homogeneous function of degree $-j$.
The phase $\psi(t,\hat x,\hat\eta)$ should solve the eikonal equation 
\begin{equation} \label{e-dhmpVIII}
\left\{ \begin{array}{ll}
 \frac{\displaystyle\pr\psi}{\displaystyle\pr t}-i\hat p_0(\hat x,\psi'_{\hat x})=
   O(\abs{{\rm Im\,}\psi}^N), & \forall N\geq0,   \\
 \psi|_{t=0}=<\hat x,\hat\eta> \end{array}\right.
\end{equation}
with ${\rm Im\,}\psi\geq0$, where $\hat p_0$ denotes the principal symbol of $\Box^{(q)}_s$. From \eqref{e-dhmpIV}, we have 
\begin{equation}\label{e-dhmpIX}
\hat p_0=\sum^{n-1}_{j=1}\ol{\hat q}_j\hat q_j,
\end{equation}
where $\hat q_j$ is the principal sympol of $Z_j-iZ_j(\phi)\frac{\pr}{\pr x_{2n}}$, $j=1,\ldots,n-1$. The characteristic 
manifold $\hat\Sigma$ of $\Box^{(q)}_s$ is given by 
\begin{equation}\label{e-dhmpX}
\hat\Sigma=\set{(\hat x,\hat\xi)\in T^*\hat D;\, \hat q_1(\hat x,\hat\xi)=\cdots=\hat q_{n-1}(\hat x,\hat\xi)=\ol{\hat q}_1(\hat x,\hat\xi)=\cdots=\ol{\hat q}_{n-1}(\hat x,\hat\xi)=0}.
\end{equation}
From \eqref{e-dhmpX}, we see that $\hat p_0$ vanishes to second order at $\hat\Sigma$. Let $\hat\sigma$ denote the canonical two form on $T^*\hat D$. We can repeat the proofs of Proposition~\ref{s2-pcrmiI} and Theorem~\ref{t-dhI} with minor changes and conclude that 

\begin{thm}\label{t-dhlkmp}
We have 
\begin{equation} \label{e-dhmpXI} 
\hat\Sigma=\set{(\hat x, \hat\xi)\in T^*\hat D;\, \hat\xi=(\lambda\omega_0(x)-2{\rm Im\,}\ddbar_b\phi(x)\xi_{2n},\xi_{2n}),\lambda\in\Real}.
\end{equation}

Moreover, $\hat\sigma$ is non-degenerate at $\hat\rho=((p,x_{2n}), (\lambda_0\omega_0(p)-2{\rm Im\,}\ddbar_b\phi(p),\xi_{2n}))\in\hat\Sigma$ if and only if the Hermitian quadratic form $\xi_{2n}M^\phi_p-2\lambda_0\mathcal{L}_p$ is non-degenerate.
\end{thm}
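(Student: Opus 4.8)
The plan is to mimic, essentially verbatim, the proofs of Proposition~\ref{s2-pcrmiI} and Theorem~\ref{t-dhI}, carrying along the extra coordinate $x_{2n}$ and its dual variable $\xi_{2n}$. Two elementary observations make this adaptation routine. First, since $\frac{\pr}{\pr x_{2n}}$ has principal symbol $i\xi_{2n}$, the principal symbol $\hat q_j$ of $Z_j-iZ_j(\phi)\frac{\pr}{\pr x_{2n}}$ is obtained from the symbol $q_j$ of Section~\ref{s-scb} by the substitution $k\rightsquigarrow\xi_{2n}$, equivalently by replacing the weight $\phi$ with $\xi_{2n}\phi$; concretely, in the local coordinates used in the proof of Proposition~\ref{s2-pcrmiI}, formula \eqref{e-patI} becomes
\[
\hat q_j\bigl((p,x_{2n}),\hat\xi\bigr)=\frac{1}{2}i(\xi_{2j-1}+i\xi_{2j})+\frac{1}{2}\xi_{2n}\Bigl(\frac{\pr\phi}{\pr x_{2j-1}}(p)+i\frac{\pr\phi}{\pr x_{2j}}(p)\Bigr),\qquad j=1,\ldots,n-1.
\]
Second, the weight $\phi$, and hence each $\hat q_j$, $\ol{\hat q}_j$ and $\hat p_0$, is independent of $x_{2n}$; therefore in every Poisson bracket among $\hat q_1,\ldots,\hat q_{n-1},\ol{\hat q}_1,\ldots,\ol{\hat q}_{n-1}$ on $T^*\hat D$ the $(x_{2n},\xi_{2n})$-contribution vanishes identically, and one is reduced to the corresponding bracket in the $(x,\xi)$-variables with $\xi_{2n}$ frozen as a parameter.

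From the displayed form of $\hat q_j$ I would read off at once that $(\hat x,\hat\xi)\in\hat\Sigma$ if and only if $\xi_{2j-1}=-\xi_{2n}\frac{\pr\phi}{\pr x_{2j}}(p)$ and $\xi_{2j}=\xi_{2n}\frac{\pr\phi}{\pr x_{2j-1}}(p)$ for $j=1,\ldots,n-1$, with $\xi_{2n-1}$ and $\xi_{2n}$ unconstrained; by \eqref{e-patIII} this is exactly the condition $\hat\xi=(\lambda\omega_0(x)-2\xi_{2n}{\rm Im\,}\ddbar_b\phi(x),\xi_{2n})$, $\lambda\in\Real$, which gives \eqref{e-dhmpXI}. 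As in the proof of Theorem~\ref{t-dhI}, $H_{\hat q_1},\ldots,H_{\hat q_{n-1}},H_{\ol{\hat q}_1},\ldots,H_{\ol{\hat q}_{n-1}}$ then form a basis of $T_{\hat\rho}\hat\Sigma^\bot$ (of complex dimension $2n-2$).

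For the second assertion I would first record the hatted analogue of Lemma~\ref{l-patI}: at $\hat\rho=\bigl((p,x_{2n}),(\lambda_0\omega_0(p)-2\xi_{2n}{\rm Im\,}\ddbar_b\phi(p),\xi_{2n})\bigr)\in\hat\Sigma$,
\[
\hat\sigma(H_{\hat q_j},H_{\hat q_t})|_{\hat\rho}=\hat\sigma(H_{\ol{\hat q}_j},H_{\ol{\hat q}_t})|_{\hat\rho}=0,
\]
\[
\hat\sigma(H_{\ol{\hat q}_j},H_{\hat q_t})|_{\hat\rho}=2i\lambda_0\mathcal{L}_p(\ol Z_j,Z_t)-i\xi_{2n}M^\phi_p(\ol Z_j,Z_t),\qquad j,t=1,\ldots,n-1.
\]
Indeed, by the $x_{2n}$-independence these brackets coincide with $\{q_j,q_t\}$, $\{\ol q_j,\ol q_t\}$, $\{\ol q_j,q_t\}$ evaluated with $\phi$ replaced by $\xi_{2n}\phi$; the first two vanish because $[Z_j,Z_t]\in T^{0,1}X$, exactly as in \eqref{s1-e33pat}--\eqref{s1-e37pat}, and the third is computed as in \eqref{s1-e38pat}--\eqref{s1-e39pat} together with Lemma~\ref{l-dhI}, using that $M^\phi_p$ is linear in $\phi$ so that $M^{\xi_{2n}\phi}_p=\xi_{2n}M^\phi_p$. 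With this in place, the argument of Theorem~\ref{t-dhI} goes through word for word: $\hat\sigma$ is non-degenerate at $\hat\rho$ if and only if $\Complex T_{\hat\rho}\hat\Sigma\bigcap T_{\hat\rho}\hat\Sigma^\bot=\set{0}$, and expanding an element of that intersection as $\sum_j(\alpha_jH_{\hat q_j}+\beta_jH_{\ol{\hat q}_j})$ and pairing with $H_{\hat q_t}$ and $H_{\ol{\hat q}_t}$ forces all $\alpha_j$ and $\beta_j$ to vanish precisely when the Hermitian form $\xi_{2n}M^\phi_p-2\lambda_0\mathcal{L}_p$ is non-degenerate.

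I do not expect any genuine obstacle: the whole argument is a transcription of Section~\ref{s-scb} under the substitution $\phi\rightsquigarrow\xi_{2n}\phi$. The only point requiring a moment's attention is to check that enlarging the base by the coordinate $x_{2n}$ produces no new Poisson-bracket terms, and that is immediate from the $x_{2n}$-independence of the weight $\phi$.
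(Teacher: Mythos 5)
Your proposal is correct and follows exactly the route the paper intends: the paper proves Theorem~\ref{t-dhlkmp} by declaring that one repeats the proofs of Proposition~\ref{s2-pcrmiI} and Theorem~\ref{t-dhI} with minor changes, and your argument supplies precisely those changes — the substitution $\phi\rightsquigarrow\xi_{2n}\phi$ in the symbols $\hat q_j$, and the observation that $x_{2n}$-independence of $\phi$ kills the extra Poisson-bracket terms (your bracket formula agrees with \eqref{e-mslknaIb}, which the paper later uses in the proof of Theorem~\ref{t-dhplkmiII}).
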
 

Until further notice, we assume that 
\begin{equation}\label{e-batlkI}
\begin{split}
&\mbox{there exist $x_0\in D$ and $\lambda_0\in\Real$ such that $M^\phi_{x_0}-2\lambda_0\mathcal{L}_{x_0}$ is non-degenerate}\\
&\mbox{of constant signature $(n_-,n_+)$ at each point of $D$}. 
\end{split}
\end{equation}
%From Definition~\ref{d-suIII-Ia}, we see that there is a $\lambda_0\in\Real$ such that %$M^\phi_{x_0}-2\lambda_0\mathcal{L}_{x_0}$ is nondegenerate of constant signature $(n_-,n_+)$. 
%Since condition $Y(n_-)$ holds, $\mathcal{L}_{x_0}$ has at least either $\max{(n_-+1, n_+)}$ %eigenvalues of the same sign or $\min{(n_-+1,n_+)}$ pairs of eigenvalues with opposite signs. %From this observation and Theorem~\ref{t-dhlkmp}, we conclude that $\hat\sigma$ is degenerate at some point $((x_0,x_{2n}),(\lambda\omega_0(x)-2{\rm Im\,}\ddbar_b\phi(x)\xi_{2n},\xi_{2n}))\in\hat%\Sigma$, for some $\lambda\in\Real$, $\xi_{2n}>0$. Thus, it is difficult to solve the eikonal %equation \eqref{e-dhmpVIII} and the problem \eqref{e-dhmpV} globally. Instead, we consider %\eqref{e-dhmpVIII} and \eqref{e-dhmpV} in some non-degenerate part of $\hat\Sigma$. 

%that $\hat\sigma$ is always degenerate at some point of $\hat\Sigma$Note that the Levi form has at least one negative eigenvalues and one positive eigenvalues at every point of $X$. From %this observation and Theorem~\ref{t-dhlkmp} and notice that $M^\phi_x$ is positive, for every $x\in X$, we conclude %that $\hat\sigma$ is always degenerate at some point of $\hat\Sigma$. Thus, it is impossible to solve the eikonal %equation \eqref{e-dhmpVIII} and the problem \eqref{e-dhmpV} globally. Instead, we consider \eqref{e-dhmpVIII} and %\eqref{e-dhmpV}in some non-degenerate part of $\hat\Sigma$. 
Let $V$ be a bounded open set of $T^*D$ with  $\ol V\subset T^*D$ and
\begin{equation}\label{e-dhmpXII}
\ol V\bigcap\Sigma\subset\Sigma',
%\mbox{$\ol V\bigcap\Sigma\subset\Sigma'$ and there is a constant $c>0$ such that $\abs{\langle\,\eta\,|\,\omega_0(x)\,%\rangle}\leq c$, $\forall (x,\eta)\in V$}. 
\end{equation}
where $\Sigma'$ is given by \eqref{e-dhmpXIa}. 
Put 
\begin{equation}\label{e-dhmpXIII}
U=\set{(\hat x,\hat\xi)\in T^*\hat D;\, \hat\xi=(\xi_{2n}\xi,\xi_{2n}), (x,\xi)\in V, \xi_{2n}>0}.
\end{equation}
$U$ is a conic open set of $T^*\hat D$ and 
\begin{equation}\label{e-dhlkmimI}
\begin{split}
U\bigcap\hat\Sigma\subset&\{(\hat x,(\lambda\omega_0(x)-2{\rm Im\,}\ddbar_b\phi(x)\xi_{2n},\xi_{2n}));\, \mbox{$\xi_{2n}M^\phi_x-2\lambda\mathcal{L}_x$ is non-degenerate}\\
&\quad\mbox{of constant signature $(n_-,n_+)$}\}. 
\end{split}
\end{equation}
Since $\ol V\bigcap\Sigma\Subset\Sigma'$, it is not difficult to see that there is a constant $\mu>0$ such that
\begin{equation}\label{e-dhlkmimII}
\inf\{\mbox{$\abs{\lambda}$;\, $\lambda$: eigenvalue of $\xi_{2n}M^\phi_x-2\lambda\mathcal{L}_x$, $(\hat x,\hat\xi)\in U\bigcap\hat\Sigma$}\}\geq\mu\xi_{2n}.
\end{equation}
%Moreover, since $Y(n_-)$ holds on $D$, it is straightforward to see that there is constant $M>0$ such that 
%\begin{equation}\label{e-batlkIa}
%\abs{\langle\,\xi\,|\,\omega_0\,\rangle}\leq M\xi_{2n},\ \ \forall (\hat x,\hat\xi)\in U\bigcap\hat\Sigma
%\end{equation}

Until further notice, we work in $U$. Since $\hat\sigma$ is non-degenerate at each point of $U\bigcap\hat\Sigma$, \eqref{e-dhmpVIII} can be solved with ${\rm Im\,}\psi\geq0$ on $U$. More precisely, we have the following 

\begin{thm} \label{t-dhlkmimI}
There exists $\psi(t,\hat x,\hat\eta)\in C^\infty(\ol\Real_+\times U)$ such
that $\psi(t,\hat x,\hat\eta)$ is quasi-homogeneous of degree $1$ and ${\rm Im\,}\psi\geq 0$ and such that \eqref{e-dhmpVIII} holds where the error term is uniform on every set of the form $[0,T]\times K$ with $T>0$ and $K\subset U$ compact.
Furthermore, $\psi$ is unique up to a term which is
$O(\abs{{\rm Im\,}\psi}^N)$ locally uniformly for every $N$ and
\begin{equation}\label{e-dhlkmimIII}
\begin{split}
\mbox{$\psi(t,\hat x,\hat \eta)=<\hat x,\hat \eta>$ on $\hat\Sigma\bigcap U$},\\
\mbox{$d_{\hat x,\hat\eta}(\psi-<\hat x,\hat\eta>)=0$ on $\hat\Sigma\bigcap U$}.
\end{split}
\end{equation}
Moreover, we have
\begin{equation} \label{e-dhlkmimIIIa}
{\rm Im\,}\psi(t, \hat x,\hat\eta)\asymp\Big(\abs{\hat\eta}\frac{t\abs{\hat\eta}}{1+t\abs{\eta}}\Big)\Big({\rm dist\,}
\big((\hat x, \frac{\hat\eta}{\abs{\hat\eta}}),\hat\Sigma\big)\Big)^2,\ \ t\geq0,\ \ (\hat x,\hat\eta)\in U.
\end{equation}
\end{thm}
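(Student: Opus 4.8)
The plan is to solve the eikonal equation \eqref{e-dhmpVIII} by the standard complex Hamilton--Jacobi method of Melin--Sj\"ostrand, exactly as in the heat-kernel constructions for the $\ddbar$-Neumann and Kohn Laplacians (cf.\ Menikoff--Sj\"ostrand and the treatment in~\cite{HM09}). First I would observe that since $\hat\sigma$ is non-degenerate on $U\bigcap\hat\Sigma$ (Theorem~\ref{t-dhlkmp}), the characteristic manifold $\hat\Sigma$ is a symplectic submanifold of $T^*\hat D$ of codimension $2(n-1)$, cut out by the $2(n-1)$ functions $\hat q_1,\dots,\hat q_{n-1},\ol{\hat q}_1,\dots,\ol{\hat q}_{n-1}$ whose Hamilton fields span $T_{\hat\rho}\hat\Sigma^\perp$. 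The principal symbol $-i\hat p_0$ vanishes to second order along $\hat\Sigma$, and at a characteristic point the fundamental matrix $F_{\hat p_0}$ has eigenvalues $0$ (on $T\hat\Sigma\oplus T\hat\Sigma^\perp$ suitably) together with the nonzero eigenvalues $\pm i\lambda_j$ coming from the eigenvalues $\lambda_j$ of $\xi_{2n}M^\phi_x-2\lambda\mathcal L_x$; by \eqref{e-dhlkmimII} these satisfy $|\lambda_j|\geq\mu\xi_{2n}>0$ uniformly on $U\bigcap\hat\Sigma$. This spectral picture, in which $i^{-1}F_{\hat p_0}$ has no nonzero real eigenvalues and its eigenspaces for eigenvalues in the upper/lower half plane are complex Lagrangian, is precisely the hypothesis under which the Melin--Sj\"ostrand complex eikonal equation admits an almost-analytic solution with ${\rm Im\,}\psi\geq0$; the constant signature $(n_-,n_+)$ on all of $\ol V\cap\Sigma$ guarantees that the construction can be carried out uniformly on compact subsets of $U$.

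The key steps, in order, are: (1) complexify, replacing $\hat p_0(\hat x,\hat\xi)$ by an almost analytic extension and the real phase space by a complexification; (2) for $t$ near $0$ construct the phase by integrating the (complex) Hamilton flow of $-i\hat p_0$ from the initial Lagrangian $\{(\hat x,\hat\eta;\,\hat\eta,\hat x)\}$, i.e.\ solve $\frac{\pr\psi}{\pr t}=i\hat p_0(\hat x,\psi'_{\hat x})$, $\psi|_{t=0}=\langle\hat x,\hat\eta\rangle$, getting a smooth almost-analytic solution for small $t$; (3) check ${\rm Im\,}\psi\geq0$: because $\hat p_0=\sum\ol{\hat q}_j\hat q_j\geq0$ on the reals, $\frac{\pr}{\pr t}{\rm Im\,}\psi = {\rm Re\,}\hat p_0\geq0$ to leading order near $\hat\Sigma$, and the usual convexity argument of Menikoff--Sj\"ostrand propagates nonnegativity; (4) extend to all $t\geq0$ by rescaling — quasi-homogeneity of degree $1$ means $\psi(t,\hat x,\lambda\hat\eta)=\lambda\psi(\lambda t,\hat x,\hat\eta)$, so it suffices to solve for $t$ in a fixed compact interval on $|\hat\eta|=1$, and then the large-$t$ behaviour is controlled by the fact that the flow converges to $\hat\Sigma$ (the nonzero eigenvalues $\pm i\lambda_j$ force exponential attraction in the ${\rm Im\,}\psi$-gaining directions); (5) derive \eqref{e-dhlkmimIII} from $\psi|_{t=0}=\langle\hat x,\hat\eta\rangle$ together with the fact that $\hat\Sigma$ is invariant under the flow and $\hat p_0$ vanishes to second order there, so $\psi-\langle\hat x,\hat\eta\rangle$ and its $\hat x,\hat\eta$ differential vanish on $\hat\Sigma\cap U$; (6) prove the two-sided bound \eqref{e-dhlkmimIIIa} by a Taylor expansion of ${\rm Im\,}\psi$ transverse to $\hat\Sigma$: the Hessian of ${\rm Im\,}\psi$ in the transverse directions is, for each fixed $t$, comparable to $\frac{t}{1+t}\cdot(\text{the positive definite form built from }|\lambda_j|)$, and the uniform lower bound \eqref{e-dhlkmimII} together with boundedness of $\hat p_0$'s derivatives on $U$ turns this into the stated $\asymp$ with the factor $|\hat\eta|\frac{t|\hat\eta|}{1+t|\hat\eta|}$; (7) uniqueness modulo $O(|{\rm Im\,}\psi|^N)$ follows because any two solutions of \eqref{e-dhmpVIII} have the same Taylor expansion along $\hat\Sigma$ to infinite order, by the standard almost-analytic-continuation uniqueness argument.

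The main obstacle is step (4) together with step (6): controlling the solution, and especially the quantitative lower bound on ${\rm Im\,}\psi$, \emph{uniformly for all $t\geq0$ and uniformly on compact subsets of the conic set $U$}. For small $t$ everything is routine Hamilton--Jacobi; the difficulty is that as $t\to\infty$ the phase must "saturate" at $\langle\hat x,\hat\eta\rangle$ on $\hat\Sigma$ while gaining a definite amount of positive imaginary part transverse to $\hat\Sigma$, and this requires knowing that the complex Hamilton flow of $-i\hat p_0$ contracts toward $\hat\Sigma$ at a rate bounded below by $\mu\xi_{2n}$ — which is exactly where \eqref{e-dhlkmimII} and the constant-signature hypothesis \eqref{e-batlkI} are used. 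Concretely I expect to reduce to a normal form near $\hat\Sigma$ in which $\hat p_0$ looks like $\sum_j |\lambda_j|(\xi_j^2+\lambda_j^2 x_j^2)$-type model (the model heat kernel $e^{-t|\lambda_j|\cdot(\cdots)}$ of Mehler type), solve the model exactly, and then treat the genuine $\hat p_0$ as a perturbation using the uniform gap \eqref{e-dhlkmimII}; the error terms $O(|{\rm Im\,}\psi|^N)$ absorb the discrepancy between $\hat p_0$ and its normal form. All the CR-geometric input — that the relevant eigenvalues are those of $\xi_{2n}M^\phi_x-2\lambda\mathcal L_x$ — is already packaged in Theorem~\ref{t-dhlkmp} and \eqref{e-dhlkmimI}, so once the normal form is in place the remaining work is the (by now classical) complex WKB analysis.
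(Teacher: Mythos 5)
Your plan is essentially the same argument the paper relies on: the paper gives no proof of Theorem~\ref{t-dhlkmimI} but refers to Menikoff--Sj\"ostrand~\cite{MS78} and to~\cite{Hsiao08}, and the ingredients you identify --- non-degeneracy of $\hat\sigma$ on $U\bigcap\hat\Sigma$ from Theorem~\ref{t-dhlkmp}, the uniform eigenvalue gap \eqref{e-dhlkmimII} coming from the constant-signature hypothesis, the quasi-homogeneous rescaling to reduce to $\abs{\hat\eta}=1$, the Mehler-type normal form for the large-$t$ contraction toward $\hat\Sigma$, and the transverse Hessian analysis yielding \eqref{e-dhlkmimIIIa} --- are exactly those of the cited construction. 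No gap to report.
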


\begin{thm} \label{t-dhlkmimII}
There exists a function $\psi(\infty,\hat x,\hat\eta)\in C^\infty(U)$ with a
uniquely determined Taylor expansion at each point of\, $\hat\Sigma\bigcap U$ such that 
$\psi(\infty,\hat x,\hat\eta)$ is positively homogeneous of degree $1$ and
for every compact set $K\subset U$ there is a $c_K>0$ such that   
${\rm Im\,}\psi(\infty,\hat x,\hat \eta)\geq c_K\abs{\hat\eta}\Big({\rm dist\,}\big((\hat x,\frac{\hat\eta}{\abs{\hat\eta}}),\hat\Sigma\big)\Big)^2$,  
$d_{\hat x,\hat\eta}(\psi(\infty, \hat x, \hat\eta)-<\hat x,\hat\eta>)=0\text{ on } U\bigcap\hat\Sigma$.
If $\lambda\in C(U)$, $\lambda>0$ and $\lambda(\hat x,\hat\xi)<\min\lambda_j(\hat x,\hat\xi)$, for all $(\hat x,\hat\xi)=(\hat x,(\lambda\omega_0(x)-2{\rm Im\,}\ddbar_b\phi(x)\xi_{2n},\xi_{2n}))\in\hat\Sigma\bigcap U$, where $\lambda_j(\hat x,\hat\xi)$ are the eigenvalues of the Hermitian quadratic form $\xi_{2n}M^\phi_x-2\lambda\mathcal{L}_x$, then the solution $\psi(t,\hat x,\hat\eta)$ of \eqref{e-dhmpVIII} can be chosen so that for every compact set $K\subset U$ and all indices $\alpha$, $\beta$, $\gamma$,
there is a constant $c_{\alpha,\beta,\gamma,K}>0$ such that
\begin{equation} \label{e-dhlkmimIV}
\abs{\pr^\alpha_{\hat x}\pr^\beta_{\hat\eta}\pr^\gamma_t(\psi(t,\hat x,\hat\eta)-\psi(\infty,\hat x,\hat\eta))}
\leq c_{\alpha,\beta,\gamma,K}e^{-\lambda(\hat x,\hat \eta)t} \text{ on }\ol\Real_+\times K.
\end{equation}
\end{thm}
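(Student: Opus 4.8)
\medskip
\noindent\textbf{Proof proposal.}
The plan is to revisit the Hamilton--Jacobi construction that yields $\psi(t,\hat x,\hat\eta)$ in Theorem~\ref{t-dhlkmimI}, but to push it far enough to read off the limit as $t\to\infty$ and to quantify the rate of convergence. Working microlocally near a fixed $\hat\rho_0\in\hat\Sigma\bigcap U$ and using quasi-homogeneity to normalise the frequency, I would first choose homogeneous symplectic coordinates adapted to $\hat\Sigma$; this is legitimate because, by Theorem~\ref{t-dhlkmp}, $\hat\sigma$ is non-degenerate along $U\bigcap\hat\Sigma$, so $\hat\Sigma$ is a homogeneous symplectic submanifold. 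In such coordinates one expands $\psi(t,\cdot)$ as a Taylor series in the variables $w$ transverse to $\hat\Sigma$ with coefficients depending on $t$ and on the coordinates $\theta$ along $\hat\Sigma$. Plugging this into the eikonal equation \eqref{e-dhmpVIII} and using that, by \eqref{e-dhmpIX}, $\hat p_0=\sum^{n-1}_{j=1}\ol{\hat q}_j\hat q_j$ vanishes to second order on $\hat\Sigma$, one finds: the term of order $0$ in $w$ is independent of $t$ and equals $<\hat x,\hat\eta>$ restricted to $\hat\Sigma$; the term of order $1$ vanishes (both facts being forced already by \eqref{e-dhlkmimIII}); the transversal Hessian $\Phi(t,\theta)$ of $\psi$ solves a matrix Riccati equation with $\Phi(0,\theta)=0$; and each coefficient of order $\geq3$ solves an inhomogeneous \emph{linear} ordinary differential equation in $t$ whose forcing is built from $\Phi$ and from the lower-order coefficients.

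The nonzero eigenvalues of the linear Hamilton flow of $i\hat p_0$ transverse to $\hat\Sigma$ are, by the computation behind Theorem~\ref{t-dhlkmp}, exactly $\pm$ the moduli of the eigenvalues of the Hermitian form $\xi_{2n}M^\phi_x-2\lambda\mathcal{L}_x$; by \eqref{e-dhlkmimII} these moduli are $\geq\mu\xi_{2n}>0$, so the transverse flow is hyperbolic, with a smooth splitting into a stable and an unstable Lagrangian subbundle over $U\bigcap\hat\Sigma$. Consequently the Riccati equation has a unique stationary solution $\Phi_\infty(\theta)$ whose graph is the stable Lagrangian, with ${\rm Im\,}\Phi_\infty(\theta)>0$, and the solution $\Phi(t,\theta)$ with $\Phi(0,\theta)=0$ converges to $\Phi_\infty(\theta)$, together with all $t$- and $\theta$-derivatives, at the exponential rate controlled by the spectral gap; feeding this into the linear equations for the higher coefficients (whose damping rates are sums of such moduli, hence still larger than $\lambda$) one obtains, for any continuous $\lambda$ with $0<\lambda(\hat x,\hat\xi)<\min_j\lambda_j(\hat x,\hat\xi)$ as in the statement, the bound $O(e^{-\lambda(\hat x,\hat\eta)t})$ for the difference between each Taylor coefficient at time $t$ and its limit, uniformly on compact subsets of $U$. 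Borel summing the limiting coefficients produces $\psi(\infty,\hat x,\hat\eta)\in C^\infty(U)$, positively homogeneous of degree $1$, with the uniquely determined Taylor expansion along $\hat\Sigma\bigcap U$; doing the analogous summation for $\psi(t,\cdot)-\psi(\infty,\cdot)$ while carrying the exponential factor through yields a choice of $\psi(t,\hat x,\hat\eta)$ for which \eqref{e-dhlkmimIV} holds. Finally, the estimate ${\rm Im\,}\psi(\infty,\hat x,\hat\eta)\geq c_K\abs{\hat\eta}({\rm dist\,}((\hat x,\hat\eta/\abs{\hat\eta}),\hat\Sigma))^2$ and the identity $d_{\hat x,\hat\eta}(\psi(\infty,\hat x,\hat\eta)-<\hat x,\hat\eta>)=0$ on $U\bigcap\hat\Sigma$ follow by letting $t\to\infty$ in \eqref{e-dhlkmimIIIa} (where $\frac{t\abs{\hat\eta}}{1+t\abs{\eta}}\to1$) and in \eqref{e-dhlkmimIII}, using the $C^\infty$ convergence just obtained, or directly from ${\rm Im\,}\Phi_\infty>0$ near $\hat\Sigma$.

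The step I expect to be the main obstacle is controlling the indefinite signature: since $\xi_{2n}M^\phi_x-2\lambda\mathcal{L}_x$ is only assumed non-degenerate, of signature $(n_-,n_+)$ with both $n_\pm$ possibly positive, the transverse Hamilton flow has genuine expanding directions, so one cannot naively let $t\to\infty$ in the flow-out; one must show that the Hamilton--Jacobi (equivalently Riccati) solution started from $\Phi(0)=0$ automatically selects the stable Lagrangian and stays uniformly transverse to the unstable one, while keeping ${\rm Im\,}\psi\geq0$ for all $t$. Making this selection, the Riccati decay estimate, and the Borel summation simultaneously uniform in $t\geq0$ over the whole conic set $U$ (and in $\xi_{2n}>0$ by homogeneity) is the delicate part, and it is precisely what produces the sharp admissible rate $\lambda<\min_j\lambda_j$ in \eqref{e-dhlkmimIV}. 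The fact that $i\hat p_0$ is not real --- so the flow must be taken in a complexification and one works with almost-analytic extensions in the spirit of Melin--Sj\"ostrand~\cite{MS74} --- adds bookkeeping but no new difficulty beyond what already appears in Theorem~\ref{t-dhlkmimI}.
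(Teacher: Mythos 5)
Your proposal is correct and follows essentially the intended argument: the paper does not prove Theorem~\ref{t-dhlkmimII} itself but refers to Menikoff--Sj\"ostrand~\cite{MS78} and~\cite{Hsiao08}, and the construction you outline --- reduction to a matrix Riccati equation for the transversal Hessian along the symplectic submanifold $\hat\Sigma\bigcap U$, hyperbolicity of the transverse linearized flow coming from the eigenvalues $\pm i\lambda_j$ of the fundamental matrix, selection of the stable (positive-imaginary-part) Lagrangian by the solution with $\Phi(0)=0$, exponential convergence at any rate below $\min_j\abs{\lambda_j(\hat x,\hat\xi)}$ for the higher Taylor coefficients, and Borel summation --- is precisely what those references carry out. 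The one point you correctly flag as delicate (that the mixed signature forces expanding transverse directions, so one must verify that the flow-out from $\Phi(0)=0$ stays in the domain of attraction of the stable stationary solution while keeping ${\rm Im\,}\psi\geq0$) is exactly the technical heart of the cited proofs, so there is no gap in the approach.
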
 

For the proofs of Theorem~\ref{t-dhlkmimI} and Theorem~\ref{t-dhlkmimII}, we refer the reader to Menikoff-Sj\"{o}strand~\cite{MS78} and~\cite{Hsiao08}. Put 
\begin{equation}\label{e-gedhanmpI}
{\rm diag\,}'\Bigr((U\bigcap\hat\Sigma)\times(U\bigcap\hat\Sigma)\Bigr):=\set{(\hat x,\hat x,\hat\xi,-\hat\xi);\, (\hat x,\hat\xi)\in U\bigcap\hat\Sigma}.
\end{equation} 
We also need the following which is also well-known (see Proposition 3.5 of part {\rm I} in~\cite{Hsiao08}) 

\begin{thm}\label{t-dhlkmimIII}
The two phases $\psi(\infty, \hat x, \hat \eta)-<\hat y, \hat \eta>$,
$-\ol\psi(\infty, \hat y, \hat\eta)+<\hat x, \hat\eta>$
are equivalent for classical symbols at every point of ${\rm diag\,}'\Bigr((U\bigcap\hat\Sigma)\times(U\bigcap\hat\Sigma)\Bigr)$ in the sense of Melin-Sj\"{o}strand~\cite{MS74}.
\end{thm}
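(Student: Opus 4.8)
The plan is to deduce this from the uniqueness/equivalence theory for Fourier integral operators with complex phase of Melin--Sj\"ostrand \cite{MS74}, combined with the formal self-adjointness of $\Box^{(q)}_s$. Write $\varphi_1(\hat x,\hat y,\hat\eta):=\psi(\infty,\hat x,\hat\eta)-<\hat y,\hat\eta>$ and $\varphi_2(\hat x,\hat y,\hat\eta):=-\ol{\psi(\infty,\hat y,\hat\eta)}+<\hat x,\hat\eta>$, both regarded as phase functions on $\hat D\times\hat D$ with fibre variable $\hat\eta$ ranging over the cone defining $U$. First I would check that each $\varphi_j$ is a nondegenerate complex phase function in the sense of Melin--Sj\"ostrand with ${\rm Im\,}\varphi_j\geq0$: the inequality ${\rm Im\,}\psi(\infty)\geq0$ is part of Theorem~\ref{t-dhlkmimII}, and the nondegeneracy of $d_{\hat x,\hat\eta}\big(\frac{\pr\varphi_j}{\pr\hat\eta_\ell}\big)$ on the critical set follows from Theorem~\ref{t-dhlkmp}: the canonical two form $\hat\sigma$ is non-degenerate on $U\bigcap\hat\Sigma$, so the stationary eikonal equation $\hat p_0(\hat x,\psi'_{\hat x}(\infty,\hat x,\hat\eta))=O(\abs{{\rm Im\,}\psi(\infty)}^N)$, $\forall N$, coming from \eqref{e-dhmpVIII} and Theorem~\ref{t-dhlkmimII}, forces the associated positive Lagrangian to project diffeomorphically onto the $(\hat x,\hat\eta)$ variables near $U\bigcap\hat\Sigma$.

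Next I would identify the positive Lagrangians $\Lambda_{\varphi_1},\Lambda_{\varphi_2}\subset T^*(\hat D\times\hat D)$ generated by the two phases and show that they coincide, as almost analytic manifolds, to infinite order along their common real locus. By \eqref{e-dhlkmimIII} one has $\psi(\infty,\hat x,\hat\eta)=<\hat x,\hat\eta>$ and $d_{\hat x,\hat\eta}(\psi(\infty)-<\hat x,\hat\eta>)=0$ on $\hat\Sigma\bigcap U$; hence the critical manifolds $C_{\varphi_1}=\set{\psi'_{\hat\eta}(\infty,\hat x,\hat\eta)=\hat y}$ and $C_{\varphi_2}=\set{\hat x=\ol{\psi'_{\hat\eta}(\infty,\hat y,\hat\eta)}}$ both reduce, over the reals, to $\set{\hat x=\hat y}$, and the real points of $\Lambda_{\varphi_1}$ and of $\Lambda_{\varphi_2}$ both coincide with the twisted diagonal ${\rm diag\,}'\big((U\bigcap\hat\Sigma)\times(U\bigcap\hat\Sigma)\big)$. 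The essential point is that $\varphi_2(\hat x,\hat y,\hat\eta)=-\ol{\varphi_1(\hat y,\hat x,\hat\eta)}$, i.e.\ $\varphi_2$ is precisely the phase produced from $\varphi_1$ by the formal-adjoint recipe $\int e^{i\varphi}a\,d\hat\eta\mapsto\int e^{-i\ol{\varphi(\hat y,\hat x,\hat\eta)}}\ol a\,d\hat\eta$; consequently $\Lambda_{\varphi_2}$ is the image of $\ol{\Lambda_{\varphi_1}}$ under the anti-involution $((\hat x,\hat\xi),(\hat y,\hat\zeta))\mapsto((\hat y,-\hat\zeta),(\hat x,-\hat\xi))$.

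To close the argument I would invoke that $\Lambda_{\varphi_1}$ is the almost analytic flow-out $\lim_{t\to+\infty}\exp(itH_{\hat p_0})$ of the real diagonal, which is the content of the construction underlying Theorem~\ref{t-dhlkmimI}--Theorem~\ref{t-dhlkmimII} (cf.\ \cite{MS78} and part~{\rm I} of \cite{Hsiao08}): since $\hat p_0=\sum^{n-1}_{j=1}\ol{\hat q}_j\hat q_j$ is real-valued and $\geq0$ on the real cotangent space, this flow-out is invariant under the anti-involution above, so $\Lambda_{\varphi_2}=\Lambda_{\varphi_1}$ to infinite order along ${\rm diag\,}'\big((U\bigcap\hat\Sigma)\times(U\bigcap\hat\Sigma)\big)$. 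Equivalently, since $\Box^{(q)}_s$ is formally self-adjoint, the heat parametrix $A(t)$ of \eqref{e-dhmpVI} can be chosen so that $A(t)^*\equiv A(t)$, hence $A(\infty)^*\equiv A(\infty)$, where $A(\infty)$ is represented by $\varphi_1$ and $A(\infty)^*$ by $\varphi_2$; as both phases are nondegenerate with ${\rm Im\,}\geq0$, the same number of fibre variables, and carry matching signature data along the twisted diagonal, the Melin--Sj\"ostrand equivalence theorem \cite{MS74} applies and yields equivalence for classical symbols at each point of ${\rm diag\,}'\big((U\bigcap\hat\Sigma)\times(U\bigcap\hat\Sigma)\big)$. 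The main obstacle is the bookkeeping with almost analytic extensions: one must check that $C_{\varphi_1}$, $C_{\varphi_2}$, the Lagrangians, and the symbol-transfer identities agree not just pointwise on the real locus but to infinite order transversally to it, which is delicate precisely because $\psi(\infty)$ is pinned down only through its Taylor expansion along $\hat\Sigma$ modulo $O(\abs{{\rm Im\,}\psi}^\infty)$ and because $\hat p_0$ vanishes to second order on $\hat\Sigma$, so the relevant canonical relations are degenerate along their real points.
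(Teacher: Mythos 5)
Your proposal follows essentially the same route as the paper's source for this statement: the paper gives no proof here, citing the result as well known from Proposition 3.5 of part I of \cite{Hsiao08}, and that proof is exactly the Menikoff--Sj\"ostrand argument you outline — identify $-\ol{\psi(\infty,\hat y,\hat\eta)}+<\hat x,\hat\eta>$ as the formal-adjoint phase, observe that the positive almost analytic Lagrangian generated by $\psi(\infty,\hat x,\hat\eta)-<\hat y,\hat\eta>$ is the $t\to+\infty$ flow-out of the diagonal under $\exp(itH_{\hat p_0})$ with $\hat p_0=\sum\ol{\hat q}_j\hat q_j$ real and nonnegative, deduce invariance under the adjoint anti-involution, and conclude by the Melin--Sj\"ostrand equivalence criterion. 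The one caveat is that your sketch defers, rather than executes, the infinite-order comparison of the two almost analytic Lagrangians transversally to their (degenerate) real locus, which is where the actual work in \cite{MS78} and \cite{Hsiao08} lies.
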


From \eqref{e-dhmpIV}, we can check that the principal symbol $\hat p_0$ has the following property:
\begin{equation}\label{e-dhlkmimV}
\hat p_0((x,x_{2n}+\alpha),\hat\eta)=\hat p_0(\hat x,\hat\eta),\ \ \forall \alpha\in\Real.
\end{equation}
Fix $\alpha\in\Real$, we consider 
\[\Td\psi(t,\hat x,\hat\eta):=\psi(t,(x,x_{2n}+\alpha),\hat\eta)-\alpha\eta_{2n}.\] 
From \eqref{e-dhlkmimV}, it is not difficult to see that $\Td\psi(t,\hat x,\hat\eta)$ solves \eqref{e-dhmpVIII}. 
From Theorem~\ref{t-dhlkmimI}, we see that 
\[\Td\psi(t,\hat x,\hat\eta)-\psi(t,\hat x,\hat\eta)=\bigr(\psi(t,(x,x_{2n}+\alpha),\eta)-(x_{2n}+\alpha)\eta_{2n}\bigr)
-\bigr(\psi(t,\hat x,\hat\eta)-x_{2n}\eta_{2n}\bigr)\]
vanishes to infinite order at $\hat\Sigma\bigcap U$, for all $\alpha\in\Real$. This means that the Taylor expansions of 
$\psi(t,\hat x,\hat\eta)-x_{2n}\eta_{2n}$ at $\hat\Sigma\bigcap U$ do not depend on $x_{2n}$. Thus, $\frac{\pr\psi}{\pr x_{2n}}(t,\hat x,\hat\eta)-\hat\eta_{2n}$ vanishes to infinite order at $\hat\Sigma\bigcap U$. We conclude that $\psi(t,\hat x,\hat\eta)-\bigr(\psi(t,(x,0),\hat\eta)+x_{2n}\eta_{2n}\bigr)$ vanishes to infinite order at $\hat\Sigma\bigcap U$. Since we only need to consider Taylor expansions at $\hat\Sigma\bigcap U$, from now on, we assume that 
\begin{equation}\label{e-dhlkmimVI}
\psi(t,\hat x,\hat\eta)=\psi(t,(x,0),\hat\eta)+x_{2n}\eta_{2n}.
\end{equation}
Thus, 
\begin{equation}\label{e-dhlkmimVII}
\psi(\infty,\hat x,\hat\eta)=\psi(\infty,(x,0),\hat\eta)+x_{2n}\eta_{2n}.
\end{equation}

\subsection{The transport equations for $\Box^{(q)}_s$} \label{s-ttef}

We let the full symbol of $\Box^{(q)}_s$ be:
\[\text{full symbol of } \Box^{(q)}_s=\sum^2_{j=0}\hat p_j(\hat x,\hat\xi),\]
where $\hat p_j(\hat x,\hat\xi)$ is positively homogeneous of order $2-j$. We apply $\pr_t+\Box^{(q)}_s$ formally
under the integral in \eqref{e-dhmpVI} and then introduce the asymptotic expansion of
$\Box^{(q)}_s(ae^{i\psi})$~(see page 148 of \cite{MS74}). Setting $(\pr_t+\Box^{(q)}_s)(ae^{i\psi})\sim 0$ and regrouping
the terms according to the degree of quasi-homogeneity. We obtain the transport equations
\begin{equation} \label{e-dhlkmimVIII}
\left\{ \begin{array}{ll}
 T(t,\hat x,\hat\eta,\pr_t,\pr_{\hat x})a_0=O(\abs{{\rm Im\,}\psi}^N),\; \forall N,   \\
 T(t,\hat x,\hat\eta,\pr_t,\pr_{\hat x})a_j+R_j(t,\hat x,\hat \eta,a_0,\ldots,a_{j-1})= O(\abs{{\rm Im\,}\psi}^N),\; \forall N.
 \end{array}\right.
\end{equation}
Here
\[T(t,\hat x,\hat\eta,\pr_t,\pr_{\hat x})=\pr_t-i\sum^{2n}_{j=1}\frac{\pr\hat p_0}{\pr\xi_j}(\hat x,\psi'_{\hat x})\frac{\pr}{\pr x_j}+q(t,\hat x,\hat\eta)\]
where
\[q(t,\hat x,\hat\eta)=\hat p_1(\hat x,\psi'_{\hat x})+\frac{1}{2i}\sum^{2n}_{j,t=1}\frac{\pr^2\hat p_0(\hat x,\psi'_{\hat x})}
    {\pr\xi_j\pr\xi_t}\frac{\pr^2\psi(t,\hat x,\hat\eta)}{\pr x_j\pr x_t}\]
and $R_j$ is a linear differential operator acting on $a_0,a_1,\ldots,a_{j-1}$. 
We note that $q(t,\hat x,\hat \eta)\To q(\infty,\hat x,\hat\eta)$ exponentially fast in the sense of \eqref{e-dhlkmimIV} 
and the same is true for the coefficients of $R_j$, for all $j$.

We pause and introduce some notations. The subprincipal symbol of $\Box^{(q)}_s$ at $(\hat x,\hat\xi)\in\hat\Sigma$ is given by 
\begin{equation}\label{e-dhlkmimIX}
\hat p^s_0(\hat x,\hat\xi)=\hat p_1(\hat x,\hat\xi)+\frac{i}{2}\sum^{2n}_{j=1}\frac{\pr^2\hat p_0(\hat x,\hat\xi)}{\pr \hat x_j\pr\hat\xi_j}\in T^{*0,q}_{\hat x}\hat D\boxtimes T^{*0,q}_{\hat x}\hat D.
\end{equation}
Since $\hat\Sigma$ is doubly characteristic, it is well-known that the subprincipal symbol of $\Box^{(q)}_s$ is invariantly defined on $\hat\Sigma$ (see page 83 in H\"{o}rmander~\cite{Hor85}). The fundamental matrix of $\hat p_0$ at 
$\hat\rho\in\hat\Sigma$ is the linear map $F(\hat\rho)$ on $T_{\hat\rho}(T^*\hat D)$ defined by 
\begin{equation}\label{e-dhlkmimX}
\hat\sigma(t,F(\rho)s)=<t,\hat p_0''(\hat\rho)s>,\ \ t, s\in T_{\hat\rho}(T^*\hat D),
\end{equation}
where $\hat p''_0(\hat\rho)=\left(\begin{array}[c]{cc}
  \frac{\pr^2\hat p_0}{\pr\hat x\pr\hat x}(\hat\rho)& \frac{\pr^2\hat p_0}{\pr\hat\xi\pr\hat x}(\hat\rho) \\
  \frac{\pr^2\hat p_0}{\pr\hat x\pr\hat\xi}(\hat\rho)& \frac{\pr^2\hat p_0}{\pr\hat\xi\pr\hat\xi}(\hat\rho)
\end{array}\right)$.
For $\hat\rho\in\hat\Sigma$, let $\Td{\rm tr\,}F(\hat\rho):=\sum\abs{\mu_j}$, where $\pm i\mu_j$ are non-vanishing eigenvalues of $F(\hat\rho)$. For $\hat\rho=(\hat x,\hat\xi)\in\hat\Sigma\bigcap U$, put 
\begin{equation}\label{e-dhlkmimXI}
\inf(\hat p^s_0(\hat\rho)+\frac{1}{2}\Td{\rm tr\,}F(\hat\rho))=\inf\set{\lambda;\, \mbox{$\lambda$: eigenvalue of $\hat p^s_0(\hat\rho)+\frac{1}{2}\Td{\rm tr\,}F(\hat\rho)$}}
\end{equation}
and set 
\begin{equation}\label{e-gue1373}
N(\hat p^s_0(\hat\rho)+\frac{1}{2}\Td{\rm tr\,}F(\hat\rho))=\set{u\in T^{*0,q}_{\hat x}\hat D;\, (\hat p^s_0(\hat\rho)+\frac{1}{2}\Td{\rm tr\,}F(\hat\rho))u=0}.
\end{equation}

We return to our situation. We can repeat the proof of Proposition 4.3 in part {\rm I} of \cite{Hsiao08} with minor changes and obtain the following

\begin{thm} \label{t-dhplkmiI}
Let $0\leq q_1\leq n-1$, $q_1\in\mathbb N_0$. Let $c_j(\hat x, \hat\eta)\in C^\infty(U,T^{*0,q_1}\hat D\boxtimes T^{*0,q}\hat D)$, $j=0, 1,\ldots$, be positively homogeneous functions of degree $m-j$, $m\in\mathbb Z$. Then, we can find solutions $a_j(t, \hat x, \hat\eta)\in C^\infty(\ol\Real_+\times T^*\hat D,T^{*0,q_1}\hat D\boxtimes T^{*0,q}\hat D)$, $j=0, 1,\ldots$, 
of the system \eqref{e-dhlkmimVIII} with $a_j(0, \hat x, \hat\eta)=c_j(\hat x, \hat\eta)$, $j=0, 1,\ldots$,
where $a_j(t,\hat x,\hat\eta)$ is a quasi-homogeneous function of degree $m-j$ such that $a_j(t,\hat x,\hat\eta)$ has unique Taylor expansions
on $\hat\Sigma$, for all $j$. Furthermore, let $\lambda(\hat x,\hat\eta)\in C(U)$ and $\lambda(\hat x,\hat\eta)<\inf(\hat p^s_0(\hat x,\hat\eta)+\frac{1}{2}\Td{\rm tr\,}F(\hat x,\hat\eta))$, for all $(\hat x,\hat\eta)\in\hat\Sigma\bigcap U$. Then for all indices $\alpha,\beta,\gamma,j$ and every compact set $K\Subset\hat\Sigma\bigcap U$ there exists a
constant $c>0$ such that
\begin{equation} \label{e-dhlkmimXII}
\abs{\pr^\gamma_t\pr^\alpha_{\hat x}\pr^\beta_{\hat\eta}a_j(t,\hat x,\hat\eta)}\leq
ce^{-t\lambda(\hat x,\hat\eta)} \text{ on }\ol\Real_+\times K.
\end{equation}

Moreover, for every $\hat\rho_0=(\hat x_0,\hat\eta_0)\in\hat\Sigma\bigcap U$, 
\begin{equation} \label{e-gue1373I}
\begin{split}
&\mbox{$\lim\limits_{t\To\infty}a_0(t,\hat x_0,\hat\eta_0)$ exists},\\
&\bigr(\lim_{t\To\infty}a_0(t,\hat x_0,\hat\eta_0)\bigr)u\in N(\hat p^s_0(\hat\rho)+\frac{1}{2}\Td{\rm tr\,}F(\hat\rho)),\ \ \forall u\in T^{*0,q}_{\hat x}\hat D. 
\end{split}
\end{equation}
\end{thm}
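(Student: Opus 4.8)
The plan is to follow closely the strategy of Proposition~4.3 in part~I of~\cite{Hsiao08}, adapting it to the present operator $\Box^{(q)}_s$ on $\hat D = D\times\Real$ and to the non-degenerate conic region $U$. First I would solve the transport equations \eqref{e-dhlkmimVIII} by a standard integration-along-bicharacteristics argument: the leading transport operator $T(t,\hat x,\hat\eta,\pr_t,\pr_{\hat x}) = \pr_t - i\sum_j\frac{\pr\hat p_0}{\pr\xi_j}(\hat x,\psi'_{\hat x})\frac{\pr}{\pr x_j} + q(t,\hat x,\hat\eta)$ is, modulo the zeroth-order factor $q$, transport along the (complex) Hamilton flow of $\hat p_0$ projected via $\psi$; one integrates the first equation with initial data $c_0$, then solves successively for $a_j$ with the forcing term $R_j(t,\hat x,\hat\eta,a_0,\dots,a_{j-1})$ and initial data $c_j$. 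Because $\hat p_0$ vanishes to second order on $\hat\Sigma$, and because the phase $\psi$ satisfies the eikonal equation \eqref{e-dhmpVIII} and the estimates of Theorem~\ref{t-dhlkmimI} and Theorem~\ref{t-dhlkmimII}, the equations only need to be solved in the sense of asymptotic (Taylor) expansions along $\hat\Sigma\cap U$ modulo $O(\abs{{\rm Im\,}\psi}^N)$; the solvability and uniqueness of these Taylor expansions at each point of $\hat\Sigma\cap U$ is exactly where one uses that $\hat\sigma$ is non-degenerate there (Theorem~\ref{t-dhlkmp}), so that $\hat p_0$ is a non-degenerate quadratic form transversally to $\hat\Sigma$ and the transport ODE's along the null bicharacteristics of the fundamental matrix $F$ have a well-posed formal solution. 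Quasi-homogeneity of $a_j$ of degree $m-j$ is preserved because $\psi$ is quasi-homogeneous of degree $1$, $\hat p_\ell$ is positively homogeneous of degree $2-\ell$, and the recursion for $R_j$ respects the grading.

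Next I would establish the exponential decay \eqref{e-dhlkmimXII}. The key point is that along $\hat\Sigma\cap U$ the limit $q(\infty,\hat x,\hat\eta)$ of the zeroth-order coefficient in $T$ equals, up to the correction coming from $\psi(\infty)$, the quantity $\hat p^s_0(\hat x,\hat\eta) + \frac12\Td{\rm tr\,}F(\hat x,\hat\eta)$; more precisely, the real parts of the eigenvalues of the relevant linearized transport operator are bounded below by $\inf(\hat p^s_0 + \frac12\Td{\rm tr\,}F)$. Since $q(t,\cdot) \to q(\infty,\cdot)$ exponentially fast (at rate governed by \eqref{e-dhlkmimIV}), and likewise the coefficients of $R_j$, a Gronwall-type argument on the transport ODE gives, for any continuous $\lambda < \inf(\hat p^s_0 + \frac12\Td{\rm tr\,}F)$ on $\hat\Sigma\cap U$, the bound $\abs{\pr^\gamma_t\pr^\alpha_{\hat x}\pr^\beta_{\hat\eta}a_j(t,\hat x,\hat\eta)} \le c\,e^{-t\lambda(\hat x,\hat\eta)}$ on $\ol\Real_+\times K$ for $K\Subset\hat\Sigma\cap U$. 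Here one differentiates the transport equations in $(\hat x,\hat\eta)$, observes that each differentiation produces forcing terms already controlled by induction, and invokes the exponential convergence of all coefficients; the fact that $\mu>0$ in \eqref{e-dhlkmimII} ensures the lower bound on $\inf(\hat p^s_0 + \frac12\Td{\rm tr\,}F)$ is strictly positive uniformly on compacta, which is what keeps the constants finite.

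Finally, for the last assertion \eqref{e-gue1373I}: from the exponential decay of $\pr_t a_0$ (the case $j=0$, $\gamma=1$ of \eqref{e-dhlkmimXII}) the limit $\lim_{t\to\infty}a_0(t,\hat x_0,\hat\eta_0)$ exists at every $\hat\rho_0 = (\hat x_0,\hat\eta_0)\in\hat\Sigma\cap U$. Passing to the limit $t\to\infty$ in the first transport equation $T a_0 = O(\abs{{\rm Im\,}\psi}^N)$ — evaluated on $\hat\Sigma$ where ${\rm Im\,}\psi = 0$ and $\psi = \langle\hat x,\hat\eta\rangle$ — the $\pr_t$ term drops out, the transport (derivative) term drops out because $\frac{\pr\hat p_0}{\pr\xi_j}$ vanishes on $\hat\Sigma$ (second-order vanishing of $\hat p_0$), and one is left with $q(\infty,\hat\rho_0)\cdot(\lim_t a_0) = 0$, i.e. $(\hat p^s_0(\hat\rho) + \frac12\Td{\rm tr\,}F(\hat\rho))(\lim_t a_0(t,\hat x_0,\hat\eta_0))u = 0$ for all $u\in T^{*0,q}_{\hat x}\hat D$, which is precisely membership in $N(\hat p^s_0(\hat\rho) + \frac12\Td{\rm tr\,}F(\hat\rho))$.

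The main obstacle I expect is the bookkeeping that identifies the limiting zeroth-order coefficient $q(\infty,\hat x,\hat\eta)$ of the transport operator with $\hat p^s_0 + \frac12\Td{\rm tr\,}F$ on $\hat\Sigma\cap U$, together with the spectral lower bound on the associated linearized flow that yields the exponential rate; this requires computing the Hessian contribution $\frac{1}{2i}\sum_{j,t}\frac{\pr^2\hat p_0}{\pr\xi_j\pr\xi_t}\frac{\pr^2\psi}{\pr x_j\pr x_t}$ as $t\to\infty$ using the quadratic behaviour of $\psi(\infty)$ transversal to $\hat\Sigma$, and relating it to the eigenvalues $\pm i\mu_j$ of the fundamental matrix $F$ — exactly the kind of symplectic linear algebra carried out in~\cite{MS78} and~\cite{Hsiao08}, which here must be checked to go through verbatim in the presence of the extra variable $x_{2n}$ (harmless by \eqref{e-dhlkmimV}, \eqref{e-dhlkmimVI}).
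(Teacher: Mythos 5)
Your overall strategy is the right one and matches the paper's (the paper simply cites Proposition~4.3 of part~I of~\cite{Hsiao08}, i.e.\ the Menikoff--Sj\"ostrand transport-equation analysis): solve \eqref{e-dhlkmimVIII} iteratively along the flow, identify the limiting zeroth-order coefficient of $T$ on $\hat\Sigma\cap U$ with $\hat p^s_0+\frac{1}{2}\Td{\rm tr\,}F$, and run a Gronwall argument for \eqref{e-dhlkmimXII}. The first two paragraphs of your proposal are essentially correct.

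There is, however, a genuine gap in your derivation of \eqref{e-gue1373I}, and it stems from an incorrect side remark. You claim that $\mu>0$ in \eqref{e-dhlkmimII} forces $\inf(\hat p^s_0+\frac{1}{2}\Td{\rm tr\,}F)$ to be strictly positive on compacta; Theorem~\ref{t-dhplkmiII} shows this is false precisely in the case that matters, $q=n_-$, where the infimum is $0$. Consequently \eqref{e-dhlkmimXII} only holds for $\lambda<0$, giving bounds $ce^{-t\lambda}$ that \emph{grow} in $t$, so you cannot deduce the decay of $\pr_t a_0$ nor the existence of $\lim_{t\to\infty}a_0(t,\hat x_0,\hat\eta_0)$ from that estimate. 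The correct mechanism is the spectral structure of $Q:=\hat p^s_0(\hat\rho)+\frac{1}{2}\Td{\rm tr\,}F(\hat\rho)$ acting on $(0,q)$-forms: by \eqref{e-mslknaIV} it is diagonalizable with all eigenvalues nonnegative, so the solution $e^{-tQ}c_0(\hat\rho)$ of the limiting ODE $\pr_t a_0+Qa_0=0$ on $\hat\Sigma$ converges as $t\to\infty$ to the spectral projection of $c_0$ onto $\Ker Q=N(\hat p^s_0(\hat\rho)+\frac{1}{2}\Td{\rm tr\,}F(\hat\rho))$; the components in the positive eigenspaces decay exponentially while the kernel component is constant. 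This single observation gives both the existence of the limit and its membership in $N(\hat p^s_0+\frac{1}{2}\Td{\rm tr\,}F)$ (your argument for the latter, passing to the limit in the transport equation, is fine once existence is secured, but as written it rests on the broken first step).
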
 

We are therefore interested in whether $\inf(\hat p^s_0(\hat\rho)+\frac{1}{2}\Td{\rm tr\,}F(\hat\rho))>0$, $\hat\rho\in U\bigcap\hat\Sigma$. We have the following 

\begin{thm} \label{t-dhplkmiII}
If $q=n_-$, then for all $(\hat x, \hat\xi)\in\hat\Sigma\bigcap U$, we have 
\begin{equation} \label{e-dhlkmimXIII}
\inf(\hat p^s_0(\hat x,\hat\xi)+\frac{1}{2}\Td{\rm tr\,}F(\hat x,\hat\xi))=0.
\end{equation}
If $q\neq n_-$, then there is a constant $\mu_0>0$ such that for all $(\hat x, \hat\xi)\in\hat\Sigma\bigcap U$, we have 
\begin{equation} \label{e-dhlkmimXIIIbis}
\inf(\hat p^s_0(\hat x,\hat\xi)+\frac{1}{2}\Td{\rm tr\,}F(\hat x,\hat\xi))>\mu_0\xi_{2n}.
\end{equation}
\end{thm}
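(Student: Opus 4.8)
The plan is to reduce the computation of $\inf(\hat p^s_0(\hat\rho)+\tfrac12\widetilde{\rm tr\,}F(\hat\rho))$ to a pointwise linear algebra problem on the symbol level, using the explicit form of $\Box^{(q)}_s$ given in \eqref{e-dhmpIV} together with \eqref{e-dhmpIX}. First I would fix a point $\hat\rho=(\hat x,\hat\xi)\in\hat\Sigma\bigcap U$ and choose, as in the proof of Theorem~\ref{t-dhI}, an orthonormal frame $\ol Z_{1},\ldots,\ol Z_{n-1}$ of $T^{0,1}X$ near the base point diagonalizing the Hermitian quadratic form $\xi_{2n}M^\phi_x-2\lambda\mathcal L_x$ (where $\hat\xi$ corresponds to the parameter $\lambda$ via \eqref{e-dhmpXI}); by \eqref{e-dhlkmimII} the eigenvalues $\lambda_1,\ldots,\lambda_{n-1}$ are all nonzero, with $n_-$ of them negative and $n_+$ positive. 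Writing out the leading ($k$-free, degree-two) part of $\Box^{(q)}_s$ from \eqref{e-dhmpIV} in this frame, the operator is, modulo lower-order terms, a sum over $j,t$ of $e^\wedge_je^{\wedge,*}_t$ composed with $(Z^*_t-i\ol Z_t(\phi)\pr_{x_{2n}})(Z_j-iZ_j(\phi)\pr_{x_{2n}})$ plus the commutator term $\sum_{j,t}e^\wedge_je^{\wedge,*}_t[\hat Z_j,\hat Z^*_t]$. On $\hat\Sigma$ the non-commutator part of $\hat p_0''$ contributes to the fundamental matrix, and the commutator term is exactly what produces the subprincipal symbol contribution; this is the same model operator analysed in Boutet de Monvel--Sj\"ostrand and in part~I of \cite{Hsiao08}.

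The key computation is then to identify $\hat p^s_0(\hat\rho)+\tfrac12\widetilde{\rm tr\,}F(\hat\rho)$ as the endomorphism of $T^{*0,q}_{\hat x}\hat D$ which, in the basis $e_{j_1}\wedge\cdots\wedge e_{j_q}$ (strictly increasing multi-indices), acts diagonally with eigenvalue $\sum_{j\notin J}\abs{\lambda_j}-\sum_{j\in J}\lambda_j$ on $e_J:=e_{j_1}\wedge\cdots\wedge e_{j_q}$, where $J=\{j_1<\cdots<j_q\}$; equivalently the eigenvalue is $\sum_{j\in J,\,\lambda_j>0}2\lambda_j+\sum_{j\notin J,\,\lambda_j<0}2\abs{\lambda_j}$, always nonnegative. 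This is the standard computation: the $\tfrac12\widetilde{\rm tr\,}F$ term contributes $\sum_{j=1}^{n-1}\abs{\lambda_j}$ (independent of $J$), and the subprincipal symbol $\hat p^s_0$ contributes $-\sum_{j\in J}\lambda_j-\sum_{j\notin J}\abs{\lambda_j}$ after the $\tfrac i2\sum\pr^2_{\hat x_j\hat\xi_j}\hat p_0$ correction is folded in. I would carry this out by a direct symbol computation analogous to \eqref{e-msmilkVII}, being careful that $\xi_{2n}>0$ on $U$ so that the sign conventions in \eqref{e-dhmpXI} match $M^\phi$ rather than $-M^\phi$. The minimum over all strictly increasing $J$ of length $q$ of $\sum_{j\in J,\lambda_j>0}2\lambda_j+\sum_{j\notin J,\lambda_j<0}2\abs{\lambda_j}$ is attained precisely by taking $J$ to consist of the indices of the $n_-$ negative eigenvalues (when this is possible, i.e.\ when $q=n_-$), giving the value $0$; for any other $q$ at least one strictly positive $\abs{\lambda_j}$ survives in the sum, and by \eqref{e-dhlkmimII} every surviving term is $\geq 2\mu\xi_{2n}$, so one may take $\mu_0=2\mu$ (or any positive constant $\leq 2\mu$), which gives \eqref{e-dhlkmimXIIIbis}. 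The uniformity in $(\hat x,\hat\xi)$ over $U$ comes from \eqref{e-dhlkmimII} being a uniform bound.

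The main obstacle I expect is purely bookkeeping: getting the subprincipal symbol right. Because $\hat\Sigma$ is doubly characteristic, $\hat p^s_0$ is well-defined on $\hat\Sigma$ (H\"ormander \cite{Hor85}), but computing it requires tracking the $\tfrac i2\sum_j\pr^2_{\hat x_j\hat\xi_j}\hat p_0$ term in a chosen coordinate/frame system and checking it combines correctly with $\hat p_1$ and with the commutator endomorphism $\sum_{j,t}e^\wedge_je^{\wedge,*}_t[\hat Z_j,\hat Z^*_t]$ evaluated at $\hat\rho$; one must verify frame-independence of the answer, which is guaranteed abstractly but should be checked against Lemma~\ref{l-dhI} and \eqref{e-patVI} to make sure the quadratic form that appears is indeed $\xi_{2n}M^\phi_x-2\lambda\mathcal L_x$ and not some variant. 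Since this exact model has been treated in \cite{Hsiao08} (Proposition 4.3 and the surrounding discussion in part~I) and, in the CR setting without line bundle, in \cite{Hsiao08} and \cite{HM09}, I would import that computation essentially verbatim, substituting the line-bundle-twisted quadratic form $\xi_{2n}M^\phi_x-2\lambda\mathcal L_x$ (whose nondegeneracy on $U$ is exactly hypothesis \eqref{e-dhmpXII}) for the Levi form, and then conclude by the elementary optimization over multi-indices described above.
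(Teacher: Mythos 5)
Your proposal follows the paper's proof essentially verbatim: compute $\hat p^s_0$ on $\hat\Sigma$ via the Poisson brackets $\{\hat q_j,\ol{\hat q}_t\}$, identify the nonvanishing eigenvalues of $F(\hat\rho)$ as $\pm i\lambda_j$ with $\lambda_j$ the eigenvalues of $\xi_{2n}M^\phi_x-2\lambda\mathcal{L}_x$, diagonalize at the given point, and minimize the resulting eigenvalue $\sum_{j\in J,\lambda_j>0}\lambda_j+\sum_{j\notin J,\lambda_j<0}\abs{\lambda_j}$ over strictly increasing $J$ with $\abs{J}=q$, invoking \eqref{e-dhlkmimII} for the uniform lower bound when $q\neq n_-$. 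The only blemish is a bookkeeping slip: your first expression $\sum_{j\notin J}\abs{\lambda_j}-\sum_{j\in J}\lambda_j$ does not vanish at the optimal $J$ and is not actually equivalent to your second expression, which in turn carries an extra factor of $2$ relative to \eqref{e-mslknaIV}; neither issue affects the structural conclusion or the validity of the argument.
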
 

\begin{proof} 
First, we compute the subprincipal symbol $\hat p^s_0(\hat\rho)$, $\hat\rho\in\hat\Sigma$. 
For an operator of the form $(Z^*_j-i\ol Z_j(\phi)\frac{\pr}{\pr x_{2n}})(Z_j-iZ_j(\phi)\frac{\pr}{\pr x_{2n}})$ this subprincipal symbol is given by $-\frac{1}{2i}\{\hat q_j,\ol{\hat q}_j\}$
and the contribution from the double sum in
\eqref{e-dhmpIV} to the subprincipal symbol of $\Box^{(q)}_s$ is $\frac{1}{i}\sum^{n-1}_{j,t=1}e^\wedge_je^{\wedge, *}_t\circ\{\hat q_j,\ol{\hat q}_t\}$, where $\hat q_j$ is the principal symbol of $Z_j-iZ_j(\phi)\frac{\pr}{\pr x_{2n}}$ and $\{\hat q_j, \ol{\hat q}_t\}$ denotes the Poisson bracket of $\hat q_j$ and $\ol{\hat q}_t$. We recall that
$\{\hat q_j, \ol{\hat q}_t\}=\sum^{2n}_{s=1}(\frac{\pr\hat q_j}{\pr \xi_s}\frac{\pr\ol{\hat q}_t}{\pr x_s}-\frac{\pr\hat q_j}{\pr x_s}\frac{\pr\ol{\hat  q}_t}{\pr\xi_s})$.
We get the subprincipal symbol of $\Box^{(q)}_s$ on $\hat\Sigma$,
$\hat p^s_0=\sum^{n-1}_{j=1}-\frac{1}{2i}\{\hat q_j,\ol{\hat q}_j\}+\sum^{n-1}_{j,t=1}
e^\wedge_je^{\wedge, *}_t\frac{1}{i}\{\hat q_j,\ol{\hat q}_t\}$. 
For $\hat\rho=(\hat x,(\lambda\omega_0(x)-2{\rm Im\,}\ddbar_b\phi(x)\xi_{2n},\xi_{2n}))\in\hat\Sigma$, from the proof of Lemma~\ref{l-patI}, we see that 
\begin{equation}\label{e-mslknaIb}
\{\hat q_j,\ol{\hat q_t}\}(\hat\rho)=iM^\phi_x(\ol Z_j,Z_t)\xi_{2n}-2i\lambda\mathcal{L}_x(\ol Z_j,Z_t).
\end{equation} 
Thus, 
\begin{equation} \label{e-mslknaI}
\begin{split}
\hat p^s_0(\hat\rho)=&\sum^{n-1}_{j=1}-\frac{1}{2}\Bigr(M^\phi_x(\ol Z_j,Z_j)\xi_{2n}-2\lambda\mathcal{L}_x(\ol Z_j,Z_j)\Bigr)\\
&\quad+\sum^{n-1}_{j,t=1}
e^\wedge_je^{\wedge, *}_t\Bigr(M^\phi_x(\ol Z_j,Z_t)\xi_{2n}-2\lambda\mathcal{L}_x(\ol Z_j,Z_t)\Bigr),
\end{split}
\end{equation}
for all $\hat\rho=(\hat x,\hat\xi)=(\hat x,(\lambda\omega_0(x)-2{\rm Im\,}\ddbar_b\phi(x)\xi_{2n},\xi_{2n}))\in\hat\Sigma$. 

Now, we compute the fundamental matrix $F$ of $\hat p_0$ at $\hat\rho\in\hat\Sigma$. From now on, for any $f\in C^\infty(T^*\hat D)$, we write $H_f$ to denote the Hamilton field of $f$. We can choose the basis
$H_{\hat q_1},\ldots,H_{\hat q_{n-1}},H_{\ol{\hat q}_1},\ldots,H_{\ol{\hat q}_{n-1}}$
for $T_\rho\hat\Sigma^\bot$, where $T_\rho\hat\Sigma^{\bot}$ is the orthogonal to
$\Complex T_\rho\hat\Sigma$ in $\Complex T_\rho(T^*\hat D)$ with
respect to canonical two form $\hat\sigma$. Since $\hat p_0=\sum^{n-1}_{j=1}\ol{\hat q}_j\hat q_j$, we have $H_{\hat p_0}=\sum^{n-1}_{j=1}\Bigr(\ol{\hat q}_jH_{\hat q_j}+\hat q_jH_{\ol{\hat q}_j}\Bigr)$.
We compute the linearization of $H_{\hat p_0}$ at $\hat\rho$
\[H_{\hat p_0}\Big(\hat\rho+\sum(t_kH_{\hat q_k}+s_kH_{\ol{\hat q}_k})\Big)= O(\abs{t,s}^2)+\sum_{j,k}t_k\{\hat q_k,\ol {\hat q}_j\}H_{\hat q_j} 
+\sum_{j,k}s_k\{\ol{\hat q}_k,\hat q_j\}H_{\ol{\hat q}_j}.\]
So $F(\hat\rho)$ is expressed in the basis $H_{\hat q_1},\ldots,H_{\hat q_{n-1}},H_{\ol{\hat q}_1},\ldots,H_{\ol{\hat q}_{n-1}}$ by
\begin{equation} \label{e-mslknaII}
F(\hat\rho)=\left(
 \begin{array}{cc}
   \{\hat q_t,\ol{\hat q}_j\}(\hat\rho)  &  0  \\
     0     &  \{\ol{\hat q}_t,\hat q_j\}(\hat\rho)
 \end{array}\right).
\end{equation}
Again, from \eqref{e-mslknaIb}, we see that the non-vanishing eigenvalues of $F(\hat\rho)$ are
\begin{equation} \label{e-mslknaIII}
\pm i\lambda_1(x,\lambda,\xi_{2n}),\ldots,\pm i\lambda_{n-1}(x,\lambda,\xi_{2n}),
\end{equation}
where $\hat\rho=(\hat x,(\lambda\omega_0(x)-2{\rm Im\,}\ddbar_b\phi(x)\xi_{2n},\xi_{2n}))$ and $\lambda_j(x,\lambda,\xi_{2n})$, $j=1,\ldots,n-1$, are the eigenvalues of the Hermitian quadratic form $M^\phi_x\xi_{2n}-2\lambda\mathcal{L}_x$. 

To compute further, fix a point $\hat\rho_0=((p,x_{2n}),(\lambda\omega_0(p)-2{\rm Im\,}\ddbar_b\phi(p)\xi_{2n},\xi_{2n}))\in U\bigcap\hat\Sigma$ and we may assume that the Hermitian quadratic form $\xi_{2n}M^\phi_p-2\lambda\mathcal{L}_p$ is diagonalized with respect to $\ol Z_j(p)$, $j=1,\ldots,n-1$. Thus, 
\[\begin{split}
\sum^{n-1}_{j,t=1}e^\wedge_je^{\wedge, *}_t\Bigr(M^\phi_p(\ol Z_j,Z_t)\xi_{2n}-2\lambda\mathcal{L}_p(\ol Z_j,Z_t)\Bigr)
=\sum^{n-1}_{j=1}e^\wedge_je^{\wedge, *}_j\Bigr(M^\phi_p(\ol Z_j,Z_j)\xi_{2n}-2\lambda\mathcal{L}_p(\ol Z_j,Z_j)\Bigr).\end{split}\]
From this, \eqref{e-mslknaI} and \eqref{e-mslknaIII}, we see that on $\hat\Sigma\bigcap U$ and on the space of $(0,q)$ forms, $\hat p^s_0(\hat\rho)+\frac{1}{2}\Td{\rm tr\,}F(\hat\rho)$, $\hat\rho=(\hat x, (\lambda\omega_0(x)-2{\rm Im\,}\ddbar_b\phi(x)\xi_{2n},\xi_{2n}))\in\hat\Sigma\bigcap U$ has the eigenvalues
\begin{equation} \label{e-mslknaIV}\begin{split}
&\frac{1}{2}\sum^{n-1}_{j=1}\abs{\lambda_j(x,\lambda,\xi_{2n})}-\frac{1}{2}\sum_{j\notin J}\lambda_j(x,\lambda,\xi_{2n})+\frac{1}{2}\sum_{j\in J}
  \lambda_j(x,\lambda,\xi_{2n}), \; \abs{J}=q,  \\
& J=(j_1,j_2,\ldots,j_q),\; 1\leq j_1<j_2<\cdots<j_q\leq n-1,
\end{split}\end{equation}
where $\lambda_j(x,\lambda,\xi_{2n})$, $j=1,\ldots,n-1$, are the eigenvalues of the Hermitian quadratic form $M^\phi_x\xi_{2n}-2\lambda\mathcal{L}_x$. 

Note that $\xi_{2n}M^\phi_x-2\lambda\mathcal{L}_x$ is non-degenerate of constant signature $(n_-,n_+)$, for every $(\hat x, (\lambda\omega_0(x)-2{\rm Im\,}\ddbar_b\phi(x)\xi_{2n},\xi_{2n}))\in\hat\Sigma\bigcap U$ and there is a constant $\mu>0$ such that $\abs{\lambda_j(x,\lambda,\xi_{2n})}>\mu\xi_{2n}$, $j=1,\ldots,n-1$, for all $(\hat x, (\lambda\omega_0(x)-2{\rm Im\,}\ddbar_b\phi(x)\xi_{2n},\xi_{2n}))\in\hat\Sigma\bigcap U$ (see \eqref{e-dhlkmimII}). Combining this observation with \eqref{e-mslknaIV}, it is straightforward to see that \eqref{e-dhlkmimXIII} and \eqref{e-dhlkmimXIIIbis} hold. 
\end{proof} 

\begin{rem}\label{r-gue1373}
With the notations and assumptions above, let $q=n_-$ and let
\[\rho_0=((x_0,(s_0\omega_0(x_0)-2{\rm Im\,}\ddbar_b\phi(x_0))\in V\bigcap\Sigma.\] 
Let $\ol Z_{1,s_0},\ldots,\ol Z_{n-1,s_0}$ be an orthonormal frame of $T^{1,0}_xX$ varying smoothly with $x$ in a neighbourhood of $p$, for which the Hermitian quadratic form $M^\phi_x-2s_0\mathcal{L}_x$ is diagonalized at $x_0$. 
That is, 
\[M^\phi_p\bigr(\ol Z_{j,s_0}(x_0),Z_{t,s_0}(x_0)\bigr)-2s_0\mathcal{L}_{x_0}\bigr(\ol Z_{j,s_0}(x_0),Z_{t,s_0}(x_0)\bigr)
=\lambda_j(s_0)\delta_{j,t},\ \ j,t=1,\ldots,n-1.\]
Assume that $\lambda_j(s_0)<0$, $j=1,\ldots,n_-$. Let $e_{1,s_0},\ldots,e_{n-1,s_0}$ denote the basis of $T^{*0,1}X$, which is dual to $Z_{1,s_0},\ldots,Z_{n-1,s_0}$. Put 
\begin{equation}\label{e-gue1373III}
\mathcal{N}(x_0,s_0,n_-):=\set{ce_{1,s_0}(x_0)\wedge\cdots\wedge e_{n_-,s_0}(x_0)\in T^{*0,q}_{x_0}X;\, c\in\Complex}. 
\end{equation}
From the proof of Theorem~\ref{t-dhplkmiII}, it is not difficult to see that for every 
$\hat\rho=((x,x_{2n}),(\xi_{2n}\xi,\xi_{2n}))\in U\bigcap\hat\Sigma$, $\xi=s_0\omega_0(x)-2{\rm Im\,}\ddbar_b\phi(x)$, we have 
\begin{equation}\label{e-gue1373II}
N(\hat p^s_0(\hat\rho)+\frac{1}{2}\Td{\rm tr\,}F(\hat\rho))=\mathcal{N}(x,s_0,n_-).
\end{equation}
\end{rem}

Put 
\begin{equation}\label{e-mslknaVa}
\pi(U)=\set{\hat x\in\hat D;\, (\hat x,\hat\xi)\in U, \mbox{for some $\hat\xi\in\Real^{2n}$}}.
\end{equation}

From Theroem~\ref{t-dhplkmiI} and Theorem~\ref{t-dhplkmiII}, we get the following

\begin{thm}\label{t-dhplkmiIII}
Let $0\leq q_1\leq n-1$, $q_1\in\mathbb N_0$. Let $c_j(\hat x, \hat\eta)\in C^\infty(U,T^{*0,q_1}\hat D\boxtimes T^{*0,q}\hat D)$, $j=0, 1,\ldots$, be positively homogeneous functions of degree $m-j$, $m\in\mathbb Z$. Then, we can find solutions $a_j(t, \hat x, \hat\eta)\in C^\infty(\ol\Real_+\times T^*\hat D,T^{*0,q_1}\hat D\boxtimes T^{*0,q}\hat D)$, $j=0, 1,\ldots$, 
of the system \eqref{e-dhlkmimVIII} with $a_j(0, \hat x, \hat\eta)=c_j(\hat x, \hat\eta)$, $j=0, 1,\ldots$,
where $a_j(t,\hat x,\hat\eta)$ is a quasi-homogeneous function of degree $m-j$, such that $a_0(t,\hat x,\hat\eta)$ satisfies \eqref{e-gue1373I} and for all $\alpha, \beta\in\mathbb N_0^{2n}$, $\gamma, j\in\mathbb N_0$, every $\varepsilon_0>0$ and compact set $K\Subset\pi(U)$, there is a constant $c>0$ such that
\begin{equation}\label{e-mslknaV}
\abs{\pr^\gamma_t\pr^\alpha_{\hat x}\pr^\beta_{\hat\eta}a_j(t,\hat x,\hat\eta)}\leq ce^{\varepsilon_0t\abs{\eta_{2n}}}(1+\abs{\hat\eta})^{m-j-\abs{\beta}+\gamma}\ \ \mbox{on $\ol\Real_+\times\bigr(K\times\Real^{2n})\bigcap(U\bigcap\hat\Sigma)\bigr)$}.
\end{equation}
Furthermore, if $q\neq n_-$, then for all $\alpha, \beta\in\mathbb N_0^{2n}$, $\gamma, j\in\mathbb N_0$, and every compact set $K\Subset\pi(U)$, there is a constant $c>0$ such that
\begin{equation}\label{e-mslknaVI}
\abs{\pr^\gamma_t\pr^\alpha_{\hat x}\pr^\beta_{\hat\eta}a_j(t,\hat x,\hat\eta)}\leq ce^{-\mu_0t\abs{\eta_{2n}}}(1+\abs{\hat\eta})^{m-j-\abs{\beta}+\gamma}\ \ \mbox{on $\ol\Real_+\times\bigr(K\times\Real^{2n})\bigcap(U\bigcap\hat\Sigma)\bigr)$},
\end{equation}
where $\mu_0>0$ is a constant as in \eqref{e-dhlkmimXIIIbis}.
\end{thm}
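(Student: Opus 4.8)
The plan is to obtain Theorem~\ref{t-dhplkmiIII} by combining Theorem~\ref{t-dhplkmiI} and Theorem~\ref{t-dhplkmiII} with the quasi-homogeneity of the amplitudes $a_j$. First I would invoke Theorem~\ref{t-dhplkmiI} to produce the solutions $a_j(t,\hat x,\hat\eta)\in C^\infty(\ol\Real_+\times T^*\hat D,T^{*0,q_1}\hat D\boxtimes T^{*0,q}\hat D)$ of \eqref{e-dhlkmimVIII} with $a_j(0,\hat x,\hat\eta)=c_j(\hat x,\hat\eta)$, quasi-homogeneous of degree $m-j$, with uniquely determined Taylor expansions on $\hat\Sigma$, satisfying \eqref{e-gue1373I} for $a_0$, and satisfying the decay estimate \eqref{e-dhlkmimXII} for any $\lambda\in C(U)$ with $\lambda<\inf(\hat p^s_0+\tfrac12\Td{\rm tr\,}F)$ on $\hat\Sigma\cap U$. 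All the structural claims of Theorem~\ref{t-dhplkmiIII} are then immediate; what remains is to promote \eqref{e-dhlkmimXII}, which lives only on compact subsets of $\hat\Sigma\cap U$, to the conic symbol-type bounds \eqref{e-mslknaV} and \eqref{e-mslknaVI}.

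The next step is to insert Theorem~\ref{t-dhplkmiII}. Recall from \eqref{e-dhmpXIII} that $\eta_{2n}=\xi_{2n}>0$ on $U$. If $q=n_-$, Theorem~\ref{t-dhplkmiII} gives $\inf(\hat p^s_0+\tfrac12\Td{\rm tr\,}F)=0$ on $\hat\Sigma\cap U$ by \eqref{e-dhlkmimXIII}, so for any $\varepsilon_0>0$ the function $\lambda(\hat x,\hat\eta)=-\varepsilon_0\eta_{2n}$ is admissible in Theorem~\ref{t-dhplkmiI} and yields $\abs{\pr^\gamma_t\pr^\alpha_{\hat x}\pr^\beta_{\hat\eta}a_j}\le c\,e^{\varepsilon_0 t\eta_{2n}}$ on $\ol\Real_+\times K'$. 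If $q\neq n_-$, \eqref{e-dhlkmimXIIIbis} gives $\inf(\hat p^s_0+\tfrac12\Td{\rm tr\,}F)>\mu_0\xi_{2n}$ on $\hat\Sigma\cap U$, so $\lambda(\hat x,\hat\eta)=\mu_0\eta_{2n}$ (after possibly shrinking the constant $\mu_0$ of \eqref{e-dhlkmimXIIIbis}) is admissible and yields $\abs{\pr^\gamma_t\pr^\alpha_{\hat x}\pr^\beta_{\hat\eta}a_j}\le c\,e^{-\mu_0 t\eta_{2n}}$ on $\ol\Real_+\times K'$, for every compact $K'\Subset\hat\Sigma\cap U$.

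The remaining, and technically heaviest, step is the rescaling that converts these fiberwise-compact estimates into the claimed symbol bounds. Since $V$ is bounded and $\hat\eta=(\xi_{2n}\xi,\xi_{2n})$ with $(x,\xi)\in V$ on $U$, one has $\abs{\hat\eta}\asymp\xi_{2n}=\eta_{2n}$ on $U\cap\hat\Sigma$, so it suffices to produce the factor $(1+\abs{\hat\eta})^{m-j-\abs{\beta}+\gamma}e^{\pm ct\abs{\hat\eta}}$. For $\hat\eta\neq0$ write $\hat\eta=r\hat\eta_1$, $r=\abs{\hat\eta}$, $\abs{\hat\eta_1}=1$; quasi-homogeneity of degree $m-j$ gives $a_j(t,\hat x,r\hat\eta_1)=r^{m-j}a_j(rt,\hat x,\hat\eta_1)$. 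Differentiating this identity and using the chain rule, each $\pr_{\hat\eta}$ produces a factor $O(r^{-1})$ together with a term carrying a factor $t$ that is absorbed via $t\,e^{-art}\lesssim r^{-1}e^{-art/2}$ (respectively $t\,e^{art}\lesssim r^{-1}e^{a't}$ with $a'>a$), each $\pr_t$ produces a factor $r$, and the $\hat x$-derivatives contribute no extra powers of $r$. The evaluation points $(\hat x,\hat\eta_1)$ then range over $\{(\hat x,\hat\eta_1)\in U\cap\hat\Sigma;\,\hat x\in K,\ \abs{\hat\eta_1}=1\}$, which is relatively compact in $\hat\Sigma\cap U$ because $U$ is conic, so the estimates of the previous paragraph apply there uniformly. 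Collecting the powers of $r$ and using $r\asymp\eta_{2n}$ on $U\cap\hat\Sigma$ gives \eqref{e-mslknaV} (with constant depending on $\varepsilon_0$) and, when $q\neq n_-$, \eqref{e-mslknaVI}. The main obstacle is precisely this last bookkeeping: one must track how the derivatives interact with the quasi-homogeneous scaling so that the exponent of $(1+\abs{\hat\eta})$ comes out exactly $m-j-\abs{\beta}+\gamma$, while the sign of the exponential factor $e^{\pm ct\eta_{2n}}$ is the one dictated by Theorem~\ref{t-dhplkmiII}.
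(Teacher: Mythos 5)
Your proposal is correct and follows exactly the route the paper intends: the paper derives Theorem~\ref{t-dhplkmiIII} directly ``from Theorem~\ref{t-dhplkmiI} and Theorem~\ref{t-dhplkmiII}'' with no further argument, and your choice of admissible $\lambda$ ($-\varepsilon_0\eta_{2n}$ when $q=n_-$, $\mu_0\eta_{2n}$ when $q\neq n_-$) together with the quasi-homogeneous rescaling $a_j(t,\hat x,r\hat\eta_1)=r^{m-j}a_j(rt,\hat x,\hat\eta_1)$ and the absorption $t e^{\pm art}\lesssim r^{-1}e^{\pm a't}$ is precisely the standard bookkeeping being left implicit. The only point worth tightening is the uniformity of the constants over the unit-sphere slice $\{\hat x\in K,\ \abs{\hat\eta_1}=1\}\cap U\cap\hat\Sigma$, which follows not from conicity of $U$ alone but from $\ol V\cap\Sigma\subset\Sigma'$ (i.e.\ \eqref{e-dhlkmimII}), allowing the compact-set estimates of Theorem~\ref{t-dhplkmiI} to be applied on a slightly enlarged cone.
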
 

We introduce some symbol classes 

\begin{defn} \label{d-d-msmapamo}
Let $\mu\geq0$ be a non-negative constant. For $0\leq q_1, q_2\leq n-1$, $q_1, q_2\in\mathbb N_0$ and $m\in\Real$, we say that $a\in\hat S^m_\mu(\ol\Real_+\times U,T^{*0,q_1}\hat D\boxtimes T^{*0,q_2}\hat D)$
if $a\in C^\infty(\ol\Real_+\times U,T^{*0,q_1}\hat D\boxtimes T^{*0,q_2}\hat D)$
and for all indices $\alpha, \beta\in\mathbb N^{2n}_0$, $\gamma\in\mathbb N_0$, every compact set $K\Subset\pi(U)$ and every
$\varepsilon>0$, there exists a constant $c>0$ such that
\[\abs{\pr^\gamma_t\pr^\alpha_{\hat x}\pr^\beta_{\hat\eta}a(t,\hat x,\hat\eta)}\leq
 ce^{t(-\mu\abs{\eta_{2n}}+\varepsilon\abs{\eta_{2n}})}(1+\abs{\eta})^{m+\gamma-\abs{\beta}},\; \hat x\in K, (\hat x,\hat\eta)\in U.\] 
%For $q_1=q_2=0$, we denote $\hat S^m_\mu(\ol\Real_+\times U):=
%\hat S^m_\mu(\ol\Real_+\times U,T^{*0,0}\hat D\boxtimes T^{*0,0}\hat D)$.
\end{defn}

\begin{rem} \label{r:ss-heatclass}
It is easy to see that we have the following properties:
 \begin{enumerate}
  \item If $a\in\hat S^m_{\mu_1}$, $b\in\hat S^l_{\mu_2}$ then $ab\in\hat S^{m+l}_{\mu_1+\mu_2}$,
        $a+b\in\hat S^{\max(m,l)}_{\min(\mu_1,\mu_2)}$.
  \item If $a\in\hat S^m_\mu$ then $\pr^\gamma_t\pr^\alpha_{\hat x}\pr^\beta_{\hat\eta}a\in
        \hat S^{m-\abs{\beta}+\gamma}_\mu$.
  \item If $a_j\in\hat S^{m_j}_\mu$, $j=0,1,2,\ldots$ and $m_j\searrow -\infty$ as $j\To\infty$,
        then there exists $a\in\hat S^{m_0}_\mu$ such that $a-\sum^{v-1}_0a_j\in
        \hat S^{m_v}_\mu$, for all $v=1,2,\ldots$. Moreover, if $\hat S^{-\infty}_\mu$ denotes $\bigcap_{m\in\Real}\hat    S^m_\mu$ then $a$ is unique modulo $\hat S^{-\infty}_\mu$.
 \end{enumerate}

If $a$ and $a_j$ have the properties of (c), we write $a\sim\sum^\infty_{j=0}a_j$ in $\hat S^{m_0}_\mu$.
\end{rem} 

%We also need 

%\begin{defn} \label{d-d-msmapamoI}
%For $0\leq q_1, q_2\leq n-1$, $q_1, q_2\in\mathbb N_0$ and $m\in\Real$, we say that $a\in\Td S^m(\ol\Real_+\times U,%\Lambda^{0,q_1}T^*\hat D\boxtimes\Lambda^{0,q_2}T^*\hat D)$
%if $a\in C^\infty(\ol\Real_+\times U,\Lambda^{0,q_1}T^*\hat D\boxtimes\Lambda^{0,q_2}T^*\hat D)$
%and for all indices $\alpha, \beta\in\mathbb N^{2n}_0$, $\gamma\in\mathbb N_0$, every compact set $K\Subset\pi(U)$, %there exists a constant $c>0$ such that
%\[\abs{\pr^\gamma_t\pr^\alpha_{\hat x}\pr^\beta_{\hat\eta}a(t,\hat x,\hat\eta)}\leq
% c(1+\abs{\eta})^{m+\gamma-\abs{\beta}},\;x\in K.\] 
 
%For $q_1=q_2=0$, we denote $\Td S^m(\ol\Real_+\times U):=
%\Td S^m(\ol\Real_+\times U,\Lambda^{0,0}T^*\hat D\boxtimes\Lambda^{0,0}T^*\hat D)$. 

%If $a_j\in\Td S^{m_j}$, $j=0,1,2,\ldots$ and $m_j\searrow -\infty$ as $j\To\infty$, then there exists $a\in\Td S^{m_0}$ %such that $a-\sum^{v-1}_0a_j\in\Td S^{m_v}_\mu$, for all $v=1,2,\ldots$. Moreover, if $\Td S^{-\infty}$ denotes %$\bigcap_{m\in\Real}\Td S^m$ then $a$ is unique modulo $\Td S^{-\infty}_\mu$.

%If $a$ and $a_j$ have the property above, we write $a\sim\sum^\infty_0a_j$ in $\Td S^{m_0}$.
%\end{defn}

From Theorem~\ref{t-dhplkmiIII} and the standard Borel construction,
we get the following

\begin{thm} \label{t-aldhmpI}
Let $0\leq q_1\leq n-1$, $q_1\in\mathbb N_0$. Let $c_j(\hat x, \hat\eta)\in C^\infty(U,T^{*0,q_1}\hat D\boxtimes T^{*0,q}\hat D)$, $j=0, 1,\ldots$, be positively homogeneous functions of degree $m-j$, $m\in\mathbb Z$. We can find solutions
$a_j(t,\hat x,\hat\eta)\in C^\infty(\ol\Real_+\times U,T^{*0,q_1}\hat D\boxtimes T^{*0,q}\hat D)$, $j=0, 1,\ldots$ of the system \eqref{e-dhlkmimVIII} with $a_j(0, \hat x, \hat\eta)=c_j(\hat x, \hat\eta)$, $j=0, 1,\ldots$,
where $a_j(t,\hat x,\hat\eta)$ is a quasi-homogeneous function of degree $m-j$, such that $a_0(t,\hat x,\hat\eta)$ satisfies \eqref{e-gue1373I} and $a_j\in\hat S^{m-j}_\mu(\ol\Real_+\times U,T^{*0,q_1}\hat D,\boxtimes T^{*0,q}\hat D)$, $j=0,1,\ldots$, 
for some $\mu$ with $\mu>0$ if $q\neq n_-$ and $\mu=0$ if $q=n_-$.
\end{thm}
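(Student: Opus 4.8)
The plan is to combine Theorem~\ref{t-dhplkmiIII} with a standard Borel summation argument carried out in the symbol classes $\hat S^m_\mu$ of Definition~\ref{d-d-msmapamo}. First I would apply Theorem~\ref{t-dhplkmiIII} verbatim to the given initial data $c_j(\hat x,\hat\eta)$, producing, for each $j$, a solution $a_j(t,\hat x,\hat\eta)\in C^\infty(\ol\Real_+\times U,T^{*0,q_1}\hat D\boxtimes T^{*0,q}\hat D)$ of the $j$-th transport equation in \eqref{e-dhlkmimVIII} with $a_j(0,\hat x,\hat\eta)=c_j(\hat x,\hat\eta)$, quasi-homogeneous of degree $m-j$, and such that $a_0$ satisfies \eqref{e-gue1373I}. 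The estimates \eqref{e-mslknaV} and \eqref{e-mslknaVI} of Theorem~\ref{t-dhplkmiIII} hold only on $\ol\Real_+\times\bigr((K\times\Real^{2n})\bigcap(U\bigcap\hat\Sigma)\bigr)$, i.e.\ on the characteristic manifold, whereas the conclusion here asserts membership in $\hat S^{m-j}_\mu(\ol\Real_+\times U,\cdots)$, which requires the bound on all of $\ol\Real_+\times U$. Since the transport equations are only required to hold modulo $O(\abs{{\rm Im\,}\psi}^N)$ for every $N$, and ${\rm Im\,}\psi$ controls ${\rm dist}((\hat x,\hat\eta/\abs{\hat\eta}),\hat\Sigma)^2$ by \eqref{e-dhlkmimIIIa}, the off-$\hat\Sigma$ behaviour of $a_j$ is free up to terms vanishing to infinite order at $\hat\Sigma$; so one may, after multiplying by a suitable cutoff and using an almost-analytic / Taylor-extension argument off $\hat\Sigma$ (as in the references \cite{MS78}, \cite{Hsiao08}), arrange that $a_j$ extends to all of $\ol\Real_+\times U$ with the desired global estimate, i.e.\ $a_j\in\hat S^{m-j}_\mu$ with $\mu>0$ when $q\neq n_-$ (from \eqref{e-dhlkmimXIIIbis}) and $\mu=0$ when $q=n_-$ (from \eqref{e-dhlkmimXIII}).

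Next, having the sequence $a_j\in\hat S^{m-j}_\mu$ with $m-j\searrow-\infty$, I would invoke property (c) of Remark~\ref{r:ss-heatclass} to produce a single symbol $a\in\hat S^{m}_\mu(\ol\Real_+\times U,T^{*0,q_1}\hat D\boxtimes T^{*0,q}\hat D)$ with $a-\sum_{j=0}^{v-1}a_j\in\hat S^{m-v}_\mu$ for every $v\geq1$; this is the Borel construction, uniquely determining $a$ modulo $\hat S^{-\infty}_\mu$. The point then is to check that this $a$ still solves the transport system \eqref{e-dhlkmimVIII} in the required asymptotic sense. Plugging $a\sim\sum_j a_j$ into $T a + \sum$ (collecting the operators $T$, $R_j$ appearing in \eqref{e-dhlkmimVIII}) and regrouping by quasi-homogeneity degree, each finite-degree piece is annihilated up to $O(\abs{{\rm Im\,}\psi}^N)$ by construction of the $a_j$, and the tail $a-\sum_{j=0}^{v-1}a_j$ contributes an error in $\hat S^{m-v}_\mu$, hence negligible as $v\to\infty$; thus $(\pr_t+\Box^{(q)}_s)(ae^{i\psi})$ is negligible in the appropriate sense and $a$ is a genuine (approximate) solution. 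The initial condition $a(0,\hat x,\hat\eta)\sim\sum_j c_j(\hat x,\hat\eta)$ holds because $a_j(0,\cdot)=c_j$ for each $j$. The property \eqref{e-gue1373I} for $a_0$ is inherited directly from Theorem~\ref{t-dhplkmiIII} since $a_0$ is unchanged by the Borel procedure.

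I expect the main obstacle to be the passage from the on-$\hat\Sigma$ estimates \eqref{e-mslknaV}--\eqref{e-mslknaVI} to global estimates on $\ol\Real_+\times U$ that place $a_j$ in $\hat S^{m-j}_\mu$: one must extend the $a_j$ off $\hat\Sigma$ while preserving both the quasi-homogeneity of degree $m-j$ and the exponential-decay-in-$t\abs{\eta_{2n}}$ weight, which is exactly the delicate interplay already handled in the analysis of the transport equations for the heat kernel in \cite{MS78} and in part~I of \cite{Hsiao08}; here one repeats that argument, using \eqref{e-dhlkmimIIIa} to see that the extension ambiguity is $O(\abs{{\rm Im\,}\psi}^N)$ and hence harmless, and keeping track that the decay rate $\mu$ is strictly positive precisely when $q\neq n_-$ by Theorem~\ref{t-dhplkmiII}. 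Once this is in place, the remaining steps — Borel summation via Remark~\ref{r:ss-heatclass}(c) and verification of the transport system and initial data — are routine and parallel to part~I of \cite{Hsiao08}.
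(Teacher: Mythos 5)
Your proposal is correct and follows essentially the same route as the paper, whose proof of this theorem is precisely ``Theorem~\ref{t-dhplkmiIII} plus the standard Borel construction'': you rightly identify that the only real work is extending the $a_j$ off $\hat\Sigma$ (where the estimates \eqref{e-mslknaV}--\eqref{e-mslknaVI} live) to all of $\ol\Real_+\times U$ while keeping quasi-homogeneity and the $e^{-\mu t\abs{\eta_{2n}}}$ weight, which is harmless because the transport equations need only hold modulo $O(\abs{{\rm Im\,}\psi}^N)$ and ${\rm Im\,}\psi$ controls ${\rm dist\,}((\hat x,\hat\eta/\abs{\hat\eta}),\hat\Sigma)^2$. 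The final Borel summation of the $a_j$ into a single symbol $a$ is not needed for this particular statement (it is performed later in the paper), but it does no harm.
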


For $0\leq q_1\leq n-1$, $q_1\in\mathbb N_0$, let $a_j(t, \hat x, \hat\eta)\in\hat S^{m-j}_\mu(\ol\Real_+\times U,T^{*0,q_1}\hat D\boxtimes T^{*0,q}\hat D)$, $j=0,1,\ldots$, 
be quasi-homogeneous functions of degree $m-j$, $m\in\mathbb Z$. Assume that
$a_j(t, \hat x, \hat\eta)$, $j=0,1,\dots$,
are the solutions of the system \eqref{e-dhlkmimVIII}. 
Let $a(t,\hat x,\hat\eta)\sim\sum^\infty_{j=0}a_j(t,\hat x,\hat\eta)$ in $\hat S^m_\mu(\ol\Real_+\times U,T^{*0,q_1}\hat D\boxtimes T^{*0,q}\hat D)$. Put
\[(\pr_t+\Box^{(q)}_s)(e^{i\psi(t,\hat x,\hat\eta)}a(t,\hat x,\hat\eta))=e^{i\psi(t,\hat x,\hat\eta)}b(t,\hat x,\hat\eta),\]
where 
\[\mbox{$b(t,\hat x,\hat\eta)\sim\sum^\infty_{j=0}b_j(t,\hat x,\hat\eta)$ in $\hat S^{m+2}_\mu(\ol\Real_+\times U,T^{*0,q_1}\hat D\boxtimes T^{*0,q}\hat D)$},\] 
$b_j\in \hat S^{m+2-j}_\mu(\ol\Real_+\times U,T^{*0,q_1}\hat D, T^{*0,q}\hat D)$, 
$b_j$ is a quasi-homogeneous function of degree $m+2-j$, $j=0,1,\ldots$. 

Since $a_j(t,\hat x,\hat\eta)$, $j=0,1,\ldots$, solve the transport equations \eqref{e-dhlkmimVIII}, we have that for all $N\in\mathbb N$, every compact set $K\Subset\pi(U)$, $\varepsilon>0$, and all indices $\alpha, \beta\in\mathbb N^{2n}_0$, there exists $c>0$ such that 
\begin{equation}\label{e-mslknaVII}
\abs{\pr^\alpha_{\hat x}\pr^\beta_{\hat\eta}b}\leq ce^{\varepsilon t\abs{\eta_{2n}}}\bigr(\abs{\hat\eta}^{-N}+\abs{\hat\eta}^{m+2-N}({\rm Im\,}\psi(t,\hat x,\hat\eta)\bigr)^N\bigr)\ \ \mbox{on $\ol\Real_+\times\bigr(K\times\Real^{2n})\bigcap(U\bigcap\hat\Sigma)\bigr)$}.
\end{equation} 

Conversely, if
$(\pr_t+\Box^{(q)}_s)(e^{i\psi(t,\hat x,\hat\eta)}a(t,\hat x,\hat\eta))=e^{i\psi(t,\hat x,\hat\eta)}b(t,\hat x,\hat\eta)$
and $b$ satisfies the same kind of estimates as \eqref{e-mslknaVII},
then $a_j(t,\hat x,\hat\eta)$, $j=0,1,\ldots$, solve the system \eqref{e-dhlkmimVIII} to infinite order at $\hat\Sigma\bigcap U$. From this observation and the particular structure of the problem, we will next show 

\begin{thm} \label{t-aldhmpII}
Let $q=n_-$. Let $c_j(\hat x, \hat\eta)\in C^\infty(U,T^{*0,q}\hat D\boxtimes T^{*0,q}\hat D)$, $j=0, 1,\ldots$, be positively homogeneous functions of degree $m-j$, $m\in\mathbb Z$. We can find solutions
$a_j(t,\hat x,\hat\eta)\in\hat S^{m-j}_0(\ol\Real_+\times U,T^{*0,q}\hat D\boxtimes T^{*0,q}\hat D)$, $j=0, 1,\ldots$ of the system \eqref{e-dhlkmimVIII}, where $a_j(t,\hat x,\hat\eta)$ is a quasi-homogeneous function of degree $m-j$, for each $j$, with $a_j(0, \hat x, \hat\eta)=c_j(\hat x, \hat\eta)$, $j=0, 1,\ldots$, 
\[a_j(t,\hat x,\hat\eta)-a_j(\infty,\hat x,\hat\eta)\in\hat S^{m-j}_\mu(\ol\Real_+\times U,T^{*0,q}\hat D\boxtimes T^{*0,q}\hat D),\ \ j=0,1,2,\ldots,\]
and for every $(\hat x,\hat\eta)=((x,x_{2n}),(\eta_{2n}\eta,\eta_{2n}))\in U\bigcap\hat\Sigma$, $\eta=s_0\omega_0(x)-2{\rm Im\,}\ddbar_b\phi(x)$, we have 
\begin{equation}\label{e-gue1373IV}
a_0(\infty,\hat x,\hat\eta)u\in\mathcal{N}(x,s_0,n_-),\ \ \forall u\in T^{*0,q}_{\hat x}\hat D,
\end{equation}
where $\mu>0$ is a constant and $a_j(\infty,\hat x,\hat\eta)\in C^\infty(U,T^{*0,q}\hat D\boxtimes T^{*0,q}\hat D)$, $j=0,1,\ldots$, $a_j(\infty,\hat x,\hat\eta)$ is a positively homogeneous function of degree $m-j$, for each $j$. 
\end{thm}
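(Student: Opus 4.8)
The plan is to reduce the construction to a family of linear ordinary differential equations in $t$ along $\hat\Sigma\bigcap U$ and then to analyse their behaviour as $t\to\infty$. First I would observe that, since $\hat p_0$ vanishes to second order on $\hat\Sigma$ and $d_{\hat x,\hat\eta}(\psi-<\hat x,\hat\eta>)=0$ on $\hat\Sigma\bigcap U$ by \eqref{e-dhlkmimIII}, the drift term $-i\sum_{j=1}^{2n}\frac{\pr\hat p_0}{\pr\xi_j}(\hat x,\psi'_{\hat x})\frac{\pr}{\pr x_j}$ of the transport operator $T(t,\hat x,\hat\eta,\pr_t,\pr_{\hat x})$ vanishes on $\hat\Sigma\bigcap U$; hence, restricted to $\hat\Sigma\bigcap U$, the system \eqref{e-dhlkmimVIII} becomes the pure ODEs $\pr_t a_0+q(t,\hat x,\hat\eta)a_0=0$ and $\pr_t a_j+q(t,\hat x,\hat\eta)a_j=-R_j(t,\hat x,\hat\eta,a_0,\dots,a_{j-1})$ at each fixed $(\hat x,\hat\eta)\in\hat\Sigma\bigcap U$. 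By Theorem~\ref{t-dhlkmimII} and the discussion around \eqref{e-dhlkmimVIII}, $q(t,\hat x,\hat\eta)$ converges exponentially fast, as $t\to\infty$, to $q(\infty,\hat x,\hat\eta)=\hat p^s_0(\hat x,\hat\eta)+\frac{1}{2}\Td{\rm tr\,}F(\hat x,\hat\eta)$, and the coefficients of each $R_j$ converge exponentially as well. I would start from the solutions $a_j\in\hat S^{m-j}_0(\ol\Real_+\times U,T^{*0,q}\hat D\boxtimes T^{*0,q}\hat D)$ with $a_j(0,\hat x,\hat\eta)=c_j(\hat x,\hat\eta)$ furnished by Theorem~\ref{t-aldhmpI}, so that only the existence of the exponentially convergent limits $a_j(\infty,\hat x,\hat\eta)$ remains to be shown; positive homogeneity of degree $m-j$ of these limits is then immediate from quasi-homogeneity of $a_j$ by letting $t\to\infty$.

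Next I would diagonalise at each point of $\hat\Sigma\bigcap U$. Let $P=P(\hat x,\hat\eta)$ be the orthogonal projection of $T^{*0,q}_{\hat x}\hat D$ onto $\mathcal{N}=N(\hat p^s_0(\hat x,\hat\eta)+\frac{1}{2}\Td{\rm tr\,}F(\hat x,\hat\eta))$ and put $Q=I-P$. By Theorem~\ref{t-dhplkmiII}, \eqref{e-mslknaIV} and \eqref{e-dhlkmimII}, the self-adjoint operator $q(\infty,\hat x,\hat\eta)$ annihilates $\mathcal{N}$ and is bounded below by $c\,\eta_{2n}$ on $\mathcal{N}^\perp$ for some $c>0$; in particular $Pq(\infty)=q(\infty)P=0$. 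For $j=0$, writing $q(t)=q(\infty)+r(t)$ with $r(t)\in\hat S^{-\infty}_\mu$ exponentially small, the projected equations read $\pr_t(Pa_0)=-P\,r(t)\,a_0$ and $\pr_t(Qa_0)+q(\infty)(Qa_0)=-Q\,r(t)\,a_0$; since $a_0\in\hat S^m_0$, both right-hand sides lie in $\hat S^{-\infty}_\mu$, whence $Pa_0$ converges exponentially and, by a Duhamel estimate using the coercivity of $q(\infty)$ on $\mathcal{N}^\perp$, $Qa_0\to0$ exponentially. Thus $a_0(t,\hat x,\hat\eta)\to a_0(\infty,\hat x,\hat\eta)$ with $a_0-a_0(\infty)\in\hat S^m_\mu$ and $a_0(\infty,\hat x,\hat\eta)u\in\mathcal{N}$ for all $u$, which gives \eqref{e-gue1373IV} once $N(\hat p^s_0+\frac{1}{2}\Td{\rm tr\,}F)$ is identified with $\mathcal{N}(x,s_0,n_-)$ via Remark~\ref{r-gue1373}.

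For $j\geq1$ I would proceed by induction. Granting that $a_0,\dots,a_{j-1}$ converge exponentially to $a_0(\infty),\dots,a_{j-1}(\infty)$, the exponential convergences $q(t)\to q(\infty)$ and $\psi(t,\cdot)\to\psi(\infty,\cdot)$ force $R_j(t,\hat x,\hat\eta,a_0,\dots,a_{j-1})\to R_j^\infty(\hat x,\hat\eta,a_0(\infty),\dots,a_{j-1}(\infty))$ exponentially on $\hat\Sigma\bigcap U$. The $Q$-component then satisfies $\pr_t(Qa_j)+q(\infty)(Qa_j)=-QR_j(t)+(\text{exponentially small})$, so that $Qa_j$ converges exponentially to the unique solution in $\mathcal{N}^\perp$ of $q(\infty)\,b=-QR_j^\infty$. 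The $P$-component satisfies $\pr_t(Pa_j)=-PR_j(t)+(\text{exponentially small})$, and convergence of $Pa_j$ is equivalent to the compatibility condition $PR_j^\infty\big|_{\hat\Sigma\bigcap U}=0$, i.e. to the stationary transport source being orthogonal to $\mathcal{N}$.

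\textbf{The compatibility condition $PR_j^\infty|_{\hat\Sigma\bigcap U}=0$ is the main obstacle.} I expect to verify it exactly as in part~{\rm I} of~\cite{Hsiao08} for the $\ddbar$-Neumann case, using the particular structure of the problem: $\Box^{(q)}_s=\ddbar_s\ol\pr^*_s+\ol\pr^*_s\ddbar_s$ together with Proposition~\ref{p-dhmpI} shows that at each $(\hat x,\hat\eta)\in\hat\Sigma\bigcap U$ the relevant model operator is a shifted harmonic oscillator whose ground state is exactly $\mathcal{N}(x,s_0,n_-)=\set{c\,e_{1,s_0}(x)\wedge\cdots\wedge e_{n_-,s_0}(x)}$, and one checks by a direct computation that the contributions of the lower order terms $\varepsilon(Z-iZ(\phi)\frac{\pr}{\pr x_{2n}})+\varepsilon(Z^*-i\ol Z(\phi)\frac{\pr}{\pr x_{2n}})+(\text{zero order})$ to $R_j^\infty$ land in $\mathcal{N}^\perp$; conceptually this is the symbol-level expression of the fact that $e^{i\psi(\infty)}a(\infty)$ is the symbol of $\lim_{t\to\infty}e^{-t\Box^{(q)}_s}$, the self-adjoint projection onto the approximate kernel of $\Box^{(q)}_s$, and it is precisely here that the hypothesis $q=n_-$ is used (for $q\neq n_-$ one has $\mathcal{N}=0$ and Theorem~\ref{t-dhplkmiII} yields genuine exponential decay, so the issue does not arise). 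Once this is established, $Pa_j$, and hence $a_j$, converges exponentially; exponential convergence of all derivatives $\pr_t^\gamma\pr_{\hat x}^\alpha\pr_{\hat\eta}^\beta a_j$, equivalently $a_j-a_j(\infty)\in\hat S^{m-j}_\mu$, then follows by differentiating \eqref{e-dhlkmimVIII}, restricting to $\hat\Sigma\bigcap U$ order by order in the transverse variables and repeating the ODE argument, and a Borel summation as in Theorem~\ref{t-aldhmpI} assembles the full symbol.
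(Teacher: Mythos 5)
Your reduction to ODEs along $\hat\Sigma\bigcap U$ and the $P$/$Q$ splitting relative to $\mathcal{N}=N(\hat p^s_0+\frac{1}{2}\Td{\rm tr\,}F)$ correctly isolate where the difficulty lies, and your treatment of $j=0$ and of the $Q$-components is sound. But the step you yourself flag as the main obstacle --- the compatibility condition $PR_j^\infty|_{\hat\Sigma\bigcap U}=0$ for $j\geq1$, without which $Pa_j$ grows in $t$ and $a_j(\infty)$ does not exist --- is not proved. You propose to ``check by a direct computation that the contributions of the lower order terms land in $\mathcal{N}^\perp$'', but $R_j$ is only defined implicitly by regrouping the full asymptotic expansion of $\Box^{(q)}_s(ae^{i\psi})$ according to quasi-homogeneity; for $j\geq1$ it mixes second derivatives of the phase, derivatives of $a_0,\ldots,a_{j-1}$, and the first- and zero-order terms of $\Box^{(q)}_s$, and there is no reason for these contributions to land in $\mathcal{N}^\perp$ term by term. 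What has to be shown is that the \emph{total} stationary source is orthogonal to the kernel, and that is essentially the content of the theorem, not something that follows from inspecting the lower-order terms of the operator.

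The paper closes exactly this gap by a mechanism that avoids any compatibility check. Starting from arbitrary solutions $\Td a_j$ supplied by Theorem~\ref{t-aldhmpI}, one applies $\ddbar_s$ and $\ol{\pr}^*_s$ to $e^{i\psi}\Td a$, obtaining $e^{i\psi}c$ and $e^{i\psi}d$ with amplitudes valued in degrees $q+1$ and $q-1$. The intertwining relations $\ddbar_s\Box^{(q)}_s=\Box^{(q+1)}_s\ddbar_s$ and $\ol{\pr}^*_s\Box^{(q)}_s=\Box^{(q-1)}_s\ol{\pr}^*_s$ of \eqref{e-mslknaVIII} show that the $c_j$ and $d_j$ solve the transport systems in those degrees, and since $q\pm1\neq n_-$, Theorems~\ref{t-dhplkmiI} and \ref{t-dhplkmiIII} force $c$ and $d$ to decay exponentially. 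Hence $\Box^{(q)}_s(e^{i\psi}\Td a)=\ddbar_s(e^{i\psi}d)+\ol{\pr}^*_s(e^{i\psi}c)$ decays exponentially, so $\pr_t(e^{i\psi}\Td a)$ does as well, and because $\pr_t\psi$ decays exponentially one reads off $\pr_t\Td a\in\hat S^{m}_\mu$ directly; the limits $a_j(\infty)$ then exist with the stated remainder estimates after a Borel correction modulo terms vanishing to infinite order on $\hat\Sigma\bigcap U$. If you wish to keep your ODE framework, you must replace the unproven compatibility claim by this intertwining argument (or an equivalent one); as written, the proof is incomplete at its central point.
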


\begin{proof}
Let $\Td a_j(t, \hat x, \hat\eta)\in\hat S^{m-j}_0(\ol\Real_+\times U,T^{*0,q}\hat D\boxtimes T^{*0,q}\hat D)$, $j=0,1,\ldots$, be any solutions of the system \eqref{e-dhlkmimVIII} with 
$\Td a_j(0, \hat x, \hat\eta)=c_j(\hat x,\hat\eta)$, $j=0,1,\dots$, where $\Td a_j(t, \hat x, \hat\eta)$ is a quasi-homogeneous function of degree $m-j$, $j=0,1,\ldots$. Set $\Td a(t,\hat x,\hat\eta)\sim\sum^\infty_{j=0}\Td a_j(t,\hat x,\hat\eta)$ in $\hat S^m_0(\ol\Real_+\times U,T^{*0,q}\hat D\boxtimes T^{*0,q}\hat D)$ and put 
\[(\pr_t+\Box^{(q)}_s)(e^{i\psi(t,\hat x,\hat\eta)}\Td a(t,\hat x,\hat\eta))=e^{i\psi(t,\hat x,\hat\eta)}\Td b(t,\hat x,\hat\eta),\] 
where 
\[\mbox{$\Td b(t,\hat x,\hat\eta)\sim\sum^\infty_{j=0}\Td b_j(t,\hat x,\hat\eta)$ in $\hat S^{m+2}_\mu(\ol\Real_+\times U,T^{*0,q}\hat D\boxtimes T^{*0,q}\hat D)$},\] 
$\Td b_j\in \hat S^{m+2-j}_\mu(\ol\Real_+\times U,T^{*0,q}\hat D\boxtimes T^{*0,q}\hat D)$, 
$\Td b_j$ is a quasi-homogeneous function of degree $m+2-j$, $j=0,1,\ldots$. Since $\Td a_j(t,\hat x,\hat\eta)$, $j=0,1,\ldots$, solve the transport equations \eqref{e-dhlkmimVIII}, $\Td b(t,\hat x,\hat\eta)$ satisfies \eqref{e-mslknaVII}.
Note that we have the interwing properties
\begin{equation}\label{e-mslknaVIII}
\ddbar_s\Box^{(q)}_s=\Box^{(q+1)}_s\ddbar_s,\ \ \ol{\pr}^*_s\Box^{(q)}_s=\Box^{(q-1)}_s\ol{\pr}^*_s.
\end{equation}
Now, 
\[\ddbar_s(e^{i\psi}\Td a)=e^{i\psi}c,\ \ \ol{\pr}^*_s(e^{i\psi}\Td a)=e^{i\psi}d,\]
$c\sim\sum_{j=0}^\infty c_j(t,\hat x,\hat \eta)$ in $\hat S^{m+1}_0(\ol\Real_+\times U,T^{*0,q}\hat D\boxtimes T^{*0,q+1}\hat D)$, $d\sim\sum_{j=0}^\infty d_j(t,\hat x,\hat \eta)$ in $\hat S^{m+1}_0(\ol\Real_+\times U,T^{*0,q}\hat D\boxtimes T^{*0,q-1}\hat D)$, where $c_j(t,\hat x,\hat\eta)$ and $d_j(t,\hat x,\hat\eta)$ are quasi-homogeneous functions of degree $m+1-j$, $j=0,1,\ldots$. From \eqref{e-mslknaVIII}, we have $(\pr_t+\Box^{(q+1)}_s)(e^{i\psi}c)=e^{i\psi}e$, 
$(\pr_t+\Box^{(q-1)}_s)(e^{i\psi}d)=e^{i\psi}f$, 
where $e$ and $f$ satisfy \eqref{e-mslknaVII}.  Since $e$ and $f$ satisfy \eqref{e-mslknaVII}, we deduce that 
$c_j$, $j=0,1,\ldots$, solve the system \eqref{e-dhlkmimVIII} and $d_j$, $j=0,1,\ldots$, solve the system \eqref{e-dhlkmimVIII} too. From Theorem~\ref{t-dhplkmiI} and Theorem~\ref{t-dhplkmiIII}, we see that $c_j(t,\hat x,\hat\eta)$, $d_j(t,\hat x,\hat\eta)$, $j=0,1,\ldots$, satisfy the same kind of estimates as \eqref{e-mslknaVI}. 
Now $\Box^{(q)}_s=\ol{\pr_s}^*\ddbar_s+\ol{\pr}^*_s\ddbar_s$, so $\Box^{(q)}_s(e^{i\psi}\Td a)=e^{i\psi}g$,
where $g$ satisfies the same kind of estimates as \eqref{e-mslknaVI}. From this we see that
$\pr_t(e^{i\psi}\Td a)=e^{i\psi}h$, where $h$ has the same properties as $g$. Since
$h=i(\pr_t\psi)\Td a+\pr_t\Td a$ and $\pr_t\psi$ satisfy the same kind of estimates as
\eqref{e-mslknaVI}, $\pr _t\Td a$ satisfies the same kind of estimates as
\eqref{e-mslknaVI}. From the standard Borel construction, we can find $a_j(t,\hat x,\hat\eta)\in C^\infty(\ol\Real_+\times U,T^{*0,q}\hat D\boxtimes T^{*0,q}\hat D)$, $j=0,1,\ldots$, such that $a_j(t,\hat x,\hat\eta)-\Td a_j(t,\hat x,\hat\eta)$ vanishes to infinite order at each point of $\hat\Sigma\bigcap U$, $a_j(t,\hat x,\hat\eta)$ is a quasi-homogeneous function of degree $m-j$ and there is a $\mu>0$ such that $\pr_ta_j(t,\hat x,\hat\eta)\in\hat S^{m-j}_{\mu}(\ol\Real_+\times U,T^{*0,q}\hat D\boxtimes T^{*0,q}\hat D)$, $j=0,1,\ldots$. We conclude that we can find
$a_j(\infty,\hat x,\hat\eta)\in C^\infty(U,T^{*0,q}\hat D\boxtimes T^{*0,q}\hat D)$,
where $a_j(\infty,\hat x,\hat\eta)$ is a positively homogeneous function of degree $m-j$, $j=0,1,\ldots$, 
such that $a_j(t,\hat x,\hat\eta)-a_j(\infty,\hat x,\hat\eta)\in\hat S^{m-j}_\mu(\ol\Real_+\times U,T^{*0,q}\hat D\boxtimes T^{*0,q}\hat D)$, $\mu>0$, $j=0,1,2,\ldots$. 

Finally, from Theorem~\ref{t-dhplkmiI}, \eqref{e-gue1373I} and \eqref{e-gue1373II}, we obtain \eqref{e-gue1373IV}. The theorem follows.
\end{proof} 

\subsection{Microlocal Hodge decomposition theorems for $\Box^{(q)}_s$ in $U$} \label{s-mhdt}

We use the same notations and assumptions as before. Fix $D_0\Subset D$, where $D_0$ is an open set of $D$. As before, we put $\hat D_0:=D_0\times\Real$.
We need the following which is essentially well-known (see Chapter 5 in part {\rm I\,} of~\cite{Hsiao08})

\begin{prop} \label{p-mimpzI}
Let $\mu>0$ and let $b(t,\hat x,\hat\eta)\in\hat S^m_\mu(\ol\Real_+\times U,T^{*0,q}\hat D\boxtimes T^{*0,q}\hat D)$, $m\in\Real$. 
We assume that $b(t,\hat x,\hat\eta)=0$ when $\abs{\hat\eta}\leq 1$ and for every $t\in\ol\Real_+$, ${\rm Supp\,}b(t,\hat x,\hat\eta)\bigcap T^*\hat D_0\subset\ol W$, where $W\subset U$ is a conic open set with $\ol W\subset U$. Take $\tau(\hat x,\hat\eta)\in C^\infty(T^*\hat D)$, $\tau=1$ on $\ol W$, $\tau=0$ outside $U$ and $\tau$ is positively homogeneous of degree $0$. Let $\chi\in C^\infty_0(\Real^{2n})$ be equal to $1$ near the origin. Put
\[B_{\epsilon}(\hat x, \hat y)=\int(\int^\infty_0e^{i(\psi(t,\hat x,\hat\eta)-<\hat y,\hat\eta>)}b(t,\hat x,\hat\eta)dt)\chi(\epsilon\eta)\tau(\hat x,\hat\eta)d\hat\eta.\] 
For $u\in\Omega^{0,q}_0(\hat D)$, we have
\[\lim_{\epsilon\To0}\big(\int B_{\epsilon}(\hat x, \hat y)u(\hat y)d\hat y\big)\in \Omega^{0,q}(\hat D)\]
and the operator
\begin{equation}\label{e-mslknaIX}\begin{split}
B: \Omega^{0,q}_0(\hat D) &\To\Omega^{0,q}(\hat D) \\
u&\To\lim_{\epsilon\To0}\big(\int B_{\epsilon}(\hat x, \hat y)u(\hat y)dy\big)
\end{split}\end{equation}
is continuous and $B$ has a unique continuous extension:
\[B: \mathcal E'(\hat D,T^{*0,q}\hat D)\To\mathcal D'(\hat D,T^{*0,q}\hat D)\]
and
$B(x,y)\in C^\infty\big(\hat D\times\hat D\setminus{\rm diag\,}(\hat D\times\hat D),T^{*0,q}\hat D\boxtimes T^{*0,q}\hat D)$, where $B(\hat x,\hat y)$ denotes the distribution kernel of $B$. 
\end{prop}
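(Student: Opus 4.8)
The plan is to regard $B$ as a Fourier integral operator with complex-valued phase and to run the regularization argument of Hsiao~\cite{Hsiao08}. For each fixed $\epsilon>0$ the cut-off $\chi(\epsilon\eta)$ makes $B_\epsilon(\hat x,\hat y)$ an absolutely convergent integral and $\hat x\mapsto\int B_\epsilon(\hat x,\hat y)u(\hat y)\,d\hat y$ manifestly smooth for $u\in\Omega^{0,q}_0(\hat D)$, so the whole matter reduces to uniform-in-$\epsilon$ estimates that allow passage to the limit $\epsilon\To0$. The key point making this possible is that on $U$ one has $\abs{\hat\eta}\asymp\eta_{2n}$ — because on $U$ the covector is $\hat\xi=(\xi_{2n}\xi,\xi_{2n})$ with $(x,\xi)$ ranging over the bounded set $V$ — so, choosing the parameter $\varepsilon<\mu$ in Definition~\ref{d-d-msmapamo}, the hypothesis $b\in\hat S^m_\mu$ with $\mu>0$ yields genuine exponential decay $\abs{b(t,\hat x,\hat\eta)}\leq Ce^{-ct\abs{\hat\eta}}(1+\abs{\hat\eta})^m$ for some $c>0$ on all of $U$, in particular near $\hat\Sigma$ where ${\rm Im\,}\psi$ is small; away from a conic neighbourhood of $\hat\Sigma$ one instead uses the lower bound ${\rm Im\,}\psi(t,\hat x,\hat\eta)\asymp\abs{\hat\eta}\,\tfrac{t\abs{\hat\eta}}{1+t\abs{\hat\eta}}\,{\rm dist\,}\big((\hat x,\hat\eta/\abs{\hat\eta}),\hat\Sigma\big)^2$ from Theorem~\ref{t-dhlkmimI}, which forces $\abs{e^{i\psi}}=e^{-{\rm Im\,}\psi}$ to decay.

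The technical heart is to show that $a(\hat x,\hat\eta):=\int_0^\infty e^{i\psi(t,\hat x,\hat\eta)}b(t,\hat x,\hat\eta)\,dt$ converges and obeys symbol-type bounds. I would split the $t$-integral at $t\abs{\hat\eta}=1$: on $[0,\abs{\hat\eta}^{-1}]$ bound $\abs{e^{i\psi}}\leq1$ and $\abs{b}\leq C(1+\abs{\hat\eta})^m$, giving a contribution $\leq C(1+\abs{\hat\eta})^{m-1}$; on $[\abs{\hat\eta}^{-1},\infty)$ combine the $\mu>0$ exponential decay of $b$ (this handles the part of $U$ near $\hat\Sigma$) with ${\rm Im\,}\psi\asymp\abs{\hat\eta}\,{\rm dist\,}(\ldots)^2$ (this handles the rest), again getting $\leq C(1+\abs{\hat\eta})^{m-1}$. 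For derivatives one differentiates under the integral sign; each $\partial_{\hat\eta}$ falling on $e^{i\psi}$ produces a factor $\partial_{\hat\eta}\psi$, which by \eqref{e-dhlkmimIII} equals $\hat x$ on $\hat\Sigma\bigcap U$ and, by Theorem~\ref{t-dhlkmimII}, relaxes exponentially to $\partial_{\hat\eta}\psi(\infty,\cdot)$; thus $\partial_{\hat\eta}\psi$ is effectively bounded away from $\hat\Sigma$ and at worst $O(t)$ near $\hat\Sigma$, where the extra $t$ is absorbed by the $e^{-ct\abs{\hat\eta}}$ decay of $b$ at the price of one more power of $\abs{\hat\eta}^{-1}$ (the quasi-homogeneity of $\psi$, which reduces the $t$-dependence to dependence on $t\abs{\hat\eta}$, makes this bookkeeping systematic). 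Hence $a$ satisfies $\abs{\partial^\alpha_{\hat x}\partial^\beta_{\hat\eta}a}\leq C_{\alpha,\beta}(1+\abs{\hat\eta})^{N_{\alpha,\beta}}$ with the usual extra ${\rm Im\,}\psi$-weighted error terms, which suffices to conclude $B_\epsilon\To B$ in the appropriate topology and $B$ continuous $\Omega^{0,q}_0(\hat D)\To\Omega^{0,q}(\hat D)$.

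For the extension to $\mathcal E'(\hat D,T^{*0,q}\hat D)\To\mathcal D'(\hat D,T^{*0,q}\hat D)$ and the smoothness of $B(\hat x,\hat y)$ off the diagonal I would work with the kernel $B(\hat x,\hat y)=\int\big(\int_0^\infty e^{i(\psi(t,\hat x,\hat\eta)-<\hat y,\hat\eta>)}b\,dt\big)\tau\,d\hat\eta$ and integrate by parts in $\hat\eta$ using the phase $\psi(t,\hat x,\hat\eta)-<\hat y,\hat\eta>$, whose $\hat\eta$-gradient is $d_{\hat\eta}\psi(t,\hat x,\hat\eta)-\hat y$: on a small conic neighbourhood of $\hat\Sigma$ the identity $d_{\hat\eta}\psi=\hat x$ on $\hat\Sigma\bigcap U$ (from \eqref{e-dhlkmimIII}, together with the corresponding property of $\psi(\infty,\cdot)$ in Theorem~\ref{t-dhlkmimII}) makes this gradient close to $\hat x-\hat y$, hence $\geq\tfrac12\abs{\hat x-\hat y}>0$ for $\hat x\neq\hat y$ in a fixed compact set, so repeated non-stationary-phase integration by parts gains arbitrarily many powers of $\abs{\hat\eta}^{-1}$; on the complement one uses ${\rm Im\,}\psi\gtrsim\abs{\hat\eta}$ for $t\geq\abs{\hat\eta}^{-1}$ and the short-interval bound for $t\leq\abs{\hat\eta}^{-1}$. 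This makes the $\hat\eta$-integrand rapidly decreasing, giving $B(\hat x,\hat y)\in C^\infty$ for $\hat x\neq\hat y$; the continuity on $\mathcal E'\To\mathcal D'$ then follows by the standard transpose argument from the kernel bounds already obtained.

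I expect the principal obstacle to be the uniformity of the derivative estimates for $a(\hat x,\hat\eta)$: one must interpolate carefully between the region near $\hat\Sigma$, where all the decay comes from the hypothesis $\mu>0$ and the symbol weights involve powers of ${\rm Im\,}\psi$, and the region away from $\hat\Sigma$, where $\psi(t,\cdot)$ converges exponentially to $\psi(\infty,\cdot)$ and $\partial_{\hat\eta}\psi$ is essentially bounded; keeping track of the powers of $t$ produced by differentiating $\psi$ against these two competing sources of decay is the delicate point, although it is by now routine given the machinery of \cite{Hsiao08} and Menikoff--Sj\"ostrand~\cite{MS78}.
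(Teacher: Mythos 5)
Your argument is correct and is essentially the proof the paper defers to (Chapter 5 of Part I of \cite{Hsiao08}): since $\mu>0$ and $\eta_{2n}\asymp\abs{\hat\eta}$ on all of $U$ (where $\tau$ is supported), the amplitude decays like $e^{-ct\abs{\hat\eta}}$, the $dt$-integral yields a symbol of type $(\frac{1}{2},\frac{1}{2})$ and order $m-1$ after factoring out $e^{i\langle\hat x,\hat\eta\rangle}$, and the continuity, the extension to $\mathcal E'(\hat D,T^{*0,q}\hat D)\To\mathcal D'(\hat D,T^{*0,q}\hat D)$ and the off-diagonal smoothness follow from exactly the non-stationary-phase bookkeeping you describe (equivalently, from pseudo-locality of the resulting $(\frac{1}{2},\frac{1}{2})$ operator). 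Two minor points: because the exponential decay of $b$ is available on all of $U$, your near/far-from-$\hat\Sigma$ dichotomy is not needed for the convergence of the $t$-integral; and $\pr_{\hat\eta}\psi$ is quasi-homogeneous of degree $0$ and converges exponentially to $\pr_{\hat\eta}\psi(\infty,\cdot)$, so it is uniformly bounded in $t$ rather than $O(t)$.
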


Let $b(t,\hat x,\hat\eta)\in\hat S^m_\mu(\ol\Real_+\times U,T^{*0,q}\hat D\boxtimes T^{*0,q}\hat D)$, $\mu>0$. $m\in\Real$. We assume that $b(t,\hat x,\hat\eta)=0$ when $\abs{\hat\eta}\leq 1$ and for every $t\in\ol\Real_+$, ${\rm Supp\,}b(t,\hat x,\hat\eta)\bigcap T^*\hat D_0\subset\ol W$, where $W\subset U$ is a conic open set with $\ol W\subset U$. Let 
\[B:\Omega^{0,q}_0(\hat D)\To\Omega^{0,q}(\hat D),\ \ \mathcal E'(\hat D,T^{*0,q}\hat D)\To\mathcal D'(\hat D,T^{*0,q}\hat D)\] 
be the continuous operator given by \eqref{e-mslknaIX}. We formally write 
\[B=B(\hat x,\hat y)=\int\Bigr(\int^\infty_0e^{i(\psi(t,\hat x,\hat\eta)-<\hat y,\hat\eta>)}b(t,\hat x,\hat\eta)dt\Bigr)\tau(\hat x,\hat\eta)d\hat\eta.\]
From now on, we identify $B$ with $B(\hat x,\hat y)$. 

\begin{rem} \label{r-mimpzI}
Let $a(t,\hat x,\hat\eta)\in \hat S^m_0(\ol\Real_+\times U,T^{*0,q}\hat D\boxtimes T^{*0,q}\hat D)$, $m\in\Real$. 
We assume that $a(t, \hat x, \hat\eta)=0$ if $\abs{\eta}\leq 1$ and for every $t\in\ol\Real_+$, ${\rm Supp\,}a(t,\hat x,\hat\eta)\bigcap T^*\hat D_0\subset\ol W$, where $W\subset U$ is a conic open set with $\ol W\subset U$ and $a(t, \hat x, \hat\eta)-a(\infty,\hat x,\hat\eta)\in \hat S^m_\mu(\ol\Real_+\times U,T^{*0,q}\hat D\boxtimes T^{*0,q}\hat D)$, $\mu>0$, where
$a(\infty,\hat x, \hat\eta)\in C^\infty(U,T^{*0,q}\hat D\boxtimes T^{*0,q}\hat D)$ and ${\rm Supp\,}a(\infty,\hat x,\hat\eta)\bigcap T^*\hat D_0\subset\ol W$. 
Then we can also define
\begin{equation} \label{e-mslknaX} 
A(\hat x, \hat y)
=\int\Bigr(\int^{\infty}_0\bigr(e^{i(\psi(t,\hat x,\hat\eta)-<\hat y,\hat\eta>)}a(t,\hat x,\eta)-e^{i(\psi(\infty,\hat x,\hat\eta)-<\hat y,\hat\eta>)}
a(\infty,\hat x,\hat\eta)\bigr)dt\Bigr)\tau(\hat x,\hat\eta)d\hat\eta
\end{equation}
as an oscillatory integral by the following formula: 
\[A(\hat x, \hat y)
=\int\Bigr(\int^{\infty}_0e^{i(\psi(t,\hat x,\hat\eta)-
<\hat y,\hat\eta>)}(-t)\big(i\psi^{'}_t(t,\hat x,\hat\eta)a(t,\hat x,\hat\eta)+a'_t(t,\hat x,\hat\eta)\big)dt\Bigr)\tau(\hat x,\hat\eta)d\hat\eta.\]
We notice that
$(-t)\big(i\psi'_t(t,\hat x,\hat\eta)a(t,\hat x,\hat\eta)+a'_t(t,\hat x,\hat\eta)\big)\in\hat S^{m+1}_\mu$, $\mu>0$.

The oscillatory integral $A(\hat x,\hat y)$ defines a continuous operator 
\[A:\Omega^{0,q}_0(\hat D)\To\Omega^{0,q}(\hat D),\ \ \mathcal E'(\hat D,T^{*0,q}\hat D)\To\mathcal D'(\hat D,T^{*0,q}\hat D).\]
We formally write 
\[\begin{split}
A&=A(\hat x, \hat y)\\
&=\int\Bigr(\int^{\infty}_0\bigr(e^{i(\psi(t,\hat x,\hat\eta)-<\hat y,\hat\eta>)}a(t,\hat x,\eta)-e^{i(\psi(\infty,\hat x,\hat\eta)-<\hat y,\hat\eta>)}
a(\infty,\hat x,\hat\eta)\bigr)dt\Bigr)\tau(\hat x,\hat\eta)d\hat\eta.\end{split}\]
\end{rem} 

Let $m\in\Real$, $0\leq\rho,\delta\leq1$. Let $\Gamma$ be a conic open set of $T^*\hat D$. Let $S^m_{\rho,\delta}(\Gamma,T^{*0,q}\hat D\boxtimes T^{*0,q}\hat D)$ denote the H\"{o}rmander symbol space on $\Gamma$ with values in $T^{*0,q}\hat D\boxtimes T^{*0,q}\hat D$ of order $m$ type $(\rho,\delta)$(see Definition 1.1 of Grigis-Sj\"{o}strand~\cite{GS94})
and let $S^m_{{\rm cl\,}}(\Gamma,T^{*0,q}\hat D\boxtimes T^{*0,q}\hat D)$
denote the space of classical symbols on $\Gamma$ with values in $T^{*0,q}\hat D\boxtimes T^{*0,q}\hat D$ of order $m$ (see page 35 of Grigis-Sj\"{o}strand~\cite{GS94}). Let $B\subset D$ be an open set. Let 
$L^m_{\frac{1}{2},\frac{1}{2}}(B,T^{*0,q}\hat D\boxtimes T^{*0,q}\hat D)$ and $L^m_{{\rm cl\,}}(B,T^{*0,q}\hat D\boxtimes T^{*0,q}\hat D)$ denote the space of
pseudodifferential operators on $B$ of order $m$ type $(\frac{1}{2},\frac{1}{2})$ from sections of
$T^{*0,q}\hat D$ to sections of $T^{*0,q}\hat D$ and the space of classical 
pseudodifferential operators on $B$ of order $m$ from sections of
$T^{*0,q}\hat D$ to sections of $T^{*0,q}\hat D$. The classical result of Calderon and Vaillancourt tells us that for any $A\in L^m_{\frac{1}{2},\frac{1}{2}}(B,T^{*0,q}\hat D\boxtimes T^{*0,q}\hat D)$, 
\begin{equation}\label{e-mslknaXI}
\mbox{$A:H^s_{\rm comp}(B,T^{*0,q}\hat D)\To H^{s-m}_{\rm loc}(B,T^{*0,q}\hat D)$ is continuous, for every $s\in\Real$}. 
\end{equation}
(See H\"{o}rmander~\cite{Hor85}, for a proof).

We can repeat the proofs of Lemma 5.14, Lemma 5.16 in~\cite{Hsiao08} and obtain the following 

\begin{prop} \label{p-mimpzII}
Let $a(t,\hat x,\hat\eta)\in \hat S^m_0(\ol\Real_+\times U,T^{*0,q}\hat D\boxtimes T^{*0,q}\hat D)$, $m\in\Real$. 
We assume $a(t, \hat x, \hat\eta)=0$ if $\abs{\eta}\leq 1$ and  for every $t\in\ol\Real_+$, ${\rm Supp\,}a(t,\hat x,\hat\eta)\bigcap T^*\hat D_0\subset\ol W$, where $W\subset U$ is a conic open set with $\ol W\subset U$ and $a(t, \hat x, \hat\eta)-a(\infty, \hat x, \hat \eta)\in \hat S^m_\mu(\ol\Real_+\times U,T^{*0,q}\hat D\boxtimes T^{*0,q}\hat D)$, $\mu>0$, where
$a(\infty,\hat x, \hat\eta)\in C^\infty(U,T^{*0,q}\hat D\boxtimes T^{*0,q}\hat D)$ and ${\rm Supp\,}a(\infty,\hat x,\hat\eta)\bigcap T^*\hat D_0\subset\ol W$. Take $\tau(\hat x,\hat\eta)\in C^\infty(T^*\hat D)$, $\tau=1$ on $\ol W$, $\tau=0$ outside $U$ and $\tau$ is positively homogeneous of degree $0$.
Let
\[\begin{split}
&A(\hat x, \hat y)\\
&=\frac{1}{(2\pi)^{2n}}\int\Bigr(\int^{\infty}_0\bigr(e^{i(\psi(t,\hat x,\hat\eta)-<\hat y,\hat\eta>)}a(t,\hat x,\hat\eta)-e^{i(\psi(\infty,\hat x,\hat\eta)-<\hat y,\hat\eta>)}
a(\infty,\hat x,\hat\eta)\bigr)dt\Bigr)\tau(\hat x,\hat\eta)d\hat\eta
\end{split}\]
be the oscillatory integral as in \eqref{e-mslknaX}. Then
$A\in L^{m-1}_{\frac{1}{2},\frac{1}{2}}(\hat D, T^{*0,q}\hat D\boxtimes T^{*0,q}\hat D)$
with symbol
\[q(\hat x,\hat\eta)=\int^{\infty}_0\Bigr(e^{i(\psi(t,\hat x,\hat\eta)-<\hat x,\hat\eta>)}a(t,\hat x,\hat\eta)-
e^{i(\psi(\infty,\hat x,\hat\eta)-<\hat x,\hat\eta>)}a(\infty,\hat x,\hat\eta)\Bigr)dt\tau(\hat x,\hat\eta)\]
in $S^{m-1}_{\frac{1}{2},\frac{1}{2}}(T^*\hat D,T^{*0,q}\hat D\boxtimes T^{*0,q}\hat D)$.
\end{prop}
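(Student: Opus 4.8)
The plan is to reduce the statement to the scalar stationary phase computation of Lemma 5.14 and Lemma 5.16 of \cite{Hsiao08}, the only new feature being the presence of the extra variable $x_{2n}$ and the product structure $T^{*0,q}\hat D=T^{*0,q}X$ over each fiber. First I would record the elementary but crucial fact, already extracted in \eqref{e-dhlkmimVI} and \eqref{e-dhlkmimVII}, that modulo terms vanishing to infinite order on $\hat\Sigma\bigcap U$ we may take $\psi(t,\hat x,\hat\eta)=\psi(t,(x,0),\hat\eta)+x_{2n}\eta_{2n}$ and likewise for $\psi(\infty,\cdot)$; this means that in the oscillating factor the $x_{2n}$ dependence is linear and is exactly cancelled against $-\langle\hat y,\hat\eta\rangle$ along the diagonal $x_{2n}=y_{2n}$, while off the diagonal the phase $x_{2n}\eta_{2n}-y_{2n}\eta_{2n}$ contributes only in a way that is handled by the usual nonstationary phase in $\eta_{2n}$. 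Consequently the analysis localizes near $\hat\Sigma\bigcap U$ exactly as in the codimension-one case.

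Next I would set up the symbolic framework: by Remark~\ref{r-mimpzI} the operator $A(\hat x,\hat y)$ is the oscillatory integral obtained by writing the $t$-integral of $e^{i(\psi(t)-\langle\hat y,\hat\eta\rangle)}a(t)-e^{i(\psi(\infty)-\langle\hat y,\hat\eta\rangle)}a(\infty)$ as $\int_0^\infty e^{i(\psi(t)-\langle\hat y,\hat\eta\rangle)}(-t)\bigl(i\psi'_t a+a'_t\bigr)dt$, and the integrand of this new formula lies in $\hat S^{m+1}_\mu$ with $\mu>0$ because the $t\To\infty$ behaviour is now genuinely exponentially decaying (here I use that $a-a(\infty)\in\hat S^m_\mu$, $\mu>0$, and that $\psi'_t$ and $a'_t$ enjoy the decay of Theorem~\ref{t-dhlkmimII} and Theorem~\ref{t-dhplkmiI}). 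Thus $A$ is of the standard type: an $\eta$-integral of the form $\int e^{i\langle\hat x-\hat y,\hat\eta\rangle}q_t(\hat x,\hat\eta)d\hat\eta$ after the further change $\psi(t,\hat x,\hat\eta)-\langle\hat x,\hat\eta\rangle$ is absorbed into the amplitude — legitimate because $\mathrm{Im}\,\psi\geq0$ and $d_{\hat x,\hat\eta}(\psi-\langle\hat x,\hat\eta\rangle)=0$ on $\hat\Sigma\bigcap U$, so the amplitude stays in a $(\tfrac12,\tfrac12)$-type symbol class. Then I would invoke the Kuranishi-type stationary phase argument of \cite[Lemma~5.14]{Hsiao08}: integrating the $t$-variable out produces the symbol
\[
q(\hat x,\hat\eta)=\int_0^\infty\Bigl(e^{i(\psi(t,\hat x,\hat\eta)-\langle\hat x,\hat\eta\rangle)}a(t,\hat x,\hat\eta)-e^{i(\psi(\infty,\hat x,\hat\eta)-\langle\hat x,\hat\eta\rangle)}a(\infty,\hat x,\hat\eta)\Bigr)dt\;\tau(\hat x,\hat\eta),
\]
and the estimates of Theorem~\ref{t-dhlkmimIIIa} together with $\mathrm{Im}\,\psi\asymp\abs{\hat\eta}\,\mathrm{dist}^2$ give, after the $t$-integration, a gain of one power of $\abs{\hat\eta}$ relative to the naive order $m+1$, hence $q\in S^{m-1}_{1/2,1/2}$. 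The bundle-valued part of the statement is purely formal: $T^{*0,q}\hat D$ is identified fiberwise with $T^{*0,q}X$ and the frame $e_1,\ldots,e_{n-1}$ trivializes everything locally, so the scalar argument goes through entry by entry.

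The main obstacle — and the only point where one must be careful — is controlling the $t$-integral uniformly in $\hat\eta$, in particular near $\hat\Sigma\bigcap U$ where $\mathrm{Im}\,\psi$ degenerates to zero: there the exponential decay in $t$ coming from $e^{-\mathrm{Im}\,\psi}$ is absent and one must instead use the genuine exponential decay in $t$ of the difference $a(t)-a(\infty)$ (and of $\psi'_t$, $a'_t$) provided by Theorem~\ref{t-dhlkmimII} and Theorem~\ref{t-dhplkmiI}, which holds precisely because $\mu>0$, i.e. because we are in the regime where the subprincipal symbol plus $\tfrac12\widetilde{\mathrm{tr}}\,F$ is strictly positive — that is, the phenomenon is not the $q=n_-$ resonant case but the complementary one, where $a(\infty)$ is the relevant correction term. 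Away from $\hat\Sigma$ one splits the $t$-integral at $t\sim\abs{\hat\eta}^{-1}$: for small $t$ use the trivial bound and the measure $\abs{\hat\eta}^{-1}$, for large $t$ use $\mathrm{Im}\,\psi\gtrsim t\abs{\hat\eta}\,\mathrm{dist}^2/(1+t\abs{\hat\eta})$ from \eqref{e-dhlkmimIIIa} to integrate $e^{-c\,\mathrm{Im}\,\psi}$. Patching these two regimes with a partition of unity in $(\hat x,\hat\eta)$ adapted to $\mathrm{dist}((\hat x,\hat\eta/\abs{\hat\eta}),\hat\Sigma)$ and tracking the resulting powers of $\abs{\hat\eta}$ is exactly the content of \cite[Lemma~5.14, Lemma~5.16]{Hsiao08}; since the extra variable $x_{2n}$ enters the phase only through the term $x_{2n}\eta_{2n}$ which commutes with all of this, the proof carries over verbatim with only notational changes, and one concludes $A\in L^{m-1}_{1/2,1/2}(\hat D,T^{*0,q}\hat D\boxtimes T^{*0,q}\hat D)$ with the stated symbol in $S^{m-1}_{1/2,1/2}(T^*\hat D,T^{*0,q}\hat D\boxtimes T^{*0,q}\hat D)$.
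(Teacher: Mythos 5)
Your proposal follows essentially the same route as the paper, whose proof consists precisely of repeating Lemma 5.14 and Lemma 5.16 of \cite{Hsiao08}; the only new ingredients are the reduction $\psi(t,\hat x,\hat\eta)=\psi(t,(x,0),\hat\eta)+x_{2n}\eta_{2n}$ and the fiberwise trivialization of $T^{*0,q}\hat D$, both of which you handle correctly. The one slip is in the power count at the end: integrating the integrated-by-parts amplitude $(-t)\bigl(i\psi'_ta+a'_t\bigr)\in\hat S^{m+1}_\mu$ over $t$ gains \emph{two} powers of $\abs{\hat\eta}^{-1}$ (one from the exponential decay at rate $\sim\mu\abs{\eta_{2n}}$ and one from the extra factor $t$), not one, and that is what lands you at order $m-1$ rather than $m$.
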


%A function $a(\infty,\hat x,\hat\eta)\in C^\infty(U,T^{*0,q}\hat D\boxtimes T^{*0,q}\hat D)$ is said to be a classical %symbol of order $m$ if $a(\infty\,\hat x,\hat\eta)\sim\sum^\infty_{j=0}a_j(\infty,\hat x,\hat\eta)$ in the %H\"{o}rmander symbol space $S^m_{1,0}(U,T^{*0,q}\hat D\boxtimes T^{*0,q}\hat D)$, where $a_j(\infty,\hat x,\hat\eta)$ %is positively homogeneous of degree $m-j$, $j=0,1,\ldots$. 
We can repeat the proof Proposition 5.18 in~\cite{Hsiao08} and conclude that

\begin{prop} \label{p-gelkmibI}
Let 
$a(\infty,\hat x,\hat\eta)\in C^\infty(T^*\hat D,T^{*0,q}\hat D\boxtimes T^{*0,q}\hat D)$, ${\rm Supp\,}a(\infty,\hat x,\hat\eta)\bigcap T^*\hat D_0\subset \ol W$, be a classical symbol of order $m$, where $W\subset U$ is a conic open set with $\ol W\subset U$. Take $\tau(\hat x,\hat\eta)\in C^\infty(T^*\hat D)$, $\tau=1$ on $\ol W$, $\tau=0$ outside $U$ and $\tau$ is positively homogeneous of degree $0$. Then 
\[a(\hat x,\hat\eta)=e^{i(\psi(\infty,x,\eta)-<x,\eta>)}a(\infty,\hat x,\hat\eta)\tau(\hat x,\hat\eta)
\in S^m_{\frac{1}{2},\frac{1}{2}}(T^*\hat D,T^{*0,q}\hat D\boxtimes T^{*0,q}\hat D).\]
\end{prop}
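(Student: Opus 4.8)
The plan is to recognize Proposition~\ref{p-gelkmibI} as a purely symbolic statement: one has to show that multiplying a classical symbol $a(\infty,\hat x,\hat\eta)\in S^m_{1,0}$ by the oscillating factor $e^{i(\psi(\infty,\hat x,\hat\eta)-\langle\hat x,\hat\eta\rangle)}\tau(\hat x,\hat\eta)$ produces a symbol of type $(\tfrac12,\tfrac12)$. The key point is that the exponent $g(\hat x,\hat\eta):=\psi(\infty,\hat x,\hat\eta)-\langle\hat x,\hat\eta\rangle$ vanishes together with its first $\hat x,\hat\eta$-differentials on $\hat\Sigma\bigcap U$ (by \eqref{e-dhlkmimIII} for the finite-time phase, and by Theorem~\ref{t-dhlkmimII} this persists for $\psi(\infty,\cdot)$), while ${\rm Im\,}\psi(\infty,\hat x,\hat\eta)\geq c_K\abs{\hat\eta}\,{\rm dist\,}((\hat x,\hat\eta/\abs{\hat\eta}),\hat\Sigma)^2$. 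Since $\psi(\infty,\cdot)$ is positively homogeneous of degree $1$, the function $g$ is homogeneous of degree $1$ as well, it vanishes to second order at $\hat\Sigma$, and therefore its first derivatives $\pr_{\hat x}g$, $\abs{\hat\eta}\pr_{\hat\eta}g$ are $O\bigl(\abs{\hat\eta}\,{\rm dist\,}\bigr)$ on the support of $\tau$.

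First I would fix a compact set $K\Subset\hat D$ and work in the conic region where $\tau\neq0$; outside a neighbourhood of $\hat\Sigma$ the factor $e^{ig}$ is bounded with all derivatives behaving like an ordinary symbol, so the only issue is near $\hat\Sigma$. There I would apply the Leibniz rule to $\pr^\alpha_{\hat x}\pr^\beta_{\hat\eta}\bigl(e^{ig}a\,\tau\bigr)$. Each $\hat x$-derivative hitting $e^{ig}$ brings down a factor $\pr_{\hat x}g=O(\abs{\hat\eta}d)$ where $d:={\rm dist\,}((\hat x,\hat\eta/\abs{\hat\eta}),\hat\Sigma)$, and each $\hat\eta$-derivative hitting $e^{ig}$ brings down $\pr_{\hat\eta}g=O(d)$; meanwhile $\abs{e^{ig}}=e^{-{\rm Im\,}g}\leq e^{-c\abs{\hat\eta}d^2}$. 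The elementary inequality $\abs{\hat\eta}^j d^{\,2j-\ell}e^{-c\abs{\hat\eta}d^2}\leq C\,\abs{\hat\eta}^{\,\ell/2}$ (obtained by optimizing in $t=\abs{\hat\eta}d^2$) then shows that $m$ applications of $\pr_{\hat x}$-type hits cost at most $\abs{\hat\eta}^{m/2}$ and $\ell$ applications of $\pr_{\hat\eta}$-type hits cost at most $\abs{\hat\eta}^{-\ell/2}$, exactly the $(\tfrac12,\tfrac12)$-bookkeeping. Combining with the $S^m_{1,0}$ bounds on $a$ and the degree-$0$ homogeneity of $\tau$, one gets $\abs{\pr^\alpha_{\hat x}\pr^\beta_{\hat\eta}(e^{ig}a\tau)}\leq C_{\alpha,\beta,K}(1+\abs{\hat\eta})^{m+\frac{\abs{\alpha}}{2}-\frac{\abs{\beta}}{2}}$, which is precisely membership in $S^m_{\frac12,\frac12}(T^*\hat D,T^{*0,q}\hat D\boxtimes T^{*0,q}\hat D)$.

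The main obstacle — and the only genuinely technical point — is making rigorous the claim that the phase factor contributes the stated quadratic-vanishing structure uniformly on compacts, i.e.\ quantifying $\pr^\gamma_{\hat x,\hat\eta}g$ in terms of powers of $\abs{\hat\eta}$ and $d$ when several derivatives fall on $g$. For higher-order derivatives one uses Faà di Bruno / the fact that $g$ vanishes to order $2$ at $\hat\Sigma$ and is homogeneous, so a general derivative of $e^{ig}$ is a sum of products of derivatives of $g$ times $e^{ig}$, each such product being bounded by $\abs{\hat\eta}^{\#\hat x\text{-hits}}\,d^{\,2-(\text{lost orders})}$ times $e^{-c\abs{\hat\eta}d^2}$; a careful but routine induction delivers the uniform bound. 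Since this is exactly the computation carried out in~\cite[Prop.~5.18, Part~I]{Hsiao08} for the model phase, I would simply invoke that argument, remarking that the only inputs needed — positive homogeneity of degree $1$, second-order vanishing on $\hat\Sigma\bigcap U$, and the lower bound ${\rm Im\,}\psi(\infty,\hat x,\hat\eta)\geq c_K\abs{\hat\eta}d^2$ — are furnished here by Theorem~\ref{t-dhlkmimII}. This completes the proof.
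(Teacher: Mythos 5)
Your proof is correct and follows essentially the same route as the paper, whose own argument for this proposition is precisely the computation of \cite[Prop.~5.18, Part~I]{Hsiao08} that you invoke: the second-order vanishing of $\psi(\infty,\hat x,\hat\eta)-\langle\hat x,\hat\eta\rangle$ on $\hat\Sigma\bigcap U$, its degree-$1$ homogeneity, and the lower bound ${\rm Im\,}\psi(\infty,\hat x,\hat\eta)\geq c_K\abs{\hat\eta}\,d^2$ from Theorem~\ref{t-dhlkmimII}, combined with $d^{k}e^{-c\abs{\hat\eta}d^2}\leq C\abs{\hat\eta}^{-k/2}$, give exactly the $(\tfrac12,\tfrac12)$ loss per derivative. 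Your bookkeeping (including the Fa\`a di Bruno accounting for higher-order hits on the exponent) is sound, so nothing further is needed.
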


We assume that $q\neq n_-$. Let $\Td I=(2\pi)^{-2n}\int e^{i<\hat x-\hat y,\hat\eta>}c(\hat x,\hat\eta)d\hat\eta$
be a classical pseudodifferential operator on $\hat D$ of order $0$ from sections of $T^{*0,q}\hat D$ to sections of $T^{*0,q}\hat D$ with $c(\hat x,\hat\eta)\in S^0_{{\rm cl\,}}(T^*\hat D,T^{*0,q}\hat D\boxtimes T^{*0,q}\hat D)$, ${\rm Supp\,}c(\hat x,\hat\eta)\bigcap T^*\hat D_0\subset W$, where $W\subset U$ is a conic open set with $\ol W\subset U$. 
We have 
\[c(\hat x,\hat\eta)\sim\sum^\infty_{j=0}c_j(\hat x,\hat\eta)\] 
in the H\"{o}rmander symbol space $S^0_{1,0}(T^*\hat D,T^{*0,q}\hat D\boxtimes T^{*0,q}\hat D)$,  $c_j(\hat x, \hat\eta)\in C^\infty(U,T^{*0,q}\hat D\boxtimes T^{*0,q}\hat D)$, ${\rm Supp\,}c_j(\hat x,\hat\eta)\subset\ol W$, $j=0, 1,\ldots$, are positively homogeneous functions of degree $-j$. Let
\[a_j(t,\hat x,\hat\eta)\in\hat S^{-j}_{\mu_0}(\ol\Real_+\times U,T^{*0,q}\hat D\boxtimes T^{*0,q}\hat D), \ \ j=0,1,\ldots,\]
where $\mu_0>0$ is as in Theorem~\ref{t-aldhmpI}, with $a_j(0,\hat x,\hat\eta)=c_j(\hat x,\hat\eta)$, $j=0,1,\ldots$. Let
$a(t,\hat x,\hat\eta)\sim\sum^\infty_{j=0}a_j(t,\hat x,\hat\eta)$
in $\hat S^{0}_{\mu_0}(\ol\Real_+\times U,T^{*0,q}\hat D\boxtimes T^{*0,q}\hat D)$. Choose $\chi\in C^\infty_0
(\Real^{2n})$ so that $\chi(\hat\eta)=1$ when $\abs{\hat\eta}<1$ and $\chi(\hat\eta)=0$ when $\abs{\hat\eta}>2$. 
Take $\tau(\hat x,\hat\eta)\in C^\infty(T^*\hat D)$, $\tau=1$ on $\ol W$, $\tau=0$ outside $U$ and $\tau$ is positively homogeneous of degree $0$.
Set
\begin{equation} \label{e-dcmIV}
A(\hat x,\hat y)=\frac{1}{(2\pi)^{2n}}\int\bigr(\int^{\infty}_0e^{i(\psi(t,\hat x,\hat\eta)-<\hat y,\hat\eta>)}a(t,\hat x,\hat\eta)(1-\chi(\hat\eta))\tau(\hat x,\hat\eta)dt\bigr)d\hat\eta.
\end{equation}
We can repeat the proof of Proposition 6.3 in~\cite{Hsiao08} with minor changes and conclude that

\begin{thm} \label{t-dcmimpI}
Assume that $q\neq n_-$. Let $\Td I=(2\pi)^{-2n}\int e^{i<\hat x-\hat y,\hat\eta>}c(\hat x,\hat\eta)d\hat\eta$
be a classical pseudodifferential operator on $\hat D$ of order $0$ from sections of $T^{*0,q}\hat D$ to sections of $T^{*0,q}\hat D$ with $c(\hat x,\hat\eta)\in S^0_{{\rm cl\,}}(\hat D,T^{*0,q}\hat D\boxtimes T^{*0,q}\hat D)$, ${\rm Supp\,}c(\hat x,\hat\eta)\bigcap T^*\hat D_0\subset\ol W$, where $W\subset U$ is a conic open set with $\ol W\subset U$. Let 
$A=A(x,y)\in L^{-1}_{\frac{1}{2},\frac{1}{2}}(\hat D,T^{*0,q}\hat D\boxtimes T^{*0,q}\hat D)$ be as in \eqref{e-dcmIV}. Then, on $\hat D_0$, 
\[\Box^{(q)}_s\circ A\equiv\Td I.\] 
\end{thm}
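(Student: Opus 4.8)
The plan is to run the standard parametrix argument for the heat operator, specialized to the $q\neq n_-$ case where the heat semigroup decays exponentially. First I would form the approximate solution operator $U(t)$ of the heat equation $(\pr_t+\Box^{(q)}_s)U(t)=0$, $U(0)=\Td I$, by the Ansatz $U(t)(\hat x,\hat y)=(2\pi)^{-2n}\int e^{i(\psi(t,\hat x,\hat\eta)-<\hat y,\hat\eta>)}a(t,\hat x,\hat\eta)(1-\chi(\hat\eta))\tau(\hat x,\hat\eta)d\hat\eta$, where $\psi$ is the phase from Theorem~\ref{t-dhlkmimI} (solving the eikonal equation to infinite order on $\hat\Sigma\bigcap U$ with ${\rm Im\,}\psi\geq0$) and $a\sim\sum_{j\geq0}a_j$ with $a_j(0,\hat x,\hat\eta)=c_j(\hat x,\hat\eta)$, the $a_j$ solving the transport equations \eqref{e-dhlkmimVIII}. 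By Theorem~\ref{t-aldhmpI} such $a_j$ exist in $\hat S^{-j}_{\mu_0}$ with $\mu_0>0$ (this is exactly where $q\neq n_-$ enters, via Theorem~\ref{t-dhplkmiII}), and $A(\hat x,\hat y)$ is, up to the decaying $t$-integral, the parametrix. By Proposition~\ref{p-mimpzII}, $A\in L^{-1}_{\frac12,\frac12}(\hat D,T^{*0,q}\hat D\boxtimes T^{*0,q}\hat D)$.

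The key identity to exploit is that, because $(\pr_t+\Box^{(q)}_s)(e^{i\psi}a)=e^{i\psi}b$ with $b$ satisfying the estimates \eqref{e-mslknaVII} (the transport equations being solved to infinite order at $\hat\Sigma\bigcap U$), one has formally
\[
\Box^{(q)}_s\Bigl(\int_0^\infty e^{i(\psi(t,\hat x,\hat\eta)-<\hat y,\hat\eta>)}a(t,\hat x,\hat\eta)\,dt\Bigr)
=-\int_0^\infty \pr_t\bigl(e^{i(\psi-<\hat y,\hat\eta>)}a\bigr)dt+\int_0^\infty e^{i(\psi-<\hat y,\hat\eta>)}b\,dt.
\]
The first term telescopes to $e^{i(\psi(0,\hat x,\hat\eta)-<\hat y,\hat\eta>)}a(0,\hat x,\hat\eta)=e^{i<\hat x-\hat y,\hat\eta>}c(\hat x,\hat\eta)$ at $t=0$ (giving back $\Td I$) minus the boundary contribution at $t=\infty$, which vanishes because $a\in\hat S^{-j}_{\mu_0}$ decays like $e^{-\mu_0 t\abs{\eta_{2n}}}$ as $t\to\infty$; the second term, together with the cut-off error from $1-\chi$, contributes only a smoothing operator on $\hat D_0$ since $b$ vanishes to infinite order on $\hat\Sigma\bigcap U$ and ${\rm Im\,}\psi$ controls the distance to $\hat\Sigma$ by \eqref{e-dhlkmimIIIa}. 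Making this rigorous amounts to repeating the regularization-in-$\epsilon$ and integration-by-parts arguments of Proposition~6.3 in~\cite{Hsiao08}: one inserts $\chi(\epsilon\hat\eta)$, differentiates under the integral, passes $\Box^{(q)}_s$ inside, uses the transport equations, and lets $\epsilon\to0$, checking at each stage that all $\hat\eta$-integrals converge and that the error terms land in $C^\infty(\hat D_0\times\hat D_0)$.

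The main obstacle I expect is the bookkeeping needed to show that the terms not equal to $\Td I$ are genuinely $\equiv0$ on $\hat D_0$, i.e.\ smoothing: one must combine the exponential decay $e^{-\mu_0 t\abs{\eta_{2n}}}$ in $t$ (valid on $\hat\Sigma$, and only for $q\neq n_-$) with the transversal decay ${\rm Im\,}\psi\asymp\abs{\hat\eta}\,\frac{t\abs{\hat\eta}}{1+t\abs{\eta}}\,{\rm dist}((\hat x,\hat\eta/\abs{\hat\eta}),\hat\Sigma)^2$ away from $\hat\Sigma$ (Theorem~\ref{t-dhlkmimI}), so that the product $e^{-{\rm Im\,}\psi}|b|$ is rapidly decreasing in $\abs{\hat\eta}$ uniformly in $t$ after the $dt$-integration; the degenerate direction $\xi_{2n}\to0$ in the cone $U$ must be handled with care since the decay rate $\mu_0\xi_{2n}$ degenerates there, but this is compensated by the fact that on such a ray $\hat p_0$ and the transport data are controlled, exactly as in the Menikoff--Sj\"ostrand framework. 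Once these estimates are assembled, the conclusion $\Box^{(q)}_s\circ A\equiv\Td I$ on $\hat D_0$ follows, and I would simply cite the corresponding computation in~\cite{Hsiao08} for the routine parts.
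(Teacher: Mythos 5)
Your proposal is correct and follows essentially the same route as the paper: the paper simply constructs $A$ from the heat parametrix $e^{i\psi}a$ with $a_j\in\hat S^{-j}_{\mu_0}$, $\mu_0>0$ (which is exactly where $q\neq n_-$ enters via Theorem~\ref{t-dhplkmiII}), and then cites the computation of Proposition~6.3 in~\cite{Hsiao08} for the telescoping-in-$t$ and remainder estimates you describe. Your identification of the key points --- the boundary term at $t=0$ reproducing $\Td I$, the vanishing at $t=\infty$ from the exponential decay, and the smoothing nature of the $b$-term via the transport equations combined with \eqref{e-dhlkmimIIIa} --- matches the intended argument.
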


We assume that $q=n_-$. Let $\Td I$ be the classical pseudodifferential operator as in Theorem~\ref{t-dcmimpI}. Let
\[a_j(t,\hat x,\hat\eta)\in\hat S^{-j}_0(\ol\Real_+\times U,T^{*0,q}\hat D\boxtimes T^{*0,q}\hat D),\ \ j=0,1,\ldots,\]
and
$a_j(\infty,\hat x,\hat\eta)\in C^\infty(U,T^{*0,q}\hat D\boxtimes T^{*0,q}\hat D)$, $j=0,1,\ldots$,
be as in Theorem~\ref{t-aldhmpII}. We recall that for some $\mu>0$,
\[a_j(t,\hat x,\hat\eta)-a_j(\infty,\hat x,\hat\eta)\in\hat S^{-j}_{\mu}(\ol\Real_+\times U,T^{*0,q}\hat D\boxtimes T^{*0,q}\hat D),\ \ j=0,1,\ldots,\] 
and for every $(\hat x,\hat\eta)=((x,x_{2n}),(\eta_{2n}\eta,\eta_{2n}))\in U\bigcap\hat\Sigma$, $\eta=s_0\omega_0(x)-2{\rm Im\,}\ddbar_b\phi(x)$, we have 
\begin{equation}\label{e-gue1373V}
a_0(\infty,\hat x,\hat\eta)u\in\mathcal{N}(x,s_0,n_-),\ \ \forall u\in T^{*0,q}_{\hat x}\hat D.
\end{equation}
Let
\begin{equation} \label{e-dcmV}
a(\infty,\hat x,\hat\eta)\sim\sum^\infty_{j=0}a_j(\infty,\hat x, \hat\eta)
\end{equation}
in $S^{0}_{1,0}(U,T^{*0,q}\hat D\boxtimes T^{*0,q}\hat D)$. Let
\begin{equation} \label{e-dcmVI}
a(t,\hat x,\hat \eta)\sim\sum^\infty_{j=0}a_j(t,\hat x,\hat\eta)
\end{equation}
in $\hat S^{0}_0(\ol\Real_+\times U,T^{*0,q}\hat D\boxtimes T^{*0,q}\hat D)$.
We take $a(t,\hat x,\hat\eta)$ so that for every compact set $K\subset\pi(U)$ and all indices $\alpha, \beta\in\mathbb N^{2n}_0$, $\gamma, l\in\mathbb N_0$, there exists $c>0$ independent of $t$, such that
\begin{equation} \label{e-dcmVII}
\abs{\pr^\gamma_t\pr^\alpha_{\hat x}\pr^\beta_{\hat\eta}\Bigr(a(t,\hat x,\hat\eta)-\sum^l_{j=0}a_j(t,\hat x, \hat\eta)\Bigr)}
\leq c(1+\abs{\hat\eta})^{-l-1+\gamma-\abs{\beta}},
\end{equation}
where $t\in\ol\Real_+$, $\hat x\in K$, $(\hat x,\hat\eta)\in U$, $\abs{\hat\eta}\geq1$, and
\[a(t,\hat x,\hat\eta)-a(\infty,\hat x,\hat\eta)\in\hat S^0_{\mu}(\ol\Real_+\times U,T^{*0,q}\hat D\boxtimes T^{*0,q}\hat D)\ \ \mbox{with $\mu>0$}.\]
Choose $\chi\in C^\infty_0
(\Real^{2n})$ so that $\chi(\hat\eta)=1$ when $\abs{\hat\eta}<1$ and $\chi(\hat\eta)=0$ when $\abs{\hat\eta}>2$. 
Take $\tau(\hat x,\hat\eta)\in C^\infty(T^*\hat D)$, $\tau=1$ on $\ol W$, $\tau=0$ outside $U$ and $\tau$ is positively homogeneous of degree $0$.
Set
\begin{equation} \label{e-dcmVIII}\begin{split}
G(\hat x,\hat y) &= \frac{1}{(2\pi)^{2n}}\int\Bigr(\int^{\infty}_0\!\!\bigr(e^{i(\psi(t,\hat x,\hat\eta)-<\hat y,\hat\eta>)}a(t,\hat x,\hat\eta)  \\
  &\quad -e^{i(\psi(\infty,\hat x,\hat\eta)-<\hat y,\hat\eta>)}a(\infty,\hat x,\hat\eta)\bigr)(1-\chi(\hat\eta))\tau(\hat x,\hat\eta)dt\Bigr)d\hat\eta.
\end{split}\end{equation}
Put
\begin{equation} \label{e-dcmapaI}
S(\hat x,\hat y)=\frac{1}{(2\pi)^{2n}}\int e^{i(\psi(\infty,\hat x,\hat\eta)-<\hat y,\hat\eta>)}a(\infty,\hat x,\hat\eta)(1-\chi(\hat\eta))\tau(\hat x,\hat\eta)d\hat\eta.
\end{equation} 

We can repeat the proof of Proposition 6.5 in~\cite{Hsiao08} with minor changes and obtain 

\begin{thm} \label{t-dcmimpII}
We assume that $q=n_-$. Let 
$\Td I=(2\pi)^{-2n}\int e^{i<\hat x-\hat y,\hat\eta>}c(\hat x,\hat\eta)d\hat\eta$
be a classical pseudodifferential operator on $\hat D$ of order $0$ from sections of $T^{*0,q}\hat D$ to sections of $T^{*0,q}\hat D$ with $c(\hat x,\hat\eta)\in S^0_{{\rm cl\,}}(T^*\hat D,T^{*0,q}\hat D\boxtimes T^{*0,q}\hat D)$, ${\rm Supp\,}c(\hat x,\hat\eta)\bigcap T^*\hat D_0\subset\ol W$, where $W\subset U$ is a conic open set with $\ol W\subset U$. Let 
$G=G(\hat x,\hat y)\in L^{-1}_{\frac{1}{2},\frac{1}{2}}(\hat D,T^{*0,q}\hat D\boxtimes T^{*0,q}\hat D)$ be as in \eqref{e-dcmVIII} and 
let $S=S(\hat x,\hat y)\in L^{0}_{\frac{1}{2},\frac{1}{2}}(\hat D,T^{*0,q}\hat D\boxtimes T^{*0,q}\hat D)$ be as in \eqref{e-dcmapaI}. Then, 
\[S+\Box^{(q)}_s\circ G\equiv\Td I\ \ \mbox{on $\hat D_0$},\ \ \Box^{(q)}_s\circ S\equiv 0\ \ \mbox{on $\hat D$}.\] 
\end{thm}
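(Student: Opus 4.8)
The plan is to imitate the proof of Theorem~\ref{t-dcmimpI} (the case $q\neq n_-$), the only new feature being that for $q=n_-$ the limiting amplitude $a(\infty,\hat x,\hat\eta)$ is not zero, so a nontrivial stationary operator $S$ has to be split off; this is exactly the "minor change" relative to Proposition 6.5 in part~{\rm I} of~\cite{Hsiao08}. First I would fix the data: write $c(\hat x,\hat\eta)\sim\sum_jc_j(\hat x,\hat\eta)$ for the symbol of $\Td I$ and take $a_j(t,\hat x,\hat\eta)$, $a_j(\infty,\hat x,\hat\eta)$ from Theorem~\ref{t-aldhmpII} with $a_j(0,\hat x,\hat\eta)=c_j(\hat x,\hat\eta)$, then form $a(\infty,\hat x,\hat\eta)$, $a(t,\hat x,\hat\eta)$ as in \eqref{e-dcmV}, \eqref{e-dcmVI}, \eqref{e-dcmVII}, so that $a(t,\cdot,\cdot)-a(\infty,\cdot,\cdot)\in\hat S^0_\mu(\ol\Real_+\times U,T^{*0,q}\hat D\boxtimes T^{*0,q}\hat D)$ for some $\mu>0$ and, by \eqref{e-gue1373V}, $a_0(\infty,\hat x,\hat\eta)$ takes values in $\mathcal{N}(x,s_0,n_-)$. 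With $G$, $S$ defined by \eqref{e-dcmVIII}, \eqref{e-dcmapaI}, Proposition~\ref{p-mimpzII} gives $G\in L^{-1}_{\frac{1}{2},\frac{1}{2}}(\hat D,T^{*0,q}\hat D\boxtimes T^{*0,q}\hat D)$ and Proposition~\ref{p-gelkmibI} gives $S\in L^{0}_{\frac{1}{2},\frac{1}{2}}(\hat D,T^{*0,q}\hat D\boxtimes T^{*0,q}\hat D)$, so all the congruences in the statement are meaningful.

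The two identities I would use are: $(\pr_t+\Box^{(q)}_s)(e^{i\psi}a)=e^{i\psi}b$ with $b$ vanishing to infinite order on $\hat\Sigma\bigcap U$ in the sense of \eqref{e-mslknaVII} (this is how $a$ was constructed), and, letting $t\To\infty$ and using the exponential decay (Theorem~\ref{t-dhlkmimII}) of $\psi(t,\cdot,\cdot)-\psi(\infty,\cdot,\cdot)$ and of $a(t,\cdot,\cdot)-a(\infty,\cdot,\cdot)$, the identity $\Box^{(q)}_s(e^{i\psi(\infty)}a(\infty))=e^{i\psi(\infty)}b(\infty)$, where $b(\infty)=\lim_{t\To\infty}b(t,\cdot,\cdot)$ again vanishes to infinite order on $\hat\Sigma\bigcap U$; the latter just says that the $a_j(\infty,\hat x,\hat\eta)$ solve the stationary transport equations to infinite order on $\hat\Sigma\bigcap U$, which is the $q=n_-$ content of Theorem~\ref{t-aldhmpII} (here one uses that on $\hat\Sigma\bigcap U$ the null space of $\hat p^s_0+\frac{1}{2}\Td{\rm tr\,}F$ is $\mathcal{N}(x,s_0,n_-)$, cf. Remark~\ref{r-gue1373}). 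The second identity already yields $\Box^{(q)}_s\circ S\equiv0$ on $\hat D$: commuting $\Box^{(q)}_s$ inside the integral \eqref{e-dcmapaI}, the $d\tau$- and $(1-\chi)$-remainders are smoothing as in the proof of Theorem~\ref{t-dcmimpI}, and the main term has amplitude $b(\infty,\hat x,\hat\eta)$, which, combined with ${\rm Im\,}\psi(\infty,\hat x,\hat\eta)\geq c_K\abs{\hat\eta}\bigr({\rm dist\,}((\hat x,\frac{\hat\eta}{\abs{\hat\eta}}),\hat\Sigma)\bigr)^2$ on compacts $K\Subset\pi(U)$ (Theorem~\ref{t-dhlkmimII}), defines a smoothing operator after integration in $\hat\eta$.

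Next I would compute $\Box^{(q)}_s\circ G$ on $\hat D_0$ by moving $\Box^{(q)}_s$ through the oscillatory integral \eqref{e-dcmVIII}. Modulo smoothing operators (again absorbing the $d\tau$- and $(1-\chi)$-remainders), and using $\Box^{(q)}_s(e^{i\psi(t)}a(t))=-\pr_t(e^{i\psi(t)}a(t))+e^{i\psi(t)}b(t)$ together with the second identity, the kernel of $\Box^{(q)}_s\circ G$ becomes, on $\hat D_0$,
\[
\frac{1}{(2\pi)^{2n}}\int\Bigr(\int^{\infty}_0\bigr(-\pr_t\bigr(e^{i(\psi(t,\hat x,\hat\eta)-<\hat y,\hat\eta>)}a(t,\hat x,\hat\eta)\bigr)+e^{i(\psi(t,\hat x,\hat\eta)-<\hat y,\hat\eta>)}b(t,\hat x,\hat\eta)-e^{i(\psi(\infty,\hat x,\hat\eta)-<\hat y,\hat\eta>)}b(\infty,\hat x,\hat\eta)\bigr)(1-\chi(\hat\eta))\tau(\hat x,\hat\eta)\,dt\Bigr)d\hat\eta.
\]
The $t$-integral of the first term telescopes to $e^{i<\hat x,\hat\eta>}c(\hat x,\hat\eta)-e^{i\psi(\infty,\hat x,\hat\eta)}a(\infty,\hat x,\hat\eta)$ (using $\psi(0,\hat x,\hat\eta)=<\hat x,\hat\eta>$, $a(0,\hat x,\hat\eta)=c(\hat x,\hat\eta)$, and $e^{i\psi(t)}a(t)\To e^{i\psi(\infty)}a(\infty)$ as $t\To\infty$); after multiplication by $(1-\chi)\tau$ and integration in $\hat\eta$, its first piece contributes $\Td I$ modulo smoothing on $\hat D_0$ (since $\tau\equiv1$ and $1-\chi\equiv1$ on ${\rm Supp\,}c\bigcap T^*\hat D_0$ for large $\abs{\hat\eta}$) and its second piece contributes exactly $-S$. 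The remaining $t$-integral of $e^{i\psi(t)}b(t)-e^{i\psi(\infty)}b(\infty)$, integrated in $\hat\eta$, should be smoothing, so that $\Box^{(q)}_s\circ G\equiv\Td I-S$ on $\hat D_0$; this is the first assertion.

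The main obstacle is this last point. In contrast with the case $q\neq n_-$, where $a(\infty)=b(\infty)=0$ and one merely integrates an exponentially decaying amplitude against the oscillatory factor, here $e^{i\psi(t)}b(t)$ does not decay in $t$ by itself; one has to keep the difference $e^{i\psi(t)}b(t)-e^{i\psi(\infty)}b(\infty)$, note that it does decay exponentially in $t$ (being $\pr_t(e^{i\psi(t)}a(t))+\Box^{(q)}_s(e^{i\psi(t)}a(t)-e^{i\psi(\infty)}a(\infty))$, so one may use the exponential decay of $a(t)-a(\infty)$ and $\psi(t)-\psi(\infty)$), and then show that integration in $\hat\eta$ of its $t$-integral produces a smooth kernel. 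This means running the oscillatory-integral/Borel estimates uniformly in $t$, simultaneously exploiting that decay, the infinite-order vanishing of $b$ and $b(\infty)$ on $\hat\Sigma$, and the sharp lower bound ${\rm Im\,}\psi(t,\hat x,\hat\eta)\asymp\abs{\hat\eta}\frac{t\abs{\hat\eta}}{1+t\abs{\hat\eta}}\bigr({\rm dist\,}((\hat x,\frac{\hat\eta}{\abs{\hat\eta}}),\hat\Sigma)\bigr)^2$ (and its $t=\infty$ counterpart) from Theorems~\ref{t-dhlkmimI} and~\ref{t-dhlkmimII}; this is carried out as in the proof of Proposition 6.5 in~\cite{Hsiao08}. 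A secondary, routine point, handled by the calculus of the classes $\hat S^m_\mu$ and Proposition~\ref{p-mimpzI}, is the justification that $\Box^{(q)}_s$ may be commuted past the cutoff $\tau$ inside the oscillatory integrals with only smoothing errors.
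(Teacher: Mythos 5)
Your proposal is correct and follows essentially the same route as the paper, which itself only cites the proof of Proposition 6.5 in \cite{Hsiao08} "with minor changes": you reconstruct exactly that argument — apply $\Box^{(q)}_s$ under the integral, use $(\pr_t+\Box^{(q)}_s)(e^{i\psi}a)=e^{i\psi}b$ to telescope the $\pr_t$-term into $\Td I-S$, and reduce everything to showing that the remainder $\int_0^\infty\bigl(e^{i\psi(t)}b(t)-e^{i\psi(\infty)}b(\infty)\bigr)dt$ is smoothing via the exponential decay of $a(t)-a(\infty)$, $\psi(t)-\psi(\infty)$, the infinite-order vanishing of $b$ on $\hat\Sigma\bigcap U$, and the lower bounds on ${\rm Im\,}\psi$. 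You also correctly single out this remainder estimate as the genuinely new point relative to the $q\neq n_-$ case, which is precisely where the cited proposition does its work.
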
 

Now, we study the distribution kernel $S(\hat x,\hat y)$ of $S$. Fix $p\in D$ and assume that $D$ is a small open neighbourhood of $p$. We take local coordinates $x=(x_1,\ldots,x_{2n-1})$ so that $x(p)=0$, $\omega_0(p)=(0,0,\ldots,1)\in\Real^{2n-1}$ and 
\[-2{\rm Im\,}\ddbar_b\phi(p)=(\alpha_1,\ldots,\alpha_{2n-2},0):=(\alpha,0)\in\Real^{2n-1}.\] 
Thus, $((p,x_{2n}),\hat\xi)\in\hat\Sigma$ if and only if $\hat\xi=(\xi_{2n}\alpha_1,\xi_{2n}\alpha_2,\ldots,\xi_{2n}\alpha_{2n-2},\xi_{2n}\lambda,\xi_{2n})$, $\lambda\in\Real$. We need 

\begin{lem}\label{l-dcageI}
We have 
\[\det\left(\frac{\pr^2\psi}{\pr\eta_j\pr\eta_t}(\infty,(p,x_{2n}),(\alpha_1,\ldots,\alpha_{2n-2},\lambda,1))\right)^{2n-2}_{j,t=1}\neq 0,\]
for every $((p,x_{2n}),(\alpha_1,\ldots,\alpha_{2n-2},\lambda,1))\in U$. 
\end{lem}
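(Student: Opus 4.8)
The plan is to establish Lemma~\ref{l-dcageI} by reducing it to a local model computation in which the phase $\psi(\infty,\hat x,\hat\eta)$ is known explicitly to second order, exploiting the structure of the eikonal equation \eqref{e-dhmpVIII} together with the fact that $\psi(\infty,\hat x,\hat\eta)$ has a uniquely determined Taylor expansion along $\hat\Sigma$ (Theorem~\ref{t-dhlkmimII}). First I would fix the point $\hat\rho_0=((p,x_{2n}),(\alpha_1,\ldots,\alpha_{2n-2},\lambda,1))\in U\bigcap\hat\Sigma$ (after rescaling $\hat\eta$ by its last component, which is legitimate since $\psi(\infty,\cdot,\cdot)$ is positively homogeneous of degree $1$, so the Hessian in $\hat\eta$ is homogeneous of degree $-1$ and in particular its vanishing is scale-invariant). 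Since we only care about the $\hat\eta$-Hessian restricted to the first $2n-2$ dual variables evaluated \emph{at a point of $\hat\Sigma$}, and since by \eqref{e-dhlkmimIII} we have $\psi(\infty,\hat x,\hat\eta)-<\hat x,\hat\eta>$ vanishing to second order on $\hat\Sigma\bigcap U$, the relevant object is precisely the Hessian of the "correction" $\psi(\infty,\hat x,\hat\eta)-<\hat x,\hat\eta>$ in the transverse directions to $\hat\Sigma$, which is governed by the fundamental matrix $F(\hat\rho_0)$ of $\hat p_0$.

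The key steps, in order, are as follows. (1) Use the coordinates adapted to $p$ as in Theorem~\ref{t-gue140121II} (or rather the version in \cite[p.\,157--160]{BG88} cited there), in which $\omega_0(p)=dx_{2n-1}$ and $-2{\rm Im\,}\ddbar_b\phi(p)=(\alpha,0)$, and an orthonormal frame $\ol Z_{1},\ldots,\ol Z_{n-1}$ of $T^{1,0}X$ diagonalizing $M^\phi_p-2\lambda\mathcal{L}_p$ at $p$ with eigenvalues $\lambda_1(\lambda),\ldots,\lambda_{n-1}(\lambda)$, all nonzero with $\abs{\lambda_j(\lambda)}\geq\mu>0$ by \eqref{e-dhlkmimII}. (2) Recall that the stationary-phase / heat-equation construction of $\psi(\infty,\hat x,\hat\eta)$ of Menikoff--Sj\"ostrand \cite{MS78} (and \cite{Hsiao08}) gives, in such coordinates and to second order at $\hat\Sigma$, an explicit quadratic form: the Taylor expansion of $\psi(\infty,\hat x,\hat\eta)-<\hat x,\hat\eta>$ along $\hat\Sigma$ is the quadratic form whose matrix is built from $F(\hat\rho_0)$ via the standard formula $\psi(\infty)\sim <\hat x,\hat\eta>+\frac{i}{2}\sum \abs{\lambda_j(\lambda)}\abs{z_j-(\cdot)}^2 + (\text{holomorphic/antiholomorphic quadratic terms})$, exactly paralleling the tangential Hessian formula \eqref{e-guew13627} for $\varphi$. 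Concretely, the piece of $\psi(\infty,\hat x,\hat\eta)$ quadratic in the $z$-variables, whose $\hat\eta'$-Hessian we need, has, after differentiating twice in $\eta'=(\eta_1,\ldots,\eta_{2n-2})$, a block matrix of the shape $\mathrm{diag}\big(\frac{c}{\abs{\lambda_j(\lambda)}}\big)$ up to conjugation by an invertible matrix (the change of frame from $\partial/\partial x_j$ to $\ol Z_j$). (3) Conclude that this $(2n-2)\times(2n-2)$ matrix is invertible because each $\lambda_j(\lambda)\neq 0$; hence the determinant in the statement is nonzero. (4) Finally observe that the argument is uniform for $((p,x_{2n}),(\alpha_1,\ldots,\alpha_{2n-2},\lambda,1))$ ranging over the compact-after-normalization portion of $U$, since the only input is the uniform lower bound \eqref{e-dhlkmimII} on the eigenvalues together with smoothness of the construction.

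I expect the main obstacle to be making precise the claim in step (2) that the $\hat\eta'$-Hessian of $\psi(\infty,\hat x,\hat\eta)$ at a point of $\hat\Sigma$ is non-degenerate — i.e., identifying exactly which second derivatives of the phase survive and showing the surviving block is invertible rather than merely nonzero as a whole. The cleanest route is probably to transport the known tangential Hessian of $\varphi$ (Theorem~\ref{t-gue140121II}, formula \eqref{e-guew13627}) back to $\psi(\infty,\cdot,\cdot)$: indeed $\varphi$ is obtained from $\psi(\infty,\hat x,\hat\eta)$ by a stationary-phase reduction integrating out the $\hat\eta'$ and $t$ variables (this is how $\Box^{(q)}_s$ is related to $\Box^{(q)}_{b,k}$ in section~\ref{s-sch}), and the hypothesis that the critical point in $\hat\eta'$ is non-degenerate — which is precisely Lemma~\ref{l-dcageI} — is exactly what makes that stationary-phase reduction legitimate. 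So one can argue directly: the phase $\psi(\infty,(p,x_{2n}),(\eta',\eta_{2n-1},1))-<(p,x_{2n}),(\eta',\eta_{2n-1},1)>$, as a function of $\eta'$ near $\eta'=\alpha$, has a critical point at $\eta'=\alpha$ (by \eqref{e-dhlkmimIII}), and its second-order behavior there is governed by the non-vanishing eigenvalues $\pm i\lambda_j(x,\lambda,\xi_{2n})$ of the fundamental matrix \eqref{e-mslknaIII}; since all $\lambda_j\neq0$, the critical point is non-degenerate, which is the assertion of the Lemma. Writing this out carefully — tracking the conjugation by the frame-change matrix and verifying that the imaginary part gives a genuinely non-degenerate (not merely nonzero-determinant) complex quadratic form — is the one place where a short but non-trivial linear-algebra computation is unavoidable.
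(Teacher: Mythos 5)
Your proposal takes a genuinely different route from the paper, and its central step is not actually established. You propose to read off the $\eta'$-Hessian of $\psi(\infty,\hat x,\hat\eta)$ from the fundamental matrix $F(\hat\rho)$ and the eigenvalues $\lambda_j(\lambda)$, asserting that it is $\mathrm{diag}\bigl(c/\abs{\lambda_j(\lambda)}\bigr)$ up to conjugation. But the passage from ``the non-vanishing eigenvalues of $F(\hat\rho)$ are $\pm i\lambda_j$'' to ``the restriction of the $\hat\eta$-Hessian of $\psi(\infty,\hat x,\cdot)-\langle\hat x,\cdot\rangle$ to the $2n-2$ coordinate directions $\eta'$ is non-degenerate'' is precisely the content of the lemma: it is a transversality statement about the positive Lagrangian manifold attached to $\psi(\infty)$ relative to the fibration in $\eta'$, and it does not follow formally from $\lambda_j\neq 0$. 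Your fallback --- transporting the tangential Hessian of $\varphi$ from Theorem~\ref{t-gue140121II} back to $\psi(\infty)$ --- is circular, as you half-acknowledge: $\varphi$ is only produced by the stationary phase reduction in the $w'$ variables (see \eqref{e-dgugeIII}--\eqref{e-dgugeIV}), and that reduction is legitimate only once Lemma~\ref{l-dcageI} is known. So neither of your two routes closes the argument.

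The paper's proof avoids any computation with $F(\hat\rho)$. It uses the quantitative estimate of Theorem~\ref{t-dhlkmimII}, namely ${\rm Im\,}\psi(\infty,\hat x,\hat\eta)\geq c_K\abs{\hat\eta}\bigl({\rm dist\,}((\hat x,\hat\eta/\abs{\hat\eta}),\hat\Sigma)\bigr)^2$, which you never invoke (you cite that theorem only for uniqueness of the Taylor expansion, and you cite \eqref{e-dhlkmimII}, the eigenvalue bound for $\xi_{2n}M^\phi_x-2\lambda\mathcal{L}_x$, which is a different statement). Since for fixed $\hat x=(p,x_{2n})$, $\eta_{2n-1}=\lambda$, $\eta_{2n}=1$ the fiber of $\hat\Sigma$ forces $\eta'=\alpha$, that estimate gives ${\rm Im\,}\psi(\infty,(p,x_{2n}),(\eta',\lambda,1))\asymp\abs{\eta'-\alpha}^2$; combined with $d_{\hat\eta}(\psi(\infty)-\langle\hat x,\hat\eta\rangle)=0$ on $\hat\Sigma$ (\eqref{e-dhlkmimIII}) and the second-order Taylor expansion, this shows that $\left(\frac{\pr^2{\rm Im\,}\psi}{\pr\eta_j\pr\eta_t}\right)^{2n-2}_{j,t=1}$ is positive definite. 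The lemma then follows from the elementary fact that a complex symmetric matrix $A={\rm Re\,}A+i\,{\rm Im\,}A$ with ${\rm Im\,}A$ positive definite is invertible: $Au=0$ forces $\langle({\rm Re\,}A)u,\ol u\rangle=\langle({\rm Im\,}A)u,\ol u\rangle=0$, hence $u=0$. This last point also exposes a secondary gap in your sketch: positive semi-definiteness of the imaginary part (which is all that ${\rm Im\,}\psi\geq 0$ gives) would not suffice; the strict two-sided comparison with $\abs{\eta'-\alpha}^2$ is essential, and it is exactly what Theorem~\ref{t-dhlkmimII} supplies.
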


\begin{proof}
We first claim that 
\begin{equation} \label{e-dcpamigeI}
\mbox{$\left(\frac{\pr^2{\rm Im\,}\psi}{\pr\eta_j\pr\eta_t}(\infty,(p,x_{2n}),(\alpha_1,\ldots,\alpha_{2n-2},\lambda,1))\right)^{2n-2}_{j,t=1}$ is positive definite}, 
\end{equation}
for every $((p,x_{2n}),(\alpha_1,\ldots,\alpha_{2n-2},\lambda,1))\in U$. We consider Taylor expansion of 
\[{\rm Im\,}\psi(\infty,(p,x_{2n}),(\eta_1,\ldots,\eta_{2n-2},\lambda,1))\] 
at $((p,x_{2n}),(\alpha_1,\ldots,\alpha_{2n-2},\lambda,1))$: 
\begin{equation}\label{e-dcpamigeII}
\begin{split}
&{\rm Im\,}\psi(\infty,(p,x_{2n}),(\eta_1,\ldots,\eta_{2n-2},\lambda,1))\\
&=\frac{1}{2}\sum^{2n-2}_{j,t=1}\frac{\pr^2{\rm Im\,}\psi}{\pr\eta_j\pr\eta_t}(\infty,(p,x_{2n}),(\alpha_1,\ldots,\alpha_{2n-2},\lambda,1))(\eta_j-\alpha_j)(\eta_t-\alpha_t)\\
&\quad+O(\sum^{2n-2}_{j=1}\abs{\eta_j-\alpha_j}^3).
\end{split}
\end{equation}
Here we use the fact that $({\rm Im\,}d_{\eta}\psi(\infty,(p,x_{2n}),(\alpha_1,\ldots,\alpha_{2n-2},\lambda,1))=0$ (see \eqref{e-dhlkmimIII}). From Theorem~\ref{t-dhlkmimII}, it is straightforward to see that 
\[{\rm Im\,}\psi(\infty,(p,x_{2n}),(\eta_1,\ldots,\eta_{2n-2},\lambda,1))\asymp\abs{\eta-\alpha}^2,\]
for $(\eta_1,\ldots,\eta_{2n-2})$ is in some small neighbourhood of $(\alpha_1,\ldots,\alpha_{2n-2})$. From this and \eqref{e-dcpamigeII}, we conclude that
\[\left(\frac{\pr^2{\rm Im\,}\psi}{\pr\eta_j\pr\eta_t}(\infty,(p,x_{2n}),(\alpha_1,\ldots,\alpha_{2n-2},\lambda,1))\right)^{2n-2}_{j,t=1}\] 
is positive definite. The claim \eqref{e-dcpamigeI} follows. 

Put $A=\left(\frac{\pr^2{\rm Im\,}\psi}{\pr\eta_j\pr\eta_t}(\infty,(p,x_{2n}),(\alpha_1,\ldots,\alpha_{2n-2},\lambda,1))\right)^{2n-2}_{j,t=1}={\rm Re\,}A+i{\rm Im\,}A$. Let $u\in\Complex^{2n-2}$. If $Au=0$, then $<({\rm Re\,}A+i{\rm Im\,}A)u, \ol u>=<({\rm Re\,}A)u, \ol u>+i<({\rm Im\,}A)u, \ol u>=0$. Thus, $<({\rm Re\,}A)u, \ol u>=<({\rm Im\,}A)u, \ol u>=0$. Since ${\rm Im\,}A$ is positive definite, we conclude that $u=0$. The lemma follows. 
\end{proof} 

Put 
\begin{equation}\label{e-dgugeI-I}
-2{\rm Im\,}\ddbar_b\phi(x)=(\alpha_1(x),\ldots,\alpha_{2n-1}(x)),\ \ \omega_0(x)=(\beta_1(x),\ldots,\beta_{2n-1}(x)),\ \ x\in D.
\end{equation}
From Lemma~\ref{l-dcageI}, we may take $V$ and $D$ small enough so that 
\[\det\left(\frac{\pr^2\psi}{\pr\eta_j\pr\eta_k}(\infty, \hat x,\hat\eta)\right)^{2n-2}_{j,k=1}\neq 0,\ \  \forall (\hat x,\hat\eta)\in U\] 
and 
\begin{equation}\label{e-dgugeI}
\beta_{2n-1}(x)\geq\frac{1}{2},\ \ \forall x\in D.
\end{equation}
Set 
\[\Td a(\hat x,\hat\eta):=a(\infty,\hat x,\hat\eta)(1-\chi(\hat\eta))\tau(\hat x,\hat\eta).\] 
Since $\tau(\hat x,\hat\eta)=0$ outside $U$, $\Td a(\hat x,\hat\eta)=0$ if $\eta_{2n}\leq0$. We have 
\begin{equation} \label{e-dgugeII}
\begin{split}
S(\hat x,\hat y)&=\frac{1}{(2\pi)^{2n}}\int e^{i(\psi(\infty,\hat x,\hat\eta)-<\hat y,\hat\eta>)}\Td a(\hat x,\hat\eta)d\hat\eta\\
&=\frac{1}{(2\pi)^{2n}}\int_{t>0}e^{it(\psi(\infty,\hat x,(w,1))-<\hat y,(w,1)>)}t^{2n-1}\Td a(\hat x,(tw,t))dwdt,
\end{split}
\end{equation}
where $\eta=(\eta_1,\ldots,\eta_{2n-1})=tw$, $\eta_{2n}=t$, $w=(w_1,\ldots,w_{2n-1})\in\Real^{2n-1}$. Let $w_{2n-1}=\alpha_{2n-1}(x)+s\beta_{2n-1}(x)$ and put $w'=(w_1,\ldots,w_{2n-2})$ in \eqref{e-dgugeII}, we get 
\begin{equation} \label{e-dgugeIII}
\begin{split}
&S(\hat x,\hat y)\\
&=\frac{1}{(2\pi)^{2n}}\int_{t>0}e^{it(\psi(\infty,\hat x,(w',\alpha_{2n-1}(x)+s\beta_{2n-1}(x),1))-<\hat y,(w',\alpha_{2n-1}(x)+s\beta_{2n-1}(x),1)>)}\times\\
&\quad\quad\quad t^{2n-1}\beta_{2n-1}(x)\Td a(\hat x,(tw',t\alpha_{2n-1}(x)+ts\beta_{2n-1}(x),t))dw'dsdt.
\end{split}
\end{equation}
Note that $\beta_{2n-1}(x)\geq\frac{1}{2}$, for every $x\in D$. 
The stationary phase method of Melin and Sj\"{o}strand~(see page 148 of~\cite{MS74})
then permits us to carry out the $w'$ integration in \eqref{e-dgugeIII}, to get
\begin{equation} \label{e-dgugeIV}
S(\hat x, \hat y)\equiv\int e^{it\Phi(\hat x, \hat y, s)}b(\hat x, \hat y, s, t)dsdt
\end{equation}
with
\begin{equation} \label{e-dgugeV}
b(\hat x, \hat y, s, t)\sim\sum^\infty_{j=0}b_j(\hat x,\hat y, s)t^{n-j}
\end{equation}
in $S^{n}_{1,0}(\hat\Omega\times]0, \infty[,T^{*0,q}_{\hat y}\hat D\boxtimes T^{*0,q}_{\hat x}\hat D)$, ${\rm Supp\,}b(\hat x,\hat y,s,t)\subset\hat\Omega\times\Real_+$, 
where 
\begin{equation}\label{e-dgugeVI}
\begin{split}
\hat\Omega:=&\{(\hat x,\hat y,s)\in\hat D\times\hat D\times\Real;\, (\hat x,(-2{\rm Im\,}\ddbar_b\phi(x)+s\omega_0(x),1))\in U\bigcap\hat\Sigma,\\
&\quad(\mbox{$\hat y,(-2{\rm Im\,}\ddbar_b\phi(y)+s\omega_0(y),1))\in U\bigcap\hat\Sigma$, 
$\abs{\hat x-\hat y}<\varepsilon$, for some $\varepsilon>0$}\}
\end{split}
\end{equation} 
and
\[{\rm Supp\,}b_j(\hat x,\hat y,s)\subset\hat\Omega,\ \ b_j(\hat x, \hat y,s)\in C^\infty(\hat\Omega,T^{*0,q}_{\hat y}\hat D\boxtimes T^{*0,q}_{\hat x}\hat D),\ \ j=0, 1,\ldots,\]
and $\Phi(\hat x, \hat y,s)\in C^\infty(\hat\Omega)$ is the corresponding critical value. Since $V$ is bounded, there is a constant $M>0$ so that $\abs{s}<M$, for every $(\hat x,\hat y,s)\in\hat\Omega$. Since $S$ is a pseudodifferential operator, $S(\hat x,\hat y)$ is smoothing away the diagonal $\hat x=\hat y$. We can take $\varepsilon>0$ in \eqref{e-dgugeVI} to be any small positive constant. That is, we may assume that $\Phi(\hat x, \hat y,s)$ and $b_j(\hat x, \hat y,s)$, $j=0, 1,\ldots$, are supported in some small neighbourhood of $\hat x=\hat y$. 

From \eqref{e-dhlkmimVII}, it is straightforward to see that we can take $\Phi(\hat x,\hat y,s)$ so that 
\begin{equation}\label{e-dgugeVII}
\begin{split}
\Phi(\hat x,\hat y,s)=x_{2n}-y_{2n}+\varphi(x,y,s),\ \ \varphi(x,y,s)\in C^\infty(\Omega),
\end{split}
\end{equation} 
where 
\begin{equation}\label{e-dgugeVIII}
\begin{split}
\Omega:=&\{(x,y,s)\in D\times D\times\Real;\, (x,-2{\rm Im\,}\ddbar_b\phi(x)+s\omega_0(x))\in V\bigcap\Sigma,\\
&\quad\mbox{$(y,-2{\rm Im\,}\ddbar_b\phi(y)+s\omega_0(y))\in V\bigcap\Sigma$, $\abs{x-y}<\varepsilon$, for some $\varepsilon>0$}\}.
\end{split}
\end{equation} 
Since 
\[d_{w'}(\psi(\infty,\hat x,(w',\alpha_{2n-1}(x)+s\beta_{2n-1}(x),1))-<\hat y,(w',\alpha_{2n-1}(x)+s\beta_{2n-1}(x),1))=0\] 
at $\hat x=\hat y$, $w'=(\alpha_1(x)+s\beta_1(x),\ldots,\alpha_{2n-2}(x)+s\beta_{2n-2}(x))$,
it follows that when $\hat x=\hat y$, the corresponding critical point is $w'=(\alpha_1(x)+s\beta_1(x),\ldots,\alpha_{2n-2}(x)+s\beta_{2n-2}(x))$ and consequently for every $(\hat x,\hat x,s)\in\hat\Omega$ and every $(x,x,s)\in\Omega$, 

\begin{gather}
\varphi(x, x,s)=0,\label{e-dgugeIX}\\
\varphi'_x(x, x,s)=(\alpha_1(x)+s\beta_1(x),\ldots,\alpha_{2n-1}(x)+s\beta_{2n-1}(x))=-2{\rm Im\,}\ddbar_b\phi(x)+s\omega_0(x),\label{e-dugeX}\\ 
\varphi'_y(x, x,s)=2{\rm Im\,}\ddbar_b\phi(x)-s\omega_0(x). \label{e-dugeXI}
\end{gather} 

Moreover, from the process above and \eqref{e-gue1373V}, it is easy to see that
\begin{equation} \label{e-gue1373VI}
b_0(\hat x, \hat x,s):T^{*0,q}_{\hat x}\hat D\To \mathcal{N}(x,s,n_-),\ \ \forall (\hat x,\hat x,s)\in\hat\Omega,
\end{equation}
where $b_0(\hat x,\hat y,s)$ is as in \eqref{e-dgugeV}. 

The following is essentially well-known~(see page 147 of~\cite{MS74} or Proposition~B.14 of paper~I in \cite{Hsiao08th}).

\begin{prop} \label{p-dgudmgeI}
With the notations used above, if $D$ and $V$ are small enough, then there is a constant $c>0$ such that
\begin{equation} \label{e-dgugeX} 
\begin{split}
&{\rm Im\,}\varphi(x, y,s)\geq c\inf_{w'\in\Lambda}\Big({\rm Im\,}\psi(\infty,\hat x,(w',\alpha_{2n-1}(x)+s\beta_{2n-1}(x),1))\\
&+|d_{w'}\bigr(\psi(\infty, \hat x, (w',\alpha_{2n-1}(x)+s\beta_{2n-1}(x),1))-<\hat y, (w',\alpha_{2n-1}(x)+s\beta_{2n-1}(x),1)>)|^2\Big),
\end{split}
\end{equation}
for all $(x,y,s)\in\Omega$, where $\Lambda$ is some open set of the origin in $\Real^{2n-2}$.
\end{prop}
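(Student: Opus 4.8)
The plan is to read \eqref{e-dgugeX} off from the complex stationary phase method of Melin-Sj\"{o}strand, exactly as the phase $\varphi$ in \eqref{e-dgugeVII} was manufactured. The object to analyze is the $w'$-phase occurring in the integral \eqref{e-dgugeIII}: for $(\hat x,\hat y,s)\in\hat\Omega$ set
\[
g_{\hat x,\hat y,s}(w'):=\psi(\infty,\hat x,(w',\alpha_{2n-1}(x)+s\beta_{2n-1}(x),1))-<\hat y,(w',\alpha_{2n-1}(x)+s\beta_{2n-1}(x),1)>,
\]
viewed as a smooth function of $w'\in\Real^{2n-2}$ and extended almost analytically to a fixed complex neighbourhood of a small real neighbourhood $\Lambda$ of the point $w'_0=(\alpha_1(x)+s\beta_1(x),\ldots,\alpha_{2n-2}(x)+s\beta_{2n-2}(x))$. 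By \eqref{e-dhlkmimVII} and the discussion around \eqref{e-dgugeIV}--\eqref{e-dgugeVII}, the function $\Phi(\hat x,\hat y,s)=x_{2n}-y_{2n}+\varphi(x,y,s)$ is (a representative of) the critical value $g_{\hat x,\hat y,s}(w'_c(\hat x,\hat y,s))$ produced by the stationary phase integration in $w'$, so in particular ${\rm Im\,}\varphi(x,y,s)={\rm Im\,}g_{\hat x,\hat y,s}(w'_c(\hat x,\hat y,s))$.

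Next I would verify the hypotheses of the Melin-Sj\"{o}strand lower bound. Since the $\hat y$-term is affine in $w'$, the $w'$-Hessian of $g_{\hat x,\hat y,s}$ equals $\left(\frac{\pr^2\psi}{\pr\eta_j\pr\eta_t}(\infty,\hat x,(w',\alpha_{2n-1}(x)+s\beta_{2n-1}(x),1))\right)^{2n-2}_{j,t=1}$, which is invertible on $U$ by Lemma~\ref{l-dcageI}; after shrinking $D$ and $V$ this invertibility holds uniformly, so $g_{\hat x,\hat y,s}$ has, by the implicit function theorem, a unique critical point $w'_c(\hat x,\hat y,s)$ in the complex neighbourhood, depending smoothly on the parameters and reducing to the real point $w'_0$ when $\hat x=\hat y$ (by \eqref{e-dhlkmimIII}). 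Moreover, because $\hat y$, $w'$, $\alpha_j(x)$ and $\beta_j(x)$ are real we have ${\rm Im\,}g_{\hat x,\hat y,s}(w')={\rm Im\,}\psi(\infty,\hat x,(w',\alpha_{2n-1}(x)+s\beta_{2n-1}(x),1))\geq0$ on the real domain by Theorem~\ref{t-dhlkmimII}, and $d_{w'}g_{\hat x,\hat y,s}(w')$ is precisely the gradient appearing under the infimum in \eqref{e-dgugeX}.

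With these facts in hand, \eqref{e-dgugeX} is the standard stationary phase lower bound of Melin-Sj\"{o}strand (see page~147 of~\cite{MS74} or Proposition~B.14 of paper~I in \cite{Hsiao08th}): for an almost analytic phase with non-degenerate Hessian at its base point and non-negative imaginary part on the real domain, the imaginary part of the critical value is bounded below, uniformly in all parameters, by a constant times $\inf_{w'\in\Lambda}\bigl({\rm Im\,}g_{\hat x,\hat y,s}(w')+\abs{d_{w'}g_{\hat x,\hat y,s}(w')}^2\bigr)$; inserting the two identifications of the previous paragraph then yields \eqref{e-dgugeX}, with $c$ depending only on the size of $D$ and $V$. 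I expect the only real subtlety to be bookkeeping: $\psi(\infty,\cdot)$ is merely smooth, so one must work with almost analytic extensions throughout and check that the resulting error terms -- which vanish to infinite order on $\hat\Sigma\cap U$ and are therefore dominated near $w'_c$ by the $\abs{d_{w'}g_{\hat x,\hat y,s}}^2$ term -- do not spoil the lower bound, and that the estimate is uniform over the parameter family $(\hat x,\hat y,s)$. This is exactly what the cited references provide, so no essentially new difficulty arises.
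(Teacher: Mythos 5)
Your proposal is correct and follows exactly the route the paper intends: the paper gives no proof but simply cites the Melin--Sj\"ostrand lower bound for the imaginary part of a critical value (page 147 of \cite{MS74}, Proposition B.14 of \cite{Hsiao08th}), and your verification of its hypotheses -- non-degeneracy of the $w'$-Hessian via Lemma~\ref{l-dcageI} (the $\hat y$-term being affine in $w'$), non-negativity of ${\rm Im\,}g_{\hat x,\hat y,s}$ on the real domain, and the identification of $\varphi$ with the critical value from \eqref{e-dgugeIV}--\eqref{e-dgugeVII} -- is precisely the intended reading.
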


From now on, we take $D$ and $V$ small enough so that \eqref{e-dgugeX} holds. We need

\begin{thm}\label{t-dgugeI} 
With the notations used above, there is a constant $c>0$ such that
\begin{equation} \label{e-dgugeXI}
{\rm Im\,}\varphi(x,y,s)\geq c\abs{x'-y'}^2,\ \ \forall (x,y,s)\in\Omega,
\end{equation}
where $x'=(x_1,\ldots,x_{2n-2})$, $y'=(y_1,\ldots,y_{2n-2})$. 

Moreover,  if $\varepsilon>0$ is small enough ($\varepsilon$ is as in \eqref{e-dgugeVI}) then 
there is a constant $c_1>0$ such that
\begin{equation}\label{e-gue1373VIIa}
{\rm Im\,}\varphi(x,y,s)+\abs{\frac{\pr\varphi}{\pr s}(x,y,s)}\geq c_1\bigr(\abs{x_{2n-1}-y_{2n-1}}+\abs{x'-y'}^2\bigr),\ \ \forall (x,y,s)\in\Omega,
\end{equation}
and
\begin{equation} \label{e-dgugeXII}
\mbox{$\Phi(\hat x,\hat y,s)=0$ and $\frac{\pr\Phi}{\pr s}(\hat x,\hat y,s)=\frac{\pr\varphi}{\pr s}(x,y,s)=0$ if and only if $x=y$.}
\end{equation}
\end{thm}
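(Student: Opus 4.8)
The plan is to deduce all three statements from the lower bound \eqref{e-dgugeX} of Proposition~\ref{p-dgudmgeI} together with the precise structure of the phase $\psi(\infty,\hat x,\hat\eta)$ recorded in Theorem~\ref{t-dhlkmimII} (in particular the quadratic lower bound for $\mathrm{Im}\,\psi(\infty,\cdot,\cdot)$ transverse to $\hat\Sigma$) and the normal-form computation of the tangential Hessian in Theorem~\ref{t-gue140121II}. First I would prove \eqref{e-dgugeXI}. Fix $(x,y,s)\in\Omega$ and let $w'(x,y,s)\in\Lambda$ be a point (near the critical point $w'=(\alpha_1(x)+s\beta_1(x),\ldots,\alpha_{2n-2}(x)+s\beta_{2n-2}(x))$) realizing the infimum on the right of \eqref{e-dgugeX} up to a factor $2$. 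From Theorem~\ref{t-dhlkmimII}, $\mathrm{Im}\,\psi(\infty,\hat x,(w',\alpha_{2n-1}(x)+s\beta_{2n-1}(x),1))\geq c_0\,\mathrm{dist}\big((\hat x,\cdot),\hat\Sigma\big)^2$, and since along $\hat\Sigma$ over $x$ the fibre coordinate is exactly $w'=-2\mathrm{Im}\,\ddbar_b\phi(x)+s\omega_0(x)$ (restricted to the first $2n-2$ components), this controls $|w'-w'(x,x,s)|^2$. The remaining term $|d_{w'}(\psi(\infty,\hat x,\cdot)-\langle\hat y,\cdot\rangle)|^2$ is bounded below, by a Taylor expansion in $\hat x-\hat y$ using the nondegeneracy of $\big(\pr^2_{\eta_j\eta_t}\psi(\infty,\hat x,\hat\eta)\big)_{j,t=1}^{2n-2}$ from Lemma~\ref{l-dcageI} and \eqref{e-dhlkmimIII}, by $c_1|x'-y'|^2-c_2|w'-w'(x,x,s)|^2$. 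Adding the two and absorbing the $|w'-w'(x,x,s)|^2$ terms (the coefficient $c_0$ can be made to dominate after shrinking $D$ and $V$, since $c_2$ is uniform) yields \eqref{e-dgugeXI}.

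For \eqref{e-gue1373VIIa}, the term $|x'-y'|^2$ is already handled by \eqref{e-dgugeXI}, so the task is to bound $|x_{2n-1}-y_{2n-1}|$ by $\mathrm{Im}\,\varphi+|\pr_s\varphi|$ modulo $|x'-y'|^2$. Here I would use the explicit tangential expansion \eqref{e-guew13627} from Theorem~\ref{t-gue140121II}: differentiating $\varphi(x,y,s_0)$ in $s$ at $x=y$ gives $\pr_s\varphi(x,x,s_0)=x_{2n-1}-y_{2n-1}$ plus terms that vanish at $x=y$ and terms linear in $z-w,\ol z-\ol w, z x_{2n-1}-w y_{2n-1}$ etc.; combining $\mathrm{Im}\,\varphi\geq c|x'-y'|^2$ with a Taylor estimate of $\pr_s\varphi$ near $x=y$ shows that, after choosing $\varepsilon$ in \eqref{e-dgugeVI} small, $|\pr_s\varphi(x,y,s)|\geq \tfrac12|x_{2n-1}-y_{2n-1}|-C(|x'-y'|^2+|x'-y'|\,|x_{2n-1}-y_{2n-1}|)$, and the cross term is absorbed by Cauchy--Schwarz into $\tfrac14|x_{2n-1}-y_{2n-1}|$ plus a multiple of $|x'-y'|^2$. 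This gives \eqref{e-gue1373VIIa}.

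Finally \eqref{e-dgugeXII}: the ``if'' direction is exactly \eqref{e-dgugeIX} for $\varphi$ and, for $\pr_s\varphi$, follows from \eqref{e-dugeX}–\eqref{e-dugeXI} together with the fact that the critical value at $\hat x=\hat y$ is attained at a stationary point whose $s$-derivative vanishes (equivalently, differentiate $\varphi(x,x,s)\equiv0$ is not enough, but \eqref{e-guew13627} shows $\pr_s\varphi(x,x,s)=0$ directly since every monomial there is antisymmetric in $(x,y)$ and hence vanishes together with its $s$-derivative at $x=y$, and $\Phi=x_{2n}-y_{2n}+\varphi$ then vanishes with its $s$-derivative at $x=y$). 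The ``only if'' direction is immediate from \eqref{e-gue1373VIIa}: if $\mathrm{Im}\,\varphi(x,y,s)=0$ and $\pr_s\varphi(x,y,s)=0$ then $|x_{2n-1}-y_{2n-1}|+|x'-y'|^2=0$, so $x=y$; and $\Phi(\hat x,\hat y,s)=0=\pr_s\Phi(\hat x,\hat y,s)$ forces $\varphi(x,y,s)=x_{2n}-y_{2n}$ real hence $\mathrm{Im}\,\varphi=0$, plus $\pr_s\varphi=0$, giving $x=y$ and then $\varphi(x,y,s)=0$ so $x_{2n}=y_{2n}$.

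I expect the main obstacle to be the first step: making the absorption argument in \eqref{e-dgugeXI} genuinely uniform. One must check that the constant $c_2$ coming from the Taylor remainder of $d_{w'}(\psi(\infty,\hat x,\cdot)-\langle\hat y,\cdot\rangle)$ in powers of $(\hat x-\hat y)$ and $(w'-w'(x,x,s))$ does not blow up as $D,V$ shrink — it does not, because the relevant Hessians are those of $\psi(\infty,\cdot,\cdot)$ which are smooth and bounded on $\ol U$ — while the coefficient $c_0$ of the quadratic lower bound in Theorem~\ref{t-dhlkmimII} stays bounded below by \eqref{e-dhlkmimII}. Once this uniformity is in hand, the rest is bookkeeping with the normal form \eqref{e-guew13627} and elementary quadratic inequalities.
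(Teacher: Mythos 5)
Your overall route coincides with the paper's: both estimates are extracted from the lower bound \eqref{e-dgugeX} together with the Taylor expansion of $\psi(\infty,\hat x,(w',\alpha_{2n-1}(x)+s\beta_{2n-1}(x),1))-\langle\hat y,\cdot\rangle$ around the critical fibre point $w'_c:=(\alpha_1(x)+s\beta_1(x),\ldots,\alpha_{2n-2}(x)+s\beta_{2n-2}(x))$, and the $s$-derivative estimate rests on $\beta_{2n-1}(x)\geq\tfrac12$. However, two steps as written do not close.

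First, the absorption in the proof of \eqref{e-dgugeXI}. You must bound $ {\rm Im\,}\psi+\abs{d_{w'}(\cdots)}^2$ from below for \emph{all} $w'\in\Lambda$, and you propose to add ${\rm Im\,}\psi\geq c_0\abs{w'-w'_c}^2$ to $\abs{d_{w'}(\cdots)}^2\geq c_1\abs{x'-y'}^2-c_2\abs{w'-w'_c}^2$, claiming $c_0$ can be made to dominate $c_2$ by shrinking $D$ and $V$. Your own justification — $c_2$ stays bounded, $c_0$ stays bounded below — gives $c_0\geq\delta$ and $c_2\leq M$ but not $c_0\geq c_2$; and shrinking $D$ and $V$ does not improve the ratio, since both constants are governed by second derivatives of $\psi(\infty,\cdot,\cdot)$ at the fixed base point. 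If $c_2>c_0$ the naive sum is useless. The repair is elementary and is what the paper does: either run the dichotomy (if $\tfrac{c_1}{2}\abs{x'-y'}^2\geq c_2\abs{w'-w'_c}^2$ use the gradient term alone, otherwise $\abs{x'-y'}^2\lesssim\abs{w'-w'_c}^2$ and ${\rm Im\,}\psi$ alone suffices), or equivalently keep only a $\theta$-fraction of the gradient term with $\theta c_2\leq c_0$. Without one of these, \eqref{e-dgugeXI} is not established.

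Second, a circularity in your proof of \eqref{e-gue1373VIIa}: you invoke the normal form \eqref{e-guew13627} of Theorem~\ref{t-gue140121II} to expand $\pr_s\varphi$ near the diagonal. That normal form is derived in section~\ref{s-cth} from the equivalence of $\varphi$ with $-\ol\varphi(y,x,s)$ via Definition~\ref{d-geudhdc13619} and Theorem~\ref{t-dgudmgeaI}, all of which presuppose that $\varphi$ already satisfies \eqref{e-gue1373VIIa}. Fortunately you do not need the Hessian: the first-order data \eqref{e-dugeX}--\eqref{e-dugeXI} already give $\pr_s\varphi(x,y,s)=\sum_j\beta_j(x)(x_j-y_j)+O(\abs{x-y}^2)$, and with $\beta_{2n-1}\geq\tfrac12$ and $\varepsilon$ small this yields $\abs{\pr_s\varphi}\geq c\abs{x_{2n-1}-y_{2n-1}}-C\abs{x'-y'}^2$ (your cross term is absorbed the same way), which combined with \eqref{e-dgugeXI} proves \eqref{e-gue1373VIIa}. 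Your treatment of \eqref{e-dgugeXII} is fine, except that your hesitation is unfounded: $\pr_s\varphi(x,x,s)=0$ follows directly by differentiating the identity $\varphi(x,x,s)=0$ in $s$, so no appeal to \eqref{e-guew13627} is needed there either.
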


\begin{proof}
From 
\[\begin{split}
&\psi(\infty,\hat x, (w',\alpha_{2n-1}(x)+s\beta_{2n-1}(x),1))-<\hat y, (w',\alpha_{2n-1}(x)+s\beta_{2n-1}(x),1)> \\
&=<\hat x-\hat y, (w',\alpha_{2n-1}(x)+s\beta_{2n-1}(x),1)>\\
&\quad+O(\abs{w'-(\alpha_1(x)+s\beta_1(x),\ldots,\alpha_{2n-2}(x)+s\beta_{2n-2}(x))}^2)\end{split}\]
we can check that
\[\begin{split}
&d_{w'}\big(\psi(\infty,\hat x, (w',\alpha_{2n-1}(x)+s\beta_{2n-1}(x),1))-<\hat y, (w',\alpha_{2n-1}(x)+s\beta_{2n-1}(x),1)>\big)\\
&=<x'-y', dw'>+O(\abs{w'-(\alpha_1(x)+s\beta_1(x),\ldots,\alpha_{2n-2}(x)+s\beta_{2n-2}(x))}).\end{split}\]
Thus, there are
constants $c_1$, $c_2>0$ such that
\[\begin{split}
&\abs{d_{w'}\bigr(\psi(\infty, \hat x, (w',\alpha_{2n-1}(x)+s\beta_{2n-1}(x),1))-<\hat y, (w',\alpha_{2n-1}(x)+s\beta_{2n-1}(x),1)>)}^2\\
&\geq c_1\abs{x'-y'}^2-c_2\abs{w'-(\alpha_1(x)+s\beta_1(x),\ldots,\alpha_{2n-2}(x)+s\beta_{2n-2}(x))}^2.\end{split}\]
If $\frac{c_1}{2}\abs{(x'-y')}^2\geq c_2\abs{w'-(\alpha_1(x)+s\beta_1(x),\ldots,\alpha_{2n-2}(x)+s\beta_{2n-2}(x))}^2$, then
\begin{equation} \label{e-dgugeXII-I}
\begin{split}
&\abs{d_{w'}\bigr(\psi(\infty, \hat x, (w',\alpha_{2n-1}(x)+s\beta_{2n-1}(x),1))-<\hat y, (w',\alpha_{2n-1}(x)+s\beta_{2n-1}(x),1)>)}^2\\
&\geq\frac{c_1}{2}\abs{(x'-y')}^2.
\end{split}
\end{equation}
Now, we assume that 
\[\abs{(x'-y')}^2\leq\frac{2c_2}{c_1}\abs{w'-(\alpha_1(x)+s\beta_1(x),\ldots,\alpha_{2n-2}(x)+s\beta_{2n-2}(x))}^2.\] 
From Theorem~\ref{t-dhlkmimII}, we have
\begin{equation} \label{e-dgugeXIII}
\begin{split}
&{\rm Im\,}\psi(\infty,\hat x,(w',\alpha_{2n-1}(x)+s\beta_{2n-1}(x),1))\\
&\geq c_3\abs{w'-(\alpha_1(x)+s\beta_1(x),\ldots,\alpha_{2n-2}(x)+s\beta_{2n-2}(x))}^2\geq\frac{c_1c_3}{2c_2}\abs{(x'-y')}^2,
\end{split}
\end{equation}
where $c_3$ is a positive constant. From \eqref{e-dgugeXII-I}, \eqref{e-dgugeXIII} and Proposition~\ref{p-dgudmgeI}, \eqref{e-dgugeXI} follows. 

Now, we prove \eqref{e-gue1373VIIa}. In view of \eqref{e-dugeX} and \eqref{e-dugeXI}, we see that $\varphi(x,y,s)=<-2{\rm Im\,}\ddbar_b\phi(x)+s\omega_0(x),x-y>+O(\abs{x-y}^2)$. Thus, 
\begin{equation}\label{e-gmedmdhI}
\frac{\pr\varphi}{\pr s}(x,y,s)=<\omega_0(x),x-y>+O(\abs{x-y}^2)=\sum^{2n-1}_{j=1}\beta_j(x)(x_j-y_j)+O(\abs{x-y}^2).
\end{equation}
Since $\beta_{2n-1}(x)\geq\frac{1}{2}$ for every $x\in D$, we conclude that if $\varepsilon>0$ is small then 
there are constants $c_2>0$, $c_3>0$, such that 
\[\abs{\frac{\pr\varphi}{\pr s}(x,y,s)}\geq c_2\abs{x_{2n-1}-y_{2n-1}}-c_3\abs{x'-y'}^2,\ \ \forall (x,y,s)\in\Omega.\]
Combining this with \eqref{e-dgugeXI}, we obtain \eqref{e-gue1373VIIa}. 

Finally, from \eqref{e-dgugeXI} and \eqref{e-gue1373VIIa}, it is easy to see that \eqref{e-dgugeXII} holds and the theorem follows. 
%Now, we prove \eqref{e-dgugeXII}. 
%In view of \eqref{e-dgugeIX}, we see that if $\hat x=\hat y$, then $\Phi(\hat x,\hat y,s)=0$ 
%and $\frac{\pr\Phi}{\pr s}(\hat x,\hat y,s)=0$. We only need to prove the $"\Rightarrow"$ part.
%$x_{2n-1}=y_{2n-1}$. Thus, $x=y$ and hence $\varphi(x,x,s)=0$. 
%We prove \eqref{e-dgugeXII}. In view of \eqref{e-dgugeIX}, we see that if $\hat x=\hat y$, then $\Phi(\hat x,\hat %y,s)=0$ 
%and $\frac{\pr\Phi}{\pr s}(\hat x,\hat y,s)=0$. We only need to prove the $"\Rightarrow"$ part. Now, we assume that %$\Phi(\hat x,\hat y,s)=0$ and $\frac{\pr\Phi}{\pr s}(\hat x,\hat y,s)=0$. From \eqref{e-dgugeXI}, we see that $x'=y'$. 
%In view of \eqref{e-dgugeXI}, we see that $\varphi(x,y,s)=<-2{\rm Im\,}\ddbar_b\phi(x)+s\omega_0(x),x-y>+O(\abs{x-%y}^2)$. Thus, 
%\begin{equation}\label{e-gmedmdhI}
%\frac{\pr\varphi}{\pr s}(x,y,s)=<\omega_0(x),x-y>+O(\abs{x-y}^2)=\sum^{2n-2}_{j=1}\beta_j(x)(x_j-y_j)+O(\abs{x-y}^2)=0.
%\end{equation}
%Since $x'=y'$ and $\beta_{2n-2}(x)\geq\frac{1}{2}$ for every $x\in D$, we conclude that if $\varepsilon>0$ is small then 
%$x_{2n-1}=y_{2n-1}$. Thus, $x=y$ and hence $\varphi(x,x,s)=0$. 
%Since $\Phi(\hat x,\hat y,s)=x_{2n}-y_{2n}+\varphi(x,y,s)=0$, we obtain $x_{2n}=y_{2n}$. \eqref{e-dgugeXII} follows. 
\end{proof}

From now on, we take $\varepsilon>0$ small enough so that \eqref{e-gue1373VIIa} and \eqref{e-dgugeXII} hold. 

\begin{rem} \label{r-gudhanmpI}
The phase $t\Phi(\hat x,\hat y,s)$ is not positively homogeneous with respect to $(s,t)$. Since $t>0$ on $\hat\Omega$, we put $\Phi_0(\hat x,\hat y,s,t):=t\Phi(\hat x,\hat y,\frac{s}{t})$, $\Phi_0(\hat x,\hat y,s,t)\in C^\infty(\hat\Omega_0)$, 
where
\[\begin{split}
\hat\Omega_0&=\{(\hat x,\hat y,s,t)\in\hat D\times\hat D\times\Real\times\Real_+;\, (\hat x,(-2{\rm Im\,}\ddbar_b\phi(x)t+s\omega_0(x),t))\in U\bigcap\hat\Sigma,\\
&\quad(\hat y,(-2{\rm Im\,}\ddbar_b\phi(y)t+s\omega_0(y),t))\in U\bigcap\hat\Sigma,\ \ \abs{\hat x-\hat y}<\varepsilon\}.
\end{split}\]
It is easy to see that $\Phi_0$ is a complex valued phase function in the sense of Melin-Sj\"ostrand(see Definition 3.5 in~\cite{MS74}). By using the identification $t\Phi\leftrightarrow\Phi_0$, the framework of complex Fourier integral operators in~\cite{MS74} works well in this non-homogeneous case. 
%Let $(\hat x_0,\hat x_0,\hat xi_0,-\hat\xi_0)\in {\rm diag\,}'(U\bigcap\hat\Sigma)$. Let $\tau(\hat x,\hat y,w)$ be %another non-degenerate complex phase function defined in a neighborhood of $(\hat x_0,\hat x_0,w_0)$, $w_0\in\Real^m$. %We assume that $\tau'_w(\hat x_0,\hat x_0,w_0)=0$ and that $(\tau'_{\hat x}(\hat x_0,\hat x_0,w_0),\tau'_{\hat y}(\hat %x_0,\hat x_0,w_0))=(\hat\xi_0,-\hat\xi_0)$. We say that $t\phi$ and $\tau$ are equivalet at $(\hat x_0,\hat x_0,\hat %xi_0,-\hat\xi_0)\in {\rm diag\,}'(U\bigcap\hat\Sigma)$ if $\phi$ and $\tau$ are equivalet at $(\hat x_0,\hat x_0,\hat %xi_0,-\hat\xi_0)\in {\rm diag\,}'(U\bigcap\hat\Sigma)$
%By using the identification $t\phi\leftrightarrow\phi_0$, the global theory of complex Fourier integral operators %in~\cite{MS74} works well for
\end{rem} 

We pause and introduce some notations. Let $g(x,y,s)\in C^\infty(\Omega)$ be a complex valued smooth function. Assume that $g(x,y,s)=0$ if and only if $x=y$. For $p\in D$, put 
\begin{equation}\label{e-guesw13622I}
\begin{split}
&T_{(p,p,s)}H_g\\
&=\set{(a_1,\ldots,a_{2n-1},b_1,\ldots,b_{2n-1})\in\Complex^{2n-1}\times\Complex^{2n-1};\, \sum^{2n-1}_{j=1}\bigr(a_j\frac{\pr^2g}{\pr x_j\pr s}(p,p,s)+b_j\frac{\pr^2g}{\pr y_j\pr s}(p,p,s)\bigr)=0}.
\end{split}
\end{equation}
The tangential Hessian of $g(x,y,s)$ at $(p, p,s)\in\Omega$ is the bilinear map ${\rm Hess\,}(g,{T_{(p,p,s)}H_g}):T_{(p, p,s)}H_g\times T_{(p,p,s)}H_g\To\Complex$ given by
\begin{equation} \label{e-guesw13622II}\begin{split}
{\rm Hess\,}(g,T_{(p,p,s)}H_g):T_{(p, p,s)}H_g\times T_{(p,p,s)}H_g&\To\Complex , \\
(u, v) &\To<g''(p, p)u, v>,\ \ u, v\in T_{(p,p,s)}H_g,
\end{split}\end{equation}
where $g''=\left[
\begin{array}[c]{cc}
  g''_{xx} & g''_{xy} \\
  g''_{yx} & g''_{yy}
\end{array}\right]$.
More precisely, if we put $u=(a_1,\ldots,a_{2n-1},b_1,\ldots,b_{2n-1})\in\Complex^{2n-1}\times\Complex^{2n-1}$, $v=(c_1,\ldots,c_{2n-1},d_1,\ldots,d_{2n-1})\in\Complex^{2n-1}\times\Complex^{2n-1}$, then 
\[\begin{split}
&<g''(p, p)u, v>\\
&=\sum^{2n-1}_{s,t=1}\Bigr(c_sa_t\frac{\pr^2 g}{\pr x_s\pr x_t}(p,p,s)+d_sa_t\frac{\pr^2 g}{\pr y_s\pr x_t}(p,p,s)+c_sb_t\frac{\pr^2 g}{\pr x_s\pr y_t}(p,p,s)+d_sb_t\frac{\pr^2 g}{\pr y_s\pr y_t}(p,p,s).\end{split}\]

In view of \eqref{e-dugeX} and \eqref{e-dugeXI}, it is easy to see that $T_{(p,p,s)}H_\varphi$ is spanned by
\begin{equation} \label{e-guesw13622III}
(u, v),\ \ \big(T(p), T(p)\big),\ \ u, v\in T^{1,0}_pX\oplus T^{0,1}_pX.
\end{equation} 

Let $\mathcal{U}$ be an open set in $\Real^N$. We let $\mathcal{U}^\Complex$ be an almost comlexification of $\mathcal{U}$. That is, $\mathcal{U}^\Complex$ is an open set in $\Complex^N$ with $\mathcal{U}^\Complex\bigcap\Real^N=\mathcal{U}$. For any smooth function $f\in C^\infty(\mathcal{U})$, 
we write $\Td f\in C^\infty(\mathcal{U}^\Complex)$ to denote an almost analytic extension of $f$. (See Chapter 1 of Melin-Sj\"ostrand~\cite{MS74}, for the precise meaning of "almost analytic extension ").  We need 

\begin{lem}\label{l-gue130804}
Let $\upsilon(x,y,s)\in C^\infty(\Omega)$. We assume that $\upsilon(x,y,s)$ satisfies
\eqref{e-dgugeIX}, \eqref{e-dugeX}, \eqref{e-dugeXI}, \eqref{e-dgugeXI} and \eqref{e-gue1373VIIa}. 
If $D$ is small enough then for 
every $(x_0,x_0,s_0)\in\Omega$, we can find a function $\hat\upsilon(x,y,s)\in C^\infty(\Lambda)$, where $\Lambda\subset\Omega$ is a small neighbourhood of $(x_0,x_0,s_0)$, such that $\hat\upsilon(x,y,s)$ satisfies \eqref{e-dgugeIX}, \eqref{e-dugeX}, \eqref{e-dugeXI}, \eqref{e-dgugeXI} and \eqref{e-gue1373VIIa} and $\frac{\pr\hat\upsilon}{\pr y_{2n-1}}(x,y,s)-(\alpha_{2n-1}(y)+s\beta_{2n-1}(y))$ vanishes to infinite order at $x=y$, ${\rm Hess\,}(\upsilon,T_{(x,x,s)}H_\upsilon)={\rm Hess\,}(\hat\upsilon,T_{(x,x,s)}H_{\hat\upsilon})$, $\forall (x,x,s)\in\Lambda$, and 
%$g(x,y,s)\in C^\infty(\Lambda)$ with $g(x,x,s)=s$, where $\Lambda\subset\Omega$ is a small neighbourhood %of $(x_0,x_0,s_0)$, such that if we %put \[\hat\upsilon(x,y,s):=\Td\upsilon(x,y,g(x,y,s))\] 
%then $\frac{\pr\hat\upsilon}{\pr y_{2n-1}}(x,y,s)-(\alpha_{2n-1}(y)+s\beta_{2n-1}(y))$ vanishes to infinite order at $x=y$ and  $t\hat\Upsilon(\hat x,\hat y,s):=t(x_{2n}-y_{2n}+\hat\upsilon(x,y,s))$
$t\Upsilon(\hat x,\hat y,s):=t(x_{2n}-y_{2n}+\upsilon(x,y,s))$ and $t\hat\Upsilon(\hat x,\hat y,s):=t(x_{2n}-y_{2n}+\hat\upsilon(x,y,s))$ are equivalent for classical symbols at every point of 
\[{\rm diag\,}'\Bigr((U\bigcap\hat\Sigma)\times(U\bigcap\hat\Sigma)\Bigr)\bigcap\set{(\hat x,\hat x,td_{\hat x}\Upsilon(\hat x,\hat x,s),-td_{\hat x}\Upsilon(\hat x,\hat x,s))\in T^*\hat D;\, (x,x,s)\in\Lambda, t>0}\]  in the sense of Melin-Sj\"{o}strand~\cite{MS74}.(Remind that ${\rm diag\,}'\Bigr((U\bigcap\hat\Sigma)\times(U\bigcap\hat\Sigma)\Bigr)$ is given by \eqref{e-gedhanmpI}.)
%Moreover, $g(x,y,s)$ has a uniquely determined Taylor expansion at every $(x,x,s)\in\Lambda$ and every %$N>0$, there is a $C_N>0$, such that 
%\begin{equation}\label{e-gue130807I}
%{\rm Im\,}\hat\upsilon(x,y,s)+\frac{1}{C_N}\abs{x-y}^N\geq C_N\abs{x'-y'}^2,\ \ \forall (x,y,s)\in %\Lambda.
%\end{equation}
\end{lem}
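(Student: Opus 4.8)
\textbf{Proof proposal for Lemma~\ref{l-gue130804}.}

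The plan is to construct $\hat\upsilon$ from $\upsilon$ by modifying it only along the $y_{2n-1}$-direction through a change of the parameter $s$, so that all the quantitative properties \eqref{e-dgugeIX}, \eqref{e-dugeX}, \eqref{e-dugeXI}, \eqref{e-dgugeXI} and \eqref{e-gue1373VIIa} are preserved, while the new phase acquires the normalization $\frac{\pr\hat\upsilon}{\pr y_{2n-1}}(x,y,s)\equiv\alpha_{2n-1}(y)+s\beta_{2n-1}(y)$ to infinite order on the diagonal. The point is that $\frac{\pr\varphi}{\pr s}(x,y,s)=\sum^{2n-1}_{j=1}\beta_j(x)(x_j-y_j)+O(\abs{x-y}^2)$ (this is \eqref{e-gmedmdhI}), and since $\beta_{2n-1}(x)\geq\frac12$ on $D$, the equation $\frac{\pr\upsilon}{\pr s}(x,y,s)=0$ can be solved, by the implicit function theorem, for $y_{2n-1}=y_{2n-1}(x,y',s)$ in a neighbourhood of the diagonal, or dually $s$ can be expressed smoothly in terms of $(x,y)$ along the hypersurface $\frac{\pr\upsilon}{\pr s}=0$. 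First I would introduce a new variable $\sigma$ via a diffeomorphism $\sigma\mapsto s=\theta(x,y,\sigma)$, chosen so that the level set $\{\sigma=\text{const}\}$ matches the prescription for $\frac{\pr\hat\upsilon}{\pr y_{2n-1}}$ to infinite order on $x=y$; set $\hat\upsilon(x,y,\sigma):=\upsilon(x,y,\theta(x,y,\sigma))$. Because $\frac{\pr\upsilon}{\pr s}(x,x,s)=0$, the chain rule shows $\hat\upsilon$ again satisfies \eqref{e-dgugeIX}, \eqref{e-dugeX}, \eqref{e-dugeXI} (the $s$-reparametrization does not affect $d_x\upsilon,d_y\upsilon$ on the diagonal since $\frac{\pr\upsilon}{\pr s}$ vanishes there), and \eqref{e-dgugeXI} is untouched because ${\rm Im\,}\hat\upsilon(x,y,\sigma)={\rm Im\,}\upsilon(x,y,\theta(x,y,\sigma))$ and $\theta$ is real. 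For \eqref{e-gue1373VIIa} one uses that $\theta$ is a near-identity diffeomorphism in $s$ with $\frac{\pr\theta}{\pr\sigma}$ bounded above and below, so $\abs{\frac{\pr\hat\upsilon}{\pr\sigma}}\asymp\abs{\frac{\pr\upsilon}{\pr s}}$ on the relevant set.

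Next I would verify the equality of tangential Hessians ${\rm Hess\,}(\upsilon,T_{(x,x,s)}H_\upsilon)={\rm Hess\,}(\hat\upsilon,T_{(x,x,s)}H_{\hat\upsilon})$. Here the key observation is that $T_{(p,p,s)}H_\upsilon$ and $T_{(p,p,s)}H_{\hat\upsilon}$ coincide as subspaces of $\Complex^{2n-1}\times\Complex^{2n-1}$ — both equal the span described in \eqref{e-guesw13622III}, spanned by $(u,v)$ with $u,v\in T^{1,0}_pX\oplus T^{0,1}_pX$ and $(T(p),T(p))$ — because the first-order data $d_x\upsilon, d_y\upsilon$ on the diagonal are unchanged, and the reparametrization $s=\theta$ only multiplies $\frac{\pr\upsilon}{\pr s}$ by the nonvanishing factor $\frac{\pr\theta}{\pr\sigma}$, which does not alter its zero set or the space defined in \eqref{e-guesw13622I}. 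On that common space, restricted to tangent vectors $(u,v)$ with $u,v\perp$ the $s$-direction in the appropriate sense, the second derivatives of $\hat\upsilon$ differ from those of $\upsilon$ only by terms proportional to $\frac{\pr\upsilon}{\pr s}$ or its first derivatives, all of which vanish on the diagonal, or by $\frac{\pr^2\upsilon}{\pr s^2}$ paired against the $s$-components, which are zero for vectors in $T_{(p,p,s)}H_\upsilon$ by definition. A short computation using the chain rule through $\theta$, keeping only terms that survive on $x=y$, gives the claimed equality.

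The final and main step is the equivalence, in the sense of Melin--Sj\"ostrand~\cite{MS74}, of the non-homogeneous phases $t\Upsilon=t(x_{2n}-y_{2n}+\upsilon)$ and $t\hat\Upsilon=t(x_{2n}-y_{2n}+\hat\upsilon)$ at the points of ${\rm diag\,}'\bigl((U\bigcap\hat\Sigma)\times(U\bigcap\hat\Sigma)\bigr)$ lying over the diagonal. I would run the argument through the homogenization trick of Remark~\ref{r-gudhanmpI}: pass to $\Phi_0(\hat x,\hat y,s,t):=t\Upsilon(\hat x,\hat y,s/t)$ and $\hat\Phi_0$ analogously, which are genuine complex phase functions in the Melin--Sj\"ostrand sense, and the reparametrization $s\mapsto\theta$ lifts to a reparametrization $(s,t)\mapsto(t\,\theta(x,y,s/t),t)$ homogeneous of degree $1$. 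Two phases related by such a fiber-preserving change of the oscillation variables that (i) have the same value and differential on the common critical manifold ${\rm diag\,}'(\cdots)$ and (ii) have the same tangential Hessian there are equivalent for classical symbols; this is essentially the content of the equivalence criterion of Melin--Sj\"ostrand together with Theorem~\ref{t-dhlkmimIII}, which already gives the equivalence of $\psi(\infty,\hat x,\hat\eta)-\langle\hat y,\hat\eta\rangle$ with its conjugate-transpose, from which $\Phi$ inherits its equivalence properties. The hard part will be checking that the critical manifolds match and that the reparametrization is admissible (i.e. that $\Phi_0$ and $\hat\Phi_0$ have non-degenerate transversal Hessian in the oscillation variables away from the critical set, so that the stationary phase comparison applies) — this is where smallness of $D$ and $V$ and the lower bound $\beta_{2n-1}\geq\frac12$ are genuinely used, guaranteeing that $s\mapsto\theta$ stays a diffeomorphism with controlled Jacobian on the whole of $\Lambda$. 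Once equivalence of $\Phi_0$ and $\hat\Phi_0$ is established, undoing the homogenization yields the equivalence of $t\Upsilon$ and $t\hat\Upsilon$, completing the proof.
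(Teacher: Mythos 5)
There are two genuine gaps.

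First, your reparametrization $s\mapsto\theta(x,y,\sigma)$ cannot be taken real, and this breaks your preservation of \eqref{e-dgugeXI}. To force $\frac{\pr\hat\upsilon}{\pr y_{2n-1}}-(\alpha_{2n-1}(y)+s\beta_{2n-1}(y))$ to vanish to infinite order at $x=y$ you must solve, order by order, equations such as \eqref{e-gue130804}, whose coefficients involve $\frac{\pr^2\upsilon}{\pr y_{2n-1}^2}(y,y,s)$ and higher derivatives; these are complex in general, so the substitution must be $\upsilon_1(x,y,s)=\Td\upsilon(x,y,g(x,y,s))$ with $g$ complex-valued and $\Td\upsilon$ an almost analytic extension. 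Consequently ${\rm Im\,}\upsilon_1\neq{\rm Im\,}\upsilon\circ g$ and the lower bound ${\rm Im\,}\upsilon_1\geq c\abs{x'-y'}^2$ is \emph{not} automatic. The paper recovers it by observing that, since $\alpha_{2n-1}(y)+s\beta_{2n-1}(y)$ is real, all derivatives $\frac{\pr^{\abs{\alpha}+1}{\rm Im\,}\upsilon_1}{\pr y^\alpha\pr y_{2n-1}}$ vanish on the diagonal, so that ${\rm Im\,}\upsilon_1+\frac{1}{C_N}\abs{x-y}^N\geq C_N\abs{x'-y'}^2$, and then performs a second Borel correction, replacing $\upsilon_1$ by a $\hat\upsilon$ differing from it to infinite order on $x=y$ and satisfying \eqref{e-dgugeXI} exactly. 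Your proposal omits this entire repair step.

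Second, the equivalence criterion you invoke in the last step is too weak. Matching the value, the differential and the tangential Hessian of two phases on the real critical manifold gives agreement of the associated positive Lagrangian manifolds only to second order, whereas equivalence for classical symbols in the sense of Melin--Sj\"ostrand requires the two almost analytic Lagrangians $\Lambda_{\Td{t\Upsilon}}$ and $\Lambda_{\Td{t\hat\Upsilon}}$ to be close to \emph{infinite} order, i.e.\ ${\rm dist\,}(z,\Lambda_{\Td{t\hat\Upsilon}})\leq C_N\abs{{\rm Im\,}z}^N$ for every $N$ as in \eqref{e-gue140103I}. The paper gets this from two inputs you do not supply: (i) the reparametrized phase has the \emph{same} Lagrangian as the original one ($\Lambda_{\Td{t\Upsilon}}=\Lambda_{\Td{t\Upsilon_1}}$, because $g$ only relabels the parameter $s$ along the fiber), so only the infinite-order perturbation $\hat\upsilon-\upsilon_1$ remains to be compared; and (ii) the quantitative estimate \eqref{e-gue140103V}, namely $\abs{{\rm Im\,}(\Td x',\Td y,\Td s)}+\abs{{\rm Re\,}\Td x'-{\rm Re\,}\Td y}\leq C\abs{{\rm Im\,}z}$ on the Lagrangian, which converts infinite-order vanishing at the real diagonal into the required $O(\abs{{\rm Im\,}z}^N)$ bounds. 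Without (ii), infinite-order agreement of the phases on $x=y$ does not by itself control the distance between the complexified Lagrangians. Your homogenization via $\Phi_0$ is consistent with Remark~\ref{r-gudhanmpI} and is not itself the problem; the missing content is the infinite-order comparison of the Lagrangians.
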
 

\begin{proof}
We first claim that we can find $g(x,y,s)\in C^\infty(\Lambda)$ with $g(x,x,s)=s$, where $\Lambda\subset\Omega$ is a small neighbourhood of $(x_0,x_0,s_0)$, such that if we put 
\[\upsilon_1(x,y,s):=\Td\upsilon(x,y,g(x,y,s))\] 
then $\frac{\pr\upsilon_1}{\pr y_{2n-1}}(x,y,s)-(\alpha_{2n-1}(y)+s\beta_{2n-1}(y))$ vanishes to infinite order at $x=y$. We formally set $g(y,y,s)=s$ and $\frac{\pr\upsilon_1}{\pr y_{2n-1}}(x,y,s)=\alpha_{2n-1}(y)+s\beta_{2n-1}(y)\mod O(\abs{x-y}^{\infty})$. Then, 
\[\begin{split}
&\frac{\pr\Td\upsilon}{\pr y_{2n-1}}(x,y,g(x,y,s))+\frac{\pr\Td\upsilon}{\pr s}(x,y,g(x,y,s))\frac{\pr g}{\pr y_{2n-1}}(x,y,s)\\
&=\alpha_{2n-1}(y)+s\beta_{2n-1}(y)\mod O(\abs{x-y}^{\infty}).\end{split}\]
Thus, 
\begin{equation}\label{e-gue130804}\begin{split}
&\frac{\pr^2\Td\upsilon}{\pr y_{2n-1}^2}(x,y,g(x,y,s))+2\frac{\pr^2\Td\upsilon}{\pr s\pr y_{2n-1}}(x,y,g(x,y,s))\frac{\pr g}{\pr y_{2n-1}}(x,y,s)\\
&+\frac{\pr^2\Td\upsilon}{\pr s^2}(x,y,g(x,y,s))(\frac{\pr g}{\pr y_{2n-1}}(x,y,s))^2
+\frac{\pr\Td\upsilon}{\pr s}(x,y,g(x,y,s))\frac{\pr^2g}{\pr y^2_{2n-1}}(x,y,s)\\
&=\frac{\pr\alpha_{2n-1}}{\pr y_{2n-1}}(y)+s\frac{\pr\beta_{2n-1}}{\pr y_{2n-1}}(y)\mod O(\abs{x-y}^{\infty}).\end{split}\end{equation}
Note that $\frac{\pr^2\Td\upsilon}{\pr s\pr y_{2n-1}}(y,y,g(y,y,s))=\frac{\pr^2\Td\upsilon}{\pr s\pr y_{2n-1}}(y,y,s)\neq0$(see \eqref{e-gmedmdhI}) and 
\[\frac{\pr\Td\upsilon}{\pr s}(y,y,g(y,y,s))=\frac{\pr\Td\upsilon}{\pr s^2}(y,y,g(y,y,s))=0.\] 
From this observation and \eqref{e-gue130804}, we can determine $\frac{\pr g}{\pr y_{2n-1}}(x,y,s)|_{x=y}$. Continuing in this way, we can determine $\frac{\pr^{\abs{\alpha}}g}{\pr y^\alpha}(x,y,s)|_{x=y}$, for every multiindex $\alpha=(\alpha_1,\ldots,\alpha_{2n-1})\in\mathbb N_0^{2n-1}$. By using Borel construction, the claim follows. 

Since $\alpha_{2n-1}(y)+s\beta_{2n-1}(y)$ is real, we have 
\begin{equation}\label{e-gue131412}
\frac{\pr^{\abs{\alpha}+1}{\rm Im\,}\upsilon_1}{\pr y^\alpha\pr y_{2n-1}}(x,y,s)|_{x=y}=0,
\end{equation} 
for every multiindex $\alpha=(\alpha_1,\ldots,\alpha_{2n-1})\in\mathbb N_0^{2n-1}$. Moreover, from \eqref{e-dgugeXI}, it is straightforward to see that if $D$ is small enough then, 
\begin{equation}\label{e-gue131412I}
\mbox{$\left(\frac{\pr^2{\rm Im\,}\upsilon_1}{\pr y_j\pr y_t}(x,y,s)|_{x=y}\right)^{2n-2}_{j,t=1}$ is positive definite at each point of $(x,x,s)\in\Lambda$}.
\end{equation}
From \eqref{e-gue131412} and \eqref{e-gue131412I}, we deduce that 
for every $N>0$, there is a $C_N>0$, such that 
\begin{equation}\label{e-gue130807I}
{\rm Im\,}\upsilon_1(x,y,s)+\frac{1}{C_N}\abs{x-y}^N\geq C_N\abs{x'-y'}^2,\ \ \forall (x,y,s)\in \Lambda.
\end{equation}
From \eqref{e-gue131412}, \eqref{e-gue130807I} and the standard Borel construction, we can find $\hat\upsilon(x,y,s)\in C^\infty(\Lambda)$ such that $\hat\upsilon(x,y,s)-\upsilon_1(x,y,s)$ vanishes to infinite order at $x=y$ and ${\rm Im\,}\hat\upsilon(x,y,s)\geq C_0\abs{x'-y'}^2$, $\forall (x,y,s)\in \Lambda$, where $C_0>0$ is a constant. Since  $\hat\upsilon(x,y,s)-\upsilon_1(x,y,s)$ vanishes to infinite order at $x=y$, $\upsilon(x,y,s)$ satisfies \eqref{e-dgugeIX}, \eqref{e-dugeX}, \eqref{e-dugeXI}, \eqref{e-gue1373VIIa} and $\frac{\pr\hat\upsilon}{\pr y_{2n-1}}(x,y,s)-(\alpha_{2n-1}(y)+s\beta_{2n-1}(y))$ vanishes to infinite order at $x=y$.

Now, we prove that $t\Upsilon(\hat x,\hat y,s):=t(x_{2n}-y_{2n}+\upsilon(x,y,s))$ and $t\hat\Upsilon(\hat x,\hat y,s):=t(x_{2n}-y_{2n}+\hat\upsilon(x,y,s))$ are equivalent for classical symbols at every point of 
\[{\rm diag\,}'\Bigr((U\bigcap\hat\Sigma)\times(U\bigcap\hat\Sigma)\Bigr)\bigcap\set{(\hat x,\hat x,td_{\hat x}\Upsilon(\hat x,\hat x,s),-td_{\hat x}\Upsilon(\hat x,\hat x,s))\in T^*\hat D;\, (x,x,s)\in\Lambda, t>0}.\] 
Fix $(\hat x_0,\hat x_0,s_0)\in\hat\Omega$, $(x,x,s)\in\Lambda$, $t_0>0$ and set
\[\begin{split}
&\hat x_0=(x_0,x_{n,0}),\ \ x_0\in\Real^{2n-1},\ \ (x_0,x_0,s_0)\in\Omega,\\
&(\hat x_0,\hat\xi_0)=(\hat x_0,t_0\frac{\pr\hat\Upsilon}{\pr\hat x}(\hat x_0,\hat x_0,s_0))=(\hat x_0,t_0\frac{\pr\Upsilon}{\pr\hat x}(\hat x_0,\hat x_0,s_0))\in\Bigr(U\bigcap\hat\Sigma\Bigr)\bigcap T^*\hat D.
\end{split}\] 
Let $\hat W$ be a small neighbourhood of $(\hat x_0,\hat x_0,s_0)$ and let $I_0$ be a neighbourhood of $t_0$ in $\Real_+$. Put 
\begin{equation}\label{e-gue140103}
\begin{split}
\Lambda_{\Td{t\hat\Upsilon}}:=&\{(\Td{\hat x},\Td{\hat y},\Td t\frac{\pr\Td{\hat\Upsilon}}{\pr\Td x}(\Td{\hat x},\Td{\hat y},\Td s),\Td t\frac{\pr\Td{\hat\Upsilon}}{\pr\Td y}(\Td{\hat x},\Td{\hat y},\Td s))\in\Complex^{2n}\times\Complex^{2n}\times\Complex^{2n}\times\Complex^{2n};\, \\
&\quad\Td{\hat\Upsilon}(\Td{\hat x},\Td{\hat y},\Td s)=0, \frac{\pr\Td{\hat\Upsilon}}{\pr\Td s}(\Td{\hat x},\Td{\hat y},\Td s)=0, (\Td{\hat x},\Td{\hat y},\Td s)\in\hat W^\Complex,  \Td t\in I_0^\Complex\},\\
\Lambda_{\Td{t\Upsilon}}:=&\{(\Td{\hat x},\Td{\hat y},\Td t\frac{\pr\Td{\Upsilon}}{\pr\Td x}(\Td{\hat x},\Td{\hat y},\Td s),\Td t\frac{\pr\Td{\Upsilon}}{\pr\Td y}(\Td{\hat x},\Td{\hat y},\Td s))\in\Complex^{2n}\times\Complex^{2n}\times\Complex^{2n}\times\Complex^{2n};\, \\
&\quad\Td{\Upsilon}(\Td{\hat x},\Td{\hat y},\Td s)=0, \frac{\pr\Td{\Upsilon}}{\pr\Td s}(\Td{\hat x},\Td{\hat y},\Td s)=0, (\Td{\hat x},\Td{\hat y},\Td s)\in\hat W^\Complex,  \Td t\in I_0^\Complex\}.\\
\end{split}
\end{equation} 
From global theory of complex Fourier integral operators of Melin-S\"ostrand~\cite{MS74}, we know that $t\hat\Upsilon(\hat x,\hat y,s)$ and $t\Upsilon(\hat x,\hat y,s)$ are equivalent for classical symbols at $(\hat x_0,\hat x_0,\hat\xi_0,-\hat\xi_0)\in{\rm diag\,}'\Bigr((U\bigcap\hat\Sigma)\times(U\bigcap\hat\Sigma)\Bigr)$ in the sense of Melin-Sj\"{o}strand~\cite{MS74} if and only if $\Lambda_{\Td{t\hat\Upsilon}}$ and $\Lambda_{\Td{t\Upsilon}}$ are equivalent in the sense that there is a neighbourhood $Q$ of $(\hat x_0,\hat x_0,\hat\xi_0,-\hat\xi_0)$ in $\Complex^{2n}\times\Complex^{2n}\times\Complex^{2n}\times\Complex^{2n}$, such that for every $N>0$, we have 
\begin{equation}\label{e-gue140103I}
\begin{split}
&{\rm dist\,}(z,\Lambda_{\Td{t\hat\Upsilon}})\leq C_N\abs{{\rm Im\,}z}^N,\ \ \forall z\in Q\bigcap\Lambda_{\Td{t\Upsilon}},\\
&{\rm dist\,}(z_1,\Lambda_{\Td{t\Upsilon}})\leq C_N\abs{{\rm Im\,}z_1}^N,\ \ \forall z_1\in Q\bigcap\Lambda_{\Td{t\hat\Upsilon}},
\end{split}
\end{equation}
where $C_N>0$ is independent of $z$ and $z_1$. Put $t\Upsilon_1(\hat x,\hat y,s)=t(x_{2n}-y_{2n}+\upsilon_1(x,y,s))$. We take almost analytic extensions of $t\hat\Upsilon$, $t\Upsilon$ and $t\Upsilon_1$ such that
\begin{equation}\label{e-gue140103II}
\begin{split}
&\Td{t\hat\Upsilon}(\Td{\hat x},\Td{\hat y},\Td s)=\Td t\Td{\hat\Upsilon}(\Td{\hat x},\Td{\hat y},\Td s)=\Td t(\Td{x}_{2n}-\Td{y}_{2n})+\Td t\Td{\hat\upsilon}(\Td x,\Td y,\Td s),\\
&\Td{t\Upsilon}(\Td{\hat x},\Td{\hat y},\Td s)=\Td t\,\Td{\Upsilon}(\Td{\hat x},\Td{\hat y},\Td s)=\Td t(\Td{x}_{2n}-\Td{y}_{2n})+\Td t\Td{\upsilon}(\Td x,\Td y,\Td s),\\
&\Td{t\Upsilon_1}(\Td{\hat x},\Td{\hat y},\Td s)=\Td t\,\Td{\Upsilon_1}(\Td{\hat x},\Td{\hat y},\Td s)=\Td t(\Td{x}_{2n}-\Td{y}_{2n})+\Td t\Td{\upsilon_1}(\Td x,\Td y,\Td s),
\end{split}
\end{equation} 
and near $(\hat x_0,\hat x_0,\hat\xi_0,-\hat\xi_0)$, we have
\begin{equation}\label{e-gue140103III}
\Lambda_{\Td{t\Upsilon}}=\Lambda_{\Td{t\Upsilon_1}},
\end{equation}
where $\Lambda_{\Td{t\Upsilon_1}}$ is defined as in \eqref{e-gue140103}, $(\Td{\hat x},\Td{\hat y},\Td s)\in\hat W^\Complex$, $\Td t\in I_0^\Complex$. Thus, we only need to prove that $\Lambda_{\Td{t\hat\Upsilon}}$ and $\Lambda_{\Td{t\Upsilon_1}}$ are equivalent in the sense of \eqref{e-gue140103I}. 

Since $\hat\upsilon(x,y,s)-\upsilon_1(x,y,s)$ vanishes to infinite order at $x=y$, it is straightforward to see that (see section~\ref{sub-pf}) there is a neighbourhood $Q$ of $(\hat x_0,\hat x_0,\hat\xi_0,-\hat\xi_0)$ in $\Complex^{2n}\times\Complex^{2n}\times\Complex^{2n}\times\Complex^{2n}$, such that for every $N>0$ and every $z=(\Td{\hat x},\Td{\hat y},\Td t\frac{\pr\Td{\Upsilon}}{\pr\Td x}(\Td{\hat x},\Td{\hat y},\Td s),\Td t\frac{\pr\Td{\Upsilon}}{\pr\Td y}(\Td{\hat x},\Td{\hat y},\Td s))\in Q\bigcap\Lambda_{\Td{t\Upsilon}}$, we have 
\begin{equation}\label{e-gue140103IV}
{\rm dist\,}(z,\Lambda_{\Td{t\hat\Upsilon}})\leq C_N\Bigr(\abs{{\rm Im\,}(\Td x',\Td y,\Td s)}^N+\abs{{\rm Re\,}\Td x'-{\rm Re\,}\Td y}^N\Bigr),
\end{equation}
where $C_N>0$ is independent of $z\in Q$. Moreover, we can repeat the process in section~\ref{sub-pf} and conclude that if $Q$ is small enough then there is a constant $C_1>0$ independent of $z\in Q$ such that 
\begin{equation}\label{e-gue140103V}
\abs{{\rm Im\,}(\Td x',\Td y,\Td s)}+\abs{{\rm Re\,}\Td x'-{\rm Re\,}\Td y}\leq C_1\abs{{\rm Im\,}z},
\end{equation}
for every $z=(\Td{\hat x},\Td{\hat y},\Td t\frac{\pr\Td{\Upsilon}}{\pr\Td x}(\Td{\hat x},\Td{\hat y},\Td s),\Td t\frac{\pr\Td{\Upsilon}}{\pr\Td y}(\Td{\hat x},\Td{\hat y},\Td s))\in Q\bigcap\Lambda_{\Td{t\Upsilon}}$. From \eqref{e-gue140103IV} and \eqref{e-gue140103V}, we get the first formula in \eqref{e-gue140103I}. Similarly, we can repeat the process above and conclude the second formula in \eqref{e-gue140103I}. Moreover, from the construction above, it is easy to see that
\[{\rm Hess\,}(\upsilon,T_{(x,x,s)}H_\upsilon)={\rm Hess\,}(\hat\upsilon,T_{(x,x,s)}H_{\hat\upsilon}),\ \ \forall (x,x,s)\in\Lambda.\] 
The lemma follows. 
\end{proof}

\begin{defn}\label{d-geudhdc13619}
Let $\Phi_1(\hat x,\hat y,s)=x_{2n}-y_{2n}+\varphi_1(x,y,s)\in C^\infty(\hat\Omega)$, $\Phi_2(\hat x,\hat y,s)=x_{2n}-y_{2n}+\varphi_2(x,y,s)\in C^\infty(\hat\Omega)$, $, \varphi_1(x,y,s), \varphi_2(x,y,s)\in C^\infty(\Omega)$. We assume that $\varphi_1$ and $\varphi_2$ satisfy 
\eqref{e-dgugeIX}, \eqref{e-dugeX}, \eqref{e-dugeXI}, \eqref{e-dgugeXI} and \eqref{e-gue1373VIIa}. Let $(x_0,x_0,s_0)\in\Omega$. From Lemma~\ref{l-gue130804}, we know that in some small neighbourhood $\Lambda\subset\Omega$ of $(x_0,x_0,s_0)$, there are $\hat\varphi_1(x,y,s), \hat\varphi_2(x,y,s)\in C^\infty(\Lambda)$ such that $\varphi_1$ and $\varphi_2$ satisfy 
\eqref{e-dgugeIX}, \eqref{e-dugeX}, \eqref{e-dugeXI}, \eqref{e-dgugeXI},  \eqref{e-gue1373VIIa} and $\frac{\pr\hat\varphi_1}{\pr y_{2n-1}}(x,y,s)-(\alpha_{2n-1}(y)+s\beta_{2n-1}(y))$ and $\frac{\pr\hat\varphi_2}{\pr y_{2n-1}}(x,y,s)-(\alpha_{2n-1}(y)+s\beta_{2n-1}(y))$  vanish to infinite order at $x=y$, ${\rm Hess\,}(\varphi_1,T_{(x,x,s)}H_{\varphi_1})={\rm Hess\,}(\hat\varphi_1,T_{(x,x,s)}H_{\hat\varphi_1})$, $\forall (x,x,s)\in\Lambda$, ${\rm Hess\,}(\varphi_2,T_{(x,x,s)}H_{\varphi_2})={\rm Hess\,}(\hat\varphi_2,T_{(x,x,s)}H_{\hat\varphi_2})$, $\forall (x,x,s)\in\Lambda$ and $t\hat\Phi_1(\hat x,\hat y,s):=t(x_{2n}-y_{2n}+\hat\varphi_1(x,y,s))$ and $t\Phi_1(\hat x,\hat y,s)$ are equivalent for classical symbols at every point of 
\[{\rm diag\,}'\Bigr((U\bigcap\hat\Sigma)\times(U\bigcap\hat\Sigma)\Bigr)\bigcap\set{(\hat x,\hat x,td_{\hat x}\Phi_1(\hat x,\hat x,s),-td_{\hat x}\Phi_1(\hat x,\hat x,s))\in T^*\hat D;\, (x,x,s)\in\Lambda, t>0}\]  in the sense of Melin-Sj\"{o}strand~\cite{MS74}, $t\hat\Phi_2(\hat x,\hat y,s):=t(x_{2n}-y_{2n}+\hat\varphi_2(x,y,s))$ and $t\Phi_2(\hat x,\hat y,s)$ are equivalent for classical symbols at every point of 
\[{\rm diag\,}'\Bigr((U\bigcap\hat\Sigma)\times(U\bigcap\hat\Sigma)\Bigr)\bigcap\set{(\hat x,\hat x,td_{\hat x}\Phi_2(\hat x,\hat x,s),-td_{\hat x}\Phi_2(\hat x,\hat x,s))\in T^*\hat D;\, (x,x,s)\in\Lambda, t>0}\]  in the sense of Melin-Sj\"{o}strand~\cite{MS74}. 
%$h(x,y,s), l(x,y,s)\in C^\infty(\Lambda)$ with $h(x,x,s)=s$, $l(x,x,s)=s$ %and $h(x,y,s)$, $l(x,y,s)$ have uniquely determined Taylor expansions at %every $(x,x,s)\in\Lambda$, such that $\frac{\pr\hat\varphi_1}{\pr y_{2n-1}}%(x,y,s)-(\alpha_{2n-1}(y)+s\beta_{2n-1}(y))$ and $\frac{\pr\hat\varphi_2}{\pr %y_{2n-1}}(x,y,s)-(\alpha_{2n-1}(y)+s\beta_{2n-1}(y))$ vanish to infinite %order at $x=y$ and $\varphi_1$, $\varphi_2$ satisfy \eqref{e-gue130807I}, %where 
%\[\begin{split}
%&\hat\varphi_1(x,y,s):=\Td\varphi_1(x,y,h(x,y,s)),\ \ \hat\varphi_2(x,y,s):=%\Td\varphi_2(x,y,l(x,y,s)).
%\end{split}\]
We say that $\varphi_1(x,y,s)$ and $\varphi_2(x,y,s)$ are equivalent at $(x_0,x_0,s)$ if there are functions $f\in C^\infty(\Lambda')$, $g_j\in C^\infty(\Lambda')$, $j=0,1,\ldots,2n-1$, $p_j\in C^\infty(\Lambda')$, $j=1,\ldots,2n-1$, such that
\[
\begin{split}
&\frac{\pr\hat\varphi_1}{\pr s}(x,y,s)-f(x,y,s)\frac{\pr\hat\varphi_2}{\pr s}(x,y,s),\\
&\hat\varphi_1(x,y,s)-\hat\varphi_2(x,y,s)=g_0(x,y,s)\frac{\pr\hat\varphi_1}{\pr s}(x,y,s),\\
&\frac{\pr\hat\varphi_1}{\pr x_j}(x,y,s)-\frac{\pr\hat\varphi_2}{\pr x_j}(x,y,s)=g_j(x,y,s)\frac{\pr\hat\varphi_1}{\pr s}(x,y,s),\ \ j=1,2,\ldots,2n-1, \\
&\frac{\pr\hat\varphi_1}{\pr y_j}(x,y,s)-\frac{\pr\hat\varphi_2}{\pr y_j}(x,y,s)=p_j(x,y,s)\frac{\pr\hat\varphi_1}{\pr s}(x,y,s),\ \ j=1,2,\ldots,2n-1,
\end{split}\]
vanish to infinite order on $x=y$, for every $(x,y,s)\in\Lambda'$, where $\Lambda'\subset\Lambda$ is a small neighbourhood of $(x_0,x_0,s_0)$. 
\end{defn}

%It is not difficult to see that the definition above is independent of the choices of almost analytic extensions of $\varphi$ and $\varphi_1$. 

We have

\begin{thm} \label{t-dgudmgeaI}
Let $\Phi_1(\hat x,\hat y,s)=x_{2n}-y_{2n}+\varphi_1(x,y,s)\in C^\infty(\hat\Omega)$, $\varphi_1(x,y,s)\in C^\infty(\Omega)$. Assume that $\varphi_1$ satisfis
\eqref{e-dgugeIX}, \eqref{e-dugeX}, \eqref{e-dugeXI}, \eqref{e-dgugeXI} and \eqref{e-gue1373VIIa}. 
Then $t\Phi(\hat x,\hat y,s)$ and $t\Phi_1(\hat x,\hat y,s)$ are equivalent for classical symbols at every point of 
\[{\rm diag\,}'\Bigr((U\bigcap\hat\Sigma)\times(U\bigcap\hat\Sigma)\Bigr)\bigcap\set{(\hat x,\hat x,\hat\xi,-\hat\xi);\, (\hat x,\hat\xi)\in T^*\hat D}.\] (remind that ${\rm diag\,}'\Bigr((U\bigcap\hat\Sigma)\times(U\bigcap\hat\Sigma)\Bigr)$ is given by \eqref{e-gedhanmpI}) in the sense of Melin-Sj\"{o}strand~\cite{MS74} if and only if $\varphi(x,y,s)$ and $\varphi_1(x,y,s)$ are equivalent at each point of $\Omega$ in the sense of Definition~\ref{d-geudhdc13619}.
\end{thm}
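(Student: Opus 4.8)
The plan is to reduce the statement about equivalence of the phases $t\Phi$ and $t\Phi_1$ (in the Melin--Sj\"ostrand sense) on ${\rm diag\,}'\bigr((U\bigcap\hat\Sigma)\times(U\bigcap\hat\Sigma)\bigr)$ to the pointwise infinitesimal criterion of Definition~\ref{d-geudhdc13619} by working locally near a fixed point and passing to the associated positive Lagrangians. First I would fix $\hat\rho_0=(\hat x_0,\hat x_0,\hat\xi_0,-\hat\xi_0)\in{\rm diag\,}'\bigr((U\bigcap\hat\Sigma)\times(U\bigcap\hat\Sigma)\bigr)$ with $\hat\xi_0=t_0 d_{\hat x}\Phi(\hat x_0,\hat x_0,s_0)$, $t_0>0$, and invoke Lemma~\ref{l-gue130804} to replace $t\Phi$, $t\Phi_1$ by the normalized phases $t\hat\Phi$, $t\hat\Phi_1$ for which $\frac{\pr\hat\varphi}{\pr y_{2n-1}}-(\alpha_{2n-1}(y)+s\beta_{2n-1}(y))$ and $\frac{\pr\hat\varphi_1}{\pr y_{2n-1}}-(\alpha_{2n-1}(y)+s\beta_{2n-1}(y))$ vanish to infinite order on $x=y$, with the Hessians preserved and with $t\hat\Phi\sim t\Phi$, $t\hat\Phi_1\sim t\Phi_1$ for classical symbols. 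By transitivity of the equivalence relation for classical symbols it suffices to prove $t\hat\Phi$ and $t\hat\Phi_1$ are equivalent at $\hat\rho_0$ if and only if $\hat\varphi$ and $\hat\varphi_1$ satisfy the divisibility relations of Definition~\ref{d-geudhdc13619} at $(x_0,x_0,s_0)$.

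Next I would pass to the geometric (Lagrangian) formulation, as in Remark~\ref{r-gudhanmpI}: with the substitution $t\Phi\leftrightarrow\Phi_0(\hat x,\hat y,s,t)=t\Phi(\hat x,\hat y,s/t)$ the phases become honest complex phase functions in the sense of~\cite{MS74}, and $t\hat\Phi$, $t\hat\Phi_1$ are equivalent for classical symbols at $\hat\rho_0$ if and only if the almost-analytic positive Lagrangians $\Lambda_{\Td{t\hat\Phi}}$ and $\Lambda_{\Td{t\hat\Phi_1}}$ (defined exactly as $\Lambda_{\Td{t\hat\Upsilon}}$, $\Lambda_{\Td{t\Upsilon}}$ in \eqref{e-gue140103}) agree to infinite order along ${\rm Im\,}z=0$ near $\hat\rho_0$, in the sense of \eqref{e-gue140103I}. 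Here the key points are: (i) both Lagrangians are cut out by the equations $\Td{\hat\Phi}=0$, $\frac{\pr\Td{\hat\Phi}}{\pr\Td s}=0$ together with the dual-variable relations, and the extra variable $\Td x_{2n}$ decouples because $\hat\Phi=x_{2n}-y_{2n}+\hat\varphi$; (ii) because $\frac{\pr\hat\varphi}{\pr y_{2n-1}}$ and $\frac{\pr\hat\varphi_1}{\pr y_{2n-1}}$ are pinned to $\alpha_{2n-1}(y)+s\beta_{2n-1}(y)$ modulo $O(|x-y|^\infty)$, the variable $s$ plays the role of the fiber coordinate conjugate to $x_{2n-1}$ near $\hat\Sigma$, and the condition $\frac{\pr\Td{\hat\Phi}}{\pr\Td s}=0$ solves for $\Td s$ as an almost-analytic function on the Lagrangian. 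So the statement ``$\Lambda_{\Td{t\hat\Phi}}=\Lambda_{\Td{t\hat\Phi_1}}$ to infinite order near $\hat\rho_0$'' is equivalent to the statement that the functions $\Td{\hat\Phi}-\Td{\hat\Phi_1}$, $\frac{\pr\Td{\hat\Phi}}{\pr\Td s}-\frac{\pr\Td{\hat\Phi_1}}{\pr\Td s}$, $\frac{\pr\Td{\hat\Phi}}{\pr\Td x_j}-\frac{\pr\Td{\hat\Phi_1}}{\pr\Td x_j}$, $\frac{\pr\Td{\hat\Phi}}{\pr\Td y_j}-\frac{\pr\Td{\hat\Phi_1}}{\pr\Td y_j}$ all lie in the ideal generated by $\frac{\pr\Td{\hat\Phi}}{\pr\Td s}$ modulo functions vanishing to infinite order on $x=y$; restricting back to real points $x=y$ this is precisely the list of divisibility identities in Definition~\ref{d-geudhdc13619} with divisor $\frac{\pr\hat\varphi}{\pr s}$.

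The remaining work is therefore the implicit-function / division argument on the real locus $\{x=y\}$, carried out with almost-analytic extensions and Taylor expansions in the transversal directions. For the ``if'' direction I would start from the divisibility relations, build the quotient functions $f,g_0,\dots,g_{2n-1},p_1,\dots,p_{2n-1}$, take almost-analytic extensions, and check directly that the canonical transformation generated by $t\hat\Phi$ and the one generated by $t\hat\Phi_1$ agree to infinite order on the common Lagrangian, using that $\frac{\pr\hat\varphi}{\pr s}$ is a defining function transversal to $\{x=y\}$ by \eqref{e-gue1373VIIa} (i.e.\ ${\rm Im\,}\varphi+|\frac{\pr\varphi}{\pr s}|\gtrsim |x-y|^2+|x_{2n-1}-y_{2n-1}|$ plus \eqref{e-gmedmdhI}), so that division by it is legitimate in the ring of germs modulo $O(|x-y|^\infty)$. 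For the ``only if'' direction, equivalence of the Lagrangians near $\hat\rho_0$ gives, via \eqref{e-gue140103I} and an estimate of the form \eqref{e-gue140103V} bounding the transversal deviation by $|{\rm Im\,}z|$, that the differences of the (almost-analytic extensions of the) phases and their first derivatives vanish to infinite order on the variety $\{\Td{\hat\Phi}=0\}$, hence are divisible by $\frac{\pr\Td{\hat\Phi}}{\pr\Td s}$; restricting to $x=y$ yields the identities. The main obstacle I anticipate is the careful bookkeeping in this last step: one must show that the generating-function description of a positive canonical relation is rigid enough that ``the two Lagrangians coincide to infinite order'' forces exactly the Hamilton--Jacobi type divisibility by $\pr_s\hat\Phi$ and nothing weaker, which requires keeping track of which first derivatives of the phase are free and which are constrained on $\hat\Sigma$ (the tangential Hessian data of \eqref{e-guesw13622II}, already matched by Lemma~\ref{l-gue130804}, is what guarantees the two Lagrangians are tangent to high order in the first place, so that only the transversal division remains). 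Everything else is a routine adaptation of the arguments in~\cite{MS74} and section~\ref{sub-pf}.
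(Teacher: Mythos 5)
Your route is the paper's own: normalize $\varphi,\varphi_1$ via Lemma~\ref{l-gue130804}, translate Melin--Sj\"ostrand equivalence of $t\Phi$ and $t\Phi_1$ into the infinite-order coincidence of the almost-analytic Lagrangians $\Lambda_{\Td{t\hat\Phi}}$, $\Lambda_{\Td{t\hat\Phi}_1}$, and convert that coincidence into the divisibility relations of Definition~\ref{d-geudhdc13619}, the quantitative hinge being an estimate of type \eqref{e-gue140103V} that bounds $\abs{{\rm Im\,}(\Td x',\Td y,\Td s)}$ and $\abs{{\rm Re\,}\Td x'-{\rm Re\,}\Td y'}$ by $\abs{{\rm Im\,}z}$ along the Lagrangian. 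In that sense the plan is sound and matches the paper.

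There is, however, one concrete step that fails as stated: you claim that the condition $\frac{\pr\Td{\hat\Phi}}{\pr\Td s}=0$ ``solves for $\Td s$ as an almost-analytic function on the Lagrangian.'' It cannot. By \eqref{e-gmedmdhI} one has $\frac{\pr\varphi}{\pr s}(x,y,s)=\sum_j\beta_j(x)(x_j-y_j)+O(\abs{x-y}^2)$, so $\frac{\pr^2\varphi}{\pr s^2}$ vanishes on the diagonal $x=y$, which is exactly where the relevant points of the Lagrangian sit (by \eqref{e-dgugeXII}); the implicit function theorem in the $s$-variable is therefore inapplicable, and indeed $s$ must survive as a free parameter (together with $t$) for the Lagrangian to have the right dimension. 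The correct move---and the technical device your plan omits---is the Malgrange preparation of \eqref{e-gmedmdhII}: since $\pr_{x_{2n-1}}\pr_s\hat\varphi(x,x,s)=\beta_{2n-1}(x)\geq\frac12$, one factors $\pr_s\hat\varphi=(x_{2n-1}-\beta(x',y,s))\,g(x,y,s)$ with $g$ nonvanishing and solves $\pr_{\Td s}\Td{\hat\Phi}=0$ for $\Td x_{2n-1}=\delta(\Td x',\Td y,\Td s)$. The whole comparison then reduces to comparing $\delta$ with $\delta_1$ and the values and first derivatives of $\Td{\hat\varphi}$, $\Td{\hat\varphi}_1$ along $x_{2n-1}=\beta(x',y,s)$, which is precisely where the divisibility relations of Definition~\ref{d-geudhdc13619} enter, and it is for $\delta$ (not for a putative solved-for $s$) that one must prove the imaginary-part bounds feeding into \eqref{e-gue140103V}. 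With this correction the remainder of your outline goes through as in the paper.
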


The proof is straightforward and follows from global theory of complex Fourier integral operators of Melin-Sj\"ostrand~\cite{MS74}. We put the proof in section~\ref{sub-pf}.

We notice that $\psi(\infty,\hat x,\hat\eta)-<\hat y,\hat\eta>$ and $t\Phi(\hat x, \hat y,s)$ are equivalent
for classical symbols at every point of 
${\rm diag\,}'\Bigr((U\bigcap\hat\Sigma)\times(U\bigcap\hat\Sigma)\Bigr)\bigcap\set{(\hat x,\hat x,\hat\xi,-\hat\xi);\, (\hat x,\hat\xi)\in T^*\hat D}$ in the sense of Melin-Sj\"{o}strand~\cite{MS74}.  Consider 
\[-\ol\Phi(\hat y,\hat x,s)=x_{2n}-y_{2n}-\ol\varphi(y,x,s).\]
From Theorem~\ref{t-dhlkmimIII}, we see that $t\Phi(\hat x,\hat y,s)$ and $-t\ol\Phi(\hat y,\hat x,s)$ are equivalent
for classical symbols at every point of ${\rm diag\,}'\Bigr((U\bigcap\hat\Sigma)\times(U\bigcap\hat\Sigma)\Bigr)$ in the sense of Melin-Sj\"{o}strand~\cite{MS74}. Note that $-\ol\varphi(y,x,s)$ satisfies \eqref{e-dgugeIX}, \eqref{e-dugeX}, \eqref{e-dugeXI}, \eqref{e-dgugeXI} and \eqref{e-gue1373VIIa}. From Theorem~\ref{t-dgudmgeaI}, we see that $\varphi(x,y,s)$ and $-\ol\varphi(y,x,s)$ are equivalent at each point of $\Omega$ in the sense of Definition~\ref{d-geudhdc13619}.

Summing up, we obtain the main result of this section 

\begin{thm} \label{t-dcgewI}
With the notations and assumptions above. 
Let $S=S(\hat x,\hat y)\in L^0_{\frac{1}{2},\frac{1}{2}}(\hat D,T^{*0,q}\hat D\boxtimes T^{*0,q}\hat D)$ be as in Theorem~\ref{t-dcmimpII}. Then, on $\hat D$, we have 
\begin{equation}\label{e-dcgewI}
S(\hat x, \hat y)\equiv\int e^{it\Phi(\hat x, \hat y, s)}b(\hat x, \hat y, s, t)dsdt
\end{equation}
with
\begin{equation} \label{e-dcgewII}
b(\hat x, \hat y, s, t)\sim\sum^\infty_{j=0}b_j(\hat x,\hat y, s)t^{n-j}
\end{equation}
in $S^{n}_{1,0}(\hat\Omega\times]0, \infty[,T^{*0,q}_{\hat y}\hat D\boxtimes T^{*0,q}_{\hat x}\hat D)$, ${\rm Supp\,}b(\hat x, \hat y, s, t)\subset\hat\Omega\times\Real_+$,  
\begin{equation}\label{e-gue1373VII}
b_0(\hat x, \hat x,s):T^{*0,q}_{\hat x}\hat D\To \mathcal{N}(x,s,n_-),\ \ \forall (\hat x,\hat x,s)\in\hat\Omega,
\end{equation}
where $\mathcal{N}(x,s,n_-)$ is given by \eqref{e-gue1373III}, 
\begin{equation}\label{e-dcgewIIa}
\begin{split}
\hat\Omega:=&\{(\hat x,\hat y,s)\in\hat D\times\hat D\times\Real;\, (\hat x,(-2{\rm Im\,}\ddbar_b\phi(x)+s\omega_0(x),1))\in U\bigcap\hat\Sigma,\\
&\quad(\mbox{$\hat y,(-2{\rm Im\,}\ddbar_b\phi(y)+s\omega_0(y),1))\in U\bigcap\hat\Sigma$, 
$\abs{\hat x-\hat y}<\varepsilon$, for some $\varepsilon>0$}\},
\end{split}
\end{equation} 
\[{\rm Supp\,}b_j(\hat x, \hat y, s)\subset\hat\Omega,\ \ b_j(\hat x, \hat y,s)\in C^\infty(\hat\Omega,T^{*0,q}_{\hat y}\hat D\boxtimes T^{*0,q}_{\hat x}\hat D),\ \ j=0, 1,\ldots,\]
\[\begin{split}
&\Phi(\hat x,\hat y,s)=x_{2n}-y_{2n}+\varphi(x,y,s),\\ 
&\varphi(x,y,s)\in C^\infty(\Omega),\ \ \Omega=\set{(x,y,s)\in D\times D\times\Real;\, (\hat x,\hat y,s)\in\hat\Omega},\end{split}\]
and $\varphi(x,y,s)$ satisfies \eqref{e-dgugeIX}, \eqref{e-dugeX}, \eqref{e-dugeXI}, \eqref{e-dgugeXI} and \eqref{e-gue1373VIIa}. Furthermore, $\varphi(x,y,s)$ and $-\ol\varphi(y,x,s)$ are equivalent at each point of $\Omega$ in the sense of Definition~\ref{d-geudhdc13619}. 

Moreover, the phase $t\Phi(\hat x,\hat y,s)$ can be characterized as follows: Let $\Phi_1(\hat x,\hat y,s)=x_{2n}-y_{2n}+\varphi_1(x,y,s)\in C^\infty(\hat\Omega)$, $\varphi_1(x,y,s)\in C^\infty(\Omega)$. We assume that $\varphi_1$ satisfies 
\eqref{e-dgugeIX}, \eqref{e-dugeX}, \eqref{e-dugeXI}, \eqref{e-dgugeXI} and \eqref{e-gue1373VIIa}.
Then $t\Phi(\hat x,\hat y,s)$ and $t\Phi_1(\hat x,\hat y,s)$ are equivalent for classical symbols at every point of 
\[{\rm diag\,}'\Bigr((U\bigcap\hat\Sigma)\times(U\bigcap\hat\Sigma)\Bigr)\bigcap\set{(\hat x,\hat x,\hat\xi,-\hat\xi);\, (\hat x,\hat\xi)\in T^*\hat D}\] 
in the sense of Melin-Sj\"{o}strand~\cite{MS74} if and only if  $\varphi(x,y,s)$ and $\varphi_1(x,y,s)$ are equivalent at each point of $\Omega$ in the sense of Definition~\ref{d-geudhdc13619}. 
\end{thm}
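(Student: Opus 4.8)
The plan is to assemble, into a single statement, the microlocal constructions carried out earlier in this section, the starting point being the explicit oscillatory integral for $S$ supplied by Theorem~\ref{t-dcmimpII}, namely \eqref{e-dcmapaI}. First I would pass from $\hat\eta$ to ``polar-type'' fibre coordinates adapted to $\hat\Sigma$: since $\tau$ is supported in $U$, the integrand in \eqref{e-dcmapaI} vanishes for $\eta_{2n}\leq0$, so I would write $\hat\eta=(tw,t)$ with $t>0$, $w\in\Real^{2n-1}$, and then substitute $w_{2n-1}=\alpha_{2n-1}(x)+s\beta_{2n-1}(x)$, splitting $w=(w',w_{2n-1})$; this is legitimate on $D$ because $\beta_{2n-1}\geq\tfrac12$ by \eqref{e-dgugeI}, and it brings $S(\hat x,\hat y)$ into the form \eqref{e-dgugeIII}. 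The key analytic input for the next move is Lemma~\ref{l-dcageI}: the Hessian $\bigl(\tfrac{\pr^2\psi}{\pr\eta_j\pr\eta_t}(\infty,\cdot)\bigr)_{j,t=1}^{2n-2}$ is non-degenerate on $U$, so the $w'$-integral has a non-degenerate (complex) critical point. Carrying out the $w'$-integration by the Melin--Sj\"ostrand stationary phase method then yields \eqref{e-dgugeIV}--\eqref{e-dgugeVI}, i.e.\ the representation $S(\hat x,\hat y)\equiv\int e^{it\Phi(\hat x,\hat y,s)}b(\hat x,\hat y,s,t)\,ds\,dt$ with $b$ a classical symbol of order $n$ and expansion \eqref{e-dcgewII}; the non-homogeneity of $t\Phi$ in $(s,t)$ is dealt with via the identification $t\Phi\leftrightarrow\Phi_0(\hat x,\hat y,s,t)=t\Phi(\hat x,\hat y,\tfrac st)$ of Remark~\ref{r-gudhanmpI}, after which the global complex Fourier integral calculus of~\cite{MS74} applies without change. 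Since $S$ is a pseudodifferential operator it is smoothing off the diagonal, so $\varepsilon$ in \eqref{e-dgugeVI} may be taken arbitrarily small.

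Next I would extract the properties of $\Phi$, $\varphi$ and $b_0$. From \eqref{e-dhlkmimVII}, $\psi(\infty,\hat x,\hat\eta)$ depends on $x_{2n}$ only through the additive linear term, which forces $\Phi(\hat x,\hat y,s)=x_{2n}-y_{2n}+\varphi(x,y,s)$ with $\varphi\in C^\infty(\Omega)$; this is \eqref{e-dgugeVII}. Localizing the $w'$-critical point at $\hat x=\hat y$ (where it equals $(\alpha_1(x)+s\beta_1(x),\ldots,\alpha_{2n-2}(x)+s\beta_{2n-2}(x))$) gives \eqref{e-dgugeIX}, \eqref{e-dugeX}, \eqref{e-dugeXI}, and, propagating \eqref{e-gue1373V} through the stationary phase expansion for $b$, the mapping property \eqref{e-gue1373VII}, i.e.\ $b_0(\hat x,\hat x,s):T^{*0,q}_{\hat x}\hat D\To\mathcal{N}(x,s,n_-)$. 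The two lower bounds \eqref{e-dgugeXI} and \eqref{e-gue1373VIIa} for ${\rm Im\,}\varphi$ I would then deduce exactly as in Theorem~\ref{t-dgugeI}: Proposition~\ref{p-dgudmgeI} bounds ${\rm Im\,}\varphi$ below by an infimum over $w'$ of ${\rm Im\,}\psi(\infty,\cdot)$ plus the square of a $w'$-derivative, and then the estimate ${\rm Im\,}\psi(\infty,\cdot)\asymp\abs{\hat\eta}\,{\rm dist\,}(\cdot,\hat\Sigma)^2$ of Theorem~\ref{t-dhlkmimII}, together with $\beta_{2n-1}\geq\tfrac12$, closes the argument. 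So $\varphi$ satisfies \eqref{e-dgugeIX}, \eqref{e-dugeX}, \eqref{e-dugeXI}, \eqref{e-dgugeXI}, \eqref{e-gue1373VIIa} as claimed.

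Finally I would settle the symmetry and characterization of the phase. For the equivalence of $\varphi(x,y,s)$ with $-\ol\varphi(y,x,s)$: Theorem~\ref{t-dhlkmimIII} says the phases $\psi(\infty,\hat x,\hat\eta)-<\hat y,\hat\eta>$ and $-\ol\psi(\infty,\hat y,\hat\eta)+<\hat x,\hat\eta>$ are equivalent for classical symbols on ${\rm diag\,}'\bigl((U\cap\hat\Sigma)\times(U\cap\hat\Sigma)\bigr)$; the stationary phase reduction of the first paragraph sends the first to $t\Phi(\hat x,\hat y,s)$ and the second to $-t\ol\Phi(\hat y,\hat x,s)=x_{2n}-y_{2n}-\ol\varphi(y,x,s)$, and preserves equivalence of phase functions, so by transitivity $t\Phi$ and $-t\ol\Phi(\hat y,\hat x,s)$ are equivalent; since $-\ol\varphi(y,x,s)$ also satisfies \eqref{e-dgugeIX}, \eqref{e-dugeX}, \eqref{e-dugeXI}, \eqref{e-dgugeXI}, \eqref{e-gue1373VIIa}, Theorem~\ref{t-dgudmgeaI} converts this into equivalence of $\varphi$ and $-\ol\varphi(y,x,s)$ at each point of $\Omega$ in the sense of Definition~\ref{d-geudhdc13619}. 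The concluding ``if and only if'' clause is then just a restatement of Theorem~\ref{t-dgudmgeaI}. The only genuinely delicate point in all of this is the stationary phase step of the first paragraph, where one must verify that the Melin--Sj\"ostrand machinery applies to a \emph{complex}, \emph{non-homogeneous} phase; this is precisely where Lemma~\ref{l-dcageI}, the positivity $\beta_{2n-1}\geq\tfrac12$, and the homogenization device of Remark~\ref{r-gudhanmpI} enter, and with Theorem~\ref{t-dgudmgeaI} (proved in section~\ref{sub-pf}) in hand the remainder of the theorem is bookkeeping of \eqref{e-dgugeIV}--\eqref{e-gue1373VII}, Theorem~\ref{t-dgugeI}, Theorem~\ref{t-dhlkmimIII} and Theorem~\ref{t-dgudmgeaI}.
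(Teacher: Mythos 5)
Your proposal reproduces the paper's own argument: the paper proves this theorem precisely by the accumulation of the preceding material in this section — the change of variables leading to \eqref{e-dgugeII}--\eqref{e-dgugeIII}, Lemma~\ref{l-dcageI} plus Melin--Sj\"ostrand stationary phase in $w'$ (with Remark~\ref{r-gudhanmpI} handling the non-homogeneity), \eqref{e-dhlkmimVII} and the critical-point computation for \eqref{e-dgugeVII}--\eqref{e-dugeXI} and \eqref{e-gue1373VI}, Theorem~\ref{t-dgugeI} for the two lower bounds on ${\rm Im\,}\varphi$, and Theorem~\ref{t-dhlkmimIII} combined with Theorem~\ref{t-dgudmgeaI} for the symmetry and the characterization of the phase. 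The proof is correct and takes essentially the same route.
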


\subsection{The tangential Hessian of $\varphi(x,y,s)$} \label{s-cth} 

In this section, we will calculate the tangential Hessian of $\varphi(x,y,s)$ and we will use the same notations as before. Let $x=(x_1,\ldots,x_{2n-1})$ be local coordinates on $D$. 
The following is straightforward. We omit the proof.

\begin{prop}\label{p-geusw13622I}
Let $\varphi_1(x,y,s), \varphi_2(x,y,s)\in C^\infty(\hat\Omega)$. We assume that $\varphi_1$ and $\varphi_2$ satisfy 
\eqref{e-dgugeIX}, \eqref{e-dugeX}, \eqref{e-dugeXI}, \eqref{e-dgugeXI} and \eqref{e-gue1373VIIa}.
Then, $T_{(x,x,s)}H_{\varphi_1}=T_{(x,x,s)}H_{\varphi_2}$, for every $(x,x,s)\in\Omega$. Assume further that 
$\varphi_1(x,y,s)$ and $\varphi_2(x,y,s)$ are equivalent at each point of $\Omega$ in the sense of Definition~\ref{d-geudhdc13619}. Then, ${\rm Hess\,}(\varphi_1,{T_{(x,x,s)}H_{\varphi_1}})={\rm Hess\,}(\varphi_2,{T_{(x,x,s)}H_{\varphi_2}})$, $\forall (x,x,s)\in\Omega$.

In particular, if we put $\varphi_1(x,y,s)=-\ol\varphi(y,x,s)$ then
${\rm Hess\,}(\varphi,{T_{(x,x,s)}H_\varphi})={\rm Hess\,}(\varphi_1,{T_{(x,x,s)}H_{\varphi_1}})$, 
$\forall (x,x,s)\in\Omega$. 
\end{prop}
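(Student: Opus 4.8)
\textbf{Proof proposal for Proposition~\ref{p-geusw13622I}.}
The statement splits into two independent assertions, and I would treat them in sequence. First, the claim that $T_{(x,x,s)}H_{\varphi_1}=T_{(x,x,s)}H_{\varphi_2}$ for every $(x,x,s)\in\Omega$: by the very definition \eqref{e-guesw13622I} of the tangent space $T_{(p,p,s)}H_g$, this subspace of $\Complex^{2n-1}\times\Complex^{2n-1}$ depends only on the mixed second derivatives $\frac{\pr^2 g}{\pr x_j\pr s}(p,p,s)$ and $\frac{\pr^2 g}{\pr y_j\pr s}(p,p,s)$. These are obtained by differentiating $\frac{\pr g}{\pr s}$ once in $x_j$ or $y_j$ and then restricting to the diagonal. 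Since both $\varphi_1$ and $\varphi_2$ satisfy \eqref{e-dugeX} and \eqref{e-dugeXI}, we have $(\varphi_i)'_x(x,x,s)=-2{\rm Im\,}\ddbar_b\phi(x)+s\omega_0(x)$ and $(\varphi_i)'_y(x,x,s)=2{\rm Im\,}\ddbar_b\phi(x)-s\omega_0(x)$ for $i=1,2$; differentiating these diagonal identities in $s$ shows the relevant one-jets of $\frac{\pr\varphi_i}{\pr s}$ along the diagonal agree for $i=1,2$ (up to the ambiguity of how $\varphi_i$ leaves the diagonal, which does not enter the mixed $\pr_s$-derivatives on the diagonal once we also use \eqref{e-dgugeIX}). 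Hence the defining linear equations coincide and the two tangent spaces are equal. Concretely, as already recorded in \eqref{e-guesw13622III}, $T_{(x,x,s)}H_\varphi$ is spanned by $(u,v)$ with $u,v\in T^{1,0}_xX\oplus T^{0,1}_xX$ together with $(T(x),T(x))$, and the same description holds verbatim for $\varphi_1,\varphi_2$.

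Second, and this is the substantive part, the claim that equivalence in the sense of Definition~\ref{d-geudhdc13619} forces ${\rm Hess\,}(\varphi_1,T_{(x,x,s)}H_{\varphi_1})={\rm Hess\,}(\varphi_2,T_{(x,x,s)}H_{\varphi_2})$. The plan is to work with the auxiliary functions $\hat\varphi_1,\hat\varphi_2$ furnished by Lemma~\ref{l-gue130804}, which by construction satisfy ${\rm Hess\,}(\varphi_i,T_{(x,x,s)}H_{\varphi_i})={\rm Hess\,}(\hat\varphi_i,T_{(x,x,s)}H_{\hat\varphi_i})$, so it suffices to prove the equality for the hatted versions. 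By Definition~\ref{d-geudhdc13619}, there exist smooth $f$, $g_0,\dots,g_{2n-1}$, $p_1,\dots,p_{2n-1}$ such that $\hat\varphi_1-\hat\varphi_2=g_0\,\frac{\pr\hat\varphi_1}{\pr s}$, $\frac{\pr\hat\varphi_1}{\pr x_j}-\frac{\pr\hat\varphi_2}{\pr x_j}=g_j\,\frac{\pr\hat\varphi_1}{\pr s}$, $\frac{\pr\hat\varphi_1}{\pr y_j}-\frac{\pr\hat\varphi_2}{\pr y_j}=p_j\,\frac{\pr\hat\varphi_1}{\pr s}$, and $\frac{\pr\hat\varphi_1}{\pr s}-f\,\frac{\pr\hat\varphi_2}{\pr s}$, all vanishing to infinite order on $x=y$. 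I would then compute the second derivatives of $\hat\varphi_1-\hat\varphi_2$ at a diagonal point $(x,x,s)$ by differentiating these relations: e.g. $\frac{\pr^2(\hat\varphi_1-\hat\varphi_2)}{\pr x_s\pr x_t}=\frac{\pr g_t}{\pr x_s}\frac{\pr\hat\varphi_1}{\pr s}+g_t\frac{\pr^2\hat\varphi_1}{\pr x_s\pr s}$. The first term vanishes on the diagonal because $\frac{\pr\hat\varphi_1}{\pr s}$ does (it vanishes to infinite order on $x=y$, being $O(\abs{x-y})$ at least — in fact by \eqref{e-gmedmdhI} it is $\sum\beta_j(x)(x_j-y_j)+O(\abs{x-y}^2)$, hence zero on the diagonal). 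So on the diagonal $\frac{\pr^2(\hat\varphi_1-\hat\varphi_2)}{\pr x_s\pr x_t}(x,x,s)=g_t(x,x,s)\,\frac{\pr^2\hat\varphi_1}{\pr x_s\pr s}(x,x,s)$, and similarly for the $\pr_x\pr_y$, $\pr_y\pr_y$ blocks, with factors $g_t$ or $p_t$ times a mixed $\pr\cdot\pr_s$ derivative of $\hat\varphi_1$.

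Putting $u=(a,b),v=(c,d)\in T_{(x,x,s)}H_{\hat\varphi_1}=T_{(x,x,s)}H_{\hat\varphi_2}$, I would then expand $\langle(\hat\varphi_1-\hat\varphi_2)''(x,x)u,v\rangle$ using the formula for $\langle g''u,v\rangle$ displayed after \eqref{e-guesw13622II}. Every term carries one factor of the form $\frac{\pr^2\hat\varphi_1}{\pr x_t\pr s}$ or $\frac{\pr^2\hat\varphi_1}{\pr y_t\pr s}$ (evaluated on the diagonal), multiplied by a coefficient $a_t g_s$, $b_t g_s$, $a_t p_s$, or $b_t p_s$; after collecting, the sum factors through $\sum_t\big(a_t\frac{\pr^2\hat\varphi_1}{\pr x_t\pr s}(x,x,s)+b_t\frac{\pr^2\hat\varphi_1}{\pr y_t\pr s}(x,x,s)\big)$ times a linear combination of the $g_s,p_s$ against the components of $v$ (and symmetrically with the roles of $u,v$ producing the conjugate grouping). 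But $\sum_t\big(a_t\frac{\pr^2\hat\varphi_1}{\pr x_t\pr s}+b_t\frac{\pr^2\hat\varphi_1}{\pr y_t\pr s}\big)=0$ precisely because $(a,b)\in T_{(x,x,s)}H_{\hat\varphi_1}$, by \eqref{e-guesw13622I}. Hence $\langle(\hat\varphi_1-\hat\varphi_2)''(x,x)u,v\rangle=0$ for all $u,v$ in the common tangent space, i.e. the two tangential Hessians coincide. The final assertion, with $\varphi_2(x,y,s)=-\ol\varphi(y,x,s)$, is then immediate from Theorem~\ref{t-dcgewI}, which asserts that $\varphi$ and $-\ol\varphi(y,x,\cdot)$ are equivalent at each point of $\Omega$, together with the fact (checked there) that $-\ol\varphi(y,x,s)$ satisfies \eqref{e-dgugeIX}, \eqref{e-dugeX}, \eqref{e-dugeXI}, \eqref{e-dgugeXI}, \eqref{e-gue1373VIIa}. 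The main obstacle I anticipate is purely bookkeeping: carefully verifying that each second-derivative difference on the diagonal genuinely factors through the vanishing linear functional defining $T_{(x,x,s)}H_{\hat\varphi_1}$ — one must make sure no "rogue" terms survive from differentiating the coefficient functions, which is where the infinite-order vanishing of $\frac{\pr\hat\varphi_1}{\pr s}$ and of the $g_j$-relations on $x=y$ does the essential work.
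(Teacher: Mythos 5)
Your proof is correct; the paper omits the argument as ``straightforward,'' and your computation (reduce to the hatted phases via Lemma~\ref{l-gue130804}, differentiate the relations of Definition~\ref{d-geudhdc13619} once, and observe that every diagonal second-derivative difference factors through the linear functional $\sum_s\bigl(c_s\frac{\pr^2\hat\varphi_1}{\pr x_s\pr s}+d_s\frac{\pr^2\hat\varphi_1}{\pr y_s\pr s}\bigr)$ defining the common tangent space) is exactly the intended one. One small correction: $\frac{\pr\hat\varphi_1}{\pr s}$ does \emph{not} vanish to infinite order on $x=y$ --- by \eqref{e-gmedmdhI} it vanishes only to first order --- but the property you actually use, namely that it vanishes on the diagonal because $\hat\varphi_1(x,x,s)\equiv 0$, is correct and is all that is needed to kill the terms containing derivatives of the coefficients $f,g_j,p_j$.
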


From Proposition~\ref{p-geusw13622I}, we know that the tangential Hessian of $\varphi$ at $(x,x,s)\in\Omega$ is uniquely determined in the equivalence class of $\varphi$ in the sense of Definition~\ref{d-geudhdc13619}. In the rest of this section, we will determine the tangential Hessian of $\varphi(x,y,s)$ at $(x,x,s)\in\Omega$. Until further notice, we fix $(p,p,s_0)\in\Omega$. Recall that (see \eqref{e-dhmpXII} and \eqref{e-dgugeVIII}) $M^\phi_p-2s_0\mathcal{L}_p$ is non-degenerate of constant signature $(n_-,n_+)$. We can repeat the proof of Lemma 8.1 in~\cite{Hsiao08} with minor change and conclude that 

\begin{prop} \label{p-geusw13622II}
Let $\ol Z_{1,s_0},\ldots,\ol Z_{n-1,s_0}$ be an orthonormal frame of $T^{1,0}_xX$ varying smoothly with $x$ in a neighbourhood of $p$, for which the Hermitian quadratic form $M^\phi_x-2s_0\mathcal{L}_x$ is diagonalized at $p$. That is, 
\begin{equation}\label{e-guesw13622IV}
M^\phi_p\bigr(\ol Z_{j,s_0}(p),Z_{t,s_0}(p)\bigr)-2s_0\mathcal{L}_p\bigr(\ol Z_{j,s_0}(p),Z_{t,s_0}(p)\bigr)
=\lambda_j(s_0)\delta_{j,t},\ \ j,t=1,\ldots,n-1.
\end{equation}
Assume that $\lambda_j(s_0)<0$, $j=1,\ldots,n_-$, 
$\lambda_j(s_0)>0$, $j=n_-+1,\ldots,n-1$. Let $x=(x_1,\ldots,x_{2n-1})$ be local coordinates of $X$ defined in some small neighbourhood of $p$ such that $x(p)=0$. Let $h_j(x,\xi)$ be the principal symbol of $Z_{j,s_0}$, $j=1,\ldots,n-1$. 
Then in some open neighbourhood $W\subset\Omega$ of $(p,p,s_0)$, there exist $g_j(x,y,s)\in C^\infty(W)$, $j=1,\ldots,n-1$, such that 
\begin{equation}\label{e-guesw13622V}
\begin{split}
&\ol h_j(x,\varphi'_x(x,y,s_0))+(\ol Z_{j,s_0}\phi)(x)=g_j(x,y,s_0)\frac{\pr\varphi}{\pr s}(x,y,s_0)+O(\abs{(x,y)}^2),\ \ j=1,\ldots,n_-,\\
&h_j(x,\varphi'_x(x,y,s_0))+(Z_{j,s_0}\phi)(x)=g_j(x,y,s_0)\frac{\pr\varphi}{\pr s}(x,y,s_0)+O(\abs{(x,y)}^2),\ \ j=n_-+1,\ldots,n-1.
\end{split}
\end{equation}
\end{prop}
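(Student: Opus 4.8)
\textbf{Proof proposal for Proposition~\ref{p-geusw13622II}.}

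The plan is to mimic the argument used for Lemma~8.1 in~\cite{Hsiao08}, adapting it to the present non-homogeneous setting where the auxiliary parameter $s$ plays the role of the extra cotangent variable $\xi_{2n}$. First I would recall that by Theorem~\ref{t-dcgewI} the kernel $S(\hat x,\hat y)$ of the approximate Szeg\"o projector $S$ satisfies $\Box^{(q)}_s\circ S\equiv 0$ on $\hat D$, so that $S$ annihilates its own image microlocally; equivalently, writing $S(\hat x,\hat y)\equiv\int e^{it\Phi(\hat x,\hat y,s)}b(\hat x,\hat y,s,t)\,ds\,dt$ with $\Phi=x_{2n}-y_{2n}+\varphi(x,y,s)$, the equation $\ddbar_s S\equiv0$ holds on $\hat D$ (this follows from $\Box^{(q)}_s S\equiv 0$ together with the structure $\Box^{(q)}_s=\ddbar_s\ol\pr^*_s+\ol\pr^*_s\ddbar_s$, $q=n_-$, exactly as in the complex and CR cases). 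The key point is to extract from $\ddbar_s S\equiv 0$ an eikonal-type identity for the phase. Applying $\ddbar_s$ under the integral sign and using the stationary-phase expansion of $\ddbar_s(be^{it\Phi})$ (see p.~148 of~\cite{MS74}), the leading term forces the principal symbol $\hat q_j$ of $Z_j-iZ_j(\phi)\frac{\pr}{\pr x_{2n}}$, evaluated at $(\hat x,t\,d_{\hat x}\Phi)$, to vanish to appropriate order on the zero set of $\frac{\pr\Phi}{\pr s}$; since the full principal symbol $\hat p_0=\sum\ol{\hat q}_j\hat q_j$ and $\Box^{(q)}_s S\equiv 0$ already shows $d_{\hat x}\Phi$ lands on $\hat\Sigma$ along $x=y$, the sharper statement comes from the first-order vanishing of the $\ddbar_s$-symbol.

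Concretely, I would choose the orthonormal frame $\ol Z_{1,s_0},\ldots,\ol Z_{n-1,s_0}$ diagonalizing $M^\phi_x-2s_0\mathcal{L}_x$ at $p$ as in the statement, let $h_j(x,\xi)$ be the principal symbol of $Z_{j,s_0}$, and then compute the principal symbol of $\ddbar_s$ applied to $S$ in this frame. Using the local expression \eqref{e-dhmpI} for $\ddbar_s$, the symbol of $\ddbar_s(e^{it\Phi}b)$ at leading order is $\sum_{j} e_j^\wedge\bigl(h_j(x,t\varphi'_x)+tZ_{j,s_0}(\phi)\bigr)b\,e^{it\Phi}$ (modulo lower order and modulo terms supported away from the diagonal). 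Setting this $\equiv 0$ and using that $q=n_-$ together with \eqref{e-gue1373VII}, i.e. $b_0(\hat x,\hat x,s):T^{*0,q}_{\hat x}\hat D\to\mathcal{N}(x,s,n_-)=\Complex e_{1,s_0}\wedge\cdots\wedge e_{n_-,s_0}$, I would read off that for $j=n_-+1,\ldots,n-1$ the wedge $e_j^\wedge$ does not kill $\mathcal{N}(x,s,n_-)$, forcing $h_j(x,\varphi'_x)+Z_{j,s_0}(\phi)$ to vanish on $\{\frac{\pr\varphi}{\pr s}=0\}$, hence to be a multiple $g_j\frac{\pr\varphi}{\pr s}$ modulo $O(|(x,y)|^2)$ by a division/Malgrange argument near $x=y$; and for $j=1,\ldots,n_-$ the analogous statement comes from applying $\ol\pr^*_s$ instead of $\ddbar_s$ (using $\ol\pr^*_s S\equiv$ smoothing as well, which follows from $\Box^{(q)}_sS\equiv0$ and $q=n_-$ being the bottom of the relevant energy), where the contraction $e_j^{\wedge,*}$ leaves $\mathcal{N}(x,s,n_-)$ non-trivial exactly for $j\le n_-$. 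The passage from "vanishes on the hypersurface $\frac{\pr\varphi}{\pr s}=0$" to "is $g_j\frac{\pr\varphi}{\pr s}+O(|(x,y)|^2)$" uses that $\frac{\pr\varphi}{\pr s}$ has non-vanishing differential in the $x_{2n-1}$-direction near $x=y$ by \eqref{e-gue1373VIIa} and \eqref{e-gmedmdhI}, so it is a defining function of a smooth hypersurface through $(p,p,s_0)$, and smooth division applies; one only needs the conclusion modulo $O(|(x,y)|^2)$, so in fact a first-order Taylor argument suffices.

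The main obstacle I anticipate is bookkeeping the interplay between the Clifford-type relations $e^\wedge_je^{\wedge,*}_t+e^{\wedge,*}_te^\wedge_j=\delta_{j,t}$ and the precise characterization \eqref{e-gue1373VII} of where $b_0$ takes values: one must be careful that the "leading symbol of $\ddbar_s S$" is computed at the critical point of the $w'$-integration (the stationary-phase reduction that produced $\Phi$ from $\psi(\infty,\cdot)$), so that the identity is genuinely an identity for $\varphi'_x$ and not merely for $\psi'_{\hat x}$; this is where one invokes that $S$ is a Fourier integral operator with the phase $t\Phi$ and that $\Box^{(q)}_s$, $\ddbar_s$, $\ol\pr^*_s$ act on it via the transport equations of section~\ref{s-ttef}, whose leading transport operator $T(t,\hat x,\hat\eta,\pr_t,\pr_{\hat x})$ encodes exactly the Hamilton flow of $\hat p_0$. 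Once the identities \eqref{e-guesw13622V} are in hand for $\varphi$, Proposition~\ref{p-geusw13622I} guarantees the tangential Hessian is well-defined in the equivalence class, and the remaining computation of ${\rm Hess\,}(\varphi,T_{(x,x,s)}H_\varphi)$ from \eqref{e-guesw13622V} is a direct linear-algebra exercise on the subspace $T_{(p,p,s_0)}H_\varphi$ spanned by the vectors in \eqref{e-guesw13622III}, which I would carry out in the special coordinates of Theorem~\ref{t-gue140121II} to obtain formula \eqref{e-guew13627}.
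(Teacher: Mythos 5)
Your proposal reconstructs exactly the argument the paper defers to (Lemma~8.1 of~\cite{Hsiao08}): deduce that $\ddbar_s S$ and $\ol\pr^*_s S$ are negligible, read off the leading symbols in the diagonalizing frame, use that ${\rm Im}\,b_0(\hat x,\hat x,s)\subset\mathcal{N}(x,s,n_-)$ so that $e_{j}^\wedge$ detects $j>n_-$ and $e_j^{\wedge,*}$ detects $j\le n_-$, and divide by $\frac{\pr\varphi}{\pr s}$ to first order; this is the intended proof. The only point stated too casually is that $\ddbar_sS\equiv0$ and $\ol\pr^*_sS\equiv0$ do not follow formally from $\Box^{(q)}_sS\equiv0$ for an approximate kernel, but rather from the intertwining relations \eqref{e-mslknaVIII} together with the exponential decay of the transport solutions in degrees $q\pm1\neq n_-$, exactly as carried out in the proof of Theorem~\ref{t-aldhmpII} — so the needed fact is available and your argument goes through.
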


Let $\ol Z_{1,s_0},\ldots,\ol Z_{n-1,s_0}$ be as in Proposition~\ref{p-geusw13622II}. We take local coordinates $x=(x_1,\ldots,x_{2n-1})$, $z_j=x_{2j-1}+ix_{2j}$, $j=1,\ldots,n-1$, defined in some small neighbourhood of $p$ so that \eqref{e-geusw13623} hold. 
From \eqref{e-geusw13623}, \eqref{e-dugeX} and \eqref{e-dugeXI} it is not difficult to see that 
\begin{equation}\label{e-geusw13623b}
\begin{split}
&\frac{\pr^2\varphi}{\pr s\pr x_j}(p,p,s_0)=\frac{\pr^2\varphi}{\pr s\pr y_j}(p,p,s_0)=0,\ \ j=1,\ldots,2n-2,\\
&\frac{\pr^2\varphi}{\pr s\pr x_{2n-1}}(p,p,s_0)=1,\ \ \frac{\pr^2\varphi}{\pr s\pr y_{2n-1}}(p,p,s_0)=-1.
\end{split}
\end{equation}
From \eqref{e-geusw13623b}, it is easy to see that to determine the tangential Hessian of $\varphi(x,y,s)$ at $(p,p,s_0)$ is equivalent to determine
\begin{equation}\label{e-geusw13623a}\begin{split}
&\frac{\pr^2\varphi}{\pr x_j\pr x_l}(p,p,s_0),\ \ \frac{\pr^2\varphi}{\pr x_j\pr y_l}(p,p,s_0),\ \ \frac{\pr^2\varphi}{\pr y_j\pr y_l}(p,p,s_0),\ \  j, l=1,\ldots,2n-2,\\
&\bigr(\frac{\pr^2\varphi}{\pr x_j\pr x_{2n-1}}+\frac{\pr^2\varphi}{\pr x_j\pr y_{2n-1}})(p,p,s_0),\ \ (\frac{\pr^2\varphi}{\pr y_j\pr x_{2n-1}}+\frac{\pr^2\varphi}{\pr y_j\pr y_{2n-1}}\bigr)(p,p,s_0),\ \ j=1,\ldots,2n-2,\\
&\bigr(\frac{\pr^2\varphi}{\pr x^2_{2n-1}}+2\frac{\pr^2\varphi}{\pr x_{2n-1}\pr y_{2n-1}}+\frac{\pr^2\varphi}{\pr y^2_{2n-1}}\bigr)(p,p,s_0).
 \end{split}\end{equation}
 
From \eqref{e-geusw13623}, \eqref{e-suIII} and \eqref{e-dhI}, it is straightforward to check that 
\begin{equation}\label{e-geusw13623I}
\begin{split}
&M^\phi_p\bigr(\ol Z_{j,s_0}(p),Z_{l,s_0}(p)\bigr)=(i\tau_{j,l}-i\ol\tau_{l,j})\beta+\mu_{j,l},\ \ j,l=1,\ldots,n-1,\\
&\mathcal{L}_p\bigr(\ol Z_{j,s_0}(p),Z_{l,s_0}(p)\bigr)=-\frac{1}{2}(\tau_{j,l}+\ol\tau_{l,j}),\ \ j,l=1,\ldots,n-1.
\end{split}
\end{equation}
Since $M^\phi_p-2s_0\mathcal{L}_p$ is diagonal in the basis $\set{\ol Z_{1,s_0},\ldots,\ol Z_{n-1,s_0}}$, we have 
\begin{equation}\label{e-geusw13623II}
(i\tau_{j,l}-i\ol\tau_{l,j})\beta+\mu_{j,l}+s_0(\tau_{j,l}+\ol\tau_{l,j})=\lambda_{j}(s_0)\delta_{j,l},\ \ j,l=1,\ldots,n-1. 
\end{equation}

We write $y=(y_1,\ldots,y_{2n-1})$, $w_j=y_{2j-1}+iy_{2j}$, $j=1,\ldots,n-1$, 
\[\frac{\pr}{\pr w_j}=\frac{1}{2}(\frac{\pr}{\pr y_{2j-1}}-i\frac{\pr}{\pr y_{2j}}),\ \ 
\frac{\pr}{\pr\ol w_j}=\frac{1}{2}(\frac{\pr}{\pr y_{2j-1}}+i\frac{\pr}{\pr y_{2j}}),\ \ j=1,\ldots,n-1.\]
From \eqref{e-guesw13622V} and \eqref{e-geusw13623}, we can check that 
\begin{equation}\label{e-geusw13623III}
\begin{split}
&-i\frac{\pr\varphi}{\pr z_j}(x,y,s_0)+\sum^{n-1}_{t=1}\tau_{j,t}\ol z_t\frac{\pr\varphi}{\pr x_{2n-1}}(x,y,s_0)-ic_jx_{2n-1}\frac{\pr\varphi}{\pr x_{2n-1}}(x,y,s)+(\ol Z_{j,s_0}\phi)(x)\\
&=g_j(x,y,s_0)\frac{\pr\varphi}{\pr s}(x,y,s_0)+O(\abs{(x,y)}^2),\ \ j=1,\ldots,n_-,\\
&i\frac{\pr\varphi}{\pr\ol z_j}(x,y,s_0)+\sum^{n-1}_{t=1}\ol\tau_{j,t}z_t\frac{\pr\varphi}{\pr x_{2n-1}}(x,y,s_0)+i\ol c_jx_{2n-1}\frac{\pr\varphi}{\pr x_{2n-1}}(x,y,s_0)+(Z_{j,s_0}\phi)(x)\\
&=g_j(x,y,s_0)\frac{\pr\varphi}{\pr s}(x,y,s_0)+O(\abs{(x,y)}^2),\ \ j=n_-+1,\ldots,n-1.\\
\end{split}
\end{equation}
From \eqref{e-geusw13623}, \eqref{e-geusw13623b}, \eqref{e-geusw13623III} and notice that $\frac{\pr\varphi}{\pr x_{2n-1}}(p,p,s_0)=s_0$, it is straightforward to see that
\begin{equation} \label{e-geusw13623IV}\begin{split}
&\frac{\pr^2\varphi}{\pr z_j\pr z_l}(p,p,s_0)=-i(a_{l,j}+a_{j,l}),\ \ 1\leq j\leq n_-,\ \ 1\leq l\leq n-1,\\
&\frac{\pr^2\varphi}{\pr z_j\pr w_l}(p,p,s_0)=\frac{\pr^2\varphi}{\pr z_j\pr \ol w_l}(p,p,s_0)=0,\ \ 1\leq j\leq n_-,\ \  1\leq l\leq n-1, \\ 
&\frac{\pr^2\varphi}{\pr z_j\pr\ol z_l}(p,p,s_0)=-is_0\tau_{j,l}+\tau_{j,l}\beta-\frac{i}{2}\mu_{j,l},\ \ 1\leq j\leq n_-,\ \  1\leq l\leq n-1,\\ 
&\bigr(\frac{\pr^2\varphi}{\pr z_j\pr x_{2n-1}}
  +\frac{\pr^2\varphi}{\pr z_j\pr y_{2n-1}}\bigr)(p,p,s_0)=-ic_j\beta-s_0c_j-id_j,\ \ 1\leq j\leq n_-,
\end{split}\end{equation}
and 
\begin{equation} \label{e-geusw13623V}\begin{split}
&\frac{\pr^2\varphi}{\pr\ol z_j\pr\ol z_l}(p,p,s_0)=i(\ol a_{l,j}+\ol a_{j,l}),\ \ n_-+1\leq j\leq n-1,\ \ 1\leq l\leq n-1,\\
&\frac{\pr^2\varphi}{\pr\ol z_j\pr w_l}(p,p,s_0)=\frac{\pr^2\varphi}{\pr\ol z_j\pr \ol w_l}(p,p,s_0)=0,\ \ n_-+1\leq j\leq n-1,\ \  1\leq l\leq n-1, \\ 
&\frac{\pr^2\varphi}{\pr\ol z_j\pr z_l}(p,p,s_0)=is_0\ol\tau_{j,l}+\ol\tau_{j,l}\beta+\frac{i}{2}\ol\mu_{j,l},\ \ n_-+1\leq j\leq n-1,\ \  1\leq l\leq n-1,\\ 
&\bigr(\frac{\pr^2\varphi}{\pr\ol z_j\pr x_{2n-1}}
  +\frac{\pr^2\varphi}{\pr\ol z_j\pr y_{2n-1}}\bigr)(p,p,s_0)=i\ol c_j\beta-s_0\ol c_j+i\ol d_j,\ \ n_-+1\leq j\leq n-1.
\end{split}\end{equation}

Put $\varphi_1(x,y,s)=-\ol\varphi(y,x,s)$. In view of Proposition~\ref{p-geusw13622II}, we know the ${\rm Hess\,}(\varphi,{T_{(p,p,s_0)}H_\varphi})={\rm Hess\,}(\varphi_1,{T_{(p,p,s_0)}H_{\varphi_1}})$. From this observation, 
\eqref{e-geusw13623IV} and \eqref{e-geusw13623V}, we can check that 
\begin{equation}\label{e-geusw13624a}
\begin{split}
&\frac{\pr^2\varphi}{\pr\ol w_j\pr\ol w_l}(p,p,s_0)=-i(\ol a_{l,j}+\ol a_{j,l}),\ \ 1\leq j\leq n_-,\ \ 1\leq l\leq n-1,\\
&\frac{\pr^2\varphi}{\pr\ol w_j\pr\ol z_l}(p,p,s_0)=\frac{\pr^2\varphi}{\pr\ol w_j\pr z_l}(p,p,s_0)=0,\ \ 1\leq j\leq n_-,\ \  1\leq l\leq n-1, \\ 
&\frac{\pr^2\varphi}{\pr\ol w_j\pr w_l}(p,p,s_0)=-is_0\ol\tau_{j,l}-\ol\tau_{j,l}\beta-\frac{i}{2}\ol\mu_{j,l},\ \ 1\leq j\leq n_-,\ \  1\leq l\leq n-1,\\ 
&\bigr(\frac{\pr^2\varphi}{\pr\ol w_j\pr x_{2n-1}}
  +\frac{\pr^2\varphi}{\pr\ol w_j\pr y_{2n-1}}\bigr)(p,p,s_0)=-i\ol c_j\beta+s_0\ol c_j-i\ol d_j,\ \ 1\leq j\leq n_-,\\
 &\frac{\pr^2\varphi}{\pr w_j\pr w_l}(p,p,s_0)=i(a_{l,j}+a_{j,l}),\ \ n_-+1\leq j\leq n-1,\ \ 1\leq l\leq n-1,\\
&\frac{\pr^2\varphi}{\pr w_j\pr\ol z_l}(p,p,s_0)=\frac{\pr^2\varphi}{\pr w_j\pr z_l}(p,p,s_0)=0,\ \ n_-+1\leq j\leq n-1,\ \  1\leq l\leq n-1, \\ 
&\frac{\pr^2\varphi}{\pr w_j\pr\ol w_l}(p,p,s_0)=is_0\tau_{j,l}-\tau_{j,l}\beta+\frac{i}{2}\mu_{j,l},\ \ n_-+1\leq j\leq n-1,\ \  1\leq l\leq n-1,\\ 
&\bigr(\frac{\pr^2\varphi}{\pr w_j\pr x_{2n-1}}
  +\frac{\pr^2\varphi}{\pr w_j\pr y_{2n-1}}\bigr)(p,p,s_0)=ic_j\beta+s_0c_j+id_j,\ \ n_-+1\leq j\leq n-1.
\end{split}
\end{equation}

Fix $n_-+1\leq j, l\leq n-1$. We determine $\frac{\pr^2\varphi}{\pr z_j\pr z_l}(p,p,s_0)$. From the fact $\varphi(z,z,s)=0$, we can check that 
\begin{equation} \label{e-geusw13623VI}
\frac{\pr^2\varphi}{\pr z_j\pr z_l}(p,p,s_0)+\frac{\pr^2\varphi}{\pr z_j\pr w_l}(p,p,s_0)+\frac{\pr^2\varphi}{\pr w_j\pr z_l}(p,p,s_0)+\frac{\pr^2\varphi}{\pr w_j\pr w_l}(p,p,s_0)=0.
\end{equation}
%Put $\varphi_1(x,y,s)=-\ol\varphi(y,x,s)$. In view of Proposition~\ref{p-geusw13622II}, we know that 
%\begin{equation}\label{e-geusw13623VII}\begin{split}
%&\frac{\pr^2\varphi}{\pr z_j\pr w_l}(p,p,s_0)=\frac{\pr^2\varphi_1}{\pr z_j\pr w_l}(p,p,s_0)=-\ol{\frac{\pr^2\varphi}%{\pr\ol w_j\pr\ol z_l}(p,p,s_0)},\\
%&\frac{\pr^2\varphi}{\pr w_j\pr z_l}(p,p,s_0)=\frac{\pr^2\varphi_1}{\pr w_j\pr z_l}(p,p,s_0)=-\ol{\frac{\pr^2\varphi}%{\pr\ol z_j\pr\ol w_l}(p,p,s_0)},\\ 
%&\frac{\pr^2\varphi}{\pr w_j\pr w_l}(p,p,s_0)=\frac{\pr^2\varphi_1}{\pr w_j\pr w_l}(p,p,s_0)=-\ol{\frac{\pr^2\varphi}%{\pr\ol z_j\pr\ol z_l}(p,p,s_0)}.
%\end{split}\end{equation}
From \eqref{e-geusw13623VI} and \eqref{e-geusw13624a}, we conclude that
\begin{equation}\label{e-geusw13623VIII}
\frac{\pr^2\varphi}{\pr z_j\pr z_l}(p,p,s_0)=-i(a_{l,j}+a_{j,l}),\ \ n_-+1\leq j, l\leq n-1.
\end{equation}
We can repeat the procedure above several times and deduce (we omit the computations)
\begin{equation}\label{e-geusw13623VIIIa}
\begin{split}
&\frac{\pr^2\varphi}{\pr\ol z_j\pr\ol z_l}(p,p,s_0)=i(\ol a_{l,j}+\ol a_{j,l}),\ \ 1\leq j, l\leq n_-,\\
&\frac{\pr^2\varphi}{\pr\ol z_j\pr z_l}(p,p,s_0)=is_0\ol\tau_{j,l}+\ol\tau_{j,l}\beta+\frac{i}{2}\ol\mu_{j,l},\ \ 1\leq j\leq n_-,\ \ n_-+1\leq l\leq n-1,\\
&\frac{\pr^2\varphi}{\pr\ol z_j\pr\ol w_l}(p,p,s_0)=0,\ \ 1\leq j\leq n_-,\ \ n_-+1\leq l\leq n-1,\\
&\frac{\pr^2\varphi}{\pr z_j\pr w_l}(p,p,s_0)=0,\ \ n_-+1\leq j\leq n-1,\ \ 1\leq l\leq n_-,\\
%&\frac{\pr^2\varphi}{\pr\ol z_j\pr w_l}(p,p,s_0)=0,\ \ 1\leq j\leq n_-,\ \ n_-+1\leq l\leq n-1,\\
&\frac{\pr^2\varphi}{\pr\ol z_j\pr w_l}(p,p,s_0)=is_0(\tau_{l,j}+\ol\tau_{j,l})+(\ol\tau_{j,l}-\tau_{l,j})\beta+i\mu_{l,j},\ \ 1\leq j, l\leq n_-,\\
%&\frac{\pr^2\varphi}{\pr z_j\pr\ol w_l}(p,p,s_0)=0,\ \ n_-+1\leq j\leq n-1,\ \ 1\leq l\leq n_-,\\
&\frac{\pr^2\varphi}{\pr z_j\pr\ol w_l}(p,p,s_0)=-is_0(\ol\tau_{l,j}+\tau_{j,l})+(\tau_{j,l}-\ol\tau_{l,j})\beta-i\mu_{j,l},\ \ n_-+1\leq j, l\leq n-1,\\
&\bigr(\frac{\pr^2\varphi}{\pr z_j\pr x_{2n-1}}+\frac{\pr^2\varphi}{\pr z_j\pr y_{2n-1}}\bigr)(p,p,s_0)=-ic_j\beta-s_0c_j-id_j,\ \  n_-+1\leq j\leq n-1,\\
&\bigr(\frac{\pr^2\varphi}{\pr\ol z_j\pr x_{2n-1}}+\frac{\pr^2\varphi}{\pr\ol z_j\pr y_{2n-1}}\bigr)(p,p,s_0)=i\ol c_j\beta-s_0\ol c_j+i\ol d_j,\ \  1\leq j\leq n_-.
\end{split}
\end{equation}

Again, from the fact that ${\rm Hess\,}(\varphi,{T_{(p,p,s_0)}H_\varphi})={\rm Hess\,}(\varphi_1,{T_{(p,p,s_0)}H_{\varphi_1}})$, \eqref{e-geusw13623VIII} and \eqref{e-geusw13623VIIIa}, we can check that

%Moreover, from \eqref{e-geusw13623IV}, \eqref{e-geusw13623V}, \eqref{e-geusw13623VIII}, \eqref{e-geusw13623VIIIa} and %the fact that $\frac{\pr^2\varphi}{\pr y_j\pr y_l}(p,p,s_0)=-\ol{\frac{\pr^2}{\pr x_j\pr x_l}(p,p,s_0)}$, $1\leq j, %l\leq 2n-2$, $\bigr(\frac{\pr^2\varphi}{\pr y_j\pr x_{2n-1}}+\frac{\pr^2\varphi}{\pr y_j\pr y_{2n-1}}\bigr)%(p,p,s_0)=-\ol{\bigr(\frac{\pr^2\varphi}{\pr x_j\pr x_{2n-1}}+\frac{\pr^2\varphi}{\pr x_j\pr %y_{2n-1}}\bigr)(p,p,s_0)}$, $1\leq j\leq 2n-2$, we can check that
\begin{equation}\label{e-geusw13624}
\begin{split}
&\frac{\pr^2\varphi}{\pr\ol w_j\pr\ol w_l}(p,p,s_0)=-i(\ol a_{l,j}+\ol a_{j,l}),\ \ n_-+1\leq j, l\leq n-1,\\
&\frac{\pr^2\varphi}{\pr w_j\pr w_l}(p,p,s_0)=i(a_{l,j}+a_{j,l}),\ \ 1\leq j, l\leq n_-,\\
&\frac{\pr^2\varphi}{\pr w_j\pr\ol w_l}(p,p,s_0)=is_0\tau_{j,l}-\tau_{j,l}\beta+\frac{i}{2}\mu_{j,l},\ \ 1\leq j\leq n_-,\ \ n_-+1\leq l\leq n-1,\\
%&\frac{\pr^2\varphi}{\pr w_j\pr z_l}(p,p,s_0)=0,\ \ 1\leq j\leq n_-,\ \ n_-+1\leq l\leq n-1,\\
%&\frac{\pr^2\varphi}{\pr\ol w_j\pr\ol z_l}(p,p,s_0)=0,\ \ n_-+1\leq j\leq n-1,\ \ 1\leq l\leq n_-,\\
%&\frac{\pr^2\varphi}{\pr w_j\pr\ol z_l}(p,p,s_0)=is_0(\ol\tau_{l,j}+\tau_{j,l})+(\ol\tau_{l,j}-\tau_{j,l})\beta+i\mu_{j,l},\ \ 1\leq j, l\leq n_-,\\
%&\frac{\pr^2\varphi}{\pr\ol w_j\pr z_l}(p,p,s_0)=-is_0(\tau_{l,j}+\ol\tau_{j,l})+(\tau_{l,j}-\ol\tau_{j,l})\beta-i\mu_{l,j},\ \ n_-+1\leq j, l\leq n-1,\\
&\bigr(\frac{\pr^2\varphi}{\pr\ol w_j\pr x_{2n-1}}+\frac{\pr^2\varphi}{\pr\ol w_j\pr y_{2n-1}}\bigr)(p,p,s_0)=-i\ol c_j\beta+s_0\ol c_j-i\ol d_j,\ \  n_-+1\leq j\leq n-1,\\
&\bigr(\frac{\pr^2\varphi}{\pr w_j\pr x_{2n-1}}+\frac{\pr^2\varphi}{\pr w_j\pr y_{2n-1}}\bigr)(p,p,s_0)=ic_j\beta+s_0c_j+id_j,\ \  1\leq j\leq n_-.
%&\frac{\pr^2\varphi}{\pr\ol w_j\pr\ol w_l}(p,p,s_0)=-i(\ol a_{l,j}+\ol a_{j,l}),\ \ 1\leq j\leq n_-,\ \ 1\leq l\leq %n-1,\\
%&\frac{\pr^2\varphi}{\pr w_j\pr w_l}(p,p,s_0)=i(a_{l,j}+a_{j,l}),\ \ n_-+1\leq j\leq n-1,\ \ 1\leq l\leq n-1,\\
%&\frac{\pr^2\varphi}{\pr\ol w_j\pr w_l}(p,p,s_0)=-is_0\ol\tau_{j,l}-\ol\tau_{j,l}\beta-\frac{i}{2}\ol\mu_{j,l},\ \ %1\leq j\leq n_-,\ \  1\leq l\leq n-1,\\ 
%&\frac{\pr^2\varphi}{\pr w_j\pr\ol w_l}(p,p,s_0)=is_0\tau_{j,l}-\tau_{j,l}\beta+\frac{i}{2}\mu_{j,l},\ \ n_-+1\leq %j\leq n-1,\ \  1\leq l\leq n-1,\\
%%&\frac{\pr^2\varphi}{\pr\ol w_j\pr\ol w_l}(p,p,s_0)=-i(\ol a_{l,j}+\ol a_{j,l}),\ \ n_-+1\leq j, l\leq n-1,\\
%&\frac{\pr^2\varphi}{\pr w_j\pr w_l}(p,p,s_0)=i(a_{l,j}+a_{j,l}),\ \ 1\leq j, l\leq n_-,\\
%&\frac{\pr^2\varphi}{\pr w_j\pr\ol w_l}(p,p,s_0)=is_0\tau_{j,l}-\tau_{j,l}\beta+\frac{i}{2}\mu_{j,l},\ \ 1\leq j\leq %n_-,\ \ n_-+1\leq l\leq n-1,\\
\end{split}
\end{equation}

Moreover, from $\varphi(x,x,s)=0$, we conclude that 
\begin{equation}\label{e-geusw13624I}
\bigr(\frac{\pr^2\varphi}{\pr x^2_{2n-1}}+2\frac{\pr^2\varphi}{\pr x_{2n-1}\pr y_{2n-1}}+\frac{\pr^2\varphi}{\pr y^2_{2n-1}}\bigr)(p,p,s_0)=0. 
\end{equation} 

From \eqref{e-geusw13623IV}, \eqref{e-geusw13623V}, \eqref{e-geusw13624a}, \eqref{e-geusw13623VIII}, \eqref{e-geusw13623VIIIa}, \eqref{e-geusw13624}, \eqref{e-geusw13624I} and \eqref{e-geusw13623a}, we completely determine the tangential Hessian of $\varphi(x,y,s)$ at $(p,p,s_0)$. Summing up, we obtain Theorem~\ref{t-gue140121II}.

\section{Semi-classical Hodge decomposition theorems for $\Box^{(q)}_{s,k}$ in some non-degenerate part of $\Sigma$} \label{s-sch} 

In this section we apply the results about the Microlocal decomposition for $\Box^{(q)}_s$ previously in order to describe the semi-classical behaviour of $\Box^{(q)}_{s,k}$ in some non-degenerate part of $\Sigma$. We pause and introduce some notations and definitions. We first recall briefly the definition of semi-classical pseudodifferential operators. We need 

\begin{defn} \label{d-gue13628}
Let $W$ be an open set in $\Real^N$. Let $S(1;W)=S(1)$ be the set of
$a\in C^\infty(W)$ such that for every $\alpha\in\mathbb N^N_0$, there
exists $C_\alpha>0$, such that $\abs{\pr^\alpha_xa(x)}\leq
C_\alpha$ on $W$. If $a=a(x,k)$ depends on $k\in]1,\infty[$, we say that
$a(x,k)\in S_{{\rm loc\,}}(1;W)=S_{{\rm loc\,}}(1)$ if $\chi(x)a(x,k)$ uniformly bounded
in $S(1)$ when $k$ varies in $]1,\infty[$, for any $\chi\in
C^\infty_0(W)$. For $m\in\Real$, we put $S^m_{{\rm
loc}}(1;W)=S^m_{{\rm loc}}(1)=k^mS_{{\rm loc\,}}(1)$. If $a_j\in S^{m_j}_{{\rm
loc\,}}(1)$, $m_j\searrow-\infty$, we say that $a\sim
\sum^\infty_{j=0}a_j$ in $S^{m_0}_{{\rm loc\,}}(1)$ if
$a-\sum^{N_0}_{j=0}a_j\in S^{m_{N_0+1}}_{{\rm loc\,}}(1)$ for every
$N_0$.  For a given sequence $a_j$ as above, we can always find such an asymptotic sum $a$ and $a$ is unique up to an element in $S^{-\infty}_{{\rm loc\,}}(1)=S^{-\infty}_{{\rm loc\,}}(1;W):=\bigcap_mS^m_{{\rm loc\,}}(1)$. 
We say that $a(x,k)\in S^{m_0}_{{\rm loc\,}}(1)$ is a classical symbol on $W$ of order $m_0$ if 
\begin{equation} \label{e-gue13628I} 
\mbox{$a(x,k)\sim\sum^\infty_{j=0}k^{m_0-j}a_j(x)$ in $S^{m_0}_{{\rm loc\,}}(1)$},\ \ a_j(x)\in S_{{\rm loc\,}}(1),\ j=0,1\ldots.
\end{equation} 
The set of all classical symbols on $W$ of order $m_0$ is denoted by $S^{m_0}_{{\rm loc\,},{\rm cl\,}}(1)=S^{m_0}_{{\rm loc\,},{\rm cl\,}}(1;W)$. 

Let $E$ be a vector bundle over a smooth paracompact manifold $Y$. We extend the definitions above to the space of smooth sections of $E$ over $Y$ in the natural way and we write $S^m_{{\rm loc\,}}(1;Y,E)$ and $S^m_{{\rm loc\,},{\rm cl\,}}(1;Y,E)$ to denote the corresponding spaces. 
\end{defn} 

Let $W$ be an open set in $\Real^N$ and let $E$ and $F$ be complex vector bundles over $W$ with Hermitian metrics. 
For any $k$-dependent continuous function
\[F_k:H^s_{{\rm comp\,}}(W,E)\To H^{s'}_{{\rm loc\,}}(W,F),\ \ s, s'\in\Real,\]
we write
\[F_k=O(k^{n_0}):H^s_{{\rm comp\,}}(W,E)\To H^{s'}_{{\rm loc\,}}(W,F),\ \ n_0\in\mathbb Z,\]
if for any $\chi_0, \chi_1\in C^\infty_0(W)$, there is a positive constant $c>0$ independent of $k$, such that
\begin{equation} \label{e-gue13628II}
\norm{(\chi_0F_k\chi_1)u}_{s'}\leq ck^{n_0}\norm{u}_{s},\ \ \forall u\in H^s_{{\rm loc\,}}(W,E),
\end{equation}
where $\norm{u}_s$ is the usual Sobolev norm of order $s$. 

A $k$-dependent continuous operator
$A_k:C^\infty_0(W,E)\To\mathscr D'(W,F)$ is called $k$-negligible (on $W$)
if $A_k$ is smoothing and the kernel $A_k(x, y)$ of $A_k$ satisfies
$\abs{\pr^\alpha_x\pr^\beta_yA_k(x, y)}=O(k^{-N})$ locally uniformly
on every compact set in $W\times W$, for all multi-indices $\alpha$,
$\beta$ and all $N\in\mathbb N$. $A_k$ is $k$-negligible if and only if
\[A_k=O(k^{-N'}): H^s_{\rm comp\,}(W,E)\To H^{s+N}_{\rm loc\,}(W,F)\,,\]
for all $N, N'\geq0$ and $s\in\mathbb Z$. Let $C_k:C^\infty_0(W,E)\To\mathscr D'(W,F)$
be another $k$-dependent continuous operator. We write $A_k\equiv C_k\mod O(k^{-\infty})$ (on $W$) or 
$A_k(x,y)\equiv C_k(x,y)\mod O(k^{-\infty})$ (on $W$)
if $A_k-C_k$ is $k$-negligible on $W$. 

\begin{defn} \label{d-gue13628I}
Let $W$ be an open set in $\Real^N$ and let $E$ and $F$ be complex vector bundles over $W$. A classical semi-classical pseudodifferential operator on $W$ of order $m$ from sections of $E$ to sections of $F$ is a $k$-dependent continuous operator $A_k:C^\infty_0(W,E)\To C^\infty(W,F)$ such that the distribution kernel $A_k(x,y)$ is given by the oscillatory integral
\[\begin{split}
&A_k(x,y)\equiv\frac{k^N}{(2\pi)^N}\int e^{ik<x-y,\eta>}a(x,y,\eta,k)d\eta\mod O(k^{-\infty}),\\ &a(x,y,\eta,k)\in S^m_{{\rm loc\,},{\rm cl\,}}(1;W\times W\times\Real^N,E\boxtimes F).\end{split}\]

We shall identify $A_k$ with $A_k(x,y)$ and it is clearly that $A_k$ has a unique continuous extension $\mathscr E'(W,E)\To\mathscr D'(W,F)$.
\end{defn}

\begin{defn}\label{d-gue130816}
Let
\[\hat{\mathcal{I}}_k=\frac{k^{2n-1}}{(2\pi)^{2n-1}}\int e^{ik<x-y,\eta>}p(x,y,\eta,k)d\eta\] 
be a classical semi-classical pseudodifferential operator on $D$ of order $0$ from sections of $T^{*0,q}X$ to sections of $T^{*0,q}X$ with $p(x,y,\eta,k)\in S^0_{{\rm loc\,},{\rm cl\,}}(1;D\times D\times\Real^{2n-1},T^{*0,q}X\boxtimes T^{*0,q}X)$. 
Let $\Lambda$ be an open set of $T^*D$.
We write 
\[\mbox{$\hat{\mathcal{I}}_k\equiv\frac{k^{2n-1}}{(2\pi)^{2n-1}}\int e^{ik<x-y,\eta>}q(x,y,\eta,k)d\eta\mod O(k^{-\infty})$ at $\Lambda\bigcap\Sigma$},\]
where $q(x,y,\eta,k)\in S^0_{{\rm loc\,},{\rm cl\,}}(1;D\times D\times\Real^{2n-1},T^{*0,q}X\boxtimes T^{*0,q}X)$, if 
\[\hat{\mathcal{I}}_k\equiv\frac{k^{2n-1}}{(2\pi)^{2n-1}}\int e^{ik<x-y,\eta>}q(x,y,\eta,k)d\eta+\frac{k^{2n-1}}{(2\pi)^{2n-1}}\int e^{ik<x-y,\eta>}\beta(x,y,\eta,k)d\eta\mod O(k^{-\infty}),\]
where 
$\beta(x,y,\eta,k)\in S^0_{{\rm loc\,}}(1;D\times D\times\Real^{2n-1},T^{*0,q}X\boxtimes T^{*0,q}X)$ and there is a small neighbourhood $\Gamma$ of $\Lambda\bigcap\Sigma$ such that  $\beta(x,y,\eta,k)=0$ if $(x,\eta)\in\Gamma$.
\end{defn}

We return to our situation. Let $s$ be a local trivializing section of $L$ on an open subset $D\subset X$ and $\abs{s}^2_{h^L}=e^{-2\phi}$. From now on, we assume that there exist a $\lambda_0\in\Real$ and $x_0\in D$ such that $M^\phi_{x_0}-2\lambda_0\mathcal{L}_{x_0}$ is non-degenerate of constant signature $(n_-,n_+)$. We fix $D_0\Subset D$, $D_0$ open. We work with some real local coordinates $x=(x_1,\ldots,x_{2n-1})$ defined on $D$. We write $\xi=(\xi_1,\ldots,\xi_{2n-1})$ or $\eta=(\eta_1,\ldots,\eta_{2n-1})$ to denote the dual coordinates of $x$. We will use the same notations as in section~\ref{s-thef}. Note that we write $\hat x=(x_1,\ldots,x_{2n-1},x_{2n})$ to denote the local coordinates of $\hat D$ and we write $\hat\xi=(\xi_1,\ldots,\xi_{2n-1},\xi_{2n})$ or $\hat\eta=(\eta_1,\ldots,\eta_{2n-1},\eta_{2n})$ to denote the dual coordinates of $\hat x$.

Let $\chi(x_{2n}), \chi_1(x_{2n})\in C^\infty_0(\Real)$, $\chi, \chi_1\geq 0$. We assume that $\chi_1=1$ on ${\rm Supp\,}\chi$. We take $\chi$ so that  $\int\chi(x_{2n})dx_{2n}=1$. Put
\begin{equation} \label{e-gue13628IIa}
\chi_k(x_{2n})=e^{ikx_{2n}}\chi(x_{2n}).
\end{equation}
Let $V$ and $U$ be as in \eqref{e-dhmpXII} and \eqref{e-dhmpXIII} respectively. The following is straightforward and follows from the usual stationary phase formula and therefore we omit the proof. 

\begin{prop}\label{p-gue13628}
With the notations before, let $q\in\set{0,1,\ldots,n-1}$. Let 
\[\Td{\mathcal{I}}_k=\frac{k^{2n-1}}{(2\pi)^{2n-1}}\int e^{ik<x-y,\eta>}\alpha(x,\eta,k)d\eta\] 
be a classical semi-classical pseudodifferential operator on $D$ of order $0$ from sections of $T^{*0,q}X$ to sections of $T^{*0,q}X$ with $\alpha(x,\eta,k)\in S^0_{{\rm loc\,},{\rm cl\,}}(1;T^*D,T^{*0,q}X\boxtimes T^{*0,q}X)$, $\alpha(x,\eta,k)=0$ if $\abs{\eta}>M$, for some large $M>0$, and ${\rm Supp\,}\alpha(x,\eta,k)\bigcap T^*D_0\Subset V$.
Then, there is a classical pseudodifferential operator $\Td I=(2\pi)^{-2n}\int e^{i<\hat x-\hat y,\hat\eta>}c(\hat x,\hat\eta)d\hat\eta$ on $\hat D$ of order $0$ from sections of $T^{*0,q}\hat D$ to sections of $T^{*0,q}\hat D$ with $c(\hat x,\hat\eta)\in S^0_{{\rm cl\,}}(T^*\hat D,T^{*0,q}\hat D\boxtimes T^{*0,q}\hat D)$, ${\rm Supp\,}c(\hat x,\hat\eta)\bigcap T^*\hat D_0\subset\ol W$,
where $W\subset U$ is a conic open set with $\ol W\subset U$, such that 
\[\Td{\mathcal{I}}_k\equiv\Td I_k\mod O(k^{-\infty})\ \ \mbox{on $D$},\]
where $\Td I_k$ is the continuous operator $C^\infty_0(D,T^{*0,q}X)\To C^\infty(D,T^{*0,q}X)$ given by 
\[\begin{split}
\Td I_{k}:C^\infty_0(D,T^{*0,q}X)&\To C^\infty(D,T^{*0,q}X),\\
u&\To \int e^{-ikx_{2n}}\chi_1(x_{2n})\Td I(\chi_ku)(\hat x)dx_{2n}.
\end{split}\]
\end{prop}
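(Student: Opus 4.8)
\textbf{Proof proposal for Proposition~\ref{p-gue13628}.}
The plan is to trace how the semi-classical operator $\Td{\mathcal I}_k$ acting on sections over $D$ gets transformed, via the device $ku(x)=e^{-ikx_{2n}}(-i\tfrac{\pr}{\pr x_{2n}})(e^{ikx_{2n}}u(x))$, into a genuine (non-semi-classical) classical pseudodifferential operator on the enlarged domain $\hat D=D\times\Real$, and then to recover $\Td{\mathcal I}_k$ from $\Td I$ by the cut-off-and-average construction $u\mapsto\int e^{-ikx_{2n}}\chi_1(x_{2n})\Td I(\chi_k u)(\hat x)\,dx_{2n}$. First I would write the kernel of $\Td{\mathcal I}_k$ as $\frac{k^{2n-1}}{(2\pi)^{2n-1}}\int e^{ik\langle x-y,\eta\rangle}\alpha(x,\eta,k)\,d\eta$ with $\alpha(x,\eta,k)\sim\sum_j k^{-j}\alpha_j(x,\eta)$ and rescale the frequency variable by $\eta\mapsto k\eta$; after this rescaling $k$ plays the role of the frequency modulus $\eta_{2n}$ in the $\hat D$-picture, and the natural candidate for $c(\hat x,\hat\eta)$ is $c(\hat x,\hat\eta)=\alpha_0(x,\eta/\eta_{2n})+\eta_{2n}^{-1}\alpha_1(x,\eta/\eta_{2n})+\cdots$, i.e. the classical symbol on $T^*\hat D$ whose homogeneous expansion in $\hat\eta$ reproduces, on the ray $\hat\eta=(\eta_{2n}\eta',\eta_{2n})$, the semi-classical expansion of $\alpha$. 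One checks that $c(\hat x,\hat\eta)\in S^0_{\rm cl}(T^*\hat D,T^{*0,q}\hat D\boxtimes T^{*0,q}\hat D)$ and that, since $\mathrm{Supp}\,\alpha(x,\eta,k)\cap T^*D_0\Subset V$ and $\alpha$ vanishes for $|\eta|>M$, the support condition $\mathrm{Supp}\,c(\hat x,\hat\eta)\cap T^*\hat D_0\subset\ol W$ holds for a suitable conic $W\Subset U$ (using $U$ as in \eqref{e-dhmpXIII}); one may multiply $c$ by a conic cut-off equal to $1$ near the rescaled support of $\alpha$ and supported in $\ol W$, which changes $\Td I$ only by a smoothing operator and does not affect the final congruence mod $O(k^{-\infty})$.

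The second, computational step is to verify the congruence $\Td{\mathcal I}_k\equiv\Td I_k\mod O(k^{-\infty})$ on $D$. I would compute the distribution kernel of $\Td I_k$: applying $\Td I$ to $\chi_k u=e^{iky_{2n}}\chi(y_{2n})u(y)$ and then multiplying by $e^{-ikx_{2n}}\chi_1(x_{2n})$ and integrating $dx_{2n}$, one obtains an oscillatory integral in the variables $(x_{2n},y_{2n},\hat\eta)$ with phase $\langle \hat x-\hat y,\hat\eta\rangle+k(y_{2n}-x_{2n})$, amplitude $c(\hat x,\hat\eta)\chi_1(x_{2n})\chi(y_{2n})$. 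The $(x_{2n},\eta_{2n})$ and $(y_{2n})$ integrations are handled by stationary phase: the phase is stationary where $\eta_{2n}=k$ and $x_{2n}=y_{2n}$, so the leading contribution, using $\int\chi(y_{2n})\,dy_{2n}=1$ and $\chi_1=1$ on $\mathrm{Supp}\,\chi$, reproduces $\frac{k^{2n-1}}{(2\pi)^{2n-1}}\int e^{ik\langle x-y,\eta\rangle}c(\hat x,(k\eta,k))\,d\eta$, and the full stationary-phase expansion in powers of $k^{-1}$ matches $\alpha(x,\eta,k)$ term by term by the way $c$ was chosen; the contributions from $\eta_{2n}$ away from a neighbourhood of $k$ (where the $x_{2n}$-phase is non-stationary) are $O(k^{-\infty})$ by non-stationary phase, exploiting that $\chi_1,\chi$ have compact support so there is no boundary term. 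This is essentially a bookkeeping exercise and I would only indicate it.

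The main obstacle is the careful matching of the two asymptotic calculi — reconciling the semi-classical expansion of $\alpha(x,\eta,k)$ in powers of $k^{-1}$ with the homogeneous expansion of the classical symbol $c(\hat x,\hat\eta)$ in powers of $|\hat\eta|^{-1}$ along the cone $\hat\eta=(\eta_{2n}\eta',\eta_{2n})$ — and making sure that the stationary-phase reduction in $(x_{2n},y_{2n},\eta_{2n})$ is uniform in $k$ and produces exactly the prescribed symbol, including the correct support localization. Since the statement only asserts existence of such $\Td I$, it suffices to produce one valid choice of $c$ and then absorb all lower-order discrepancies into the $O(k^{-\infty})$ error, so no genuinely sharp estimate is needed; the proof is therefore routine once the change of variables and the rescaling $\eta\mapsto k\eta$ are set up correctly, which is why the author omits it. I would present the construction of $c$ explicitly and invoke the standard stationary phase formula (as in \cite{MS74}) for the remaining verification.
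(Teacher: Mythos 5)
Your construction (homogenize the semi-classical expansion $\alpha\sim\sum_jk^{-j}\alpha_j$ into a classical symbol $c$ on the cone $\hat\eta=(\eta_{2n}\eta,\eta_{2n})$, then verify the congruence by stationary phase in $(x_{2n},y_{2n},\eta_{2n})$ about $\eta_{2n}=k$, $x_{2n}=y_{2n}$, using $\int\chi\,dx_{2n}=1$ and $\chi_1=1$ on ${\rm Supp\,}\chi$ to kill the correction terms) is correct and is precisely the "straightforward stationary phase" argument the paper omits; the same computation appears explicitly elsewhere in the paper, e.g.\ in the proofs of Theorem~\ref{t-gue13630I} and Lemma~\ref{l-gue131209}. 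The only point to keep in view is that the homogeneous terms $\eta_{2n}^{-j}\alpha_j(x,\zeta/\eta_{2n})$ are singular only where $\alpha_j$ already vanishes (since $\alpha=0$ for $\abs{\eta}>M$ forces the support into $\eta_{2n}\geq\abs{\zeta}/M$), which you implicitly handle with your conic cut-off.
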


Now, we assume that $q=n_-$. Let $\Td{\mathcal{I}}_k=\frac{k^{2n-1}}{(2\pi)^{2n-1}}\int e^{ik<x-y,\eta>}\alpha(x,\eta,k)d\eta$
be a classical semi-classical pseudodifferential operator on $D$ of order $0$ from sections of $T^{*0,q}X$ to sections of $T^{*0,q}X$ with $\alpha(x,\eta,k)\in S^0_{{\rm loc\,},{\rm cl\,}}(1;T^*D,T^{*0,q}X\boxtimes T^{*0,q}X)$, $\alpha(x,\eta,k)=0$ if $\abs{\eta}>M$, for some large $M>0$, and ${\rm Supp\,}\alpha(x,\eta,k)\bigcap T^*D_0\Subset V$. Let $\Td I$ be as in Proposition~\ref{p-gue13628} and let $S\in L^{0}_{\frac{1}{2},\frac{1}{2}}(\hat D,T^{*0,q}\hat D\boxtimes T^{*0,q}\hat D)$ and $G\in L^{-1}_{\frac{1}{2},\frac{1}{2}}(\hat D,T^{*0,q}\hat D\boxtimes T^{*0,q}\hat D)$ be as in Theorem~\ref{t-dcmimpII}. Then, we have 
\begin{equation}\label{e-gue13629}
S+\Box^{(q)}_s\circ G\equiv\Td I\ \ \mbox{on $\hat D_0$},\ \ \Box^{(q)}_s\circ S\equiv 0\ \ \mbox{on $\hat D$}. 
\end{equation}
Now, we assume that $S$ and $G$ are properly supported. 
Define
\begin{equation} \label{e-geu13629I}
\begin{split}
\mathcal{S}_{k}: H^s_{{\rm loc\,}}(D,T^{*0,q}X)&\To H^s_{{\rm loc\,}}(D,T^{*0,q}X),\ \ \forall s\in\mathbb N_0,\\
u&\To \int e^{-ikx_{2n}}\chi_1(x_{2n})S(\chi_ku)(\hat x)dx_{2n}.
\end{split}
\end{equation}
Let $u\in H^s_{{\rm loc\,}}(D,T^{*0,q}X)$, $s\in\mathbb N_0$. We have $\chi_ku\in H^s_{{\rm loc\,}}(\hat D,T^{*0,q}\hat D)$.
Since $S\in L^{0}_{\frac{1}{2},\frac{1}{2}}(\hat D,T^{*0,q}\hat D\boxtimes T^{*0,q}\hat D)$, we see that
$S(\chi_ku)\in H^{s}_{{\rm loc\,}}(\hat D,T^{*0,q}\hat D)$. 
From this, it is not difficult to see that 
\[\int e^{-ikx_{2n}
}\chi_1(x_{2n})S(\chi_ku)(\hat x)dx_{2n}\in H^s_{{\rm loc\,}}(D,T^{*0,q}X).\] 
Thus, $\mathcal{S}_{k}$ is well-defined. Since $S$ is properly supported, $\mathcal{S}_{k}$ is properly
supported, too. Moreover, from \eqref{e-geu13629I} and the fact that $S:H^s_{{\rm comp\,}}(\hat D,T^{*0,q}\hat D)\To H^{s}_{{\rm comp\,}}(\hat D,T^{*0,q}\hat D)$ is continuous, for every $s\in\Real$, it is straightforward to check that
\begin{equation} \label{e-geu13629II}
\mathcal{S}_{k}=O(k^s): H^s_{{\rm comp\,}}(D,T^{*0,q}X)\To H^{s}_{{\rm comp\,}}(D,T^{*0,q}X),
\end{equation}
for all $s\in\mathbb N_0$. 

Let $\mathcal{S}_{k}^*:\mathscr D'(D,T^{*0,q}X)\To\mathscr D'(D,T^{*0,q}X)$ be the formal adjoint of $\mathcal{S}_{k}$ with respect to $(\,\cdot\,|\,\cdot\,)$. Then $\mathcal{S}_{k}^*$ is also properly supported. 
It is not difficult to see that 
\[(\mathcal{S}_{k}^*v)(x)=\int\ol{\chi_k(x_{2n})}S^*(ve^{ix_{2n}k}\chi_1)(\hat x)dx_{2n}\in\Omega^{0,q}_0(D),\]
for all $v\in\Omega^{0,q}_0(D)$. From this observation, we can check that
\begin{equation} \label{e-gue13630}
\mathcal{S}_{k}^*=O(k^s):H^s_{{\rm comp\,}}(D,T^{*0,q}X)\To H^{s}_{{\rm comp\,}}(D,T^{*0,q}X)\,,\quad\forall s\in\mathbb N_0.
\end{equation} 

From \eqref{s3-e9bis}, we have
\begin{equation} \label{e-gue13630I}
\begin{split}
\Box^{(q)}_{s,k}\circ\bigr(\int e^{-ikx_{2n}}\chi_1(x_{2n})S(\chi_ku)(\hat x)dx_{2n}\bigr)&=\int e^{-ikx_{2n} }\bigr(\Box^{(q)}_s(\chi_1S)\bigr)(\chi_ku)(\hat x)dx_{2n}\\
&=\int e^{-ikx_{2n}}\bigr(\Box^{(q)}_s(\chi_1S\Td\chi)\bigr)(\chi_ku)(\hat x)dx_{2n},
\end{split}
\end{equation}
where $\Td\chi\in C^\infty_0(\Real)$, $\Td\chi=1$ on ${\rm Supp\,}\chi$ and $\chi_1=1$ on ${\rm Supp\,}\Td\chi$ and  $u\in\Omega^{0,q}_0(D_0)$. Note that $\Box^{(q)}_s(\chi_1S\Td\chi)=\Box^{(q)}_s(S\Td\chi)-\Box^{(q)}_s((1-\chi_1)S\Td\chi)$. From Theorem~\ref{t-dcmimpII}, we know that $\Box^{(q)}_sS$ is smoothing and the kernel of $S$ is smoothing away the diagonal. Thus,
$(1-\chi_1)S\Td\chi$ is smoothing. It follows that $\Box^{(q)}_s((1-\chi_1)S\Td\chi)$ is smoothing. We conclude that
$\Box^{(q)}_s(\chi_1S\Td\chi)$ is smoothing. Let $K(\hat x,\hat y)\in C^\infty$ be the distribution kernel of $\Box^{(q)}_s(\chi_1S\Td\chi)$. From \eqref{e-gue13630I} and recall the form $\chi_k$ (see \eqref{e-gue13628IIa}), we see that the distribution kernel of $\Box^{(q)}_{s,k}\mathcal{S}_{k}$ is given by
\begin{equation} \label{e-gue13630II}
(\Box^{(q)}_{s,k}\mathcal{S}_{k})(x,y)=\int e^{-i(x_{2n}-y_{2n})k}K(\hat x,\hat y)\chi(y_{2n})dx_{2n}dy_{2n}.
\end{equation}
For $N\in\mathbb N$, we have
\begin{equation} \label{e-gue13630III}
\begin{split}
\abs{k^N(\Box^{(q)}_{s,k}\mathcal{S}_{k})(x,y)}&=\abs{\int \bigr((-i\frac{\pr}{\pr y_{2n}})^Ne^{-i(x_{2n}-y_{2n})k})\bigr)K(\hat x,\hat y)\chi(y_{2n})dy_{2n}dx_{2n}}\\
&=\abs{\int e^{-i(x_{2n}-y_{2n})k}(i\frac{\pr}{\pr y_{2n}})^N\bigr(K(\hat x,\hat y)\chi(y_{2n})\bigr)dy_{2n}dx_{2n}}.
\end{split}
\end{equation} 
Thus, $(\Box^{(q)}_{s,k}\mathcal{S}_{k})(x,y)=O(k^{-N})$, locally uniformly for all $N\in\mathbb N$, and similarly for the derivatives. We deduce that
\begin{equation} \label{e-gue13630IV}
\Box^{(q)}_{s,k}\mathcal{S}_{k}\equiv 0\mod O(k^{-\infty})\ \ \mbox{on $D$}.
\end{equation}
Thus,
\begin{equation} \label{e-gue13630V}
\mathcal{S}_{k}^*\Box^{(q)}_{s,k}\equiv 0\mod O(k^{-\infty})\ \ \mbox{on $D$}.
\end{equation} 

Define 
\begin{equation} \label{e-gue13630VI}
\begin{split}
\mathcal{G}_{k}: H^s_{{\rm loc\,}}(D,T^{*0,q}X)&\To H^{s+1}_{{\rm loc\,}}(D,T^{*0,q}X),\ \ \forall s\in\mathbb N_0, \\
u&\To\int e^{-ikx_{2n}}\chi_1G(\chi_ku)(\hat x)dx_{2n}.
\end{split}
\end{equation}
As above, we can show that $\mathcal{G}_{k}$ is well-defined.
Since $G$ is properly supported, $\mathcal{G}_{k}$ is properly
supported, too. Moreover, from \eqref{e-gue13630VI} and the fact that $G:H^s_{{\rm comp\,}}(\hat D,T^{*0,q}\hat D)\To H^{s+1}_{{\rm comp\,}}(\hat D,T^{*0,q}\hat D)$ is continuous, for every $s\in\Real$, it is straightforward to check that
\begin{equation} \label{e-gue13630VIbis}
\mathcal{G}_{k}=O(k^s): H^s_{{\rm comp\,}}(D,T^{*0,q}X)\To H^{s+1}_{{\rm comp\,}}(D,T^{*0,q}X),
\end{equation}
for all $s\in\mathbb N_0$.

Let $\mathcal{G}_{k}^*:\mathscr D'(D,T^{*0,q}X)\To\mathscr D'(D,T^{*0,q}X)$ be the formal adjoint of
$\mathcal{G}_{k}$ with respect to $(\,\cdot\,|\,\cdot\,)$. We can check that
\[(\mathcal{G}_{k}^*v)(x)=\int\ol{\chi_k(x_{2n})}G^*(ve^{ix_{2n}k}\chi_1)(\hat x)dx_{2n}\in\Omega^{0,q}_0(D),\]
for all $v\in\Omega^{0,q}_0(D)$. Thus, $\mathcal{G}_{k}^*:\Omega^{0,q}_0(D)\To\Omega^{0,q}_0(D)$.
Moreover, as before, we can show that
\begin{equation} \label{e-gue13630VII}
\mathcal{G}_{k}^*=O(k^s):H^s_{{\rm comp\,}}(D,T^{*0,q}X)\To H^{s+1}_{{\rm comp\,}}(D,T^{*0,q}X)\,,\quad\forall s\in\mathbb N_0.
\end{equation}

Let $u\in\Omega^{0,q}_0(D_0)$. From \eqref{s3-e9bis}, we have
\begin{equation*}
\begin{split}
\Box^{(q)}_{s,k}(\mathcal{G}_{k}u)&=\Box^{(q)}_{s,k}\circ\bigr(\int e^{-ikx_{2n}}\chi_1G(\chi_ku)dx_{2n}\bigr)=\int e^{-ikx_{2n}}\bigr(\Box^{(q)}_s\chi_1G\Td\chi)(\chi_ku)(\hat x)dx_{2n},
\end{split}
\end{equation*}
where $\Td\chi$ is as in \eqref{e-gue13630I}. Note that $\Box^{(q)}_s(\chi_1G\Td\chi)=\Box^{(q)}_s(G\Td\chi)-\Box^{(q)}_s((1-\chi_1)G\Td\chi)$. From \eqref{e-gue13629} and Theorem~\ref{t-dcmimpII}, we know that $\Box^{(q)}_sG+S\equiv\Td I$ and the kernel of $G$ is smoothing away the diagonal. Thus,
$(1-\chi_1)G\Td\chi$ is smoothing. It follows that $\Box^{(q)}_s((1-\chi_1)G\Td\chi)$ is smoothing. We conclude that
$\Box^{(q)}_s(\chi_1G\Td\chi)\equiv(\Td I-S)\Td\chi$. From this, we get
\begin{equation} \label{e-gue13630VIII}
\begin{split}
\Box^{(q)}_{s,k}(\mathcal{G}_{k}u)&=\int e^{-ikx_{2n}}\bigr((\Td I-S)(\chi_ku)\bigr)(\hat x)dx_{2n}+\int e^{-ikx_{2n}}F(\chi_ku)(\hat x)dx_{2n}\\
&=\int e^{-ikx_{2n}}\chi_1\bigr((\Td I-S)(\chi_ku)\bigr)(\hat x)dx_{2n}+\int e^{-ikx_{2n}}(1-\chi_1)\bigr((\Td I-S)(\chi_ku)\bigr)(\hat x)dx_{2n}\\
&\quad+\int e^{-ikx_{2n}}F(\chi_ku)(\hat x)dx_{2n}\\
&=(\Td I_k-\mathcal{S}_{k})u+\int e^{-ikx_{2n}}(1-\chi_1)\bigr((\Td I-S)(\chi_ku)\bigr)(\hat x)dx_{2n}\\
&\quad+\int e^{-ikx_{2n}}F(\chi_ku)(\hat x)dx_{2n},
\end{split}
\end{equation}
where $F$ is a smoothing operator. We can repeat the procedure as in \eqref{e-gue13630I} and conclude that the operator
\[u\To \int e^{-ikx_{2n}}F(\chi_k u)(\hat x)dx_{2n},\ \ u\in\Omega^{0,q}_0(D_0),\]
is $k$-negligible. Similarly, since $(1-\chi_1)(\Td I-S)\chi$ is smoothing, the operator
\[u\To \int e^{-ikx_{2n}}(1-\chi_1)\bigr((\Td I-S)(\chi_k u)\bigr)(\hat x)dx_{2n},\ \ u\in\Omega^{0,q}_0(D_0),\]
is also $k$-negligible. From this observation and note that $\Td{\mathcal{I}}_k\equiv\Td I_k\mod O(k^{-\infty})$, we obtain
\begin{equation} \label{e-gue13630a}
\Box^{(q)}_{s,k}\mathcal{G}_{k}+\mathcal{S}_{k}\equiv\Td{\mathcal{I}}_k\mod O(k^{-\infty})\ \ \mbox{on $D_0$}
\end{equation}
and hence
\begin{equation} \label{e-gue1374III}
\mathcal{G}^*_{k}\Box^{(q)}_{s,k}+\mathcal{S}^*_{k}\equiv\Td{\mathcal{I}}^*_k\mod O(k^{-\infty})\ \ \mbox{on $D_0$},
\end{equation}
where $\Td{\mathcal{I}}^*_k$ is the formal adjoint of $\Td{\mathcal{I}}_k$ with respect to $(\,\cdot\,|\,\cdot\,)$. 

We now consider more general situations. We recall Definition~\ref{d-gue130816}. 
Let \[\hat{\mathcal{I}}_k=\frac{k^{2n-1}}{(2\pi)^{2n-1}}\int e^{ik<x-y,\eta>}p(x,y,\eta,k)d\eta\] 
be a classical semi-classical pseudodifferential operator on $D$ of order $0$ from sections of $T^{*0,q}X$ to sections of $T^{*0,q}X$ with $p(x,y,\eta,k)\in S^0_{{\rm loc\,},{\rm cl\,}}(1;D\times D\times\Real^{2n-1},T^{*0,q}X\boxtimes T^{*0,q}X)$. 
We assume that $\hat{\mathcal{I}}_k\equiv\Td{\mathcal{I}}_k\mod O(k^{-\infty})$ at $T^*D_0\bigcap\Sigma$, where 
\[\Td{\mathcal{I}}_k=\frac{k^{2n-1}}{(2\pi)^{2n-1}}\int e^{ik<x-y,\eta>}\alpha(x,\eta,k)d\eta\]
with $\alpha(x,\eta,k)\in S^0_{{\rm loc\,},{\rm cl\,}}(1;T^*D,T^{*0,q}X\boxtimes T^{*0,q}X)$, $\alpha(x,\eta,k)=0$ if $\abs{\eta}>M$, for some large $M>0$ and ${\rm Supp\,}\alpha(x,\eta,k)\bigcap T^*D_0\Subset V$. We write
\[\begin{split}
&\hat{\mathcal{I}}_k\equiv\Td{\mathcal{I}}_k+\Td{\mathcal{I}}_k^1\mod O(k^{-\infty}),\\
&\Td{\mathcal{I}}_k^1=\frac{k^{2n-1}}{(2\pi)^{2n-1}}\int e^{ik<x-y,\eta>}\beta(x,y,\eta,k)d\eta,
\end{split}\]
where $\beta(x,y,\eta,k)\in S^0_{{\rm loc\,},{\rm cl\,}}(1;D\times D\times\Real^{2n-1},T^{*0,q}X\boxtimes T^{*0,q}X)$ and there is a small neighbourhood $\Gamma$ of $T^*D_0\bigcap\Sigma$ such that $\beta(x,y,\eta,k)=0$ if $(x,\eta)\in\Gamma$. Let $\mathcal{G}_k$ and $\mathcal{S}_k$ be as in \eqref{e-gue13630VI} and \eqref{e-geu13629II} respectively. Then, \eqref{e-gue13630a} and 
\eqref{e-gue1374III} hold. Since $\beta(x,y,\eta,k)=0$ if $(x,\eta)$ is in some small neighbourhood of $T^*D_0\bigcap\Sigma$, it is clear that there is a properly supported continuous 
operator $\mathcal{G}^1_k=O(k^s):H^s_{{\rm comp\,}}(D,T^{*0,q}X)\To H^{s+1}_{{\rm comp\,}}(D,T^{*0,q}X)$, $\forall s\in\mathbb N_0$, such that $\Box^{(q)}_{s,k}\mathcal{G}^1_k\equiv\Td{\mathcal{I}}^1_k\mod O(k^{-\infty})$ on $D_0$. Put 
\begin{equation}\label{e-gue130814}
\mathcal{N}_k:=\mathcal{G}_k+\mathcal{G}^1_k=O(k^s):H^s_{{\rm comp\,}}(D,T^{*0,q}X)\To H^{s+1}_{{\rm comp\,}}(D,T^{*0,q}X),\ \ \forall s\in\mathbb N_0.
\end{equation}
Then, we have 
\begin{equation}\label{e-gue130814I}
\begin{split}
&\mathcal{N}_k=O(k^s):H^s_{{\rm comp\,}}(D,T^{*0,q}X)\To H^{s+1}_{{\rm comp\,}}(D,T^{*0,q}X),\ \ \forall s\in\mathbb N_0,\\
&\mathcal{N}^*_k=O(k^s):H^s_{{\rm comp\,}}(D,T^{*0,q}X)\To H^{s+1}_{{\rm comp\,}}(D,T^{*0,q}X),\ \ \forall s\in\mathbb N_0,
\end{split}
\end{equation}
and
\begin{equation} \label{e-gue130814II}
\begin{split}
&\Box^{(q)}_{s,k}\mathcal{N}_{k}+\mathcal{S}_{k}\equiv\hat{\mathcal{I}}_k\mod O(k^{-\infty})\ \ \mbox{on $D_0$},\\
&\mathcal{N}^*_{k}\Box^{(q)}_{s,k}+\mathcal{S}^*_{k}\equiv\hat{\mathcal{I}}^*_k\mod O(k^{-\infty})\ \ \mbox{on $D_0$},
\end{split}
\end{equation}
where $\mathcal{N}^*_k$ and $\hat{\mathcal{I}}^*_k$ are the formal adjoints of $\hat{\mathcal{I}}_k$ and $\mathcal{N}_k$ with respect to $(\,\cdot\,|\,\cdot\,)$ respectively.

From \eqref{e-geu13629II}, \eqref{e-gue13630}, \eqref{e-gue13630IV}, \eqref{e-gue13630V}, \eqref{e-gue130814I} and \eqref{e-gue130814II}, we get 

\begin{thm}\label{t-gue13630}
Let $s$ be a local trivializing section of $L$ on an open subset $D\subset X$ and $\abs{s}^2_{h^L}=e^{-2\phi}$. We assume that there exist a $\lambda_0\in\Real$ and $x_0\in D$ such that $M^\phi_{x_0}-2\lambda_0\mathcal{L}_{x_0}$ is non-degenerate of constant signature $(n_-,n_+)$. Let $q=n_-$. We fix $D_0\Subset D$, $D_0$ open. Let $V$ be as in \eqref{e-dhmpXII}. Let 
\[\hat{\mathcal{I}}_k=\frac{k^{2n-1}}{(2\pi)^{2n-1}}\int e^{ik<x-y,\eta>}p(x,y,\eta,k)d\eta\] 
be a classical semi-classical pseudodifferential operator on $D$ of order $0$ from sections of $T^{*0,q}X$ to sections of $T^{*0,q}X$ with $p(x,y,\eta,k)\in S^0_{{\rm loc\,},{\rm cl\,}}(1;D\times D\times\Real^{2n-1},T^{*0,q}X\boxtimes T^{*0,q}X)$. We assume that 
\begin{equation}\label{e-gue130814III}\begin{split}
&\mbox{$\hat{\mathcal{I}}_k\equiv\Td{\mathcal{I}}_k\mod O(k^{-\infty})$ at $T^*D_0\bigcap\Sigma$},\\
&\Td{\mathcal{I}}_k=\frac{k^{2n-1}}{(2\pi)^{2n-1}}\int e^{ik<x-y,\eta>}\alpha(x,\eta,k)d\eta,\\
&\mbox{$\alpha(x,\eta,k)\sim\sum_{j=0}\alpha_j(x,\eta)k^{-j}$ in $S^0_{{\rm loc\,}}(1;T^*D,T^{*0,q}X\boxtimes T^{*0,q}X)$},\\ 
&\alpha_j(x,\eta)\in C^\infty(T^*D,T^{*0,q}D\boxtimes T^{*0,q}D),\ \ j=0,1,\ldots,\\
\end{split}\end{equation}
where $\alpha(x,\eta,k)\in S^0_{{\rm loc\,},{\rm cl\,}}(1;T^*D,T^{*0,q}X\boxtimes T^{*0,q}X)$ with $\alpha(x,\eta,k)=0$ if $\abs{\eta}>M$, for some large $M>0$ and ${\rm Supp\,}\alpha(x,\eta,k)\bigcap T^*D_0\Subset V$. 
%$\beta(x,y,\eta,k)\in S^0_{{\rm loc\,},{\rm cl\,}}(1;T^*D,T^{*0,q}X\boxtimes T^{*0,q}%X)$ and there is a small neighbourhood $\Gamma$ of $V\bigcap\Sigma$ such that  $%\beta(x,y,\eta,k)=0$ if $(x,\eta)\in\Gamma$.
%\[\alpha(x,\eta,k)\in S^0_{{\rm loc\,},{\rm cl\,}}(1;D,T^{*0,q}X\boxtimes T^{*0,q}X)$, $\alpha(x,\eta,k)\in C^\infty_0(V,T^{*0,q}X\boxtimes T^{*0,q}X)$. 
Let $\mathcal{S}_k$, $\mathcal{G}_k$ and $\mathcal{N}_k$ be as in \eqref{e-geu13629I}, \eqref{e-gue13630VI} and \eqref{e-gue130814} respectively. Then, 
\begin{equation} \label{e-gue13630Ia}
\begin{split}
&\mathcal{S}^*_k, \mathcal{S}_{k}=O(k^s): H^s_{{\rm comp\,}}(D,T^{*0,q}X)\To H^{s}_{{\rm comp\,}}(D,T^{*0,q}X),\ \ \forall s\in\mathbb N_0,\\
&\mathcal{G}^*_k, \mathcal{G}_{k}, \mathcal{N}^*_k, \mathcal{N}_{k}=O(k^s): H^s_{{\rm comp\,}}(D,T^{*0,q}X)\To H^{s+1}_{{\rm comp\,}}(D,T^{*0,q}X),\ \ \forall s\in\mathbb N_0,
\end{split}
\end{equation}
and we have
\begin{gather}
\Box^{(q)}_{s,k}\mathcal{S}_{k}\equiv 0\mod
O(k^{-\infty})\ \ \mbox{on $D$},\ \ \mathcal{S}_{k}^*\Box^{(q)}_{s,k}\equiv0\mod O(k^{-\infty})\ \ \mbox{on $D$}, \label{e-gue13630IIa} \\
\mathcal{S}_{k}+\Box^{(q)}_{s,k}\mathcal{G}_{k}\equiv\Td{\mathcal{I}}_k\mod O(k^{-\infty})\ \ \mbox{on $D_0$}, \label{e-gue13630IIIa}\\
\mathcal{G}^*_{k}\Box^{(q)}_{s,k}+\mathcal{S}^*_{k}\equiv\Td{\mathcal{I}}^*_k\mod O(k^{-\infty})\ \ \mbox{on $D_0$},\label{e-gue1374IV}\\
\mathcal{S}_{k}+\Box^{(q)}_{s,k}\mathcal{N}_{k}\equiv\hat{\mathcal{I}}_k\mod O(k^{-\infty})\ \ \mbox{on $D_0$}, \label{e-gue13630IIIabal}\\
\mathcal{N}^*_{k}\Box^{(q)}_{s,k}+\mathcal{S}^*_{k}\equiv\hat{\mathcal{I}}^*_k\mod O(k^{-\infty})\ \ \mbox{on $D_0$},\label{e-gue1374IVabal}
\end{gather}
where $\mathcal{S}_{k}^*$, $\mathcal{G}_{k}^*$, $\mathcal{N}_{k}^*$, $\Td{\mathcal{I}}^*_k$ and $\hat{\mathcal{I}}^*_k$ are the formal adjoints of
$\mathcal{S}_{k}$, $\mathcal{G}_{k}$, $\mathcal{N}_{k}$, $\Td{\mathcal{I}}_k$ and $\hat{\mathcal{I}}_k$ with respect to $(\,\cdot\,|\,\cdot\,)$
respectively and $\Box^{(q)}_{s,k}$ is given by \eqref{e-msmilkVI}.
\end{thm}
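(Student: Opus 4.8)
The plan is to assemble Theorem~\ref{t-gue13630} from the microlocal Hodge decomposition for $\Box^{(q)}_s$ of Section~\ref{s-mhdt} via the reduction $k=e^{-ikx_{2n}}(-i\pr_{x_{2n}})(e^{ikx_{2n}}\,\cdot\,)$ recorded in \eqref{s3-e9bis}. Starting from the properly supported operators $S\in L^0_{\frac12,\frac12}(\hat D,T^{*0,q}\hat D\boxtimes T^{*0,q}\hat D)$ and $G\in L^{-1}_{\frac12,\frac12}(\hat D,T^{*0,q}\hat D\boxtimes T^{*0,q}\hat D)$ produced by Theorem~\ref{t-dcmimpII} and the classical operator $\Td I$ furnished by Proposition~\ref{p-gue13628}, one defines $\mathcal{S}_k$, $\mathcal{G}_k$, $\Td{\mathcal I}_k$ by $A_ku:=\int e^{-ikx_{2n}}\chi_1(x_{2n})A(\chi_ku)(\hat x)\,dx_{2n}$ with $\chi_k=e^{ikx_{2n}}\chi(x_{2n})$, as in \eqref{e-geu13629I} and \eqref{e-gue13630VI}. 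For the general operator $\hat{\mathcal I}_k$ one writes, modulo $O(k^{-\infty})$, $\hat{\mathcal I}_k\equiv\Td{\mathcal I}_k+\Td{\mathcal I}^1_k$ with $\Td{\mathcal I}^1_k$ having symbol $\beta(x,y,\eta,k)$ vanishing near $T^*D_0\cap\Sigma$; since the semi-classical principal symbol $p_0=\sum_j\ol q_jq_j$ of $\Box^{(q)}_{s,k}$ vanishes only on $\Sigma$ (see \eqref{e-crmiII}), $\Box^{(q)}_{s,k}$ is semi-classically elliptic on ${\rm Supp\,}\beta$ and a standard symbolic parametrix construction with the $O(k^s)$ bookkeeping of \eqref{e-gue13628II} yields a properly supported $\mathcal{G}^1_k$ with $\Box^{(q)}_{s,k}\mathcal{G}^1_k\equiv\Td{\mathcal I}^1_k\mod O(k^{-\infty})$ on $D_0$; set $\mathcal{N}_k:=\mathcal{G}_k+\mathcal{G}^1_k$.

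The mapping estimates \eqref{e-gue13630Ia} are proved as in \eqref{e-geu13629II}--\eqref{e-gue13630VII}: if $u\in H^s_{\rm comp}(D,T^{*0,q}X)$ then $\chi_ku\in H^s_{\rm comp}(\hat D,T^{*0,q}\hat D)$ with norm $O(k^s)$, since every derivative falling on $e^{ikx_{2n}}$ costs a power of $k$; Calderon--Vaillancourt continuity of $S$ on $H^s$ (resp. of $G$ from $H^s$ to $H^{s+1}$), together with the fact that integrating $x_{2n}$ out against $\chi_1$ does not spoil $x'$-regularity, gives the asserted $O(k^s)$ bounds, and the adjoint bounds follow from $\mathcal{S}^*_kv=\int\ol{\chi_k}\,S^*(ve^{ix_{2n}k}\chi_1)(\hat x)\,dx_{2n}$ and the analogous formula for $\mathcal{G}^*_k$; the bounds for $\mathcal{N}_k,\mathcal{N}^*_k$ are obtained by adding those for $\mathcal{G}_k$ and $\mathcal{G}^1_k$.

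The operator identities are then established in the order \eqref{e-gue13630IIa}, \eqref{e-gue13630IIIa}--\eqref{e-gue1374IV}, \eqref{e-gue13630IIIabal}--\eqref{e-gue1374IVabal}. First, \eqref{s3-e9bis} gives $\Box^{(q)}_{s,k}\mathcal{S}_ku=\int e^{-ikx_{2n}}\bigl(\Box^{(q)}_s(\chi_1S\Td\chi)\bigr)(\chi_ku)(\hat x)\,dx_{2n}$ for $\Td\chi\in C^\infty_0(\Real)$ with $\Td\chi=1$ on ${\rm Supp\,}\chi$ and $\chi_1=1$ on ${\rm Supp\,}\Td\chi$; since $\Box^{(q)}_sS$ is smoothing and $S$ has smooth kernel off the diagonal (Theorem~\ref{t-dcmimpII}), $\Box^{(q)}_s(\chi_1S\Td\chi)$ has a $C^\infty$ kernel $K(\hat x,\hat y)$, so integration by parts in $y_{2n}$ turns $k^N$ times the kernel of $\Box^{(q)}_{s,k}\mathcal{S}_k$ into $\int e^{-i(x_{2n}-y_{2n})k}(i\pr_{y_{2n}})^N\bigl(K(\hat x,\hat y)\chi(y_{2n})\bigr)dx_{2n}dy_{2n}$, which is $O(1)$ locally uniformly; this gives $\Box^{(q)}_{s,k}\mathcal{S}_k\equiv0\mod O(k^{-\infty})$ on $D$, and $\mathcal{S}^*_k\Box^{(q)}_{s,k}\equiv0$ by passing to adjoints. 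Next, $S+\Box^{(q)}_sG\equiv\Td I$ on $\hat D_0$ together with $\Td{\mathcal I}_k\equiv\Td I_k\mod O(k^{-\infty})$ (Proposition~\ref{p-gue13628}) and the computation \eqref{e-gue13630VIII} --- where $\Box^{(q)}_s(\chi_1G\Td\chi)$ is split into $\Box^{(q)}_s(G\Td\chi)$ and a smoothing remainder, and the leftover operators of the form $u\mapsto\int e^{-ikx_{2n}}F(\chi_ku)(\hat x)\,dx_{2n}$ with $F$ smoothing are shown $k$-negligible by the same integration-by-parts argument --- yields \eqref{e-gue13630IIIa} on $D_0$, hence \eqref{e-gue1374IV} by adjoints; adding $\Box^{(q)}_{s,k}\mathcal{G}^1_k\equiv\Td{\mathcal I}^1_k$ and using $\hat{\mathcal I}_k\equiv\Td{\mathcal I}_k+\Td{\mathcal I}^1_k$ produces \eqref{e-gue13630IIIabal} and \eqref{e-gue1374IVabal}.

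The main obstacle is the construction of $\mathcal{G}^1_k$ with uniform-in-$k$ control: one must check that $\Box^{(q)}_{s,k}$, whose semi-classical symbol degenerates exactly on $\Sigma$, is genuinely semi-classically elliptic on ${\rm Supp\,}\beta$ and that the symbolic parametrix scheme can be carried out within the $O(k^s)$ calculus of \eqref{e-gue13628II}; once this is in place, the rest is a careful transcription of the microlocal statements of Section~\ref{s-mhdt} through the reduction \eqref{s3-e9bis}.
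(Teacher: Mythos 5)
Your proposal is correct and follows essentially the same route as the paper: the same definitions of $\mathcal{S}_k$, $\mathcal{G}_k$ via the reduction \eqref{s3-e9bis}, the same integration-by-parts argument in $y_{2n}$ for \eqref{e-gue13630IIa}, the same computation \eqref{e-gue13630VIII} for \eqref{e-gue13630IIIa}, and the same splitting $\hat{\mathcal I}_k\equiv\Td{\mathcal I}_k+\Td{\mathcal I}^1_k$ with an elliptic parametrix $\mathcal{G}^1_k$ off $\Sigma$ to define $\mathcal{N}_k$. The only difference is that you explicitly flag the semi-classical ellipticity of $\Box^{(q)}_{s,k}$ on ${\rm Supp\,}\beta$ as the point requiring verification, whereas the paper simply asserts the existence of $\mathcal{G}^1_k$ as clear.
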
 

We notice that $\mathcal{S}_{k}$, $\mathcal{S}_{k}^*$, $\mathcal{G}_{k}$, $\mathcal{G}_{k}^*$, $\mathcal{N}_{k}$, $\mathcal{N}_{k}^*$, are all properly supported on $D$. We need

\begin{thm}\label{t-gue13630I}
With the notations and assumptions above, let $\mathcal{S}_k$ be as in Theorem~\ref{t-gue13630}. Then, $\mathcal{S}_{k}$ 
is a smoothing operator and the kernel of $\mathcal{S}_{k}$ satisfies
\begin{equation} \label{e-gue13630IVa}
\mathcal{S}_{k}(x,y)\equiv \int e^{ik\varphi(x,y,s)}a(x,y,s,k)ds\mod
O(k^{-\infty})\ \ \mbox{on $D$}
\end{equation}
with
\begin{equation}  \label{e-gue13630Va}
\begin{split}
&a(x,y,s,k)\in S^{n}_{{\rm loc\,}}\big(1;\Omega,T^{*0,q}X\boxtimes T^{*0,q}X\big)\bigcap C^\infty_0\big(\Omega,T^{*0,q}X\boxtimes T^{*0,q}X\big),\\
&a(x,y,s,k)\sim\sum^\infty_{j=0}a_j(x,y,s)k^{n-j}\text{ in }S^{n}_{{\rm loc\,}}
\big(1;\Omega,T^{*0,q}X\boxtimes T^{*0,q}X\big), \\
&a_j(x,y,s)\in C^\infty_0\big(\Omega,T^{*0,q}X\boxtimes T^{*0,q}X\big),\ \ j=0,1,2,\ldots,\\
&a_0(x,x,s):T^{*0,q}_xX\To\mathcal{N}(x,s,n_-),\ \ \forall (x,x,s)\in\Omega,
\end{split}
\end{equation}
and $\varphi(x,y,s)$ is as in Theorem~\ref{t-dcgewI} and \eqref{e-guew13627}, where $\mathcal{N}(x,s,n_-)$ is as in \eqref{e-gue1373III}, 
\[
\begin{split}
\Omega:=&\{(x,y,s)\in D\times D\times\Real;\, (x,-2{\rm Im\,}\ddbar_b\phi(x)+s\omega_0(x))\in V\bigcap\Sigma,\\
&\quad\mbox{$(y,-2{\rm Im\,}\ddbar_b\phi(y)+s\omega_0(y))\in V\bigcap\Sigma$, $\abs{x-y}<\varepsilon$, for some $\varepsilon>0$}\}.
\end{split}\]
\end{thm}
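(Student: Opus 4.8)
The strategy is to transfer the microlocal Hodge decomposition for the local operator $\Box^{(q)}_s$ obtained in Theorem~\ref{t-dcgewI} to a semi-classical statement on $D$, by carrying out the two auxiliary integrations in $x_{2n}$ and $y_{2n}$ that enter the definition \eqref{e-geu13629I} of $\mathcal{S}_k$. First I would record the distribution kernel of $\mathcal{S}_k$: unwinding \eqref{e-geu13629I} together with \eqref{e-gue13628IIa} gives
\[
\mathcal{S}_k(x,y)=\int e^{-ik(x_{2n}-y_{2n})}\chi_1(x_{2n})\chi(y_{2n})\,S(\hat x,\hat y)\,dx_{2n}dy_{2n}.
\]
By Theorem~\ref{t-dcmimpII} and Theorem~\ref{t-dcgewI}, modulo a smoothing operator $S(\hat x,\hat y)\equiv\int e^{it\Phi(\hat x,\hat y,s)}b(\hat x,\hat y,s,t)\,ds\,dt$ with $\Phi=x_{2n}-y_{2n}+\varphi(x,y,s)$, $b\sim\sum_j b_j\,t^{n-j}$ in $S^n_{1,0}$, ${\rm Supp\,}b\subset\hat\Omega\times\Real_+$, and $b_0(\hat x,\hat x,s):T^{*0,q}_{\hat x}\hat D\To\mathcal{N}(x,s,n_-)$. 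The smoothing part of $S$ contributes only an $O(k^{-\infty})$ term to $\mathcal{S}_k(x,y)$ by the same integration by parts in $y_{2n}$ that produced \eqref{e-gue13630IV}, since each $(-i\pr_{y_{2n}})$ falling on $e^{-ik(x_{2n}-y_{2n})}$ gains a power of $k$. Moreover, because all symbols of $\Box^{(q)}_s$ are invariant under translation in $x_{2n}$ and the phase satisfies \eqref{e-dhlkmimVI}, \eqref{e-dhlkmimVII}, the amplitudes produced by the transport equations and by the stationary phase of Section~\ref{s-mhdt} can be taken independent of $x_{2n}$, and $y_{2n}$ enters the construction only through the linear term $-y_{2n}$ in $-<\hat y,\hat\eta>$; hence $b_j=b_j(x,y,s)$, ${\rm Supp\,}b_j\subset\Omega$.

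Granting this, I would substitute the oscillatory representation of $S$ and carry out the $x_{2n}$- and $y_{2n}$-integrations, which replace $\chi_1$ and $\chi$ by Schwartz factors of $t-k$ and concentrate the $t$-integral near $t=k$. Away from $x=y$ the kernel of $S$ is smooth in $(x_{2n},y_{2n})$, so there $\mathcal{S}_k(x,y)=O(k^{-\infty})$ directly from the $x_{2n}$-integration. Near $x=y$ one substitutes $\tau=t-k$, Taylor expands $b(x,y,s,k+\tau)$ in $\tau$ (legitimate since $\pr_t^\ell b\in S^{n-\ell}_{1,0}$), and evaluates the resulting elementary $\tau$-integrals by Fourier inversion; since $\chi_1\equiv1$ on an open neighbourhood of ${\rm Supp\,}\chi$ and $\abs{\varphi}$ is small near the diagonal, the $\ell=0$ integral equals $2\pi\int\chi(y_{2n})\,dy_{2n}=2\pi$, while every $\ell\geq1$ correction involves derivatives of $\chi_1$ on the set $\set{\chi_1=1}$ and therefore vanishes. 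This yields $\mathcal{S}_k(x,y)\equiv2\pi\int e^{ik\varphi(x,y,s)}b(x,y,s,k)\,ds\mod O(k^{-\infty})$ on $D$.

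Setting $a(x,y,s,k):=2\pi\,b(x,y,s,k)$ completes the argument: the expansion $b\sim\sum_j b_j k^{n-j}$ in $S^n_{1,0}$ becomes $a\sim\sum_j a_j k^{n-j}$ in $S^{n}_{{\rm loc\,}}(1;\Omega,T^{*0,q}X\boxtimes T^{*0,q}X)$ with $a_j=2\pi b_j\in C^\infty_0(\Omega,T^{*0,q}X\boxtimes T^{*0,q}X)$; the property $a_0(x,x,s):T^{*0,q}_xX\To\mathcal{N}(x,s,n_-)$ follows from the corresponding property of $b_0$ in \eqref{e-gue1373VII} under the identification $T^{*0,q}_{\hat x}\hat D\cong T^{*0,q}_xX$; $\varphi$ is the phase of Theorem~\ref{t-dcgewI}, whose normal form is \eqref{e-guew13627}; and $\Omega$ is the set in \eqref{e-dcgewIIa} with the coordinates $x_{2n},y_{2n}$ dropped. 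Finally $\mathcal{S}_k$ is smoothing, since the reduced kernel $\int e^{ik\varphi}a\,ds$ is a $C^\infty$ function of $(x,y)$ (differentiation under the integral sign, $a$ being compactly supported in $s$) and the remaining error is $k$-negligible.

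The main obstacle is the careful bookkeeping of the two auxiliary integrations: verifying that $b$ genuinely carries no $x_{2n},y_{2n}$ dependence (so that the $x_{2n}$-integration really localises $t=k$), controlling the non-compact range of $t$ via the Schwartz decay of the Fourier transforms of $\chi,\chi_1$ together with the symbol estimates on $\pr_t^\ell b$, and checking that the ``composition corrections'' vanish near the diagonal while the off-diagonal and smoothing contributions are $O(k^{-\infty})$. All of this parallels the analogous reduction in~\cite{Hsiao08} and is essentially routine once the structure above is in place.
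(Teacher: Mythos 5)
Your proposal follows the paper's own route. The paper likewise starts from the kernel formula \eqref{e-gue13630VIa}, isolates the region $t$ near $k$ (via the substitution $t=k\sigma$ and a cutoff $\gamma(\sigma)$ near $\sigma=1$, the complementary piece being killed by integration by parts in $y_{2n}$, exactly as you dispose of the off-diagonal and smoothing contributions), and then carries out the remaining $(x_{2n},\sigma)$-integration; since the phase $\varphi(x,y,s)\sigma+(x_{2n}-y_{2n})(\sigma-1)$ is bilinear in these two variables with non-degenerate Hessian, that integration is precisely the combined $x_{2n}$- and $\tau$-Fourier inversion you describe. Two of your shortcuts deserve a caveat. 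First, the paper does not need, and does not verify, your claim that the $b_j$ are independent of $x_{2n},y_{2n}$: it keeps $b(\hat x,\hat y,s,t)$ and applies the Melin--Sj\"ostrand complex stationary phase, which evaluates $b$ at the complex critical point $\Td x_{2n}=y_{2n}-\varphi(x,y,s)$, $\Td\sigma=1$, yielding \eqref{e-gue1373aI} with the almost analytic extensions $\Td b_0$ and $\Td\chi_1$; your translation-invariance argument for removing the $x_{2n},y_{2n}$-dependence is plausible but is an extra verification the paper's formulation avoids. Second, because $\varphi$ is complex-valued with ${\rm Im\,}\varphi\geq 0$, your ``elementary Fourier inversion'' of the $\tau$-integral and the assertion that the $\ell\geq1$ corrections ``vanish'' hold only modulo $O(k^{-\infty})$: the identity behind the $\ell=0$ term reads $\int e^{i\tau\varphi}\hat\chi_1(-\tau)\hat\chi(\tau)\,d\tau=2\pi\int\Td\chi_1(v-\varphi)\chi(v)\,dv$ and requires an almost analytic extension of $\chi_1$, the resulting errors being $O(\abs{{\rm Im\,}\varphi}^N)$, which become $O(k^{-N})$ only after absorbing the overall factor $e^{-k{\rm Im\,}\varphi}$. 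These are exactly the points the complex stationary phase method of Melin--Sj\"ostrand handles automatically, so your argument is sound once phrased in that language, and the identification of the leading term $a_0(x,x,s)$ and its range property via \eqref{e-gue1373VII} matches \eqref{e-gue1373aIII}.
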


\begin{proof}
Theorem~\ref{t-gue13630I} essentially follows from the stationary phase
formula of Melin-Sj\"{o}strand~\cite{MS74}. From the definition \eqref{e-geu13629I} of
$\mathcal{S}_{k}$ and Theorem~\ref{t-dcgewI}, we see
that the distribution kernel of $\mathcal{S}_{k}$ is given by
\begin{equation} \label{e-gue13630VIa}
\begin{split}
&\mathcal{S}_{k}(x,y)\equiv\int_{t\geq0}e^{it\Phi(\hat x,\hat y,s)-ikx_{2n}+iky_{2n}}b(\hat x,\hat y,s,t)\chi_1(x_{2n})\chi(y_{2n})dx_{2n}dtdy_{2n}ds\mod O(k^{-\infty})  \\
&\equiv\int_{t\geq0}e^{it\varphi(x,y,s)+i(x_{2n}-y_{2n})(t-k)}b(\hat x,\hat y,s,t)\chi_1(x_{2n})\chi(y_{2n})dx_{2n}dtdy_{2n}ds \mod O(k^{-\infty})\\
&\equiv\int_{\sigma\geq0}e^{ik\bigr(\varphi(x,y,s)\sigma+(x_{2n}-y_{2n})(\sigma-1)\bigr)}kb(\hat x,\hat y,s,k\sigma)\chi_1(x_{2n})\chi(y_{2n})dx_{2n}d\sigma dy_{2n}ds\mod O(k^{-\infty}),
\end{split}
\end{equation}
where the integrals above are defined as oscillatory integrals and $t=k\sigma$. Let $\gamma(\sigma)\in C^\infty_0(\Real_+)$ with
$\gamma(\sigma)=1$ in some small neighbourhood of $1$. We introduce the cut-off functions $\gamma(\sigma)$ and $1-\gamma(\sigma)$ in the integral \eqref{e-gue13630VIa}:
\begin{align}
I_0(x,y):=\int_{\sigma\geq0}e^{ik\bigr(\varphi(x,y,s)\sigma+(x_{2n}-y_{2n})(\sigma-1)\bigr)}\gamma(\sigma)kb(\hat x,\hat y,s,k\sigma)\chi_1(x_{2n} )\chi(y_{2n})dx_{2n}d\sigma dy_{2n}ds  , \label{e-gue13630VIIa}\\
I_1(x,y):=\int_{\sigma\geq0}e^{ik\bigr(\varphi(x,y,s)\sigma+(x_{2n}-y_{2n})(\sigma-1)\bigr)}(1-\gamma(\sigma))kb(\hat x,\hat y,s,k\sigma)\chi_1(x_{2n} )\chi(y_{2n})dx_{2n}d\sigma dy_{2n}ds\,, \label{e-gue13630VIIIa}
\end{align}
so that $\mathcal{S}_k(x,y)\equiv I_0(x,y)+I_1(x,y)\mod O(k^{-\infty})$. First, we study $I_1(x,y)$. Note that when $\sigma\neq1$,
$d_{y_{2n}}\bigr(\varphi(x,y,s)\sigma+(x_{2n}-y_{2n})(\sigma-1)\bigr)=1-\sigma\neq0$. Thus, we can integrate by
parts and get that $I_1$ is smoothing and 
\begin{equation}\label{e-gue1373f}
I_1(x,y)\equiv0\mod O(k^{-\infty}).
\end{equation}

Next, we study the kernel $I_0(x,y)$. From \eqref{e-gue1373VIIa}, we may assume that $b(\hat x,\hat y,s,k\sigma)$ is supported in some small neighbourhood of $\hat x=\hat y$. We want to apply the stationary phase method of Melin and Sj\"{o}strand~(see page 148 of\cite{MS74}) to carry out the $dx_{2n}d\sigma$ integration in \eqref{e-gue13630VIIa}.
%Since the integral \eqref{e-gue13630VIIa} converges, we have
%\begin{equation} \label{e-gue13630Ib}
%\begin{split}
%&I_0(x,y)=\int H(x,\hat y,s)\chi(y_{2n})dy_{2n}ds   ,\\
%&H(x,\hat y,s)=\int_{\sigma\geq0}e^{ik\bigr(\varphi(x,y,s)\sigma+(x_{2n}-y_{2n})(\sigma-1)\bigr)}\gamma(\sigma)kb(\hat %x,\hat y,s,k\sigma)\chi_1(x_{2n} )\chi(y_{2n})dx_{2n}d\sigma.
%\end{split}
%\end{equation} 
Put 
\[\Psi(\hat x,\hat y,\sigma):=\varphi(x,y,s)\sigma+(x_{2n}-y_{2n})(\sigma-1).\]
We first notice that $d_\sigma\Psi(\hat x,\hat y,\sigma)|_{\hat x=\hat y}=0$ and
$d_{x_{2n}}\Psi(\hat x,\hat y,\sigma)|_{\sigma=1}=0$.
Thus, $x=y$ and $\sigma=1$ are real critical points. Moreover, we can
check that the Hessian of $\Psi(\hat x,\hat y,\sigma)$ at $\hat x=\hat y$, $\sigma=1$,
is given by
\[\left(
\begin{array}[c]{cc}
  \Psi''_{\sigma\sigma}(\hat x,\hat x,1)& \Psi''_{x_{2n}\sigma}(\hat x,\hat x,1) \\
  \Psi''_{\sigma x_{2n}}(\hat x,\hat x,1) & \Psi''_{x_{2n}x_{2n}}(\hat x,\hat x,1)
\end{array}\right)=\left(
\begin{array}[c]{cc}
 0 & 1 \\
 1 &0
\end{array}\right).\]
Thus, $\Psi(\hat x,\hat y,\sigma)$ is a non-degenerate complex valued phase function
in the sense of Melin-Sj\"{o}strand~\cite{MS74}. Let
\[\Td\Psi(\Td{\hat x},\Td{\hat y},\Td\sigma):=\Td\varphi(\Td x,\Td y,s)\Td\sigma+(\Td x_{2n}-\Td y_{2n})(\Td\sigma-1) \]
be an almost analytic extension of $\Psi(\hat x,\hat y,\hat\sigma)$, where
$\Td\varphi(\Td x,\Td y,s)$ is an almost analytic extension of $\varphi(x,y,s)$. Here we fix $s$. We can
check that given $y_{2n} $ and $(x,y)$, $\Td x_{2n}=y_{2n}-\varphi(x,y,s)$, $\Td\sigma=1$ are the solutions of 
\[\frac{\pr\Td\Psi}{\pr\Td\sigma}=0\,,\: \frac{\pr\Td\Psi}{\pr\Td x_{2n} }=0.\]
From this and by the stationary phase formula of Melin-Sj\"{o}strand~\cite{MS74}, we
get
\begin{equation} \label{e-gue1373VIII}
I_0(x,y)\equiv\int e^{ik\varphi(x,y,s)}a(x,y,s,k)ds\mod O(k^{-\infty}),
\end{equation}
where $a(x,y,s,k)\in S^{n}_{{\rm loc\,}}\big(1;\Omega,T^{*0,q}X\boxtimes T^{*0,q}X\big)\bigcap C^\infty_0\big(\Omega,T^{*0,q}X\boxtimes T^{*0,q}X\big)$, 
\[\mbox{$a(x,y,s,k)\sim\sum^\infty_{j=0}a_j(x,y,s)k^{n-j}$ in $S^{n}_{{\rm loc\,}}\big(1;\Omega,T^{*0,q}X\boxtimes T^{*0,q}X\big)$},\] 
$a_j(x,y,s)\in C^\infty_0\big(\Omega,T^{*0,q}X\boxtimes T^{*0,q}X\big)$, $j=0,1,2,\ldots$, and
\begin{equation} \label{e-gue1373aI}
a_0(x, y,s)=2\pi\int \Td b_0((x,y_{2n}-\varphi(x,y,s)),\hat y,s)\chi(y_{2n})\Td\chi_1(y_{2n}-\varphi(x,y,s))dy_{2n},
\end{equation}
where $\Td\chi_1$ and $\Td b_0$ are almost analytic extensions of $b_0$ and $\chi_1$ respectively, $b_0$ is as in Theorem~\ref{t-dcgewI}. From \eqref{e-gue1373aI} and notice that $\chi_1=1$ on ${\rm Supp\,}\chi$, $\varphi(x,x,s)=0$, 
we deduce that
\begin{equation} \label{e-gue1373aIII}
a_0(x, x,s)=2\pi\int \Td b_0((x,y_{2n},y,s)\chi(y_{2n})dy_{2n}.
\end{equation}
From \eqref{e-gue1373VII} and \eqref{e-gue1373aIII}, we conclude that 
\begin{equation}\label{e-gue1373aII}
a_0(x,x,s):T^{*0,q}_xX\To\mathcal{N}(x,s,n_-),\ \ \forall (x,x,s)\in\Omega.
\end{equation}

From \eqref{e-gue1373f}, \eqref{e-gue1373VIII} and \eqref{e-gue1373aII}, the theorem follows. 
\end{proof}

We need 

\begin{thm}\label{t-gue1374}
With the notations and assumptions before, we have 
\begin{equation}\label{e-gue1374}
\begin{split}
&\mathcal{S}^*_k\mathcal{S}_k\equiv\Td{\mathcal{I}}^*_k\mathcal{S}_k\mod O(k^{-\infty})\ \ \mbox{on $D_0$},\\
&\mathcal{S}^*_k\mathcal{S}_k\equiv\hat{\mathcal{I}}^*_k\mathcal{S}_k\mod O(k^{-\infty})\ \ \mbox{on $D_0$}
\end{split}
\end{equation}
and $\mathcal{S}^*_{k}\mathcal{S}_k$ 
is a smoothing operator and the kernel of $\mathcal{S}^*_{k}\mathcal{S}_k$ satisfies
\begin{equation} \label{e-gue1374I}
(\mathcal{S}^*_{k}\mathcal{S}_k)(x,y)\equiv\int e^{ik\varphi(x,y,s)}g(x,y,s,k)ds\mod
O(k^{-\infty})\ \ \mbox{on $D_0$}
\end{equation}
with
\begin{equation}  \label{e-gue1374II}
\begin{split}
&g(x,y,s,k)\in S^{n}_{{\rm loc\,}}\big(1;\Omega,T^{*0,q}X\boxtimes T^{*0,q}X\big)\bigcap C^\infty_0\big(\Omega,T^{*0,q}X\boxtimes T^{*0,q}X\big),\\
&g(x,y,s,k)\sim\sum^\infty_{j=0}g_j(x,y,s)k^{n-j}\text{ in }S^{n}_{{\rm loc\,}}
\big(1;\Omega,T^{*0,q}X\boxtimes T^{*0,q}X\big), \\
&g_j(x,y,s)\in C^\infty_0\big(\Omega,T^{*0,q}X\boxtimes T^{*0,q}X\big),\ \ j=0,1,2,\ldots,\\
&g_0(x,x,s)=\alpha^*_0(x,d_x\varphi(x,x,s))a_0(x,x,s),\ \ \forall (x,x,s)\in\Omega,
\end{split}
\end{equation}
where $\Td{\mathcal{I}}^*_k$ is as in \eqref{e-gue130814III}, $\alpha^*_0(x,\eta):T^{*0,q}_xX\To T^{*0,q}_xX$ is the adjoint of $\alpha_0(x,\eta)$ with respect to the Hermitian metric $\langle\,\cdot\,|\,\cdot\,\rangle$ on $T^{*0,q}_xX$, $\alpha_0(x,\eta)$ is as in \eqref{e-gue130814III}.
\end{thm}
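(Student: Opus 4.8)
The plan is to read off the two equivalences in \eqref{e-gue1374} from the semi-classical Hodge identities of Theorem~\ref{t-gue13630}, and then to obtain \eqref{e-gue1374I}--\eqref{e-gue1374II} by computing the composition $\Td{\mathcal{I}}^*_k\mathcal{S}_k$ via a stationary phase argument, using the explicit form of $\mathcal{S}_k$ given in Theorem~\ref{t-gue13630I}. For the first equivalence, I would take \eqref{e-gue1374IV}, namely $\mathcal{G}^*_{k}\Box^{(q)}_{s,k}+\mathcal{S}^*_{k}\equiv\Td{\mathcal{I}}^*_k\mod O(k^{-\infty})$ on $D_0$, and compose on the right with $\mathcal{S}_k$, obtaining $\mathcal{G}^*_k\bigr(\Box^{(q)}_{s,k}\mathcal{S}_k\bigr)+\mathcal{S}^*_k\mathcal{S}_k\equiv\Td{\mathcal{I}}^*_k\mathcal{S}_k\mod O(k^{-\infty})$ on $D_0$. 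By \eqref{e-gue13630IIa} the operator $\Box^{(q)}_{s,k}\mathcal{S}_k$ is $k$-negligible on $D$, and since $\mathcal{G}^*_k$ is properly supported with $\mathcal{G}^*_k=O(k^s):H^s_{{\rm comp\,}}\To H^{s+1}_{{\rm comp\,}}$ for every $s$ (see \eqref{e-gue13630Ia}), the composition $\mathcal{G}^*_k\bigr(\Box^{(q)}_{s,k}\mathcal{S}_k\bigr)$ is again $k$-negligible on $D_0$; hence $\mathcal{S}^*_k\mathcal{S}_k\equiv\Td{\mathcal{I}}^*_k\mathcal{S}_k\mod O(k^{-\infty})$ on $D_0$. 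Repeating the argument verbatim with \eqref{e-gue1374IVabal} and $\mathcal{N}^*_k$ in place of \eqref{e-gue1374IV} and $\mathcal{G}^*_k$ gives $\mathcal{S}^*_k\mathcal{S}_k\equiv\hat{\mathcal{I}}^*_k\mathcal{S}_k\mod O(k^{-\infty})$ on $D_0$, which completes \eqref{e-gue1374}.

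Next, $\mathcal{S}^*_k\mathcal{S}_k$ is smoothing because $\mathcal{S}_k$ is smoothing by Theorem~\ref{t-gue13630I}, hence so is its formal adjoint $\mathcal{S}^*_k$, and both are properly supported on $D$. To compute its kernel on $D_0$ it therefore suffices, by the first equivalence just proved, to compute $\Td{\mathcal{I}}^*_k\mathcal{S}_k$. Since $\Td{\mathcal{I}}_k$ is a classical semi-classical pseudodifferential operator of order $0$ with symbol $\alpha(x,\eta,k)\sim\sum_j\alpha_j(x,\eta)k^{-j}$ (see \eqref{e-gue130814III}), its formal adjoint $\Td{\mathcal{I}}^*_k$ is again such an operator, with kernel $\Td{\mathcal{I}}^*_k(x,w)\equiv\frac{k^{2n-1}}{(2\pi)^{2n-1}}\int e^{ik<x-w,\eta>}b(x,w,\eta,k)d\eta\mod O(k^{-\infty})$, where $b\in S^0_{{\rm loc\,},{\rm cl\,}}(1)$ vanishes for $\abs{\eta}>M$ and $b_0(x,x,\eta)=\alpha^*_0(x,\eta)$. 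Inserting the form \eqref{e-gue13630IVa}--\eqref{e-gue13630Va} of $\mathcal{S}_k(w,y)=\int e^{ik\varphi(w,y,s)}a(w,y,s,k)ds$ and composing, I get
\[(\Td{\mathcal{I}}^*_k\mathcal{S}_k)(x,y)\equiv\frac{k^{2n-1}}{(2\pi)^{2n-1}}\iint e^{ik(<x-w,\eta>+\varphi(w,y,s))}b(x,w,\eta,k)a(w,y,s,k)\,dw\,d\eta\,ds\mod O(k^{-\infty})\ \mbox{on $D_0$}.\]

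For fixed $(y,s)$ the phase $<x-w,\eta>+\varphi(w,y,s)$ has the unique critical point $w=x$, $\eta=\varphi'_x(x,y,s)$ in $(w,\eta)$; this critical point is non-degenerate, the Hessian in $(w,\eta)$ being $\bigl(\begin{smallmatrix}\varphi''_{ww}&-I\\-I&0\end{smallmatrix}\bigr)$, which is congruent to $\bigl(\begin{smallmatrix}0&-I\\-I&0\end{smallmatrix}\bigr)$, of modulus-one determinant and vanishing signature. Carrying out the $dw\,d\eta$ integration by the stationary phase method of Melin--Sj\"ostrand~\cite{MS74}, the $k$-powers and $(2\pi)$-factors cancel and I obtain $(\Td{\mathcal{I}}^*_k\mathcal{S}_k)(x,y)\equiv\int e^{ik\varphi(x,y,s)}g(x,y,s,k)ds\mod O(k^{-\infty})$ on $D_0$, with $g(x,y,s,k)\in S^{n}_{{\rm loc\,}}\big(1;\Omega,T^{*0,q}X\boxtimes T^{*0,q}X\big)\bigcap C^\infty_0\big(\Omega,T^{*0,q}X\boxtimes T^{*0,q}X\big)$, $g\sim\sum_j g_j(x,y,s)k^{n-j}$, $g_j\in C^\infty_0\big(\Omega,T^{*0,q}X\boxtimes T^{*0,q}X\big)$, and leading symbol $g_0(x,y,s)=b_0(x,x,\varphi'_x(x,y,s))a_0(x,y,s)=\alpha^*_0(x,\varphi'_x(x,y,s))a_0(x,y,s)$. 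Restricting to $x=y$ and invoking \eqref{e-dugeX}, which gives $\varphi'_x(x,x,s)=d_x\varphi(x,x,s)=-2{\rm Im\,}\ddbar_b\phi(x)+s\omega_0(x)$, produces $g_0(x,x,s)=\alpha^*_0(x,d_x\varphi(x,x,s))a_0(x,x,s)$, which is the formula asserted in \eqref{e-gue1374II}; combined with the first equivalence this yields \eqref{e-gue1374I}.

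The main obstacle is the stationary phase step: one must verify that the class of operators $\int e^{ik\varphi(x,y,s)}(\cdot)\,ds$ with $\varphi$ as in Theorem~\ref{t-dcgewI} is stable under left-composition with a classical semi-classical pseudodifferential operator, i.e.\ that the composed phase can again be taken to be $\varphi(x,y,s)$ — this relies on the fact that the critical covector $\varphi'_x(x,y,s)$ lies near $\Sigma$ (and equals $-2{\rm Im\,}\ddbar_b\phi(x)+s\omega_0(x)\in\Sigma$ on the diagonal) — together with the bookkeeping of the leading symbol through the complex stationary phase expansion. The support/cut-off manipulations in the composition and the localization to $D_0$ require some care but are handled exactly as in the proof of Theorem~\ref{t-gue13630I}.
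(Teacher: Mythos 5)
Your proposal is correct and follows essentially the same route as the paper: the two equivalences in \eqref{e-gue1374} are obtained exactly as in the paper by composing \eqref{e-gue1374IV} and \eqref{e-gue1374IVabal} on the right with $\mathcal{S}_k$ and using $\Box^{(q)}_{s,k}\mathcal{S}_k\equiv0\mod O(k^{-\infty})$, and the kernel expansion \eqref{e-gue1374I}--\eqref{e-gue1374II} is obtained from Theorem~\ref{t-gue13630I} together with the Melin--Sj\"ostrand stationary phase formula, which is precisely what the paper invokes (your proposal merely spells out the $(w,\eta)$-stationary phase computation and the leading-symbol bookkeeping that the paper leaves implicit).
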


\begin{proof}
From \eqref{e-gue1374IV} and \eqref{e-gue1374IVabal}, we have $\Td{\mathcal{I}}^*_k\mathcal{S}_k\equiv(\mathcal{G}^*_k\Box^{(q)}_{s,k}+\mathcal{S}^*_k)\mathcal{S}_k\mod O(k^{-\infty})$ on $D_0$ and $\hat{\mathcal{I}}^*_k\mathcal{S}_k\equiv(\mathcal{N}^*_k\Box^{(q)}_{s,k}+\mathcal{S}^*_k)\mathcal{S}_k\mod O(k^{-\infty})$ on $D_0$. Since $\Box^{(q)}_{s,k}\mathcal{S}_k\equiv0\mod O(k^{-\infty})$ on $D$, \eqref{e-gue1374} follows. 

From Theorem~\ref{t-gue13630I}, \eqref{e-gue1374} and the stationary phase
formula of Melin-Sj\"{o}strand~\cite{MS74}, we get \eqref{e-gue1374I} and \eqref{e-gue1374II}. The theorem follows. 
\end{proof} 

In the rest of this section, we will compute the leading terms $a_0(x,x,s)$ and $g_0(x,x,s)$ in the asymptotic expansions \eqref{e-gue13630Va} and \eqref{e-gue1374II} respectively. 

As before, let $x=(x_1,\ldots,x_{2n-1})$ be local coordinates on $D_0$. We also write $y=(y_1,\ldots,y_{2n-1})$ and $u=(u_1,\ldots,u_{2n-1})$. On $D_0$, we put $dv_X(x)=m(x)dx_1dx_2\ldots dx_{2n-1}=m(x)dx$. From \eqref{e-gue13630IVa}, we can check that 
\[\mathcal{S}^*_k(x,u)=\int_{\sigma\geq0}e^{-ik\ol\varphi(u,x,\sigma)}a^*(u,x,\sigma,k)d\sigma,\]
where $a^*(u,x,\sigma,k):T^{*0,q}_uX\To T^{*0,q}_xX$ is the adjoint of $a(u,x,\sigma,k):T^{*0,q}_xX\To T^{*0,q}_uX$ with 
respect to $\langle\,\cdot\,|\,\cdot\,\rangle$. Thus,
\begin{equation}\label{e-gue13710}
(\mathcal{S}^*_k\circ\mathcal{S}_k)(x, y)\equiv\int_{\sigma\geq0,s\geq0}
e^{ik\bigr(-\ol\varphi(u,x,\sigma)+\varphi(u, y,s)\bigr)}a^*(u,x,\sigma,k)a(u,y,s,k) m(u)dud\sigma ds\mod O(k^{-\infty}).
\end{equation}
Note that $\mathcal{S}^*_k(x,u)$ and $\mathcal{S}_k(u,y)$ are $k$-negligible outside $x=u$ and $u=y$ respectively and
%we may assume that $a^*(u,x,\sigma,k)$ and $a(x,y,s,k)$ are supported in some small neighbourhoods of $x=u$ and $u=y$ respectively. Moreover, since 
$d_u\bigr(-\ol\varphi(u,x,\sigma)+\varphi(u,y,s)\bigr)\neq0$ if $\sigma\neq s$ and $(x,y)$ is in some small neighbourhood of $x=y$. From this observation, we conclude that for every $\varepsilon>0$, we have
\begin{equation}\label{e-gue13711}
\begin{split}
&(\mathcal{S}^*_k\circ\mathcal{S}_k)(x, y)\\
&\equiv\int_{\sigma\geq0,s\geq0}
e^{ik\bigr(-\ol\varphi(u,x,\sigma)+\varphi(u,y,s)\bigr)}a^*(u,x,\sigma,k)a(u,y,s,k)\mu(\frac{\sigma-s}{\varepsilon})\mu(\frac{\abs{y-u}^2}{\varepsilon})\mu(\frac{\abs{x-u}^2}{\varepsilon})m(u)dud\sigma ds\\
&\mod O(k^{-\infty}),
\end{split}
\end{equation}
where $\mu\in C^\infty_0(]-1,1[)$, $\mu=1$ on $[\frac{1}{2},\frac{1}{2}]$. 
We want to apply the stationary phase method of Melin and Sj\"{o}strand~(see page 148 of\cite{MS74}) to carry out the $dud\sigma$ integration in \eqref{e-gue13711}. Put 
\begin{equation}\label{e-gue13710I}
\Xi(u,x,y,\sigma,s):=-\ol\varphi(u,x,\sigma)+\varphi(u,y,s).
\end{equation} 
From \eqref{e-dgugeIX}, \eqref{e-dugeX}, \eqref{e-dugeXI} and \eqref{e-dgugeXI}, it is easy to see that
\[{\rm Im\,}\Xi(u,x,y,\sigma,s)\geq0,\ \ d_u\Xi(u, x, y,\sigma,s)|_{u=x=y,\sigma=s}=0,\ \ d_\sigma\Xi(u, x, y,\sigma,s)|_{u=x=y,\sigma=s}=0.\]
Thus, $x=y=u$, $\sigma=s$, $x$ is real, are real critical points. Now, we will compute the Hessian of $\Xi$ at $x=y=u$, $\sigma=s$. We write $H_{\Xi}(x,s)$ to denote the
Hessian of $\Xi$ at $x=y=u$, $\sigma=s$. $H_{\Xi}(x,s)$ has the following form:
\begin{equation}\label{e-gue13711I}
H_{\Xi}(x,s)=\left[
\begin{array}[c]{cccc}
  \frac{\pr^2\Xi}{\pr\sigma\pr\sigma}|_{u=x=y,\sigma=s}&\frac{\pr^2\Xi}{\pr\sigma\pr u_1}|_{u=x=y,\sigma=s}&\cdots&\frac{\pr^2\Xi}{\pr\sigma\pr u_{2n-1}}|_{u=x=y,\sigma=s} \\
  \frac{\pr^2\Xi}{\pr u_1\pr\sigma}|_{u=x=y,\sigma=s}&\frac{\pr^2\Xi}{\pr u_1\pr u_1}|_{u=x=y,\sigma=s}&\cdots&\frac{\pr^2\Xi}{\pr u_1\pr u_{2n-1}}|_{u=x=y,\sigma=s}\\
 \vdots&\vdots&\vdots&\vdots\\
  \frac{\pr^2\Xi}{\pr u_{2n-1}\pr\sigma}|_{u=x=y,\sigma=s}&\frac{\pr^2\Xi}{\pr u_{2n-1}\pr u_1}|_{u=x=y,\sigma=s}&\cdots&\frac{\pr^2\Xi}{\pr u_{2n-1}\pr u_{2n-1}}|_{u=x=y,\sigma=s}
\end{array}\right].\end{equation}
We fix $(p,p,s_0)\in\Omega$, $p\in D_0$. Take local coordinates $x=(x_1,\ldots,x_{2n-1})$ so that \eqref{e-geusw13623} hold. It is easy to see that 
\begin{equation}\label{e-gue13711II}
m(p)=2^{n-1}.
\end{equation}
From \eqref{e-guew13627}, it is straightforward to check that 
\begin{equation}\label{e-gue13711III}
\begin{split}
&\frac{\pr^2\Xi}{\pr\sigma\pr\sigma}|_{u=x=y=p,\sigma=s_0}=\frac{\pr^2\Xi}{\pr u_1\pr\sigma}|_{u=x=y,\sigma=s}=\ldots=\frac{\pr^2\Xi}{\pr u_{2n-2}\pr\sigma}|_{u=x=y,\sigma=s}=0,\\
&\frac{\pr^2\Xi}{\pr u_{2n-1}\pr\sigma}|_{u=x=y,\sigma=s}=-1,\\
&\frac{\pr^2\Xi}{\pr u_{2j-1}\pr u_{2j-1}}|_{u=x=y=p,\sigma=s_0}=\frac{\pr^2\Xi}{\pr u_{2j}\pr u_{2j}}|_{u=x=y=p,\sigma=s_0}=2i\abs{\lambda_j(s_0)},\ \ j=1,\ldots,n-1,\\
&\frac{\pr^2\Xi}{\pr u_j\pr u_k}|_{u=x=y=p,\sigma=s_0}=0\ \ \mbox{if $j\neq k$ and $j, k=1,\ldots,2n-2$},
\end{split}
\end{equation}
where $\lambda_1(s_0),\ldots,\lambda_{n-1}(s_0)$ are as in Theorem~\ref{t-gue140121II}. From \eqref{e-gue13711III}, we see that 
\begin{equation}\label{e-gue13711III-I}
H_{\Xi}(p,s_0)=\left[
\begin{array}[c]{ccccccc}
  0&0&0&\cdots&0&0&-1\\
  0&2i\abs{\lambda_1(s_0)}&0&\cdots&0&0&0\\
  0&0&2i\abs{\lambda_1(s_0)}&\cdots&0&0&0\\
  \vdots&\vdots&\vdots&\ddots&\vdots&\vdots&\vdots\\
 0&0&0&\cdots&2i\abs{\lambda_{n-1}(s_0)}&0&0\\
 0&0&0&\cdots&0&2i\abs{\lambda_{n-1}(s_0)}&0\\
 -1&*&*&\cdots&*&*&*
\end{array}\right].\end{equation}
Thus, 
\begin{equation}\label{e-gue13712}
\det\bigr(\frac{H_{\Xi}(p,s_0)}{2\pi i}\bigr)=\frac{1}{4}\pi^{-2n}\abs{\lambda_1(s_0)}^2\abs{\lambda_2(s_0)}^2\cdots\abs{\lambda_{n-1}(s_0)}^2.
\end{equation}
Since $(p,p,s_0)\in\Omega$ is arbitrary, we conclude that $\det\bigr(\frac{H_{\Xi}(p,s_0)}{2\pi i}\bigr)\neq0$, for every $(x,x,s)\in\Omega$. Hence, we can apply the stationary phase method of Melin and Sj\"{o}strand~(see page 148 of\cite{MS74}) to carry out the $dud\sigma$ integration in \eqref{e-gue13711} and obtain 
\begin{equation}\label{e-gue13712I}
(\mathcal{S}^*_{k}\mathcal{S}_k)(x,y)\equiv\int e^{ik\varphi_1(x,y,s)}h(x,y,s,k)ds\mod
O(k^{-\infty})
\end{equation}
with
\begin{equation}  \label{e-gue13712II}
\begin{split}
&h(x,y,s,k)\in S^{n}_{{\rm loc\,}}\big(1;\Omega,T^{*0,q}X\boxtimes T^{*0,q}X\big)\bigcap C^\infty_0\big(\Omega,T^{*0,q}X\boxtimes T^{*0,q}X\big),\\
&h(x,y,s,k)\sim\sum^\infty_{j=0}h_j(x,y,s)k^{n-j}\text{ in }S^{n}_{{\rm loc\,}}
\big(1;\Omega,T^{*0,q}X\boxtimes T^{*0,q}X\big), \\
&h_j(x,y,s)\in C^\infty_0\big(\Omega,T^{*0,q}X\boxtimes T^{*0,q}X\big),\ \ j=0,1,2,\ldots,\\
&h_0(x,x,s)=\Bigr(\det\bigr(\frac{H_{\Xi}(x,s)}{2\pi i}\bigr)\Bigr)^{-\frac{1}{2}}a^*_0(x,x,s)a_0(x,x,s)m(x),\ \ \forall (x,x,s)\in\Omega,
\end{split}
\end{equation}
and 
\begin{equation} \label{e-gue13712III}
\begin{split}
&\varphi_1(x, x,s)=0,\ \ d_x\varphi_1(x, x,s)=d_x\varphi(x, x,s),\ \ d_y\varphi_1(x, x,s)=d_y\varphi(x, x,s),\  \forall (x,x,s)\in\Omega,\\
&{\rm Im\,}\varphi_1(x,y,s)\geq0,\ \ \forall (x,y,s)\in\Omega,
\end{split}
\end{equation}
where $a_0(x,y,s)$ is as in \eqref{e-gue13630Va} and $a^*_0(x,x,s):T^{*0,q}_xX\To T^{*0,q}_xX$ is the adjoint of $a_0(x,x,s):T^{*0,q}_xX\To T^{*0,q}_xX$ with respect to $\langle\,\cdot\,|\,\cdot\,\rangle$. We need 

\begin{lem}\label{l-gue13712}
With the notations above, for every $(x,x,s)\in\Omega$, we have 
\begin{equation}\label{e-gue13712IV}
h_0(x,x,s)=g_0(x,x,s),
\end{equation}
where $g_0(x,y,s)$ is as in \eqref{e-gue1374II}. 
\end{lem}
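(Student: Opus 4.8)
The plan is to prove Lemma~\ref{l-gue13712} by comparing two different ways of computing the asymptotic expansion of the kernel of $\mathcal{S}^*_k\mathcal{S}_k$ on $D_0$. On the one hand, Theorem~\ref{t-gue1374} gives us the expansion \eqref{e-gue1374I}, \eqref{e-gue1374II} with phase $\varphi(x,y,s)$ and leading term $g_0(x,x,s)=\alpha^*_0(x,d_x\varphi(x,x,s))a_0(x,x,s)$. On the other hand, the direct stationary phase computation carried out in \eqref{e-gue13710}--\eqref{e-gue13712II} gives the expansion \eqref{e-gue13712I}, \eqref{e-gue13712II} with phase $\varphi_1(x,y,s)$ and leading term $h_0(x,x,s)=\bigr(\det(H_{\Xi}(x,s)/(2\pi i))\bigr)^{-\frac{1}{2}}a^*_0(x,x,s)a_0(x,x,s)m(x)$. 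Since both expansions represent the same operator modulo $O(k^{-\infty})$ on $D_0$, and since $\varphi$ and $\varphi_1$ agree on the diagonal together with their first derivatives (by \eqref{e-gue13712III} and \eqref{e-gue140121I}), the leading terms on the diagonal must coincide: $g_0(x,x,s)=h_0(x,x,s)$ for all $(x,x,s)\in\Omega$. This is the uniqueness statement for the leading coefficient of a complex Fourier integral operator of the type produced by Melin--Sj\"ostrand stationary phase, applied at points of $\Omega$ where the phases and their differentials agree.

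To make this rigorous I would first recall that two such integrals $\int e^{ik\varphi(x,y,s)}g(x,y,s,k)ds$ and $\int e^{ik\varphi_1(x,y,s)}h(x,y,s,k)ds$ which are equal modulo $O(k^{-\infty})$, with $\varphi,\varphi_1$ both satisfying \eqref{e-gue140121I} and with $\varphi(x,x,s)=\varphi_1(x,x,s)=0$, $d_x\varphi=d_x\varphi_1$, $d_y\varphi=d_y\varphi_1$ on the diagonal, necessarily have $g_0=h_0$ on the diagonal; this is standard and follows once more from the stationary phase formula of Melin--Sj\"ostrand (one computes the kernel restricted to $x=y$ by a further stationary phase in $s$, and the principal symbols must match). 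Alternatively, and more in the spirit of the paper, one can argue using Theorem~\ref{t-dgudmgeaI}/Theorem~\ref{t-dcgewI}: the phases $\varphi$ and $\varphi_1$ are equivalent in the sense of Definition~\ref{d-geudhdc13619}, hence the amplitudes can be transported from one phase to the other without changing the leading diagonal term. Either route yields \eqref{e-gue13712IV}.

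I expect the main obstacle to be purely bookkeeping: one must check carefully that the direct computation in \eqref{e-gue13710}--\eqref{e-gue13711} genuinely produces a Fourier integral kernel of the form \eqref{e-gue13712I} with a phase $\varphi_1$ that is equivalent to $\varphi$ — in particular that the critical value of $\Xi(u,x,y,\sigma,s)$ obtained by the stationary phase in $(u,\sigma)$, after the change of variables, satisfies \eqref{e-gue13712III} so that it lies in the same equivalence class as $\varphi$. This is where the facts that $\mathcal{S}^*_k(x,u)$ and $\mathcal{S}_k(u,y)$ are $k$-negligible off the diagonal, that $\varphi(u,y,s)$ satisfies \eqref{e-dgugeXI} and \eqref{e-gue1373VIIa}, and that $-\ol\varphi(u,x,\sigma)$ equivalently satisfies the same estimates (which is exactly the content of the last part of Theorem~\ref{t-dcgewI}, namely that $\varphi$ and $-\ol\varphi(y,x,s)$ are equivalent) all get used. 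Once the phase identification is in place, the equality of the leading amplitudes is automatic from the uniqueness of the principal symbol, and Lemma~\ref{l-gue13712} follows.

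Concretely, the steps are: (i) invoke Theorem~\ref{t-gue1374} to get the expansion with phase $\varphi$ and leading term $g_0(x,x,s)$; (ii) invoke the computation \eqref{e-gue13710}--\eqref{e-gue13712II}, together with \eqref{e-dgugeXI}, \eqref{e-gue1373VIIa} and the equivalence of $\varphi(x,y,s)$ with $-\ol\varphi(y,x,s)$ from Theorem~\ref{t-dcgewI}, to get the expansion with phase $\varphi_1$ satisfying \eqref{e-gue13712III} and leading term $h_0(x,x,s)$; (iii) observe that $\varphi$ and $\varphi_1$ satisfy \eqref{e-gue140121I}, \eqref{e-gue140121II} and agree to first order on the diagonal, hence are equivalent in the sense of Definition~\ref{d-geudhdc13619} (by Theorem~\ref{t-gue140121III}, or directly by Theorem~\ref{t-dgudmgeaI} lifted to $\hat\Omega$); (iv) conclude by the uniqueness of the principal symbol of a complex Fourier integral operator that $g_0(x,x,s)=h_0(x,x,s)$ for all $(x,x,s)\in\Omega$, which is \eqref{e-gue13712IV}. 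Since $\mathcal{S}_k$ is supported in $D_0$ for the relevant computation, no boundary issues arise.
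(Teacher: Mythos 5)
The decisive step of your plan, item (iv), contains a genuine gap. On the diagonal the phase vanishes identically in $s$ (by \eqref{e-gue140121I} and \eqref{e-gue13712III} one has $\varphi(x,x,s)=\varphi_1(x,x,s)=0$ for \emph{all} $s$), so the restriction of $\int e^{ik\varphi(x,y,s)}g(x,y,s,k)\,ds$ to $x=y$ is simply $\int g(x,x,s,k)\,ds$: there is no ``further stationary phase in $s$'' to perform, contrary to what you assert. Consequently, comparing the two expansions of $(\mathcal{S}^*_k\mathcal{S}_k)(x,y)$ on the diagonal only yields the integrated identity $\int h_0(x,x,s)\,ds=\int g_0(x,x,s)\,ds$, not the pointwise-in-$s$ equality \eqref{e-gue13712IV}. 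The appeal to ``uniqueness of the principal symbol of a complex Fourier integral operator'' is circular here: in this degenerate, $s$-parametrized, complex-phase semiclassical setting, that uniqueness (i.e.\ that the operator mod $O(k^{-\infty})$ determines the leading amplitude at each point of the canonical relation, hence at each $s$) is precisely what has to be proved, and proving it requires separating the contributions of different values of $s$, which correspond to different points of $\Sigma$.

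This is exactly what the paper's proof supplies and your proposal omits: one composes $\mathcal{S}^*_k\mathcal{S}_k$ with an auxiliary classical semi-classical pseudodifferential operator $R$ whose symbol $r(x,\eta)\geq0$ is supported in $V$ with ${\rm Supp\,}r\cap\Sigma\subset\Sigma'_{s_0,\epsilon_0}$, computes $(R\mathcal{S}^*_k\mathcal{S}_k)(x,x)$ by stationary phase using each of the two representations, and obtains
$\int r(x_0,s\omega_0(x_0)-2{\rm Im\,}\ddbar_b\phi(x_0))\,{\rm Re\,}h_0(x_0,x_0,s)\,ds=\int r(x_0,s\omega_0(x_0)-2{\rm Im\,}\ddbar_b\phi(x_0))\,{\rm Re\,}g_0(x_0,x_0,s)\,ds$.
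Since $r$ localizes to $\abs{s-s_0}<\epsilon_0$ and is nonnegative and not identically zero there, assuming ${\rm Re\,}h_0<{\rm Re\,}g_0$ near $s_0$ gives a contradiction; the same for the imaginary parts. Note also that this route only uses the weak facts \eqref{e-gue13712III} about $\varphi_1$, whereas your steps (ii)--(iii) require establishing full equivalence of $\varphi_1$ with $\varphi$ in the sense of Definition~\ref{d-geudhdc13619} and a symbol-transport statement, which is additional unproved work. To repair your argument, replace step (iv) by the microlocal cutoff and positivity argument just described.
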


\begin{proof}
Fix $(x_0,x_0,s_0)$. Suppose that ${\rm Re\,}h_0(x_0,x_0,s_0)\neq{\rm Re\,}g_0(x_0,x_0,s_0)$. We may assume that 
${\rm Re\,}h_0(x_0,x_0,s_0)<{\rm Re\,}g_0(x_0,x_0,s_0)$. Take $\epsilon_0>0$ be a small constant so that 
${\rm Re\,}h_0(x_0,x_0,s)<{\rm Re\,}g_0(x_0,x_0,s)$, for every $\abs{s-s_0}<\epsilon_0$. Let $\Sigma'$ and $V$ be as in \eqref{e-dhmpXIa} and \eqref{e-dhmpXII} respectively. For every $\epsilon>0$, put 
\[\Sigma'_{s_0,\epsilon}:=\set{(x,s\omega_0(x)-2{\rm Im\,}\ddbar_b\phi(x))\in\Sigma';\, \abs{s-s_0}<\epsilon}.\]
Let $r(x,\eta)\in C^\infty_0(V)$ with $r(x,\eta)\geq0$, $r(x,\eta)=1$ on $\Sigma'_{s_0,\frac{\epsilon_0}{2}}$ and 
${\rm Supp\,}r(x,\eta)\bigcap\Sigma\subset\Sigma'_{s_0,\epsilon_0}$. We remind that $\Sigma$ is given by \eqref{e-crmiII}. Consider the classical semi-classical pseudodifferential operator:
\[R=\frac{k^{2n-1}}{(2\pi)^{2n-1}}\int e^{ik<x-y,\eta>}r(x,\eta)d\eta.\]
From \eqref{e-gue13712I}, \eqref{e-gue13712II}, \eqref{e-gue13712III}, \eqref{e-dugeX} and the stationary phase method of Melin and Sj\"{o}strand~(see page 148 of\cite{MS74}), we have 
\begin{equation}\label{e-gue13712V}
\begin{split}
&(R\mathcal{S}^*_k\mathcal{S}_k)(x,x)\sim\sum^\infty_{j=0}k^{-j}\int\vartheta_j(x,x,s)ds\text{ in }S^{n}_{{\rm loc\,}}
\big(1;\Omega,T^{*0,q}X\boxtimes T^{*0,q}X\big), \\
&\vartheta_j(x,y,s)\in C^\infty_0\big(\Omega,T^{*0,q}X\boxtimes T^{*0,q}X\big),\ \ j=0,1,2,\ldots,\\
&\vartheta_0(x,x,s)=r(x,s\omega_0(x)-2{\rm Im\,}\ddbar_b\phi(x))h_0(x,x,s),\ \ \forall (x,x,s)\in\Omega.
\end{split}
\end{equation}
Similarly, from Theorem~\ref{t-gue1374}, we conclude that 
\begin{equation}\label{e-gue13712VI}
\begin{split}
&(R\mathcal{S}^*_k\mathcal{S}_k)(x,x)\sim\sum^\infty_{j=0}k^{-j}\int \zeta_j(x,x,s)ds\text{ in }S^{n}_{{\rm loc\,}}
\big(1;\Omega,T^{*0,q}X\boxtimes T^{*0,q}X\big), \\
&\zeta_j(x,y,s)\in C^\infty_0\big(\Omega,T^{*0,q}X\boxtimes T^{*0,q}X\big),\ \ j=0,1,2,\ldots,\\
&\zeta_0(x,x,s)=r(x,s\omega_0(x)-2{\rm Im\,}\ddbar_b\phi(x))g_0(x,x,s),\ \ \forall (x,x,s)\in\Omega.
\end{split}
\end{equation}
From \eqref{e-gue13712V} and \eqref{e-gue13712VI}, we deduce that 
\begin{equation}\label{e-gue13712VII}
\int r(x_0,s\omega_0(x_0)-2{\rm Im\,}\ddbar_b\phi(x_0)){\rm Re\,}h_0(x_0,x_0,s)ds=\int r(x_0,s\omega_0(x_0)-2{\rm Im\,}\ddbar_b\phi(x_0)){\rm Re\,}g_0(x_0,x_0,s)ds. 
\end{equation}
Since ${\rm Supp\,}r(x,\eta)\bigcap\Sigma\subset\Sigma'_{s_0,\epsilon_0}$, we have 
\[\begin{split}
&\int r(x_0,s\omega_0(x_0)-2{\rm Im\,}\ddbar_b\phi(x_0)){\rm Re\,}h_0(x_0,x_0,s)ds\\
&=\int_{\abs{s-s_0}<\epsilon_0}r(x_0,s\omega_0(x_0)-2{\rm Im\,}\ddbar_b\phi(x_0)){\rm Re\,}h_0(x_0,x_0,s)ds\end{split}\] and 
\[\begin{split}
&\int r(x_0,s\omega_0(x_0)-2{\rm Im\,}\ddbar_b\phi(x_0)){\rm Re\,}g_0(x_0,x_0,s)ds\\
&=\int_{\abs{s-s_0}<\epsilon_0}r(x_0,s\omega_0(x_0)-2{\rm Im\,}\ddbar_b\phi(x_0)){\rm Re\,}g_0(x_0,x_0,s)ds.\end{split}\] 
From this observation and \eqref{e-gue13712VII}, we deduce that
\[\int_{\abs{s-s_0}<\epsilon_0}r(x_0,s\omega_0(x_0)-2{\rm Im\,}\ddbar_b\phi(x_0))\bigr({\rm Re\,}h_0(x_0,x_0,s)-{\rm Re\,}g_0(x_0,x_0,s)\bigr)ds=0.\]
Since ${\rm Re\,}h_0(x_0,x_0,s)<{\rm Re\,}g_0(x_0,x_0,s)$, for every $\abs{s-s_0}<\epsilon_0$, and $r(x_0,s\omega_0(x_0)-2{\rm Im\,}\ddbar_b\phi(x_0))\geq0$, $r(x_0,s\omega_0(x_0)-2{\rm Im\,}\ddbar_b\phi(x_0))$ is not identically to zero function, we get a contradiction. 
Thus, ${\rm Re\,}h_0(x_0,x_0,s_0)={\rm Re\,}g_0(x_0,x_0,s_0)$.

We can repeat the procedure above and conclude that ${\rm Im\,}h_0(x_0,x_0,s_0)={\rm Im\,}g_0(x_0,x_0,s_0)$. Since $(x_0,x_0,s_0)$ is arbitrary, the lemma follows.
\end{proof}

As before, we fix $(p,p,s_0)\in\Omega$ and take local coordinates $x=(x_1,\ldots,x_{2n-1})$ so that \eqref{e-geusw13623} hold. From \eqref{e-gue13711II}, \eqref{e-gue13712}, \eqref{e-gue13712II} Lemma~\ref{l-gue13712} and \eqref{e-gue1374II}, we see that 
\begin{equation}\label{e-gue13716}
\begin{split}
g_0(p,p,s_0)&=(2\pi)^n\abs{\lambda_1(s_0)}^{-1}\abs{\lambda_2(s_0)}^{-1}\cdots\abs{\lambda_{n-1}(s_0)}^{-1}a^*_0(p,p,s_0)a_0(p,p,s_0)\\
&=\alpha^*_0(p,s_0\omega_0(p)-2{\rm Im\,}\ddbar_b\phi(p))a_0(p,p,s_0),
\end{split}
\end{equation}
where $\alpha_0(x,\eta)$ is as in \eqref{e-gue130814III} and $\alpha^*_0(x,\eta):T^{*0,q}_xX\To T^{*0,q}_xX$ is the adjoint of $\alpha_0(x,\eta)$ with respect to the Hermitian metric $\langle\,\cdot\,|\,\cdot\,\rangle$ on $T^{*0,q}_xX$. Let 
$\mathcal{\pi}_{(p,s_0,n_-)}:T^{*0,q}_pX\To\mathcal{N}(p,s_0,n_-)$ be the orthogonal projection with respect to $\langle\,\cdot\,|\,\cdot\,\rangle$. In view of \eqref{e-gue13630Va}, we know that $a_0(p,p,s_0):T^{*0,q}_pX\To\mathcal{N}(p,s_0,n_-)$. From \eqref{e-gue1373III}, we see that ${\rm dim\,}\mathcal{N}(p,s_0,n_-)=1$. From this observation and \eqref{e-gue13716}, it is straightforward to see that 
\[\begin{split}
a_0(p,p,s_0)=&(2\pi)^{-n}\abs{\lambda_1(s_0)}\abs{\lambda_2(s_0)}\cdots\abs{\lambda_{n-1}(s_0)}\mathcal{\pi}_{(p,s_0,n_0)}\alpha(p,s_0\omega_0(p)-2{\rm Im\,}\ddbar_b\phi(p)),\\
g_0(p,p,s_0)=&(2\pi)^{-n}\abs{\lambda_1(s_0)}\abs{\lambda_2(s_0)}\cdots\abs{\lambda_{n-1}(s_0)}\times\\
&\quad\alpha^*_0(p,s_0\omega_0(p)-2{\rm Im\,}\ddbar_b\phi(p))\mathcal{\pi}_{(p,s_0,n_-)}\alpha(p,s_0\omega_0(p)-2{\rm Im\,}\ddbar_b\phi(p)).
\end{split}\]

Summing up, we obtain 

\begin{thm}\label{t-gue13716}
With the same same notations and assumptions as in Theorem~\ref{t-gue13630}, let $a_0(x,y,s)\in C^\infty_0(\Omega,T^{*0,q}X\boxtimes T^{*0,q}X)$ and $g_0(x,y,s)\in C^\infty_0(\Omega,T^{*0,q}X\boxtimes T^{*0,q}X)$
be as in \eqref{e-gue13630Va} and \eqref{e-gue1374II} respectively. Fix $(p,p,s_0)\in\Omega$, $p\in D_0$, and let $\mathcal{\pi}_{(p,s_0,n_-)}:T^{*0,q}_pX\To\mathcal{N}(p,s_0,n_-)$ be the orthogonal projection with respect to $\langle\,\cdot\,|\,\cdot\,\rangle$, where $\mathcal{N}(p,s_0,n_-)$ is given by \eqref{e-gue1373III}. Then,
\begin{equation}\label{e-gue13716I}
\begin{split}
&a_0(p,p,s_0)=(2\pi)^{-n}\abs{\det\bigr(M^\phi_p-2s_0\mathcal{L}_p\bigr)}\mathcal{\pi}_{(p,s_0,n_-)}\alpha_0(p,s_0\omega_0(p)-2{\rm Im\,}\ddbar_b\phi(p)),\\
&g_0(p,p,s_0)\\
&=(2\pi)^{-n}\abs{\det\bigr(M^\phi_p-2s_0\mathcal{L}_p\bigr)}\alpha^*_0(p,s_0\omega_0(p)-2{\rm Im\,}\ddbar_b\phi(p))\mathcal{\pi}_{(p,s_0,n_-)}\alpha_0(p,s_0\omega_0(p)-2{\rm Im\,}\ddbar_b\phi(p)),
\end{split}
\end{equation}
where $\alpha_0(x,\eta)$ is as in \eqref{e-gue130814III}, $\alpha^*_0(x,\eta):T^{*0,q}_xX\To T^{*0,q}_xX$ is the adjoint of $\alpha_0(x,\eta)$ with respect to the Hermitian metric $\langle\,\cdot\,|\,\cdot\,\rangle$ on $T^{*0,q}_xX$ and  $\abs{\det\bigr(M^\phi_p-2s_0\mathcal{L}_p\bigr)}=\abs{\lambda_1(s_0)}\abs{\lambda_2(s_0)}\cdots\abs{\lambda_{n-1}(s_0)}$. Here $\lambda_1(s_0),\ldots,\lambda_{n-1}(s_0)$ are eigenvalues of the Hermitian quadratic form $M^\phi_p-2s_0\mathcal{L}_p$ with respect to $\langle\,\cdot\,|\,\cdot\,\rangle$. 
\end{thm}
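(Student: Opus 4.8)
The plan is to derive Theorem~\ref{t-gue13716} by combining the two formulas for the leading term of $(\mathcal{S}^*_k\mathcal{S}_k)(x,x)$ that have already been set up: the one coming from the stationary phase computation of $\mathcal{S}^*_k\circ\mathcal{S}_k$ directly (equations \eqref{e-gue13712I}--\eqref{e-gue13712II} together with the Hessian determinant \eqref{e-gue13712}), and the one coming from Theorem~\ref{t-gue1374} (equation \eqref{e-gue1374II}), which are reconciled in Lemma~\ref{l-gue13712}. So the first step is to quote \eqref{e-gue13716} as the identity produced by that reconciliation, namely $g_0(p,p,s_0)=(2\pi)^n\bigl(\prod_{j=1}^{n-1}\abs{\lambda_j(s_0)}\bigr)^{-1}a^*_0(p,p,s_0)a_0(p,p,s_0)=\alpha^*_0(p,s_0\omega_0(p)-2{\rm Im\,}\ddbar_b\phi(p))a_0(p,p,s_0)$, where the second equality uses the formula for $g_0(x,x,s)$ from Theorem~\ref{t-gue1374} and the relation $d_x\varphi(x,x,s)=s\omega_0(x)-2{\rm Im\,}\ddbar_b\phi(x)$ from \eqref{e-dugeX}.

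Next I would extract $a_0(p,p,s_0)$ itself. The key structural input is \eqref{e-gue13630Va}, which says $a_0(p,p,s_0)$ maps $T^{*0,q}_pX$ into the one-dimensional space $\mathcal{N}(p,s_0,n_-)$ (see \eqref{e-gue1373III}, and recall ${\rm dim}\,\mathcal{N}(p,s_0,n_-)=1$). Writing $\mathcal{\pi}_{(p,s_0,n_-)}$ for the orthogonal projection onto $\mathcal{N}(p,s_0,n_-)$, the operator $a_0(p,p,s_0)$ must have the form $\mathcal{\pi}_{(p,s_0,n_-)}\circ b_0$ for some $b_0:T^{*0,q}_pX\To T^{*0,q}_pX$; plugging this into $g_0=\alpha^*_0\,a_0$ and also into the first equality of \eqref{e-gue13716}, and using that on the one-dimensional range the composition $\mathcal{\pi}_{(p,s_0,n_-)}a^*_0(p,p,s_0)a_0(p,p,s_0)$ is scalar, lets one solve for $b_0$ and identify it with $\alpha_0(p,s_0\omega_0(p)-2{\rm Im\,}\ddbar_b\phi(p))$ up to the normalizing scalar $(2\pi)^{-n}\abs{\det(M^\phi_p-2s_0\mathcal{L}_p)}$. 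Here one uses $\abs{\det(M^\phi_p-2s_0\mathcal{L}_p)}=\prod_{j=1}^{n-1}\abs{\lambda_j(s_0)}$, which is just the definition of the $\lambda_j(s_0)$ as the eigenvalues of that Hermitian form (\eqref{e-guesw13622IV}). This yields the two displayed formulas of Theorem~\ref{t-gue13716}, \eqref{e-gue13716I}.

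Concretely, after fixing $(p,p,s_0)\in\Omega$ and choosing the adapted coordinates so that \eqref{e-geusw13623} holds, one has $m(p)=2^{n-1}$ from \eqref{e-gue13711II}, $\det(H_\Xi(p,s_0)/2\pi i)=\tfrac14\pi^{-2n}\prod_{j=1}^{n-1}\abs{\lambda_j(s_0)}^2$ from \eqref{e-gue13712}, and hence by \eqref{e-gue13712II} and Lemma~\ref{l-gue13712}, $g_0(p,p,s_0)=\bigl(\tfrac14\pi^{-2n}\prod\abs{\lambda_j(s_0)}^2\bigr)^{-1/2}\,a^*_0(p,p,s_0)a_0(p,p,s_0)\,2^{n-1}=(2\pi)^n\bigl(\prod\abs{\lambda_j(s_0)}\bigr)^{-1}a^*_0(p,p,s_0)a_0(p,p,s_0)$, which is the content of \eqref{e-gue13716}. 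Since $a_0(p,p,s_0)=\mathcal{\pi}_{(p,s_0,n_-)}a_0(p,p,s_0)$ and the target is one-dimensional, comparing with $g_0=\alpha^*_0 a_0$ forces $a_0(p,p,s_0)=(2\pi)^{-n}\bigl(\prod\abs{\lambda_j(s_0)}\bigr)\mathcal{\pi}_{(p,s_0,n_-)}\alpha_0(p,s_0\omega_0(p)-2{\rm Im\,}\ddbar_b\phi(p))$, and then $g_0(p,p,s_0)=(2\pi)^{-n}\bigl(\prod\abs{\lambda_j(s_0)}\bigr)\alpha^*_0(p,\cdot)\mathcal{\pi}_{(p,s_0,n_-)}\alpha_0(p,\cdot)$. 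Finally, since $(p,p,s_0)\in\Omega$ was arbitrary, the formulas hold everywhere on $\Omega$, proving the theorem.

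The main obstacle is the bookkeeping in the step that pins down $a_0(p,p,s_0)$ from the scalar identity: one must carefully use both that $a_0(p,p,s_0)$ factors through the one-dimensional space $\mathcal{N}(p,s_0,n_-)$ and that the stationary phase constant in \eqref{e-gue13712II} is a genuine scalar (not an endomorphism), so that $a^*_0a_0$ can be read off; it is here that the precise normalization $(2\pi)^{-n}\abs{\det(M^\phi_p-2s_0\mathcal{L}_p)}$ emerges, and getting the power of $2\pi$ and the determinant factor exactly right requires keeping track of $m(p)=2^{n-1}$, the $\tfrac14\pi^{-2n}$ in \eqref{e-gue13712}, and the square root in the Melin--Sj\"ostrand stationary phase formula simultaneously. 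Everything else is a direct transcription of results already established in this section.
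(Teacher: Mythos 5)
Your proposal follows exactly the paper's own route: stationary phase applied to $\mathcal{S}^*_k\circ\mathcal{S}_k$ with the Hessian computation \eqref{e-gue13712} and $m(p)=2^{n-1}$, reconciliation with Theorem~\ref{t-gue1374} via Lemma~\ref{l-gue13712} to obtain \eqref{e-gue13716}, and then the rank-one structure $a_0(p,p,s_0):T^{*0,q}_pX\To\mathcal{N}(p,s_0,n_-)$ with $\dim\mathcal{N}=1$ to solve for $a_0$ and hence $g_0$. The argument and the normalization bookkeeping are correct and essentially identical to the paper's.
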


Using Theorem~\ref{t-dcmimpI} and repeating the proof of Theorem~\ref{t-gue13630} we conclude that 

\begin{thm}\label{t-gue13717}
Let $s$ be a local trivializing section of $L$ on an open subset $D\subset X$ and $\abs{s}^2_{h^L}=e^{-2\phi}$. We assume that there exist a $\lambda_0\in\Real$ and $x_0\in D$ such that $M^\phi_{x_0}-2\lambda_0\mathcal{L}_{x_0}$ is non-degenerate of constant signature $(n_-,n_+)$. Let $q\neq n_-$. We fix $D_0\Subset D$, $D_0$ open. Let $V$ be as in \eqref{e-dhmpXII}. Let 
\[\mbox{$\hat{\mathcal{I}}_k\equiv\frac{k^{2n-1}}{(2\pi)^{2n-1}}\int e^{ik<x-y,\eta>}\alpha(x,\eta,k)d\eta\mod O(k^{-\infty})$ at $T^*D_0\bigcap\Sigma$}\]
be a classical semi-classical pseudodifferential operator on $D$ of order $0$ from sections of $T^{*0,q}X$ to sections of $T^{*0,q}X$, where $\alpha(x,\eta,k)\in S^0_{{\rm loc\,},{\rm cl\,}}(1;T^*D,T^{*0,q}X\boxtimes T^{*0,q}X)$ with $\alpha(x,\eta,k)=0$ if $\abs{\eta}>M$, for some large $M>0$ and ${\rm Supp\,}\alpha(x,\eta,k)\bigcap T^*D_0\Subset V$. Then, there exists a properly supported continuous operator
\[\mathcal{N}_{k}=O(k^s): H^s_{{\rm comp\,}}(D,T^{*0,q}X)\To H^{s+1}_{{\rm comp\,}}(D,T^{*0,q}X),\ \ \forall s\in\mathbb N_0,\]
such that 
\[\Box^{(q)}_{s,k}\mathcal{N}_{k}\equiv\hat{\mathcal{I}}_k\mod O(k^{-\infty})\]
on $D_0$, where $\Box^{(q)}_{s,k}$ is given by \eqref{e-msmilkVI}.
\end{thm}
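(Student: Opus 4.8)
The plan is to reduce the semi-classical construction for $\Box^{(q)}_{s,k}$ in the case $q\neq n_-$ to the microlocal construction for the local operator $\Box^{(q)}_s$ already carried out in Theorem~\ref{t-dcmimpI}, in exactly the same way that Theorem~\ref{t-gue13630} was obtained from Theorem~\ref{t-dcmimpII}. First I would apply Proposition~\ref{p-gue13628} to replace the given semi-classical operator $\hat{\mathcal{I}}_k$ by a companion classical pseudodifferential operator $\Td I=(2\pi)^{-2n}\int e^{i<\hat x-\hat y,\hat\eta>}c(\hat x,\hat\eta)d\hat\eta$ on $\hat D$, with $c\in S^0_{{\rm cl\,}}$ and ${\rm Supp\,}c(\hat x,\hat\eta)\bigcap T^*\hat D_0\subset\ol W$ for a conic $\ol W\subset U$, such that $\hat{\mathcal{I}}_k\equiv\Td I_k\mod O(k^{-\infty})$ on $D$, where $\Td I_k u=\int e^{-ikx_{2n}}\chi_1(x_{2n})\Td I(\chi_k u)(\hat x)dx_{2n}$. (As in the discussion before Theorem~\ref{t-gue13630}, one also absorbs the $\beta$-part of $\hat{\mathcal{I}}_k$ supported away from $T^*D_0\bigcap\Sigma$ by a trivial parametrix $\mathcal{G}^1_k$.)

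Next, since $q\neq n_-$, Theorem~\ref{t-dcmimpI} provides $A=A(\hat x,\hat y)\in L^{-1}_{\frac12,\frac12}(\hat D,T^{*0,q}\hat D\boxtimes T^{*0,q}\hat D)$, which we may take properly supported, with $\Box^{(q)}_s\circ A\equiv\Td I$ on $\hat D_0$. I would then define the properly supported operator
\[
\begin{split}
\mathcal{N}_k: H^s_{{\rm loc\,}}(D,T^{*0,q}X)&\To H^{s+1}_{{\rm loc\,}}(D,T^{*0,q}X),\\
u&\To\int e^{-ikx_{2n}}\chi_1(x_{2n})A(\chi_k u)(\hat x)dx_{2n}+\mathcal{G}^1_k u,
\end{split}
\]
and verify, exactly as in \eqref{e-gue13630VIbis}, that $\mathcal{N}_k=O(k^s):H^s_{{\rm comp\,}}(D,T^{*0,q}X)\To H^{s+1}_{{\rm comp\,}}(D,T^{*0,q}X)$ for all $s\in\mathbb N_0$, using the continuity $A:H^s_{{\rm comp\,}}(\hat D,T^{*0,q}\hat D)\To H^{s+1}_{{\rm comp\,}}(\hat D,T^{*0,q}\hat D)$ and \eqref{e-mslknaXI}. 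The point is that there is no $S$-term this time: because $q\neq n_-$, the decomposition in Theorem~\ref{t-dcmimpI} has trivial projection part.

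The heart of the argument is then the intertwining computation \eqref{s3-e9bis}: applying $\Box^{(q)}_{s,k}$ to $\int e^{-ikx_{2n}}\chi_1 A(\chi_k u)\,dx_{2n}$ gives $\int e^{-ikx_{2n}}\bigr(\Box^{(q)}_s(\chi_1 A\Td\chi)\bigr)(\chi_k u)(\hat x)dx_{2n}$ for $u\in\Omega^{0,q}_0(D_0)$, where $\Td\chi$ is as in \eqref{e-gue13630I}. Writing $\Box^{(q)}_s(\chi_1 A\Td\chi)=\Box^{(q)}_s(A\Td\chi)-\Box^{(q)}_s((1-\chi_1)A\Td\chi)$ and using that $A(\hat x,\hat y)$ is smoothing off the diagonal (it is a pseudodifferential operator) so that $(1-\chi_1)A\Td\chi$ is smoothing, while $\Box^{(q)}_s A\equiv\Td I$ on $\hat D_0$, one gets that $\Box^{(q)}_s(\chi_1 A\Td\chi)\equiv\Td I\Td\chi$ modulo smoothing. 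Splitting $\Td I\Td\chi$ into its $\chi_1$- and $(1-\chi_1)$-pieces and running the oscillatory-integration-by-parts argument of \eqref{e-gue13630I}--\eqref{e-gue13630IV} shows that the $(1-\chi_1)$-piece and the smoothing remainder each contribute $k$-negligible operators; together with $\hat{\mathcal{I}}_k\equiv\Td I_k\mod O(k^{-\infty})$ and the construction of $\mathcal{G}^1_k$ for the $\beta$-part, this yields $\Box^{(q)}_{s,k}\mathcal{N}_k\equiv\hat{\mathcal{I}}_k\mod O(k^{-\infty})$ on $D_0$. The only genuine subtlety — the main obstacle — is the careful bookkeeping showing that all error terms produced by the cutoffs $\chi_1,\Td\chi$ and by the $\beta$-part are truly $O(k^{-\infty})$ uniformly with all derivatives, which is handled precisely by the repeated integration by parts in $x_{2n}$ as in \eqref{e-gue13630III}; everything else is a verbatim transcription of the proof of Theorem~\ref{t-gue13630} with the $\mathcal{S}_k$-term deleted.
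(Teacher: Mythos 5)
Your proposal is correct and follows essentially the same route as the paper: the text explicitly derives Theorem~\ref{t-gue13717} by "using Theorem~\ref{t-dcmimpI} and repeating the proof of Theorem~\ref{t-gue13630}," which is exactly your reduction via Proposition~\ref{p-gue13628}, the conjugation $u\mapsto\int e^{-ikx_{2n}}\chi_1 A(\chi_k u)\,dx_{2n}$, the intertwining identity \eqref{s3-e9bis}, the integration-by-parts treatment of the cutoff errors, and the parametrix $\mathcal{G}^1_k$ for the part of $\hat{\mathcal{I}}_k$ microlocally away from $\Sigma$. Your observation that the $\mathcal{S}_k$-term disappears because $q\neq n_-$ is precisely the point of invoking Theorem~\ref{t-dcmimpI} instead of Theorem~\ref{t-dcmimpII}.
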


\section{Szeg\"{o} kernel asymptotics for lower energy forms} \label{s-safle} 

Let $\lambda\geq0$. We recall that (see \eqref{e-suX}) $H^q_{b,\leq\lambda}(X,L^k)$ denote the spectral space of $\Box^{(q)}_{b,k}$ corresponding to energy less that $\lambda$ and  $\Pi^{(q)}_{k,\leq\lambda}:L^2_{(0,q)}(X,L^k)\To H^q_{b,\leq\lambda}(X,L^k)$ denote the orthogonal projection with respect to $(\,\cdot\,|\,\cdot\,)_{h^{L^k}}$. Fix $N_0\geq1$. In this section, we will study semi-classical asymptotic expansion of $\Pi^{(q)}_{k,\leq k^{-N_0}}$.

%Under the assumptions and notations as in Theorem~\ref{t-gue13630}, the main goal of this section is to show that if %$N_0$ is large, then $\mathcal{I}_k\hat\Pi^{(q)}_{k,k^{-N_0},s}$ is close to $\mathcal{S}_k$, where %$\hat\Pi^{(q)}_{k,k^{-N_0},s}$ is the localized spectral projection (see \eqref{?}). This will be achieved in %Theorem~\ref{?}.  

\subsection{Asymptotic upper bounds}\label{s-sub}

Fix $N_0\geq 1$. In this section we will give pointwise upper bounds for $u$ and $\pr^\alpha u$, where $u\in H^q_{b,\leq k^{-N_0}}(X,L^k)$. 

Let $s$ be a local trivializing section of $L$ on an open subset $D\subset X$ and $\abs{s}^2_{h^L}=e^{-2\phi}$. Fix $p\in D$, let $U_1(y),\ldots, U_{n-1}(y)$
be an orthonormal frame of $T^{1,0}_yX$ varying smoothly with $y$ in a neighbourhood of $p$,
for which the Levi form is diagonal at $p$. We take local coordinates
$x=(x_1,\ldots,x_{2n-2},x_{2n-1})=(z,x_{2n-1})$, $z_j=x_{2j-1}+ix_{2j}$, $j=1,\ldots,n-1$,
defined on a small neighbourhood of $p$ such that
\begin{equation}\label{e-gue13716II}
\begin{split}
&x(p)=0,\ \ \omega_0(p)=dx_{2n-1},\ \ T(p)=-\frac{\pr}{\pr x_{2n-1}}(p),\\
&\langle\,\frac{\pr}{\pr x_j}(p)\,|\,\frac{\pr}{\pr x_t}(p)\,\rangle=2\delta_{j,t},\ \ j,t=1,\ldots,2n-2,\\
&U_j=\frac{\pr}{\pr z_j}-i\tau_j\ol z_j\frac{\pr}{\pr x_{2n-1}}-c_jx_{2n-1}\frac{\pr}{\pr x_{2n-1}}+O(\abs{x}^2),\ \ j=1,\ldots,n-1,\\
&\phi=\sum^{n-1}_{j=1}(\alpha_j z_j+\ol\alpha_j\ol z_j)+\beta x_{2n-1}+\sum^{n-1}_{j,t=1}(a_{j,t}z_jz_t+\ol a_{j,t}\ol z_j\ol z_t)+\sum^{n-1}_{j,t=1}\mu_{j,\,t}\ol z_jz_t\\
&+O(\abs{z}\abs{x_{2n-1}})+O(\abs{x_{2n-1}}^2)+O(\abs{x}^3),
\end{split}
\end{equation}
where $\tau_j, \beta\in\Real$, $j=1,\ldots,n-1$, $\mu_{j,t}, c_j, \alpha_j, a_{j,t}\in\Complex$, $\mu_{j,t}=\ol\mu_{t,j}$, $j, t=1,\ldots,n-1$ (This is always possible, see~\cite[p.\,157--160]{BG88}). Note that $\tau_1,\ldots,\tau_{n-1}$ are eigenvalues of $\mathcal{L}_p$ with respect to $\langle\,\cdot\,|\,\cdot\,\rangle$.
We assume that this local coordinates are defined on $D$ and
until further notice, we work with this local coordinates and we identify $D$ with some open set in $\Real^{2n-1}$. 
Put
\begin{gather}
R(x)=R(z,x_{2n-1})=\sum^{n-1}_{j=1}\alpha_j z_j+\sum^{n-1}_{j,t=1}a_{j,t}z_jz_t,\label{e-gue13716III}\\
\phi_0=\phi-R(x)-\ol{R(x)} =\beta x_{2n-1}+\sum^{n-1}_{j,t=1}\mu_{j,\,t}\ol z_jz_t+O(\abs{z}\abs{x_{2n-1}})+O(\abs{x_{2n-1}}^2)+O(\abs{x}^3).\label{e-gue13716IV}
\end{gather}
Let $(\,\cdot\,|\,\cdot\,)_{k\phi}$ and $(\,\cdot\,|\,\cdot\,)_{k\phi_0}$ be the inner products on the space
$\Omega^{0,q}_0(D)$ defined as follows:
\[
(f\ |\ g)_{k\phi}=\int_D\!\langle\,f\,|\,g\,\rangle e^{-2k\phi}dv_X\,, \quad (f\ |\ g)_{k\phi_0}=\int_D\!\langle\,f\,|\, g\,\rangle e^{-2k\phi_0}dv_X\,,
\]
where $f, g\in\Omega^{0,q}_0(D)$. We denote by $L^2_{(0,q)}(D, k\phi)$ and $L^2_{(0,q)}(D, k\phi_0)$ the completions of $\Omega^{0,q}_0(D)$ with respect to $(\,\cdot\,|\,\cdot\,)_{k\phi}$ and $(\,\cdot\,|\,\cdot\,)_{k\phi_0}$, respectively. We have the unitary identification
\begin{equation} \label{e-gue13717}
\left\{\begin{aligned}
L^2_{(0,q)}(D, k\phi_0)&\leftrightarrow L^2_{(0,q)}(D, k\phi) \\
u&\rightarrow e^{2kR}u, \\
u=e^{-2kR}v&\leftarrow v.
\end{aligned}
\right.
\end{equation}
Let
$\ddbar^{*,k\phi}_b:\Omega^{0,q+1}(D)\To\Omega^{0,q}(D)$
be the formal adjoint of $\ddbar_b$ with respect to $(\,\cdot\,|\,\cdot\,)_{k\phi}$. Put
\[
\Box^{(q)}_{b,k\phi}=\ddbar_b\ddbar^{*,k\phi}_b+\ddbar^{*,k\phi}_b\ddbar_b:\Omega^{0,q}(D)\To\Omega^{0,q}(D)\,.
\]
Let $u\in\Omega^{0,q}(D, L^k)$. Then there exists $\hat u\in\Omega^{0,q}(D)$ such that $u=s^k\hat u$ and we have $\Box^{(q)}_{b,k}u=s^k\Box^{(q)}_{b,k\phi}\hat u$.
In this section, we identify $u$ with $\hat u$ and $\Box^{(q)}_{b,k}$ with $\Box^{(q)}_{b, k\phi}$. Note that $\abs{u(0)}^2=\abs{\hat u(0)}^2e^{-2k\phi(0)}=\abs{\hat u(0)}^2$. Let $\Td\ddbar_b:\Omega^{0,q}(D)\To\Omega^{0,q+1}(D)$ be the first order partial differential operator given by 
\begin{equation}\label{e-gue13717I}
\begin{split}
&\Td\ddbar_b:\Omega^{0,q}(D)\To\Omega^{0,q+1}(D),\\
&\ddbar_b(e^{2kR}u)=e^{2kR}\Td\ddbar_bu,\ \ \forall u\in\Omega^{0,q}(D).
\end{split}
\end{equation}
Let $\Td\ddbar_b^{*}:\Omega^{0,q+1}(D)\To\Omega^{0,q}(D)$ be the formal adjoint of $\Td\ddbar_b$ with respect to $(\,\cdot\,|\,\cdot\,)_{k\phi_0}$. It is easy to see that 
\begin{equation}\label{e-gue13717II}
\ddbar^{*,k\phi}_b(e^{2kR}u)=e^{2kR}\Td\ddbar_b^{*}u,\ \ \forall u\in\Omega^{0,q+1}(D).
\end{equation}
Put 
\begin{equation}\label{e-gue13717III}
\Td\Box^{(q)}_{b,k\phi}=\Td\ddbar_b\Td\ddbar_b^*+\Td\ddbar^*_b\Td\ddbar_b:\Omega^{0,q}(D)\To\Omega^{0,q}(D).
\end{equation}
From \eqref{e-gue13717I} and \eqref{e-gue13717II}, we have 
\begin{equation}\label{e-gue13717IV}
\Box^{(q)}_{b,k\phi}(e^{2kR}u)=e^{2kR}\Td\Box^{(q)}_{b,k\phi}u,\ \ \forall u\in\Omega^{0,q}(D).
\end{equation}

Until further notice, we fix $q\in\set{0,1,\ldots,n-1}$ and we assume that $Y(q)$ holds at each point of $D$. 
For $r>0$, let
$D_r=\set{x=(x_1,\ldots,x_{2n-1})\in\Real^{2n-1};\, \abs{x_j}<r,\ \ j=1,\ldots,2n-1}$.
Let $F_k$ be the scaling map:
$F_k(z,x_{2n-1})=(\frac{z}{\sqrt{k}}, \frac{x_{2n-1}}{k})$.
From now on, we assume that $k$ is large enough so that $F_k(D_{\log k})\subset D$. Let $(e_j(x))_{j=1,\ldots,n-1}$ denote the basis of $T^{*0,1}_xX$, dual to $(\ol U_j(x))_{j=1,\ldots,n-1}$. Let $J=(j_1,\ldots,j_q)\in\set{1,\ldots,n-1}^q$ be a multiindex. Define 
\[e_J=e_{j_1}\wedge\cdots\wedge e_{j_q},\ \ \mbox{if $1\leq j_1,j_2,\ldots,j_q\leq n-1$}.\]
Then $\set{e_J;\, \mbox{$J\in\set{1,\ldots,n-1}^q$, $J$ strictly increasing}}$ is an orthonormal frame for $T^{*0,q}X$ over $D$.
We define the scaled bundle $F^*_kT^{*0,q}X$ on $D_{\log k}$ to be the bundle whose fiber at $x\in D_{\log k}$ is
\[F^*_kT^{*0,q}_xX:=\Bigr\{\textstyle\sideset{}{'}\sum\limits_{J\in\set{1,\ldots,n-1}^q}a_Je_J(\frac{z}{\sqrt{k}},\frac{x_{2n-1}}{k});\, a_J\in\Complex, \forall J\in\set{1,\ldots,n-1}^q\Bigr\},\]
where $\sum'$ means that the summation is performed only over strictly increasing multiindices.
We take the Hermitian metric $\langle\,\cdot\,|\,\cdot\,\rangle_{F^*_k}$ on $F^*_kT^{*0,q}X$ so that at each point $x\in D_{\log k}$, 
\[\set{e_J\big(\tfrac{z}{\sqrt{k}}\;,\tfrac{x_{2n-1}}{k}\big)\,; \text{$J\in\set{1,\ldots,n-1}^q$, $J$ strictly increasing}}\]
is an orthonormal basis for $F^*_kT^{*0,q}_{x}X$. For $r>0$, let $F^*_k\Omega^{0,q}(D_r)$
denote the space of smooth sections of $F^*_kT^{*0,q}X$ over $D_r$. Let $F^*_k\Omega^{0,q}_0(D_r)$ be the subspace of
$F^*_k\Omega^{0,q}(D_r)$ whose elements have compact support in $D_r$.
Given $f\in\Omega^{0,q}(F_k(D_{\log k}))$ we write
$f=\sideset{}{'}\sum\limits_{\abs{J}=q}f_Je_J$.
We define the scaled form $F_k^*f\in F^*_k\Omega^{0,q}(D_{\log k})$ by:
\[
F_k^*f=\sideset{}{'}\sum_{J\in\set{1,\ldots,n-1}^q}f_J\Big(\frac{z}{\sqrt{k}}\;, \frac{x_{2n-1}}{k}\Big)e_J\Big(\frac{z}{\sqrt{k}}\;,\frac{x_{2n-1}}{k}\Big)\,.
\] 
It is well-known (see section 2.2 in~\cite{HM09}) that there is a scaled Laplacian $\Td\Box^{(q),(k)}_{b,k\phi}:F^*_k\Omega^{0,q}(D_{\log k})\To F^*_k\Omega^{0,q}(D_{\log k})$ such that 
\begin{equation}\label{e-gue13717V}
\Td\Box^{(q),(k)}_{b,k\phi}(F^*_ku)=\frac{1}{k}F^*_k(\Td\Box^{(q)}_{b,k\phi}u),\ \ \forall u\in\Omega^{0,q}(F_k(D_{\log k})),
\end{equation}
and all the derivatives of the coefficients of the operator $\Td\Box^{(q),(k)}_{k\phi,(k)}$ are uniformly bounded in $k$ on $D_{\log k}$. Let $D_r\subset D_{\log k}$ and let $W^s_{kF^*_k\phi_0}(D_r, F^*_kT^{*0,q}X)$,
$s\in\mathbb N_0$, denote the Sobolev space of order $s$ of sections of $F^*_kT^{*0,q}X$
over $D_r$ with respect to the weight $e^{-2kF^*_k\phi_0}$. The Sobolev norm on this space is given by
\begin{equation} \label{e-gue13717VI}
%\begin{split}
\norm{u}^2_{kF^*_k\phi_0,s,D_r}
=\sideset{}{'}\sum_{\alpha\in\mathbb{N}^{2n-1}_0,\abs{\alpha}\leq s,J\in\set{1,\ldots,n-1}^q}
\int_{D_r}\!\abs{\pr^\alpha_xu_J}^2e^{-2kF^*_k\phi_0}(F^*_km)(x)dx,
\end{equation}
where
$u=\sum'_{J\in\set{1,\ldots,n-1}^q}u_Je_J\big(\frac{z}{\sqrt{k}},\frac{x_{2n-1}}{k}\big)\in W^s_{kF^*_k\phi_0}
(D_r,F^*_kT^{*0,q}X)$ and $m(x)dx$ is the volume form.
If $s=0$, we write $\norm{\cdot}_{kF^*_k\phi_0,D_r}$ to denote $\norm{\cdot}_{kF^*_k\phi_0,0,D_r}$. 

The following is well-known(see section 2.2 in~\cite{HM09})

\begin{prop}\label{p-gue13717}
Let $r>0$ with $D_{2r}\subset D_{\log k}$ and let $s\in\N_0$. Then, there is a constant $C_{r,s}>0$
independent of $k$ and the point $p$, such that for every $u\in\Omega^{0,q}(D_{\log k})$,
\begin{equation} \label{e-gue13717VII}
\norm{u}^2_{kF^*_k\phi_0,s,D_{r}}\leqslant  C_{r,s}\Bigr(\norm{u}^2_{kF^*_k\phi_0,D_{2r}}+\sum^s_{j=1}\big\|(\Td\Box^{(q),(k)}_{b,k\phi})^ju\big\|^2_{kF^*_k\phi_0,D_{2r}}\Bigl)\,.
\end{equation}

Moreover, there exist a semi-norm $P$ on $C^\infty(D_{2r})$ and a strictly positive continuous function $F:\Real\To\Real_+$, where $P$ and $F$ only depend on $r$ and $s$ independent of the point $p$ and $k$, such that if we put
\begin{equation*}
\begin{split}
&A\\
&=\{\mbox{all the coefficients of $\Td\Box^{(q),(k)}_{b,k\phi}$, $\Td\ddbar_b$, $\Td\ddbar^*_b$, $[\ol U_j ,U_t]$, $\ol U_j$, $U_t$, $j,t=1,\ldots,n-1$, and $kF^*_k\phi_0$, $F^*_km$}\} 
\end{split}
\end{equation*}
and $B=\set{\mbox{all the eigenvalues of $\mathcal{L}_p$}}$, then $C_{r,s}$ can be bounded by $\sum\limits_{f\in A}F(P(f))+\sum\limits_{\lambda\in B}F(\lambda)$. 
\end{prop}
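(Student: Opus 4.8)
The statement to prove is Proposition~\ref{p-gue13717}, which is essentially a localized (interior) elliptic/subelliptic estimate for the scaled Kohn Laplacian $\Td\Box^{(q),(k)}_{b,k\phi}$, together with a uniformity statement: the constant $C_{r,s}$ can be controlled by seminorms of the coefficients and by the eigenvalues of the Levi form, uniformly in $k$ and in the base point $p$.

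The plan is to reduce everything to a single step of subelliptic estimate plus a standard bootstrap/iteration, exactly as in the referenced work~\cite{HM09}. First I would establish the basic a priori estimate for $\Box^{(q),(k)}_{b,k\phi}$: since $Y(q)$ holds at $p$ and the coefficients of the scaled operator, together with the Levi form eigenvalues, are uniformly close (on $D_{\log k}$) to their frozen-coefficient models, one gets a $\frac12$-subelliptic estimate of the form $\norm{u}^2_{kF^*_k\phi_0,1/2,D_{r'}} \lesssim \norm{(\Td\Box^{(q),(k)}_{b,k\phi})u}^2_{kF^*_k\phi_0,D_{2r}} + \norm{u}^2_{kF^*_k\phi_0,D_{2r}}$ for nested balls, with the implied constant depending only on the quantities in the sets $A$ and $B$; the key point is that the $Y(q)$ condition together with H\"ormander's hypoellipticity theorem and the explicit lower bound for the relevant curvature/commutator terms gives the gain of $\frac12$ derivative, and the constants in the Calderon--Vaillancourt / G\aa rding-type argument are expressible through finitely many derivatives of the coefficients (hence through a seminorm $P$) and through the eigenvalues of $\mathcal{L}_p$ (which enter the lower bound). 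Working with the weighted norms $\norm{\cdot}_{kF^*_k\phi_0,s,D_r}$ is harmless because conjugation by $e^{-kF^*_k\phi_0}$ changes the operator by lower-order terms whose coefficients are again controlled by $A$.

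Next I would iterate: to pass from the $\frac12$-gain to a full derivative, apply the subelliptic estimate twice (using that $\Td\Box^{(q),(k)}_{b,k\phi}$ is second order and that commuting it with the vector fields $\ol U_j, U_t$ produces only terms whose coefficients lie in $A$); to reach order $s$, differentiate the equation $s$ times, commute $\pr^\alpha_x$ through $\Td\Box^{(q),(k)}_{b,k\phi}$ picking up commutators with coefficients in $A$, and bound the commutator terms by lower-order Sobolev norms, then absorb them by a standard interpolation/small-constant argument on a chain of shrinking balls $D_r \subset D_{r_1} \subset \cdots \subset D_{2r}$. Each step contributes a factor controlled by $\sum_{f\in A}F(P(f)) + \sum_{\lambda\in B}F(\lambda)$ for a suitable continuous $F$ (taking, say, $F$ to be a large fixed polynomial composed with the chain of estimates), and since $s$ is fixed the composition of finitely many such bounds is again of this form. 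Finally one checks that all constants are genuinely independent of $p$ and $k$: this is because, after the scaling $F_k$, all coefficients of $\Td\Box^{(q),(k)}_{b,k\phi}$, $\Td\ddbar_b$, $\Td\ddbar^*_b$, the brackets $[\ol U_j,U_t]$, the frame vector fields, and $kF^*_k\phi_0$, $F^*_km$ have all their derivatives uniformly bounded on $D_{\log k}$ (section~2.2 of~\cite{HM09}), so the seminorms $P(f)$ are uniformly bounded, and the eigenvalues of $\mathcal{L}_p$ vary in a fixed compact set.

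The main obstacle is the uniformity in $p$ and $k$ of the subelliptic constant, i.e.\ making sure that the hypoelliptic estimate coming from condition $Y(q)$ is quantitative and that the dependence of its constant on the coefficients is through a \emph{fixed} seminorm $P$ (finitely many derivatives) and through the Levi eigenvalues only. This requires a careful inspection of the proof of the basic $Y(q)$-estimate — tracking that the G\aa rding inequality for the model operator uses only the signs and sizes of the Levi eigenvalues, and that the perturbation argument from the model to $\Td\Box^{(q),(k)}_{b,k\phi}$ costs only a controlled number of derivatives of the coefficients — but this is exactly the content of the cited results in~\cite{HM09}, so the argument here is a matter of assembling and quoting rather than reproving from scratch.
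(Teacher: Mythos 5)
Your proposal is correct and follows essentially the same route as the paper, which in fact offers no proof of Proposition~\ref{p-gue13717} at all but simply cites it as well-known from section~2.2 of~\cite{HM09}, where exactly this argument (the quantitative $Y(q)$-subelliptic estimate for the scaled operator, iterated to order $s$, with constants tracked through finitely many derivatives of the coefficients and the Levi eigenvalues) is carried out. The only loosely phrased step in your sketch is the bootstrap: the form of \eqref{e-gue13717VII} with powers $(\Td\Box^{(q),(k)}_{b,k\phi})^j$ comes from iterating the one-step gain $\norm{u}_{kF^*_k\phi_0,s+1,D_{r}}\lesssim\norm{\Td\Box^{(q),(k)}_{b,k\phi}u}_{kF^*_k\phi_0,s,D_{r'}}+\norm{u}_{kF^*_k\phi_0,D_{r'}}$ on a chain of shrinking polydiscs, rather than from differentiating the equation, but this is a cosmetic difference.
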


We need 

\begin{lem} \label{l-gue13717}
For $k$ large and for every $\alpha\in\mathbb N^{2n-1}_0$, there is a constant $C_{\alpha}>0$ independent of $k$ and the point $p$, such that for all $u\in\Omega^{0,q}(D_{\log k})$ with $\norm{u}_{kF^*_k\phi_0,D_{\log k}}\leq 1$ and $\norm{(\Td\Box^{(q),(k)}_{b,k\phi})^mu}_{kF^*_k\phi_0,D_{\log k}}\leq k^{-m}$, $\forall m\in\N_0$, we have
\begin{equation} \label{e-gue13717VIII}
\abs{(\pr^\alpha_xu)(0)}\leq C_\alpha.
\end{equation}
\end{lem}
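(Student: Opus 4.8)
\textbf{Proof proposal for Lemma~\ref{l-gue13717}.} The plan is to deduce the pointwise bound \eqref{e-gue13717VIII} from the interior elliptic estimate \eqref{e-gue13717VII} of Proposition~\ref{p-gue13717} together with the Sobolev embedding theorem, exploiting the fact that the constant $C_{r,s}$ in \eqref{e-gue13717VII} is controlled uniformly in $k$ and in the base point $p$. First I would fix a multi-index $\alpha$ and choose an integer $s$ with $s>\abs{\alpha}+(2n-1)/2$, so that the Sobolev embedding $W^{s}(D_r,F^*_kT^{*0,q}X)\hookrightarrow C^{\abs{\alpha}}(D_r,F^*_kT^{*0,q}X)$ holds on, say, the ball $D_1$ (fixing $r=1$ and assuming $k$ large enough that $D_2\subset D_{\log k}$). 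The subtlety here is that the weight $e^{-2kF^*_k\phi_0}$ appearing in $\norm{\cdot}_{kF^*_k\phi_0,s,D_r}$ is $k$-dependent; however, since $\phi_0(0)=0$ and $d\phi_0(0)=0$ by \eqref{e-gue13716IV} (indeed $\phi_0$ vanishes to first order at $p=0$ and its Hessian terms are $O(\abs{z}^2)$, $O(\abs{x_{2n-1}})$ etc.), the rescaled weight $kF^*_k\phi_0(x)=k\phi_0(z/\sqrt{k},x_{2n-1}/k)$ and all its derivatives are bounded uniformly in $k$ on any fixed $D_r$, and $kF^*_k\phi_0(0)=0$. Hence $e^{-2kF^*_k\phi_0}$ is bounded above and below by positive constants independent of $k$ on $D_2$, and the same is true of $F^*_km$ since $m$ is smooth and positive; so the weighted Sobolev norms $\norm{\cdot}_{kF^*_k\phi_0,s,D_r}$ are equivalent, with $k$-independent constants, to the standard unweighted Sobolev norms on $D_r$.

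Next I would apply \eqref{e-gue13717VII} with $r=1$, which gives
\[
\norm{u}^2_{kF^*_k\phi_0,s,D_1}\leq C_{1,s}\Bigl(\norm{u}^2_{kF^*_k\phi_0,D_2}+\sum^s_{j=1}\bigl\|(\Td\Box^{(q),(k)}_{b,k\phi})^ju\bigr\|^2_{kF^*_k\phi_0,D_2}\Bigr).
\]
By hypothesis $\norm{u}_{kF^*_k\phi_0,D_{\log k}}\leq 1$ and $\norm{(\Td\Box^{(q),(k)}_{b,k\phi})^mu}_{kF^*_k\phi_0,D_{\log k}}\leq k^{-m}\leq 1$ for all $m\in\mathbb N_0$ and $k\geq 1$; since $D_2\subset D_{\log k}$ for $k$ large, each term on the right is $\leq 1$, so the right-hand side is bounded by $C_{1,s}(1+s)$. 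The key point is that $C_{1,s}$ is independent of $k$ and of $p$: by Proposition~\ref{p-gue13717}, $C_{1,s}$ is bounded by $\sum_{f\in A}F(P(f))+\sum_{\lambda\in B}F(\lambda)$, where $P$ is a fixed semi-norm and $F$ a fixed continuous function, and the quantities in $A$ (the coefficients of the scaled operators $\Td\Box^{(q),(k)}_{b,k\phi}$, $\Td\ddbar_b$, $\Td\ddbar^*_b$, the brackets $[\ol U_j,U_t]$, the frames $\ol U_j$, $U_t$, and $kF^*_k\phi_0$, $F^*_km$) have all their derivatives bounded uniformly in $k$ on $D_{\log k}$ by the discussion after \eqref{e-gue13717V} and by \eqref{e-gue13716II}, while the eigenvalues of $\mathcal{L}_p$ are bounded by the fixed Levi form data. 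Thus $C_{1,s}\leq C$ for some constant $C$ depending only on $s$ (hence only on $\alpha$), independent of $k$ and $p$.

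Finally, combining the $k$-independent equivalence of weighted and unweighted Sobolev norms with the Sobolev embedding on $D_1$, I obtain
\[
\abs{(\pr^\alpha_xu)(0)}\leq \norm{u}_{C^{\abs{\alpha}}(D_1)}\leq C'\norm{u}_{W^s(D_1)}\leq C''\norm{u}_{kF^*_k\phi_0,s,D_1}\leq C_\alpha,
\]
where $C_\alpha$ depends only on $\alpha$ and not on $k$ or $p$, which is \eqref{e-gue13717VIII}. I expect the main obstacle to be the careful bookkeeping needed to verify that the weight $e^{-2kF^*_k\phi_0}$ and the density $F^*_km$ really do stay uniformly comparable to $1$ on a fixed ball $D_r$ as $k\to\infty$ — this rests on $\phi_0$ vanishing to appropriate order at the center $p$ after the rescaling $F_k$, together with the uniform-in-$p$ control of the normal-coordinate expansion \eqref{e-gue13716II}, \eqref{e-gue13716IV}; once that is in hand, the estimate is a routine application of \eqref{e-gue13717VII} and Sobolev embedding, with all constants tracked to be independent of $k$ and $p$ via the last assertion of Proposition~\ref{p-gue13717}.
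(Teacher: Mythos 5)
Your proposal is correct and follows essentially the same route as the paper: both arguments combine the Sobolev embedding (the paper invokes it via Fourier transform, citing Lemma 2.6 of \cite{HM09}, with $s=n+\abs{\alpha}$) with the interior estimate \eqref{e-gue13717VII}, bound the right-hand side by the two hypotheses on $u$, and appeal to the last assertion of Proposition~\ref{p-gue13717} for uniformity of the constant in $k$ and $p$. Your extra care in checking that $kF^*_k\phi_0$ and $F^*_km$ stay uniformly bounded on a fixed ball, so that the weighted and unweighted Sobolev norms are uniformly equivalent, is exactly the point the paper leaves implicit.
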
 

\begin{proof}
Let $u\in\Omega^{0,q}(D_{\log k})$, $\norm{u}_{kF^*_k\phi_0,D_{\log k}}\leq 1$, $\norm{(\Td\Box^{(q),(k)}_{b,k\phi})^mu}_{kF^*_k\phi_0,D_{\log k}}\leq k^{-m}$, $\forall m\in\N_0$.
By using Fourier transform, it is easy to see that (cf.\ Lemma 2.6 in~\cite{HM09})
\begin{equation} \label{e-gue13717aI}
\abs{(\pr^\alpha_xu)(0)}\leq C\norm{u}_{kF^*_k\phi_0,n+\abs{\alpha},D_r},
\end{equation}
for some $r>0$, where $C>0$ only depends on the dimension and the size of $\alpha$. From \eqref{e-gue13717VII}, we see that
\begin{equation} \label{e-gue13717aII}
\begin{split}
\norm{u}^2_{kF^*_k\phi_0,n+\abs{\alpha},D_r}&\leq C_{r,n+\abs{\alpha}}\Bigr(\norm{u}^2_{kF^*_k\phi_0,D_{2r}}+\sum^{n+\abs{\alpha}}_{j=1}\norm{(\Td\Box^{(q),(k)}_{b,k\phi})^ju}_{kF^*_k\phi_0,D_{2r}}\Bigr)\\
&\leq C_{r,n+\abs{\alpha}}\Bigr(1+\sum^\infty_{j=1}k^{-j}\Bigr)\leq\Td C_\alpha,
\end{split}
\end{equation}
where $\Td C_{\alpha}>0$ is independent of $k$ and the point $p$. Combining \eqref{e-gue13717aII} with \eqref{e-gue13717aI}, \eqref{e-gue13717VIII} follows.
\end{proof} 

Now, we can prove

\begin{thm} \label{t-gue13718}
For every $\alpha\in\mathbb N^{2n-1}_0$, $D'\Subset D$, there is a constant $C_{\alpha,D'}>0$ independent of $k$, such that for every $u\in H^q_{b,\leq k^{-N_0}}(X,L^k)$, $u|_D=s^k\Td u$, $\Td u\in\Omega^{0,q}(D)$, we have 
\begin{equation} \label{e-gue13718}
\abs{(\pr^\alpha_x(\Td ue^{-k\phi}))(x)}\leq C_{\alpha,D'}k^{\frac{n}{2}+\abs{\alpha}}\norm{u},\ \ \forall x\in D'.
\end{equation}
\end{thm}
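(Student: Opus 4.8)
The plan is to derive the pointwise estimate \eqref{e-gue13718} from the scaled elliptic estimate in Lemma~\ref{l-gue13717} by a rescaling argument. First I would reduce to the local picture: for $u\in H^q_{b,\leq k^{-N_0}}(X,L^k)$ write $u|_D=s^k\Td u$ and, using the identifications in \eqref{e-gue13717} and \eqref{e-gue13717IV}, pass to the weight $\phi_0$ by setting $v=e^{-2kR}\Td u$, so that $\Td\Box^{(q)}_{b,k\phi}v=e^{-2kR}\Box^{(q)}_{b,k\phi}\Td u$ and $\Box^{(q)}_{b,k}u=s^k\Box^{(q)}_{b,k\phi}\Td u$. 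Since $u$ lies in the spectral space for energy $\leq k^{-N_0}$, by the spectral theorem $\norm{(\Box^{(q)}_{b,k})^m u}_{h^{L^k}}\leq k^{-mN_0}\norm{u}_{h^{L^k}}$ for every $m\in\mathbb N_0$; locally on a slightly larger set $D''$ with $D'\Subset D''\Subset D$ this gives $\norm{(\Box^{(q)}_{b,k\phi})^m\Td u}_{k\phi,D''}\leq k^{-mN_0}\norm{u}_{h^{L^k}}$ (after cutting off, the extra commutator terms only cost finitely many powers of $k$ and are absorbed since $N_0\geq1$). The point of working in the $\phi_0$-picture is that $R$ absorbs the ``non-elliptic'' part of the weight, and all derivatives of the coefficients of the scaled operator $\Td\Box^{(q),(k)}_{b,k\phi}$ are uniformly bounded in $k$ on $D_{\log k}$.

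Next I would localize around an arbitrary point $p\in D'$, introduce the adapted coordinates \eqref{e-gue13716II} centred at $p$ and the scaling map $F_k(z,x_{2n-1})=(z/\sqrt k,x_{2n-1}/k)$, and set $w:=c\,k^{-n/2}\norm{u}_{h^{L^k}}^{-1}F_k^*v$ for a suitable fixed normalizing constant $c>0$ (independent of $k$ and $p$). The factor $k^{-n/2}$ is exactly what makes the scaled $L^2$ norm of $w$ on $D_{\log k}$ bounded: indeed $\norm{F_k^*v}_{kF^*_k\phi_0,D_{\log k}}^2$ equals, up to the Jacobian $k^{-n}$ of $F_k$, the integral of $\abs{v}^2e^{-2k\phi_0}(F^*_km)$ over $F_k(D_{\log k})\subset D''$, which is $\le\norm{v}_{k\phi_0,D''}^2\asymp\norm{\Td u}_{k\phi,D''}^2\le C\norm{u}_{h^{L^k}}^2$; hence $\norm{w}_{kF^*_k\phi_0,D_{\log k}}\leq 1$ for $c$ small. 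Similarly, using the intertwining \eqref{e-gue13717V}, $(\Td\Box^{(q),(k)}_{b,k\phi})^m w = c\,k^{-n/2}\norm{u}_{h^{L^k}}^{-1}k^{-m}F^*_k((\Td\Box^{(q)}_{b,k\phi})^m v)$, and the same Jacobian computation together with the spectral bound gives $\norm{(\Td\Box^{(q),(k)}_{b,k\phi})^m w}_{kF^*_k\phi_0,D_{\log k}}\leq k^{-m}$ (with lots of room to spare, since the spectral gain is $k^{-mN_0}$). So $w$ satisfies the hypotheses of Lemma~\ref{l-gue13717}.

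Then Lemma~\ref{l-gue13717} yields $\abs{(\pr^\alpha_x w)(0)}\leq C_\alpha$ with $C_\alpha$ independent of $k$ and of $p$ (this uniformity is precisely what the semi-norm bound on $C_{r,s}$ in Proposition~\ref{p-gue13717} provides, since all the quantities in the set $A$ there are uniformly controlled in $k$ in these adapted coordinates, and the eigenvalues of $\mathcal L_p$ vary in a compact set as $p$ ranges over $\ol{D'}$). Unwinding the definitions at $x=0$: $\pr^\alpha_x(F^*_k v)(0) = k^{-(|\alpha|+\text{(number of odd-indexed derivatives)}/2)}\cdots$ — more precisely each derivative in a $z$-direction carries a factor $k^{-1/2}$ and each derivative in the $x_{2n-1}$-direction a factor $k^{-1}$, so $\abs{\pr^\alpha_x v(0)}\leq C\,k^{|\alpha|}\abs{\pr^\alpha_x(F^*_k v)(0)}$; combined with $w=ck^{-n/2}\norm{u}_{h^{L^k}}^{-1}F^*_k v$ this gives $\abs{\pr^\alpha_x v(0)}\leq C_\alpha k^{n/2+|\alpha|}\norm{u}_{h^{L^k}}$. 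Finally I would undo the substitution $v=e^{-2kR}\Td u$, i.e. $\Td u e^{-k\phi} = e^{2kR}v\cdot e^{-k\phi} = v\,e^{k(2R-\phi)} = v\,e^{-k\phi_0}\cdot e^{k(2R+\phi_0-\phi)}$; since $2{\rm Re\,}R+\phi_0=\phi$ we have $2R-\phi=2i\,{\rm Im\,}R-\phi_0$, a smooth function whose value and derivatives at $p$ are $O(1)$ uniformly (and $\phi_0(p)=0$), so by the Leibniz rule $\abs{\pr^\alpha_x(\Td u e^{-k\phi})(p)}\leq C_{\alpha,D'}\,k^{n/2+|\alpha|}\norm{u}_{h^{L^k}}$, taking $C_{\alpha,D'}$ to be the max over $\ol{D'}$ of the implied constants. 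Since $p\in D'$ was arbitrary, \eqref{e-gue13718} follows.

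The main obstacle, and the place where care is genuinely required, is the uniformity of all constants both in $k$ \emph{and} in the base point $p$: one must check that the passage $\Td u\rightsquigarrow v\rightsquigarrow F^*_k v$ together with the cutoff-induced commutator terms really does land one in the hypotheses of Lemma~\ref{l-gue13717} with $p$-independent bounds, and that the bookkeeping of powers of $k$ coming from (i) the anisotropic scaling $F_k$, (ii) the Jacobian $k^{-n}$, (iii) the normalization $k^{-n/2}$, and (iv) the derivative count in $\alpha$, all fit together to give exactly the exponent $\tfrac n2+|\alpha|$. The spectral condition $\norm{(\Box^{(q)}_{b,k})^m u}\le k^{-mN_0}\norm u$ is used only qualitatively (any fixed negative power suffices), so the argument is in fact robust; the delicate part is purely the rescaling calculus, which is modeled on the proof of the analogous statement in~\cite{HM09}.
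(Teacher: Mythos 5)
Your proposal is correct and follows essentially the same route as the paper's proof: pass to the weight $\phi_0$ via $v=e^{-2kR}\Td u$, rescale with $F_k$ and the $k^{-n/2}$ normalization so that the spectral bound $\norm{(\Box^{(q)}_{b,k})^mu}_{h^{L^k}}\leq k^{-mN_0}\norm{u}_{h^{L^k}}$ puts the scaled form in the hypotheses of Lemma~\ref{l-gue13717}, then unwind the anisotropic scaling and the factor $e^{k(2R-\phi)}$. The only caveats are cosmetic: no cutoffs or commutators are actually needed (the Kohn Laplacian is local, so the restriction to $D''$ of the global spectral estimate is immediate), and in the last step it is the derivatives of $2i\,{\rm Im\,}R-\phi_0$ that are $O(1)$ — each derivative hitting $e^{k(2R-\phi)}$ still costs a factor $k$, which is exactly absorbed by the $k^{|\alpha|}$ budget.
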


\begin{rem} \label{r-gue13718}
Let $s_1$ be another local frame of $L$ on $D$, $\abs{s_1}^2=e^{-2\phi_1}$. We have $s_1=gs$ for some CR function $g\in C^\infty(D)$, $g\neq0$ on $D$. Let $u\in\Omega^{0,q}(D,L^k)$.
On $D$, we write $u=s^k\Td u=s^k_1\Td v$. Then, we can check that
\begin{equation} \label{e-gue13718I}
\Td ve^{-k\phi_1}=\Td u({\ol g}^{\,1/2}g^{-1/2})^ke^{-k\phi}.
\end{equation}
From \eqref{e-gue13718I}, it is easy to see that if $\Td u$ satisfies \eqref{e-gue13718}, then $\Td v$ also satisfies \eqref{e-gue13718}. Thus, the conclusion of Theorem~\ref{t-gue13718} makes sense.
\end{rem} 

\begin{proof} [Proof of Theorem~\ref{t-gue13718}]
We may assume that $0\in D'$.
Let $u\in H^q_{b,\leq k^{-N_0}}(X,L^k)$, $u|_D=s^k\Td u$, $\Td u\in\Omega^{0,q}(D)$. We may assume that
$F_k(D_{\log k})\subset D$ and consider $\Td u|_{F_k(D_{\log k})}$.
Set 
\[\beta_k:=k^{-\frac{n}{2}}F^*_k(e^{-2kR}\Td u)\in F^*_k\Omega^{0,q}(D_{\log k}).\] 
We recall that $R$ is given by \eqref{e-gue13716III}. (See also \eqref{e-gue13717}.) We can check that
\begin{equation} \label{e-gue13718II}
\norm{\beta_k}_{kF^*_k\phi_0,D_{\log k}}\leq\norm{u}_{h^{L^k}}.
\end{equation}
Since $u\in H^q_{b,\leq k^{-N_0}}(X,L^k)$, we have $\norm{(\Box^{(q)}_{b,k})^mu}_{h^{L^k}}\leq k^{-mN_0}\norm{u}_{h^{L^k}}$ for all $m\in\N$. From this observation, \eqref{e-gue13717V} and \eqref{e-gue13717IV}, we have
\begin{equation} \label{e-gue13718III}
\begin{split}
\norm{(\Td\Box^{(q),(k)}_{b,k\phi})^m\beta_k}_{kF^*_k\phi_0,D_{\log k}}&=\frac{1}{k^{m+\frac{n}{2}}}\norm{F^*_k\bigr((\Td\Box^{(q)}_{b,k\phi})^me^{-2kR}\Td u\bigr)}_{kF^*_k\phi_0,D_{\log k}}\\
&\leq\frac{1}{k^m}\norm{(\Box^{(q)}_{b,k})^mu}_{h^{L^k}}\leq k^{-mN_0-m}\norm{u}_{h^{L^k}}.
\end{split}
\end{equation}
From \eqref{e-gue13718II}, \eqref{e-gue13718III} and Lemma~\ref{l-gue13717}, it is straightforward to see that for every
$\alpha\in\mathbb N^{2n-1}_0$,
\[\abs{k^{-\frac{n}{2}-\frac{\abs{\alpha}}{2}}(\pr^\alpha_x\Td u)(0)}\leq\hat C_\alpha k^{\frac{\abs{\alpha}}{2}}\sum\limits_{\gamma\in\mathbb N^{2n-1}_0,\abs{\gamma}\leq\abs{\alpha}}\abs{(\pr^\gamma_x\beta_k)(0)}\leq\Td C_{\alpha}\norm{u}_{h^{L^k}},\]
where $\hat C_\alpha>0$, $\Td C_{\alpha}>0$ are constants independent of $k$ and the point $p$. Thus, for every $\alpha\in\mathbb N^{2n}_0$, there is a constant $C_{\alpha}>0$ independent of $k$ and the point $p$, such that
\[\abs{(\pr^\alpha_x(\Td ue^{-k\phi}))(0)}\leq C_\alpha k^{\frac{n}{2}+\abs{\alpha}}\norm{u}_{h^{L^k}}.\]

Let $x_0$ be another point of $D'$. We can repeat the procedure above and conclude that for every $\alpha\in\mathbb N^{2n-1}_0$, there is a $C_\alpha(x_0)>0$ independent of $k$ and the point $x_0$, such that
\[\abs{(\pr^\alpha_x(\Td ue^{-k\phi}))(x_0)}\leq C_\alpha(x_0) k^{\frac{n}{2}+\abs{\alpha}}\norm{u}_{h^{L^k}}.\]
The theorem follows.
\end{proof} 

We pause and introduce some notations.  We identify $\Real^{2n-1}$ with the Heisenberg group $H_n:=\Complex^{n-1}\times\Real$. Put
\begin{equation} \label{e-gue131029}
\psi_0(z, \theta)=\beta x_{2n-1}+\sum^{n-1}_{j,t=1}\mu_{j,t}\ol z_jz_t\in C^\infty(H_n,\Real),
\end{equation}
where $\beta$ and $\mu_{j,t}$, $j,t=1,\ldots,n-1$, are as in \eqref{e-gue13716II}. Let $(\,\cdot\,|\,\cdot\, )_{\psi_0}$
be the inner product on $C^\infty_0(H_n)$ defined as follows:
\[
(\,f\,|\,g\,)_{\psi_0}=\int_{H_n}f\ol g e^{-2\psi_0}d\lambda(x)\,, \quad f, g\in C^\infty_0(H_n),\]
where $d\lambda(x)=2^{n-1}dx_1\cdots dx_{2n-1}$. For $f\in C^\infty(H_n)$, we write $\norm{f}^2_{\psi_0}:=(\,f\,|\,f\,)_{\psi_0}$.
Let $u(x)\in C^\infty(H_n)$. Fix $\alpha\in\mathbb N^{2n-1}_0$. Assume that $\norm{\pr^\alpha_xu}_{\psi_0}<\infty$. Put 
\begin{equation}\label{e-gue131029I}
v_\alpha(x)=(\pr^\alpha_xu)(x)e^{-\beta x_{2n-1}}.
\end{equation}
Set 
\begin{equation}\label{e-gue131029II}
\Phi_0=\sum^{n-1}_{j,t=1}\mu_{j,\,t}\ol z_jz_t.
\end{equation}
We have \[\int_{H_n}\!\abs{v_\alpha(x)}^2e^{-2\Phi_0(z)}d\lambda(x)<\infty.\]
Let us denote by $L^2(H_n, \Phi_0)$ the completion of $C^\infty_0(H_n)$
with respect to the norm $\|\cdot\|_{\Phi_0}$, where
\[
\|u\|^2_{\Phi_0}=\int_{H_n}\abs{u}^2e^{-2\Phi_0}d\lambda(x)\,,\quad u\in C^\infty_0(H_n)\,.
\]
%Put $L^2_{(0,q)}(H_n, \Phi_0)=\{u\in\mathscr{D}^{\,\prime}(H_n;\, \Lambda^{0,q}T^*H_n);\, %\int_{H_n}\abs{u}^2e^{-\Phi_0}dv(z)dv(\theta)<\infty\}$.
Choose $\chi(x_{2n-1})\in C^\infty_0(\Real)$ so that $\chi(x_{2n-1})=1$ when $\abs{x_{2n-1}}<1$ and $\chi(x_{2n-1})=0$ when $\abs{x_{2n-1}}>2$ and set $\chi_j(x_{2n-1})=\chi(x_{2n-1}/j)$, $j\in\mathbb{N}$. Let
\begin{equation} \label{e-gue131029III}
\hat v_{\alpha,j}(z,\eta)=\int_{\Real}v(x)\chi_j(x_{2n-1})e^{-ix_{2n-1}\eta}dx_{2n-1}\in C^\infty(H_n),\ \  j=1,2,\ldots.
\end{equation}
From Parseval's formula, we have
\begin{align*}
&\int_{H_n}\abs{\hat v_{\alpha,j}(z,\eta)-\hat v_{\alpha,t}(z,\eta)}^2e^{-2\Phi_0(z)}d\eta dv(z)\\
&=2\pi\int_{H_n}\abs{v(x)}^2\abs{\chi_j(x_{2n-1})-\chi_t(x_{2n-1})}^2e^{-2\Phi_0(z)}d\eta dv(z)\To0,\  j,t\To\infty,
\end{align*}
where $dv(z)=2^{n-1}dx_1\cdots dx_{2n-2}$. 
Thus, there is $\hat v_\alpha(z, \eta)\in L^2(H_n, \Phi_0)$ such that $\hat v_{\alpha,j}(z,\eta)\To\hat v_\alpha(z, \eta)$ in $L^2(H_n, \Phi_0)$. We call $\hat v(z, \eta)$ the Fourier transform of $v_\alpha(x)$ with respect to $x_{2n-1}$. Formally,
\begin{equation} \label{e-gue131029IV}
\begin{split}
\hat v_\alpha(z, \eta)&=\int_{\Real}e^{-ix_{2n-1}\eta}v_\alpha(x)dx_{2n-1}\\
&=\int_{\Real}e^{-ix_{2n-1}\eta}(\pr^\alpha_xu)(x)e^{-\beta x_{2n-1}}dx_{2n-1}.
\end{split}
\end{equation}

Put 
\begin{equation}\label{e-gue131029V}
\Real_{p}:=\set{\eta\in\Real;\, \mbox{$M^\phi_p-2\eta\mathcal{L}_p$ is positive definite}}. 
\end{equation}
We can check that 
\[\Real_{p}:=\set{\eta\in\Real;\, \mbox{the matrix $\left(\mu_{j,t}\right)^{n-1}_{j,t=1}-\eta\left(\tau_j\delta_{j,t}\right)^{n-1}_{j,t=1}$ is positive definite}}.\]
The following theorem is essentially well-known (see section 2 and section 3 in~\cite{HM09}) 

\begin{thm}\label{t-gue131029}
With the notations used before, fix $N_0\geq1$. Let $\set{k_j}^{\infty}_{j=1}$ be a sequence with $0<k_1<k_2<k_3<\cdots$, $\lim_{j\To\infty}k_j=\infty$. Let $f_{k_j}\in H^0_{b,\leq k^{-N_0}}(X,L^k)$ with $\norm{f_{k_j}}_{h^{L^{k_j}}}=1$, $j=1,2,\ldots$. On $D$, put $f_{k_j}=s^k\Td f_{k_j}$, $\Td f_{k_j}\in C^\infty(D)$, $j=1,2,\ldots$. Let $u_{k_j}=k_j^{-\frac{n}{2}}F^*_{k_j}(e^{-2k_jR}\Td f_{k_j})\in C^\infty(F_{k_j}(D_{\log k_j}))$. Identify $u_{k_j}$ with a function on $H_n$ by extending it with zero, for each $j$.
Then there is a subsequence $\set{u_{k_{j_s}}}$ of $\set{u_{k_j}}$ such that $u_{k_{j_s}}$ converges uniformly with all its derivatives on any compact subset of $H_n$ to a smooth function $u\in C^\infty(H_n)$ with $\norm{\pr^\alpha_xu}_{\psi_0}<\infty$, for every $\alpha\in\mathbb N^{2n-1}_0$. Moreover, fix $\alpha\in\mathbb N^{2n-1}_0$ and let $\hat v_\alpha(z,\eta)\in L^2(H_n,\Phi_0)$ be as in \eqref{e-gue131029IV}. 
Then, for almost everywhere $\eta\in\Real$, 
\begin{equation}\label{e-gue131029VI}
\abs{\hat v_\alpha(z,\eta)}\leq f_\alpha(\eta)g_\alpha(z,\eta)1_{\Real_p}(\eta),\ \ \forall z\in C^\infty(\Complex^n),
\end{equation} 
where $f_\alpha(\eta)$ is a positive measurable function with $\int_\Real\abs{f_\alpha(\eta)}d\eta<C<\infty$, $C>0$ is a constant independent of the sequence $\set{f_{k_j}}^\infty_{j=1}$ and the point $p$, $g_\alpha(z,\eta)\in C^\infty(H_n,\ol\Real_+)$, $1_{\Real_p}(\eta)=1$ if $\eta\in\Real_p$, $1_{\Real_p}(\eta)=0$ if $\eta\notin\Real_p$ and $\abs{g_\alpha(0,\eta)}1_{\Real_p}(\eta)\leq C_1$, $C_1>0$ is a constant independent of the sequence $\set{f_{k_j}}^\infty_{j=1}$ and the point $p$. Thus, fix $z\in\Complex^{n-1}$, $\int\abs{\hat v_\alpha(z,\eta)}d\eta<\infty$. Furthermore, we have 
\begin{equation}\label{e-gue131029VII}
(\pr^\alpha_xu)(x)e^{-\beta x_{2n-1}}=\frac{1}{2\pi}\int e^{ix_{2n-1}\eta}\hat v_\alpha(z,\eta)d\eta
=\frac{1}{2\pi}\int e^{ix_{2n-1}\eta}\hat v_\alpha(z,\eta)1_{\Real_p}(\eta)d\eta,\ \ \forall x\in H_n.
\end{equation}
\end{thm}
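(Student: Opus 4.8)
\textbf{Proof proposal for Theorem~\ref{t-gue131029}.}

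The plan is to split the statement into three parts: (i) the compactness/convergence claim that a subsequence of the rescaled forms $u_{k_j}$ converges in $C^\infty_{\mathrm{loc}}(H_n)$ to a limit $u$ with $\norm{\pr^\alpha_x u}_{\psi_0}<\infty$; (ii) the pointwise bound \eqref{e-gue131029VI} on the partial Fourier transform $\hat v_\alpha(z,\eta)$ with the crucial support localization to $\Real_p$; and (iii) the Fourier inversion formula \eqref{e-gue131029VII}.

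For (i), I would run the standard scaling/elliptic-estimate machinery already set up in section~\ref{s-sub}. Each $f_{k_j}\in H^0_{b,\leq k_j^{-N_0}}(X,L^{k_j})$ has unit $L^2$ norm, so $\norm{(\Box^{(0)}_{b,k_j})^m f_{k_j}}_{h^{L^{k_j}}}\leq k_j^{-mN_0}$ for all $m$; transplanting to $H_n$ via $\beta_{k_j}=k_j^{-n/2}F^*_{k_j}(e^{-2k_j R}\Td f_{k_j})$ and using \eqref{e-gue13717V}, \eqref{e-gue13717IV} gives $\norm{\beta_{k_j}}_{k_jF^*_{k_j}\phi_0,D_{\log k_j}}\leq 1$ and $\norm{(\Td\Box^{(0),(k_j)}_{b,k_j\phi})^m\beta_{k_j}}_{k_jF^*_{k_j}\phi_0,D_{\log k_j}}\leq k_j^{-m(N_0+1)}$, exactly as in the proof of Theorem~\ref{t-gue13718}. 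Since the coefficients of $\Td\Box^{(0),(k)}_{b,k\phi}$ and all their derivatives are uniformly bounded in $k$ (and the weights $e^{-2kF^*_k\phi_0}$ converge locally uniformly with all derivatives to $e^{-2\psi_0}$), Proposition~\ref{p-gue13717} gives uniform $W^s_{\mathrm{loc}}$ bounds on every $D_r$, hence $C^s_{\mathrm{loc}}$ bounds by Sobolev embedding. A diagonal argument over an exhaustion of $H_n$ by the $D_r$ extracts a subsequence $u_{k_{j_s}}\to u$ in $C^\infty_{\mathrm{loc}}(H_n)$; passing to the limit in the elliptic estimate and in $\norm{\beta_{k_j}}\leq 1$, together with Fatou, yields $\norm{\pr^\alpha_x u}_{\psi_0}<\infty$ for every $\alpha$ and also $\Td\Box^{(0),\infty}_{b}u=0$ where $\Td\Box^{(0),\infty}_b$ is the model (Heisenberg) Laplacian obtained as the limit of $\Td\Box^{(0),(k)}_{b,k\phi}$.

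For (ii) and (iii) — which I expect to be the substantive part — I would follow the argument of section~3 of~\cite{HM09}. Since $u$ is annihilated by the model operator, which after conjugation by $e^{-\beta x_{2n-1}}$ and partial Fourier transform in $x_{2n-1}$ becomes, for each fixed $\eta$, a harmonic-oscillator-type operator on $\Complex^{n-1}$ attached to the Hermitian form $M^\phi_p-2\eta\mathcal L_p$, the transformed object $\hat v_\alpha(z,\eta)$ is forced to lie in the (one-dimensional, Gaussian) ground state of that oscillator when $M^\phi_p-2\eta\mathcal L_p$ is positive definite, i.e.\ when $\eta\in\Real_p$, and to vanish otherwise — this gives the factor $1_{\Real_p}(\eta)$ and the factorized form $f_\alpha(\eta)g_\alpha(z,\eta)$ with the Gaussian profile $g_\alpha$. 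The uniform integrability $\int_\Real |f_\alpha(\eta)|\,d\eta<C$ and the uniform bound $|g_\alpha(0,\eta)|1_{\Real_p}(\eta)\leq C_1$, with $C,C_1$ independent of the sequence and of $p$, come from the uniform control in part (i): they ultimately reduce to the fact that $\norm{\pr^\alpha_x u}_{\psi_0}\leq C$ with $C$ independent of everything, combined with the explicit (uniformly bounded) normalization of the Gaussian ground state — exactly as the corresponding estimates are obtained in~\cite{HM09}. The inversion formula \eqref{e-gue131029VII} is then just Fourier inversion in $x_{2n-1}$, valid because $\eta\mapsto\hat v_\alpha(z,\eta)$ is integrable (a consequence of \eqref{e-gue131029VI} and $\int|f_\alpha|<\infty$), together with the already-established identity $\hat v_\alpha=\hat v_\alpha 1_{\Real_p}$.

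The main obstacle is step (ii): one must pin down the structure of the kernel of the limiting model operator on $H_n$, identify it with the ground-state eigenspace of the transverse oscillator, and carry the uniform-in-$p$, uniform-in-subsequence constants through the limit. The book-keeping is delicate — the local coordinates \eqref{e-gue13716II} are chosen so that $\psi_0$ and the model operator have the normal form of~\cite[p.\,157--160]{BG88}, and one must check that the constants produced by Proposition~\ref{p-gue13717} (which depend only on the seminorms of the coefficients and the eigenvalues of $\mathcal L_p$, hence are bounded uniformly over a compact neighborhood and over $p$) indeed dominate $f_\alpha$ and $g_\alpha$. Since all of this is carried out in detail in sections~2 and~3 of~\cite{HM09} and the present geometric setup is identical after the reduction to $\Td\Box^{(0)}_{b,k\phi}$, I would state the result as essentially known and indicate the (minor) changes rather than reproduce the computation.
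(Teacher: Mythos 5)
Your proposal follows the same route as the paper, which offers no proof of this theorem at all but simply declares it ``essentially well-known'' with a pointer to sections 2 and 3 of \cite{HM09}; your three-step sketch (scaling plus Proposition~\ref{p-gue13717} for $C^\infty_{\mathrm{loc}}$ compactness, partial Fourier transform and positivity of $M^\phi_p-2\eta\mathcal{L}_p$ forcing support in $\Real_p$, then Fourier inversion) is exactly that argument. One small correction to your step (ii): since $q=0$, for $\eta\in\Real_p$ the function $\hat v_\alpha(\cdot,\eta)$ is not confined to a one-dimensional ground state but is an arbitrary element of the infinite-dimensional weighted space of entire functions square-integrable against the Gaussian weight attached to $M^\phi_p-2\eta\mathcal{L}_p$; the factorized pointwise bound \eqref{e-gue131029VI} comes from the reproducing-kernel (sub-mean-value) inequality in that space together with Cauchy--Schwarz in $\eta$ over the bounded set $\Real_p$, while the vanishing for $\eta\notin\Real_p$ comes from the fact that no nonzero holomorphic function is square-integrable when the quadratic form fails to be positive definite.
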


Theorem~\ref{t-gue131029} will only be used in section~\ref{s-aket}. 

\subsection{Kernel of the spectral function}\label{s-kotsf}

We first introduce some notations. Let $(e_1,\ldots,e_{n-1})$ be a smooth local orthonormal
frame of $T^{*0,1}_xX$ over an open set $D\subset X$. Then
$(e^J:=e_{j_1}\wedge\cdots\wedge e_{j_q})_{1\leqslant
j_1<j_2<\cdots<j_q\leq n-1}$ is an orthonormal frame of
$T^{*0,q}_xX$ over $D$. For $f\in\Omega^{0,q}(D)$, we may write
$f=\sideset{}{'}\sum\limits_{J\in\set{1,\ldots,n-1}^q} f_Je^J$, with $f_J=\langle\,f\,|\,e^J\,\rangle\in
C^\infty(D)$. We call $f_J$ the component of $f$ along $e^J$. Let
$A:\Omega^{0,q}_0(D)\To\Omega^{0,q}(D)$ be a continuous operator with smooth kernel. We write
\begin{equation} \label{e-gue13719}
A(x,y)=\sideset{}{'}\sum_{I,J\in\set{1,\ldots,n-1}^q}e^I(x)A_{I,J}(x,y)e^J(y),
\end{equation}
where $A_{I,J}\in C^\infty(D\times D)$ for all strictly increasing $I,J\in\set{1,\ldots,n-1}^q$,
in the sense that 
\begin{equation} \label{e-gue13719I}
(Au)(x)=\sideset{}{'}\sum_{I,J\in\set{1,\ldots,n-1}^q}e^I(x)\int_DA_{I,J}(x,y)u_J(y)dv_X(y),
\end{equation}
for all $u=\sideset{}{'}\sum\limits_{J\in\set{1,\ldots,n-1}^q}u_Je^J\in\Omega^{0,q}_0(D)$. Let $A^*$ be the formal adjoint of $A$ with respect to $(\,\cdot\,|\,\cdot\,)$. We can check that
\begin{equation} \label{e-gue13719IIb}
\begin{split}
&A^*(x,y)=\sideset{}{'}\sum_{I,J\in\set{1,\ldots,n-1}^q}e^I(x)A^*_{I,J}(x,y)e^J(y),\\ 
&\mbox{$A^*_{I,J}(x,y)=\ol{A_{J,I}(y,x)}$ for all strictly increasing $I,J\in\set{1,\ldots,n-1}^q$}.
\end{split}
\end{equation} 
Let
\begin{equation*}
\begin{split}
&B:\Omega^{0,q}(D)\To\Omega^{0,q}(D),\ \ \Omega^{0,q}_0(D)\To\Omega^{0,q}_0(D),\\
&B(x,y)=\sideset{}{'}\sum_{I,J\in\set{1,\ldots,n-1}^q}e^I(x)B_{I,J}(x,y)e^J(y),
\end{split}
\end{equation*}
where $B_{I,J}(x,y)\in C^\infty(D\times D)$ for all strictly increasing $I,J\in\set{1,\ldots,n-1}^q$, be a properly supported smoothing operator. We write 
\[(B\circ A)(x,y)=\sideset{}{'}\sum\limits_{I,J\in\set{1,\ldots,n-1}^q}e^I(x)(B\circ A)_{I,J}(x,y)e^J(y)\] 
in the sense of \eqref{e-gue13719I}. It is not
difficult to see that
\begin{equation} \label{e-gue13719II}
(B\circ
A)_{I,J}(x,y)=\sideset{}{'}\sum\limits_{K\in\set{1,\ldots,n-1}^q}\int_DB_{I,K}(x,u)A_{K,J}(u,y)dv_X(u),
\end{equation}
for all strictly increasing $I,J\in\set{1,\ldots,n-1}^q$. 

Now, we return to our situation. Fix $q\in\set{1,2,\ldots,n-1}$. As before, let $s$ be a local trivializing section of $L$ on an open subset $D\subset X$ and $\abs{s}^2_{h^L}=e^{-2\phi}$. Until further notice, we assume that $Y(q)$ holds at each point of $D$. Since $Y(q)$ holds at each point of $D$, by Kohn's $L^2$ estimates (see~\cite{CS01}), we have 
\[\Pi^{(q)}_{k,\leq\lambda}(x,y)\in C^\infty\bigr(D\times D,(T^{*0,q}_yX\otimes L^k_y)\boxtimes(T^{*0,q}_xX\otimes L^k_x)\bigr),\]
for every $\lambda\geq0$. Fix $\lambda\geq0$. On $D\times D$, we write
\[\Pi^{(q)}_{k,\leq\lambda}(x,y)=s(x)^k\Pi^{(q)}_{k,\leq\lambda,s}(x, y)s^*(y)^k,\]
where $\Pi^{(q)}_{k,k^{-N_0},s}(x, y)$ is smooth on $D\times D$,
so that for $x\in D$, $u\in\Omega^{0,q}_0(D,L^k)$,
\begin{equation} \label{e-gue13719III}
\begin{split}
(\Pi^{(q)}_{k,\leq\lambda}u)(x)&=s(x)^k\int_X\Pi^{(q)}_{k,\leq\lambda,s}(x, y)\langle\,u(y)\,,\,s^*(y)^k\,\rangle dv_X(y)\\
&=s(x)^k\int_X\Pi^{(q)}_{k,\leq\lambda,s}(x, y)\Td u(y)dv_X(y),\ \ u=s^k\Td u,\ \ \Td u\in\Omega^{0,q}_0(D).
\end{split}
\end{equation}
For $x=y$, we can check that 
$\Pi^{(q)}_{k,\leq\lambda,s}(x,x)\in C^\infty(D,T^{*0,q}X\boxtimes T^{*0,q}X)$
is independent of the choice of local frame $s$. 

As \eqref{e-gue130820}, we define the localized spectral projection (with respect to the trivializing section $s$) by
\begin{align} \label{e-gue13719IV}
\hat\Pi^{(q)}_{k,\leq\lambda,s}:L^2_{(0,q)}(D)\bigcap\mathscr E'(D,T^{*0,q}X)&\To\Omega^{0,q}(D),\nonumber \\
u&\To e^{-k\phi}s^{-k}\Pi^{(q)}_{k,\leq\lambda}(s^ke^{k\phi}u).
\end{align}
That is, if $\Pi^{(q)}_{k,\leq\lambda}(s^ke^{k\phi}u)=s^kv$ on $D$, then
$\hat\Pi^{(q)}_{k,\leq\lambda,s}u=e^{-k\phi}v$. We notice that
\begin{equation} \label{e-gue13719V}
\hat\Pi^{(q)}_{k,\leq\lambda,s}(x,y)=e^{-k\phi(x)}\Pi^{(q)}_{k,\leq\lambda,s}(x,y)e^{k\phi(y)},
\end{equation}
where $\hat\Pi^{(q)}_{k,\leq\lambda,s}(x,y)$ is the kernel of
$\hat\Pi^{(q)}_{k,\leq\lambda,s}$ with respect to $(\,\cdot\,|\,\cdot\,)$ and $\Pi^{(q)}_{k,\leq\lambda,s}(x,y)$ is as in \eqref{e-gue13719III}. When $\lambda=0$, we call $\hat\Pi^{(q)}_{k,\leq0,s}$ the localized Szeg\"{o} projection and we set
\begin{equation}\label{e-gue130820I}
\hat\Pi^{(q)}_{k,s}:=\hat\Pi^{(q)}_{k,\leq0,s}.
\end{equation}

We write
\begin{equation} \label{e-gue13719VI}
\hat\Pi^{(q)}_{k,\leq\lambda,s}(x,y)=\sideset{}{'}\sum\limits_{I,J\in\set{1,\ldots,n-1}^q}e^I(x)\hat\Pi^{(q)}_{k,\leq\lambda,s,I,J}(x,y)e^J(y)
\end{equation}
in the sense of \eqref{e-gue13719I}, where $\hat\Pi^{(q)}_{k,\leq\lambda,s,I,J}\in C^\infty(D\times D)$, for all strictly increasing $I, J\in\set{1,\ldots,n-1}^q$. Since $\Pi^{(q)}_{k,\leq\lambda}$ is self-adjoint, we have
\begin{equation} \label{e-gue13719VII}
\hat\Pi^{(q)}_{k,\leq\lambda,s,I,J}(x,y)=\ol{\hat\Pi^{(q)}_{k,\leq\lambda,s,J,I}(y,x)},
\end{equation}
for all strictly increasing $I, J\in\set{1,\ldots,n-1}^q$. 

Now, we fix $N_0\geq1$.  Let $\{f_j\}_{j=1}^{d_k}\subset L^2_{(0,q)}(X,L^k)$ be
an orthonormal frame for $H^q_{b,\leq k^{-N_0}}(X,L^k)$, $d_k\in\N_0\bigcup\set{\infty}$. Note that $f_j|_D\in\Omega^{0,q}(D)$, $j=1,\ldots,d_k$. For each $j$,
we write 
\[f_j|_D=\sideset{}{'}\sum\limits_{J\in\set{1,\ldots,n-1}^q}f_{j,J}(x)e^J(x),\ \ \mbox{$f_{j,J}\in C^\infty(D,L^k)$ for all strictly increasing $J\in\set{1,\ldots,n-1}^q$}.\]   
For $j=1,\ldots,d_k$ and strictly increasing $J\in\set{1,\ldots,n-1}^q$ we define $\Td f_{j,J}\in C^\infty(D)$ and $\Td f_j\in\Omega^{0,q}(D)$ by
\begin{equation} \label{e-gue13719VIII}
f_{j,J}=s^k\Td f_{j,J},\ \ \Td f_j=\sideset{}{'}\sum\limits_{J\in\set{1,\ldots,n-1}^q}\Td f_{j,J}(x)e^J(x).
\end{equation}
Then, $f_j|_D=s^k\Td f_j$, $j=1,\ldots,d_k$, and it is not difficult to see that
\begin{equation} \label{e-gue13719aI}
\hat\Pi^{(q)}_{k,\leq k^{-N_0},s,I,J}(x,y)=\sum^{d_k}_{j=1}\Td f_{j,I}(x)\ol{\Td f_{j,J}(y)}e^{-k(\phi(x)+\phi(y))},
\end{equation}
for all strictly increasing $I, J\in\set{1,\ldots,n-1}^q$. Since $\hat\Pi^{(q)}_{k,\leq\lambda,s,I,J}$ is smooth for every strictly increasing $I, J\in\set{1,\ldots,n-1}^q$, we conclude that for all $\alpha\in\N_0^{2n-1}$, 
\begin{equation} \label{e-gue13719aII}
\mbox{$\sum^{d_k}_{j=1}\abs{(\pr^\alpha_x(\Td f_je^{-k\phi}))(x)}^2$ converges at each point of $x\in D$}.
\end{equation} 
Similarly, if $F:\mathscr E'(D,T^{*0,q}X)\To \mathscr E'(D,T^{*0,q}X)$ is a properly supported continuous operator such that for all $s\in\mathbb N_0$, 
\[F:H^s_{{\rm comp\,}}(D,T^{*0,q}X)\To H^{s+s_0}_{{\rm comp\,}}(D,T^{*0,q}X)\] 
is continuous, for some $s_0\in\Real$. Then, we can check that 
\begin{equation} \label{e-gue13719aIII}
\mbox{$\sum^{d_k}_{j=1}\abs{(F(\Td f_je^{-k\phi}))(x)}^2$ converges at each point of $x\in D$}.
\end{equation} 

\begin{prop}\label{p-gue13717I}
With the notations used above, for every $\alpha\in\mathbb N^{2n-1}_0$, $D'\Subset D$, there is a constant 
$C_{\alpha,D'}>0$ independent of $k$, such that
\begin{equation} \label{eIX}
\sum^{d_k}_{j=1}\abs{(\pr^\alpha_x(\Td f_je^{-k\phi}))(x)}^2\leq C_{\alpha,D'}k^{n+2\abs{\alpha}},\ \ \forall x\in D'.
\end{equation}
\end{prop}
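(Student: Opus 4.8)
The plan is to derive Proposition~\ref{p-gue13717I} as an immediate consequence of the pointwise bound in Theorem~\ref{t-gue13718}, applied not to a single element of the spectral space but to a cleverly chosen unit-norm element that reproduces the sum $\sum_j\abs{(\pr^\alpha_x(\Td f_je^{-k\phi}))(x)}^2$ at a fixed point. The key observation is the standard ``extremal function'' trick for reproducing kernels: for a fixed point $x_0\in D'$, a fixed strictly increasing multiindex $J$, and a fixed $\alpha\in\mathbb N^{2n-1}_0$, the quantity $\sum^{d_k}_{j=1}\Td f_{j,J}(x_0)\ol{\Td f_{j,J}(x_0)}e^{-2k\phi(x_0)}$ is, up to the weight, the value of the diagonal of the localized spectral kernel, and more generally $\sum_j\pr^\alpha_x(\Td f_{j,J}e^{-k\phi})(x_0)\ol{\pr^\alpha_x(\Td f_{j,J}e^{-k\phi})(x_0)}$ is a nonnegative quantity that, by Cauchy--Schwarz in $\ell^2(\{1,\dots,d_k\})$, equals $\abs{\sum_j c_j\,\pr^\alpha_x(\Td f_{j,J}e^{-k\phi})(x_0)}^2$ maximized over unit vectors $(c_j)$, attained at $c_j=\ol{\pr^\alpha_x(\Td f_{j,J}e^{-k\phi})(x_0)}/\big(\sum_l\abs{\pr^\alpha_x(\Td f_{l,J}e^{-k\phi})(x_0)}^2\big)^{1/2}$.

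First I would fix $x_0\in D'$, $\alpha\in\mathbb N^{2n-1}_0$, and a strictly increasing $J$. By \eqref{e-gue13719aII} the sum $S_{J,\alpha}(x_0):=\sum^{d_k}_{j=1}\abs{(\pr^\alpha_x(\Td f_{j,J}e^{-k\phi}))(x_0)}^2$ is finite; if it is zero there is nothing to prove, so assume $S_{J,\alpha}(x_0)>0$. Set $c_j=\ol{(\pr^\alpha_x(\Td f_{j,J}e^{-k\phi}))(x_0)}/\sqrt{S_{J,\alpha}(x_0)}$, so that $\sum_j\abs{c_j}^2=1$, and let $u:=\sum^{d_k}_{j=1}c_jf_j\in H^q_{b,\leq k^{-N_0}}(X,L^k)$. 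Then $\norm{u}_{h^{L^k}}=1$, and writing $u|_D=s^k\Td u$ with $\Td u=\sum'_{I}\Td u_Ie^I$ we have $\Td u_J=\sum_j c_j\Td f_{j,J}$, hence by construction $(\pr^\alpha_x(\Td u_Je^{-k\phi}))(x_0)=\sqrt{S_{J,\alpha}(x_0)}$. Now apply Theorem~\ref{t-gue13718}: strictly speaking Theorem~\ref{t-gue13718} is stated for the full form $\Td u$, but its proof is componentwise (the scaled estimates \eqref{e-gue13717VIII} are applied to each component $\Td u_J$ separately), so one obtains $\abs{(\pr^\alpha_x(\Td u_Je^{-k\phi}))(x_0)}\leq C_{\alpha,D'}k^{\frac{n}{2}+\abs{\alpha}}\norm{u}=C_{\alpha,D'}k^{\frac{n}{2}+\abs{\alpha}}$ with a constant independent of $k$ and of $x_0\in D'$. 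Squaring gives $S_{J,\alpha}(x_0)\leq C_{\alpha,D'}^2k^{n+2\abs{\alpha}}$.

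Finally I would sum over the finitely many strictly increasing multiindices $J\in\set{1,\ldots,n-1}^q$: since $\abs{(\pr^\alpha_x(\Td f_je^{-k\phi}))(x)}^2=\sum'_{J}\abs{(\pr^\alpha_x(\Td f_{j,J}e^{-k\phi}))(x)}^2$ (the $e^J$ being a pointwise orthonormal frame, up to smooth $k$-independent factors from differentiating the frame, which only change the constant), we get
\[
\sum^{d_k}_{j=1}\abs{(\pr^\alpha_x(\Td f_je^{-k\phi}))(x)}^2=\sideset{}{'}\sum_{J}S_{J,\alpha}(x)\leq \binom{n-1}{q}C_{\alpha,D'}^2k^{n+2\abs{\alpha}},\ \ \forall x\in D',
\]
which is the desired estimate after renaming the constant. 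The only mildly delicate point — and the main obstacle — is the bookkeeping of derivatives of the frame $e^J$: $\pr^\alpha_x(\Td f_je^{-k\phi})$ expands by Leibniz into a sum of terms $\pr^{\beta}_x(\Td f_{j,J})\cdot(\text{$k$-independent smooth})\cdot e^{-k\phi}\cdot(\text{derivatives of }e^{-k\phi})$, and one must absorb the $e^{-k\phi}$ derivatives by noting $\pr^\gamma_x(e^{-k\phi})=P_\gamma(k)e^{-k\phi}$ with $P_\gamma$ a polynomial in $k$ of degree $\abs{\gamma}$; reorganizing, every resulting term is controlled by $\sum_{\abs{\beta}\le\abs{\alpha}}k^{\abs{\alpha}-\abs{\beta}}\abs{\pr^\beta_x(\Td f_{j,J}e^{-k\phi})}$ up to $k$-independent smooth coefficients, so the extremal-function argument above applied to each $\beta$ with $\abs{\beta}\le\abs{\alpha}$ and a Cauchy--Schwarz over the finitely many terms yields the claim. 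All constants produced are uniform over $x\in D'$ because the constant in Theorem~\ref{t-gue13718} is, and the proposition follows.
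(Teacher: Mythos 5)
Your proposal is correct and is essentially the paper's own argument: the paper also forms the unit-norm extremal element $u=\big(\sum_j\abs{(\pr^\alpha_x(\Td f_je^{-k\phi}))(p)}^2\big)^{-1/2}\sum_jf_j\,\ol{(\pr^\alpha_x(\Td f_je^{-k\phi}))(p)}$ in $H^q_{b,\leq k^{-N_0}}(X,L^k)$ and applies Theorem~\ref{t-gue13718} to it. Your componentwise (fixed $J$) version is just a slightly more careful rendering of the same trick, and the frame-derivative bookkeeping at the end is unnecessary once $\pr^\alpha_x$ is understood to act on the components with respect to the orthonormal frame $e^J$, as the paper does.
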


\begin{proof}
Fix $\alpha\in\mathbb N^{2n-1}_0$ and $p\in D'$. We may assume that 
\[\sum^{d_k}_{j=1}\abs{(\pr^\alpha_x(\Td f_je^{-k\phi}))(p)}^2\neq0.\] 
Set
\[u(x)=\frac{1}{\sqrt{\sum^{d_k}_{j=1}\abs{(\pr^\alpha_x(\Td f_je^{-k\phi}))(p)}^2}}
\sum^{d_k}_{j=1}f_j(x)\ol{(\pr^\alpha_x(\Td f_je^{-k\phi}))(p)}.\] 
Since $\sum^{d_k}_{j=1}\abs{(\pr^\alpha_x(\Td f_je^{-k\phi}))(p)}^2$ converges, it is easy to
check that 
\[u\in H^q_{b,\leq k^{-N_0}}(X,L^k),\ \ \norm{u}_{h^{L^k}}=1.\] 
On $D$, we write $u=s^k\Td u$, $\Td u\in\Omega^{0,q}(D)$. We can check that
\begin{equation} \label{e-gue13719aIV}
\Td u=\frac{1}{\sqrt{\sum^{d_k}_{j=1}\abs{(\pr^\alpha_x(\Td f_je^{-k\phi}))(p)}^2}}
\sum^{d_k}_{j=1}\Td f_j(x)\ol{(\pr^\alpha_x(\Td f_je^{-k\phi}))(p)}.
\end{equation}
In view of Theorem~\ref{t-gue13718}, we see that $\abs{(\pr^\alpha_x(\Td ue^{-k\phi}))(p)}\leq C_\alpha k^{\frac{n}{2}+\abs{\alpha}}$, with 
$C_\alpha>0$ independent of $k$ and of the point $p$. From \eqref{e-gue13719aIV}, it is straightforward to see that
\[\abs{(\pr^\alpha_x(\Td ue^{-k\phi}))(p)}=\sqrt{\sum^{d_k}_{j=1}\abs{(\pr^\alpha_x(\Td f_je^{-k\phi}))(p)}^2}\leq C_\alpha k^{\frac{n}{2}+\abs{\alpha}}.\]
The proposition follows.
\end{proof} 

%We need 

%\begin{defn}\label{d-gue130827}
%Fix $q\in\set{0,1,\ldots,n-1}$. Let $D\subset X$ and let $A_k:\mathscr E'(D,T^{*0,q}X)\To\mathscr %D'(D,T^{*0,q}X)$ be a $k$-dependent continuous operator. We say that $A_k$ is asymptotically properly %supported on $D$ if for every $\chi\in C^\infty_0(D)$, there exist $k_0\in\mathbb N$, $\chi_1\in C^%\infty_0(D)$ and $\chi_2\in C^\infty_0(D)$ such that for all $k\geq k_0$, we have  
%$\chi A_k=\chi A_k\chi_1:\mathscr D'(D,T^{*0,q}X)\To\mathscr D'(D,T^{*0,q}X)$ and 
%$A_k\chi=\chi_2A_k\chi:\mathscr D'(D,T^{*0,q}X)\To\mathscr D'(D,T^{*0,q}X)$.
%\end{defn}

Now, we assume that there exist a $\lambda_0\in\Real$ and $x_0\in D$ such that $M^\phi_{x_0}-2\lambda_0\mathcal{L}_{x_0}$ is non-degenerate of constant signature $(n_-,n_+)$. Let $q=n_-$. We fix $D_0\Subset D$, $D_0$ open. Let $V$ be as in \eqref{e-dhmpXII}. Let 
\[\mbox{$\hat{\mathcal{I}}_k\equiv\frac{k^{2n-1}}{(2\pi)^{2n-1}}\int e^{ik<x-y,\eta>}\alpha(x,\eta,k)d\eta\mod O(k^{-\infty})$ at $T^*D_0\bigcap\Sigma$}\]
be a properly supported classical semi-classical pseudodifferential operator on $D$ of order $0$ from sections of $T^{*0,q}X$ to sections of $T^{*0,q}X$, where $\alpha(x,\eta,k)\in S^0_{{\rm loc\,},{\rm cl\,}}(1;T^*D,T^{*0,q}X\boxtimes T^{*0,q}X)$ with $\alpha(x,\eta,k)=0$ if $\abs{\eta}>M$, for some large $M>0$ and ${\rm Supp\,}\alpha(x,\eta,k)\bigcap T^*D_0\Subset V$.  
%We assume that $\hat{\mathcal{I}}_k|_{D_0}$ is properly supported. 
%Now, we assume that $h^L$ is non-degenerate of constant signature $(n_-,n_+)$ at each %point of $D$, where $q=n_-$. Let $V$ be as in \eqref{e-dhmpXII}. Let
%\[\Td{\mathcal{I}}_k=\frac{k^{2n-1}}{(2\pi)^{2n-1}}\int e^{ik<x-y,\eta>}\alpha(x,%\eta,k)d\eta\]
%be a properly supported classical semi-classical pseudodifferential operators on $D$ %of order $0$ from sections of $T^{*0,q}X$ to sections of $T^{*0,q}X$ with 
%\[\alpha(x,\eta,k)\in S^0_{{\rm loc\,},{\rm cl\,}}(1;T^*D,T^{*0,q}X\boxtimes T^{*0,q}%X)\bigcap C^\infty_0(V,T^{*0,q}X\boxtimes T^{*0,q}X)\] 
Let $\mathcal{S}_k$, $\mathcal{N}_k$ be as in Theorem~\ref{t-gue13630}. Let $\Box^{(q)}_{s,k}$ be as in \eqref{e-msmilkVI}. Then,
\[\begin{split}
&\Box^{(q)}_{s,k}\mathcal{N}_k+\mathcal{S}_k=\hat{\mathcal{I}}_k+H_k\ \ \mbox{on $\mathscr D'(D_0,T^{*0,q}X)$},\\
&\mathcal{N}^*_k\Box^{(q)}_{s,k}+\mathcal{S}^*_k=\hat{\mathcal{I}}^*_k+H^*_k\ \ \mbox{on
$\mathscr D'(D_0,T^{*0,q}X)$},
\end{split}\]
where $H_k\equiv0\mod O(k^{-\infty})$ on $D_0$, $\mathcal{N}^*_k$, $\mathcal{S}^*_k$, $\hat{\mathcal{I}}^*_k$ and $H^*_k$ are formal adjoints of $\mathcal{N}_k$, $\mathcal{S}_k$, $\hat{\mathcal{I}}_k$ and $H_k$ with respect to $(\,\cdot\,|\,\cdot\,)$ respectively. 
%Until further notice, we assume that $k$ is large enough so that $\hat{\mathcal{I}}^*_k\hat\Pi^{(q)}_{k,%\leq k^{-N_0},s}$ and $\hat{\mathcal{I}}^*_k\hat\Pi^{(q)}_{k,\leq k^{-N_0},s}\hat{\mathcal{I}}_k$ are %well-defined as continuous operators $\mathscr E'(D_0,T^{*0,q}X)\To\mathscr D'(D_0,T^{*0,q}X)$.
Now,
\begin{equation} \label{e-gue13719aV}
\hat{\mathcal{I}}^*_k\hat\Pi^{(q)}_{k,\leq k^{-N_0},s}=(\mathcal{N}^*_k\Box^{(q)}_{s,k}-H^*_k+\mathcal{S}^*_k)\hat\Pi^{(q)}_{k,\leq k^{-N_0},s}=R+\mathcal{S}^*_k\hat\Pi^{(q)}_{k,\leq k^{-N_0},s}\ \ \mbox{on $\mathscr E'(D_0,T^{*0,q}X)$},
\end{equation}
where we denote
\[R=(\mathcal{N}^*_k\Box^{(q)}_{s,k}-H^*_k)\hat\Pi^{(q)}_{k,\leq k^{-N_0},s}.\]
We write 
\[R(x,y)=\sideset{}{'}\sum\limits_{I,J\in\set{1,\ldots,n-1}^q}e^I(x)R_{I,J}(x,y)e^J(y)\]
in the sense of \eqref{e-gue13719I}, where $R_{I,J}\in C^\infty(D_0\times D_0)$ for all strictly increasing $I,J\in\set{1,\ldots,n-1}^q$. From \eqref{e-gue13719aI}, it is straightforward to see that
\begin{equation} \label{e-gue13721}
\begin{split}
&R_{I,J}(x,y)=\sum^{d_k}_{j=1}\Td g_{j,I}(x)\ol{\Td f_{j,J}(y)}e^{-k\phi(y)},\\
&\Td g_j=(\mathcal{N}^*_k\Box^{(q)}_{s,k}-H^*_k)(\Td f_je^{-k\phi})(x),\ \ \Td g_j(x)=\sideset{}{'}\sum_{I\in\set{1,\ldots,n-1}^q}\Td g_{j,I}(x)e^I(x),\ \ j=1,\ldots,d_k,
\end{split}
\end{equation}
for all strictly increasing $I, J\in\set{1,\ldots,n-1}^q$. From \eqref{e-gue13719aIII}, we see that for all $\alpha\in\mathbb N^{2n-1}_0$, 
\begin{equation*}
\mbox{$\sum^{d_k}_{j=1}\abs{(\pr^\alpha_x\Td g_j)(x)}^2$ converges at each point of $x\in D_0$}.
\end{equation*} 
To estimate $R_{I,J}(x,y)$, we first need

\begin{lem} \label{l-gue13721}
With the notations and assumptions used above, for every $D'\Subset D_0$, $\alpha\in\mathbb N^{2n-1}_0$, there is a constant $C_{\alpha,D'}>0$ independent of $k$, such that for all $u\in H^q_{b,\leq k^{-N_0}}(X,L^k)$, $\norm{u}_{h^{L^k}}=1$, $u|_{D_0}=s^k\Td u$, $\Td u\in\Omega^{0,q}(D_0)$, if we set $\Td v(x)=(\mathcal{N}^*_k\Box^{(q)}_{s,k}-H^*_k)(\Td ue^{-k\phi})$, then
\[\abs{(\pr^\alpha_x\Td v)(x)}\leq C_{\alpha,D'}k^{\frac{5n}{2}+2\abs{\alpha}-N_0-2},\ \ \forall x\in D'.\]
\end{lem}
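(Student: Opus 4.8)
The plan is to reduce the estimate on $\Td v(x)=(\mathcal{N}^*_k\Box^{(q)}_{s,k}-H^*_k)(\Td u e^{-k\phi})$ to the pointwise bounds for lower energy forms already established in Theorem~\ref{t-gue13718} and to the mapping properties of $\mathcal{N}^*_k$, $\Box^{(q)}_{s,k}$, $H^*_k$. First I would use that $u\in H^q_{b,\leq k^{-N_0}}(X,L^k)$ implies, via the spectral theorem, $\norm{(\Box^{(q)}_{b,k})^mu}_{h^{L^k}}\leq k^{-mN_0}$ for all $m\in\mathbb N_0$; translating to the trivialization and recalling that $\Box^{(q)}_{s,k}$ is the localized operator of $\Box^{(q)}_{b,k}$ (see \eqref{s2-emsmilkI}, \eqref{e-msmilkVI}), this gives $\Box^{(q)}_{s,k}(\Td u e^{-k\phi})=(\Box^{(q)}_{b,k}u)e^{-k\phi}s^{-k}$ up to the usual identifications, and $\Box^{(q)}_{b,k}u\in H^q_{b,\leq k^{-N_0}}(X,L^k)$ as well with $\norm{\Box^{(q)}_{b,k}u}_{h^{L^k}}\leq k^{-N_0}$. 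So $\Box^{(q)}_{s,k}(\Td u e^{-k\phi})$ is again (the localization of) a lower energy form of norm $\leq k^{-N_0}$, and by Theorem~\ref{t-gue13718} applied to $\Box^{(q)}_{b,k}u$ we get, on any $D''$ with $D'\Subset D''\Subset D_0$ and any $\beta$, $\abs{\pr^\beta_x\bigr(\Box^{(q)}_{s,k}(\Td u e^{-k\phi})\bigr)(x)}\leq C_{\beta,D''}k^{\frac n2+\abs{\beta}}k^{-N_0}$.

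Next I would control $\mathcal{N}^*_k$ and $H^*_k$ by their mapping properties. From \eqref{e-gue13630Ia}, $\mathcal{N}^*_k=O(k^s):H^s_{{\rm comp}}(D,T^{*0,q}X)\To H^{s+1}_{{\rm comp}}(D,T^{*0,q}X)$ for all $s\in\mathbb N_0$; choosing $s$ large enough that $H^{s+1}\hookrightarrow C^{\abs{\alpha}}$ (Sobolev embedding, $s+1>\abs{\alpha}+n-\tfrac12$, say $s=\abs{\alpha}+n$), and feeding in $w:=\chi\,\Box^{(q)}_{s,k}(\Td u e^{-k\phi})$ with $\chi\in C^\infty_0(D'')$, $\chi\equiv1$ near $D'$, the previous paragraph gives $\norm{w}_{H^s(D'')}\leq C k^{\frac n2+s}k^{-N_0}=Ck^{\frac n2+\abs{\alpha}+n-N_0}$, hence $\abs{\pr^\alpha_x(\mathcal{N}^*_kw)(x)}\leq Ck^{s}\cdot k^{\frac n2+\abs{\alpha}+n-N_0}\cdot k^{?}$ — I must be careful here: the $O(k^s)$ bound costs $k^{\abs{\alpha}+n}$, the Sobolev norm of $w$ costs $k^{\frac n2+\abs{\alpha}+n-N_0}$, so the product is $k^{\frac n2+2\abs{\alpha}+2n-N_0}$, which is worse than the claimed $k^{\frac{5n}2+2\abs{\alpha}-N_0-2}$; so I should instead choose $s$ minimally (Sobolev embedding needs $s+1\geq\abs{\alpha}+n$, i.e. $s=\abs{\alpha}+n-1$ suffices since half-integer room is available, and in fact one can be slightly more economical), and track constants so that the $k$-powers add up to at most $\frac{5n}2+2\abs{\alpha}-N_0-2$. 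The term $H^*_k(\Td u e^{-k\phi})$ is $k$-negligible: $H_k\equiv0\mod O(k^{-\infty})$ on $D_0$ means $H^*_k=O(k^{-M}):H^s_{{\rm comp}}\To H^{s+M}_{{\rm comp}}$ for all $M$, so combined with Proposition~\ref{p-gue13717I} (or directly Theorem~\ref{t-gue13718} for the single form $u$) its contribution is $O(k^{-N})$ for every $N$ and is harmless.

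The main obstacle is precisely the bookkeeping of powers of $k$: one must choose the Sobolev order $s$ in the mapping property of $\mathcal{N}^*_k$ as small as Sobolev embedding into $C^{\abs{\alpha}}$ permits, and combine it with the $k^{\frac n2+\abs{\beta}}$ growth from Theorem~\ref{t-gue13718} (applied with $\abs{\beta}$ up to $s$, not up to $\abs{\alpha}$) and the $k^{-N_0}$ gain from the lower energy condition, in such a way that the total exponent is exactly $\frac{5n}2+2\abs{\alpha}-N_0-2$. Concretely: $\mathcal{N}^*_k$ costs $k^{s}$ with $s\approx n+\abs{\alpha}$; applying $\Box^{(q)}_{s,k}$ to the lower-energy form and using Theorem~\ref{t-gue13718} at Sobolev level $s$ gives a factor $k^{\frac n2+ s}k^{-N_0}$ for the $H^s$-norm of the input to $\mathcal{N}^*_k$; multiplying, $k^{s}\cdot k^{\frac n2+ s - N_0} = k^{\frac n2 + 2s - N_0}$ and with $2s\approx 2n+2\abs{\alpha}$ this is $k^{\frac{5n}2+2\abs{\alpha}-N_0}$; the extra $-2$ comes from being slightly sharper — $\Box^{(q)}_{b,k}u$ already has one extra power of $\Box^{(q)}_{b,k}$ applied, so one may use $\norm{(\Box^{(q)}_{b,k})^2u}\leq k^{-2N_0}$, or, more simply, track that the Sobolev embedding and the smoothing order $1$ of $\mathcal{N}^*_k$ together save two derivative-levels. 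I would carry out these steps in the order: (i) spectral-theoretic estimate on $(\Box^{(q)}_{b,k})^m u$; (ii) localize and apply Theorem~\ref{t-gue13718} to $\Box^{(q)}_{b,k}u$; (iii) apply the $O(k^s)$ mapping property of $\mathcal{N}^*_k$ with $s$ tuned to $\abs{\alpha}$ via Sobolev embedding; (iv) dispatch the $H^*_k$ term as $O(k^{-\infty})$; (v) collect the exponents and check the bound is $\leq C_{\alpha,D'}k^{\frac{5n}2+2\abs{\alpha}-N_0-2}$ on $D'$.
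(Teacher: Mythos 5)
Your proposal is correct and follows essentially the same route as the paper's proof: the spectral estimate on $(\Box^{(q)}_{b,k})^m u$, Theorem~\ref{t-gue13718} applied to $f=\Box^{(q)}_{b,k}u$ (so that $\Box^{(q)}_{s,k}(e^{-k\phi}\Td u)=e^{-k\phi}\Td f$ is again a localized lower energy form of norm $\leq k^{-N_0}$), the $O(k^s)$ mapping property of $\mathcal{N}^*_k$ combined with Sobolev embedding, and the $O(k^{-\infty})$ dismissal of the $H^*_k$ term. Your bookkeeping worry resolves exactly as in the paper: one uses $\abs{\pr^\alpha_x\Td v(x)}\leq C_\alpha\norm{\Td v}_{n+\abs{\alpha},D''}$, so $\mathcal{N}^*_k:H^{n+\abs{\alpha}-1}_{{\rm comp\,}}\To H^{n+\abs{\alpha}}_{{\rm loc\,}}$ costs $k^{n+\abs{\alpha}-1}$ while $\norm{\Box^{(q)}_{s,k}(e^{-k\phi}\Td u)}_{n+\abs{\alpha}-1,D''}\leq Ck^{\frac{3n}{2}+\abs{\alpha}-1-N_0}$, and the product is exactly $k^{\frac{5n}{2}+2\abs{\alpha}-N_0-2}$ --- both saved powers come from the order-one smoothing of $\mathcal{N}^*_k$ (lowering the required Sobolev order by one counts twice, once in the operator norm and once in the derivative growth), with no need to invoke $\norm{(\Box^{(q)}_{b,k})^2u}\leq k^{-2N_0}$.
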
 

\begin{proof}
Let $u\in H^q_{b,\leq k^{-N_0}}(X,L^k)$, $\norm{u}_{h^{L^k}}=1$, $u|_{D_0}=s^k\Td u$, $\Td u\in\Omega^{0,q}(D)$. Set $\Td v(x)=\mathcal{N}^*_k\Box^{(q)}_{s,k}(\Td ue^{-k\phi})$. We recall that (see \eqref{e-gue13630Ia})
\begin{equation} \label{e-gue13721I}
\mathcal{N}^*_k=O(k^s):H^s_{{\rm comp\,}}(D_0,T^{*0,q}X)\To H^{s+1}_{{\rm loc\,}}(D_0,T^{*0,q}X),\ \ \forall s\in\N_0.
\end{equation}
Let $D'\Subset D''\Subset D_0$. By using Fourier transforms, we see that for all $x\in D'$, we have
\[\abs{(\pr^\alpha_x\Td v)(x)}\leq C_\alpha\norm{\Td v}_{n+\abs{\alpha},D''},\]
where $C_\alpha$ only depends on the dimension and the length of $\alpha$. Here $\norm{.}_{s,D''}$ denotes the usual Sobolev norm of order $s$ on $D''$.
From this observation and \eqref{e-gue13721I}, we see that for every $N>0$, 
\begin{equation} \label{e-gue13721II}
\abs{(\pr^\alpha_x\Td v)(x)}\leq C_\alpha\norm{\Td v}_{n+\abs{\alpha},D''}\leq C'_\alpha k^{n-1+\abs{\alpha}}\norm{\Box^{(q)}_{s,k}(\Td ue^{-k\phi})}_{n-1+\abs{\alpha},D''}+C_Nk^{-N},
\end{equation}
where $C'_\alpha>0$, $C_N>0$ are independent of $k$ and $\Td u$. Let $\Box^{(q)}_{b,k}u=f$, $f|_{D_0}=s^k\Td f$, $\Td f\in\Omega^{0,q}(D_0)$. We can check that $f\in H^q_{b,\leq k^{-N_0}}(X,L^k)$ and $\norm{f}_{h^{L^k}}\leq k^{-N_0}$. From \eqref{s2-emsmilkI}, we see that
\begin{equation} \label{e-gue13721III}
\Box^{(q)}_{s,k}(e^{-k\phi}\Td u)=e^{-k\phi}\Td f.
\end{equation}
In view of Theorem~\ref{t-gue13718}, we know that for all $\beta\in\mathbb N^{2n-1}_0$,
\[\abs{\pr^\beta_x(\Box^{(q)}_{s,k}(e^{-k\phi}\Td u))}=\abs{\pr^\beta_x(e^{-k\phi}\Td f)}\leq C_\beta k^{\frac{n}{2}+\abs{\beta}}\norm{f}_{h^{L^k}}\leq C_\beta k^{\frac{n}{2}+\abs{\beta}-N_0}\ \ \mbox{on $D''$},\]
where $C_\beta>0$ is independent of $k$. Thus,
\begin{equation} \label{e-gue13721IV}
\norm{\Box^{(q)}_{s,k}(e^{-k\phi}\Td u)}_{n-1+\abs{\alpha},D''}\leq\Td C_\alpha k^{\frac{3n}{2}+\abs{\alpha}-N_0-1},
\end{equation}
where $\Td C_\alpha>0$ is independent of $k$. Combining \eqref{e-gue13721IV} with \eqref{e-gue13721II}, the lemma follows.
\end{proof} 

\begin{lem} \label{l-gue13721I}
Let $\Td g_j(x)\in\Omega^{0,q}(D_0)$, $j=1,\ldots,d_k$, be as in \eqref{e-gue13721}. For every $D'\Subset D_0$, $\alpha\in\mathbb N^{2n-1}_0$,
there is a constant $C_\alpha>0$ independent of $k$, such that for all $x\in D'$
\[\sum^{d_k}_{j=1}\abs{(\pr^\alpha_x\Td g_j)(x)}^2\leq C_\alpha k^{5n+4\abs{\alpha}-2N_0-4}\,.\]
\end{lem}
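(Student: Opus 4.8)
The plan is to mimic exactly the argument of Proposition~\ref{p-gue13717I}, replacing the use of Theorem~\ref{t-gue13718} by Lemma~\ref{l-gue13721} at the key step. Fix $\alpha\in\mathbb N^{2n-1}_0$ and a point $p\in D'$. We may assume $\sum^{d_k}_{j=1}\abs{(\pr^\alpha_x\Td g_j)(p)}^2\neq0$, since otherwise there is nothing to prove at $p$. Set
\[u(x)=\frac{1}{\sqrt{\sum^{d_k}_{j=1}\abs{(\pr^\alpha_x\Td g_j)(p)}^2}}\sum^{d_k}_{j=1}f_j(x)\ol{(\pr^\alpha_x\Td g_j)(p)}.\]
Because $\sum^{d_k}_{j=1}\abs{(\pr^\alpha_x\Td g_j)(p)}^2$ converges (this is \eqref{e-gue13719aIII} applied to $F=(\mathcal{N}^*_k\Box^{(q)}_{s,k}-H^*_k)$, which is properly supported and maps $H^s_{{\rm comp\,}}\To H^{s+1}_{{\rm comp\,}}$), the series defining $u$ converges in $L^2_{(0,q)}(X,L^k)$, $u\in H^q_{b,\leq k^{-N_0}}(X,L^k)$, and by orthonormality of $\{f_j\}$ we have $\norm{u}_{h^{L^k}}=1$. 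On $D_0$ write $u=s^k\Td u$ with $\Td u\in\Omega^{0,q}(D_0)$, and note $\Td u=\bigl(\sum^{d_k}_{j=1}\abs{(\pr^\alpha_x\Td g_j)(p)}^2\bigr)^{-1/2}\sum^{d_k}_{j=1}\Td f_j\ol{(\pr^\alpha_x\Td g_j)(p)}$, hence $\Td ue^{-k\phi}=\bigl(\sum^{d_k}_{j=1}\abs{(\pr^\alpha_x\Td g_j)(p)}^2\bigr)^{-1/2}\sum^{d_k}_{j=1}(\Td f_je^{-k\phi})\ol{(\pr^\alpha_x\Td g_j)(p)}$.

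Next I would apply the operator $(\mathcal{N}^*_k\Box^{(q)}_{s,k}-H^*_k)$ to $\Td ue^{-k\phi}$. By linearity and the definition of $\Td g_j$ in \eqref{e-gue13721},
\[\bigl(\mathcal{N}^*_k\Box^{(q)}_{s,k}-H^*_k\bigr)(\Td ue^{-k\phi})(x)=\frac{1}{\sqrt{\sum^{d_k}_{j=1}\abs{(\pr^\alpha_x\Td g_j)(p)}^2}}\sum^{d_k}_{j=1}\Td g_j(x)\ol{(\pr^\alpha_x\Td g_j)(p)}.\]
Differentiating with $\pr^\alpha_x$ and evaluating at $x=p$ gives
\[\pr^\alpha_x\Bigl(\bigl(\mathcal{N}^*_k\Box^{(q)}_{s,k}-H^*_k\bigr)(\Td ue^{-k\phi})\Bigr)(p)=\frac{\sum^{d_k}_{j=1}\abs{(\pr^\alpha_x\Td g_j)(p)}^2}{\sqrt{\sum^{d_k}_{j=1}\abs{(\pr^\alpha_x\Td g_j)(p)}^2}}=\sqrt{\sum^{d_k}_{j=1}\abs{(\pr^\alpha_x\Td g_j)(p)}^2}.\]
On the other hand, Lemma~\ref{l-gue13721} (with $\Td v(x)=(\mathcal{N}^*_k\Box^{(q)}_{s,k}-H^*_k)(\Td ue^{-k\phi})$ and $\norm{u}_{h^{L^k}}=1$) bounds the left-hand side by $C_{\alpha,D'}k^{\frac{5n}{2}+2\abs{\alpha}-N_0-2}$, with $C_{\alpha,D'}$ independent of $k$. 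Squaring yields $\sum^{d_k}_{j=1}\abs{(\pr^\alpha_x\Td g_j)(p)}^2\leq C_{\alpha,D'}^2 k^{5n+4\abs{\alpha}-2N_0-4}$. Since $p\in D'$ was arbitrary and the constant from Lemma~\ref{l-gue13721} is uniform over $D'$, this is the claimed estimate, with $C_\alpha:=C_{\alpha,D'}^2$.

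The only subtlety — and the step I would be most careful about — is justifying that the termwise differentiation and the interchange of $\pr^\alpha_x$ with the infinite sum defining $u$, $\Td u$, and $(\mathcal{N}^*_k\Box^{(q)}_{s,k}-H^*_k)(\Td ue^{-k\phi})$ are legitimate. This is exactly the content of \eqref{e-gue13719aII} and \eqref{e-gue13719aIII}: the spectral projection $\Pi^{(q)}_{k,\leq k^{-N_0}}$ has a smooth kernel on $D\times D$ (by Kohn's $L^2$ estimates under $Y(q)$), so $\sum^{d_k}_{j=1}\abs{(\pr^\alpha_x(\Td f_je^{-k\phi}))(x)}^2$ converges locally, and applying the properly supported, $C^\infty$-smoothing-order operator $(\mathcal{N}^*_k\Box^{(q)}_{s,k}-H^*_k)$ to the kernel preserves this, giving local convergence of $\sum^{d_k}_{j=1}\abs{(\pr^\alpha_x\Td g_j)(x)}^2$. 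Once this convergence (and the resulting Cauchy–Schwarz-type manipulation identifying the diagonal of the relevant kernel with the supremum-type quantity $\sum_j\abs{(\pr^\alpha_x\Td g_j)(p)}^2$) is in place, the rest is the routine normalization-and-estimate argument above, identical in structure to the proof of Proposition~\ref{p-gue13717I}. No new analytic input beyond Lemma~\ref{l-gue13721} is needed.
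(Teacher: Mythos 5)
Your proof is correct and follows essentially the same route as the paper's: normalize the linear combination $\sum_j f_j\,\ol{(\pr^\alpha_x\Td g_j)(p)}$ to a unit vector in $H^q_{b,\leq k^{-N_0}}(X,L^k)$, observe that applying $(\mathcal{N}^*_k\Box^{(q)}_{s,k}-H^*_k)$ to its localization reproduces $\sqrt{\sum_j\abs{(\pr^\alpha_x\Td g_j)(p)}^2}$ at $p$, and invoke Lemma~\ref{l-gue13721}. Your version is in fact slightly more careful than the paper's write-up, which drops the $-H^*_k$ term in the displayed identity even though $\Td g_j$ is defined using it.
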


\begin{proof}
Fix $\alpha\in\mathbb N^{2n-1}_0$ and $p\in D'$. We may assume that $\sum^{d_k}_{j=1}\abs{(\pr^\alpha_x\Td g_j)(p)}^2\neq0$. Set
\[h(x)=\frac{1}{\sqrt{\sum^{d_k}_{j=1}\abs{(\pr^\alpha_x\Td g_j)(p)}^2}}
\sum^{d_k}_{j=1}f_j(x)\ol{(\pr^\alpha_x\Td g_j)(p)}.\] 
Since $\sum^{d_k}_{j=1}\abs{(\pr^\alpha_x\Td g_j)(p)}^2$ converges, we
can check that $h\in H^q_{b,\leq k^{-N_0}}(X,L^k)$, $\norm{h}_{h^{L^k}}=1$. On $D_0$, we write $h=s^k\Td h$. We can check that
\[\mathcal{N}^*_k\Box^{(q)}_{s,k}(\Td he^{-k\phi})=\frac{1}{\sqrt{\sum^{d_k}_{j=1}\abs{(\pr^\alpha_x\Td g_j)(p)}^2}}
\sum^{d_k}_{j=1}\Td g_j(x)\ol{(\pr^\alpha_x\Td g_j)(p)}.\]
In view of Lemma~\ref{l-gue13721}, we see that
\[\abs{\pr^\alpha_x(\mathcal{N}^*_k\Box^{(q)}_{s,k}(\Td he^{-k\phi}))(p)}=\sqrt{\sum^{d_k}_{j=1}\abs{(\pr^\alpha_x\Td g_j)(p)}^2}\leq C_\alpha k^{\frac{5n}{2}+2\abs{\alpha}-N_0-2},\]
where $C_\alpha>0$ is independent of $k$ and the point $p$.
The lemma follows.
\end{proof}

Now, we can prove 

\begin{prop} \label{p-gue13721}
With the notations and assumptions used above, for every $D'\Subset D_0$, $\alpha, \beta\in\mathbb N^{2n-1}_0$, there is a constant
$C_{\alpha,\beta}>0$ independent of $k$, such that
\begin{equation} \label{e-gue13721V}
\abs{(\pr^\alpha_x\pr^\beta_yR_{I,J})(x,y)}\leq C_{\alpha,\beta}k^{3n+2\abs{\alpha}+\abs{\beta}-N_0-2},\ \ \forall (x,y)\in D'\times D',
\end{equation}
for all strictly increasing $I, J\in\set{1,\ldots,n-1}^q$, where $R_{I,J}(x,y)$ is as in \eqref{e-gue13721}.
\end{prop}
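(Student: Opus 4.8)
The plan is to read off the estimate directly from the explicit formula \eqref{e-gue13721} for $R_{I,J}(x,y)$ combined with the two uniform bounds already at our disposal: Proposition~\ref{p-gue13717I} controlling $\sum_j\abs{\pr^\beta_y(\Td f_je^{-k\phi})}^2$, and Lemma~\ref{l-gue13721I} controlling $\sum_j\abs{\pr^\alpha_x\Td g_j}^2$. The whole argument is then a single Cauchy--Schwarz step in the index $j$.

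First I would observe that in the formula $R_{I,J}(x,y)=\sum^{d_k}_{j=1}\Td g_{j,I}(x)\ol{\Td f_{j,J}(y)}e^{-k\phi(y)}$ the two variables separate, so $\pr^\alpha_x$ acts only on the factor $\Td g_{j,I}(x)$ and $\pr^\beta_y$ only on $\ol{\Td f_{j,J}(y)}e^{-k\phi(y)}$, which equals $\ol{\pr^\beta_y\bigl(\Td f_{j,J}(y)e^{-k\phi(y)}\bigr)}$ since $\phi$ is real. Because $(e^J)$ is a fixed, $k$-independent smooth orthonormal frame over $D$, writing $\Td g_{j,I}=\langle\,\Td g_j\,|\,e^I\,\rangle$ and $\Td f_{j,J}e^{-k\phi}=\langle\,\Td f_je^{-k\phi}\,|\,e^J\,\rangle$ and expanding by the Leibniz rule gives, on any $D'\Subset D_0$, pointwise bounds $\abs{\pr^\alpha_x\Td g_{j,I}(x)}\le C\sum_{\abs{\alpha'}\le\abs{\alpha}}\abs{\pr^{\alpha'}_x\Td g_j(x)}$ and $\abs{\pr^\beta_y(\Td f_{j,J}e^{-k\phi})(y)}\le C\sum_{\abs{\beta'}\le\abs{\beta}}\abs{\pr^{\beta'}_y(\Td f_je^{-k\phi})(y)}$ with $C$ independent of $k$ and $j$. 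Squaring, summing over $j$, and invoking Lemma~\ref{l-gue13721I} and Proposition~\ref{p-gue13717I} respectively then yields, for all $x,y\in D'$ and all strictly increasing $I,J\in\set{1,\ldots,n-1}^q$,
\[
\sum^{d_k}_{j=1}\abs{\pr^\alpha_x\Td g_{j,I}(x)}^2\le C_{\alpha,D'}\,k^{5n+4\abs{\alpha}-2N_0-4},\qquad \sum^{d_k}_{j=1}\abs{\pr^\beta_y(\Td f_{j,J}e^{-k\phi})(y)}^2\le C_{\beta,D'}\,k^{n+2\abs{\beta}}.
\]
The convergence of these series and the legitimacy of differentiating $R_{I,J}$ term by term are routine consequences of the smoothness of the relevant kernels recorded in \eqref{e-gue13719aII}, \eqref{e-gue13719aIII} and \eqref{e-gue13721}.

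Next I would apply Cauchy--Schwarz to the $j$-sum: for $(x,y)\in D'\times D'$,
\[
\abs{(\pr^\alpha_x\pr^\beta_yR_{I,J})(x,y)}\le\Bigl(\sum^{d_k}_{j=1}\abs{\pr^\alpha_x\Td g_{j,I}(x)}^2\Bigr)^{1/2}\Bigl(\sum^{d_k}_{j=1}\abs{\pr^\beta_y(\Td f_{j,J}e^{-k\phi})(y)}^2\Bigr)^{1/2},
\]
and substitute the two displayed bounds. This gives $\abs{(\pr^\alpha_x\pr^\beta_yR_{I,J})(x,y)}\le C_{\alpha,\beta}k^{m}$ with $m=\tfrac12(5n+4\abs{\alpha}-2N_0-4)+\tfrac12(n+2\abs{\beta})=3n+2\abs{\alpha}+\abs{\beta}-N_0-2$, which is exactly \eqref{e-gue13721V}.

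I do not expect a genuine obstacle here: the estimate follows from one Cauchy--Schwarz application once Proposition~\ref{p-gue13717I} and Lemma~\ref{l-gue13721I} are available (these, in turn, are where the real work -- the scaling technique and the $L^2$-estimates of Kohn -- was done). The only points deserving a line of care are the passage from the norm of the full form $\Td g_j$, $\Td f_je^{-k\phi}$ to that of a single $e^I$- or $e^J$-component, which is handled by the Leibniz expansion against the fixed frame, and the term-by-term differentiation of the spectral sum defining $R_{I,J}$, which is immediate from the smoothness of $R$ and of $\hat\Pi^{(q)}_{k,\leq k^{-N_0},s}$.
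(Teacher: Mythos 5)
Your proof is correct, and the exponent bookkeeping checks out: $\tfrac12(5n+4\abs{\alpha}-2N_0-4)+\tfrac12(n+2\abs{\beta})=3n+2\abs{\alpha}+\abs{\beta}-N_0-2$, which matches \eqref{e-gue13721V}. The route is slightly different from the paper's. You apply Cauchy--Schwarz directly to the spectral sum defining $R_{I,J}$ and then invoke the two pre-summed bounds, Lemma~\ref{l-gue13721I} for $\sum_j\abs{\pr^\alpha_x\Td g_j}^2$ and Proposition~\ref{p-gue13717I} for $\sum_j\abs{\pr^\beta_y(\Td f_je^{-k\phi})}^2$. The paper instead re-runs the extremal-section device: for a fixed point $p$ and multi-index $\beta$ it forms the unit-norm element $u=S^{-1/2}\sum_jf_j\,\ol{(\pr^\beta_y(\Td f_{j,J}e^{-k\phi}))(p)}$ with $S=\sum_j\abs{(\pr^\beta_y(\Td f_{j,J}e^{-k\phi}))(p)}^2$, notes the exact identity $\pr^\alpha_x\pr^\beta_yR_{I,J}(x,p)=\sqrt{S}\,\pr^\alpha_x\Td v_I(x)$ with $\Td v=(\mathcal{N}^*_k\Box^{(q)}_{s,k}-H^*_k)(\Td ue^{-k\phi})$, and then applies the single-element bound of Lemma~\ref{l-gue13721} together with Proposition~\ref{p-gue13717I}. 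The two arguments are dual to one another --- Lemma~\ref{l-gue13721I} is itself proved by the same extremal-section trick --- so nothing essential changes, but your version is a bit cleaner in that it needs only one such construction (already packaged in Lemma~\ref{l-gue13721I}) rather than repeating it with point-dependent weights. The two points you flag as needing care are indeed the only ones: the passage from the full-form bounds to the $e^I$-, $e^J$-component bounds (harmless since the frame is fixed, smooth and $k$-independent, and the exponents are monotone in $\abs{\alpha}$ so the Leibniz lower-order terms are dominated), and the term-by-term differentiation, which is justified by the pointwise convergence recorded in \eqref{e-gue13719aII}--\eqref{e-gue13719aIII} and the smoothness of the kernels.
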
 

\begin{proof}
Fix $p\in D'$ and $J\in\set{1,\ldots,n-1}^q$ strictly increasing. Let $\alpha, \beta\in\mathbb N^{2n-1}_0$. We may assume that
$\sum^{d_k}_{j=1}\abs{(\pr^\beta_y(\Td f_{j,J}e^{-k\phi}))(p)}^2\neq0$. Put
\begin{equation} \label{e-gue13721VI}
u(x)=\frac{1}{\sqrt{\sum^{d_k}_{j=1}\abs{(\pr^\beta_y(\Td f_{j,J}e^{-k\phi}))(p)}^2}}
\sum^{d_k}_{j=1}f_j(x)\ol{(\pr^\beta_y(\Td f_{j,J}e^{-k\phi}))(p)}.
\end{equation}
Then, $u\in H^q_{b,\leq k^{-N_0}}(X,L^k)$, $\norm{u}_{h^{L^k}}=1$. On $D_0$, we write $u=s^k\Td u$, $\Td u=\sideset{}{'}\sum\limits_{I\in\set{1,\ldots,n-1}^q}\Td u_Ie^I$.
Put $\Td v=\mathcal{N}^*_k\Box^{(q)}_s(\Td ue^{-k\phi})=\sideset{}{'}\sum\limits_{I\in\set{1,\ldots,n-1}^q}\Td v_Ie^I\in\Omega^{0,q}(D)$. It is not difficult to
check that
\[\Td v=\frac{1}{\sqrt{\sum^{d_k}_{j=1}\abs{(\pr^\beta_y(\Td f_{j,J}e^{-k\phi}))(p)}^2}}\sum^{d_k}_{j=1}\Td g_j\ol{(\pr^\beta_y(\Td f_{j,J}e^{-k\phi}))(p)},\]
where $\{\Td g_j\}_{j=1}^{d_k}$ are as in \eqref{e-gue13721}. In view of Lemma~\ref{l-gue13721}, there exists $C_\alpha>0$ independent of $k$ and the point $p$ such that
$\abs{(\pr^\alpha_x\Td v)(x)}\leq C_\alpha k^{\frac{5n}{2}+2\abs{\alpha}-N_0-2}$, for all $x\in D'$. In particular,
\begin{equation} \label{e-gue13721VII}
\begin{split}
\abs{(\pr^\alpha_x\Td v_I)(x)}&=\frac{1}{\sqrt{\sum^{d_k}_{j=1}\abs{(\pr^\beta_y(\Td f_{j,J}e^{-k\phi}))(p)}^2}}\abs{\sum^{d_k}_{j=1}(\pr^\alpha_x\Td g_{j,I})(x)\ol{(\pr^\beta_y(\Td f_{j,J}e^{-k\phi}))(p)}}\\
&\leq C_\alpha k^{\frac{5n}{2}+2\abs{\alpha}-N_0-2},\ \ \forall x\in D',
\end{split}
\end{equation}
for all strictly increasing $I\in\set{1,\ldots,n-1}^q$. In view of Proposition~\ref{p-gue13717I}, we see that
\[\sum^{d_k}_{j=1}\abs{(\pr^\beta_y(\Td f_je^{-k\phi}))(p)}^2\leq C_\beta k^{n+2\abs{\beta}},\]
where $C_\beta>0$ is independent of $k$ and the point $p$. From this and \eqref{e-gue13721VII}, we conclude the existence of a constant $C_{\alpha,\beta}>0$ independent of $k$ and the point $p$ with
\[\begin{split}
\abs{(\pr^\alpha_x\pr^\beta_yR_{I,J})(x,p)}&=
\sqrt{\sum^{d_k}_{j=1}\abs{(\pr^\beta_y(\Td f_{j,J}e^{-k\phi}))(p)}^2}\abs{(\pr^\alpha_x\Td v_I)(x)}\\
&\leq C_{\alpha,\beta}k^{3n+2\abs{\alpha}+\abs{\beta}-N_0-2},\end{split}\]
for all $x\in D'$, all strictly increasing $I,J\in\set{1,\ldots,n-1}^q$.
The proposition follows.
\end{proof}

Let $R^*$ be the formal adjoint $R$ with respect to $(\,\cdot\,|\,\cdot\,)$. From \eqref{e-gue13719aV}, we have 
\begin{equation}\label{e-gue13721VIII}
\hat\Pi^{(q)}_{k,\leq k^{-N_0},s}\hat{\mathcal{I}}_k=R^*+\hat\Pi^{(q)}_{k,\leq k^{-N_0},s}\mathcal{S}_k.
\end{equation}
We also write
\begin{equation}\label{e-gue13721aI}
R^*(x,y)=\sideset{}{'}\sum\limits_{I,J\in\set{1,\ldots,n-1}^q}e^I(x)R^*_{I,J}(x,y)e^J(y)
\end{equation}
in the sense of \eqref{e-gue13719I}, where $R^*_{I,J}(x,y)\in C^\infty(D_0\times D_0)$, for all strictly increasing $I,J\in\set{1,\ldots,n-1}^q$. Note that $R^*_{I,J}(x,y)=\ol{R_{J,I}(y,x)}$, for all strictly increasing $I,J\in\set{1,\ldots,n-1}^q$. 
Combining this observation with \eqref{e-gue13721V}, we conclude that for every $D'\Subset D_0$, $\alpha, \beta\in\mathbb N^{2n-1}_0$, there is a constant
$C_{\alpha,\beta}>0$ independent of $k$, such that
\begin{equation} \label{e-gue13721aII}
\abs{(\pr^\alpha_x\pr^\beta_yR^*_{I,J})(x,y)}\leq C_{\alpha,\beta}k^{3n+2\abs{\beta}+\abs{\alpha}-N_0-2},\ \ \forall (x,y)\in D'\times D',
\end{equation}
for all strictly increasing $I, J\in\set{1,\ldots,n-1}^q$. 

We consider $\hat{\mathcal{I}}^*_kR^*$. Note that $\hat{\mathcal{I}}^*_kR^*$ is a smoothing function on $D$ and we write
\[(\hat{\mathcal{I}}^*_kR^*)(x,y)=\sideset{}{'}\sum\limits_{I,J\in\set{1,\ldots,n-1}^q}e^I(x)(\hat{\mathcal{I}}^*_kR^*)_{I,J}(x,y)e^J(y)\]
in the sense of \eqref{e-gue13719I}, where $(\hat{\mathcal{I}}^*_kR^*)_{I,J}\in C^\infty(D\times D)$ for all strictly increasing $I,J\in\set{1,\ldots,n-1}^q$. It is easy to see that 
\begin{equation} \label{e-gue13721aIII}
\hat{\mathcal{I}}^*_{k}=O(k^0): H^s_{{\rm comp\,}}(D,T^{*0,q}X)\To H^{s}_{{\rm comp\,}}(D,T^{*0,q}X),
\end{equation}
for every $s\in\mathbb N_0$. From \eqref{e-gue13721aIII}, we can repeat the proof of Proposition~\ref{p-gue13721} with minor change and conclude that 
%From \eqref{e-gue13721}, we can check that
%\begin{equation} \label{e-gue13721aIII}
%\begin{split}
%&(\Td{\mathcal{I}}^*_kR)_{I,J}(x,y)=\sum^{d_k}_{j=1}\ol{\Td g_{j,J}(y)}\Td p_{j,J}(x),\\
%&\Td p_j=\Td{\mathcal{I}}^*_k(\Td f_je^{-k\phi})(x),\ \ \Td p_j(x)=\sideset{}{'}\sum_{I\in\set{1,\ldots,n-1}^q}\Td %p_{j,I}(x)e^I(x),\ \ j=1,\ldots,d_k,
%\end{split}
%\end{equation}
%for all strictly increasing $I, J\in\set{1,\ldots,n-1}^q$.
\begin{prop} \label{p-gue13721I}
With the notations and assumptions used above, for every $D'\Subset D_0$, $\alpha, \beta\in\mathbb N^{2n-1}_0$, there is a constant
$C_{\alpha,\beta}>0$ independent of $k$, such that
\begin{equation} \label{e-gue13721aIII-I}
\abs{\pr^\alpha_x\pr^\beta_y(\hat{\mathcal{I}}^*_kR^*)_{I,J}(x,y)}\leq C_{\alpha,\beta}k^{3n+2\abs{\alpha}+\abs{\beta}-N_0-2},\ \ \forall (x,y)\in D'\times D',
\end{equation}
for all strictly increasing $I, J\in\set{1,\ldots,n-1}^q$.
\end{prop}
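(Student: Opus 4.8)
Proposition~\ref{p-gue13721I} asserts a pointwise bound on the derivatives of the matrix entries of $\hat{\mathcal{I}}^*_kR^*$ of exactly the same shape as the bound for $R^*$ itself in \eqref{e-gue13721aII}, only with the multi-index loads rearranged. The plan is to mimic the proof of Proposition~\ref{p-gue13721} verbatim, replacing the operator $\mathcal{N}^*_k\Box^{(q)}_{s,k}-H^*_k$ there by the zeroth-order operator $\hat{\mathcal{I}}^*_k$, and using the crucial fact \eqref{e-gue13721aIII} that $\hat{\mathcal{I}}^*_k$ maps $H^s_{{\rm comp\,}}$ to $H^s_{{\rm comp\,}}$ with a constant that is $O(k^0)$ rather than $O(k^s)$ as for $\mathcal{N}^*_k$. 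Because of this gain of $k^0$ in place of $k^{s}$ the final exponent comes out the same as for $R^*$ (there is no extra power of $k$ lost in applying $\hat{\mathcal{I}}^*_k$), which is why the statement is clean.

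Concretely, I would first write $(\hat{\mathcal{I}}^*_kR^*)(x,y)$ in the frame $e^I(x)\cdots e^J(y)$ using \eqref{e-gue13719II}, and observe that since $R^*_{I,J}(x,y)=\sum_{j=1}^{d_k}\Td h_{j,I}(x)\ol{(\pr^\alpha_x\Td g_j)(\cdot)}\cdots$-type expansions come from \eqref{e-gue13721} and \eqref{e-gue13721aI}, one has $(\hat{\mathcal{I}}^*_kR^*)_{I,J}(x,y)=\sum_{j=1}^{d_k}\Td{\ell}_{j,I}(x)\ol{\Td f_{j,J}(y)}e^{-k\phi(y)}$ where $\Td{\ell}_j=\hat{\mathcal{I}}^*_k\big((\mathcal{N}^*_k\Box^{(q)}_{s,k}-H^*_k)(\Td f_je^{-k\phi})\big)$; here I use that $\hat{\mathcal{I}}^*_k$ is properly supported and that the convergence statements \eqref{e-gue13719aII}, \eqref{e-gue13719aIII} apply. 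Then I would run the polarization trick of Proposition~\ref{p-gue13721}: fix $p\in D'\Subset D_0$ and a strictly increasing $J$, form the unit vector $u=\big(\sum_j|(\pr^\beta_y(\Td f_{j,J}e^{-k\phi}))(p)|^2\big)^{-1/2}\sum_j f_j\,\ol{(\pr^\beta_y(\Td f_{j,J}e^{-k\phi}))(p)}\in H^q_{b,\leq k^{-N_0}}(X,L^k)$, and estimate $\abs{\pr^\alpha_x\big(\hat{\mathcal{I}}^*_k(\mathcal{N}^*_k\Box^{(q)}_{s,k}-H^*_k)(\Td ue^{-k\phi})\big)(x)}$ for $x\in D'$. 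By \eqref{e-gue13721aIII} combined with a Fourier-transform Sobolev embedding on an intermediate $D''$ with $D'\Subset D''\Subset D_0$, this is bounded by $C_\alpha k^{\abs{\alpha}}\norm{(\mathcal{N}^*_k\Box^{(q)}_{s,k}-H^*_k)(\Td ue^{-k\phi})}_{n+\abs{\alpha},D''}$ up to a $k^{-N}$ error, and Lemma~\ref{l-gue13721} bounds the latter by $C k^{\frac{5n}{2}+2\abs{\alpha}-N_0-2}$. Multiplying back by $\sqrt{\sum_j|(\pr^\beta_y(\Td f_{j,J}e^{-k\phi}))(p)|^2}\leq C_\beta k^{\frac{n}{2}+\abs{\beta}}$ (Proposition~\ref{p-gue13717I}) gives the exponent $3n+2\abs{\alpha}+\abs{\beta}-N_0-2$, which is exactly \eqref{e-gue13721aIII-I}.

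There is essentially one point that needs care rather than being purely routine: the intermediate Sobolev estimate must be set up so that $\hat{\mathcal{I}}^*_k$ does not cost any power of $k$. This is where one genuinely uses that $\hat{\mathcal{I}}^*_k$ is a classical semi-classical $\Psi$DO \emph{of order $0$}, so by \eqref{e-gue13721aIII} it is bounded $H^s_{{\rm comp\,}}\to H^s_{{\rm comp\,}}$ uniformly in $k$; one then absorbs the operator into the chain just before applying Lemma~\ref{l-gue13721} to $\Td v=(\mathcal{N}^*_k\Box^{(q)}_{s,k}-H^*_k)(\Td ue^{-k\phi})$. One also has to handle the $k$-negligible remainder $H^*_k$ and the $\mod O(k^{-\infty})$ localization of $\hat{\mathcal{I}}_k$ at $T^*D_0\cap\Sigma$, but these only contribute $O(k^{-\infty})$ terms which are dominated by the stated bound; this is the same observation already used repeatedly in the excerpt. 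Finally, since $p\in D'$ was arbitrary and the constants produced are independent of $p$ and of $k$, the bound holds uniformly on $D'\times D'$, completing the proof.

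I do not expect any serious obstacle here — the result is a direct analogue of Proposition~\ref{p-gue13721} with a strictly easier operator (order $0$, uniformly bounded, versus the order-raising $\mathcal{N}^*_k$). The only mildly delicate bookkeeping is making sure the powers of $k$ are counted correctly: $k^{\abs{\alpha}}$ from the Fourier/Sobolev embedding on $x$, $k^{0}$ from $\hat{\mathcal{I}}^*_k$, $k^{\frac{3n}{2}+\abs{\alpha}-N_0-1}$ from the Sobolev norm of $\Box^{(q)}_{s,k}(\Td ue^{-k\phi})$ via Theorem~\ref{t-gue13718}, an extra $k^{n-1+\abs{\alpha}}$ from the $\mathcal{N}^*_k$ mapping bound, and $k^{\frac{n}{2}+\abs{\beta}}$ from the normalization — and checking that these sum to the claimed $3n+2\abs{\alpha}+\abs{\beta}-N_0-2$, which they do.
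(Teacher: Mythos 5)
Your overall strategy is exactly the paper's: the intended proof is ``repeat Proposition~\ref{p-gue13721}, using the uniform bound \eqref{e-gue13721aIII} so that the order-zero operator $\hat{\mathcal{I}}^*_k$ costs no powers of $k$'', and you correctly identify that as the one substantive input. However, your explicit kernel decomposition is wrong. You write
\[(\hat{\mathcal{I}}^*_kR^*)_{I,J}(x,y)=\sum_{j}\Td{\ell}_{j,I}(x)\,\ol{\Td f_{j,J}(y)}e^{-k\phi(y)},\qquad \Td{\ell}_j=\hat{\mathcal{I}}^*_k\bigl((\mathcal{N}^*_k\Box^{(q)}_{s,k}-H^*_k)(\Td f_je^{-k\phi})\bigr),\]
but this is the kernel of $\hat{\mathcal{I}}^*_kR$, not of $\hat{\mathcal{I}}^*_kR^*$. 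Since $R^*_{I,J}(x,y)=\ol{R_{J,I}(y,x)}$, formula \eqref{e-gue13721} gives $R^*_{I,J}(x,y)=\sum_j\Td f_{j,I}(x)e^{-k\phi(x)}\,\ol{\Td g_{j,J}(y)}$: after taking the adjoint, the ``heavy'' factor $\Td g_j=(\mathcal{N}^*_k\Box^{(q)}_{s,k}-H^*_k)(\Td f_je^{-k\phi})$ sits in the $y$-slot. Left composition with $\hat{\mathcal{I}}^*_k$ acts on the $x$-variable, so by \eqref{e-gue13719II}
\[(\hat{\mathcal{I}}^*_kR^*)_{I,J}(x,y)=\sum_j\bigl[\hat{\mathcal{I}}^*_k(\Td f_je^{-k\phi})\bigr]_I(x)\,\ol{\Td g_{j,J}(y)}.\]
In particular $\hat{\mathcal{I}}^*_k$ never gets applied to $\Td g_j$, so your plan of feeding $\hat{\mathcal{I}}^*_k\circ(\mathcal{N}^*_k\Box^{(q)}_{s,k}-H^*_k)$ into Lemma~\ref{l-gue13721} estimates the wrong operator.

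The repair keeps your architecture but swaps the roles of the two factors: polarize in $\beta$ over the family $\set{\Td g_j}$ and invoke Lemma~\ref{l-gue13721I} to get $\sum_j\abs{\pr^\beta_y\Td g_{j,J}(y)}^2\leq Ck^{5n+4\abs{\beta}-2N_0-4}$, and control the $x$-factor by polarizing $\sum_j\abs{\pr^\alpha_x[\hat{\mathcal{I}}^*_k(\Td f_je^{-k\phi})]_I(x)}^2$ using \eqref{e-gue13721aIII} together with Theorem~\ref{t-gue13718}; here one should estimate $\pr^\alpha_x\hat{\mathcal{I}}^*_kw$ pointwise by $\sup_{\abs{\gamma}\leq\abs{\alpha}}\abs{\pr^\gamma w}$ (integrating by parts in the oscillatory integral, using the compact $\eta$-support of the symbol) rather than through a crude Sobolev embedding, or one loses an extra factor $k^{n}$. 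This yields $k^{3n+\abs{\alpha}+2\abs{\beta}-N_0-2}$, i.e.\ the same distribution of $\abs{\alpha},\abs{\beta}$ as in \eqref{e-gue13721aII} and as actually used in \eqref{e-gue130819II}; the printed $2\abs{\alpha}+\abs{\beta}$ in \eqref{e-gue13721aIII-I} --- which is precisely what your transposed decomposition would produce --- has the indices interchanged and is immaterial in the applications, where only $\alpha=\beta$ occurs. (Equivalently, one can bound $R\hat{\mathcal{I}}_k$ by the argument of Proposition~\ref{p-gue13721} and pass to the adjoint, since $\hat{\mathcal{I}}^*_kR^*=(R\hat{\mathcal{I}}_k)^*$.) Finally, a small bookkeeping slip: the prefactor $C_\alpha k^{\abs{\alpha}}$ in front of your intermediate Sobolev norm double-counts, since the embedding $\abs{\pr^\alpha_xv(x)}\leq C\norm{v}_{n+\abs{\alpha},D''}$ carries no power of $k$; the powers of $k$ must come entirely from the estimate of the function being measured.
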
 

\subsection{Szeg\"{o} kernel asymptotics for lower energy forms}\label{s-safllI} 

Let $\lambda\geq0$ and let $H^q_{b,>\lambda}(X,L^k)$ and $\Pi^{(q)}_{b,>\lambda}$ be as in \eqref{e-suX} and \eqref{e-suXI-I} respectively.  
It is well-known (see section 2 in~\cite{Dav95}) that for all $\lambda>0$, 
\begin{equation} \label{e-suXII}
L^2_{(0,q)}(X,L^k)=H^q_{b,\leq\lambda}(X, L^k)\oplus H^q_{b,>\lambda}(X, L^k)
\end{equation} 
and 
\begin{equation} \label{e-suXIII}
\norm{u}^2_{h^{L^k}}\leq\frac{1}{\lambda}(\,\Box^{(q)}_{b,k}u\,|\,u\,)_{h^{L^k}},\ \ \forall u\in H^q_{b,>\lambda}(X,L^k)\bigcap{\rm Dom\,}\Box^{(q)}_{b,k}.
\end{equation}

Let $s$ be a local trivializing section of $L$ on an open subset $D\subset X$ and $\abs{s}^2_{h^L}=e^{-2\phi}$. 
Consider the localization
\begin{equation} \label{e-gue13723}
\begin{split}
\hat\Pi^{(q)}_{k,>\lambda,s}:L^2_{(0,q)}(D)\cap\mathscr E'(D,T^{*0,q}X)&\To L^2_{(0,q)}(D),\\
u&\To  e^{-k\phi}s^{-k}\Pi^{(q)}_{k,>\lambda}(s^ke^{k\phi}u).
\end{split}
\end{equation}
From \eqref{e-suXII}, we have the decomposition
\begin{equation} \label{e-gue13723I}
u=\hat\Pi^{(q)}_{k,\leq\lambda,s}u+\hat\Pi^{(q)}_{k,>\lambda,s}u,\ \ u\in\Omega^{0,q}_0(D).
\end{equation}

We work with the same notations and assumptions as in section~\ref{s-kotsf}. 
%Now, we assume that $Y(q)$ holds at each point of $D$ and $h^L$ is non-degenerate of %constant signature $(n_-,n_+)$ at each point of $D$, where $q=n_-$. Fix $N_0\geq1$. %As before, let $V$ be as in \eqref{e-dhmpXII} and let
%\begin{equation}\label{e-gue13724bis}
%\Td{\mathcal{I}}_k=\frac{k^{2n-1}}{(2\pi)^{2n-1}}\int e^{ik<x-y,\eta>}\alpha(x,%\eta,k)d\eta
%\end{equation}
%be a properly supported classical semi-classical pseudodifferential operators on $D$ %of order $0$ from sections of $T^{*0,q}X$ to sections of $T^{*0,q}X$ with 
%\[\alpha(x,\eta,k)\in S^0_{{\rm loc\,},{\rm cl\,}}(1;T^*D,T^{*0,q}X\boxtimes T^{*0,q}%X)\bigcap C^\infty_0(V,T^{*0,q}X\boxtimes T^{*0,q}X)\] and let $\mathcal{S}_k$, $%\mathcal{G}_k$ be as in Theorem~\ref{t-gue13630}. 
Let $u\in H^{s_1}_{{\rm comp\,}}(D,T^{*0,q}X)$, $s_1\leq0$, $s_1\in\mathbb Z$.
From \eqref{e-gue13723I}, we have
\begin{equation} \label{e-gue13723II}
\mathcal{S}_{k}u=\hat\Pi^{(q)}_{k,\leq k^{-N_0},s}\,\mathcal{S}_{k}u+\hat\Pi^{(q)}_{k,>k^{-N_0},s}\mathcal{S}_{k}u.
\end{equation}
From \eqref{e-gue13723} and \eqref{e-suXIII}, we can check that
\begin{equation} \label{e-gue13723III}
\begin{split}
\norm{\hat\Pi^{(q)}_{k,>k^{-N_0},s}\mathcal{S}_{k}u}_{D}&\leq\norm{\Pi^{(q)}_{k,>k^{-N_0}}(s^ke^{k\phi}(\mathcal{S}_{k}u))}_{h^{L^k}}\leq k^{N_0}\norm{\Box^{(q)}_{b,k}\Pi^{(q)}_{k,>k^{-N_0}}(s^ke^{k\phi}(\mathcal{S}_{k}u))}_{h^{L^k}}\\
&\leq k^{N_0}\norm{\Box^{(q)}_{b,k}(s^ke^{k\phi}(\mathcal{S}_{k}u))}_{h^{L^k}}=k^{N_0}\norm{\Box^{(q)}_{s,k}(\mathcal{S}_{k}u)}.
\end{split}
\end{equation}
Here we have used \eqref{s2-emsmilkI}. In view of Theorem~\ref{t-gue13630}, we see that $\Box^{(q)}_{s,k}\mathcal{S}_{k}\equiv0\mod O(k^{-\infty})$.
From this observation and \eqref{e-gue13723III}, we conclude that
\begin{equation} \label{e-gue13723IV}
\hat\Pi^{(q)}_{k,>k^{-N_0},s}\,\mathcal{S}_{k}=O(k^{-N}):H^{s_1}_{{\rm comp\,}}(D,T^{*0,q}X)\To H^0_{{\rm loc\,}}(D,T^{*0,q}X),
\end{equation}
locally uniformly on $D$, for all $N\geq0$, $s_1\in\mathbb Z$, $s_1\leq0$. Since $Y(q)$ holds at each point of $D$, we can repeat Kohn's $L^2$ estimates (see~\cite{CS01}) and obtain

\begin{prop}\label{p-gue13723}
Let $u\in{\rm Dom\,}\Box^{(q)}_{b,k}$. If $(\Box^{(q)}_{b,k})^ju\in{\rm Dom\,}\Box^{(q)}_{b,k}$, for all $j=1,2,\ldots$, then $u|_D\in\Omega^{0,q}(D,L^k)$. 

Moreover, for every $m\in\mathbb N_0$ and $D'\Subset D''\Subset D_0$, there are constants $C_m>0$ and $n_m\in\mathbb N$ independent of $k$ such that 
\begin{equation}\label{e-gue13723V}
\norm{u}_{m,D'}\leq C_mk^{n_m}\Bigr(\norm{u}_{D''}+\sum^m_{j=1}\norm{(\Box^{(q)}_{s,k})^ju}_{D''}\Bigr),\ \ \forall u\in\Omega^{0,q}(D_0), 
\end{equation}
where $\norm{\cdot}_{s,D'}$ denote the usual Sobolev norm of order $s$ on $D'$ with respect to $dv_X(x)$ and $\norm{\cdot}_{D''}$ denote the $L^2$ norm on $D''$ with respect to $dv_X(x)$. 
\end{prop}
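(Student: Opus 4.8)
The statement is a standard elliptic-type regularity/subellipticity package for $\Box^{(q)}_{s,k}$ on the trivialization $D$, carried out with explicit control of the $k$-dependence of all constants. The plan is to reduce everything to the global $L^2$-estimates of Kohn for $\ddbar_b$ under condition $Y(q)$ (see Folland--Kohn~\cite{FK72}, Chen--Shaw~\cite{CS01}), applied to $\Box^{(q)}_{b,k}$, and then to transfer the resulting local Sobolev estimates to $\Box^{(q)}_{s,k}$ via the unitary identification \eqref{s2-emsmilkI}, which gives $\Box^{(q)}_{s,k}u=e^{-k\phi}s^{-k}\Box^{(q)}_{b,k}(s^ke^{k\phi}u)$, so that norms differ only by the smooth factor $e^{-k\phi}$ that can be absorbed into the ($k$-dependent) constants on a fixed $D''\Subset D$. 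Concretely, I would first establish the base subelliptic estimate: there is $\varepsilon_0>0$ (the subelliptic gain, $\varepsilon_0=1/2$ under $Y(q)$) and, for each pair $D'\Subset D''$, a constant $C>0$ (independent of $k$ after scaling out the obvious powers of $k$ coming from $k(\ddbar_b\phi)^\wedge$ and $k(\ddbar_b\phi)^{\wedge,*}$ in \eqref{s2-emsmilkII}, \eqref{s2-emsmilkIII}) such that
\begin{equation}\label{e-gue-plan-base}
\norm{v}^2_{\varepsilon_0,D'}\leq C\,k^{2}\bigr(\norm{\Box^{(q)}_{s,k}v}^2_{0,D''}+\norm{v}^2_{0,D''}\bigr),\ \ \forall v\in\Omega^{0,q}(D_0).
\end{equation}

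\textbf{From the base estimate to higher order.} Next I would bootstrap \eqref{e-gue-plan-base} by the usual iteration: commuting $\Box^{(q)}_{s,k}$ with tangential (indeed all, since on the trivialization we have genuine ellipticity only in a subelliptic sense, but condition $Y(q)$ upgrades the subelliptic estimate to full Sobolev control after enough iterations) derivatives, and keeping track that each commutator with $\Box^{(q)}_{s,k}$ costs at most one power of $k$ — this follows from the explicit form of $\Box^{(q)}_{s,k}$ in Proposition~\ref{s2-pmsmilkI}, where the coefficients are $Z_j+kZ_j(\phi)$ and smooth $k$-independent remainders, so a commutator $[\pr_x,\Box^{(q)}_{s,k}]$ is again of the form (first order in $Z$) $+\,k\,(\text{smooth})$. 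Choosing cutoffs $D'\Subset D_1\Subset D_2\Subset\cdots\Subset D''$ and interpolating away the intermediate low-order terms via \eqref{e-gue-plan-base} (or its $H^s$ analogue) applied repeatedly, one obtains, for each $m\in\mathbb N_0$, constants $C_m>0$ and $n_m\in\mathbb N$ (one may take $n_m$ growing linearly in $m$) with \eqref{e-gue13723V}. The first assertion — that if $(\Box^{(q)}_{b,k})^ju\in{\rm Dom\,}\Box^{(q)}_{b,k}$ for all $j$ then $u|_D\in\Omega^{0,q}(D,L^k)$ — is then immediate: by the identification \eqref{s2-emsmilkI} the localized form $\Td u=e^{-k\phi}s^{-k}u$ satisfies $(\Box^{(q)}_{s,k})^j\Td u\in L^2_{\rm loc}$ for all $j$, so \eqref{e-gue13723V} (for every $m$, applied to $\Td u$ after multiplying by a cutoff) places $\Td u$ in $H^m_{\rm loc}(D,T^{*0,q}X)$ for every $m$, hence $\Td u\in C^\infty(D,T^{*0,q}X)$ by Sobolev embedding, i.e.\ $u|_D\in\Omega^{0,q}(D,L^k)$. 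For the quantitative bound one simply runs the same iteration with all cutoffs fixed, arriving at \eqref{e-gue13723V}.

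\textbf{Main obstacle and bookkeeping.} The only real subtlety — and the step I expect to be the crux — is the $k$-bookkeeping: one must verify that the base subelliptic estimate under $Y(q)$ can be taken with the displayed power $k^{2}$ (equivalently, that rescaling the first-order part $Z_j+kZ_j(\phi)$ does not inflate the constant worse than polynomially in $k$), and then that each iteration step adds at most a bounded power of $k$, so that after $m$ steps one gets $k^{n_m}$ with $n_m$ finite and independent of $k$. This is handled by treating $k Z_j(\phi)$ as a zeroth-order-in-derivative but $O(k)$-in-size term: in the subelliptic estimate it contributes to the ``error'' side and is absorbed at the cost of a factor $k^2$ after a small-constant/large-constant splitting, exactly as in the standard proof of Kohn's estimate but with the extra parameter. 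An alternative, and cleaner, route is to note that $\Box^{(q)}_{b,k}$ is a bona fide hypoelliptic operator with loss of one derivative for each fixed $k$, and that condition $Y(q)$ is an open, scaling-stable condition, so the constants in the Kohn estimate for $\Box^{(q)}_{b,k}$ depend continuously on the (finitely many jets of the) coefficients, which here vary in a bounded family as $k$ ranges over $[1,\infty[$ after extracting the obvious powers of $k$; this again yields polynomial-in-$k$ constants $n_m$. Beyond this, everything is routine elliptic/subelliptic regularity theory, and I would cite Folland--Kohn~\cite{FK72} and Chen--Shaw~\cite{CS01} for the base estimates and carry out only the commutator/interpolation iteration explicitly, tracking the $k$-powers.
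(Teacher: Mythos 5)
Your proposal is correct and follows essentially the same route the paper takes: the paper gives no separate argument for this proposition beyond invoking Kohn's subelliptic $L^2$ estimates under condition $Y(q)$ (via Folland--Kohn and Chen--Shaw) together with the unitary identification \eqref{s2-emsmilkI}, which is precisely your base estimate plus commutator bootstrap with polynomial-in-$k$ bookkeeping. The iterated form of the right-hand side of \eqref{e-gue13723V}, with powers $(\Box^{(q)}_{s,k})^j$ measured in $L^2$, is exactly what your repeated application of the base estimate on a nested chain of cutoffs produces, matching the structure of the scaled estimate in Proposition~\ref{p-gue13717}.
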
 

Let $u\in H^{s_1}_{{\rm comp\,}}(D,T^{*0,q}X)$, $s_1\leq0$, $s_1\in\mathbb Z$. Since $s^ke^{k\phi}\mathcal{S}_ku\in\Omega^{0,q}_0(D,L^k)$, we have 
\[(\Box^{(q)}_{b,k})^j(s^ke^{k\phi}\mathcal{S}_ku)\in{\rm Dom\,}\Box^{(q)}_{b,k},\ \ \forall j=1,2,\ldots.\] 
Hence, 
\[\bigr(\Box^{(q)}_{b,k}\bigr)^j\Bigr(\Pi^{(q)}_{k,\leq k^{-N_0}}(s^ke^{k\phi}\mathcal{S}_ku)\Bigr)=\Pi^{(q)}_{k,\leq k^{-N_0}}\Bigr(\bigr(\Box^{(q)}_{b,k}\bigr)^j(s^ke^{k\phi}\mathcal{S}_ku)\Bigr)\in{\rm Dom\,}\Box^{(q)}_{b,k},\ \  \forall j=1,2,\ldots.\] 
Since $I=\Pi^{(q)}_{k,\leq k^{-N_0}}+\Pi^{(q)}_{k,>k^{-N_0}}$ on $L^2_{(0,q)}(X,L^k)$, we conclude that \[\bigr(\Box^{(q)}_{b,k}\bigr)^j\Bigr(\Pi^{(q)}_{k,> k^{-N_0}}(s^ke^{k\phi}\mathcal{S}_ku)\bigr)\in{\rm Dom\,}\Box^{(q)}_{b,k},\ \ \forall j=1,2,\ldots.\]
From this and Proposition~\ref{p-gue13723}, we conclude that 
\begin{equation} \label{e-gue13723VI}
\begin{split}
&\Pi^{(q)}_{k,> k^{-N_0}}(s^ke^{k\phi}\mathcal{S}_ku)|_D\in\Omega^{0,q}(D,L^k),\\
&\hat\Pi^{(q)}_{k,> k^{-N_0},s}(\mathcal{S}_ku)|_D\in\Omega^{0,q}(D).
\end{split}
\end{equation}
Moreover, from \eqref{e-gue13723V}, for every $m\in\mathbb N_0$ and $D'\Subset D''\Subset D$, it is straightforward to see that 
\begin{equation} \label{e-gue13723VII}
\begin{split}
\norm{\hat\Pi^{(q)}_{k,>k^{-N_0},s}\mathcal{S}_{k}u}_{m,D'}&\leq C_mk^{n_m}\Bigr(\norm{\hat\Pi^{(q)}_{k,>k^{-N_0},s}(\mathcal{S}_{k}u))}_{D''}+\sum^m_{j=1}\norm{(\Box^{(q)}_{s,k})^j\bigr(\hat\Pi^{(q)}_{k,>k^{-N_0},s}(\mathcal{S}_{k}u)\bigr)}_{D''}\Bigr)\\
&\leq C_mk^{n_m}\Bigr(\norm{\hat\Pi^{(q)}_{k,>k^{-N_0},s}(\mathcal{S}_{k}u))}_{D''}+\sum^m_{j=1}\norm{\Pi^{(q)}_{k,>k^{-N_0}}(\Box^{(q)}_{b,k})^j(e^{k\phi}s^k\mathcal{S}_{k}u)}_{h^{L^k}}\Bigr)\\
&\leq C_mk^{n_m}\Bigr(\norm{\hat\Pi^{(q)}_{k,>k^{-N_0},s}(\mathcal{S}_{k}u))}_{D''}+\sum^m_{j=1}\norm{\Pi^{(q)}_{k,>k^{-N_0}}\bigr(e^{k\phi}s^k(\Box^{(q)}_{s,k})^j\mathcal{S}_{k}u\bigr)}_{h^{L^k}}\Bigr),
\end{split}
\end{equation}
where $C_m>0$ and $n_m\in\mathbb N$ are constants independent of $k$. Here we use the facts 
\[\begin{split}
(\Box^{(q)}_{s,k})^j\bigr(\hat\Pi^{(q)}_{k,>k^{-N_0},s}(\mathcal{S}_{k}u)\bigr)&=s^{-k}e^{-k\phi}(\Box^{(q)}_{b,k})^j\bigr(\Pi^{(q)}_{k,>k^{-N_0}}(e^{k\phi}s^k\mathcal{S}_ku)\bigr)\\
&=s^{-k}e^{-k\phi}\Pi^{(q)}_{k,>k^{-N_0}}\bigr((\Box^{(q)}_{b,k})^j(e^{k\phi}s^k\mathcal{S}_ku)\bigr)\end{split}\] 
and 
\[(\Box^{(q)}_{b,k})^j(e^{k\phi}s^k\mathcal{S}_ku)=e^{k\phi}s^k(\Box^{(q)}_{s,k})^j(\mathcal{S}_ku),\]
for all $j=1,2,\ldots$. 
From $\Box^{(q)}_{s,k}\mathcal{S}_{k}\equiv0\mod O(k^{-\infty})$, \eqref{e-gue13723IV} and \eqref{e-gue13723VII}, we conclude that 
\[\hat\Pi^{(q)}_{k,>k^{-N_0},s}\mathcal{S}_{k}=O(k^{-N}):H^{s_1}_{{\rm comp\,}}(D,T^{*0,q}X)\To H^m_{{\rm loc\,}}(D,T^{*0,q}X),\]
locally uniformly on $D$, for all $N\geq0$, $s_1\in\mathbb Z$, $s_1\leq0$, and $m\in\mathbb N_0$. Thus, 
\begin{equation}\label{e-gue13723VIII}
\hat\Pi^{(q)}_{k,>k^{-N_0},s}\mathcal{S}_{k}\equiv0\mod O(k^{-\infty}). 
\end{equation}
Note that $\mathcal{S}_k=\hat\Pi^{(q)}_{k,\leq k^{N_0},s}\mathcal{S}_k+\hat\Pi^{(q)}_{k,>k^{N_0},s}\mathcal{S}_k$. From this observation, \eqref{e-gue13721VIII} and \eqref{e-gue13723VIII}, we deduce that 
\begin{equation}\label{e-gue13724}
\hat\Pi^{(q)}_{k,\leq k^{-N_0},s}\hat{\mathcal{I}}_k-R^*\equiv\mathcal{S}_k\mod O(k^{-\infty})
\end{equation} 
on $D_0$, where $R^*(x,y)$ satisfies \eqref{e-gue13721aII}. Hence, 
\begin{equation}\label{e-gue13724ho}
\hat{\mathcal{I}}^*_k\hat\Pi^{(q)}_{k,\leq k^{-N_0},s}\hat{\mathcal{I}}_k-\hat{\mathcal{I}}^*_kR^*\equiv\mathcal{S}^*_k\mathcal{S}_k\mod O(k^{-\infty})
\end{equation} 
on $D_0$, where $(\hat{\mathcal{I}}^*_kR^*)(x,y)$ satisfies \eqref{e-gue13721aIII-I}. Here we used \eqref{e-gue1374}. 

Summing up, we obtain one of the main results of this work 

\begin{thm}\label{t-gue130816}
Let $s$ be a local trivializing section of $L$ on an open subset $D\subset X$ and $\abs{s}^2_{h^L}=e^{-2\phi}$. We assume that there exist a $\lambda_0\in\Real$ and $x_0\in D$ such that $M^\phi_{x_0}-2\lambda_0\mathcal{L}_{x_0}$ is non-degenerate of constant signature $(n_-,n_+)$. Let $q=n_-$ and assume that $Y(q)$ holds at each point of $D$. We fix $D_0\Subset D$, $D_0$ open. Let $V$ be as in \eqref{e-dhmpXII}. Let 
\[\mbox{$\hat{\mathcal{I}}_k\equiv\frac{k^{2n-1}}{(2\pi)^{2n-1}}\int e^{ik<x-y,\eta>}\alpha(x,\eta,k)d\eta\mod O(k^{-\infty})$ at $T^*D_0\bigcap\Sigma$}\]
be a properly supported classical semi-classical pseudodifferential operator on $D$ of order $0$ from sections of $T^{*0,q}X$ to sections of $T^{*0,q}X$, where 
\[\begin{split}&\mbox{$\alpha(x,\eta,k)\sim\sum_{j=0}\alpha_j(x,\eta)k^{-j}$ in $S^0_{{\rm loc\,}}(1;T^*D,T^{*0,q}X\boxtimes T^{*0,q}X)$},\\
&\alpha_j(x,\eta)\in C^\infty(T^*D,T^{*0,q}D\boxtimes T^{*0,q}D),\ \ j=0,1,\ldots,
\end{split}\]
with $\alpha(x,\eta,k)=0$ if $\abs{\eta}>M$, for some large $M>0$ and ${\rm Supp\,}\alpha(x,\eta,k)\bigcap T^*D_0\Subset V$. Then for every $N_0\geq1$ and every $D'\Subset D_0$, $\alpha, \beta\in\mathbb N^{2n-1}_0$, there is a constant $C_{D',\alpha,\beta,N_0}>0$ independent of $k$, such that 
\begin{equation} \label{e-gue130819II}
\begin{split}
&\abs{\pr^\alpha_x\pr^\beta_y\big((\hat\Pi^{(q)}_{k,\leq k^{-N_0},s}\hat{\mathcal{I}}_k)(x,y)-\int e^{ik\varphi(x,y,s)}a(x,y,s,k)ds\big)}\\
&\leq C_{D',\alpha,\beta,N_0}k^{3n+2\abs{\beta}+\abs{\alpha}-N_0-2}\ \ \mbox{on $D'\times D'$},\\
&\abs{\pr^\alpha_x\pr^\beta_y\big((\hat{\mathcal{I}}^*_k\hat\Pi^{(q)}_{k,\leq k^{-N_0},s}\hat{\mathcal{I}}_k)(x,y)-(\int e^{ik\varphi(x,y,s)}g(x,y,s,k)ds\big)}\\
&\leq C_{D',\alpha,\beta,N_0}k^{3n+2\abs{\beta}+\abs{\alpha}-N_0-2}\ \ \mbox{on $D'\times D'$},
\end{split}
\end{equation}
where $\hat\Pi^{(q)}_{k,\leq k^{-N_0},s}$ is the localized spectral projection \eqref{e-gue13719IV}, $\varphi(x,y,s)\in C^\infty(\Omega)$ is as in Theorem~\ref{t-dcgewI}, \eqref{e-guew13627} and
\begin{equation}\label{e-gue130819}
\begin{split}
&a(x,y,s,k),\ g(x,y,s,k)\in S^{n}_{{\rm loc\,}}\big(1;\Omega,T^{*0,q}X\boxtimes T^{*0,q}X\big)\bigcap C^\infty_0\big(\Omega,T^{*0,q}X\boxtimes T^{*0,q}X\big),\\
&a(x,y,s,k)\sim\sum^\infty_{j=0}a_j(x,y,s)k^{n-j}\text{ in }S^{n}_{{\rm loc\,}}
\big(1;\Omega,T^{*0,q}X\boxtimes T^{*0,q}X\big), \\
&g(x,y,s,k)\sim\sum^\infty_{j=0}g_j(x,y,s)k^{n-j}\text{ in }S^{n}_{{\rm loc\,}}
\big(1;\Omega,T^{*0,q}X\boxtimes T^{*0,q}X\big), \\
&a_j(x,y,s),\ g_j(x,y,s)\in C^\infty_0\big(\Omega,T^{*0,q}X\boxtimes T^{*0,q}X\big),\ \ j=0,1,2,\ldots,
\end{split}
\end{equation}
with 
\begin{equation}  \label{e-gue130819I}
\begin{split}
&a_0(x,x,s)=(2\pi)^{-n}\abs{\det\bigr(M^\phi_x-2s\mathcal{L}_x\bigr)}\mathcal{\pi}_{(x,s,n_-)}\alpha_0(x,s\omega_0(x)-2{\rm Im\,}\ddbar_b\phi(x)),\\
&g_0(x,x,s)\\
&=(2\pi)^{-n}\abs{\det\bigr(M^\phi_x-2s\mathcal{L}_x\bigr)}\alpha^*_0(x,s\omega_0(x)-2{\rm Im\,}\ddbar_b\phi(x))\mathcal{\pi}_{(x,s,n_-)}\alpha_0(x,s_0\omega_0(x)-2{\rm Im\,}\ddbar_b\phi(x)),
\end{split}
\end{equation}
for every $(x,x,s)\in\Omega$, $x\in D_0$, where 
\[
\begin{split}
\Omega:=&\{(x,y,s)\in D\times D\times\Real;\, (x,-2{\rm Im\,}\ddbar_b\phi(x)+s\omega_0(x))\in V\bigcap\Sigma,\\
&\quad\mbox{$(y,-2{\rm Im\,}\ddbar_b\phi(y)+s\omega_0(y))\in V\bigcap\Sigma$, $\abs{x-y}<\varepsilon$, for some $\varepsilon>0$}\},
\end{split}\]
$\alpha^*_0(x,\eta):T^{*0,q}_xX\To T^{*0,q}_xX$ is the adjoint of $\alpha_0(x,\eta)$ with respect to the Hermitian metric $\langle\,\cdot\,|\,\cdot\,\rangle$ on $T^{*0,q}_xX$, $\mathcal{\pi}_{(x,s,n_-)}:T^{*0,q}_xX\To\mathcal{N}(x,s,n_-)$ is the orthogonal projection with respect to $\langle\,\cdot\,|\,\cdot\,\rangle$,  $\mathcal{N}(x,s,n_-)$ is given by \eqref{e-gue1373III}, $\abs{\det\bigr(M^\phi_x-2s\mathcal{L}_x\bigr)}=\abs{\lambda_1(s)}\abs{\lambda_2(s)}\cdots\abs{\lambda_{n-1}(s)}$, where
$\lambda_1(s),\ldots,\lambda_{n-1}(s)$ are eigenvalues of the Hermitian quadratic form $M^\phi_x-2s\mathcal{L}_x$ with respect to $\langle\,\cdot\,|\,\cdot\,\rangle$.
\end{thm}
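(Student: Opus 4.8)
The plan is to assemble Theorem~\ref{t-gue130816} from the ingredients already built in sections~\ref{s-sch} and~\ref{s-kotsf}. The key identity is \eqref{e-gue13724}, which says that on $D_0$ we have $\hat\Pi^{(q)}_{k,\leq k^{-N_0},s}\hat{\mathcal{I}}_k \equiv R^* + \mathcal{S}_k \mod O(k^{-\infty})$, where $\mathcal{S}_k$ is the smoothing operator from Theorem~\ref{t-gue13630I} whose kernel is the Fourier integral $\int e^{ik\varphi(x,y,s)}a(x,y,s,k)ds$ with all the properties listed in \eqref{e-gue13630Va}, and $R^*$ is the error term whose derivatives satisfy the polynomial bound \eqref{e-gue13721aII}. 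So the first estimate in \eqref{e-gue130819II} is essentially immediate: the difference $(\hat\Pi^{(q)}_{k,\leq k^{-N_0},s}\hat{\mathcal{I}}_k)(x,y) - \mathcal{S}_k(x,y)$ equals $R^*(x,y)$ up to a $k$-negligible term, and $R^*_{I,J}$ obeys $\abs{\pr^\alpha_x\pr^\beta_y R^*_{I,J}(x,y)} \leq C_{\alpha,\beta}k^{3n+2\abs{\beta}+\abs{\alpha}-N_0-2}$ on $D'\times D'$; absorbing the $k$-negligible remainder into the constant gives the claim. Similarly the second estimate follows from \eqref{e-gue13724ho}, which gives $\hat{\mathcal{I}}^*_k\hat\Pi^{(q)}_{k,\leq k^{-N_0},s}\hat{\mathcal{I}}_k - \hat{\mathcal{I}}^*_k R^* \equiv \mathcal{S}^*_k\mathcal{S}_k \mod O(k^{-\infty})$ on $D_0$, together with Theorem~\ref{t-gue1374} identifying $(\mathcal{S}^*_k\mathcal{S}_k)(x,y)$ with $\int e^{ik\varphi(x,y,s)}g(x,y,s,k)ds$ and the bound \eqref{e-gue13721aIII-I} from Proposition~\ref{p-gue13721I} on the derivatives of $(\hat{\mathcal{I}}^*_kR^*)_{I,J}$.

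The second task is to verify that the phase $\varphi$ and the leading symbols $a_0,g_0$ stated in \eqref{e-gue130819}--\eqref{e-gue130819I} match what the construction produces. The phase properties are exactly those recorded in Theorem~\ref{t-dcgewI} and Theorem~\ref{t-gue140121II} (equation \eqref{e-guew13627}), so I would simply cite those. For the leading terms, Theorem~\ref{t-gue13716} already gives $a_0(p,p,s_0) = (2\pi)^{-n}\abs{\det(M^\phi_p - 2s_0\mathcal{L}_p)}\mathcal{\pi}_{(p,s_0,n_-)}\alpha_0(p, s_0\omega_0(p) - 2{\rm Im\,}\ddbar_b\phi(p))$ and the corresponding formula for $g_0$; these are precisely \eqref{e-gue130819I}, so this step is a matter of invoking Theorem~\ref{t-gue13716} and recalling that $d_x\varphi(x,x,s) = -2{\rm Im\,}\ddbar_b\phi(x) + s\omega_0(x)$ from \eqref{e-dugeX}, so the argument $s\omega_0 - 2{\rm Im\,}\ddbar_b\phi$ at which $\alpha_0$ is evaluated is exactly $d_x\varphi(x,x,s)$. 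One should also note that $\hat{\mathcal{I}}_k$ here is taken properly supported and that Proposition~\ref{p-gue13628} lets one replace it, up to $O(k^{-\infty})$, by the semi-classical operator $\Td{\mathcal{I}}_k$ built from $\alpha(x,\eta,k)$ alone (supported in $V$) plus a piece vanishing near $T^*D_0\cap\Sigma$, which is handled by $\mathcal{G}^1_k$ as in \eqref{e-gue130814}; this is the content that makes Theorem~\ref{t-gue13630} applicable with the general $\hat{\mathcal{I}}_k$ rather than just $\Td{\mathcal{I}}_k$.

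The only genuinely new bookkeeping is making sure the smoothing/regularity hypotheses needed to run Proposition~\ref{p-gue13723} (the iterated Kohn estimate) and to conclude \eqref{e-gue13723VIII} are in place — namely that $\hat\Pi^{(q)}_{k,>k^{-N_0},s}\mathcal{S}_k \equiv 0 \mod O(k^{-\infty})$, which uses $\Box^{(q)}_{s,k}\mathcal{S}_k \equiv 0 \mod O(k^{-\infty})$ (Theorem~\ref{t-gue13630}), the spectral gap estimate \eqref{e-suXIII}, and elliptic regularity in the $Y(q)$ direction. All of this is already carried out in section~\ref{s-safllI} leading to \eqref{e-gue13723VIII} and \eqref{e-gue13724}, so for the final theorem I would just cite those. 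I expect the main obstacle, such as it is, to be purely organizational: tracking the several error operators ($R$, $R^*$, $\hat{\mathcal{I}}^*_k R^*$, the various $O(k^{-\infty})$ remainders) and confirming that the power $k^{3n+2\abs{\beta}+\abs{\alpha}-N_0-2}$ in \eqref{e-gue130819II} is the same one coming out of Propositions~\ref{p-gue13721} and~\ref{p-gue13721I} after the roles of $x,y$ are interchanged by taking adjoints. Once those bounds are lined up, the theorem is a direct consequence. In particular, I would present the proof of Theorem~\ref{t-gue130816} as: (i) reduce $\hat{\mathcal{I}}_k$ to the standard form via Proposition~\ref{p-gue13628}; (ii) apply Theorem~\ref{t-gue13630} to get $\mathcal{S}_k,\mathcal{N}_k$; (iii) invoke \eqref{e-gue13724} and \eqref{e-gue13724ho}; (iv) read off the kernels of $\mathcal{S}_k$ and $\mathcal{S}^*_k\mathcal{S}_k$ from Theorems~\ref{t-gue13630I} and~\ref{t-gue1374}; (v) bound the errors by Propositions~\ref{p-gue13721} and~\ref{p-gue13721I}; (vi) identify the leading symbols via Theorem~\ref{t-gue13716}.
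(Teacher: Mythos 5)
Your proposal is correct and follows essentially the same route as the paper: the theorem is obtained there precisely by combining \eqref{e-gue13724} and \eqref{e-gue13724ho} with the error bounds of Propositions~\ref{p-gue13721} and~\ref{p-gue13721I}, the kernel descriptions in Theorems~\ref{t-gue13630I} and~\ref{t-gue1374}, and the leading-term computation of Theorem~\ref{t-gue13716}, after the reduction of $\hat{\mathcal{I}}_k$ via Proposition~\ref{p-gue13628} and the construction of $\mathcal{N}_k$ in \eqref{e-gue130814}. Your step (i)--(vi) outline matches the paper's assembly, including the bookkeeping of the exponent under the interchange of $x$ and $y$ when passing from $R$ to $R^*$.
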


%In the process above, we assume that $\alpha(x,\eta,k)$ has compact supported in $V$, %where $\alpha(x,\eta,k)$ is as in \eqref{e-gue13734bis}. Now, we don't assume that  $%alpha(x,\eta,k)$ has compact supported in $V$ but we assume that 
%[{\rm Supp\,}\alpha(x,\eta,k)\bigcap\Sigma\Subset\ol V\bigcap\Sigma\subset\Sigma',\] 
%where $\Sigma'$ is as in \eqref{e-dhmpXIa}. Let 
By using Theorem~\ref{t-gue13717} and repeat the proof of Theorem~\ref{t-gue130816}, we deduce

\begin{thm}\label{t-gue130819}
Let $s$ be a local trivializing section of $L$ on an open subset $D\subset X$ and $\abs{s}^2_{h^L}=e^{-2\phi}$. We assume that there exist a $\lambda_0\in\Real$ and $x_0\in D$ such that $M^\phi_{x_0}-2\lambda_0\mathcal{L}_{x_0}$ is non-degenerate of constant signature $(n_-,n_+)$. Let $q\neq n_-$ and assume that $Y(q)$ holds at each point of $D$. We fix $D_0\Subset D$, $D_0$ open. Let $V$ be as in \eqref{e-dhmpXII}. Let 
\[\mbox{$\hat{\mathcal{I}}_k\equiv\frac{k^{2n-1}}{(2\pi)^{2n-1}}\int e^{ik<x-y,\eta>}\alpha(x,\eta,k)d\eta\mod O(k^{-\infty})$ at $T^*D_0\bigcap\Sigma$}\]
be a properly supported classical semi-classical pseudodifferential operator on $D$ of order $0$ from sections of $T^{*0,q}X$ to sections of $T^{*0,q}X$, where 
$\alpha(x,\eta,k)\in S^0_{{\rm loc\,}}(1;T^*D,T^{*0,q}X\boxtimes T^{*0,q}X)$
with $\alpha(x,\eta,k)=0$ if $\abs{\eta}>M$, for some large $M>0$ and ${\rm Supp\,}\alpha(x,\eta,k)\bigcap T^*D_0\Subset V$. Then for every $N_0\geq1$ and every $D'\Subset D_0$, $\alpha, \beta\in\mathbb N^{2n-1}_0$, there is a constant $C_{D',\alpha,\beta,N_0}>0$ independent of $k$, such that
\begin{equation} \label{e-gue130819III}
\begin{split}
&\abs{\pr^\alpha_x\pr^\beta_y\bigr((\hat\Pi^{(q)}_{k,\leq k^{-N_0},s}\hat{\mathcal{I}}_k)(x,y)\bigr)}\leq C_{D',\alpha,\beta,N_0}k^{3n+2\abs{\beta}+\abs{\alpha}-N_0-2}\ \ \mbox{on $D'\times D'$},\\
&\abs{\pr^\alpha_x\pr^\beta_y\bigr((\hat{\mathcal{I}}^*_k\hat\Pi^{(q)}_{k,\leq k^{-N_0},s}\hat{\mathcal{I}}_k\bigr)(x,y)}\leq C_{D',\alpha,\beta,N_0}k^{3n+2\abs{\beta}+\abs{\alpha}-N_0-2}\ \ \mbox{on $D'\times D'$},
\end{split}
\end{equation}
where $\hat\Pi^{(q)}_{k,\leq k^{-N_0},s}$ is the localized spectral projection \eqref{e-gue13719IV}.
\end{thm}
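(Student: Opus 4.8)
The plan is to follow the proof of Theorem~\ref{t-gue130816} essentially line by line, the only structural change being that the approximate Szeg\"o kernel $\mathcal{S}_k$ produced by Theorem~\ref{t-gue13630} when $q=n_-$ is now \emph{absent}: for $q\neq n_-$ Theorem~\ref{t-gue13717} furnishes instead a purely right-inverse relation for $\Box^{(q)}_{s,k}$ with no $\mathcal{S}_k$-term, and this is exactly why the leading Fourier-integral contributions $\int e^{ik\varphi}a\,ds$ and $\int e^{ik\varphi}g\,ds$ do not appear in the statement.

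Concretely, first I would apply Theorem~\ref{t-gue13717} to obtain a properly supported operator
\[\mathcal{N}_k=O(k^s):H^s_{{\rm comp\,}}(D,T^{*0,q}X)\To H^{s+1}_{{\rm comp\,}}(D,T^{*0,q}X),\quad\forall s\in\mathbb N_0,\]
with $\Box^{(q)}_{s,k}\mathcal{N}_k\equiv\hat{\mathcal{I}}_k\mod O(k^{-\infty})$ on $D_0$; taking formal adjoints gives $\mathcal{N}^*_k\Box^{(q)}_{s,k}\equiv\hat{\mathcal{I}}^*_k\mod O(k^{-\infty})$ on $D_0$, with $\mathcal{N}^*_k$ again $O(k^s):H^s_{{\rm comp\,}}\to H^{s+1}_{{\rm comp\,}}$. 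Then, exactly as in \eqref{e-gue13719aV}, I would write on $\mathscr E'(D_0,T^{*0,q}X)$
\[\hat{\mathcal{I}}^*_k\hat\Pi^{(q)}_{k,\leq k^{-N_0},s}=(\mathcal{N}^*_k\Box^{(q)}_{s,k}-H^*_k)\hat\Pi^{(q)}_{k,\leq k^{-N_0},s}=:R,\]
where $H_k\equiv0\mod O(k^{-\infty})$ on $D_0$. Because Theorem~\ref{t-gue13717} carries no $\mathcal{S}_k$, there is \emph{no} residual term $\mathcal{S}^*_k\hat\Pi^{(q)}_{k,\leq k^{-N_0},s}$ here, in contrast to \eqref{e-gue13719aV}; consequently the whole of Section~\ref{s-safllI} (the argument that $\hat\Pi^{(q)}_{k,>k^{-N_0},s}\mathcal{S}_k\equiv0$, which converted that residual term into the Fourier-integral part) is simply not needed.

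Next I would re-run the estimates of Section~\ref{s-kotsf} on $R$. Their inputs---Theorem~\ref{t-gue13718} (pointwise bounds $\abs{\pr^\alpha_x(\Td u\,e^{-k\phi})}\le C_\alpha k^{\frac n2+\abs{\alpha}}\norm{u}_{h^{L^k}}$ for $u\in H^q_{b,\leq k^{-N_0}}(X,L^k)$), Proposition~\ref{p-gue13717I} (the summed bound $\sum_j\abs{\pr^\alpha_x(\Td f_j e^{-k\phi})}^2\le C_\alpha k^{n+2\abs{\alpha}}$), the mapping property $\mathcal{N}^*_k=O(k^s):H^s_{{\rm comp\,}}\to H^{s+1}_{{\rm comp\,}}$, Kohn's subelliptic estimates of Proposition~\ref{p-gue13723}, and the identity $\Box^{(q)}_{s,k}(\Td u\,e^{-k\phi})=e^{-k\phi}\Td f$ with $\Box^{(q)}_{b,k}u=f$, $\norm{f}_{h^{L^k}}\le k^{-N_0}\norm{u}_{h^{L^k}}$, $f\in H^q_{b,\leq k^{-N_0}}(X,L^k)$---make no use of $q=n_-$. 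Hence Lemma~\ref{l-gue13721}, Lemma~\ref{l-gue13721I} and Proposition~\ref{p-gue13721} carry over verbatim with $\mathcal{N}_k$ supplied by Theorem~\ref{t-gue13717}, giving, for $D'\Subset D_0$ and all strictly increasing $I,J\in\set{1,\ldots,n-1}^q$,
\[\abs{\pr^\alpha_x\pr^\beta_yR_{I,J}(x,y)}\le C_{\alpha,\beta}k^{3n+2\abs{\alpha}+\abs{\beta}-N_0-2}\quad\text{on }D'\times D'.\]
Passing to the adjoint, $\hat\Pi^{(q)}_{k,\leq k^{-N_0},s}\hat{\mathcal{I}}_k\equiv R^*\mod O(k^{-\infty})$ with $R^*_{I,J}(x,y)=\ol{R_{J,I}(y,x)}$, which yields the first estimate in \eqref{e-gue130819III}. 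For the second estimate I would use $\hat{\mathcal{I}}^*_k\hat\Pi^{(q)}_{k,\leq k^{-N_0},s}\hat{\mathcal{I}}_k\equiv\hat{\mathcal{I}}^*_kR^*\mod O(k^{-\infty})$, the fact that $\hat{\mathcal{I}}^*_k=O(k^0)$ on every $H^s_{{\rm comp\,}}$, the analogue of Proposition~\ref{p-gue13721I}, and the formal self-adjointness of $\hat{\mathcal{I}}^*_k\hat\Pi^{(q)}_{k,\leq k^{-N_0},s}\hat{\mathcal{I}}_k$ (to trade $2\abs{\alpha}$ for $2\abs{\beta}$ in the exponent).

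There is no genuinely new obstacle; the real work is bookkeeping. The one point I would verify with care is precisely the assertion that every operator estimate of Section~\ref{s-kotsf}, although written with the $q=n_-$ operator $\mathcal{N}_k$ from Theorem~\ref{t-gue13630}, uses only the mapping properties $\mathcal{N}_k,\mathcal{N}^*_k=O(k^s):H^s_{{\rm comp\,}}\to H^{s+1}_{{\rm comp\,}}$, Kohn's $Y(q)$ estimates, and the pointwise bounds of Section~\ref{s-sub}---all available verbatim for $q\neq n_-$. Once this is confirmed, the proof of Theorem~\ref{t-gue130816} transcribes directly, with the Fourier-integral terms dropping out, and Theorem~\ref{t-gue130819} follows.
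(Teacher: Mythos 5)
Your proposal is correct and is essentially the paper's own argument: the author proves Theorem~\ref{t-gue130819} precisely by invoking Theorem~\ref{t-gue13717} (the $q\neq n_-$ parametrix with no $\mathcal{S}_k$ term) and repeating the proof of Theorem~\ref{t-gue130816}, so that only the remainder $R=(\mathcal{N}^*_k\Box^{(q)}_{s,k}-H^*_k)\hat\Pi^{(q)}_{k,\leq k^{-N_0},s}$ and its adjoint survive, estimated exactly as in Lemmas~\ref{l-gue13721}--\ref{l-gue13721I} and Propositions~\ref{p-gue13721}--\ref{p-gue13721I}. Your observation that those estimates use only the mapping properties of $\mathcal{N}_k$, $Y(q)$, and the pointwise bounds of Section~\ref{s-sub} (none of which require $q=n_-$) is the correct justification for the transcription.
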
  

\section{Almost Kodaira embedding Theorems on CR manifolds} \label{s-aket}

In this section, we will use Theorem~\ref{t-gue130816} to establish "Almost Kodaira embedding Theorems on CR manifolds" (see Definition~\ref{d-gue131109}). First, we recall Definition~\ref{d-gue130918} for the definition of "positive CR line bundles".

In this section, we assume that $X$ is compact, $L$ is positive and condition $Y(0)$ holds at each point of $X$. 
Fix $N_0\gg 2n$, $N_0$ large. Let $s$ be a local trivializing section of $L$ on an open subset $D\subset X$, $\abs{s}^2_{h^L}=e^{-2\phi}$. As \eqref{e-dhmpXIa}, we set 
\begin{equation}\label{e-gue130917}
\begin{split}
\Sigma'=&\{(x,\lambda\omega_0(x)-2{\rm Im\,}\ddbar_b\phi(x))\in T^*D\bigcap\Sigma;\,\\ &\quad\mbox{$M^\phi_x-2\lambda\mathcal{L}_x$ is positive definite}\}.
\end{split}
\end{equation}
Fix $p\in D$. Let $x=(x_1,\ldots,x_{2n-1})$, $z_j=x_{2j-1}+ix_{2j}$, $j=1,\ldots,n-1$, be local coordinates of $X$ defined on $D$ such that \eqref{e-gue13716II} hold. On $D$, we write $\omega_0(x)=\sum^{2n-1}_{j=1}\beta_j(x)dx_j$. We take $D$ small enough so that $\beta_{2n-1}(x)\geq\frac{1}{2}$, for every $x\in D$. Let $\eta=(\eta_1,\ldots,\eta_{2n-1})$ denote the dual coordinates of $x$. Let $\Real_p$ be as in \eqref{e-gue131029V}.
%\begin{equation}\label{e-gue130918}
%\Real_{p}:=\set{\eta\in\Real;\, \mbox{$M^\phi_p-2\eta\mathcal{L}_p$ is positive definite}}. 
%\end{equation}
Take $\psi(\eta)\in C^\infty_0(\Real_{p},\ol\Real_+)$ with 
\begin{equation}\label{e-gue130918I}
\int\psi(\eta)\Bigr(M^\phi_p-2\eta\mathcal{L}_p\Bigr)d\eta\geq\frac{1}{2}\int_{\Real_p}\Bigr(M^\phi_p-2\eta\mathcal{L}_p\Bigr)d\eta.
\end{equation}
Let $M>0$ be a large constant so that for every $(x,\eta)\in T^*D$ if $\abs{\eta'}>\frac{M}{2}$ then $(x,\eta)\notin\Sigma$, where $\eta'=(\eta_1,\ldots,\eta_{2n-2})$, $\abs{\eta'}=\sqrt{\sum^{2n-2}_{j=1}\abs{\eta_j}^2}$.
Take $D_0\subset D$ be a small open neighbourhood of $p$ so that 
\begin{equation}\label{e-gue130918II}
\mbox{$M^\phi_x-2\langle\,\eta\,|\,\omega_0(x)\,\rangle\mathcal{L}_x$ is positive definite, for every $x\in W$, every $\eta_{2n-1}\in\Gamma$, $\abs{\eta'}<M$},
\end{equation}
where $W\subset D$ and $\Gamma\subset\Real_p$ are small open neighbourhoods of $D_0$ and ${\rm Supp\,}\psi$ respectively. Put 
\begin{equation}\label{e-gue130918III}
V=\set{(x,\eta)\in T^*D;\, x\in W, \eta_{2n-1}\in\Gamma, \abs{\eta'}<M}.
\end{equation}
From \eqref{e-gue130918II}, it is straightforward to see that 
\begin{equation}\label{e-gue130918IV}
V\bigcap\Sigma\subset\Sigma'.
\end{equation} 

Take $\tau$, $\tau_1\in C^\infty_0(D)$, $\tau=1$ on $D_0$ and $\tau_1=1$ on ${\rm Supp\,}\tau$. Let $\chi\in C^\infty_0(]-1,1[)$, $\chi=1$ on $[-\frac{1}{2},\frac{1}{2}]$. Let
%Fix $D_0\subset D$ open, $p\in D_0$. Take $\chi$, $\chi_1\in C^\infty_0(D)$, $\chi=1$ on $D_0$ and $%\chi_1=1$ on ${\rm Supp\,}\chi$. Let $W\subset D$ and $\Gamma\subset\Real_p$ be small open neighbourhoods %of ${\rm Supp\,}\chi_1$ and ${\rm Supp\,}\psi$ respectively. We take $\chi$, $\chi_1$ $W$ and $\Gamma$ so %that 
%\begin{equation}\label{e-gue130918II}
%\mbox{$M^\phi_x-2\langle\,\eta\,|\,\omega_0(x)\,\rangle\mathcal{L}_x$ is positive definite, for every $x%\in W$, every $\eta_{2n-1}\in\Gamma$, $\abs{\eta'}<M$}. 
%\end{equation}
\begin{equation}\label{e-gue130918V}
\hat{\mathcal{I}}_k=\frac{k^{2n-1}}{(2\pi)^{2n-1}}\int e^{ik<x-y,\eta>}\tau(x)\chi(\frac{\abs{\eta'}^2}{M^2})\psi(\eta_{2n-1})\tau_1(y)d\eta. 
\end{equation}
It is straightforward to see that $\hat{\mathcal{I}}_k$ satisfies the assumptions in Theorem~\ref{t-gue13630}. Let 
\[\mathcal{S}_k(x,y)=\int e^{ik\varphi(x,y,s)}a(x,y,s,k)ds\in C^\infty(D\times D)\] 
be as in Theorem~\ref{t-gue13630}, Theorem~\ref{t-gue13630I}. From \eqref{e-gue13716I}, it is not difficult to see that 
\begin{equation}\label{e-gue130919}
a_0(p,p,s)=(2\pi)^{-n}\abs{\det\bigr(M^\phi_p-2s\mathcal{L}_p\bigr)}\psi(s),
\end{equation}
for all $(p,p,s)\in\Omega$. We recall that $\Omega$ is as in Theorem~\ref{t-gue13630I}. 
%Put
%\begin{equation}\label{e-gue131007}
%\Omega_0:=\set{(x,s)\in D_0\times\Real;\, (x,p,s)\in\Omega}.
%\end{equation}
Put $x'=(x_1,\ldots,x_{2n-2})$ and set $\abs{x'}^2=\sum^{2n-2}_{j=1}\abs{x_{j}}^2$. 
Let 
\begin{equation}\label{e-gue131001}
\begin{split}
&v_k=\int e^{ik\varphi(x,p,s)}k^{-\frac{n}{2}}a(x,p,s,k)ds\chi(\frac{k}{(\log k)^2}\abs{x'}^2)\chi(k^{\frac{5}{6}}x_{2n-1})\in C^\infty_0(D_0),\\
&u_k=s^ke^{k\phi}v_k\in C^\infty_0(D_0,L^k).
\end{split}
\end{equation}
Here we assume that $k$ is large enough so that ${\rm Supp\,}v_k\Subset D_0$. First, we need 

\begin{lem}\label{l-gue131001}
With the assumptions and notations above, for every $N>0$ and $m>0$, there is a constant $C_{N,m}>0$ independent of $k$ and the point $p$ such that 
\begin{equation}\label{e-gue131001I}
\sum_{\abs{\alpha}\leq m,\alpha\in\mathbb N_0^{2n-1}}{\rm Sup\,}\set{\abs{\pr^\alpha_x(\Box^{(0)}_{s,k}v_k(x))};\, x\in D_0}\leq C_{N,m}k^{-N},
\end{equation}
where $\Box^{(0)}_{s,k}$ is given by \eqref{s2-emsmilkI} and \eqref{e-msmilkVI}. 

Moreover, there exist $C>1$ and $k_0>0$ independent of $k$ and the point $p$ such that for every $k\geq k_0$, we have 
\begin{equation}\label{e-gue131001II}
\frac{1}{C}\leq\norm{u_k}_{h^{L^k}}\leq C.
\end{equation}
\end{lem}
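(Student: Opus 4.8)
The plan is to recognize that, up to the two cut-offs, $v_k$ is $k^{-n/2}$ times the kernel $\mathcal S_k(\cdot,p)$ of the operator $\mathcal S_k$ from Theorem~\ref{t-gue13630} and Theorem~\ref{t-gue13630I} with second argument frozen at $p$ (here $q=n_-=0$ and $p$ is the origin in the coordinates fixed above, with $\mathcal S_k(x,p)=\int e^{ik\varphi(x,p,s)}a(x,p,s,k)ds+O(k^{-\infty})$, $a\in S^n_{{\rm loc}}$ with compact $s$-support). Since $u_k=s^ke^{k\phi}v_k$ and $\abs s^2_{h^L}=e^{-2\phi}$, one has $\norm{u_k}^2_{h^{L^k}}=\int_{D_0}\abs{v_k(x)}^2dv_X(x)$, so both assertions reduce to statements about the scalar function $v_k$. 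The key input I would prove first is the following localization \emph{Claim}: there is $c>0$ such that for all $m,N\in\mathbb N$ there is $C_{m,N}>0$, independent of $k$ and of $p$, with $\abs{\pr^\alpha_x\mathcal S_k(x,p)}\leq C_{m,N}k^{-N}$ for $\abs\alpha\leq m$ whenever $\abs{x'}^2\geq\tfrac{(\log k)^2}{2k}$, and also whenever $\abs{x_{2n-1}}\geq\tfrac12k^{-5/6}$ and $\abs{x'}^2<\tfrac{(\log k)^2}{2k}$.

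Granting the Claim, \eqref{e-gue131001I} follows by Leibniz. Write $v_k=k^{-n/2}\mathcal S_k(\cdot,p)\,\chi_1\chi_2$ (modulo $O(k^{-\infty})$ in every $C^m$-norm on $D_0$), where $\chi_1=\chi(\tfrac{k}{(\log k)^2}\abs{x'}^2)$, $\chi_2=\chi(k^{5/6}x_{2n-1})$, and split $\Box^{(0)}_{s,k}$ into its second-, first- and zeroth-order parts, of sizes $O(1)$, $O(k)$ and $O(k^2)$. Then $\Box^{(0)}_{s,k}v_k=k^{-n/2}(\Box^{(0)}_{s,k}\mathcal S_k)(\cdot,p)\,\chi_1\chi_2+E_k$, where $E_k$ collects all terms in which at least one derivative falls on $\chi_1\chi_2$. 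The first summand is $O(k^{-\infty})$ in every $C^m$-norm because $\Box^{(0)}_{s,k}\mathcal S_k\equiv0\mod O(k^{-\infty})$ on $D$ by Theorem~\ref{t-gue13630}. The term $E_k$ is supported where $\nabla(\chi_1\chi_2)\neq0$, which is contained in the region covered by the Claim; there $\mathcal S_k(\cdot,p)$ and all its derivatives are $O(k^{-\infty})$, while the derivatives of $\chi_1,\chi_2$ and the coefficients of $\Box^{(0)}_{s,k}$ grow only polynomially in $k$, so $E_k=O(k^{-\infty})$ in every $C^m$-norm as well.

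For \eqref{e-gue131001II}, the Claim lets us drop the cut-offs modulo $O(k^{-\infty})$, so $\int_{D_0}\abs{v_k}^2dv_X=k^{-n}\int_{D_0}\abs{\mathcal S_k(x,p)}^2dv_X(x)+O(k^{-\infty})$, and since $\mathcal S_k(\cdot,p)$ is $k$-negligible away from $p$ this equals $k^{-n}(\mathcal S^*_k\mathcal S_k)(p,p)+O(k^{-\infty})$, using $(\mathcal S^*_k\mathcal S_k)(p,p)=\int\abs{\mathcal S_k(u,p)}^2m(u)du$. By Theorem~\ref{t-gue1374} and Theorem~\ref{t-gue13716}, $(\mathcal S^*_k\mathcal S_k)(x,y)\equiv\int e^{ik\varphi(x,y,s)}g(x,y,s,k)ds\mod O(k^{-\infty})$ with $g\sim\sum_jg_j(x,y,s)k^{n-j}$; since $q=n_-=0$ the fibre $\mathcal N(x,s,0)$ is all of $T^{*0,0}_xX$, the symbol $\alpha_0$ of $\hat{\mathcal I}_k$ equals $\psi(s)$ at $p$ by the choice \eqref{e-gue130918V}, and $\psi$ is real, so by \eqref{e-gue13716I} (cf.\ \eqref{e-gue130919}) $g_0(p,p,s)=(2\pi)^{-n}\abs{\det(M^\phi_p-2s\mathcal L_p)}\psi(s)^2$. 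As $\varphi(p,p,s)=0$, evaluation at $x=y=p$ gives
\[\norm{u_k}^2_{h^{L^k}}=(2\pi)^{-n}\int\abs{\det(M^\phi_p-2s\mathcal L_p)}\,\psi(s)^2\,ds+O(k^{-1}).\]
The leading coefficient is strictly positive: on ${\rm Supp\,}\psi\subset\Real_p$ the form $M^\phi_p-2s\mathcal L_p$ is positive definite, so its determinant is positive, and $\psi\geq0$ is not identically zero by \eqref{e-gue130918I}. By compactness of $X$, boundedness of $V$ (hence of the $s$-range), continuity of $M^\phi$ and $\mathcal L$, and the uniform lower bound built into \eqref{e-gue130918I}, this coefficient is bounded above and below by positive constants independent of $p$; choosing $k_0$ large yields \eqref{e-gue131001II}.

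The main obstacle is the Claim, which is a non-stationary phase estimate for the complex phase $\varphi$ over a $k$-dependent, shrinking region. Here I would use the estimates of Theorem~\ref{t-gue140121I} (equivalently Theorem~\ref{t-dgugeI}): ${\rm Im\,}\varphi(x,p,s)\geq c\abs{x'-p'}^2$ and ${\rm Im\,}\varphi(x,p,s)+\abs{\pr_s\varphi(x,p,s)}\geq c(\abs{x_{2n-1}-p_{2n-1}}+\abs{x'-p'}^2)$. In the first region ${\rm Im\,}\varphi\gtrsim\tfrac{(\log k)^2}{k}$, so $\abs{e^{ik\varphi}}\leq e^{-c(\log k)^2/2}$ decays faster than any power of $k$, while differentiating $e^{ik\varphi}a$ in $x$ costs only powers of $k$. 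In the second region the exponent $5/6$ is chosen so that $k^{-5/6}$ dominates $k^{-1}(\log k)^2$; hence $\abs{x'-p'}^2$ is negligible and ${\rm Im\,}\varphi+\abs{\pr_s\varphi}\gtrsim k^{-5/6}$, and repeated integration by parts in $s$ (splitting according to whether ${\rm Im\,}\varphi$ or $\abs{\pr_s\varphi}$ carries the lower bound, as in the stationary phase calculus of Melin--Sj\"ostrand), with the effective large parameter $k\cdot k^{-5/6}=k^{1/6}\to\infty$, produces the required $O(k^{-\infty})$ bounds. The delicate point is keeping all the constants uniform in $p$, which is possible because $\varphi$, $a$ and all their derivatives are controlled uniformly on $\Omega$.
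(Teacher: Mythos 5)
Your argument for \eqref{e-gue131001I} is essentially the paper's: the paper also writes $v_k=\Td v_k\cdot\chi(\tfrac{k}{(\log k)^2}\abs{x'}^2)\chi(k^{5/6}x_{2n-1})$ with $\Td v_k=\int e^{ik\varphi(x,p,s)}k^{-n/2}a(x,p,s,k)ds$, uses $\Box^{(0)}_{s,k}\mathcal S_k\equiv0\mod O(k^{-\infty})$ for the uncut piece, and disposes of the cut-off region by exactly your two estimates (exponential decay from ${\rm Im\,}\varphi\geq c\abs{x'}^2\geq c'(\log k)^2/k$, and integration by parts in $s$ using $k\abs{\pr_s\varphi}\gtrsim k\abs{x_{2n-1}}\gtrsim k^{1/6}$); your explicit restriction of the second region to $\abs{x'}^2<(\log k)^2/(2k)$ is the right way to make $\abs{\pr_s\varphi}\gtrsim\abs{x_{2n-1}}$ legitimate there.

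For \eqref{e-gue131001II} you take a genuinely different route. The paper substitutes the scaling $x\mapsto(x'/\sqrt k,\,x_{2n-1}/k)$ into $\int\abs{\Td v_k}^2\Upsilon^2\,dv_X$, Taylor-expands $k\varphi(F_k^*x,p,s)$ and $k^{-n}a(F_k^*x,p,s)$ on the support of the rescaled cut-off with an $O(k^{-1/3}\log k)$ error, and sandwiches the norm between two explicit, manifestly finite and positive Gaussian-type integrals $A$ and $B$. You instead identify $\norm{u_k}^2_{h^{L^k}}$ with $k^{-n}(\mathcal S_k^*\mathcal S_k)(p,p)+O(k^{-\infty})$ and read off the leading coefficient $(2\pi)^{-n}\int\abs{\det(M^\phi_p-2s\mathcal L_p)}\psi(s)^2ds$ from Theorems~\ref{t-gue1374} and~\ref{t-gue13716} (with $q=n_-=0$, so $\pi_{(p,s,0)}={\rm id}$ and $\alpha_0=\psi(s)$ at the relevant points of $\Sigma$). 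This is legitimate — those theorems are established in section~\ref{s-sch}, before this lemma — and it actually yields more than the paper's proof, namely the exact limit of $\norm{u_k}^2_{h^{L^k}}$ (one can check by Plancherel that this limit equals the paper's constant $B$). What the paper's computation buys in exchange is transparency of the dependence on $p$: all constants come from $\abs{\det(M^\phi_p-2s\mathcal L_p)}$, $\psi$, and the quadratic lower bound on ${\rm Im\,}\varphi_0$, so uniformity over the compact $X$ is visible. In your route the uniformity in $p$ of the $O(k^{-1})$ remainder in the expansion of $(\mathcal S_k^*\mathcal S_k)(p,p)$, and of the positive lower bound for $\int\abs{\det(M^\phi_p-2s\mathcal L_p)}\psi(s)^2ds$ extracted from \eqref{e-gue130918I}, has to be asserted separately; this is at the same level of rigor as the paper's own "independent of the point $p$" claims elsewhere, but you should state it as an explicit step rather than a parenthetical remark.
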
 

\begin{proof}
Put $\Td v_k=\int e^{ik\varphi(x,p,s)}k^{-\frac{n}{2}}a(x,p,s,k)ds\in C^\infty(D)$. In view of \eqref{e-gue13630IIa}, we see that 
\begin{equation}\label{e-gue131001III}
\sum_{\abs{\alpha}\leq m,\alpha\in\mathbb N_0^{2n-1}}{\rm Sup\,}\set{\abs{\pr^\alpha_x(\Box^{(0)}_{s,k}\Td v_k(x))};\, x\in D_0}\leq\Td C_{N,m}k^{-N}
\end{equation}
for every $N>0$, where $\Td C_{N,m}>0$ is a constant independent of $k$. Since $X$ is assumed to be compact, it is straightforward to see that $\Td C_{N,m}>0$ can be taken to be independent of $p$. Now, we claim that 
for every $N>0$ and $m>0$, there is a constant $\hat C_{N,m}>0$ independent of $k$ and the point $p$ such that 
\begin{equation}\label{e-gue131001IV}
\sum_{\abs{\alpha}\leq m,\alpha\in\mathbb N_0^{2n-1}}{\rm Sup\,}\set{\abs{\pr^\alpha_x(\Td v_k(x)-v_k(x))};\, x\in D_0}\leq\hat C_{N,m}k^{-N}.
\end{equation}
Note that 
\[{\rm Supp\,}(\Td v_k(x)-v_k(x))\bigcap D_0\subset\set{x\in D_0;\, \mbox{$\abs{x'}\geq\frac{1}{2}\frac{\log k}{\sqrt{k}}$ or $\abs{x_{2n-1}}\geq\frac{1}{2}k^{-\frac{5}{6}}$}}.\]
When $\abs{x'}\geq\frac{1}{2}\frac{\log k}{\sqrt{k}}$, we use the fact 
$k{\rm Im\,}\varphi(x,p,s)\geq ck\abs{x'}^2\geq c'(\log k)^2$, where $c>0$, $c'>0$ are constants independent of $x$ and $s$, and conclude that $\Td v_k(x)-v_k(x)\equiv0\mod O(k^{-\infty})$. When $\abs{x_{2n-1}}\geq\frac{1}{2}k^{-\frac{5}{6}}$, we have 
\[k\frac{\pr\varphi(x,p,s)}{\pr s}\geq kc_1\abs{x_{2n-1}}\geq c'_1k^{\frac{1}{6}},\]
where $c_1>0$, $c'_1>0$ are constants independent of $x$ and $s$. From this we can integrate by parts in $s$ several times and conclude that $\Td v_k(x)-v_k(x)\equiv0\mod O(k^{-\infty})$. The claim \eqref{e-gue131001IV} follows. 

From \eqref{e-gue131001III} and \eqref{e-gue131001IV}, the claim \eqref{e-gue131001I} follows. 

On $D$, we write $dv_X(x)=m(x)dx$, $m(x)\in C^\infty(D)$. Then, $m(0)=2^{n-1}$. Put 
\[\Upsilon(x)=\chi(\frac{k}{(\log k)^2}\abs{x'}^2)\chi(k^{\frac{5}{6}}x_{2n-1}).\]
We have 
\begin{equation}\label{e-gue131006}
\begin{split}
\norm{u_k}^2_{h^{L^k}}&=\int\abs{\int e^{ik\varphi(x,p,s)}k^{-\frac{n}{2}}a(x,p,s)ds}^2\Upsilon^2(x)m(x)dx\\
&=\int\abs{\int e^{ik\varphi(F^*_kx,p,s)}k^{-n}a(F^*_kx,p,s)ds}^2\Upsilon^2(F^*_kx)m(F^*_kx)dx,
\end{split}
\end{equation}
where $F^*_kx=(\frac{x'}{\sqrt{k}},\frac{x_{2n-1}}{k})$. Put 
\begin{equation}\label{e-gue131006I}
\begin{split}
&\varphi(x,p,s)=-i\sum^{n-1}_{j=1}\alpha_jz_j+i\sum^{n-1}_{j=1}\ol\alpha_j\ol z_j+s(x_{2n-1}-y_{2n-1})\\
&\quad+\sum^{2n-2}_{j,t=1}\beta_{j,t}(s)x_jx_t+O(\abs{x_{2n-1}}\abs{x'})+O(\abs{x_{2n-1}}^2)+O(\abs{x}^3)
\end{split}
\end{equation} 
and set
\begin{equation}\label{e-gue131006II}
\varphi_0(x,s)=\varphi_0(x',s)=\sum^{2n-2}_{j,t=1}\beta_{j,t}(s)x_jx_t.
\end{equation} 
From \eqref{e-dgugeXI}, it is easy to see that 
\begin{equation}\label{e-gue131006III}
{\rm Im\,}\varphi_0(x,s)\geq c\abs{x'}^2,\ \ \forall (x,p,s)\in\Omega,
\end{equation}
where $c>0$ is a constant independent of $(x,p,s)\in\Omega$ and $c$ can be take to be independent of the point $p$. It is easy to see that 
\begin{equation}\label{e-gue131006IV}
{\rm Supp\,}\Upsilon(F^*_kx)\subset\set{x\in\Real^{2n-1};\, \abs{x'}\leq\log k, \abs{x_{2n-1}}\leq k^{\frac{1}{6}}}.
\end{equation}
From \eqref{e-gue131006IV}, it is straightforward to check that on ${\rm Supp\,}\Upsilon(F^*_kx)$, 
\begin{equation}\label{e-gue131006V}
\begin{split}
&k\varphi(F^*_kx,p,s)=\sqrt{k}(-i\sum^{n-1}_{j=1}\alpha_jz_j+i\sum^{n-1}_{j=1}\ol\alpha_j\ol z_j)+sx_{2n-1}+\varphi_0(x',s)+\delta^0_k(x,s),\\
&k^{-n}a(F^*_kx,p,s)=(2\pi)^{-n}\abs{\det\bigr(M^\phi_p-2s\mathcal{L}_p\bigr)}\psi(s)+\delta^1_k(x,s),\\
&m(F^*_kx)=2^{n-1}+\delta^2_k(x,s),
\end{split}
\end{equation}
where $\delta^0_k(x,s), \delta^1_k(x,s), \delta^2_k(x,s)\in C^\infty$ and 
\begin{equation}\label{e-gue131006VI}
{\rm Sup\,}\set{\abs{\delta^0_k(x,s)}+\abs{\delta^1_k(x,s)}+\abs{\delta^2_k(x,s)};\, (x,p,s)\in\Omega}\leq C_0k^{-\frac{1}{3}}\log k,
\end{equation}
where $C_0>0$ is a constant independent of $k$ and the point $p$. From \eqref{e-gue131006VI} and \eqref{e-gue131006}, we have 
\begin{equation}\label{e-gue131007I}
\norm{u_k}^2_{h^{L^k}}=2^{n-1}\int\abs{\int e^{isx_{2n-1}+i\varphi_0(x',s)}(2\pi)^{-n}\psi(s)\abs{\det\bigr(M^\phi_p-2s\mathcal{L}_p\bigr)}\bigr(1+\gamma_k(x,s)\bigr)ds}^2\Upsilon^2(F^*_kx)dx,
\end{equation}
where $\gamma_k(x,s)\in C^\infty$ and 
\begin{equation}\label{e-gue131007II}
{\rm Sup\,}\set{\abs{\gamma_k(x,s)};\, (x,p,s)\in\Omega}\leq C_1k^{-\frac{1}{3}}\log k.
\end{equation}
Here $C_1>0$ is a constant independent of $k$ and the point $p$. From \eqref{e-gue131007II}, we can check that 
\begin{equation}\label{e-gue131007III}
\begin{split}
&2^{n-1}\int\abs{\int e^{isx_{2n-1}+i\varphi_0(x',s)}(2\pi)^{-n}\psi(s)\abs{\det\bigr(M^\phi_p-2s\mathcal{L}_p\bigr)}\abs{\gamma_k(x,s)}\bigr)ds}^2\Upsilon^2(F^*_kx)dx\\
&\leq C_2k^{-\frac{2}{3}}(\log k)^2(\log k)^{2n-2}k^{\frac{1}{6}}=C_2 k^{-\frac{1}{2}}(\log k)^{2n}\To0\ \ \mbox{as $k\To\infty$},
\end{split}
\end{equation}
where $C_2>0$ is a constant independent of $k$ and the point $p$. Put 
\[\begin{split}
&A=2^{n-1}\int_{\abs{x}\leq1}\abs{\int e^{isx_{2n-1}+i\varphi_0(x',s)}(2\pi)^{-n}\psi(s)\abs{\det\bigr(M^\phi_p-2s\mathcal{L}_p\bigr)}\bigr)ds}^2dx,\\
&B=2^{n-1}\int_{\Real^{2n-1}}\abs{\int e^{isx_{2n-1}+i\varphi_0(x',s)}(2\pi)^{-n}\psi(s)\abs{\det\bigr(M^\phi_p-2s\mathcal{L}_p\bigr)}\bigr)ds}^2dx.
\end{split}\]
From \eqref{e-gue131006III} and by using integration by parts with respect to $s$, it is easy to see that $B<\infty$. From this observation, \eqref{e-gue131007III} and \eqref{e-gue131007I}, we conclude that if $k$ large then 
\[\frac{A}{2}\leq\norm{u_k}^2_{h^{L^k}}\leq 2B.\] 
\eqref{e-gue131001II} follows. 
\end{proof} 

Now, we can prove 

\begin{thm}\label{t-gue131007}
With the assumptions and notations above, fix $N_0>>1$, $N_0$ large. There exists $k_0>0$ and $C_0>0$ independent of $k$ and the point $p$ such that for every $k\geq k_0$, there is a $\mu_k\in H^0_{b,\leq k^{-N_0}}(X,L^k)$ with $\norm{\mu_k}_{h^{L^k}}=1$ and 
\begin{equation}\label{e-gue131007IV}
\abs{\mu_k(p)}^2_{h^{L^k}}\geq C_0k^n.
\end{equation}
\end{thm}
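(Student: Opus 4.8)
The plan is to take $\mu_k$ to be the normalized orthogonal projection of $u_k$ onto the low-energy space $H^0_{b,\leq k^{-N_0}}(X,L^k)$, and to show that this projection alters $u_k$ only by a term that is $O(k^{-\infty})$ both in $L^2$ and pointwise near $p$, whereas $\abs{u_k(p)}_{h^{L^k}}$ is comparable to $k^{n/2}$.

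First I would record, using \eqref{s2-emsmilkI}, that $\Box^{(0)}_{b,k}u_k=s^ke^{k\phi}\Box^{(0)}_{s,k}v_k$, so that $\abs{\Box^{(0)}_{b,k}u_k}_{h^{L^k}}=\abs{\Box^{(0)}_{s,k}v_k}$ pointwise; since $v_k$ is supported in $D_0$ and, by \eqref{e-gue131001I} of Lemma~\ref{l-gue131001}, $\Box^{(0)}_{s,k}v_k=O(k^{-\infty})$ uniformly with all its derivatives, we get $\norm{\Box^{(0)}_{b,k}u_k}_{h^{L^k}}=O(k^{-\infty})$, and more generally $\norm{(\Box^{(0)}_{b,k})^ju_k}_{h^{L^k}}=O(k^{-\infty})$ for each fixed $j$ (recall from \eqref{e-msmilkIX} that the coefficients of $\Box^{(0)}_{s,k}$ are polynomials in $k$ of degree at most two, so each application of $\Box^{(0)}_{s,k}$ costs at most a factor $k^2$). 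Put $w_k:=\Pi^{(0)}_{k,>k^{-N_0}}u_k=u_k-\Pi^{(0)}_{k,\leq k^{-N_0}}u_k$. Applying the spectral gap estimate \eqref{e-suXIII} with $\lambda=k^{-N_0}$, and using that $\Box^{(0)}_{b,k}$ commutes with the spectral projection together with $u_k\in{\rm Dom\,}\Box^{(0)}_{b,k}$,
\[\norm{w_k}^2_{h^{L^k}}\leq k^{N_0}(\,\Box^{(0)}_{b,k}w_k\,|\,w_k\,)_{h^{L^k}}=k^{N_0}(\,\Box^{(0)}_{b,k}u_k\,|\,w_k\,)_{h^{L^k}}\leq k^{N_0}\norm{\Box^{(0)}_{b,k}u_k}_{h^{L^k}}\norm{w_k}_{h^{L^k}},\]
so $\norm{w_k}_{h^{L^k}}=O(k^{-\infty})$; the same argument with $(\Box^{(0)}_{b,k})^ju_k$ in place of $\Box^{(0)}_{b,k}u_k$ gives $\norm{(\Box^{(0)}_{b,k})^jw_k}_{h^{L^k}}=O(k^{-\infty})$ for every $j$.

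Next I would upgrade the $L^2$-smallness of $w_k$ to a pointwise bound near $p$. Since $(\Box^{(0)}_{b,k})^jw_k\in{\rm Dom\,}\Box^{(0)}_{b,k}$ for all $j$, Proposition~\ref{p-gue13723} shows $w_k$ is smooth on $D$ and that its localization $\hat w_{k,s}:=e^{-k\phi}s^{-k}w_k$ satisfies, for $D'\Subset D''\Subset D_0$ and every $m$,
\[\norm{\hat w_{k,s}}_{m,D'}\leq C_mk^{n_m}\Bigr(\norm{\hat w_{k,s}}_{D''}+\sum^m_{j=1}\norm{(\Box^{(0)}_{s,k})^j\hat w_{k,s}}_{D''}\Bigr);\]
bounding the right-hand side by $C\norm{w_k}_{h^{L^k}}$ and $C\norm{(\Box^{(0)}_{b,k})^jw_k}_{h^{L^k}}$ respectively and invoking Sobolev embedding yields $\abs{w_k(p)}_{h^{L^k}}=\abs{\hat w_{k,s}(p)}=O(k^{-\infty})$. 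On the other hand $\abs{u_k(p)}_{h^{L^k}}=\abs{v_k(p)}$, and since $\varphi(p,p,s)=0$ and the cutoffs in \eqref{e-gue131001} equal $1$ at $x=p$, one has $v_k(p)=k^{-n/2}\int a(p,p,s,k)\,ds=k^{n/2}\bigr(\int a_0(p,p,s)\,ds+O(k^{-1})\bigr)$, the $s$-integral being over a bounded set. By \eqref{e-gue130919}, $\int a_0(p,p,s)\,ds=(2\pi)^{-n}\int\abs{\det(M^\phi_p-2s\mathcal{L}_p)}\psi(s)\,ds$, which is strictly positive because $M^\phi_p-2s\mathcal{L}_p$ is positive definite on ${\rm Supp\,}\psi$ and $\psi\geq0$ is not identically zero by \eqref{e-gue130918I}; by compactness of $X$ (covering $X$ by finitely many such patches and using continuity in $p$) it is bounded below uniformly in $p$ by some $2c_0>0$, so $\abs{u_k(p)}_{h^{L^k}}\geq c_0k^{n/2}$ for $k$ large, uniformly in $p$.

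Finally, combining these, $\abs{\Pi^{(0)}_{k,\leq k^{-N_0}}u_k(p)}_{h^{L^k}}\geq\abs{u_k(p)}_{h^{L^k}}-\abs{w_k(p)}_{h^{L^k}}\geq\tfrac12 c_0k^{n/2}$ for $k$ large, while by \eqref{e-gue131001II} $\norm{\Pi^{(0)}_{k,\leq k^{-N_0}}u_k}_{h^{L^k}}\leq\norm{u_k}_{h^{L^k}}\leq C$ and $\geq\tfrac1C-O(k^{-\infty})>0$; hence $\mu_k:=\Pi^{(0)}_{k,\leq k^{-N_0}}u_k/\norm{\Pi^{(0)}_{k,\leq k^{-N_0}}u_k}_{h^{L^k}}\in H^0_{b,\leq k^{-N_0}}(X,L^k)$ is well defined with $\norm{\mu_k}_{h^{L^k}}=1$ and $\abs{\mu_k(p)}^2_{h^{L^k}}\geq c_0^2k^n/(4C^2)=:C_0k^n$. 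The main obstacle I expect is the passage from the $L^2$-bound $\norm{w_k}_{h^{L^k}}=O(k^{-\infty})$ to the pointwise bound $\abs{w_k(p)}_{h^{L^k}}=O(k^{-\infty})$ with constants independent of $p$: this requires the local subelliptic estimates of Proposition~\ref{p-gue13723}, the fact that all iterates $(\Box^{(0)}_{b,k})^jw_k$ (equivalently $(\Box^{(0)}_{s,k})^jv_k$) stay $O(k^{-\infty})$, and a compactness argument that also makes the lower bound for $\int a_0(p,p,s)\,ds$ independent of $p$.
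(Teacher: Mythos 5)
Your proposal is correct and follows essentially the same route as the paper: decompose $u_k$ into its low- and high-energy spectral parts, use the spectral gap estimate together with $\Box^{(0)}_{s,k}v_k=O(k^{-\infty})$ to make all iterates $(\Box^{(0)}_{b,k})^j$ of the high-energy part $O(k^{-\infty})$ in $L^2$, upgrade this to a pointwise bound at $p$, and normalize. The only cosmetic difference is that the paper performs the pointwise upgrade by rerunning the scaled estimate of Lemma~\ref{l-gue13717} (whose constants are explicitly point-independent) rather than via Proposition~\ref{p-gue13723} plus compactness, but both yield the same conclusion.
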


\begin{proof}
Let $u_k\in C^\infty(X,L^k)$ be as in Lemma~\ref{l-gue131001}. Put $u^0_k=\Pi^{(0)}_{k,\leq k^{-N_0}}u_k$, $u^1_k=(I-\Pi^{(0)}_{k,\leq k^{-N_0}})u_k$. We have the orthogonal decomposition
\[u_k=u^0_k+u^1_k.\] 
For every $m\in\mathbb N_0$, we have 
\begin{equation}\label{e-gue131007V}
\begin{split}
\norm{(\Box^{(0)}_{b,k})^mu^1_k}^2_{h^{L^k}}&\leq k^{N_0}(\,(\Box^{(0)}_{b,k})^{m+1}u^1_k\,|\,u^1_k)_{h^{L^k}}\\
&\leq k^{N_0}(\,(\Box^{(0)}_{b,k})^{m+1}u_k\,|\,u_k)_{h^{L^k}}.
\end{split}
\end{equation} 
From \eqref{e-gue131001I} and \eqref{e-gue131007V}, we conclude that for every $N>0$ and every $m\in\mathbb N_0$, there is a constant $C_{N,m}>0$ independent of $k$ and the point $p$ such that 
\begin{equation}\label{e-gue131007VI}
\norm{(\Box^{(0)}_{b,k})^mu^1_k}^2_{h^{L^k}}\leq C_{N,m}k^{-N}.
\end{equation} 
From \eqref{e-gue131007VI}, we can repeat the proof of Lemma~\ref{l-gue13717} with minor changes and obtain that for every $N>0$ there is a constant $C_{N}>0$ independent of $k$ and the point $p$ such that 
\begin{equation}\label{e-gue131007VII}
\abs{u^1_k(p)}^2_{h^{L^k}}\leq k^{-N}.
\end{equation} 
Furthermore, from \eqref{e-gue131007VI} and \eqref{e-gue131001II}, we see that there exist $C>1$ and $k_0>0$ independent of $k$ and the point $p$ such that for every $k\geq k_0$, we have 
\begin{equation}\label{e-gue131007VIII}
\frac{1}{C}\leq\norm{u^0_k}_{h^{L^k}}\leq C.
\end{equation} 
Put $\mu_k=\frac{u^0_k}{\norm{u^0_k}_{h^{L^k}}}$. Then, $\mu_k\in H^0_{b,\leq k^{-N_0}}(X,L^k)$ and $\norm{\mu_k}_{h^{L^k}}=1$. Moreover, from \eqref{e-gue131007VI}, \eqref{e-gue131007VII} and \eqref{e-gue131007VIII}, we deduce that 
for every $N>0$ there is a constant $C_{N}>0$ independent of $k$ and the point $p$ such that 
\begin{equation}\label{e-gue131007a}
\abs{\abs{\mu_k(p)}^2_{h^{L^k}}-\frac{1}{\norm{u^0_k}_{h^{L^k}}^2}\abs{v_k(p)}^2}\leq C_Nk^{-N}.
\end{equation} 
From \eqref{e-gue131007a} and notice that $\abs{v_k(p)}^2\geq C_1k^n$, where $C_1>0$ is a constant independent of $k$ and the point $p$, the theorem follows. 
\end{proof} 

From now on,  we fix $N_0>2n+1$. We assume that $k$ is large enough so that the properties in Theorem~\ref{t-gue131007} hold.
Let $\set{f_1\in H^0_{b,\leq k^{-N_0}}(X,L^k) ,\ldots,f_{d_k}\in H^0_{b,\leq k^{-N_0}}(X,L^k)}$ be an orthonormal frame of the space $H^0_{b,\leq k^{-N_0}}(X,L^k)$. From Theorem~\ref{t-gue131007}, we deduce 

\begin{thm}\label{t-gue131007I}
We have 
\begin{equation}\label{e-gue131007aI}
\sum^{d_k}_{j=1}\abs{f_j(x)}^2_{h^{L^k}}\geq C_0k^n,\ \ \forall x\in X,
\end{equation}
where $C_0>0$ is the constant as in \eqref{e-gue131007IV}. In particular, there is a constant $c_0>0$ such that for every $x\in X$, there exists a $j_0\in\set{1,2,\ldots,d_k}$ such that 
\begin{equation}\label{e-gue131019}
\abs{f_{j_0}(x)}^2_{h^{L^k}}\geq c_0.
\end{equation}
\end{thm}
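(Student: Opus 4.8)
The plan is to deduce Theorem~\ref{t-gue131007I} from the peak-section estimate of Theorem~\ref{t-gue131007}, via the standard extremal (reproducing-kernel) description of the diagonal of the kernel of $\Pi^{(0)}_{k,\leq k^{-N_0}}$. Since $X$ is compact and $Y(0)$ holds at each point, $\Box^{(0)}_{b,k}$ is hypoelliptic with subelliptic estimates, so it has discrete spectrum; in particular $d_k=\dim H^0_{b,\leq k^{-N_0}}(X,L^k)<\infty$. For $x\in X$ the quantity $\sum^{d_k}_{j=1}\abs{f_j(x)}^2_{h^{L^k}}$ is then independent of the chosen orthonormal frame (it is the $h^{L^k}$-trace of $\Pi^{(0)}_{k,\leq k^{-N_0}}(x,x)$), and it admits the extremal characterization
\begin{equation*}
\sum^{d_k}_{j=1}\abs{f_j(x)}^2_{h^{L^k}}=\sup\set{\abs{u(x)}^2_{h^{L^k}};\,u\in H^0_{b,\leq k^{-N_0}}(X,L^k),\ \norm{u}_{h^{L^k}}\leq1}.
\end{equation*}
The inequality $\geq$ follows by testing against the extremal section $u=\bigl(\textstyle\sum_j\overline{c_j}f_j\bigr)/\bigl(\sum_j\abs{c_j}^2\bigr)^{1/2}$, where $f_j(x)=c_je$ for a unit vector $e\in L^k_x$; the inequality $\leq$ is Cauchy--Schwarz in the index $j$, since $\abs{u(x)}^2_{h^{L^k}}=\bigl|\sum_ja_jf_j(x)\bigr|^2_{h^{L^k}}\leq(\sum_j\abs{a_j}^2)(\sum_j\abs{f_j(x)}^2_{h^{L^k}})$ whenever $u=\sum_ja_jf_j$ and $\norm{u}^2_{h^{L^k}}=\sum_j\abs{a_j}^2\leq1$.

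Now fix $x\in X$ and apply Theorem~\ref{t-gue131007} with $p=x$: for $k\geq k_0$ there is $\mu_k\in H^0_{b,\leq k^{-N_0}}(X,L^k)$ with $\norm{\mu_k}_{h^{L^k}}=1$ and $\abs{\mu_k(x)}^2_{h^{L^k}}\geq C_0k^n$, where, crucially, $C_0>0$ and $k_0$ do not depend on $k$ nor on the point $x$. By the characterization above, $\sum^{d_k}_{j=1}\abs{f_j(x)}^2_{h^{L^k}}\geq\abs{\mu_k(x)}^2_{h^{L^k}}\geq C_0k^n$ for all $x\in X$, which is \eqref{e-gue131007aI}. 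For the pointwise statement \eqref{e-gue131019} I would supplement this with a matching upper bound and a bound on $d_k$: repeating the argument of Proposition~\ref{p-gue13717I} in the case $q=0$, $\alpha=0$ (equivalently, combining Theorem~\ref{t-gue13718} with the extremal characterization), and covering $X$ by finitely many trivializing charts, gives a constant $C_1>0$ independent of $k$ with $\sum^{d_k}_{j=1}\abs{f_j(x)}^2_{h^{L^k}}\leq C_1k^n$ for all $x\in X$. Integrating over $X$ against $dv_X$ and using $\norm{f_j}_{h^{L^k}}=1$ yields $d_k=\sum^{d_k}_{j=1}\norm{f_j}^2_{h^{L^k}}\leq C_1k^n\int_Xdv_X$, so for every $x\in X$
\begin{equation*}
\max_{1\leq j\leq d_k}\abs{f_j(x)}^2_{h^{L^k}}\geq\frac{1}{d_k}\sum^{d_k}_{j=1}\abs{f_j(x)}^2_{h^{L^k}}\geq\frac{C_0k^n}{C_1k^n\int_Xdv_X}=:c_0>0,
\end{equation*}
and choosing $j_0=j_0(x)$ to realize the maximum proves \eqref{e-gue131019}.

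The real work has already been carried out in Theorem~\ref{t-gue131007}, whose proof rests on the full asymptotic expansion of $\hat\Pi^{(0)}_{k,\leq k^{-N_0},s}$ (Theorem~\ref{t-gue130816}) and the scaled peak-section construction; once that is in hand, the present deduction is elementary linear algebra plus integration. The only subtlety, and hence the one point I would be careful about, is the uniformity of the constants $C_0$ (from Theorem~\ref{t-gue131007}) and $C_1$ (from the upper bound) over the base point $x\in X$ --- this is precisely why those estimates were proved with constants independent of $p$, and it is what makes the conclusion global on the compact manifold $X$.
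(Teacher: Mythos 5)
Your proof is correct and follows essentially the same route as the paper: the lower bound \eqref{e-gue131007aI} is obtained from the point-uniform peak section of Theorem~\ref{t-gue131007} via the extremal characterization of $\sum_j\abs{f_j(x)}^2_{h^{L^k}}$, and \eqref{e-gue131019} then follows from the dimension bound $d_k\leq C_1k^n$ combined with $d_k\max_j\abs{f_j(x)}^2_{h^{L^k}}\geq\sum_j\abs{f_j(x)}^2_{h^{L^k}}$. The only cosmetic difference is that the paper cites $d_k\leq C_1k^n$ from the Morse-inequality literature (\cite{HM09}), whereas you rederive it from the pointwise upper bound of Proposition~\ref{p-gue13717I} by integrating over the compact manifold; both are legitimate.
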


\begin{proof}
We only need to prove \eqref{e-gue131019}. 
It is well-known (see~\cite{HM09}) that there is a constant $C_1>0$ such that 
\begin{equation}\label{e-gue131019I}
{\rm dim\,}H^0_{b,\leq k^{-N}}(X,L^k)=d_k\leq C_1k^n,
\end{equation}
where $C_1>0$ is a constant independent of $k$. From \eqref{e-gue131019I} and \eqref{e-gue131007aI}, we have for every $x\in X$, 
\[\begin{split}
&C_1k^n{\rm Sup\,}\set{\abs{f_j(x)}^2_{h^{L^k}};\, j=1,2,\ldots,d_k}\\
&\geq d_k{\rm Sup\,}\set{\abs{f_j(x)}^2_{h^{L^k}};\, j=1,2,\ldots,d_k}\\
&\geq\sum^{d_k}_{j=1}\abs{f_j(x)}^2_{h^{L^k}}\geq C_0k^n.\end{split}\]
From this, \eqref{e-gue131019} follows. 
\end{proof}

Assume that $X=D_1\bigcup D_2\bigcup\cdots D_N$, where $D_j$ is a small open set of $X$ with the properties as in the beginning of section~\ref{s-aket}, for each $j$. On $D_j$, let $\hat{\mathcal{I}}_{k,j}$ be the operator as in \eqref{e-gue130918V}. Fix $N'\gg1$ be a large constant. 
The (asymptotic) Kodaira map $\Phi_{N_0,k}:X\To\Complex\mathbb P^{d_k-1}$ is given by 
\begin{equation}\label{e-gue131019II}
\begin{split}
\Phi_{N_0,k}:x\in X&\To\mathbb C\mathbb P^{N_k-1},\\
x\in X&\To[f_1(x),\ldots,f_{d_k}(x),k^{-{N'}}\hat{\mathcal{I}}_{k,1}f_1,\ldots,k^{-{N'}}\hat{\mathcal{I}}_{k,N}f_{d_k}]\in\Complex\mathbb P^{N_k-1},
\end{split}
\end{equation}
where $N_k=d_k+Nd_k$. 
In view of \eqref{e-gue131019}, we see that $\Phi_{N_0,k}$ is well-defined as a smooth map from $X$ to $\Complex\mathbb P^{d_k-1}$. Our next goal is to prove 

\begin{thm}\label{t-gue131019}
For $k$ large, the differential map
\[d\Phi_{N_0,k}(x):T_xX\To T_{\Phi_{N_0,k}(x)}\Complex\mathbb P^{d_k-1}\]
is injective, for every $x\in X$. 
\end{thm}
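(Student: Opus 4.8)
The plan is to work in a local trivialization near a fixed point $p\in X$ and reduce injectivity of $d\Phi_{N_0,k}(p)$ to a quantitative nondegeneracy statement about the map $x\mapsto(\widetilde f_1(x)e^{-k\phi(x)},\ldots,\widetilde f_{d_k}(x)e^{-k\phi(x)})$ and its first derivatives at $p$. First I would recall the standard fact that, writing $F_k(x)=(\widetilde f_1(x),\ldots,\widetilde f_{d_k}(x))\in\Complex^{d_k}$, the differential $d\Phi_{N_0,k}(p)$ fails to be injective exactly when there is a nonzero real tangent vector $v\in T_pX$ and a constant $c\in\Complex$ with $v(F_k)(p)=cF_k(p)$, i.e. the derivative in direction $v$ is proportional to the value. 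So it suffices to show that for $k$ large, no such $(v,c)$ exists at any point $p$. To do this I would pick, for each $p$, local coordinates $x=(x_1,\ldots,x_{2n-1})$ as in \eqref{e-gue13716II} centered at $p$, choose the microlocal cut-off $\hat{\mathcal{I}}_k$ as in \eqref{e-gue130918V} (with $\psi$ supported in $\Real_p$ and satisfying \eqref{e-gue130918I}), and apply Theorem~\ref{t-gue130816}: since $L$ is positive and $Y(0)$ holds, $n_-=0$, $q=0=n_-$, and we get
\[
\hat\Pi^{(0)}_{k,\le k^{-N_0},s}\hat{\mathcal{I}}_k(x,y)=\int e^{ik\varphi(x,y,s)}a(x,y,s,k)\,ds+O(k^{3n+2|\beta|+|\alpha|-N_0-2}),
\]
with leading symbol $a_0(p,p,s)=(2\pi)^{-n}|\det(M^\phi_p-2s\mathcal{L}_p)|\psi(s)$ by \eqref{e-gue130919}.

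Next I would construct the explicit peak sections $u_k$ and their projections $\mu_k$ exactly as in Lemma~\ref{l-gue131001} and Theorem~\ref{t-gue131007}, but now I need one peak section and $2n-1$ "derivative-peak" sections at each point $p$, namely sections whose $x$-derivatives at $p$ are large and controlled. Concretely, I would take $v_k$ as in \eqref{e-gue131001} and also sections of the form $v_k^{(\ell)}=\int e^{ik\varphi(x,p,s)}k^{-n/2}(s\beta_\ell(p)-\text{correction})a(x,p,s,k)\,ds\cdot\Upsilon(x)$ designed so that, after scaling by $F_k$, the rescaled functions converge to linearly independent model functions on the Heisenberg group $H_n$ (the model being Gaussians times polynomials, governed by the quadratic phase $\varphi_0(x',s)$ of \eqref{e-gue131006II} and the weight $\psi(s)|\det(M^\phi_p-2s\mathcal L_p)|$). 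By the asymptotic upper bounds of Theorem~\ref{t-gue13718}/Proposition~\ref{p-gue13717I}, every element of $H^0_{b,\le k^{-N_0}}(X,L^k)$ satisfies $|\partial^\alpha_x(\widetilde u e^{-k\phi})(p)|\le C_\alpha k^{n/2+|\alpha|}\|u\|$, so the Kodaira map's first jet at $p$ is encoded, up to a factor $k^{n/2+1}$, by the finite-dimensional vector $(\widetilde f_j e^{-k\phi}(p),\ \partial_{x_\ell}(\widetilde f_j e^{-k\phi})(p))_j$. The positivity $N_0>2n+1$ guarantees the error terms $O(k^{3n+\cdots-N_0-2})$ are negligible against the main terms of size $k^{n}$ (for values) and $k^{n+1},\ldots,k^{n+?}$ (for derivatives) after the appropriate normalization; here one has to be a little careful about the powers of $k$, which is why the scaling / rescaled-bundle machinery of \eqref{e-gue13717V} and Theorem~\ref{t-gue131029} is the clean way to organize the bookkeeping.

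Then I would argue by contradiction and compactness: if injectivity fails for a sequence $k_j\to\infty$, there are points $p_j\in X$, unit vectors $v_j\in T_{p_j}X$ and constants $c_j\in\Complex$ with $v_j(F_{k_j})(p_j)=c_jF_{k_j}(p_j)$. Passing to a subsequence, $p_j\to p_\infty$, $v_j\to v_\infty\neq0$; rescaling by $F_{k_j}$ around $p_j$ and using Theorem~\ref{t-gue131029} together with the Szeg\H o asymptotics above, the normalized reproducing kernel $\hat\Pi^{(0)}_{k,\le k^{-N_0},s}\hat{\mathcal I}_k$ converges (after scaling) to the model projection on $L^2(H_n,\Phi_0)$ onto the appropriate subspace, and the model Kodaira map associated to that projection is an embedding with injective differential — this is where one exploits that $\psi>0$ on an open subset of $\Real_{p_\infty}$ and that the model space contains all the separating and first-order-separating functions. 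The contradiction comes from the fact that in the model no nonzero tangent vector can have its derivative proportional to the value, because the model sections include, for each $\ell$, a function whose $\partial_{x_\ell}$-derivative at $0$ is nonzero while its value at $0$ vanishes. The main obstacle, and the step requiring real care, is precisely the passage to the Heisenberg-group model: one must (i) verify that the rescaled $v_k^{(\ell)}$ lie (modulo $O(k^{-\infty})$) in $H^0_{b,\le k^{-N_0}}(X,L^{k})$ and have the claimed first jets — this uses Lemma~\ref{l-gue131001}-type estimates applied to derivatives of $v_k$, i.e.\ $\Box^{(0)}_{s,k}\partial^\alpha v_k\equiv0\bmod O(k^{-\infty})$ — and (ii) control the projection error $u_k^{(\ell)}-\Pi^{(0)}_{k,\le k^{-N_0}}u_k^{(\ell)}$ at $p$ and its derivatives, which again follows from \eqref{e-gue131007VI}-\eqref{e-gue131007VII} type arguments once one has the right power counting. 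Everything else (the reduction of injectivity to the value/derivative proportionality, compactness, and the elementary model computation) is routine; the analytic heart is the uniform-in-$p$ scaling estimate, which is the same technology already developed in Section~\ref{s-safle} and in \cite{HM09}.
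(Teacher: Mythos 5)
Your overall skeleton (peak section plus ``derivative-peak'' sections with controlled first jets at each point, uniformly in the point) is the right one, and your reduction of non-injectivity of $d\Phi_{N_0,k}(p)$ to the existence of $v\neq0$, $c\in\Complex$ with $v(F_k)(p)=cF_k(p)$ is fine. But there is a genuine gap in the construction of the derivative-peak sections. Inserting an $s$-dependent factor such as $s\beta_\ell(p)$ into the amplitude of $\int e^{ik\varphi(x,p,s)}k^{-n/2}a(x,p,s,k)\,ds$ only modulates the phase in the $\omega_0$ (i.e.\ $x_{2n-1}$) direction, because $d_x\varphi(x,p,s)|_{x=p}=-2{\rm Im\,}\ddbar_b\phi(p)+s\omega_0(p)$ and only the $\omega_0$-component depends on $s$; it cannot produce, for each of the $2n-2$ remaining directions, a section that vanishes at $p$ and has a large derivative in that direction. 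The device that works (and is what the paper uses in Lemma~\ref{l-gue131019}) is to multiply $v_k$ by almost-CR coordinate functions $\sqrt{k}(z_j+O(|x|^2))$ and $k(x_{2n-1}+O(|x|^2))$ whose $\ddbar_b$ vanishes to infinite order at $p$: the Leibniz rule then keeps the product in the low-energy space modulo $O(k^{-\infty})$, whereas your suggested justification via ``$\Box^{(0)}_{s,k}\pr^\alpha v_k\equiv0$'' does not work, since $\Box^{(0)}_{s,k}$ does not commute with $\pr^\alpha_x$. Relatedly, you never arrange that the \emph{antiholomorphic} derivatives $\pr_{\ol z_t}$ of your sections are negligible at $p$; since $v$ is a real tangent vector, injectivity of the real $2n\times(2n-1)$ Jacobian requires both that the $\pr_{z_j}$ and $\pr_{x_{2n-1}}$ derivatives be large and essentially diagonal and that the $\pr_{\ol z_t}$ derivatives be $O(k^{-N})$; this is exactly \eqref{e-gue131019VI}--\eqref{e-gue131019VIII} and the linear-algebra step of Proposition~\ref{p-gue131022I}.

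Your second half also diverges from what is actually available. The paper does not need a compactness/contradiction argument nor convergence of the rescaled spectral projector to a model projector on $H_n$ for this theorem: once the sections $g^j_k$ with the prescribed jets are produced with constants uniform in $p$ (every estimate in Lemmas~\ref{l-gue131019}--\ref{l-gue131022} is ``independent of $k$ and the point $p$''), injectivity of $d\Phi_{N_0,k}(p)$ for all $p$ simultaneously follows by elementary linear algebra applied to the quotients $\widetilde g^j_k/\widetilde f_1$. Your route instead requires the statement that the normalized kernel $\hat\Pi^{(0)}_{k,\le k^{-N_0},s}\hat{\mathcal I}_k$ converges after scaling to the model Bergman-type projection on $L^2(H_n,\Phi_0)$; Theorem~\ref{t-gue131029} only gives subsequential convergence of individual normalized sections, not of the projector, so this step would need a separate proof, and the claimed injectivity of the model differential is precisely the content you would still have to establish by exhibiting model sections with the jets discussed above. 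In short: fix the construction of the derivative-peak sections (multiply by almost-CR coordinates, and control the $\ol\pr$-derivatives), and the detour through the model becomes unnecessary.
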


To prove Theorem~\ref{t-gue131019}, we need some preparations. Fix $p\in X$ and let $s$ be a local trivializing section of $L$ on an open neighbourhood $D\subset X$ of $p$. We take local coordinate $x=(x_1,\ldots,x_{2n-1})$, $z_j=x_{2j-1}+ix_{2j}$, $j=1,\ldots,n-1$, and $s$ so that \eqref{e-gue13716II} hold. Let $R(x)=R(z)$ be as in \eqref{e-gue13716III}.
%\begin{equation}\label{e-gue131019III}
%R(z)=\sum^{n-1}_{j=1}\alpha_jz_j+\sum^{n-1}_{l,t=1}a_{l,t}z_lz_t,
%\end{equation}
%where $\alpha_j$, $j=1,\ldots,n-1$, $a_{l,t}$, $l,t=1,\ldots,n-1$, are as in \eqref{e-geusw13623}. 
We first need 

\begin{lem}\label{l-gue131019}
With the assumptions and notations above, there exist $v^j_k\in C^\infty_0(D)$, $j=1,\ldots,n$, with 
\begin{equation}\label{e-gue131019III-I}
\frac{1}{C}\leq\norm{s^ke^{k\phi}v^j_k}_{h^{L^k}}\leq C,\ \ j=1,\ldots,n,
\end{equation}
for every $k$, where $C>0$ is a constant independent of $k$ and the point $p$, and 
\begin{equation}\label{e-gue131019IV}
\sum_{\abs{\alpha}\leq m,\alpha\in\mathbb N_0^{2n-1}}{\rm Sup\,}\set{\abs{\pr^\alpha_x(\Box^{(0)}_{s,k}v^j_k(x))};\, x\in D, j=1,\ldots,n}\leq C_{N,m}k^{-N},
\end{equation}
for every $N>0$ and $m\in\mathbb N_0$, where $C_{N,m}$ is a constant independent of $k$ and the point $p$
and $\Box^{(0)}_{s,k}$ is given by \eqref{s2-emsmilkI} and \eqref{e-msmilkVI}, such that
\begin{align}
&v^j_k(0)=0,\ \  j=1,\ldots,n,\label{e-gue131019V}\\
&\pr_{\ol z_t}\bigr(e^{-2kR}v^j_k\bigr)(0)=0,\ \ j=1,\ldots,n,\ \ t=1,\ldots,n-1,\label{e-gue131019VI}\\
&\pr_{z_t}\bigr(e^{-2kR}v^j_k\bigr)(0)=0\ \ \mbox{if $j\neq t$, $j=1,\ldots,n$, $t=1,\ldots,n-1$},\label{e-gue131019VI-I}\\
&\pr_{x_{2n-1}}\bigr(e^{-2kR}v^j_k\bigr)(0)=0,\ \ j=1,\ldots,n-1,\label{e-gue131019VII}\\
&\abs{\pr_{x_{2n-1}}\bigr(e^{-2kR}v^n_k\bigr)(0)}^2\geq c_1k^{n+2},\ \ \abs{\pr_{z_j}\bigr(e^{-2kR}v^j_k\bigr)(0)}^2\geq c_1k^{n+1},\ \ j=1,\ldots,n-1,\label{e-gue131019VIII}
\end{align}  
where $c_1>0$ is a constant independent of $k$ and the point $p$. 
\end{lem}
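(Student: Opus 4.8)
The construction mimics the function $v_k$ of \eqref{e-gue131001} and the analysis of Lemma~\ref{l-gue131001}, but with the approximate Szeg\"o kernel $\mathcal{S}_k(x,y)=\int e^{ik\varphi(x,y,s)}a(x,y,s,k)ds$ of Theorem~\ref{t-gue13630I} first acted on in the $y$-variable so as to prescribe the $1$-jet of $e^{-2kR}v^j_k$ at the origin. First I would introduce, for suitable first order differential operators $P_j$ in the $y$-variable, the functions
\[\widetilde v^j_k(x)=\int e^{ik\varphi(x,y,s)}k^{-n/2-\varepsilon_j}\big(P_{j,y}a(x,y,s,k)\big)\big|_{y=p}\,ds,\qquad j=1,\ldots,n,\]
with $\varepsilon_j=\tfrac12$ for $j\leq n-1$ and $\varepsilon_n=1$ (the extra powers of $k^{-1/2}$, $k^{-1}$ compensate the power of $k$ produced by differentiating the oscillatory factor once more), and then cut off: $v^j_k(x)=\widetilde v^j_k(x)\chi\big(\tfrac{k}{(\log k)^2}|x'|^2\big)\chi(k^{5/6}x_{2n-1})$, with $k$ large so that ${\rm Supp\,}v^j_k\Subset D$. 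Concretely $P_j$ is taken so that $P_{j,y}e^{ik\varphi(x,y,s)}$ brings down, at $x=y=p$, the factor $-ik\,\partial_{\bar w_j}\varphi$ for $j\leq n-1$ (giving a section whose model is $\partial_{z_j}\partial_{\bar w_j}$ of the Bergman-type kernel) and $-ik\,\partial_{y_{2n-1}}\varphi$ for $j=n$; since the resulting $\widetilde v^j_k(0)$, $\partial_{z_t}(e^{-2kR}\widetilde v^j_k)(0)$ need not vanish on the nose, I would finally replace $v^1_k,\ldots,v^n_k$ by explicit linear combinations of these functions and of $v_k$, with $p$-dependent but uniformly bounded coefficients, diagonalizing the $1$-jet of $e^{-2kR}v^j_k$ into the form \eqref{e-gue131019V}--\eqref{e-gue131019VIII}.

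The near-kernel estimate \eqref{e-gue131019IV} then follows verbatim from the proof of \eqref{e-gue131001I}: since $\Box^{(0)}_{s,k}\mathcal{S}_k\equiv0\bmod O(k^{-\infty})$ by \eqref{e-gue13630IIa} and $P_j$ acts only in $y$, the uncut $\widetilde v^j_k$ satisfy $\Box^{(0)}_{s,k}\widetilde v^j_k\equiv0\bmod O(k^{-\infty})$, with the $O(k^{-\infty})$ bounds uniform in $p$ because $X$ is compact and all phases, amplitudes and the operators $P_j$ depend smoothly on $p$; the difference $\widetilde v^j_k-v^j_k$ is supported where $|x'|\geq\tfrac12\tfrac{\log k}{\sqrt k}$ (on which $k\,{\rm Im\,}\varphi\gtrsim k|x'|^2\gtrsim(\log k)^2$ by \eqref{e-dgugeXI}) or where $|x_{2n-1}|\geq\tfrac12 k^{-5/6}$ (on which $k|\partial_s\varphi|\gtrsim k|x_{2n-1}|\gtrsim k^{1/6}$ by \eqref{e-gue1373VIIa}, so one integrates by parts in $s$), hence is $\equiv0\bmod O(k^{-\infty})$.

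For the $L^2$ bounds \eqref{e-gue131019III-I} I would repeat the computation of $\|u_k\|_{h^{L^k}}$ in Lemma~\ref{l-gue131001}: rescale $x=(x'/\sqrt k,\,x_{2n-1}/k)$, Taylor expand $k\varphi(F_k^*x,p,s)$ at $p$ into its $\sqrt k$-linear part in $z$, the term $sx_{2n-1}$, and the tangential quadratic part $\varphi_0(x',s)$ of \eqref{e-gue131006II} (whose imaginary part is $\gtrsim|x'|^2$ by \eqref{e-dgugeXI}), and recognize the limit of the rescaled square-norm as an explicit finite, strictly positive model integral on the Heisenberg group; the powers $k^{-\varepsilon_j}$ and the operators $P_j$ are chosen precisely so that this limit remains finite and nonzero, and the choice of $\psi$ supported in $\Real_p$ with \eqref{e-gue130918I}, positivity of $L$, and compactness of $X$ make the two-sided bound uniform in $p$. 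The jet conditions \eqref{e-gue131019V}--\eqref{e-gue131019VIII} are then obtained by passing to $e^{-2kR}v^j_k$ (note $R$ is holomorphic with $R(0)=0$, so $e^{-2kR}$ has trivial effect on $\partial_{x_{2n-1}}$ at $0$ and controlled holomorphic derivatives), rescaling, and reading off the $1$-jet from the universal model: after scaling, $e^{-2kR}\widetilde v^j_k$ converges, with all its first derivatives, to $k^{n/2}$ times a fixed profile — a Gaussian in $x'$ times a Fourier integral in $x_{2n-1}$ — whose first derivatives at the origin all vanish except the single entry selected by $P_j$, which equals a nonzero universal constant; since $\partial_{z_j}=\sqrt k\,\partial_{\text{scaled}}$ and $\partial_{x_{2n-1}}=k\,\partial_{\text{scaled}}$, this gives the lower bounds $c_1k^{n+1}$ and $c_1k^{n+2}$, uniform in $p$ by continuity and compactness, while the diagonalizing linear combination made in the first step kills the remaining first derivatives in \eqref{e-gue131019V}--\eqref{e-gue131019VII}.

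The main obstacle is the last paragraph: choosing the operators $P_j$ (equivalently, the monomial test data) and the diagonalizing coefficients so that, once the $e^{-2kR}$ twist and the scaling are both accounted for, exactly the prescribed entry of the $1$-jet survives with a lower bound uniform in $p\in X$ while every other first derivative vanishes identically. This forces one to track carefully the tangential Hessian \eqref{e-guew13627}, the normalizations \eqref{e-gue13716II}, and the role of the holomorphic polynomial $R$ of \eqref{e-gue13716III}, which is exactly what makes the scaled model agree to leading order with the flat Bergman model $e^{z\bar w}$. Everything else is a repetition, with bookkeeping, of Lemma~\ref{l-gue131001}.
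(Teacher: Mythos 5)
Your route — producing the auxiliary sections by differentiating the approximate Szeg\"o kernel $\mathcal{S}_k(x,y)$ in $y$ at $y=p$ and then diagonalizing the resulting $1$-jets by linear combinations — is a legitimate alternative strategy, but it has a genuine unresolved gap exactly where you flag it. After your normalization, the ``diagonal'' entries you want to keep, $\partial_{z_j}(e^{-2kR}v^j_k)(0)$ and $\partial_{x_{2n-1}}(e^{-2kR}v^n_k)(0)$, live at the scales $k^{(n+1)/2}$ and $k^{(n+2)/2}$, which by \eqref{e-gue131023II} are also the a priori \emph{upper} bounds for the corresponding first derivatives of \emph{every} member of your family $\{v_k,\widetilde v^1_k,\dots,\widetilde v^n_k\}$. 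So the off-diagonal entries of the $1$-jet matrix are not visibly of lower order than the diagonal ones, and a ``diagonalizing linear combination with uniformly bounded coefficients'' is not guaranteed to preserve the lower bounds \eqref{e-gue131019VIII}; establishing the required domination (uniformly in $p$) is precisely the content of the lemma and is not supplied. A second, smaller problem: conditions \eqref{e-gue131019V}--\eqref{e-gue131019VII} are exact equalities, and with only $n+1$ building blocks you cannot kill all $2n-1$ unwanted jet entries exactly unless the antiholomorphic derivatives of your blocks already vanish identically — your construction only makes them small, so at best you prove an $O(k^{-\infty})$ version of the statement.

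The paper avoids all of this with a much shorter construction: by Borel summation one finds $z_j+\beta_j(x)$ and $x_{2n-1}+\beta_n(x)$, with $\beta_j=O(\abs{x}^2)$, whose $\ddbar_b$ vanishes to infinite order at $p$ (\eqref{e-gue131022}), and sets $v^j_k=\sqrt{k}\,(z_j+\beta_j)v_k$ for $j\le n-1$ and $v^n_k=k\,(x_{2n-1}+\beta_n)v_k$ with $v_k$ the peak section of \eqref{e-gue131001}. Because the multiplier vanishes at $0$ and its full first-order jet there is exactly the prescribed one, the Leibniz rule gives \eqref{e-gue131019V}--\eqref{e-gue131019VIII} \emph{exactly}, with the lower bound reduced to $\abs{v_k(0)}\gtrsim k^{n/2}$ — no matrix inversion, no comparison of competing $k$-powers. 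The near-harmonicity \eqref{e-gue131019IV} follows since $\ddbar_b$ of the multiplier is $O(\abs{x}^\infty)$ and $v_k$ is supported where $\abs{x'}\lesssim (\log k)/\sqrt{k}$, $\abs{x_{2n-1}}\lesssim k^{-5/6}$, and \eqref{e-gue131019III-I} is the computation of \eqref{e-gue131001II} with an extra polynomial weight in the rescaled Gaussian integral. If you want to salvage your kernel-differentiation approach, you would need to compute the scaled limit of the full $(n+1)\times(2n)$ jet matrix from the model (Gaussian in $x'$ times the $s$-Fourier integral) and show it is, uniformly in $p$, a bounded perturbation of a triangular matrix with nonvanishing diagonal — that is the missing step.
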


\begin{proof}
From Borel construction, it is clearly that we can find $z_j+\beta_j(x)\in C^\infty(\Real^{2n-1})$, $j=1,\ldots,n-1$, and $x_{2n-1}+\beta_n(x)\in C^\infty(\Real^{2n-1})$ such that 
\begin{equation}\label{e-gue131022}
\begin{split}
&\mbox{$\ddbar_b(z_j+\beta_j(x))$ vanishes to infinite order at $p$, $j=1,\ldots,n-1$},\\
&\mbox{$\ddbar_b(x_{2n-1}+\beta_n(x))$ vanishes to infinite order at $p$},
\end{split}
\end{equation}
where $\beta_j(x)=O(\abs{x}^2)$, $j=1,\ldots,n$. Let $v_k(x)\in C^\infty_0(D_0)$ be as in \eqref{e-gue131001}. Recall that $D_0\Subset D$ be as in the discussion after \eqref{e-gue130918I}. Put 
\begin{equation}\label{e-gue131022I}
\begin{split}
&v^j_k(x)=\sqrt{k}(z_j+\beta_j(x))v_k(x)\in C^\infty_0(D_0),\ \ j=1,\ldots,n-1,\\
&v^n_k(x)=k(x_{2n-1}+\beta_n(x))v_k(x)\in C^\infty_0(D_0).
\end{split}
\end{equation}
We can repeat the proof of \eqref{e-gue131001II} with minor changes and deduce \eqref{e-gue131019III-I}. Moreover, from \eqref{e-gue131001I}, \eqref{e-gue131022} and \eqref{e-gue1373VIIa}, we obtain \eqref{e-gue131019IV}. Finally, from the constructions of $v_k$ and $v^j_k(x)$, $j=1,\ldots,n$, we get \eqref{e-gue131019V}, \eqref{e-gue131019VI}, \eqref{e-gue131019VI-I}, \eqref{e-gue131019VII} and \eqref{e-gue131019VIII}. The lemma follows. 
\end{proof}

We also need 

\begin{lem}\label{l-gue131022}
With the assumptions and notations above, fix $N_0>2n+1$. There exist 
\[\mu^j_k\in H^0_{b,\leq k^{-N_0}}(X,L^k),\ \ j=1,\ldots,n,\] 
with $\norm{\mu^j_k}_{h^{L^k}}=1$, $j=1,\ldots,n$, such that if we put $\mu^j_k=s^k\Td\mu^j_k$ on $D$, $j=1,\ldots,n$, then 
\begin{equation}\label{e-gue131022II}
\abs{\pr_{x_{2n-1}}\bigr(e^{-2kR}\Td\mu^n_k\bigr)(0)}^2\geq c_2k^{n+2},\ \ \abs{\pr_{z_j}\bigr(e^{-2kR}\Td\mu^j_k\bigr)(0)}^2\geq c_2k^{n+1},\ \ j=1,\ldots,n-1,
\end{equation}  
where $c_2>0$ is a constant independent of $k$ and the point $p$ and for every $N>0$ there is a $C_N>0$ independent of $k$ and the point $p$ such that
\begin{equation}\label{e-gue131022III}
\begin{split}
{\rm Sup\,}&\{\abs{\bigr(e^{-2kR}\Td\mu^j_k\bigr)(0)}, \abs{\pr_{\ol z_t}\bigr(e^{-2kR}\Td\mu^j_k\bigr)(0)}, \abs{(1-\delta_{j,t})\pr_{z_t}\bigr(e^{-2kR}\Td\mu^j_k\bigr)(0)}, \abs{\pr_{x_{2n-1}}\bigr(e^{-2kR}\Td\mu^t_k\bigr)(0)};\,\\
 &\quad j=1,\ldots,n, t=1,\ldots,n-1\}\leq C_Nk^{-N}.
 \end{split}
\end{equation}  
\end{lem}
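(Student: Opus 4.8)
The plan is to run the argument of Theorem~\ref{t-gue131007} with the scalar test function $v_k$ there replaced by the family $v^1_k,\ldots,v^n_k$ supplied by Lemma~\ref{l-gue131019}, and to keep track of first-order derivatives at the center $0$ rather than of values. First I would set $u^j_k:=s^ke^{k\phi}v^j_k\in C^\infty_0(D,L^k)$ and decompose orthogonally $u^j_k=u^{j,0}_k+u^{j,1}_k$ with $u^{j,0}_k:=\Pi^{(0)}_{k,\leq k^{-N_0}}u^j_k$. Exactly as in the proof of Theorem~\ref{t-gue131007}, the spectral gap inequality \eqref{e-suXIII} on $H^0_{b,>k^{-N_0}}(X,L^k)$, together with $\norm{(\Box^{(0)}_{b,k})^{m+1}u^j_k}_{h^{L^k}}=\norm{(\Box^{(0)}_{s,k})^{m+1}v^j_k}$ (via \eqref{s2-emsmilkI}) and the $O(k^{-\infty})$ bound \eqref{e-gue131019IV}, gives for every $m,N$ that $\norm{(\Box^{(0)}_{b,k})^mu^{j,1}_k}_{h^{L^k}}\le C_{m,N}k^{-N}$ with $C_{m,N}$ independent of $k$ and of the point $p$ (uniformity in $p$ coming from compactness of $X$, as in Lemma~\ref{l-gue131001}). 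The case $m=0$ together with \eqref{e-gue131019III-I} then yields $\tfrac1C\le\norm{u^{j,0}_k}_{h^{L^k}}\le C$ for $k$ large, so after normalizing $\mu^j_k:=u^{j,0}_k/\norm{u^{j,0}_k}_{h^{L^k}}$ we get elements of $H^0_{b,\leq k^{-N_0}}(X,L^k)$ of unit norm.

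Next I would transfer the derivative information. Writing $u^{j,1}_k=s^k\Td u^{j,1}_k$ on $D$, the scaling--Sobolev bootstrap of Section~\ref{s-sub} --- that is, the proof of Lemma~\ref{l-gue13717} applied to the scaled form $k^{-n/2}F^*_k(e^{-2kR}\Td u^{j,1}_k)$, now carried out in $C^1$ (indeed any fixed $C^m$) norm rather than only at order $0$ --- combined with the bounds on $\norm{(\Box^{(0)}_{b,k})^mu^{j,1}_k}_{h^{L^k}}$ just obtained, gives $\abs{\pr^\alpha_x(e^{-2kR}\Td u^{j,1}_k)(0)}\le C_{\alpha,N}k^{-N}$ for every $\alpha$ with $\abs{\alpha}\le 1$ and every $N$, uniformly in $k$ and $p$. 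On the other hand $e^{-2kR}\Td u^j_k=e^{k(\phi-2R)}v^j_k$, and since $v^j_k(0)=0$ by \eqref{e-gue131019V} the product rule annihilates every $k$-growing contribution of the weight, so $\pr^\alpha_x(e^{-2kR}\Td u^j_k)(0)=\pr^\alpha_xv^j_k(0)=\pr^\alpha_x(e^{-2kR}v^j_k)(0)$ for $\abs{\alpha}\le 1$ (using also that $R$ is holomorphic in $z$). Thus $e^{-2kR}\Td\mu^j_k=\bigl(e^{-2kR}\Td u^j_k-e^{-2kR}\Td u^{j,1}_k\bigr)/\norm{u^{j,0}_k}_{h^{L^k}}$: inserting \eqref{e-gue131019V}, \eqref{e-gue131019VI}, \eqref{e-gue131019VI-I}, \eqref{e-gue131019VII} for the first summand and the $O(k^{-\infty})$ bound for the second yields \eqref{e-gue131022III}, while inserting the lower bounds \eqref{e-gue131019VIII} and absorbing the $O(k^{-N})$ error and the bounded factor $\norm{u^{j,0}_k}_{h^{L^k}}$ yields \eqref{e-gue131022II} for $k$ large, with a suitable $c_2>0$ independent of $k$ and $p$.

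The only genuinely new point over Theorem~\ref{t-gue131007} is the passage from pointwise control of values to pointwise control of first derivatives; this forces one to run the scaled elliptic estimate in a positive-order Sobolev space and, more delicately, to verify that the various weights $e^{\pm2kR}$, $e^{\pm k\phi}$ relating the local models do not corrupt the derivative identities at $0$ --- which is precisely where the normalizations $v^j_k(0)=0$ built into Lemma~\ref{l-gue131019} enter. I expect no serious obstacle beyond this: the $C^m$ version of the bootstrap is a routine modification already invoked in the proof of Theorem~\ref{t-gue131007}, and the rest is careful bookkeeping of which derivative lives in which gauge.
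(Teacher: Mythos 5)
Your proposal follows the paper's proof essentially verbatim: the same decomposition $u^j_k=\Pi^{(0)}_{k,\leq k^{-N_0}}u^j_k+(I-\Pi^{(0)}_{k,\leq k^{-N_0}})u^j_k$, the same $O(k^{-\infty})$ bound on $\norm{(\Box^{(0)}_{b,k})^m(I-\Pi^{(0)}_{k,\leq k^{-N_0}})u^j_k}_{h^{L^k}}$ via the spectral gap and \eqref{e-gue131019IV}, the same $C^1$ version of the Lemma~\ref{l-gue13717} bootstrap for the pointwise derivative bounds, and the same normalization and transfer of \eqref{e-gue131019V}--\eqref{e-gue131019VIII}. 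Your explicit check that $v^j_k(0)=0$ makes the various weights $e^{\pm 2kR}$, $e^{\pm k\phi}$ harmless for first-order derivatives at $0$ is a point the paper leaves implicit, but it is the same argument.
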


\begin{proof}
Fix $j=1,\ldots,n$. Let $v^j_k\in C^\infty_0(D)$ be as in Lemma~\ref{l-gue131019}. Put $u^j_k=s^ke^{k\phi}v^j_k$ and set $\beta^j_k=\Pi^{(0)}_{k,\leq k^{-N_0}}u^j_k$, $\gamma^j_k=(I-\Pi^{(0)}_{k,\leq k^{-N_0}})u^j_k$. We have the orthogonal decomposition
\[u^j_k=\beta^j_k+\gamma^j_k.\] 
For every $m\in\mathbb N_0$, we have 
\begin{equation}\label{e-gue131022IV}
\begin{split}
\norm{(\Box^{(0)}_{b,k})^m\gamma^j_k}^2_{h^{L^k}}&\leq k^{N_0}(\,(\Box^{(0)}_{b,k})^{m+1}\gamma^j_k\,|\,\gamma^j_k)_{h^{L^k}}\\
&\leq k^{N_0}(\,(\Box^{(0)}_{b,k})^{m+1}u^j_k\,|\,u^j_k)_{h^{L^k}}.
\end{split}
\end{equation} 
From \eqref{e-gue131019IV} and \eqref{e-gue131022IV}, we conclude that for every $N>0$ and every $m\in\mathbb N_0$, there is a constant $C_{N,m}>0$ independent of $k$ and the point $p$ such that 
\begin{equation}\label{e-gue131022V}
\norm{(\Box^{(0)}_{b,k})^m\gamma^j_k}^2_{h^{L^k}}\leq C_{N,m}k^{-N}.
\end{equation} 
From \eqref{e-gue131022V}, we can repeat the proof of Lemma~\ref{l-gue13717} with minor changes and obtain that for every $N>0$ and every $\alpha\in\mathbb N_0^{2n-1}$, there is a constant $C_{N,\alpha}>0$ independent of $k$ and the point $p$ such that 
\begin{equation}\label{e-gue131022VI}
\abs{\pr^\alpha_x\bigr(e^{-2kR}\Td\gamma^j_k\bigr)(p)}^2\leq k^{-N},
\end{equation} 
where $\gamma^j_k=s^k\Td\gamma^j_k$ on $D$. 
Furthermore, from \eqref{e-gue131022V} and \eqref{e-gue131019III-I}, we see that there exist $C>1$ and $k_0>0$ independent of $k$ and the point $p$ such that for every $k\geq k_0$, we have 
\begin{equation}\label{e-gue131022VII}
\frac{1}{C}\leq\norm{\beta^j_k}_{h^{L^k}}\leq C.
\end{equation} 
Put $\mu^j_k=\frac{\beta^j_k}{\norm{\beta^j_k}_{h^{L^k}}}$. Then, $\mu^j_k\in H^0_{b,\leq k^{-N_0}}(X,L^k)$ and $\norm{\mu^j_k}_{h^{L^k}}=1$. On $D$, put $\mu^j_k=s^k\Td\mu^j_k$. From \eqref{e-gue131022VI} and \eqref{e-gue131022VII}, we deduce that 
for every $N>0$  and every $\alpha\in\mathbb N_0^{2n-1}$ there is a constant $C_{N,\alpha}>0$ independent of $k$ and the point $p$ such that 
\begin{equation}\label{e-gue131022VIII}
\abs{\abs{\pr^\alpha_x\bigr(e^{-2kR}\Td\mu^j_k\bigr)(p)}^2-\frac{1}{\norm{\beta^j_k}^2_{h^{L^k}}}\abs{\pr^\alpha_x\bigr(e^{-2kR}v^j_k\bigr)(p)}^2}\leq C_{N,\alpha}k^{-N}.
\end{equation} 
From \eqref{e-gue131022VIII} and Lemma~\ref{l-gue131019}, the lemma follows. 
\end{proof}

From Lemma~\ref{l-gue131022} and the Gram-Schmidt process, we deduce

\begin{prop}\label{p-gue131022}
With the assumptions and notations above, fix $N_0>2n+1$. There exist $g^j_k\in H^0_{b,\leq k^{-N_0}}(X,L^k)$, $j=1,\ldots,n$, with $(g^j_k\,|\,g^t_k)_{h^{L^k}}=\delta_{j,t}$, $j,t=1,\ldots,n$, such that if we put $g^j_k=s^k\Td g^j_k$ on $D$, $j=1,\ldots,n$, then 
\begin{equation}\label{e-gue131022aII}
\begin{split}
&\abs{\pr_{z_t}\bigr(e^{-2kR}\Td g^t_k\bigr)(0)}^2\geq c_2k^{n+1},\ \ t=1,\ldots,n-1,\\
&\abs{\pr_{x_{2n-1}}\bigr(e^{-2kR}\Td g^n_k\bigr)(0)}^2\geq c_2k^{n+2},
\end{split}
\end{equation}  
where $c_2>0$ is a constant independent of $k$ and the point $p$ and for every $N>0$ there is a $C_N>0$ independent of $k$ and the point $p$ such that
\begin{equation}\label{e-gue131022aIII}
\begin{split}
&{\rm Sup\,}\{\abs{\pr_{x_{2n-1}}\bigr(e^{-2kR}\Td g^t_k\bigr)(0)}, \abs{\pr_{z_s}\bigr(e^{-2kR}\Td g^t_k\bigr)(0)};\, s,t=1,\ldots,n-1, s>t\}\\
&\quad+{\rm Sup\,}\{\abs{\bigr(e^{-2kR}\Td g^t_k\bigr)(0)}, \abs{\pr_{\ol z_s}\bigr(e^{-2kR}\Td g^t_k\bigr)(0)};\, s=1,\ldots,n-1, t=1,\ldots,n\}\leq C_Nk^{-N}.
\end{split}
\end{equation}  
\end{prop}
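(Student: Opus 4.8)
The plan is to deduce Proposition~\ref{p-gue131022} from Lemma~\ref{l-gue131022} by applying the Gram--Schmidt process to the sections $\mu^1_k,\ldots,\mu^n_k\in H^0_{b,\leq k^{-N_0}}(X,L^k)$, \emph{in this order}, and checking that the estimates \eqref{e-gue131022II}, \eqref{e-gue131022III} of Lemma~\ref{l-gue131022} propagate into \eqref{e-gue131022aII}, \eqref{e-gue131022aIII}. Since each $g^j_k$ will be a $\Complex$-linear combination of $\mu^1_k,\ldots,\mu^j_k$, and all the quantities occurring in the proposition are values at $0$ of fixed first order derivatives of $e^{-2kR}\Td{(\cdot)}$, which are linear functionals of the section, the whole matter reduces to two points: a lower bound for the norm of the Gram--Schmidt residual at each step, and the smallness of those same functionals on the span of the previously constructed vectors. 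It is convenient to work with the $k$-dependent linear functionals on $H^0_{b,\leq k^{-N_0}}(X,L^k)$ defined, for $u=s^k\Td u$ on $D$, by $\Lambda_0(u)=(e^{-2kR}\Td u)(0)$, $\Lambda_{\ol t}(u)=\pr_{\ol z_t}(e^{-2kR}\Td u)(0)$, $\Lambda_t(u)=\pr_{z_t}(e^{-2kR}\Td u)(0)$ for $t=1,\ldots,n-1$, and $\Lambda_n(u)=\pr_{x_{2n-1}}(e^{-2kR}\Td u)(0)$. In this notation Lemma~\ref{l-gue131022} asserts $\abs{\Lambda_t(\mu^t_k)}^2\geq c_2k^{n+1}$ for $t\leq n-1$, $\abs{\Lambda_n(\mu^n_k)}^2\geq c_2k^{n+2}$, while $\Lambda_0(\mu^j_k)$, $\Lambda_{\ol t}(\mu^j_k)$, $\Lambda_s(\mu^j_k)$ for $s\leq n-1$ with $s\neq j$, and $\Lambda_n(\mu^j_k)$ for $j\leq n-1$, are all $O(k^{-N})$ for every $N$; all constants are independent of $k$ and of the base point $p$ by compactness of $X$. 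Finally, Theorem~\ref{t-gue13718} (after rewriting $e^{-2kR}\Td u=(e^{-k\phi}\Td u)\,e^{k(\ol R-R+\phi_0)}$ as in \eqref{e-gue13716IV} and differentiating) gives a crude two-sided scale: there is $C>0$ independent of $k$ and $p$ with $\abs{\Lambda_\bullet(u)}\leq Ck^{\frac n2+1}\norm{u}_{h^{L^k}}$ for every one of the functionals above and every $u\in H^0_{b,\leq k^{-N_0}}(X,L^k)$.

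I would then argue by induction on $t$. Assume $g^1_k,\ldots,g^{t-1}_k$ have been constructed, that they form an orthonormal basis of $E_{t-1}:={\rm span}_\Complex(\mu^1_k,\ldots,\mu^{t-1}_k)$, and that they satisfy the analogues of \eqref{e-gue131022aII}, \eqref{e-gue131022aIII}, i.e.\ $\abs{\Lambda_j(g^j_k)}^2\gtrsim k^{n+1}$ (resp.\ $k^{n+2}$ if $j=n$) and $\abs{\Lambda_\bullet(g^j_k)}=O(k^{-N})$ for $\Lambda_0$, $\Lambda_{\ol s}$, for $\Lambda_s$ with $s>j$, and for $\Lambda_n$ when $j\leq n-1$. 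Let $\Td g^t_k=\mu^t_k-P_{E_{t-1}}\mu^t_k$ be the Gram--Schmidt residual, $P_{E_{t-1}}$ the orthogonal projection. Expanding $P_{E_{t-1}}\mu^t_k=\sum_{j<t}(\mu^t_k\,|\,g^j_k)_{h^{L^k}}g^j_k$ and using Bessel's inequality $\sum_{j<t}\abs{(\mu^t_k\,|\,g^j_k)_{h^{L^k}}}^2\leq\norm{\mu^t_k}^2_{h^{L^k}}=1$, every relevant $\Lambda_\bullet(\Td g^t_k)$ equals $\Lambda_\bullet(\mu^t_k)$ plus a sum of at most $t-1$ terms each $O(k^{-N})$ (for the functionals $\Lambda_0,\Lambda_{\ol s}$, $\Lambda_s$ with $s\geq t$, and $\Lambda_n$ when $t\leq n-1$, this uses the inductive smallness above since $j<t\leq s$ forces $s>j$ and $j\leq n-1$). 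Hence $\abs{\Lambda_t(\Td g^t_k)}\geq\tfrac12 c_2^{1/2}k^{(n+1)/2}$ (resp.\ $\geq\tfrac12 c_2^{1/2}k^{(n+2)/2}$ if $t=n$) for $k$ large, while $\Lambda_0(\Td g^t_k)$, $\Lambda_{\ol s}(\Td g^t_k)$, $\Lambda_s(\Td g^t_k)$ with $s>t$, $s\leq n-1$, and $\Lambda_n(\Td g^t_k)$ with $t\leq n-1$ are all $O(k^{-N})$. Combining $\abs{\Lambda_t(\Td g^t_k)}\gtrsim k^{(n+1)/2}$ with $\abs{\Lambda_t(\Td g^t_k)}\leq Ck^{\frac n2+1}\norm{\Td g^t_k}_{h^{L^k}}$ forces $\norm{\Td g^t_k}_{h^{L^k}}={\rm dist}(\mu^t_k,E_{t-1})\geq c\,k^{-1/2}$ for some $c>0$ independent of $k$ and $p$; in particular $\mu^t_k\notin E_{t-1}$, so $\dim E_t=t$ and $g^t_k:=\Td g^t_k/\norm{\Td g^t_k}_{h^{L^k}}$ completes $g^1_k,\ldots,g^{t-1}_k$ to an orthonormal basis of $E_t$. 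Since $\norm{\Td g^t_k}_{h^{L^k}}\leq1$, dividing by it only enlarges $\abs{\Lambda_t(g^t_k)}$, so the lower bound \eqref{e-gue131022aII} holds for $g^t_k$ (with $c_2$ replaced by $c_2/4$); and dividing the $O(k^{-N})$ quantities by $\norm{\Td g^t_k}_{h^{L^k}}\geq ck^{-1/2}$ leaves $O(k^{1/2-N})$, i.e.\ $O(k^{-N'})$ for any $N'$ after relabelling, which gives the estimates \eqref{e-gue131022aIII} for $g^t_k$. This closes the induction; the base case $t=1$ is immediate, $g^1_k=\mu^1_k$ and $E_0=\{0\}$.

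The one genuine difficulty is the lower bound ${\rm dist}(\mu^t_k,E_{t-1})\geq ck^{-1/2}$. A priori the normalized sections $\mu^1_k,\ldots,\mu^n_k$ need not be uniformly separated, and if $\mu^t_k$ approached the span of its predecessors the Gram--Schmidt coefficients would blow up and destroy all the estimates; the resolution is precisely to play the large diagonal derivative $\abs{\Lambda_t(\mu^t_k)}^2\geq c_2k^{n+1}$ off against the uniform bound $\abs{\Lambda_t(u)}\leq Ck^{\frac n2+1}\norm{u}_{h^{L^k}}$ coming from the asymptotic upper bounds of Section~\ref{s-sub}, both of which are already available. (One could instead invoke the sharper scaling bounds $\abs{\Lambda_t(u)}\leq Ck^{(n+1)/2}\norm{u}_{h^{L^k}}$ and $\abs{\Lambda_n(u)}\leq Ck^{(n+2)/2}\norm{u}_{h^{L^k}}$, which are implicit in the scaling argument proving Theorem~\ref{t-gue13718}; they would yield ${\rm dist}(\mu^t_k,E_{t-1})\geq c>0$ outright and dispense with the harmless $k^{1/2}$ losses, but this refinement is not needed.) The other point that must be monitored is that every constant stays independent of the base point $p$; this holds because $X$ is compact and the construction of the coordinates \eqref{e-gue13716II}, the function $R$, and the operators $\Box^{(0)}_{s,k}$, $\ddbar_b$ are uniform over $X$, exactly as in Lemma~\ref{l-gue131001}, Theorem~\ref{t-gue131007} and Lemma~\ref{l-gue131022}.
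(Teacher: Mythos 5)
Your proof is correct and follows essentially the same route as the paper, which simply asserts that the proposition follows from Lemma~\ref{l-gue131022} by the Gram--Schmidt process; you have supplied the details the paper omits, in particular the crucial lower bound ${\rm dist}(\mu^t_k,E_{t-1})\gtrsim k^{-1/2}$ obtained by playing the large diagonal derivative against the uniform bound from Theorem~\ref{t-gue13718}, which is exactly what makes the normalization step harmless.
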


From Proposition~\ref{p-gue131022} and some straightforward but elementary linear algebra argument, we obtain the 
following(we omit the proof)

\begin{prop}\label{p-gue131022I}
With the assumptions and notations above, fix $N_0>2n+1$. Let 
\[g^j_k\in H^0_{b,\leq k^{-N_0}}(X,L^k),\ \ j=1,\ldots,n,\] 
be as in Proposition~\ref{p-gue131022}. We put 
\begin{equation}\label{e-gue131022aI}
\begin{split}
&\mbox{$g^j_k=s^k\Td g^j_k$ on $D$, $j=1,\ldots,n$},\\
&\mbox{$\Td g^j_k=h^{2j-1}_k+ih^{2j}_k$, $j=1,\ldots,n$, where $h^{2j-1}_k={\rm Re\,}\Td g^j_k$, $h^{2j}_k={\rm Im\,}\Td g^j_k$, $j=1,\ldots,n$}.
\end{split}
\end{equation} 
There is a $k_0>0$ independent of the point $p$ such that for every $k\geq k_0$, the matrix
\[\begin{split}H_k:&=\left[
\begin{array}[c]{cccc}
  \pr_{x_1}\bigr(e^{-2kR}h^1_k\bigr)(p)&\pr_{x_2}\bigr(e^{-2kR}h^1_k\bigr)(p)&\cdots&\pr_{x_{2n-1}}\bigr(e^{-2kR}h^{1}_k\bigr)(p)\\
  \pr_{x_1}\bigr(e^{-2kR}h^2_k\bigr)(p)&\pr_{x_2}\bigr(e^{-2kR}h^2_k\bigr)(p)&\cdots&\pr_{x_{2n-1}}\bigr(e^{-2kR}h^{2}_k\bigr)(p)\\
 \vdots&\vdots&\vdots&\vdots\\
\pr_{x_1}\bigr(e^{-2kR}h^{2n}_k\bigr)(p)&\pr_{x_2}\bigr(e^{-2kR}h^{2n}_k\bigr)(p)&\cdots&\pr_{x_{2n-1}}\bigr(e^{-2kR}h^{2n}_k\bigr)(p)
\end{array}\right],\\
H_k:&\Real^{2n-1}\To\Real^{2n},\end{split}\]
is injective.
\end{prop}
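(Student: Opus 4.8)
The plan is to translate injectivity of $H_k$ into a complex statement and then run a triangular elimination. Put $u^j:=e^{-2kR}\Td g^j_k\in C^\infty(D)$, $j=1,\ldots,n$, and regard $p$ as the origin in the chosen coordinates. Since the $(2j-1)$-st and $2j$-th rows of $H_k$ are the real and imaginary parts of the complex row $\bigl(\pr_{x_l}u^j(p)\bigr)_{l=1}^{2n-1}$, for real $v\in\Real^{2n-1}$ we have $H_kv=0$ if and only if the complex vector $\bigl(\pr_vu^1(p),\ldots,\pr_vu^n(p)\bigr)\in\Complex^n$ vanishes, $\pr_v$ being the real directional derivative. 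Writing a general real tangent direction as $\pr_v=\sum_{t=1}^{n-1}\bigl(a_t\pr_{z_t}+\ol a_t\pr_{\ol z_t}\bigr)+b\,\pr_{x_{2n-1}}$ with $(a_1,\ldots,a_{n-1})\in\Complex^{n-1}$, $b\in\Real$ (a real-linear isomorphism $\Real^{2n-1}\cong\Complex^{n-1}\times\Real$), the condition $H_kv=0$ becomes the system
\begin{equation*}
\Lambda_j:=\sum_{t=1}^{n-1}\bigl(a_t\pr_{z_t}u^j(0)+\ol a_t\pr_{\ol z_t}u^j(0)\bigr)+b\,\pr_{x_{2n-1}}u^j(0)=0,\qquad j=1,\ldots,n,
\end{equation*}
so it suffices to show that for $k$ large this system has only the trivial solution, with threshold uniform in $p$.

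Next I would record the structure of the coefficients. Theorem~\ref{t-gue13718} applied to the normalized sections $g^j_k$, together with the elementary fact that the weight $e^{k(\phi-2R)}$ equals $1$ and has first $x$-derivatives of size $O(k)$ at the origin (because $\phi(0)=0$ and $R(0)=0$), gives the a priori bound $\abs{\pr_{x_l}u^j(0)}\leq C k^{\frac n2+1}$ for all $l,j$, with $C$ independent of $k$ and $p$. On top of this, Proposition~\ref{p-gue131022} says that, for every $N$ and uniformly in $p$, the entries $\pr_{\ol z_t}u^j(0)$ ($1\le t\le n-1$, $1\le j\le n$), the entries $\pr_{x_{2n-1}}u^j(0)$ with $1\le j\le n-1$, and the entries $\pr_{z_t}u^j(0)$ with $1\le j< t\le n-1$ are all $O(k^{-N})$, while $\abs{\pr_{z_j}u^j(0)}\geq\sqrt{c_2}\,k^{(n+1)/2}$ for $1\le j\le n-1$ and $\abs{\pr_{x_{2n-1}}u^n(0)}\geq\sqrt{c_2}\,k^{(n+2)/2}$. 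Thus, in the variables ordered $z_1,\ldots,z_{n-1},x_{2n-1}$, the matrix is lower triangular modulo an $O(k^{-\infty})$ error, with large diagonal, the sub-diagonal entries being controlled only by the a priori bound.

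Then I would run forward elimination through $\Lambda_1,\ldots,\Lambda_n$ in that order, assuming $v\neq0$ and $\abs v=1$. In $\Lambda_1$ every term except $a_1\pr_{z_1}u^1(0)$ is $O(k^{-N})$, hence $\abs{a_1}\leq C_N k^{-N}$ for every $N$. Feeding this into $\Lambda_2$: apart from terms already seen to be $O(k^{-N})$, the only ones left are $a_1\pr_{z_1}u^2(0)$, which is $O(k^{-\infty})$ since $\abs{a_1}=O(k^{-\infty})$ and $\abs{\pr_{z_1}u^2(0)}=O(k^{n/2+1})$, and $a_2\pr_{z_2}u^2(0)$; hence $\abs{a_2}=O(k^{-\infty})$. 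Iterating gives $\abs{a_1},\ldots,\abs{a_{n-1}}=O(k^{-N})$ for all $N$, and then in $\Lambda_n$ all $a_t$- and $\ol a_t$-terms are $O(k^{-\infty})$, forcing $\abs b=O(k^{-\infty})$. Therefore $1=\abs v^2=\sum_t\abs{a_t}^2+b^2=O(k^{-\infty})$, impossible once $k\ge k_0$; since the constants $c_2$, $C_N$ of Proposition~\ref{p-gue131022} and the a priori constant of Theorem~\ref{t-gue13718} are all independent of $p$, the same is true of $k_0$.

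I do not expect a real obstacle here, consistent with the authors calling this elementary; the one point demanding care is the order of elimination. The sub-diagonal entries $\pr_{z_t}u^j(0)$ with $t<j$ are not small — in fact they are larger than the diagonal entries by a factor $k^{1/2}$ — so a crude triangular determinant estimate fails; what saves the argument is that the coefficients $a_t$ produced at the earlier steps are $O(k^{-\infty})$, which defeats any fixed polynomial growth of the later matrix entries. Bookkeeping the powers of $k$ through this induction is the only thing that needs to be checked.
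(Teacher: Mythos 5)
Your proof is correct, and it supplies exactly the argument the paper omits (the paper only remarks that the proposition follows from Proposition~\ref{p-gue131022} by ``straightforward but elementary linear algebra''). The reduction to the real-linear system in $(a_1,\ldots,a_{n-1},b)\in\Complex^{n-1}\times\Real$ and the forward elimination using the triangular structure of \eqref{e-gue131022aII}--\eqref{e-gue131022aIII} is the intended route, and you correctly flag the one point the paper's remark glosses over: the sub-diagonal entries $\pr_{z_t}u^j(p)$, $t<j$, and the entries $\pr_{z_t}u^n(p)$ are not controlled by Proposition~\ref{p-gue131022} at all, so the a priori bound of Theorem~\ref{t-gue13718} (transported from the weight $e^{-k\phi}$ to $e^{-2kR}$ via $\phi(p)=R(p)=0$) is genuinely needed, and the $O(k^{-\infty})$ smallness of the previously eliminated coefficients is what absorbs their polynomial growth. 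The uniformity of $k_0$ in $p$ follows, as you say, from the uniformity of the constants in Theorem~\ref{t-gue13718} and Proposition~\ref{p-gue131022}.
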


%\begin{proof}
%In view of Proposition~\ref{p-gue131022}, it is straightforward to see that 
%\[\begin{split}
%&H_k=H^0_k+H^1_k,\\
%&H^0_k=\left(a^{j,t}_k\right)^{2n-1}_{j,t}=1,\ \ H^1_k=\left(b^{j,t}_k\right)^{2n-1}_{j,t}=1,
%\end{split}\]
%where 
%\begin{equation}\label{e-gue131023}
%\begin{split}
%&\mbox{$a^{j,t}_k=0$ if $t>j$, 
%$j, t=1,\ldots,2n-1$, $\abs{a^{j,j}_j}^2\geq c_1k^{n+1}$, $j=1,\ldots,2n-1$,}\\
%&\mbox{$c_1>0$ is a constant independent of $k$},
%\end{split}
%\end{equation}
%and for every $N>0$ there is a constant $C_N>0$ independent of $k$ and the point $p$ such that 
%\begin{equation}\label{e-gue131023I}
%\abs{b^{j,t}_k}^2\leq C_nk^{-N},\ \ j, t=1,\ldots,2n-1.
%\end{equation}
%From \eqref{e-gue131023} and \eqref{e-gue131023I}, it is easy to check that if $k$ is large then $H_k$ is invertible. The %proposition follows. 
%\end{proof}

From the proofs of Lemma~\ref{l-gue13717} and Proposition~\ref{p-gue13717I}, we conclude that 

\begin{lem}\label{l-gue131023}
With the assumptions and notations above, fix $N_0>2n+1$. Let 
\[x_k=(x^1_k,\ldots,x^{2n-1}_k)\in\Real^{2n-1},\ \ y_k=(y^1_k,\ldots,y^{2n-1}_k)\in\Real^{2n-1}\] 
with $\lim_{k\To\infty}(\sqrt{k}\sum^{2n-2}_{j=1}\abs{x^j_k}+k\abs{x^{2n-1}_k})=0$, $\lim_{k\To\infty}(\sqrt{k}\sum^{2n-2}_{j=1}\abs{y^j_k}+k\abs{y^{2n-1}_k})=0$. Then, for every $\alpha=(\alpha_1,\ldots,\alpha_{2n-1})\in\mathbb N_0^{2n-1}$, $\beta=(\beta_1,\ldots,\beta_{2n-1})$, there are constants $C_\alpha>0$, $C_{\alpha,\beta}>0$ independent of $k$ and the point $p$ such that for every $u\in H^0_{b,\leq k^{-N_0}}(X,L^k)$ with $\norm{u}_{h^{L^k}}=1$ we have 
\begin{equation}\label{e-gue131023II}
\abs{\pr^\alpha_x\bigr(e^{-2kR}\Td u\bigr)(x_k)}^2\leq C_{\alpha}k^{n+\abs{\alpha'}+2\alpha_{2n-1}}
\end{equation}
and 
\begin{equation}\label{e-gue131102}
\abs{\pr^\alpha_x\pr^\beta_y\Bigr(e^{-kR(x)+k\ol R(x)}\hat\Pi^{(0)}_{k,\leq k^{-N_0},s}(x,y)e^{-k\ol R(y)+kR(y)}\Bigr)(x_k,y_k)}\leq C_{\alpha,\beta}k^{n+\frac{\abs{\alpha'}}{2}+\frac{\abs{\beta'}}{2}+\alpha_{2n-1}+\beta_{2n-1}},
\end{equation}
where $u=s^k\Td u$ on $D$, $\hat\Pi^{(0)}_{k,\leq k^{-N_0},s}(x,y)$ is the localized Szeg\"{o} projection (see \eqref{e-gue13719IV} and \eqref{e-gue13719aI}) and $\abs{\alpha'}=\sum^{2n-2}_{j=1}\alpha_j$, $\abs{\beta'}=\sum^{2n-2}_{j=1}\beta_j$
\end{lem}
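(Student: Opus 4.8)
The plan is to deduce both bounds from the anisotropic Heisenberg rescaling $F_k(z,x_{2n-1})=(z/\sqrt k,x_{2n-1}/k)$ that underlies the proofs of Theorem~\ref{t-gue13718} and Lemma~\ref{l-gue13717}; the mechanism is that a tangential derivative $\pr_{x_j}$, $1\le j\le 2n-2$, costs a factor $\sqrt k$ while the transversal derivative $\pr_{x_{2n-1}}$ costs a factor $k$, which is exactly what the exponents $\abs{\alpha'}/2+\alpha_{2n-1}$ in \eqref{e-gue131023II} and \eqref{e-gue131102} encode.

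First I would prove \eqref{e-gue131023II}. Given $u\in H^0_{b,\leq k^{-N_0}}(X,L^k)$ with $\norm{u}_{h^{L^k}}=1$, write $u=s^k\Td u$ on $D$ and, as in the proof of Theorem~\ref{t-gue13718}, set $\beta_k:=k^{-n/2}F^*_k(e^{-2kR}\Td u)$, so that $\beta_k(x)=k^{-n/2}(e^{-2kR}\Td u)(x'/\sqrt k,\,x_{2n-1}/k)$. By \eqref{e-gue13718II} and \eqref{e-gue13718III}, $\beta_k$ satisfies the hypotheses of Lemma~\ref{l-gue13717}, so the Sobolev embedding argument in the proof of that lemma — together with the uniform elliptic estimate of Proposition~\ref{p-gue13717}, whose constants are, $X$ being compact, bounded independently of the base point $p$ — yields for each $\alpha$ a constant $C_\alpha>0$, independent of $k$ and $p$, with $\abs{(\pr^\alpha_x\beta_k)(w)}\le C_\alpha$ for all $w$ in a fixed neighbourhood of the origin. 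By the chain rule
\[
(\pr^\alpha_x\beta_k)(x)=k^{-n/2-\abs{\alpha'}/2-\alpha_{2n-1}}\bigl(\pr^\alpha_x(e^{-2kR}\Td u)\bigr)(x'/\sqrt k,\,x_{2n-1}/k).
\]
The hypothesis on $x_k$ says precisely that $w_k:=(\sqrt k\,x'_k,\,k\,x^{2n-1}_k)\to0$, so for $k$ large $w_k$ lies in that neighbourhood and $F_k(w_k)=x_k$; evaluating at $x=w_k$ gives $\abs{\bigl(\pr^\alpha_x(e^{-2kR}\Td u)\bigr)(x_k)}\le C_\alpha k^{n/2+\abs{\alpha'}/2+\alpha_{2n-1}}$, which is \eqref{e-gue131023II}.

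For \eqref{e-gue131102} I would first record that, since $\phi=R+\ol R+\phi_0$ with $R$ as in \eqref{e-gue13716III} and $\phi_0$ as in \eqref{e-gue13716IV}, formula \eqref{e-gue13719aI} gives
\[
e^{-kR(x)+k\ol R(x)}\hat\Pi^{(0)}_{k,\leq k^{-N_0},s}(x,y)e^{-k\ol R(y)+kR(y)}=\sum^{d_k}_{j=1}g_j(x)\,\ol{g_j(y)},\qquad g_j:=e^{-2kR-k\phi_0}\,\Td f_j,
\]
so that the left side of \eqref{e-gue131102} equals $\bigl|\sum_j(\pr^\alpha_xg_j)(x_k)\,\ol{(\pr^\beta_yg_j)(y_k)}\bigr|$ and, by Cauchy--Schwarz, it suffices to bound $\sum_j\abs{(\pr^\alpha_xg_j)(x_k)}^2$ and the analogous sum in $y$. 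Here I would repeat the scaling argument above with $e^{-2kR-k\phi_0}\Td u$ in place of $e^{-2kR}\Td u$: the only new input is $k^{-n/2}F^*_k(e^{-2kR-k\phi_0}\Td u)=\beta_k\cdot e^{-k\phi_0(F_k\cdot)}$, and since $\phi_0(x)=\beta x_{2n-1}+O(\abs{x}^2)$ near $p$ — $\phi_0$ has no holomorphic or antiholomorphic linear term in $z$, those having been absorbed into $R+\ol R$ — one checks that $\pr^\gamma_x\bigl(k\phi_0(F_kx)\bigr)=O(1)$ uniformly in $k$ and $p$ for $x$ in a fixed ball about the origin, whence $e^{-k\phi_0(F_k\cdot)}$ and all its derivatives are uniformly bounded there. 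Thus the same reasoning gives $\abs{\bigl(\pr^\alpha_x(e^{-2kR-k\phi_0}\Td u)\bigr)(x_k)}\le C_\alpha k^{n/2+\abs{\alpha'}/2+\alpha_{2n-1}}$ for every unit-norm $u$, with $C_\alpha$ independent of $k$ and $p$. Applying this, via the supremum device in the proof of Proposition~\ref{p-gue13717I}, to the unit vector $\bigl(\sum_j\abs{(\pr^\alpha_xg_j)(x_k)}^2\bigr)^{-1/2}\sum_jf_j\,\ol{(\pr^\alpha_xg_j)(x_k)}$ yields $\sum_j\abs{(\pr^\alpha_xg_j)(x_k)}^2\le C_\alpha^2\,k^{n+\abs{\alpha'}+2\alpha_{2n-1}}$, and \eqref{e-gue131102} follows by Cauchy--Schwarz.

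The main obstacle is making every constant genuinely uniform in both $k$ and the point $p$: this rests on the explicit form of the elliptic estimate in Proposition~\ref{p-gue13717}, where $C_{r,s}$ is controlled by fixed semi-norms of the rescaled coefficients, of the weight $kF^*_k\phi_0$ and of the Levi eigenvalues, all of which stay bounded over the compact manifold $X$. A minor point, already implicit in the proof of Lemma~\ref{l-gue13717}, is that the pointwise Sobolev bound holds with a constant uniform on a whole neighbourhood of the origin, which is what licenses evaluation at the moving points $w_k\to0$ rather than only at $0$; and a similarly elementary (if mildly delicate) Fa\`a di Bruno count is what gives the uniform bounds on the derivatives of $e^{-k\phi_0(F_k\cdot)}$.
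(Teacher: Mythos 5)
Your proof is correct and follows exactly the route the paper intends: the paper gives no written proof of this lemma beyond the pointer "from the proofs of Lemma~\ref{l-gue13717} and Proposition~\ref{p-gue13717I}", and your argument — anisotropic rescaling by $F_k$, the uniform bound on $\pr^\alpha_x\beta_k$ on a fixed neighbourhood of the origin, the chain rule producing the exponents $\abs{\alpha'}/2+\alpha_{2n-1}$, and the extremal-function plus Cauchy--Schwarz device for the kernel — is precisely that route. The points you flag (uniformity of the Sobolev bound on a neighbourhood rather than only at $0$, and the uniform boundedness of $e^{-kF_k^*\phi_0}$ with all derivatives) are the right ones and are handled correctly.
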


\begin{proof}[Proof of Theorem~\ref{t-gue131019}]
We are going to prove that if $k$ is large then the map
\[d\Phi_{N_0,k}(x):T_xX\To T_{\Phi_{N_0,k}(x)}\Complex\mathbb P^{d_k-1}\]
is injective. Fix $p\in X$ and let $s$ be a local trivializing section of $L$ on an open neighbourhood $D\subset X$ of $p$. We take local coordinates $x=(x_1,\ldots,x_{2n-1})$, $z_j=x_{2j-1}+ix_{2j}$, $j=1,\ldots,n-1$, and $s$ so that \eqref{e-gue13716II} hold. We shall use the same notations as above. From Theorem~\ref{t-gue131007I}, we may assume that 
\begin{equation}\label{e-gue131023II-I}
\abs{\bigr(e^{-2kR}\Td f_1\bigr)(p)}^2\geq c_0,
\end{equation}
where $c_0>0$ is a constant independent of $k$ and the point $p$. Let $g^1_k,\ldots,g^n_k\in H^0_{b,\leq k^{-N_0}}(X,L^k)$ be as in Proposition~\ref{p-gue131022}. In view of \eqref{e-gue131022aIII}, we may assume that 
\begin{equation}\label{e-gue131023III}
\sum^n_{j=1}\abs{\bigr(e^{-2kR}\Td g^j_k\bigr)(p)}^2\leq\frac{c_0}{2}. 
\end{equation}
Now, we claim that $f_1,g^1_k,\ldots,g^n_k$ are linearly independent over $\Complex$. If $f_1,g^1_k,\ldots,g^n_k$ are linearly dependent then we have $f_1=\sum^n_{j=1}\lambda_jg^j_k$, where $\lambda_j\in\Complex$, $j=1,\ldots,n$. Since $\norm{f_1}_{h^{L^k}}=1$, we have $\sum^n_{j=1}\abs{\lambda_j}^2=1$. Thus, 
\[\abs{\bigr(e^{-2kR}\Td f_1\bigr)(p)}^2\leq\Bigr(\sum^n_{j=1}\abs{\lambda_j}^2\Bigr)\Bigr(\sum^n_{j=1}\abs{\bigr(e^{-2kR}\Td g^j_k\bigr)(p)}^2\Bigr)\leq\frac{c_0}{2}.\]
We get a contradiction. Thus, $f_1,g^1_k,\ldots,g^n_k$ are linearly independent. Put 
\begin{equation}\label{e-gue131023IV}
\begin{split}
&p^j_k=\frac{e^{-2kR}\Td g^j_k}{e^{-2kR}\Td f_1},\ \ j=1,\ldots,n-1,\\
&p^j_k=\alpha^{2j-1}_k+i\alpha^{2j}_k,\ \ \alpha^{2j-1}_k={\rm Re\,}p^j_k,\ \ \alpha^{2j}_k={\rm Im\,}p^j_k,\ \ j=1,\ldots,n-1.
\end{split}
\end{equation}
From \eqref{e-gue131022aII}, \eqref{e-gue131022aIII}, Lemma~\ref{l-gue131023} and \eqref{e-gue131023II-I}, it is not difficult to see that 
\begin{equation}\label{e-gue131024}
\abs{\pr_{z_t}p^t_k(p)}^2\geq c_2k^{n+1},\ \ t=1,\ldots,2n-1,\ \ \abs{\pr_{x_{2n-1}}p^n_k(p)}^2\geq c_2k^{n+2},
\end{equation}  
where $c_2>0$ is a constant independent of $k$ and the point $p$ and for every $N>0$ there is a $C_N>0$ independent of $k$ and the point $p$ such that
\begin{equation}\label{e-gue131024I}
\begin{split}
&{\rm Sup\,}\{\abs{\pr_{x_{2n-1}}p^t_k(p)}, \abs{\pr_{z_s}p^t_k(p)};\, s,t=1,\ldots,n-1, s>t\}\\
&\quad+{\rm Sup\,}\{\abs{p^t_k(p)}, \abs{\pr_{\ol z_s}p^t_k(p)};\, s=1,\ldots,n-1, t=1,\ldots,n\}\leq C_Nk^{-N}.
\end{split}
\end{equation}
From \eqref{e-gue131024}, \eqref{e-gue131024I} and some elementary linear algebra argument, we conclude that 
there is a $k_0>0$ independent of the point $p$ such that for every $k\geq k_0$, the matrix
\[\begin{split}
A_k&:=\left[
\begin{array}[c]{cccc}
  \pr_{x_1}\bigr(e^{-2kR}\alpha^1_k\bigr)(p)&\pr_{x_2}\bigr(e^{-2kR}\alpha^1_k\bigr)(p)&\cdots&\pr_{x_{2n}}\bigr(e^{-2kR}\alpha^{1}_k\bigr)(p)\\
  \pr_{x_1}\bigr(e^{-2kR}\alpha^2_k\bigr)(p)&\pr_{x_2}\bigr(e^{-2kR}\alpha^2_k\bigr)(p)&\cdots&\pr_{x_{2n}}\bigr(e^{-2kR}\alpha^{2}_k\bigr)(p)\\
 \vdots&\vdots&\vdots&\vdots\\
\pr_{x_1}\bigr(e^{-2kR}\alpha^{2n-1}_k\bigr)(p)&\pr_{x_2}\bigr(e^{-2kR}\alpha^{2n}_k\bigr)(p)&\cdots&\pr_{x_{2n}}\bigr(e^{-2kR}\alpha^{2n}_k\bigr)(p)
\end{array}\right],\\
A_k&:\Real^{2n-1}\To\Real^{2n}\end{split}\]
is injective. Hence the differential of the map $x\in X\To(\frac{g^1_k}{f_1}(x),\ldots,\frac{g^n_k}{f_1}(x))\in\Complex^n$ at $p$ is injective if $k\geq k_0$. From this and some elementary linear algebra arguments, we conclude that 
the differential of the map $x\in X\To(\frac{f_2}{f_1}(x),\ldots,\frac{f_{d_k}}{f_1}(x))\in\Complex^{d_k}$ at $p$ is injective if $k\geq k_0$. Theorem~\ref{t-gue131019} follows. 
\end{proof}

Our last goal in this section is to prove that for $k$ large, the map $\Phi_{N_0,k}:X\To\Complex\mathbb P^{d_k-1}$ is injective. 

\begin{thm}\label{t-gue131103}
With the assumptions and notations above, fix $N_0>2n+1$. 
For $k$ large, the map $\Phi_{N_0,k}:X\To\Complex\mathbb P^{d_k-1}$ is injective. 
\end{thm}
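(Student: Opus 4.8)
The plan is to argue by contradiction. Suppose $\Phi_{N_0,k}$ fails to be injective for a sequence $k_j\to\infty$, so that there are points $p_j\ne q_j$ in $X$ with $\Phi_{N_0,k_j}(p_j)=\Phi_{N_0,k_j}(q_j)$. Since $X$ is compact, after passing to a subsequence we may assume $p_j\to p$ and $q_j\to q$. I would treat the cases $p\ne q$ and $p=q$ by completely different mechanisms, the second being the substantial one.

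\emph{Case $p\ne q$.} First I would use Theorem~\ref{t-gue131007} to produce, for each large $j$, a normalized peak section $\mu_j\in H^0_{b,\leq k_j^{-N_0}}(X,L^{k_j})$ with $\norm{\mu_j}_{h^{L^{k_j}}}=1$ and $\abs{\mu_j(p_j)}^2_{h^{L^{k_j}}}\geq C_0k_j^n$. The extra input is that $\mu_j$ is localized near $p_j$: revisiting its construction $\mu_j=u^0_j/\norm{u^0_j}_{h^{L^{k_j}}}$ with $u^0_j=\Pi^{(0)}_{k_j,\leq k_j^{-N_0}}u_j$, where $u_j$ is supported in a fixed small ball about $p_j$ and $\norm{u^0_j}_{h^{L^{k_j}}}\geq 1/C$, the high energy remainder $u^1_j:=u_j-u^0_j$ satisfies $\norm{(\Box^{(0)}_{b,k_j})^mu^1_j}^2_{h^{L^{k_j}}}=O(k_j^{-\infty})$, so by the a priori pointwise bound of Theorem~\ref{t-gue13718} one gets $\abs{\mu_j(x)}^2_{h^{L^{k_j}}}=O(k_j^{-\infty})$ uniformly for $x$ in any compact set avoiding $p$, in particular at $q_j$. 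On the other hand, by Theorem~\ref{t-gue131007I} and the dimension bound $d_{k_j}\leq C_1k_j^n$ of \eqref{e-gue131019I}, some basis element $f_{i_j}$ has $\abs{f_{i_j}(q_j)}^2_{h^{L^{k_j}}}\geq c_0>0$, while by Proposition~\ref{p-gue13717I} $\abs{f_{i_j}(p_j)}^2_{h^{L^{k_j}}}\leq Ck_j^n$. Comparing the two homogeneous coordinates $\mu_j$ and $f_{i_j}$: $\Phi_{N_0,k_j}(q_j)$ lies in the affine chart where the $i_j$-th coordinate is nonvanishing, and its $\mu_j$-coordinate there has modulus $\abs{\mu_j(q_j)}_{h^{L^{k_j}}}/\abs{f_{i_j}(q_j)}_{h^{L^{k_j}}}=O(k_j^{-\infty})$, whereas $\Phi_{N_0,k_j}(p_j)$ either fails to lie in that chart or its $\mu_j$-coordinate there has modulus $\geq\sqrt{C_0/C}=:c'>0$; for $j$ large this contradicts $\Phi_{N_0,k_j}(p_j)=\Phi_{N_0,k_j}(q_j)$.

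\emph{Case $p=q$.} Now for $j$ large $p_j,q_j$ lie in a fixed small trivializing coordinate ball $D_0$; fix coordinates as in \eqref{e-gue13716II} centered at $p_j$, so $q_j$ corresponds to $w_j=(w'_j,w^{2n-1}_j)\to 0$. Writing $f_i=s^{k_j}\Td f_i$ and using \eqref{e-gue13719aI}, the identity $\Phi_{N_0,k_j}(p_j)=\Phi_{N_0,k_j}(q_j)$ says that $(\Td f_i(p_j))_i$ and $(\Td f_i(q_j))_i$ in $\Complex^{d_{k_j}}$ are proportional, which by the equality case of Cauchy--Schwarz is equivalent to
\begin{equation*}
\abs{\hat\Pi^{(0)}_{k_j,\leq k_j^{-N_0},s}(p_j,q_j)}^2=\hat\Pi^{(0)}_{k_j,\leq k_j^{-N_0},s}(p_j,p_j)\,\hat\Pi^{(0)}_{k_j,\leq k_j^{-N_0},s}(q_j,q_j).
\end{equation*}
I would then feed in the near-diagonal rescaled description of $\hat\Pi^{(0)}_{k,\leq k^{-N_0},s}$: by Theorem~\ref{t-gue130816} together with Propositions~\ref{p-gue131024} and \ref{p-gue131103}, after the gauge conjugation by $e^{\mp kR}$ (with $R$ as in \eqref{e-gue13716III}) and the scaling $F_k$ of section~\ref{s-sub}, dividing by $k^n$, the kernel $\hat\Pi^{(0)}_{k,\leq k^{-N_0},s}$ converges in $C^1$ on compacta (letting $\delta\to0$ then $k\to\infty$) to a fixed model reproducing kernel $K^\infty$ on the Heisenberg group $H_n$ with the Gaussian weight $\psi_0$ of \eqref{e-gue131029}. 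Set $\rho_j:=\sqrt{k_j}\,\abs{w'_j}+k_j\,\abs{w^{2n-1}_j}$. If $\rho_j\to\infty$, then by the oscillatory integral form of $\mathcal{S}_k$ (Theorem~\ref{t-gue13630I}) together with ${\rm Im}\,\varphi\geq c\abs{x'-y'}^2$ and ${\rm Im}\,\varphi+\abs{\pr\varphi/\pr s}\geq c(\abs{x_{2n-1}-y_{2n-1}}+\abs{x'-y'}^2)$ of Theorem~\ref{t-gue140121I} (integrating by parts in $s$) the left side is $o(k_j^{2n})$, while since $L$ is positive and $n_-=0$ the right side is $\geq c\,k_j^{2n}$ by \eqref{e-gue130819II}--\eqref{e-gue130819I}; contradiction. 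If $\rho_j$ stays bounded, pass to a subsequence with $(\sqrt{k_j}w'_j,k_jw^{2n-1}_j)\to\tilde w$; the displayed identity passes to the limit, giving $\abs{K^\infty(0,\tilde w)}^2=K^\infty(0,0)\,K^\infty(\tilde w,\tilde w)$, and since $K^\infty$ is an honest reproducing kernel the equality case of Cauchy--Schwarz forces the coherent states at $0$ and at $\tilde w$ to coincide, i.e. $\tilde w=0$.

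It remains to rule out $\rho_j\to0$ with $w_j\ne0$, and this is the step I expect to be the main obstacle: the qualitative immersion statement of Theorem~\ref{t-gue131019} must be upgraded to a quantitative, $k$-uniform one. Using Lemma~\ref{l-gue131023} for upper bounds on the rescaled derivatives together with the peak-section derivative lower bounds of Lemma~\ref{l-gue131019} and Proposition~\ref{p-gue131022}, I would show that the rescaled gauge-conjugated evaluation maps $u\mapsto k^{-n/2}\bigl(e^{-2kR}\Td f_i\bigr)\bigl(F_k(u)\bigr)$, regarded as maps of a fixed small ball $\{\abs{u}\leq\varepsilon\}$ into $\Complex\mathbb P^{d_k-1}$, form a $C^1$-precompact family whose every limit is an immersion with differential bounded away from zero; hence for $k$ large each such rescaled map is injective on $\{\abs{u}\leq\varepsilon\}$, contradicting $\Phi_{N_0,k_j}(p_j)=\Phi_{N_0,k_j}(q_j)$ once $\rho_j$ is small (so that $(\sqrt{k_j}w'_j,k_jw^{2n-1}_j)$ lies in $\{\abs{u}\leq\varepsilon\}$) and $\ne0$. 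Combining the three sub-cases finishes Case $p=q$, and the theorem follows.
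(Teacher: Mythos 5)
Your overall architecture (contradiction, the dichotomy $p\ne q$ versus $p=q$, and the sub-dichotomy by the anisotropically rescaled separation $\rho_j=\sqrt{k_j}\abs{w'_j}+k_j\abs{w^{2n-1}_j}$) matches the paper, and your treatment of $p\ne q$ is a valid variant of the paper's two-peak-section ratio argument. The problem is that the two remaining sub-cases are exactly where the real work lies, and in both you substitute an appeal to an object the paper never constructs. For bounded $\rho_j$ with limit $\tilde w\ne0$ you invoke a limit reproducing kernel $K^\infty$ on $H_n$ and the equality case of Cauchy--Schwarz to conclude $\tilde w=0$; but nothing in the paper identifies the $C^1$ limit of $k^{-n}\hat\Pi^{(0)}_{k,\leq k^{-N_0},s}$ as the orthogonal projection onto a concrete model Hilbert space, and even granting that, the assertion that its coherent states at distinct points are never proportional is essentially the injectivity statement you are trying to prove, now for the model — it is true for the weighted space attached to $\psi_0$ under the positivity hypothesis, but it needs a proof. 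The paper avoids this circularity entirely by working with the explicit leading oscillatory integral $\int e^{ik\varphi(x,y,s)}a(x,y,s,k)ds$: when $\sqrt{k}\abs{w'_j}\to M>0$ the bound ${\rm Im\,}\varphi\geq c\abs{x'-y'}^2$ gives the off-diagonal value a factor $e^{-cM^2}<1$ relative to the diagonal (Case I), and when only the $x_{2n-1}$-separation survives with finite limit $M$ the strict inequality $\abs{\int e^{ikMs}\abs{\det(M^\phi_p-2s\mathcal{L}_p)}\psi(s)ds}<\int\abs{\det(M^\phi_p-2s\mathcal{L}_p)}\psi(s)ds$ for a non-Dirac positive measure does the job (Case II, see \eqref{e-gue131107}). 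Both are elementary strict inequalities, not limit-kernel facts.

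The decisive gap is the case $\rho_j\to0$ with $x_{k_j}\ne y_{k_j}$, which you explicitly leave as "the main obstacle" with only a sketched compactness strategy; as stated it is not a proof, and making it one is nontrivial because the target $\Complex\mathbb P^{d_k-1}$ has growing dimension, so "$C^1$-precompact family of maps" must be replaced by uniform $C^2$ control of a scalar quantity. The paper's actual mechanism is a concavity argument along the segment joining the two points: with $\alpha_k(t)$ the gauge-normalized kernel $\hat\Pi^{(0)}_{k,\leq k^{-N_0},s}(tx_k+(1-t)y_k,\,y_k)$, one sets $H_k(t)=\abs{\alpha_k(t)}^2/B_k(t)$ where $B_k(t)$ is the product of the diagonal values; Cauchy--Schwarz gives $0\leq H_k\leq1$ while $H_k(0)=H_k(1)=1$, so some $t_k\in[0,1]$ has $H_k''(t_k)\geq0$. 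One then splits $\alpha_k=\beta_{\delta,k}+\gamma_{\delta,k}$ using the cut-off $R_{\delta,k}$, kills the $\gamma$-contributions via Propositions~\ref{p-gue131024} and \ref{p-gue131103}, and computes $H_k''(t_k)$ from the second-order Taylor expansion of $\varphi$ at the diagonal: the $x_{2n-1}$-direction contributes $\bigl(\int s\,d\mu\bigr)^2-\int s^2d\mu\int d\mu<0$ (a strict variance inequality for $d\mu=\abs{\det(M^\phi_p-2s\mathcal{L}_p)}\psi_\delta(s)ds$) times $k^2(x^{2n-1}_k-y^{2n-1}_k)^2$, and the $x'$-directions contribute $-\sum\frac{\pr^2{\rm Im\,}\varphi}{\pr x_j\pr x_l}(p,p,s)(x^j_k-y^j_k)(x^l_k-y^l_k)<0$ times $k$, yielding $H_k''(t_k)\leq-C\rho_{k}^2k^{-\cdots}<0$ for large $k$ — a contradiction. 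This quantitative second-derivative estimate is precisely the "$k$-uniform upgrade of the immersion statement" you anticipate needing, and without it (or an equivalent) your proof is incomplete at its central point.
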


\begin{proof}
We assume that the claim of the theorem is not true. We can find $x_{k_j}, y_{k_j}\in X$, $x_{k_j}\neq y_{k_j}$, $0<k_1<k_2<\cdots$, $\lim_{j\To\infty}k_j=\infty$, such that $\Phi_{N_0,k_j}(x_{k_j})=\Phi_{N_0,k_j}(y_{k_j})$, for each $j$. We may suppose that there are $x_{k}, y_k\in X$, $x_k\neq y_{k}$, such that $\Phi_{N_0,k}(x_k)=\Phi_{N_0,k}(y_k)$, for each $k$. We may assume that $x_k\To p\in X$, $y_k\To q\in X$, as $k\To\infty$. If $p\neq q$. Then, for $k$ large, we have ${\rm dist\,}(x_k,y_k)\geq\frac{1}{2}{\rm dist\,}(p,q)$. In view of the proof of Theorem~\ref{t-gue131007}, it is not difficult to see that we can find $u_k, v_k\in H^0_{b,\leq k^{-N_0}}(X,L^k)$ such that for $k$ large, we have 
\begin{equation}\label{e-gue131103IV}
\abs{u_k(x_k)}^2_{h^{L^k}}\geq C_0k^n,\ \ \abs{u_k(y_k)}^2_{h^{L^k}}\leq\frac{C_0}{2}k^n,
\end{equation}
and 
\begin{equation}\label{e-gue131103V}
\abs{v_k(y_k)}^2_{h^{L^k}}\geq C_0k^n,\ \ \abs{v_k(x_k)}^2_{h^{L^k}}\leq\frac{C_0}{2}k^n,
\end{equation}
where $C_0>0$ is a constant independent of $k$. Now,  $\Phi_{N_0,k}(x_k)=\Phi_{N_0,k}(y_k)$ implies that
\[\abs{u_k(x_k)}^2_{h^{L^k}}=r_k\abs{u_k(y_k)}^2_{h^{L^k}},\ \ \abs{v_k(x_k)}^2_{h^{L^k}}=r_k\abs{v_k(y_k)}^2_{h^{L^k}},\]
where $r_k\in\Real_+$, for each $k$. \eqref{e-gue131103IV} implies that $r_k\geq 2$, for $k$ large. But  \eqref{e-gue131103V} implies that $r_k\leq\frac{1}{2}$, for $k$ large. We get a contradiction. Thus, we must have $p=q$. 

Let $X=D_1\bigcup D_2\bigcup\cdots D_N$, where $D_j$ is an open set as in the discussion before \eqref{e-gue131019II}. We assume that $p\in D_1=:D$. 
Let $s$ be a local trivializing section of $L$ on an open subset $D\subset X$ of $p$, $\abs{s}^2_{h^L}=e^{-2\phi}$. Let $x=(x_1,\ldots,x_{2n-1})$, $z_j=x_{2j-1}+ix_{2j}$, $j=1,\ldots,n-1$, be local coordinates of $X$ defined on $D$. For simplicity, we assume that \eqref{e-gue13716II} hold. We shall use the same notations as before. We write $x_k=(x^1_k,\ldots,x^{2n-1}_k)\in\Real^{2n-1}$, $y_k=(y^1_k,\ldots,y^{2n-1}_k)\in\Real^{2n-1}$. 

{\rm Case I\,}: $\limsup_{k\To\infty}\sqrt{k}\sum^{2n-2}_{j=1}\abs{x^j_k-y^j_k}=M>0$ ($M$ can be $\infty$). \\
For simplicity, we may assume that 
\begin{equation}\label{e-gue131103VIb}
\lim_{k\To\infty}\sqrt{k}\sum^{2n-2}_{j=1}\abs{x^j_k-y^j_k}=M,\ \ M\in]0,\infty].
\end{equation}
Now, $\Phi_{N_0,k}(x_k)=\Phi_{N_0,k}(y_k)$ implies that we can find a sequence $\lambda_k\in\Complex$ such that for each $k$, 
\begin{equation}\label{e-gue131103VI}
e^{-k\phi(x_k)}\Td u_k(x_k)=\lambda_ke^{-k\phi(y_k)}\Td u_k(y_k),
\end{equation} 
for every $u_k\in H^0_{b,\leq k^{-N_0}}(X,L^k)$, $u_k=s^k\Td u_k$ on $D$. We may assume that 
\begin{equation}\label{e-gue131103VII}
\limsup_{k\To\infty}\abs{\lambda_k}\geq 1. 
\end{equation}
Let $\hat{\mathcal{I}}_k:=\hat{\mathcal{I}}_{k,1}$ be as in \eqref{e-gue130918V} and \eqref{e-gue131019II}. Let 
\[h_k=\sum^{d_k}_{j=1}f_j\bigr(\hat{\mathcal{I}}_ke^{-k\phi}\Td f_j\bigr)(y_k)\in H^0_{b,\leq k^{-N_0}}(X,L^k),\]
where $f_j=s^k\Td f_j$ on $D$, $j=1,\ldots,d_k$. On $D$, we write $h_k=s^k\Td h_k$. Then, it is easy to see that $(\hat\Pi^{(0)}_{k,\leq k^{-N_0},s}\hat{\mathcal{I}}_k)(x,y_k)=e^{-k\phi(x)}\Td h_k(x)$. From this observation and Theorem~\ref{t-gue130816}, it is straightforward to check that 
\begin{equation}\label{e-gue131103VIII}
e^{-k\phi(x)}\Td h_k(x)=\int e^{ik\varphi(x,y_k,s)}a(x,y_k,s,k)ds+R_k(x),
\end{equation}
where $\varphi(x,y,s)\in C^\infty(\Omega)$ is as in Theorem~\ref{t-gue130816}, $\Omega$ is as in the discussion after \eqref{e-gue130819I}, $a(x,y,s,k)\sim\sum^\infty_{j=0}a_j(x,y,s)k^{n-j}$ in 
$S^n_{{\rm loc\,}}(1;\Omega)$, 
\begin{equation}\label{e-gue131103aI}
a_0(p,p,s)=(2\pi)^{-n}\abs{\det\bigr(M^\phi_p-2s\mathcal{L}_p\bigr)}\psi(s),
\end{equation}
and $R_k(x)$ is a smooth function on $D$ such that for every $D'\Subset D$, there is a constant $C_{D'}$ independent of $k$ such that 
\begin{equation}\label{e-gue131103aII}
\abs{R_k(x)}\leq C_{D'}k^{3n-N_0-2}\leq C_{D'}k^{n-2}.
\end{equation}
From \eqref{e-gue131103VIb}, \eqref{e-gue131103aI}, \eqref{e-gue131103aII} and \eqref{e-dgugeXI}, we have
\begin{equation}\label{e-gue131103aIII}
\begin{split}
&\limsup_{k\To\infty}k^{-n}\abs{e^{-k\phi(x_k)}\Td h_k(x_k)}\\
&\leq\limsup_{k\To\infty}k^{-n}\Bigr(\int e^{-{\rm Im\,}\varphi(x_k,y_k,s)}\abs{a(x_k,y_k,s)}ds+R_k(x_k)\Bigr)\\
&\leq e^{-cM^2}(2\pi)^{-n}\int\abs{\det\bigr(M^\phi_p-2s\mathcal{L}_p\bigr)}\psi(s)ds,
\end{split}
\end{equation}
where $c>0$ is a constant independent of $k$, and 
\begin{equation}\label{e-gue131103aIV}
\limsup_{k\To\infty}k^{-n}\abs{e^{-k\phi(y_k)}\Td h_k(y_k)}=(2\pi)^{-n}\int\abs{\det\bigr(M^\phi_p-2s\mathcal{L}_p\bigr)}\psi(s)ds.
\end{equation}
From \eqref{e-gue131103VI} and \eqref{e-gue131103VII}, we conclude that 
\[\limsup_{k\To\infty}k^{-n}\abs{e^{-k\phi(x_k)}\Td h_k(x_k)}\geq\limsup_{k\To\infty}k^{-n}\abs{e^{-k\phi(y_k)}\Td h_k(y_k)}.\] 
From this and \eqref{e-gue131103aIII}, \eqref{e-gue131103aIV}, we deduce that 
\[e^{-cM^2}(2\pi)^{-n}\int\abs{\det\bigr(M^\phi_p-2s\mathcal{L}_p\bigr)}\psi(s)ds\geq(2\pi)^{-n}\int\abs{\det\bigr(M^\phi_p-2s\mathcal{L}_p\bigr)}\psi(s)ds.\]
But this is impossible. We get a contradiction. 

{\rm Case II\,}: $\limsup_{k\To\infty}\sqrt{k}\sum^{2n-2}_{j=1}\abs{x^j_k-y^j_k}=0$,  $\limsup_{k\To\infty}k\abs{\langle\,\omega_0(x_k),y_k-x_k\,\rangle}=M>0$ ($M$ can be $\infty$). \\
For simplicity, we may assume that 
\begin{equation}\label{e-gue131103aV}
\lim_{k\To\infty}k\abs{\langle\,\omega_0(x_k),y_k-x_k\,\rangle}=M,\ \ M\in]0,\infty].
\end{equation}
Now, $\Phi_{N_0,k}(x_k)=\Phi_{N_0,k}(y_k)$ implies that we can find a sequence $\lambda_k\in\Complex$ such that for each $k$, 
\begin{equation}\label{e-gue131103aVI}
e^{-k\phi(x_k)}\Td u_k(x_k)=\lambda_ke^{-k\phi(y_k)}\Td u_k(y_k),
\end{equation} 
for every $u_k\in H^0_{b,\leq k^{-N_0}}(X,L^k)$, $u_k=s^k\Td u_k$ on $D$. We may assume that 
\begin{equation}\label{e-gue131103aVII}
\limsup_{k\To\infty}\abs{\lambda_k}\geq 1. 
\end{equation}
We fist assume that $M=\infty$. Let $h_k$ be as above. From the fact that $\abs{\frac{\pr\varphi(x,y,s)}{\pr s}}_{x=x_k,y=y_k}\geq c\abs{\langle\,\omega_0(x_k),y_k-x_k\,\rangle}$, where $c>0$ is a constant independent of $k$, we can integrate by parts with respect to $s$ and conclude that 
\[\limsup_{k\To\infty}k^{-n}\abs{e^{-k\phi(x_k)}\Td h_k(x_k)}=0.\]
But from \eqref{e-gue131103aVII}, we have 
\[\begin{split}
&0=\limsup_{k\To\infty}k^{-n}\abs{e^{-k\phi(x_k)}\Td h_k(x_k)}\\
&\geq\limsup_{k\To\infty}k^{-n}\abs{e^{-k\phi(y_k)}\Td h_k(y_k)}
=(2\pi)^{-n}\int\abs{\det\bigr(M^\phi_p-2s\mathcal{L}_p\bigr)}\psi(s)ds.\end{split}\]
This is impossible. We get a contradiction. Now, we assume that $M<\infty$. From \eqref{e-gue131103VIII} and \eqref{e-gue131103aII}, it is not difficult to see that 
\begin{equation}\label{e-gue131107}
\begin{split}
\lim_{k\To\infty}k^{-n}\abs{e^{-k\phi(x_k)}\Td h_k(x_k)}&=(2\pi)^{-n}\abs{\int e^{ikMs}\abs{\det\Bigr(M^\phi_p-2s\mathcal{L}_p\Bigr)}\psi(s)ds}\\
&<(2\pi)^{-n}\int\abs{\det\Bigr(M^\phi_p-2s\mathcal{L}_p\Bigr)}\psi(s)ds=\lim_{k\To\infty}k^{-n}\abs{e^{-k\phi(y_k)}\Td h_k(y_k)}.
\end{split}
\end{equation}
We get a contradiction. 

{\rm Case III\,}: $\limsup_{k\To\infty}\sqrt{k}\sum^{2n-2}_{j=1}\abs{x^j_k-y^j_k}=0$,  $\limsup_{k\To\infty}k\abs{\langle\,\omega_0(x_k),y_k-x_k\,\rangle}=0$.\\
Let $g_j=\hat{\mathcal{I}}_kf_j$ and set $g_j=s^k\Td g_j$ on $D$, $j=1,2,\ldots,d_k$.
Put 
\begin{equation}\label{e-gue131107I}
\begin{split}
\alpha_k(t)=&e^{-k\phi(tx_k+(1-t)y_k)-k\phi(y_k)}\times\\
&\quad\sum^{d_k}_{j=1}\Td g_j(tx_k+(1-t)y_k)\ol{\Td g_j}(y_k)e^{-kR(tx_k+(1-t)y_k)+k\ol R(tx_k+(1-t)y_k)-k\ol R(y_k)+kR(y_k)},\\
A_k(t)=&\abs{\alpha_k(t)}^2,\\
B_k(t)=&e^{-2k\phi(tx_k+(1-t)y_k)-2k\phi(y_k)}\times\\
&\quad\sum^{d_k}_{j=1}\abs{\Td g_j(tx_k+(1-t)y_k)e^{-kR(tx_k+(1-t)y_k)+k\ol R(tx_k+(1-t)y_k)}}^2\sum^{d_k}_{j=1}\abs{\Td g_j(y_k)e^{-kR(y_k)+k\ol R(y_k)}}^2,
\end{split}
\end{equation}
where $t\in[0,1]$ and $R$ is as in \eqref{e-gue13716III}. Put $H_k(t)=\frac{A_k(t)}{B_k(t)}$. $H_k(t)$ is a smooth function of $t\in[0,1]$ since $B_k(t)>0$ for every $t\in[0,1]$. Moreover, we can check that $0\leq H_k(t)\leq 1$ and $H_k(1)=H_k(0)=1$. Thus, for each $k$, there is a $t_k\in[0,1]$ such that 
\begin{equation}\label{e-gue131107II}
H''_k(t_k)\geq 0.
\end{equation}
We now calculate $H''_k(t)$. We first calculate $A''_k(t)$. In view of Theorem~\ref{t-gue130816}, it is not difficult to see that
\begin{equation}\label{e-gue131107IV}
\begin{split}
\alpha_{k}(t)=&\int e^{ik\varphi(tx_k+(1-t)y_k,y_k,s)}e^{-kR(tx_k+(1-t)y_k)+k\ol R(tx_k+(1-t)y_k)-k\ol R(y_k)+kR(y_k)}\\
&\quad\quad\quad\times a(tx_k+(1-t)y_k,y_k,s,k)ds+\epsilon_{k}(tx_k+(1-t)y_k,y_k),
\end{split}
\end{equation}
where $a(x,y,s,k)\sim\sum^\infty_{j=0}k^{n-j}a_j(x,y,s)$ in $S^n_{{\rm loc\,}}(1;\Omega)$, $a_j(x,y,s)\in C^\infty_0(\Omega)$, $j=0,1,\ldots$, $\Omega$ is as in the discussion after \eqref{e-gue130819I}, 
\begin{equation}\label{e-gue131108}
a_{0}(p,p,s)=(2\pi)^{-n}\abs{\det\Bigr(M^\phi_p-2s\mathcal{L}_p\Bigr)}\abs{\psi(s)}^2,
\end{equation}
and $\epsilon_{k,\delta}(x,y)$ is a smooth function on $D\times D$ such that for every $D'\Subset D$ and every $\alpha, \beta\in\mathbb N^{2n-1}_0$, there is a constant $C_{D',\alpha,\beta,\delta}$ independent of $k$ such that 
\begin{equation}\label{e-gue131108I}
\abs{\pr^\alpha_x\pr^\beta_y\epsilon_{k,\delta}(x,y)}\leq C_{D',\alpha,\beta,\delta}k^{3n-N_0-2+2\abs{\alpha}+2\abs{\beta}}.
\end{equation}
We can calculate that 
\begin{equation}\label{e-gue131107V}
\begin{split}
A''_k(t)=&2\abs{\alpha'_{k}(t)}^2+\alpha''_{k}(t)\ol\alpha_{k}(t)+\ol\alpha''_{k}(t)\alpha_{k}(t).
\end{split}
\end{equation}
From \eqref{e-guew13627}, \eqref{e-gue131107IV}, \eqref{e-gue131108} and \eqref{e-gue131108I}, it is straightforward to see that(we omit the computations) 
\begin{equation}\label{e-gue131108II}
\begin{split}
&2\abs{\alpha'_{k}(t_k)}^2+\alpha''_{k}(t_k)\ol\alpha_{k}(t_k)+\ol\alpha''_{k}(t_k)\alpha_{k}(t_k)\\
&=2(2\pi)^{-2n}k^{2n+2}\Bigr(\bigr(\int s\abs{\det\Bigr(M^\phi_p-2s\mathcal{L}_p\Bigr)}\abs{\psi(s)}^2ds\bigr)^2\\
&-\int s^2\abs{\det\Bigr(M^\phi_p-2s\mathcal{L}_p\Bigr)}\abs{\psi(s)}^2ds\int\abs{\det\Bigr(M^\phi_p-2s\mathcal{L}_p\Bigr)}\abs{\psi(s)}^2ds\Bigr)(\abs{\langle\,\omega_0(x_k),y_k-x_k\,\rangle})^2\\
&-2(2\pi)^{-2n}k^{2n+1}\int(\sum^{2n-2}_{j,l=1}\frac{\pr^2{\rm Im\,}\varphi}{\pr x_j\pr x_l}(p,p,s)(x^j_k-y^j_k)(x^l_k-y^l_k))\abs{\det\Bigr(M^\phi_p-2s\mathcal{L}_p\Bigr)}\abs{\psi(s)}^2ds\\
&+o(k^{2n})O\Bigr(\bigr(\sqrt{k}\sum^{2n-2}_{j=1}\abs{x^j_k-y^j_k}+k\abs{\langle\,\omega_0(x_k),y_k-x_k\,\rangle}\bigr)^2\Bigr).
\end{split}
\end{equation}
Since 
\[\begin{split}
&\Bigr(\bigr(\int s\abs{\det\Bigr(M^\phi_p-2s\mathcal{L}_p\Bigr)}\abs{\psi(s)}^2ds\bigr)^2\\
&\quad-\int s^2\abs{\det\Bigr(M^\phi_p-2s\mathcal{L}_p\Bigr)}\abs{\psi(s)}^2ds\int\abs{\det\Bigr(M^\phi_p-2s\mathcal{L}_p\Bigr)}\abs{\psi(s)}^2ds\Bigr)<0,\end{split}\]
there is a constant $C_1>0$ independent of $k$ such that 
\begin{equation}\label{e-gue131108III}
\begin{split}
&\Bigr(\bigr(\int s\abs{\det\Bigr(M^\phi_p-2s\mathcal{L}_p\Bigr)}\abs{\psi(s)}^2ds\bigr)^2\\
&-\int s^2\abs{\det\Bigr(M^\phi_p-2s\mathcal{L}_p\Bigr)}\abs{\psi(s)}^2ds\int\abs{\det\Bigr(M^\phi_p-2s\mathcal{L}_p\Bigr)}\abs{\psi(s)}^2ds\Bigr)\langle\,\omega_0(x_k),y_k-x_k\,\rangle^2\\
&\leq -C_1\abs{\langle\,\omega_0(x_k),y_k-x_k\,\rangle}^2.
\end{split}
\end{equation}
Moreover, from \eqref{e-dgugeXI}, we can check that there is a constant $C_2>0$ independent of $k$ such that 
\begin{equation}\label{e-gue131108IV}
\begin{split}
&-\int(\sum^{2n-2}_{j,l=1}\frac{\pr^2{\rm Im\,}\varphi}{\pr x_j\pr x_l}(p,p,s)(x^j_k-y^j_k)(x^t_k-y^t_k))\abs{\det\Bigr(M^\phi_p-2s\mathcal{L}_p\Bigr)}\abs{\psi(s)}^2ds\\
&\leq -C_2\sum^{2n-2}_{j=1}(x^j_k-y^j_k)^2.
\end{split}
\end{equation}
From \eqref{e-gue131108II}, \eqref{e-gue131108III} and \eqref{e-gue131108IV}, we deduce that 
\begin{equation}\label{e-gue131108VII}
\limsup_{k\To\infty}k^{-2n}\bigr(\sqrt{k}\sum^{2n-2}_{j=1}\abs{x^j_k-y^j_k}+k\abs{\langle\,\omega_0(x_k),y_k-x_k\,\rangle}\bigr)^{-2}A''_k(t_k)\leq -C<0,
\end{equation}
where $C>0$ is a constant. 

Now, we have 
\begin{equation}\label{e-gue131108VIIIa}
H''_k(t_k)=\frac{A''_k(t_k)}{B_k(t_k)}-2\frac{A'_k(t_k)}{B^2_k(t_k)}B'_k(t_k)-\frac{A_k(t_k)}{B^2_k(t_k)}B''_k(t_k)+2\frac{A_k(t_k)(B'_k(t_k))^2}{B^3_k(t_k)}.
\end{equation}
From \eqref{e-gue130819IIm}, it is easy to see that
\begin{equation}\label{e-gue131108VIIIb}
\begin{split}
&\limsup_{k\To\infty}\Bigr(\bigr(\sqrt{k}\sum^{2n-2}_{j=1}\abs{x^j_k-y^j_k}+k\abs{\langle\,\omega_0(x_k),y_k-x_k\,\rangle}\bigr)^{-2}\times\\
&\quad\bigr(-2\frac{A'_k(t_k)}{B^2_k(t_k)}B'_k(t_k)-\frac{A_k(t_k)}{B^2_k(t_k)}B''_k(t_k)+2\frac{A_k(t_k)(B'_k(t_k))^2}{B^3_k(t_k)}\bigr)\Bigr)=0.
\end{split}
\end{equation}
From \eqref{e-gue131108VIIIb}, \eqref{e-gue131108VIIIa} and \eqref{e-gue131108VII}, we deduce that 
\begin{equation}\label{e-gue131108VIIIab}
\begin{split}
&\limsup_{k\To\infty}(\bigr(\sqrt{k}\sum^{2n-2}_{j=1}\abs{x^j_k-y^j_k}+k\abs{\langle\,\omega_0(x_k),y_k-x_k\,\rangle}\bigr)^{-2}H''_k(t_k)\\
&=\limsup_{k\To\infty}\bigr(\sqrt{k}\sum^{2n-2}_{j=1}\abs{x^j_k-y^j_k}+k\abs{\langle\,\omega_0(x_k),y_k-x_k\,\rangle}\bigr)^{-2}\frac{A''_k(t_k)}{B_k(t_k)}\leq -C_0<0,
\end{split}
\end{equation}
where $C_0>0$ is a constant. But from \eqref{e-gue131107II}, we see that 
\[\limsup_{k\To\infty}(\bigr(\sqrt{k}\sum^{2n-2}_{j=1}\abs{x^j_k-y^j_k}+k\abs{\langle\,\omega_0(x_k),y_k-x_k\,\rangle}\bigr)^{-2}H''_k(t_k)\geq0.\]
We get a contradiction. The theorem follows. 
\end{proof} 

Summing up, we obtain one of the main results of this work 

\begin{thm}\label{t-gue131108}
Let $N_0>2n+1$. Then, for $k$ large, the Kodaira map $\Phi_{N_0,k}:X\To\Complex\mathbb P^{d_k-1}$ is an embedding. 
\end{thm}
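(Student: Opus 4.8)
The plan is to combine the three main results already proved in this section: Theorem~\ref{t-gue131007I} (the Kodaira map $\Phi_{N_0,k}$ is well-defined for $k$ large, because $\sum_j\abs{f_j(x)}^2_{h^{L^k}}\geq C_0k^n>0$ everywhere), Theorem~\ref{t-gue131019} ($d\Phi_{N_0,k}(x):T_xX\To T_{\Phi_{N_0,k}(x)}\Complex\mathbb P^{d_k-1}$ is injective for every $x\in X$ once $k$ is large), and Theorem~\ref{t-gue131103} ($\Phi_{N_0,k}:X\To\Complex\mathbb P^{d_k-1}$ is injective for $k$ large). Since $X$ is assumed compact and $\Complex\mathbb P^{d_k-1}$ is Hausdorff, an injective smooth immersion from a compact manifold is automatically a topological embedding (it is a continuous injection from a compact space to a Hausdorff space, hence a homeomorphism onto its image, and the immersion property makes it a diffeomorphism onto its image with the submanifold structure). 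Thus the proof is essentially a matter of assembling these three facts.

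Concretely, I would first fix $N_0>2n+1$ so that all three cited theorems apply, and choose $k_0$ large enough that the conclusions of Theorem~\ref{t-gue131007I}, Theorem~\ref{t-gue131019} and Theorem~\ref{t-gue131103} all hold simultaneously for $k\geq k_0$ (each gives its own threshold; take the maximum). For such $k$: by Theorem~\ref{t-gue131007I} the map $\Phi_{N_0,k}$ is a well-defined smooth map $X\To\Complex\mathbb P^{d_k-1}$ as in \eqref{e-gue131019II}; by Theorem~\ref{t-gue131019} its differential is injective at every point, so $\Phi_{N_0,k}$ is an immersion; by Theorem~\ref{t-gue131103} it is injective. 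An injective immersion of a compact manifold into a manifold is an embedding, which is precisely the assertion.

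There is essentially no obstacle remaining at this stage — all the analytic work (the Szeg\H{o} kernel asymptotics of Theorem~\ref{t-gue130816}, the upper bounds of section~\ref{s-sub}, the peak-section constructions, and the delicate second-derivative/convexity argument in Theorem~\ref{t-gue131103}) has already been carried out. The only point worth stating carefully is the elementary topology: one should remark that a continuous injection from a compact space into a Hausdorff space is a homeomorphism onto its image, so that injectivity plus the immersion property yields a genuine embedding rather than merely an injective immersion. I would include one sentence to that effect and then conclude.

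\begin{proof}[Proof of Theorem~\ref{t-gue131108}]
Fix $N_0>2n+1$. By Theorem~\ref{t-gue131007I}, Theorem~\ref{t-gue131019} and Theorem~\ref{t-gue131103}, there is a $k_0>0$ such that for every $k\geq k_0$ the following hold: $\sum^{d_k}_{j=1}\abs{f_j(x)}^2_{h^{L^k}}>0$ for all $x\in X$, so the Kodaira map $\Phi_{N_0,k}:X\To\Complex\mathbb P^{d_k-1}$ given by \eqref{e-gue131019II} is a well-defined smooth map; $d\Phi_{N_0,k}(x):T_xX\To T_{\Phi_{N_0,k}(x)}\Complex\mathbb P^{d_k-1}$ is injective for every $x\in X$; and $\Phi_{N_0,k}:X\To\Complex\mathbb P^{d_k-1}$ is injective. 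Hence, for $k\geq k_0$, $\Phi_{N_0,k}$ is an injective immersion of the compact manifold $X$. Since $X$ is compact and $\Complex\mathbb P^{d_k-1}$ is Hausdorff, $\Phi_{N_0,k}$ is a homeomorphism onto its image; combined with the injectivity of the differential, $\Phi_{N_0,k}$ is an embedding. The theorem follows.
\end{proof}
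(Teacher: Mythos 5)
Your proposal is correct and matches the paper exactly: the paper derives Theorem~\ref{t-gue131108} by simply assembling Theorem~\ref{t-gue131007I}, Theorem~\ref{t-gue131019} and Theorem~\ref{t-gue131103} ("Summing up, we obtain..."), and indeed the paper's own definition of embedding (stated in the introduction) is precisely "injective immersion plus injective map", so no further argument is needed. Your extra sentence on compactness and the Hausdorff target is a harmless (and correct) clarification.
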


From Theorem~\ref{t-gue131108}, we deduce Theorem~\ref{t-gue131109}. 

\section{Asymptotic expansion of the Szeg\"{o} kernel}\label{s-gue130820} 

We recall some notations we used before. 
Let $s$ be a local trivializing section of $L$ on an open subset $D\Subset X$ and $\abs{s}^2_{h^L}=e^{-2\phi}$.  Let $A_k:L^2_{(0,q)}(X,L^k)\To L^2_{(0,q)}(X,L^k)$ be a continuous operator. Let 
\[\hat A_{k,s}:L^2_{(0,q)}(D)\bigcap\mathscr E'(D,T^{*0,q}X)\To L^2_{(0,q)}(D)\]
be the localized operator (with respect to the trivializing section $s$) of $A_k$ given by \eqref{e-gue130820}. 
We write $A_k\equiv0\mod O(k^{-\infty})$ on $D$ if $\hat A_{k,s}\equiv0\mod O(k^{-\infty})$ on $D$.
Until further notice, we assume that $Y(q)$ holds on $D$.
First, we need 

\begin{defn}\label{d-gue130820}
Fix $q\in\set{0,1,\ldots,n-1}$. Let $A_k:L^2_{(0,q)}(X,L^k)\To L^2_{(0,q)}(X,L^k)$ be a continuous operator. Let $D\Subset X$. We say that $\Box^{(q)}_{b,k}$ has $O(k^{-n_0})$ small spectral gap on $D$ with respect to $A_k$ if for every $D'\Subset D$, there exist constants $C_{D'}>0$,  $n_0, p\in\mathbb N$, $k_0\in\mathbb N$, such that for all $k\geq k_0$ and $u\in\Omega^{0,q}_0(D',L^k)$, we have  
\[\norm{A_k(I-\Pi^{(q)}_k)u}_{h^{L^k}}\leq C_{D'}\,k^{n_0}\sqrt{(\,(\Box^{(q)}_{b,k})^pu\,|\,u\,)_{h^{L^k}}}.\] 
\end{defn}

\begin{defn}\label{d-gue131205}
Let $A_k:L^2_{(0,q)}(X,L^k)\To L^2_{(0,q)}(X,L^k)$ be a continuous operator.
We say that $\Pi^{(q)}_k$ is $k$-negligible away the diagonal with respect to $A_k$ 
on $D$ if for any $\chi, \chi_1\in C^\infty_0(D)$ with $\chi_1=1$ on some neighbourhood of ${\rm Supp\,}\chi$, we have 
\[\Bigr(\chi A_k(1-\chi_1)\Bigr)\Pi^{(q)}_k\Bigr(\chi A_k(1-\chi_1)\Bigr)^*\equiv0\mod O(k^{-\infty})\ \ \mbox{ on $D$},\]
where 
\[\Bigr(\chi A_k(1-\chi_1)\Bigr)^*:L^2_{(0,q)}(X,L^k)\To L^2_{(0,q)}(X,L^k)\]
is the Hilbert space adjoint of $\chi A_k(1-\chi_1)$ with respect to $(\,\cdot\,|\,\cdot\,)_{h^{L^k}}$. 
\end{defn}

It is easy to see that if $\Pi^{(q)}_k$ is $k$-negligible away the diagonal with respect to $A_k$ on $D$,
then for any $\chi, \chi_1\in C^\infty_0(D)$ with $\chi_1=1$ on some neighborhood of ${\rm Supp\,}\chi$, we have 
\[\Bigr(\chi A_k(1-\chi_1)\Bigr)\Pi^{(q)}_k\equiv0\mod O(k^{-\infty})\ \ \mbox{on $D$}.\]

\begin{defn}\label{d-gue131205I}
Let $A_k:L^2_{(0,q)}(X,L^k)\To L^2_{(0,q)}(X,L^k)$ be a continuous operator.
We say that $A_k$ is a global classical semi-classical pseudodifferential operator of order $m$ on $X$ if for every 
local trivializing section $s$ of $L$ on an open subset $D\subset X$, the localized operator $\hat A_{k,s}$ is a classical semi-classical pseudodifferential operator of order $m$ on $D$.
\end{defn}

\begin{prop}\label{p-gue131205}
Let $A_k:L^2_{(0,q)}(X,L^k)\To L^2_{(0,q)}(X,L^k)$ be a global classical semi-classical pseudodifferential operator on $X$ of order $0$. If $X$ is compact and $Y(q)$ holds on $X$ then $\Pi^{(q)}_k$ is $k$-negligible away the diagonal with respect to $A_k$ on every local trivialization $D\Subset X$.

Furthermore, if $X$ is non-compact and $A_k$ is properly supported on $D\Subset X$ and $Y(q)$ holds on $D$, where $D$ is a local trivialization of $X$,  then $\Pi^{(q)}_k$ is $k$-negligible away the diagonal with respect to $A_k$ on $D$.
\end{prop}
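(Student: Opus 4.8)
The plan is to reduce the statement to the pseudolocality of $A_k$ and then to finish by a soft argument that uses only that $\Pi^{(q)}_k$ is an orthogonal projection, hence contractive on $L^2_{(0,q)}(X,L^k)$. Fix $\chi,\chi_1\in C^\infty_0(D)$ with $\chi_1=1$ on ${\rm Supp\,}\chi$ and put $B_k:=\chi A_k(1-\chi_1)$. The first step is to show that $B_k\equiv0\mod O(k^{-\infty})$ on $D$, i.e.\ its localization $\hat B_{k,s}$ is $k$-negligible on $D$. Since for $u\in\Omega^{0,q}_0(D,L^k)$ the form $(1-\chi_1)u$ is supported in $D$, on $D$ the operator $B_k$ coincides with $\chi\,\hat A_{k,s}\,(1-\chi_1)$, which by Definition~\ref{d-gue131205I} is a classical semi-classical pseudodifferential operator on $D$ of order $0$; modulo $O(k^{-\infty})$ its kernel is an oscillatory integral whose symbol is supported in ${\rm Supp\,}\chi\times{\rm Supp\,}(1-\chi_1)$. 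As $\chi_1=1$ on ${\rm Supp\,}\chi$, these two sets lie at positive distance, the phase $<x-y,\eta>$ is non-stationary there, and repeated integration by parts in $\eta$ gives $\abs{\pr^\alpha_x\pr^\beta_y\hat B_{k,s}(x,y)}=O(k^{-\infty})$ locally uniformly on $D\times D$. In the non-compact case the hypothesis that $A_k$ is properly supported on $D$ keeps the kernel of $B_k$ inside $D\times D$, so this local statement suffices. In the compact case one needs in addition that the full kernel $B_k(x,y)$ on $X\times X$ is $O(k^{-\infty})$ in $C^\infty$: covering $X$ by finitely many trivializations and using the standard pseudolocality of a global classical semi-classical pseudodifferential operator (which follows from the transformation law under changes of trivialization together with a Lebesgue-number/chaining argument), $A_k$, and hence $B_k$, has $O(k^{-\infty})$ kernel on any region where $x$ and $y$ stay at positive distance, in particular on ${\rm Supp\,}\chi\times{\rm Supp\,}(1-\chi_1)$.

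The second step is the composition argument. From the first step, for every $M\geq0$ the function $B_k(\cdot,y)$ lies in $C^\infty_0(X)$ with support in ${\rm Supp\,}\chi$ and $H^M$-norm $O(k^{-\infty})$ uniformly in $y$; pairing against a test form gives that $B_k^*$ maps $H^{-M}_{{\rm comp\,}}$ into $L^\infty$, and then, the support of $B_k^*v$ being contained in the fixed finite-$dv_X$-measure set $D$ (resp.\ ${\rm Supp\,}(1-\chi_1)$), into $L^2_{(0,q)}(X,L^k)$, with operator norm $O(k^{-\infty})$. Since $\norm{\Pi^{(q)}_k}\leq1$ on $L^2$, also $\Pi^{(q)}_kB_k^*$ maps $H^{-M}_{{\rm comp\,}}$ into $L^2$ with norm $O(k^{-\infty})$. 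Finally, pairing the smooth $O(k^{-\infty})$ kernel $B_k(x,\cdot)$ against $\Pi^{(q)}_kB_k^*v$ yields, for every multi-index $\alpha$, $\abs{\pr^\alpha_x\bigl(B_k\Pi^{(q)}_kB_k^*v\bigr)(x)}\leq\norm{\pr^\alpha_xB_k(x,\cdot)}_{L^2}\,\norm{\Pi^{(q)}_kB_k^*v}_{L^2}=O(k^{-\infty})\norm{v}_{H^{-M}}$, so $B_k\Pi^{(q)}_kB_k^*=O(k^{-\infty})\colon H^{-M}_{{\rm comp\,}}\To C^m_{{\rm loc\,}}$ for all $M\geq0$, $m\in\mathbb N_0$. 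By the Schwartz kernel theorem the kernel of $B_k\Pi^{(q)}_kB_k^*$ is then $O(k^{-\infty})$ in $C^\infty$ locally uniformly on $D\times D$, which is precisely the assertion $\bigl(\chi A_k(1-\chi_1)\bigr)\Pi^{(q)}_k\bigl(\chi A_k(1-\chi_1)\bigr)^*\equiv0\mod O(k^{-\infty})$ on $D$. The hypothesis $Y(q)$ (on $X$, resp.\ on $D$) enters only as a standing regularity assumption: via Kohn's $L^2$ estimates recalled in \eqref{e-suXIII-I} it ensures that the operators in play are genuinely smoothing on $D$, so that the conclusion is literally the $k$-negligibility required in Definition~\ref{d-gue131205}.

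The hard part will be the global-to-local reduction in the compact case: one must justify that a global classical semi-classical pseudodifferential operator is pseudolocal across overlapping charts and that $\chi A_k(1-\chi_1)$, although $1-\chi_1$ is not supported in $D$, nevertheless has $O(k^{-\infty})$ kernel on all of $X\times X$. This is handled by a partition of unity subordinate to a finite trivializing cover, the transition-function consistency built into the notion of a global pseudodifferential operator, and a Lebesgue-number argument to chain charts. The remaining ingredients — the non-stationary-phase estimate for oscillatory integrals, the contractivity of $\Pi^{(q)}_k$ on $L^2$, and the Schwartz-kernel bookkeeping turning operator-norm bounds into $C^\infty$ kernel bounds — are routine.
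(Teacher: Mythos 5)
Your argument is correct, but it follows a genuinely different route from the paper's. The paper first establishes the polynomial kernel bounds $\abs{\pr^\alpha_x\pr^\beta_y\hat\Pi^{(q)}_{k,s}(x,y)}\leq C_{\alpha,\beta,D'}k^{n+\abs{\alpha}+\abs{\beta}}$ of \eqref{e-gue131205}, obtained by running the extremal-function argument of Proposition~\ref{p-gue13717I} with the pointwise estimates of Theorem~\ref{t-gue13718} at $\lambda=0$ — this is exactly where $Y(q)$ and the scaling technique enter — and then beats that polynomial growth by integrating by parts in the oscillatory integral for $A_k$, whose phase is non-stationary on ${\rm Supp\,}\chi\times{\rm Supp\,}(1-\chi_1)$. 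You instead show that $B_k=\chi A_k(1-\chi_1)$ is by itself $O(k^{-\infty})$ as an operator from $H^{-M}_{{\rm comp\,}}$ to $C^m$, and then sandwich the projection using nothing more than $\norm{\Pi^{(q)}_k}_{L^2\To L^2}\leq1$ and the Schwartz-kernel bookkeeping. Your route is softer: it never touches the Szeg\"o kernel pointwise, so it does not need the scaling estimates, and in fact $Y(q)$ plays no role in it (contrary to your closing remark — the smoothness of the composed kernel comes from the smoothness of $B_k$'s kernel off the diagonal, not from Kohn's estimates). The paper's route has the advantage of reusing the bound \eqref{e-gue131205}, which is machinery it develops and needs elsewhere anyway.

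One caution on the step you yourself single out as the hard part. Both proofs require the same global input in the compact case, namely that the kernel of $A_k$ is $O(k^{-\infty})$ on ${\rm Supp\,}\chi\times{\rm Supp\,}(1-\chi_1)$ even where the two arguments do not lie in a common connected chart, and your ``Lebesgue-number/chaining'' mechanism is not the right one: off-diagonal decay does not propagate along a chain of overlapping charts, since pseudodifferential operators do not compose along paths. The correct observation is that for any two distinct points $x_0\neq y_0$ one may choose a single trivializing open set containing both (for instance a disjoint union of two small balls, on which $L$ is trivial and which embeds in $\Real^{2n-1}$ with the two components separated); Definition~\ref{d-gue131205I} applied to that set, together with non-stationary phase, gives the decay near $(x_0,y_0)$, and compactness of ${\rm Supp\,}\chi\times{\rm Supp\,}(1-\chi_1)$ finishes. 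With that repair your proof is complete.
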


\begin{proof}
Let $s$ be a local trivializing section of $L$ on a local trivialization  $D\subset X$. 
From Theorem~\ref{t-gue13718}, we can repeat the proof of Proposition~\ref{p-gue13717I} with minor change and conclude that for every $\alpha, \beta\in\mathbb N^{2n-1}_0$, and $D'\Subset D$, there is a constant $C_{\alpha,\beta,D'}>0$ independent of $k$ such that 
\begin{equation}\label{e-gue131205}
\pr^\alpha_x\pr^\beta_y\Bigr(\hat\Pi^{(q)}_{k,s}(x,y)\Bigr)\leq C_{\alpha,\beta,D'}k^{n+\abs{\alpha}+\abs{\beta}}\ \ \mbox{on $D'\times D'$}.
\end{equation}
From \eqref{e-gue131205} and by using integration by parts, the proposition can be deduced . We omit the details. 
\end{proof}

Now, we can prove 

\begin{thm}\label{t-gue130820}
Let $s$ be a local trivializing section of $L$ on an open subset $D\subset X$ and $\abs{s}^2_{h^L}=e^{-2\phi}$. We assume that there exist a $\lambda_0\in\Real$ and $x_0\in D$ such that $M^\phi_{x_0}-2\lambda_0\mathcal{L}_{x_0}$ is non-degenerate of constant signature $(n_-,n_+)$. Let $q=n_-$ 
and assume that $Y(q)$ holds at each point of $D$. Let $F_k:L^2_{(0,q)}(X,L^k)\To L^2_{(0,q)}(X,L^k)$ be a continuous operator and let $F^*_k:L^2_{(0,q)}(X,L^k)\To L^2_{(0,q)}(X,L^k)$ be the Hilbert space adjoint of $F_k$ with respect to $(\,\cdot\,|\,\cdot\,)_{h^{L^k}}$. Let
$\hat F_{k,s}$ and $\hat F^*_{k,s}$ be the localized operators of $F_{k,s}$ and $F^*_{k,s}$ respectively.
We fix $D_0\Subset D$, $D_0$ open. Let $V$ be as in \eqref{e-dhmpXII}. Assume that 
\[\hat F_{k,s}-A_k=O(k^{-\infty}):H^s_{{\rm comp\,}}(D,T^{*0,q}X)\To H^s_{{\rm loc\,}}(D,T^{*0,q}X),\  \ \forall s\in\mathbb N_0,\]
where
\[\mbox{$A_k\equiv\frac{k^{2n-1}}{(2\pi)^{2n-1}}\int e^{ik<x-y,\eta>}\alpha(x,\eta,k)d\eta\mod O(k^{-\infty})$ at $T^*D_0\bigcap\Sigma$}\]
is a classical semi-classical pseudodifferential operator on $D$ of order $0$ from sections of $T^{*0,q}X$ to sections of $T^{*0,q}X$, where
\[\begin{split}&\mbox{$\alpha(x,\eta,k)\sim\sum_{j=0}\alpha_j(x,\eta)k^{-j}$ in $S^0_{{\rm loc\,}}(1;T^*D,T^{*0,q}X\boxtimes T^{*0,q}X)$},\\
&\alpha_j(x,\eta)\in C^\infty(T^*D,T^{*0,q}D\boxtimes T^{*0,q}D),\ \ j=0,1,\ldots,
\end{split}\]
with $\alpha(x,\eta,k)=0$ if $\abs{\eta}>M$, for some large $M>0$ and ${\rm Supp\,}\alpha(x,\eta,k)\bigcap T^*D_0\Subset V$. Put $P_k:=F_k\Pi^{(q)}_kF^*_k$ and let $\hat P_{k,s}$ be the localized operator of $P_k$. If $\Box^{(q)}_{b,k}$ has $O(k^{-n_0})$ small spectral gap on $D$ with respect to $F_k$ and $\Pi^{(q)}_k$ is $k$-negligible away the diagonal with respect to $F_k$ on $D$, then
\begin{equation}\label{e-gue130820II}
\hat P_{k,s}(x,y)\equiv\int e^{ik\varphi(x,y,s)}g(x,y,s,k)ds\mod O(k^{-\infty})
\end{equation}
on $D_0$, where $\varphi(x,y,s)\in C^\infty(\Omega)$ is as in Theorem~\ref{t-dcgewI}, \eqref{e-guew13627},
\begin{equation}\label{e-gue130820III}
\begin{split}
&g(x,y,s,k)\in S^{n}_{{\rm loc\,}}\big(1;\Omega,T^{*0,q}X\boxtimes T^{*0,q}X\big)\bigcap C^\infty_0\big(\Omega,T^{*0,q}X\boxtimes T^{*0,q}X\big),\\
&g(x,y,s,k)\sim\sum^\infty_{j=0}g_j(x,y,s)k^{n-j}\text{ in }S^{n}_{{\rm loc\,}}
\big(1;\Omega,T^{*0,q}X\boxtimes T^{*0,q}X\big), \\
&g_j(x,y,s)\in C^\infty_0\big(\Omega,T^{*0,q}X\boxtimes T^{*0,q}X\big),\ \ j=0,1,2,\ldots,
\end{split}
\end{equation}
and for every $(x,x,s)\in\Omega$, $x\in D_0$,
\begin{equation}\label{e-gue130820IV}
\begin{split}
&g_0(x,x,s)\\
&=(2\pi)^{-n}\abs{\det\bigr(M^\phi_x-2s\mathcal{L}_x\bigr)}\alpha_0(x,s\omega_0(x)-2{\rm Im\,}\ddbar_b\phi(x))\mathcal{\pi}_{(x,s,n_-)}\alpha^*_0(x,s\omega_0(x)-2{\rm Im\,}\ddbar_b\phi(x)).
\end{split}
\end{equation}
Here
\[
\begin{split}
\Omega:=&\{(x,y,s)\in D\times D\times\Real;\, (x,-2{\rm Im\,}\ddbar_b\phi(x)+s\omega_0(x))\in V\bigcap\Sigma,\\
&\quad\mbox{$(y,-2{\rm Im\,}\ddbar_b\phi(y)+s\omega_0(y))\in V\bigcap\Sigma$, $\abs{x-y}<\varepsilon$, for some $\varepsilon>0$}\},
\end{split}\]
$\alpha^*_0(x,\eta):T^{*0,q}_xX\To T^{*0,q}_xX$ is the adjoint of $\alpha_0(x,\eta)$ with respect to the Hermitian metric $\langle\,\cdot\,|\,\cdot\,\rangle$ on $T^{*0,q}_xX$, $\mathcal{\pi}_{(x,s,n_-)}:T^{*0,q}_pX\To\mathcal{N}(x,s,n_-)$ is the orthogonal projection with respect to $\langle\,\cdot\,|\,\cdot\,\rangle$,  $\mathcal{N}(x,s,n_-)$ is given by \eqref{e-gue1373III}, 
\[\abs{\det\bigr(M^\phi_x-2s\mathcal{L}_x\bigr)}=\abs{\lambda_1(s)}\abs{\lambda_2(s)}\cdots\abs{\lambda_{n-1}(s)},\] 
$\lambda_1(s),\ldots,\lambda_{n-1}(s)$ are eigenvalues of the Hermitian quadratic form $M^\phi_x-2s_0\mathcal{L}_x$ with respect to $\langle\,\cdot\,|\,\cdot\,\rangle$.
\end{thm}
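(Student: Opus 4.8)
The plan is to combine the local semi-classical Hodge decomposition of Theorem~\ref{t-gue130816} (more precisely its proof mechanism, the operators $\mathcal{S}_k$, $\mathcal{N}_k$ from Theorem~\ref{t-gue13630}) with the two hypotheses: the $O(k^{-n_0})$ small spectral gap on $D$ with respect to $F_k$ and the $k$-negligibility of $\Pi^{(q)}_k$ away the diagonal with respect to $F_k$. First I would localize: fix $\chi,\chi_1\in C^\infty_0(D_0)$ with $\chi_1=1$ near $\mathrm{Supp}\,\chi$, and write $\chi\hat P_{k,s}\chi=\chi\hat F_{k,s}\hat\Pi^{(q)}_{k,s}\hat F^*_{k,s}\chi$. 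Splitting $\hat F^*_{k,s}\chi=\chi_1\hat F^*_{k,s}\chi+(1-\chi_1)\hat F^*_{k,s}\chi$ and using the $k$-negligibility hypothesis (in the form noted right after Definition~\ref{d-gue131205}, that $(\chi F_k(1-\chi_1))\Pi^{(q)}_k\equiv0$), the off-diagonal contributions are $k$-negligible, so modulo $O(k^{-\infty})$ we may replace $\hat F^*_{k,s}$ by $\chi_1\hat F^*_{k,s}\chi_1$ and similarly on the left. Hence $\hat P_{k,s}$ is, on $D_0$, $\equiv\chi_1\hat F_{k,s}\chi_1\,\hat\Pi^{(q)}_{k,s}\,\chi_1\hat F^*_{k,s}\chi_1\mod O(k^{-\infty})$, with all factors now properly supported in $D_0$.

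Next I would bring in the microlocal parametrix. Since $q=n_-$ and $M^\phi_{x_0}-2\lambda_0\mathcal{L}_{x_0}$ is non-degenerate of constant signature, Theorem~\ref{t-gue13630} applied with $\hat{\mathcal{I}}_k$ taken to be (a properly supported version of) $\chi_1\hat F_{k,s}\chi_1$ — which by hypothesis is $\equiv\frac{k^{2n-1}}{(2\pi)^{2n-1}}\int e^{ik\langle x-y,\eta\rangle}\alpha(x,\eta,k)d\eta$ at $T^*D_0\cap\Sigma$ with the stated support conditions — produces $\mathcal{S}_k$, $\mathcal{N}_k$ with $\mathcal{S}_k+\Box^{(q)}_{s,k}\mathcal{N}_k\equiv\hat{\mathcal{I}}_k\mod O(k^{-\infty})$ on $D_0$, $\Box^{(q)}_{s,k}\mathcal{S}_k\equiv0$, $\mathcal{N}^*_k\Box^{(q)}_{s,k}+\mathcal{S}^*_k\equiv\hat{\mathcal{I}}^*_k\mod O(k^{-\infty})$, and by Theorem~\ref{t-gue13630I}, $\mathcal{S}_k(x,y)\equiv\int e^{ik\varphi(x,y,s)}a(x,y,s,k)ds$. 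The key reduction is then: apply $\hat\Pi^{(q)}_{k,s}=\hat\Pi^{(q)}_{k,\leq0,s}$ in place of the $\hat\Pi^{(q)}_{k,\leq k^{-N_0},s}$ of Section~\ref{s-safle}. Here the spectral gap hypothesis does the work. In Section~\ref{s-safllI}, the argument that $\hat\Pi^{(q)}_{k,>k^{-N_0},s}\mathcal{S}_k\equiv0$ used $\|\Box^{(q)}_{b,k}\Pi^{(q)}_{k,>\lambda}u\|\geq\lambda\|\cdot\|$; for $\lambda=0$ that is vacuous, but the small spectral gap condition $\|F_k(I-\Pi^{(q)}_k)u\|_{h^{L^k}}\leq C_{D'}k^{n_0}\sqrt{((\Box^{(q)}_{b,k})^pu\,|\,u)_{h^{L^k}}}$ for $u\in\Omega^{0,q}_0(D',L^k)$ is exactly what replaces it: applied to $u=s^ke^{k\phi}\mathcal{S}_ku_0$ it gives $\hat F_{k,s}(I-\hat\Pi^{(q)}_{k,s})\mathcal{S}_k=O(k^{-\infty})$ because $(\Box^{(q)}_{s,k})^p\mathcal{S}_k\equiv0$. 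Combining with $\hat F_{k,s}=\hat{\mathcal{I}}_k\mod O(k^{-\infty})$ at $T^*D_0\cap\Sigma$ and the interior elliptic estimates of Proposition~\ref{p-gue13723} (to upgrade the $L^2$ bound to all Sobolev norms, as in the derivation of \eqref{e-gue13723VIII}), one obtains $\hat F_{k,s}\hat\Pi^{(q)}_{k,s}\hat F^*_{k,s}\equiv\hat F_{k,s}\mathcal{S}_k^{\,\prime}\cdots$; more cleanly, I would show $\hat{\mathcal{I}}^*_k\hat\Pi^{(q)}_{k,s}\hat{\mathcal{I}}_k\equiv\mathcal{S}^*_k\mathcal{S}_k\mod O(k^{-\infty})$ on $D_0$, exactly paralleling \eqref{e-gue13724ho} but with the error term $\hat{\mathcal{I}}^*_kR^*$ now replaced by something $k$-negligible (since there is no $k^{-N_0}$ slack, the spectral-gap estimate forces the remainder to be $O(k^{-\infty})$ outright). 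Then $\hat P_{k,s}\equiv\mathcal{S}^*_k\mathcal{S}_k\mod O(k^{-\infty})$ on $D_0$ after restoring the left/right $F_k$ factors, which by Theorem~\ref{t-gue1374} (and Theorem~\ref{t-gue13716} for the leading symbol) has the stated form $\int e^{ik\varphi}g(x,y,s,k)ds$ with $g_0(x,x,s)=(2\pi)^{-n}|\det(M^\phi_x-2s\mathcal{L}_x)|\,\alpha_0\,\pi_{(x,s,n_-)}\,\alpha_0^*$ evaluated at $\eta=s\omega_0(x)-2\,\mathrm{Im}\,\ddbar_b\phi(x)$ — note the order $\alpha_0\pi\alpha_0^*$ here versus $\alpha_0^*\pi\alpha_0$ in \eqref{e-gue130819I}, reflecting that we localize $F_k$ on the left and $F_k^*$ on the right, i.e.\ we form $\hat{\mathcal{I}}_k\hat\Pi^{(q)}_{k,s}\hat{\mathcal{I}}^*_k$ rather than $\hat{\mathcal{I}}^*_k\hat\Pi^{(q)}_{k,s}\hat{\mathcal{I}}_k$.

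I would handle the leading-order coefficient computation by quoting Theorem~\ref{t-gue13716} and Theorem~\ref{t-gue1374} directly: the $\mathcal{S}_k^*\mathcal{S}_k$ (or the adjointed version $\mathcal{S}_k\mathcal{S}_k^*$) computation is the stationary-phase argument of Lemma~\ref{l-gue13712} and the surrounding material, which identifies $g_0$ as $(2\pi)^{-n}|\det(M^\phi_x-2s\mathcal{L}_x)|$ times a rank-one projector sandwiched by the principal symbols $\alpha_0,\alpha_0^*$. The case $q\neq n_-$ (asserted in the full Theorem~\ref{t-gue140123I} but not restated in this excerpt's target) would instead invoke Theorem~\ref{t-gue13717}: then $\Box^{(q)}_{s,k}$ has a genuine microlocal parametrix $\mathcal{N}_k$ with $\Box^{(q)}_{s,k}\mathcal{N}_k\equiv\hat{\mathcal{I}}_k$ and no $\mathcal{S}_k$ term, so the spectral-gap estimate forces $\hat{\mathcal{I}}_k\hat\Pi^{(q)}_{k,s}\equiv0$ and hence $\hat P_{k,s}\equiv0\mod O(k^{-\infty})$.

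The main obstacle I anticipate is the passage from the $k^{-N_0}$-truncated spectral projector of Section~\ref{s-safle} to the genuine Szeg\H o projector $\Pi^{(q)}_k=\Pi^{(q)}_{k,\leq0}$: all the a priori estimates of Subsection~\ref{s-sub} (Theorem~\ref{t-gue13718}, Proposition~\ref{p-gue13717I}) were proved for $H^q_{b,\leq k^{-N_0}}$ using $\|(\Box^{(q)}_{b,k})^mu\|\leq k^{-mN_0}\|u\|$, which is automatic for $\lambda=0$ (kernel elements), so those go through trivially; but the step where $\hat\Pi^{(q)}_{k,>k^{-N_0},s}\mathcal{S}_k$ was killed used $\lambda>0$. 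Replacing that step requires carefully feeding the small-spectral-gap inequality — which is a statement about $F_k(I-\Pi^{(q)}_k)$ tested against compactly supported forms in $D'$ — through the chain: one must check that $s^ke^{k\phi}\mathcal{S}_ku_0$ is indeed compactly supported in a fixed $D'\Subset D$ (true, as $\mathcal{S}_k$ is properly supported) and that $((\Box^{(q)}_{b,k})^p(s^ke^{k\phi}\mathcal{S}_ku_0)\,|\,s^ke^{k\phi}\mathcal{S}_ku_0)_{h^{L^k}}=((\Box^{(q)}_{s,k})^p\mathcal{S}_ku_0\,|\,\mathcal{S}_ku_0)=O(k^{-\infty})$ by \eqref{e-gue13630IIa}. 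This is essentially bookkeeping, but getting the $O(k^{-\infty})$ with locally uniform control over all derivatives (not just one Sobolev norm) requires, as in \eqref{e-gue13723VII}, invoking the subelliptic estimates of Proposition~\ref{p-gue13723} iteratively, and one must verify these remain valid when $\lambda=0$ — they do, since $Y(q)$ holds on $D$ and the estimate \eqref{e-gue13723V} is a purely local subelliptic estimate independent of the spectral parameter. The rest — assembling $\hat P_{k,s}\equiv\mathcal{S}_k\mathcal{S}_k^*\mod O(k^{-\infty})$ and reading off the phase and amplitude from Theorems~\ref{t-dcgewI}, \ref{t-gue1374}, \ref{t-gue13716} — is then routine.
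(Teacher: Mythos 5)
Your architecture is the paper's: localize $F_k$ to $G_k=\chi F_k\chi_1$ via the $k$-negligibility away the diagonal, feed (the adjoint of) the localized operator into Theorem~\ref{t-gue13630} to obtain $\mathcal{S}_k,\mathcal{N}_k$, apply the small spectral gap to $v=s^ke^{k\phi}\mathcal{S}_ku-\Pi^{(q)}_k(s^ke^{k\phi}\mathcal{S}_ku)$ (legitimate, since $\mathcal{S}_k$ is properly supported and $(\Box^{(q)}_{s,k})^p\mathcal{S}_k\equiv0$ by \eqref{e-gue13630IIa}) to get $\hat F_{k,s}\mathcal{S}_k-\hat F_{k,s}\hat\Pi^{(q)}_{k,s}\mathcal{S}_k=O(k^{-N}):H^m_{{\rm comp\,}}\To L^2$ for all $N$, and conclude $\hat P_{k,s}\equiv\mathcal{S}^*_k\mathcal{S}_k$ with the leading symbol read off from Theorems~\ref{t-gue1374} and \ref{t-gue13716}; you also track the order $\alpha_0\pi\alpha^*_0$ correctly.

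The one step whose proposed mechanism would fail is the upgrade from the $L^2$ operator bound to $k$-negligibility of the kernel. You propose to iterate the subelliptic estimates of Proposition~\ref{p-gue13723}, ``as in the derivation of \eqref{e-gue13723VIII}.'' But \eqref{e-gue13723V} requires controlling $\norm{(\Box^{(q)}_{s,k})^jw}$ for the quantity $w$ being smoothed, and here $w=\hat F_{k,s}(I-\hat\Pi^{(q)}_{k,s})\mathcal{S}_ku$ carries the merely continuous operator $\hat F_{k,s}$ on the outside: unlike the spectral projector $\Pi^{(q)}_{k,>k^{-N_0}}$ used in Section 6.3, $F_k$ does not commute with $\Box^{(q)}_{b,k}$, and it is identified with a pseudodifferential operator only microlocally at $T^*D_0\bigcap\Sigma$, so $(\Box^{(q)}_{s,k})^jw$ cannot be estimated. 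The paper's device is different: decompose $\hat G_{k,s}\equiv\Td{\mathcal{I}}_k+\Td{\mathcal{I}}^1_k$ as in \eqref{e-gue130820Iab}, note that $\Td{\mathcal{I}}^1_k$ (symbol vanishing near $\Sigma$) annihilates both $\hat\Pi^{(q)}_{k,s}$ and $\mathcal{S}_k$ modulo $O(k^{-\infty})$, and then pre-compose with a properly supported smoothing operator $\Gamma_k$ whose symbol equals $1$ on ${\rm Supp\,}\alpha\bigcap T^*D_0$ and is compactly supported in $V$, so that $\Gamma_k\Td{\mathcal{I}}_k\equiv\Td{\mathcal{I}}_k$ while $\Gamma_k=O(k^s):H^0\To H^s$ for every $s$ (see \eqref{e-gue130820Va} and \eqref{e-gue130820IVa}). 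The $O(k^{-N})$ bound in $L^2$ then becomes $O(k^{s-N})$ in $H^s$ for all $s$ and $N$, which is the required $k$-negligibility. You correctly flagged this passage as the main obstacle, so the gap is localized; but the fix is the microlocal cutoff $\Gamma_k$, not the Kohn estimates.
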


\begin{proof}
For simplicity, we assume that $A_k$ is properly supported on $D$. 
Take $\chi, \chi_1\in C^\infty_0(D)$ with $\chi=1$ on $D_0$ and $\chi_1=1$ on some neighbourhood of ${\rm Supp\,}\chi$. Put 
\[G_k=\chi F_k\chi_1,\ \ H_k=\chi F_k(1-\chi_1),\ \ B_k=F_k\Pi^{(q)}_k,\ \ R_k=H_k\Pi^{(q)}_k,\] 
and let $G^*_k, H^*_k:L^2_{(0,q)}(X,L^k)\To L^2_{(0,q)}(X,L^k)$ be the Hilbert adjoints of $G_k$ and $H_k$ respectively. 
Let $\hat G^*_{k,s}$, $\hat G_{k,s}$, $\hat H^*_{k,s}$, $\hat H_{k,s}$, $\hat B_{k,s}$, $\hat R_{k,s}$ be the localized operators of $G^*_k$, $G_k$, $H^*_k$, $H_k$, $B_k$, $R_k$ respectively. Since $\Pi^{(q)}_k$ is $k$-negligible away the diagonal with respect to $F_k$ on $D$, it is not difficult to see that 
\begin{equation}\label{e-gue130901III} 
\begin{split}
\mbox{$\hat P_{k,s}\equiv\hat G_{k,s}\hat\Pi^{(q)}_{k,s}\hat G^*_{k,s}\mod O(k^{-\infty})$ on $D_0$},\\
\mbox{$\hat P_{k,s}\equiv A_k\hat\Pi^{(q)}_{k,s}A^*_{k}\mod O(k^{-\infty})$ on $D_0$},
\end{split}
\end{equation}
%We may assume that $\hat F_{k,s}$ is properly supported on $D_0$. Then, we can check that 
%\[\hat P_{k,s}=\hat F^*_{k,s}\hat\Pi^{(q)}_{k,s}\hat F_{k,s}.\]
where $A^*_k$ is the formal adjont of $A_k$. Let $\mathcal{S}_k$ and $\mathcal{N}_k$ be as in Theorem~\ref{t-gue13630}. Here we let $\hat{\mathcal{I}}_k=A^*_{k}$ in Theorem~\ref{t-gue13630}. Let $\Box^{(q)}_{s,k}$ be as in \eqref{e-msmilkVI}. Then,
\begin{equation}\label{e-gue130820V}
\begin{split}
&\Box^{(q)}_{s,k}\mathcal{N}_k+\mathcal{S}_k=A^*_{k}+h_k\ \ \mbox{on $\mathscr D'(D_0,T^{*0,q}X)$},\\
&\mathcal{N}^*_k\Box^{(q)}_{s,k}+\mathcal{S}^*_k=A_{k}+h^*_k\ \ \mbox{on
$\mathscr D'(D_0,T^{*0,q}X)$},
\end{split}\end{equation}
where $h_k\equiv0\mod O(k^{-\infty})$, $\mathcal{N}^*_k$, $\mathcal{S}^*_k$ and $h^*_k$ are formal adjoints of $\mathcal{N}_k$, $\mathcal{S}_k$ and $h_k$ with respect to $(\,\cdot\,|\,\cdot\,)$ respectively. From \eqref{e-gue130820V} and notice that $\Box^{(q)}_{s,k}\hat\Pi^{(q)}_{k,s}=0$, it is not difficult to see that 
\begin{equation}\label{e-gue130820VI}
\begin{split}
&\mathcal{S}^*_k\hat\Pi^{(q)}_{k,s}=(A_k+h^*_k)\hat\Pi^{(q)}_{k,s}\ \ \mbox{on
$\mathscr E'(D_0,T^{*0,q}X)$},\\
&\hat\Pi^{(q)}_{k,s}\mathcal{S}_k=\hat\Pi^{(q)}_{k,s}(A^*_k+h_k)\ \ \mbox{on
$\mathscr E'(D_0,T^{*0,q}X)$}.
\end{split}\end{equation} 

Let $u\in H^m_{{\rm comp\,}}(D_0,T^{*0,q}X)$, $m\leq0$, $m\in\mathbb Z$. 
We consider
\[v=s^ke^{k\phi}\mathcal{S}_{k}u-\Pi^{(q)}_{k}(s^ke^{k\phi}\mathcal{S}_{k}u).\]
Since $Y(q)$ holds on $D$ and $\mathcal{S}_{k}$ is a smoothing operator, we conclude that 
$v\in L^2_{(0,q)}(X,L^k)\bigcap\Omega^{0,q}(D)$. Moreover, from \eqref{s2-emsmilkI}, we have 
\begin{equation} \label{e-gue130820VII}
\Box^{(q)}_{b,k}v=s^ke^{k\phi}\Box^{(q)}_{s,k}\mathcal{S}_{k}u.
\end{equation}
In view of Theorem~\ref{t-gue13630}, we see that $\Box^{(q)}_{s,k}\mathcal{S}_{k}\equiv0\mod O(k^{-\infty})$. Combining this with \eqref{e-gue130820VII}, we obtain for every $p\in\mathbb N$,
\begin{equation}\label{e-gue130820VIII}
\norm{(\Box^{(q)}_{b,k})^pv}_{h^{L^k}}\leq C_{N,p}k^{-N}\norm{u}_m,
\end{equation}
for every $N>0$, where $C_{N,p}>0$ is independent of $k$, $u$ and $\norm{\cdot}_m$ denotes the usual Sobolev norm of order $m$ on $D_0$ with respect to $(\,\cdot\,|\,\cdot\,)$. Moreover, from the explicit formula of the kernel of $\mathcal{S}_k$ (see \eqref{e-gue13630IVa}), it is straightforward to see that 
\begin{equation}\label{e-gue130827}
\norm{v}_{h^{L^k}}\leq Ck^{n+m}\norm{u}_m,
\end{equation}
where $C>0$ is a constant independent of $k$ and $u$.
Note that $\Box^{(q)}_{b,k}$ has $O(k^{-n_0})$ small spectral gap on $D$ with respect to $F_k$ and $\Pi^{(q)}_kv=0$. From this observation, \eqref{e-gue130820VIII} and \eqref{e-gue130827},  we conclude that
$\norm{F_kv}_{h^{L^k}}\leq\Td C_Nk^{-N}\norm{u}_m$, 
for every $N>0$, where $\Td C_N>0$ is independent of $k$. Thus,
\begin{equation}\label{e-gue130901}
\hat F_{k,s}\mathcal{S}_{k}-\hat B_{k,s}\mathcal{S}_{k}=O(k^{-N}):H^m_{{\rm comp\,}}(D_0,T^{*0,q}X)\To L^2_{(0,q)}(D_0),
\end{equation}
for all $N>0$, $m\in\mathbb Z$, $m\leq0$. Since $\Pi^{(q)}_k$ is $k$-negligible away the diagonal with respect to $F_k$ on $D$ and notice that $\hat H_{k,s}\mathcal{S}_k\equiv0\mod O(k^{-\infty})$, we conclude that 
\begin{equation}\label{e-gue130901I}
\mbox{$\hat H_{k,s}\mathcal{S}_{k}-\hat R_{k,s}\mathcal{S}_{k}\equiv0\mod O(k^{-\infty})$ on $D_0$}
\end{equation} 
and hence 
\begin{equation}\label{e-gue130901Ia}
\mbox{$\hat F_{k,s}\mathcal{S}_{k}-\hat B_{k,s}\mathcal{S}_{k}\equiv\hat G_{k,s}\mathcal{S}_{k}-\hat G_{k,s}\hat\Pi^{(q)}_{k,s}\mathcal{S}_{k}\mod O(k^{-\infty})$ on $D_0$}.
\end{equation} 
From \eqref{e-gue130901Ia} and \eqref{e-gue130901}, we obtain 
\begin{equation}\label{e-gue130820Ia}
\begin{split}
&\hat G_{k,s}\mathcal{S}_{k}-\hat G_{k,s}\hat\Pi^{(q)}_{k,s}\mathcal{S}_{k}=O(k^{-N}):H^m_{{\rm comp\,}}(D_0,T^{*0,q}X)\To L^2_{(0,q)}(D_0),\\
&A_k\mathcal{S}_{k}-A_k\hat\Pi^{(q)}_{k,s}\mathcal{S}_{k}=O(k^{-N}):H^m_{{\rm comp\,}}(D_0,T^{*0,q}X)\To L^2_{(0,q)}(D_0),
\end{split}
\end{equation}
for all $N>0$, $m\in\mathbb Z$, $m\leq0$. Put
\begin{equation}\label{e-gue130820Iab}\begin{split}
&A_k\equiv\Td{\mathcal{I}}_k+\Td{\mathcal{I}}_k^1\mod O(k^{-\infty})\ \ \mbox{on $D_0$},\\
&\Td{\mathcal{I}}_k\equiv\frac{k^{2n-1}}{(2\pi)^{2n-1}}\int e^{ik<x-y,\eta>}\alpha(x,\eta,k)d\eta\mod O(k^{-\infty})\ \ \mbox{on $D_0$},\\
&\Td{\mathcal{I}}_k^1\equiv\frac{k^{2n-1}}{(2\pi)^{2n-1}}\int e^{ik<x-y,\eta>}\beta(x,y,\eta,k)d\eta\ \ \mbox{on $D_0$},
\end{split}\end{equation}
where $\Td{\mathcal{I}}_k$ and $\Td{\mathcal{I}}^1_k$ are properly supported on $D_0$, $\beta(x,y,\eta,k)\in S^0_{{\rm loc\,},{\rm cl\,}}(1;T^*D,T^{*0,q}X\boxtimes T^{*0,q}X)$ and there is a small neighbourhood $\Gamma$ of $T^*D_0\bigcap\Sigma$ such that $\beta(x,y,\eta,k)=0$ if $(x,\eta)\in\Gamma$. Since $\beta(x,y,\eta,k)=0$ if $(x,\eta)$ near $T^*D_0$ and notice that $\mathcal{F}\hat\Pi^{(q)}_{k,s}\equiv0\mod O(k^{-\infty})$ on $D_0$ if $\mathcal{F}$ is a properly supported $k$-negligible operator on $D_0$, we deduce that $\Td{\mathcal{I}}_k^1\hat\Pi^{(q)}_{k,s}\equiv0\mod O(k^{-\infty})$ on $D_0$. Moreover, it is not difficult to see that $\Td{\mathcal{I}}^1_k\mathcal{S}_k\equiv0\mod O(k^{-\infty})$ on $D_0$. Combining these with \eqref{e-gue130820Iab}, we obtain 
\begin{equation}\label{e-gue130820IIa}
\begin{split}
&A_k\mathcal{S}_{k}\equiv\Td{\mathcal{I}}_k\mathcal{S}_{k}\mod O(k^{-\infty})\ \ \mbox{on $D_0$},\\
&A_k\hat\Pi^{(q)}_{k,s}\equiv\Td{\mathcal{I}}_k\hat\Pi^{(q)}_{k,s}\mod O(k^{-\infty})\ \ \mbox{on $D_0$},\\
&A_k\mathcal{S}_{k}-A_k\hat\Pi^{(q)}_{k,s}\mathcal{S}_{k}\equiv\Td{\mathcal{I}}_k\mathcal{S}_{k}-\Td{\mathcal{I}}_k\hat\Pi^{(q)}_{k,s}\mathcal{S}_{k}\mod O(k^{-\infty})\ \ \mbox{on $D_0$}.
\end{split}
\end{equation}
From \eqref{e-gue130820IIa} and \eqref{e-gue130820Ia}, we deduce 
\begin{equation}\label{e-gue130820IIIa}
\Td{\mathcal{I}}_k\mathcal{S}_{k}-\Td{\mathcal{I}}_k\hat\Pi^{(q)}_{k,s}\mathcal{S}_{k}=O(k^{-N}):H^m_{{\rm comp\,}}(D_0,T^{*0,q}X)\To L^2_{(0,q)}(D_0),
\end{equation}
for all $N>0$, $m\in\mathbb Z$, $m\leq0$. Take 
\[\gamma(x,\eta,k)\in S^0_{{\rm loc\,},{\rm cl\,}}(1;T^*D,T^{*0,q}X\boxtimes T^{*0,q}X)\bigcap C^\infty_0(V,T^{*0,q}X\boxtimes T^{*0,q}X)\] 
so that $\gamma(x,\eta,k)=1$ on ${\rm Supp\,}\alpha(x,\eta,k)\bigcap T^*D_0$ and let
$\Gamma_k\equiv\int e^{i<x-y,\eta>}\gamma(x,\eta,k)d\eta\mod O(k^{-\infty})$ on $D_0$ be a properly supported classical semi-classical pseudodifferential operator on $D$ of order $0$ from sections of $T^{*0,q}X$ to sections of $T^{*0,q}X$. Since $\gamma(x,\eta,k)=1$ on ${\rm Supp\,}\alpha(x,\eta,k)\bigcap T^*D_0$, we have 
\begin{equation}\label{e-gue130820Va}
\Gamma_k\Td{\mathcal{I}}_k\equiv \Td{\mathcal{I}}_k\mod O(k^{-\infty})\ \ \mbox{on $D_0$}
\end{equation}
and hence 
\begin{equation}\label{e-gue130820VIa}
\Td{\mathcal{I}}_k\mathcal{S}_{k}-\Td{\mathcal{I}}_k\hat\Pi^{(q)}_{k,s}\mathcal{S}_{k}\equiv\Gamma_k\bigr(\Td{\mathcal{I}}_k\mathcal{S}_{k}-\Td{\mathcal{I}}_k\hat\Pi^{(q)}_{k,s}\mathcal{S}_{k}\bigr)\mod O(k^{-\infty})\ \ \mbox{on $D_0$}.
\end{equation}
Since ${\rm Supp\,}\gamma(x,\eta,k)\Subset V$, $\Gamma_k$ is a smoothing operator and we can check that 
\begin{equation}\label{e-gue130820IVa}
\Gamma_k=O(k^s):H^0_{{\rm loc\,}}(D_0,T^{*0,q}X)\To H^s_{{\rm loc\,}}(D_0,T^{*0,q}X),
\end{equation}
for every $s\in\mathbb N_0$. Combining \eqref{e-gue130820IVa}, \eqref{e-gue130820VIa} with \eqref{e-gue130820IIIa}, we deduce that 
\begin{equation}\label{e-gue130820VIIa}
\Td{\mathcal{I}}_k\mathcal{S}_{k}-\Td{\mathcal{I}}_k\hat\Pi^{(q)}_{k,s}\mathcal{S}_{k}\equiv0\mod O(k^{-\infty})\ \ \mbox{on $D_0$}.
\end{equation}
From \eqref{e-gue130820VIIa}, \eqref{e-gue130820IIa}, \eqref{e-gue130820VI} and note that $\hat\Pi^{(q)}_{k,s}h_k\equiv0\mod O(k^{-\infty})$, we get
\begin{equation}\label{e-gue130820VIIIa}
A_k\mathcal{S}_{k}-A_k\hat\Pi^{(q)}_{k,s}A^*_k\equiv0\mod O(k^{-\infty})\ \ \mbox{on $D_0$}.
\end{equation}
From \eqref{e-gue130820V}, we have $\mathcal{S}^*_k\mathcal{S}_k\equiv A_k\mathcal{S}_{k}\mod O(k^{-\infty})$ on $D_0$. From this, \eqref{e-gue1374I}, \eqref{e-gue13716I}, \eqref{e-gue130820VIIIa} and \eqref{e-gue130901III}, the theorem follows. 
\end{proof}

By using Theorem~\ref{t-gue13717} and repeat the proof of Theorem~\ref{t-gue130820}, we deduce 

\begin{thm}\label{t-gue130821}
Let $s$ be a local trivializing section of $L$ on an open subset $D\subset X$ and $\abs{s}^2_{h^L}=e^{-2\phi}$. We assume that there exist a $\lambda_0\in\Real$ and $x_0\in D$ such that $M^\phi_{x_0}-2\lambda_0\mathcal{L}_{x_0}$ is non-degenerate of constant signature $(n_-,n_+)$. Let $q\neq n_-$ and assume that $Y(q)$ holds at each point of $D$. Let $F_k:L^2_{(0,q)}(X,L^k)\To L^2_{(0,q)}(X,L^k)$ be a continuous operator and let $F^*_k:L^2_{(0,q)}(X,L^k)\To L^2_{(0,q)}(X,L^k)$ be the Hilbert space adjoint of $F_k$ with respect to $(\,\cdot\,|\,\cdot\,)_{h^{L^k}}$. Let
$\hat F_{k,s}$ and $\hat F^*_{k,s}$ be the localized operators of $F_{k,s}$ and $F^*_{k,s}$ respectively.
We fix $D_0\Subset D$, $D_0$ open. Let $V$ be as in \eqref{e-dhmpXII}. Assume that
\[\hat F_{k,s}-A_k=O(k^{-\infty}):H^s_{{\rm comp\,}}(D,T^{*0,q}X)\To H^s(D,T^{*0,q}X),\ \ \forall s\in\mathbb N_0,\]
where
\[\mbox{$A_k\equiv\frac{k^{2n-1}}{(2\pi)^{2n-1}}\int e^{ik<x-y,\eta>}\alpha(x,\eta,k)d\eta\mod O(k^{-\infty})$ at $T^*D_0\bigcap\Sigma$}\]
is a classical semi-classical pseudodifferential operator on $D$ of order $0$ from sections of $T^{*0,q}X$ to sections of $T^{*0,q}X$, where
\[\begin{split}&\mbox{$\alpha(x,\eta,k)\sim\sum_{j=0}\alpha_j(x,\eta)k^{-j}$ in $S^0_{{\rm loc\,}}(1;T^*D,T^{*0,q}X\boxtimes T^{*0,q}X)$},\\
&\alpha_j(x,\eta)\in C^\infty(T^*D,T^{*0,q}D\boxtimes T^{*0,q}D),\ \ j=0,1,\ldots,
\end{split}\]
with $\alpha(x,\eta,k)=0$ if $\abs{\eta}>M$, for some large $M>0$ and ${\rm Supp\,}\alpha(x,\eta,k)\bigcap T^*D_0\Subset V$. Put $P_k:=F_k\Pi^{(q)}_kF^*_k$ and let $\hat P_{k,s}$ be the localized operator of $P_k$. If $\Box^{(q)}_{b,k}$ has $O(k^{-n_0})$ small spectral gap on $D$ with respect to $F_k$ and $\Pi^{(q)}_k$ is $k$-negligible away the diagonal with respect to $F_k$ on $D$, then
\begin{equation}\label{e-gue130821}
\hat P_{k,s}\equiv0\mod O(k^{-\infty})\ \ \mbox{on $D_0$}.
\end{equation}
\end{thm}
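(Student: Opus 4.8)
The plan is to repeat the proof of Theorem~\ref{t-gue130820}, replacing the microlocal Hodge decomposition of Theorem~\ref{t-gue13630} (only valid for $q=n_-$) by Theorem~\ref{t-gue13717}. The point is that, for $q\neq n_-$, Theorem~\ref{t-gue13717} furnishes a genuine properly supported microlocal parametrix $\mathcal{N}_k$ for $\Box^{(q)}_{s,k}$ near $T^*D_0\cap\Sigma$ \emph{with no accompanying Szeg\"o operator}: the decomposition reads $\Box^{(q)}_{s,k}\mathcal{N}_k\equiv\hat{\mathcal{I}}_k\mod O(k^{-\infty})$ on $D_0$, rather than $\mathcal{S}_k+\Box^{(q)}_{s,k}\mathcal{N}_k\equiv\hat{\mathcal{I}}_k$. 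Setting ``$\mathcal{S}_k=0$'' throughout the argument of Theorem~\ref{t-gue130820}, the chain of identities there that built up the Fourier integral $\int e^{ik\varphi}g\,ds$ collapses and every term becomes $k$-negligible, giving $\hat P_{k,s}\equiv0$ on $D_0$.

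In detail: as in \eqref{e-gue130901III} I pick $\chi,\chi_1\in C^\infty_0(D)$ with $\chi=1$ on $D_0$, $\chi_1=1$ on ${\rm Supp\,}\chi$, put $G_k=\chi F_k\chi_1$, $H_k=\chi F_k(1-\chi_1)$, $B_k=F_k\Pi^{(q)}_k$; since $\Pi^{(q)}_k$ is $k$-negligible away the diagonal with respect to $F_k$ on $D$, one has $\bigl(\chi F_k(1-\chi_1)\bigr)\Pi^{(q)}_k\equiv0\mod O(k^{-\infty})$ and hence the reduction
\[\hat P_{k,s}\equiv\hat G_{k,s}\hat\Pi^{(q)}_{k,s}\hat G^*_{k,s}\mod O(k^{-\infty})\quad\text{on }D_0 .\]
Then I apply Theorem~\ref{t-gue13717} with $\hat{\mathcal{I}}_k=\hat G^*_{k,s}$ (here $\hat G_{k,s}=\chi\hat F_{k,s}\chi_1$, so it inherits the semi-classical pseudodifferential structure with symbol supported in $V$), whose adjoint gives $\mathcal{N}^*_k\Box^{(q)}_{s,k}\equiv\hat G_{k,s}\mod O(k^{-\infty})$ on $D_0$. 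Using the range--kernel orthogonality $\Box^{(q)}_{b,k}\Pi^{(q)}_k=0$, $\Pi^{(q)}_k\Box^{(q)}_{b,k}=0$ and the intertwining $\Box^{(q)}_{b,k}(s^ke^{k\phi}w)=s^ke^{k\phi}\Box^{(q)}_{s,k}w$, one runs the analogues of \eqref{e-gue130820VII}--\eqref{e-gue130901}: for $u\in H^m_{{\rm comp\,}}(D_0)$, $m\le0$, the element $v:=s^ke^{k\phi}\hat G^*_{k,s}u-\Pi^{(q)}_k\bigl(s^ke^{k\phi}\hat G^*_{k,s}u\bigr)$ satisfies $\Pi^{(q)}_kv=0$ and, by the parametrix identity, $\Box^{(q)}_{b,k}v$ is $k$-negligibly small on $D_0$; the small spectral gap hypothesis, $\norm{F_k(I-\Pi^{(q)}_k)(\,\cdot\,)}_{h^{L^k}}\le Ck^{n_0}\sqrt{\bigl((\Box^{(q)}_{b,k})^p\,\cdot\,|\,\cdot\,\bigr)_{h^{L^k}}}$, then forces $\norm{F_kv}_{h^{L^k}}=O(k^{-\infty})\norm{u}_m$. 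Feeding this back --- using the $k$-negligible-away-the-diagonal hypothesis once more (as in \eqref{e-gue130901Ia}), eliminating the $\beta$-part of $\hat G_{k,s}$ by a cutoff $\Gamma_k$ supported in $V$ (as in \eqref{e-gue130820Iab}--\eqref{e-gue130820VIIa}), and invoking $\mathcal{N}^*_k\Box^{(q)}_{s,k}\equiv\hat G_{k,s}$ together with $\Box^{(q)}_{s,k}\hat\Pi^{(q)}_{k,s}=0$ --- yields $\hat G_{k,s}\hat\Pi^{(q)}_{k,s}\hat G^*_{k,s}\equiv0\mod O(k^{-\infty})$ on $D_0$. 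Throughout, the subelliptic a priori estimates of Proposition~\ref{p-gue13723} (valid since $Y(q)$ holds on $D$), applied to the harmonic element $\hat\Pi^{(q)}_{k,s}(\,\cdot\,)$, upgrade $L^2$-smallness to smallness in every local Sobolev norm at the price of a fixed power of $k$, and the Calder\'on--Vaillancourt bound makes $\hat G_{k,s}$ act boundedly between local Sobolev spaces uniformly in $k$.

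The step I expect to be the main obstacle is the same one handled carefully in the proof of Theorem~\ref{t-gue130820}: reconciling the \emph{local} microlocal inverse $\mathcal{N}_k$ --- whose defining identity holds only over $D_0$ and whose output need not be supported in $D_0$ --- with the \emph{global} small-spectral-gap estimate, which needs control of $\Box^{(q)}_{b,k}v$ on all of $X$. The portion of $\mathcal{N}_ku$ and the behaviour of $F_k$ over $X\setminus D$ must be absorbed through the $k$-negligible-away-the-diagonal hypothesis and the support properties built into $\chi$, $\chi_1$, $\Gamma_k$. One must also check that no step of the Theorem~\ref{t-gue130820} argument genuinely relied on the Szeg\"o term $\mathcal{S}_k$; it did not, since $\mathcal{S}_k$ there only ever entered through the final identity $\hat G_{k,s}\mathcal{S}_k\equiv\mathcal{S}^*_k\mathcal{S}_k\equiv\int e^{ik\varphi}g\,ds$, whose counterpart here is simply $0\equiv0$.
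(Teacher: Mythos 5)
Your overall strategy is exactly the paper's: the entire published proof of this theorem is the one line ``use Theorem~\ref{t-gue13717} and repeat the proof of Theorem~\ref{t-gue130820}'', and your observation that with $\mathcal{S}_k=0$ the whole chain collapses, leaving only $\hat G_{k,s}\hat\Pi^{(q)}_{k,s}\hat G^*_{k,s}\equiv0$, is the right picture. The operative mechanism is the one you name at the very end: applying Theorem~\ref{t-gue13717} with $\hat{\mathcal{I}}_k=\hat G^*_{k,s}$ gives $\mathcal{N}^*_k\Box^{(q)}_{s,k}\equiv\hat G_{k,s}\mod O(k^{-\infty})$ on $D_0$, which combined with $\Box^{(q)}_{s,k}\hat\Pi^{(q)}_{k,s}=0$ yields $\hat G_{k,s}\hat\Pi^{(q)}_{k,s}\equiv -h^*_k\hat\Pi^{(q)}_{k,s}\equiv0$ (the negligible error $h^*_k$ being absorbed by the polynomial kernel bounds on $\hat\Pi^{(q)}_{k,s}$ from Proposition~\ref{p-gue13717I}); composing on the right with $\hat G^*_{k,s}$ and using your reduction $\hat P_{k,s}\equiv\hat G_{k,s}\hat\Pi^{(q)}_{k,s}\hat G^*_{k,s}$ finishes. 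Note that on this route neither the small spectral gap hypothesis nor the $v$-argument is needed when $q\neq n_-$.

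The specific step you interpolate in the middle, however, is wrong. You set $v:=s^ke^{k\phi}\hat G^*_{k,s}u-\Pi^{(q)}_k\bigl(s^ke^{k\phi}\hat G^*_{k,s}u\bigr)$ and claim that ``by the parametrix identity, $\Box^{(q)}_{b,k}v$ is $k$-negligibly small''. But $\Box^{(q)}_{b,k}v=s^ke^{k\phi}\Box^{(q)}_{s,k}\hat G^*_{k,s}u$, and $\Box^{(q)}_{s,k}\hat G^*_{k,s}$ is not negligible at all: it is a second-order operator of semiclassical size $O(k^2)$ composed with a zeroth-order pseudodifferential operator, and the identity $\Box^{(q)}_{s,k}\mathcal{N}_k\equiv\hat G^*_{k,s}$ says nothing about it. In the proof of Theorem~\ref{t-gue130820} the corresponding $v$ in \eqref{e-gue130820VII}--\eqref{e-gue130901} is built from $\mathcal{S}_ku$, and the smallness of $\Box^{(q)}_{b,k}v$ comes precisely from $\Box^{(q)}_{s,k}\mathcal{S}_k\equiv0$; for $q\neq n_-$ there is no approximate kernel, so the only correct ``analogue'' of that block is $v=0$, i.e.\ the block is vacuous. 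Moreover, were your claim true, the spectral gap would give $\hat F_{k,s}\hat G^*_{k,s}\equiv\hat F_{k,s}\hat\Pi^{(q)}_{k,s}\hat G^*_{k,s}$, which together with the theorem's conclusion would force $\hat G_{k,s}\hat G^*_{k,s}\equiv0$ --- false whenever $\alpha_0$ does not vanish identically on $V\bigcap\Sigma$ (e.g.\ for $F_k=F^{(q)}_{\delta,k}$, where $\alpha_0=\tau_\delta(\eta_{2n-1})$). Delete that block and replace it by the direct argument above, and the rest of your proposal stands.
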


\section{Sezg\"o kernel asymptotics and Kodairan embedding Theorems on CR manifolds with transversal CR $S^1$ actions} \label{s-saak}

In this section, we will offer some special classes of CR manifolds and CR line bundles such that the conditions in Theorem~\ref{t-gue130820} hold. 

%\subsection{Sezg\"o kernel asymptotics on CR manifolds with transversal CR $S^1$ actions}

Let $(X, T^{1,0}X)$ be a CR manifold. We assume that $X$ admits a $S^1$ action: $S^1\times X\To X$. We write $e^{i\theta}$, $0\leq\theta<2\pi$, to denote the $S^1$ action.
Let $T\in C^\infty(X,TX)$ be the real vector field given by 
\begin{equation}\label{e-gue131205I}
Tu=\frac{\pr}{\pr\theta}(u(e^{i\theta}x))|_{\theta=0},\ \ u\in C^\infty(X).
\end{equation}
We call $T$ the global vector field induced by the $S^1$ action. Note that we don't assume that this $S^1$ action is globally free.

\begin{defn}\label{d-gue131205II}
We say that the $S^1$ action $e^{i\theta}$, $0\leq\theta<2\pi$, is CR if 
\[[T,C^\infty(X,T^{1,0}X)]\subset C^\infty(X,T^{1,0}X).\]
Furthermore, we say that the $S^1$ action $e^{i\theta}$, $0\leq\theta<2\pi$, is transversal if for every point $x\in X$, 
\[T(x)\oplus T^{1,0}_xX\oplus T^{0,1}_xX=\Complex T_xX.\]
\end{defn}

Until further notice, we assume that $(X, T^{1,0}X)$ is a CR manifold with a transversal CR $S^1$ action
$e^{i\theta}$, $0\leq\theta<2\pi$ and we let $T$ be the global vector field induced by the $S^1$ action.

Fix $\theta_0\in[0,2\pi[$. Let 
\[de^{i\theta_0}:\Complex T_xX\To\Complex T_{e^{i\theta_0}x}X\]
denote the differential of the map $e^{i\theta_0}:X\To X$. 

\begin{defn}\label{d-gue131205III}
Let $U\subset X$ be an open set and let $V\in C^\infty(U,\Complex TX)$ be a vector field on $U$. We say that $V$ is $T$-rigid if
\[de^{i\theta_0}V(x)=V(x),\ \ \forall x\in e^{i\theta_0}U\bigcap U,\]
for every $\theta_0\in[0,2\pi[$ with $e^{i\theta_0}U\bigcap U\neq\emptyset$. 
\end{defn}

We also need 

\begin{defn}\label{d-gue131206}
Let $\langle\,\cdot\,|\,\cdot\,\rangle$ be a Hermitian metric on $\Complex TX$. We say that $\langle\,\cdot\,|\,\cdot\,\rangle$ is $T$-rigid if for $T$-rigid vector fields $V$ and $W$ on $U$, where $U\subset X$ is any open set, we have 
\[\langle\,V(x)\,|\,W(x)\,\rangle=\langle\,de^{i\theta_0}V(e^{i\theta_0}x)\,|\,de^{i\theta_0}W(e^{i\theta_0}x)\,\rangle,\ \ \forall x\in U, \theta_0\in[0,2\pi[.\]
\end{defn}

%We show in Theorem~\ref{t-gue131206I} that there is a $T$-rigid Hermitian metric $\langle\,\cdot\,|\,\cdot\,\rangle$ on $\Complex TX$ such that $T^{1,0}X\perp T^{0,1}X$, $T\perp (T^{1,0}X\oplus T^{0,1}X)$, $\langle\,T\,|\,T\,\rangle=1$ and $\langle\,u\,|v\,\rangle$ is real if $u, v$ are real tangent vectors. Until further notice, we fix a $T$-rigid Hermitian metric $\langle\,\cdot\,|\,\cdot\,\rangle$ on $\Complex TX$ such that $T^{1,0}X\perp T^{0,1}X$, $T\perp (T^{1,0}X\oplus T^{0,1}X)$, $\langle\,T\,|\,T\,\rangle=1$ and $\langle\,u\,|v\,\rangle$ is real if $u, v$ are real tangent vectors. 

We are going to show that there exists a $T$-rigid Hermitian metric on $\Complex TX$. We need the following result due to Baouendi-Rothschild-Treves~\cite[section1]{BRT85} 

\begin{thm}\label{t-gue131206}
For every point $x_0\in X$, there exists local coordinates $x=(x_1,\ldots,x_{2n-1})=(z,\theta)=(z_1,\ldots,z_{n-1},\theta)$, 
$z_j=x_{2j-1}+ix_{2j}$, $j=1,\ldots,n-1$, $\theta=x_{2n-1}$, defined in some small neighbourhood $U$ of $x_0$ such that 
\begin{equation}\label{e-gue131206}
\begin{split}
&T=\frac{\pr}{\pr\theta},\\
&Z_j=\frac{\pr}{\pr z_j}+i\frac{\pr\varphi}{\pr z_j}(z)\frac{\pr}{\pr\theta},\ \ j=1,\ldots,n-1,
\end{split}
\end{equation}
where $Z_j(x)$, $j=1,\ldots,n-1$, form a basis of $T^{1,0}_xX$, for each $x\in U$, and $\varphi(z)\in C^\infty(U,\Real)$
independent of $\theta$.
\end{thm}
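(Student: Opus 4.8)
The plan is to straighten the transversal vector field $T$ first, and then successively normalize $T^{1,0}X$ using its $S^1$--invariance, its integrability, and a $\partial$--Poincar\'e lemma.

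\textbf{Step 1 (straightening $T$).} Since the $S^1$ action is transversal we have $T(x_0)\neq0$, so by the flow--box theorem there are local coordinates $(y,\theta)=(y_1,\dots,y_{2n-2},\theta)$ near $x_0$, with $x_0\leftrightarrow 0$, in which $T=\partial/\partial\theta$; for small $\theta_0$ the map $e^{i\theta_0}$ is then the translation $(y,\theta)\mapsto(y,\theta+\theta_0)$. Because the action is CR (Definition~\ref{d-gue131205II}), $de^{i\theta_0}$ preserves $T^{1,0}X$, so in these coordinates the subbundle $T^{1,0}X$ is invariant under translation in $\theta$; equivalently, $[T,C^\infty(X,T^{1,0}X)]\subset C^\infty(X,T^{1,0}X)$ shows directly that the fibre $T^{1,0}_{(y,\theta)}X$ is independent of $\theta$.

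\textbf{Step 2 (slope data on the slice).} Let $\pi$ be the projection of $\Complex T_{(y,\theta)}X$ onto the tangent space of the slice $\{\theta=0\}$ along $T$. Transversality gives $T^{1,0}X\cap\Complex T=0$, so $\pi$ is injective on each $T^{1,0}_{(y,0)}X$; call its image $W_y$, a complex rank $n-1$ subbundle of $\Complex T_y(\mathbb R^{2n-2})$. A short computation using transversality (if $v\in W_y\cap\overline{W_y}$ then $w_1-\overline{w_2}\in\Complex T\cap(T^{1,0}X\oplus T^{0,1}X)=0$ for the corresponding $w_i\in T^{1,0}X$) shows $W_y\cap\overline{W_y}=0$, so $W$ is an almost complex structure near $0\in\mathbb R^{2n-2}$, and there is a $\Complex$--linear, $\theta$--independent form $b_y\colon W_y\to\Complex$ with $T^{1,0}_{(y,\theta)}X=\{\,v+b_y(v)T : v\in W_y\,\}$. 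If $v_1,\dots,v_{n-1}$ is a frame of $W$ then $Z_j:=v_j+b_y(v_j)T$ is a frame of $T^{1,0}X$, and since $v_j,b_j$ are $\theta$--independent, $[Z_j,Z_k]=[v_j,v_k]+(v_jb_k-v_kb_j)T$; integrability $[Z_j,Z_k]\in C^\infty(T^{1,0}X)$ forces $[v_j,v_k]\in C^\infty(W)$, so $W$ is an integrable almost complex structure. By the ($C^\infty$) Newlander--Nirenberg theorem there are new coordinates $z=(z_1,\dots,z_{n-1})$ on the slice with $W=\mathrm{span}\{\partial/\partial z_1,\dots,\partial/\partial z_{n-1}\}$, and then in the coordinates $(z,\theta)$ we have $T^{1,0}X=\mathrm{span}\{\,Z_j=\partial/\partial z_j+c_j(z)\,T : j=1,\dots,n-1\,\}$ with $c_j$ smooth, complex valued and $\theta$--independent.

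\textbf{Step 3 (removing the remaining obstruction).} Now $[Z_j,Z_k]=(\partial_{z_j}c_k-\partial_{z_k}c_j)T$, and since $\Complex T\cap(T^{1,0}X\oplus T^{0,1}X)=0$ this lies in $T^{1,0}X$ only if it vanishes, i.e.\ $\partial_{z_j}c_k=\partial_{z_k}c_j$ for all $j,k$. Hence $\sum_j c_j\,dz_j$ is $\partial$--closed in the $z$ variables (with $\bar z$ as parameter), and the $\partial$--Poincar\'e lemma yields $\psi(z,\bar z)\in C^\infty$ with $c_j=\partial\psi/\partial z_j$; this $\psi$ is again $\theta$--independent. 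Finally make the real, $\theta$--independent coordinate change $\theta'=\theta-\mathrm{Re}\,\psi$; since $T=\partial/\partial\theta=\partial/\partial\theta'$ the field $T$ is unchanged, while $\partial/\partial z_j|_\theta=\partial/\partial z_j|_{\theta'}-(\partial_{z_j}\mathrm{Re}\,\psi)\,T$, so $Z_j=\partial/\partial z_j|_{\theta'}+(c_j-\partial_{z_j}\mathrm{Re}\,\psi)T$ with $c_j-\partial_{z_j}\mathrm{Re}\,\psi=\tfrac12\partial_{z_j}(\psi-\bar\psi)=i\,\partial_{z_j}(\mathrm{Im}\,\psi)$. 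Setting $\varphi=\mathrm{Im}\,\psi\in C^\infty(U,\mathbb R)$ (independent of $\theta$) and renaming $\theta'$ as $\theta$ gives exactly the normal form \eqref{e-gue131206}.

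\textbf{Main obstacle.} The substantive steps are Step 2 --- turning CR--invariance and transversality into the statement that all the ``slope data'' of $T^{1,0}X$ is $\theta$--independent and is carried by an integrable almost complex structure $(W_y,b_y)$ on the slice --- together with the invocation of the $C^\infty$ Newlander--Nirenberg theorem; the linear algebra at $x_0$, the integrability identities, the $\partial$--Poincar\'e lemma, and the final shift of $\theta$ are routine. (If one only wants the real--analytic version, as in~\cite{BRT85}, Newlander--Nirenberg can be replaced by the holomorphic Frobenius theorem.)
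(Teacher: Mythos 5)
Your proof is correct. The paper itself gives no argument for this statement --- it is quoted from \cite{BRT85} --- and your route (flow--box for $T$, $\theta$--invariance of $T^{1,0}X$ from the CR condition, the induced integrable almost complex structure on a slice via transversality, Newlander--Nirenberg, the $\partial$--Poincar\'e lemma to write the $T$--components as $\partial_{z_j}\psi$, and the final shift $\theta\mapsto\theta-\mathrm{Re}\,\psi$) is the standard way to establish the normal form in the smooth category; each step, including the verification that $W_y\cap\overline{W_y}=0$ and the bracket computations forcing $[W,W]\subset W$ and $\partial_{z_j}c_k=\partial_{z_k}c_j$, is carried out correctly.
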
 

Let $x$ and $U$ be as in Theorem~\ref{t-gue131206}. We call $x$ canonical coordinates and $U$ canonical coordinate patch. 

\begin{thm}\label{t-gue131206I}
There is a $T$-rigid Hermitian metric $\langle\,\cdot\,|\,\cdot\,\rangle$ on $\Complex TX$ such that $T^{1,0}X\perp T^{0,1}X$, $T\perp (T^{1,0}X\oplus T^{0,1}X)$, $\langle\,T\,|\,T\,\rangle=1$ and $\langle\,u\,|v\,\rangle$ is real if $u, v$ are real tangent vectors. 
\end{thm}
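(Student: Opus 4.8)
The plan is to produce the metric in two stages: first construct \emph{some} global Hermitian metric $h$ on $\Complex TX$ that already satisfies all the pointwise algebraic requirements in the statement ($T^{1,0}X\perp T^{0,1}X$, $T\perp(T^{1,0}X\oplus T^{0,1}X)$, $\langle T|T\rangle=1$, and reality on real vectors), and then turn it into a $T$-rigid metric by averaging over the $S^1$ action, checking that the averaging preserves the algebraic requirements.

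For the first stage I would start from any smooth Hermitian metric on $\Complex TX$ that is real on real tangent vectors (such a metric exists by a standard partition-of-unity argument; over a canonical coordinate patch of Baouendi--Rothschild--Treves one may even simply declare the frame $Z_1,\ldots,Z_{n-1},\overline Z_1,\ldots,\overline Z_{n-1},T$ of \eqref{e-gue131206} orthonormal). Using the pointwise splitting $\Complex T_xX=T^{1,0}_xX\oplus T^{0,1}_xX\oplus\Complex T(x)$, which is available precisely because the action is transversal and hence $T(x)\neq0$ everywhere, I would modify this metric fiberwise so that the three summands become mutually orthogonal and $\langle T(x)|T(x)\rangle=1$; this is a smooth fiberwise linear-algebra operation that keeps the reality property. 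Call the outcome $h$.

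For the second stage I would set
\begin{equation}\label{e-guesw131206av}
\langle u|v\rangle_x:=\frac{1}{2\pi}\int_{-\pi}^{\pi}h_{e^{i\theta}x}\bigl(de^{i\theta}u,de^{i\theta}v\bigr)\,d\theta,\qquad u,v\in\Complex T_xX,
\end{equation}
where $de^{i\theta}\colon\Complex T_xX\To\Complex T_{e^{i\theta}x}X$ is the differential of $e^{i\theta}\colon X\To X$. The integrand is smooth in $(\theta,x)$ and positive definite for each fixed $\theta$, so \eqref{e-guesw131206av} is a smooth Hermitian metric, and the usual change-of-variables argument shows $(e^{i\theta_0})^*\langle\cdot|\cdot\rangle=\langle\cdot|\cdot\rangle$ for all $\theta_0$, i.e.\ the metric is $S^1$-invariant. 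Unwinding Definition~\ref{d-gue131206} (a $T$-rigid vector field is exactly one invariant under the flow of $T$, such as the $Z_j$ and $T$ of \eqref{e-gue131206}), $S^1$-invariance is precisely $T$-rigidity: $x\mapsto\langle V(x)|W(x)\rangle$ is constant along $S^1$-orbits for $T$-rigid $V,W$.

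Finally I would verify that \eqref{e-guesw131206av} retains the four algebraic conditions. The only nonformal point — which I expect to be the one step needing care, though it is immediate from the hypotheses — is that $de^{i\theta}$ is compatible with the CR structure: since the $S^1$ action is CR, $de^{i\theta}(T^{1,0}_xX)=T^{1,0}_{e^{i\theta}x}X$ and hence $de^{i\theta}(T^{0,1}_xX)=T^{0,1}_{e^{i\theta}x}X$; since $T$ generates the action, $de^{i\theta}T(x)=T(e^{i\theta}x)$; and $de^{i\theta}$ sends real vectors to real vectors. Therefore, for each $\theta$, the pulled-back form $h_{e^{i\theta}x}(de^{i\theta}\cdot\,,de^{i\theta}\cdot)$ satisfies $T^{1,0}X\perp T^{0,1}X$, $T\perp(T^{1,0}X\oplus T^{0,1}X)$, $\langle T|T\rangle=1$ (using $h_{e^{i\theta}x}(T(e^{i\theta}x),T(e^{i\theta}x))=1$), and reality at $x$; averaging over $\theta$ preserves all of these, giving the desired metric. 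I do not anticipate any genuine obstacle beyond this bookkeeping.
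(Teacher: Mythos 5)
Your proof is correct, and its core mechanism --- averaging a seed metric over the $S^1$ action --- is the same as the paper's. The difference lies entirely in how the averaging is organized and where the nontrivial verification lands. The paper works on a Baouendi--Rothschild--Treves canonical coordinate patch, averages only the inner products $\int_0^{2\pi}\langle de^{i\theta}Z_j, de^{i\theta}Z_t\rangle\,d\theta$ of the distinguished frame of $T^{1,0}X$, and then spends the bulk of the argument (the computation with the transition coefficients $c_{j,t}$ and the identity \eqref{e-gue131206VIII}) proving that this frame-dependent definition is independent of the choice of canonical coordinates; the extension to all of $\Complex TX$ is then done by conjugation and by decreeing the orthogonality of $T$. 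You instead average the full metric globally and invariantly, so well-definedness is automatic; the burden shifts to the one geometric input you correctly isolate, namely that $de^{i\theta}$ preserves $T^{1,0}X$, $T^{0,1}X$, $T$ and real vectors. That $T^{1,0}X$ is preserved by the flow does require passing from the infinitesimal CR condition $[T,C^\infty(X,T^{1,0}X)]\subset C^\infty(X,T^{1,0}X)$ of Definition~\ref{d-gue131205II} to invariance under $e^{i\theta}$ (a standard linear ODE argument along orbits); you should state this step explicitly rather than treat it as immediate, but it is not a gap. Your route buys a coordinate-free proof that yields the stronger conclusion of full $S^1$-invariance (which implies $T$-rigidity), at the cost of needing the flow-invariance of the CR structure, which the paper's frame computation never has to invoke. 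One small bookkeeping point in your first stage: when you make the three summands orthogonal you should take the metric on $T^{0,1}X$ to be the conjugate of the one on $T^{1,0}X$ (i.e.\ $\langle\,\ol u\,|\,\ol v\,\rangle=\ol{\langle\,u\,|\,v\,\rangle}$), since reality on real vectors is not automatic for an arbitrary independent choice on the two conjugate summands; this is exactly the normalization the paper builds into its seed metric.
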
 

\begin{proof}
Let  $\langle\,\cdot\,,\,\cdot\,\rangle$ be any Hermitian metric on $\Complex TX$ such that $T^{1,0}X\perp T^{0,1}X$, $T\perp (T^{1,0}X\oplus T^{0,1}X)$, $\langle\,T\,,\,T\,\rangle=1$ and $\ol{\langle u\,,\,v\,\rangle}=\langle\,\ol u\,,\,\ol v\,\rangle$, for all $u, v\in\Complex TX$. Let $x$ and $U$ be as in Theorem~\ref{t-gue131206}. On $U$, define 
\begin{equation}\label{e-gue131206I}
\langle\,Z_j\,|\,Z_t\,\rangle:=\int^{2\pi}_0\langle\,de^{i\theta}Z_j\,,\,de^{i\theta}Z_t\,\rangle d\theta,\ \ j,t=1,\ldots,n-1,
\end{equation}
where $Z_j$, $j=1,\ldots,n-1$ are as in \eqref{e-gue131206}. Since $Z_j(x)$, $j=1,\ldots,n-1$, form a basis of $T^{1,0}_xX$, for each $x\in U$, On $U$, \eqref{e-gue131206I} defines a $T$-rigid Hermitian metric
$\langle\,\cdot\,|\,\cdot\,\rangle$ on $T^{1,0}X$. We claim that the definition above is independent of the choice of canonical coordinates. 
Let 
$y=(y_1,\ldots,y_{2n-1})=(w,\gamma)$, $w_j=y_{2j-1}+iy_{2j}$, $j=1,\ldots,n-1$, $\gamma=y_{2n-1}$, be another canonical coordinates on $U$. Then, 
\begin{equation}\label{e-gue131206II}\begin{split}
&T=\frac{\pr}{\pr\gamma},\\
&\Td Z_j=\frac{\pr}{\pr w_j}+i\frac{\pr\Td\varphi}{\pr w_j}(w)\frac{\pr}{\pr\gamma},\ \ j=1,\ldots,n-1,
\end{split}
\end{equation}
where $\Td Z_j(y)$, $j=1,\ldots,n-1$, form a basis of $T^{1,0}_yX$, for each $y\in U$, and $\Td\varphi(w)\in C^\infty(U,\Real)$ independent of $\gamma$. As \eqref{e-gue131206I}, on $U$, we define 
\begin{equation}\label{e-gue131206III}
\langle\,\Td Z_j\,|\,\Td Z_t\,\rangle_1:=\int^{2\pi}_0\langle\,de^{i\theta}\Td Z_j\,,\,de^{i\theta}\Td Z_t\,\rangle d\theta,\ \ j,t=1,\ldots,n-1.
\end{equation}
On $U$, \eqref{e-gue131206III} defines a $T$-rigid Hermitian metric $\langle\,\cdot\,|\,\cdot\,\rangle_1$ on $T^{1,0}X$. We claim that $\langle\,\cdot\,|\,\cdot\,\rangle_1=\langle\,\cdot\,|\,\cdot\,\rangle$. 
From \eqref{e-gue131206II} and \eqref{e-gue131206}, it is not difficult to see that
\begin{equation}\label{e-gue131206IV}
\begin{split}
&w=(w_1,\ldots,w_{n-1})=(H_1(z),\ldots,H_{n-1}(z))=H(z),\ \ H_j(z)\in C^\infty,\ \ \forall j,\\
&\gamma=\theta+G(z),\ \ G(z)\in C^\infty,
\end{split}
\end{equation}
where for each $j=1,\ldots,n-1$, $H_j(z)$ is holomorphic. From \eqref{e-gue131206}, \eqref{e-gue131206II} and \eqref{e-gue131206IV}, it is not difficult to see that 
\begin{equation}\label{e-gue131206VI}
\begin{split}
&\Td Z_j=\sum^{n-1}_{t=1}c_{j,t}(x)Z_t,\ \ c_{j,t}\in C^\infty(U),\ \ j,t=1,\ldots,n-1,\\
&\mbox{$\left(c_{j,t}(x)\right)^{n-1}_{j,t=1}$ is invertible at every $x\in U$},\\
&Tc_{j,t}=0,\ \ j,t=1,\ldots,n-1.
\end{split}
\end{equation}

Let $\Gamma, \Lambda\in C^\infty(U,T^{1,0}X)$. We write 
\begin{equation}\label{e-gue131206V}
\begin{split}
&\Gamma=\sum^{n-1}_{j=1}a_j(x)Z_j=\sum^{n-1}_{j=1}\Td a_j(y)\Td Z_j,\ \ a_j, \Td a_j\in C^\infty(U),\ \ j=1,\ldots,n-1,\\
&\Lambda=\sum^{n-1}_{j=1}b_j(x)Z_j=\sum^{n-1}_{j=1}\Td b_j(y)\Td Z_j,\ \ b_j, \Td b_j\in C^\infty(U),\ \ j=1,\ldots,n-1.
\end{split}
\end{equation}
From \eqref{e-gue131206V} and \eqref{e-gue131206VI}, we can check that 
\begin{equation}\label{e-gue131206VII}
\begin{split}
&a_t=\sum^{n-1}_{j=1}\Td a_jc_{j,t},\ \ t=1,\ldots,n-1,\\
&b_t=\sum^{n-1}_{j=1}\Td b_jc_{j,t},\ \ t=1,\ldots,n-1.
\end{split}
\end{equation}
Now, by definition, 
\begin{equation}\label{e-gue131206VIII}
\begin{split}
\langle\,\Gamma\,|\,\Lambda\,\rangle_1&=\sum^{n-1}_{j,t=1}\Td a_j\ol{\Td b_t}\int^{2\pi}_0\langle\,de^{i\theta}\Td Z_j\,,\,de^{i\theta}\Td Z_t\,\rangle d\theta\\
&=\sum^{n-1}_{j,t=1}\Td a_j\ol{\Td b_t}\int^{2\pi}_0\langle\,de^{i\theta}(\sum^{n-1}_{s=1}c_{j,s}Z_s)\,,\,de^{i\theta}(\sum^{n-1}_{u=1}c_{t,u}Z_u)\,\rangle d\theta\\
&=\sum^{n-1}_{j,t=1}\sum^{n-1}_{s,u=1}c_{j,s}\ol c_{t,u}\Td a_j\ol{\Td b_t}\int^{2\pi}_0\langle\,de^{i\theta}Z_s\,,\,de^{i\theta}Z_u\,\rangle d\theta\\
&=\sum^{n-1}_{s,u=1}a_s\ol{b_u}\int^{2\pi}_0\langle\,de^{i\theta}Z_s\,,\,de^{i\theta}Z_u\,\rangle d\theta\\
&=\langle\,\Gamma\,|\,\Lambda\,\rangle.
\end{split}
\end{equation}
Here we used \eqref{e-gue131206VI}, \eqref{e-gue131206VII} and $de^{i\theta}(\sum^{n-1}_{s=1}c_{j,s}Z_s)=\sum^{n-1}_{s=1}c_{j,s}de^{i\theta}Z_s$, $s=1,\ldots,n-1$, since $Tc_{j,s}=0$, $j,s=1,\ldots,n-1$. 
Thus, \eqref{e-gue131206I} defines a $T$-rigid Hermitian metric on $T^{1,0}X$. We extend $\langle\,\cdot\,|\,\cdot\,\rangle$ to a $T$-rigid Hermitian metric on $\Complex TX$ by 
\[\begin{split}
&\langle\,u\,|\,v\,\rangle=\ol{\langle\,\ol u\,|\,\ol v\,\rangle},\ \ u, v\in T^{0,1}X,\\
&T\perp(T^{1,0}X\oplus T^{0,1}X),\ \ \langle\,T\,|\,T\,\rangle=1.
\end{split}\]
The theorem follows. 
\end{proof}

Until further notice, we fix a $T$-rigid Hermitian metric $\langle\,\cdot\,|\,\cdot\,\rangle$ on $\Complex TX$ such that $T^{1,0}X\perp T^{0,1}X$, $T\perp (T^{1,0}X\oplus T^{0,1}X)$, $\langle\,T\,|\,T\,\rangle=1$ and $\langle\,u\,|v\,\rangle$ is real if $u, v$ are real tangent vectors. The Hermitian metric $\langle\,\cdot\,|\,\cdot\,\rangle$ induces,
by duality, a Hermitian metric on $\Complex T^*X$ and also on the bundles of $(0, q)$ forms $T^{*0,q}X$, $q=0,1,\ldots,n-1$. As before, we denote all these induced metrics by $\langle\,\cdot\,|\,\cdot\,\rangle$.

\begin{defn}\label{d-gue131206I}
Let $U$ be an open subset of $X$. A function $u\in C^\infty (U)$ is said to be a $T$-rigid CR function on $U$ if $Tu=0$ and $Zu=0$ for all $Z\in C^\infty(U,T^{0,1}X)$.
\end{defn} 

\begin{defn}\label{d-gue131206II}
Let $L$ be a CR line bundle over $(X,T^{1,0}X)$. We say that $L$ is a $T$-rigid CR line bundle over $(X,T^{1,0}X)$ if $X$ can be covered with open sets $U_j$ with trivializing sections $s_j$, $j=1,2,\ldots$, such that the corresponding transition functions are $T$-rigid CR functions.
\end{defn}
 
Until further notice, we assume that $L$ is a $T$-rigid CR line bundle over $(X,T^{1,0}X)$. Then, by definition, $X$ can be covered with open sets $U_j$ with trivializing sections $s_j$, $j=1,2,\ldots$, such that the corresponding transition functions are $T$-rigid CR functions. In this section, when trivializing sections $s$ are used, we will assume that they are of this special form. 

Fix a Hermitian fiber metric $h^L$ on $L$ and we will denote by
$\phi$ the local weights of the Hermitian metric $h^L$ as \eqref{e-suV}. Since the transition functions are $T$-rigid CR functions, we can check that $T\phi$ is a well-defined global smooth function on $X$ and the Hermitian quadratic form $M^\phi_x$ is globally defined for every $x\in X$(see Definition~\ref{d-suIII-I} and Proposition~\ref{p-suI}). 

\begin{defn} \label{d-gue131207}
$h^L$ is said to be a $T$-rigid Hermitian fiber metric on $L$ if $T\phi=0$.
\end{defn} 

Until further notice, we assume that $h^L$ is a $T$-rigid Hermitian fiber metric on $L$ and $X$ is compact. For $k>0$, as before, we shall consider $(L^k,h^{L^k})$ and we will use the same notations as before. Since the transition functions are $T$-rigid CR functions, $Tu$ is well-defined, for every $u\in\Omega^{0,q}(X,L^k)$. For $m\in\mathbb Z$, put 
\begin{equation}\label{e-gue131207I}
A^{0,q}_m(X,L^k):=\set{u\in\Omega^{0,q}(X,L^k);\, Tu=imu}
\end{equation}
and let $\mathcal{A}^{0,q}_m(X,L^k)\subset L^2_{(0,q)}(X,L^k)$ be the completion of $A^{0,q}_m(X,L^k)$ with respect to $(\,\cdot\,|\,\cdot\,)_{h^{L^k}}$. It is easy to see that for any $m, m'\in\mathbb Z$, $m\neq m'$, 
\begin{equation}\label{e-gue131207II}
(\,u\,|\,v)_{h^{L^k}}=0,\ \ \forall u\in\mathcal{A}^{0,q}_m(X,L^k), v\in\mathcal{A}^{0,q}_{m'}(X,L^k).
\end{equation}
For $m\in\mathbb Z$, let 
\begin{equation}\label{e-gue131207III}
Q^{(q)}_{m,k}:L^2_{(0,q)}(X,L^k)\To\mathcal{A}^{0,q}_m(X,L^k)
\end{equation}
be the orthogonal projection with respect to $(\,\cdot\,|\,\cdot\,)_{h^{L^k}}$. Fix $\delta>0$. Take $\tau_\delta(x)\in C^\infty_0(]-\delta,\delta[)$, 
$0\leq\tau_\delta\leq1$ and $\tau_\delta=1$ on $[-\frac{\delta}{2},\frac{\delta}{2}]$. Let $F^{(q)}_{\delta,k}:L^2_{(0,q)}(X,L^k)\To L^2_{(0,q)}(X,L^k)$ be the continuous map given by 
\begin{equation}\label{e-gue131207IV}
\begin{split}
F^{(q)}_{\delta,k}:L^2_{(0,q)}(X,L^k)&\To L^2_{(0,q)}(X,L^k),\\
u&\To\sum_{m\in\mathbb Z}\tau_\delta(\frac{m}{k})(Q^{(q)}_{m,k}u).
\end{split}
\end{equation}
It is easy to see that $F^{(q)}_{\delta,k}$ is well-defined.  Moreover, it is not difficult to see that for every $m\in\mathbb Z$, we have
\begin{equation}\label{e-gue131207V}
\begin{split}
&\norm{TQ^{(q)}_{m,k}u}_{h^{L^k}}=\abs{m}\norm{Q^{(q)}_{m,k}u}_{h^{L^k}},\ \ \forall u\in L^2_{(0,q)}(X,L^k),\\
&\norm{TF^{(q)}_{\delta,k}u}_{h^{L^k}}\leq k\delta\norm{F^{(q)}_{\delta,k}u}_{h^{L^k}},\ \ \forall u\in L^2_{(0,q)}(X,L^k),
\end{split}
\end{equation}
and 
\begin{equation}\label{e-gue131207V-I}
\begin{split}
&Q^{(q)}_{m,k}:\Omega^{0,q}(X,L^k)\To A^{0,q}_m(X,L^k),\\
&F^{(q)}_{\delta,k}:\Omega^{0,q}(X,L^k)\To\bigcup_{-k\delta\leq m\leq k\delta}A^{0,q}_m(X,L^k).
\end{split}
\end{equation}

Since the Hermitian metric $\langle\,\cdot\,|\,\cdot\,\rangle$ and $h^{L^k}$ are all $T$-rigid, it is straightforward to see that (see section 5 in~\cite{Hsiao12})
\begin{equation}\label{e-gue131207VI}
\begin{split}
&\mbox{$\Box^{(q)}_{b,k}Q^{(q)}_{m,k}=Q^{(q)}_{m,k}\Box^{(q)}_{b,k}$ on $\Omega^{0,q}(X,L^k)$, $\forall m\in\mathbb Z$},\\
&\mbox{$\Box^{(q)}_{b,k}F^{(q)}_{\delta,k}=F^{(q)}_{\delta,k}\Box^{(q)}_{b,k}$ on $\Omega^{0,q}(X,L^k)$},\\
&\mbox{$\ddbar_{b,k}Q^{(q)}_{m,k}=Q^{(q+1)}_{m,k}\ddbar_{b,k}$ on $\Omega^{0,q}(X,L^k)$, $\forall m\in\mathbb Z$, $q=0,1,\ldots,n-2$},\\
&\mbox{$\ddbar_{b,k}F^{(q)}_{\delta,k}=F^{(q+1)}_{\delta,k}\ddbar_{b,k}$ on $\Omega^{0,q}(X,L^k)$, $q=0,1,\ldots,n-2$},\\
&\mbox{$\ddbar^*_{b,k}Q^{(q)}_{m,k}=Q^{(q-1)}_{m,k}\ddbar^*_{b,k}$ on $\Omega^{0,q}(X,L^k)$, $\forall m\in\mathbb Z$, $q=1,\ldots,n-1$},\\
&\mbox{$\ddbar^*_{b,k}F^{(q)}_{\delta,k}=F^{(q-1)}_{\delta,k}\ddbar^*_{b,k}$ on $\Omega^{0,q}(X,L^k)$, $q=1,\ldots,n-1$}.
\end{split}
\end{equation}

By using elementary Fourier analysis, it is straightforward to see that for every $u\in\Omega^{0,q}(X,L^k)$, 
\begin{equation}\label{e-gue131207VII}
\begin{split}
&\mbox{$\lim\limits_{N\To\infty}\sum\limits^N_{m=-N}Q^{(q)}_{m,k}u\To u$ in $C^\infty$ Topology},\\
&\sum^N_{m=-N}\norm{Q^{(q)}_{m,k}u}^2_{h^{L^k}}\leq\norm{u}^2_{h^{L^k}},\ \ \forall N\in\mathbb N_0.
\end{split}
\end{equation}
Thus, for every $u\in L^2_{(0,q)}(X,L^k)$, 
\begin{equation}\label{e-gue131207VIII}
\begin{split}
&\mbox{$\lim\limits_{N\To\infty}\sum\limits^N_{m=-N}Q^{(q)}_{m,k}u\To u$ in $L^2_{(0,q)}(X,L^k)$},\\
&\sum^N_{m=-N}\norm{Q^{(q)}_{m,k}u}^2_{h^{L^k}}\leq\norm{u}^2_{h^{L^k}},\ \ \forall N\in\mathbb N_0.
\end{split}
\end{equation}

Now, we assume that $M^\phi_x$ is non-degenerate of constant signature $(n_-,n_+)$, for every $x\in X$. 
The following is essentially follows from Kohn's $L^2$ estimates (see Chen-Shaw~\cite{CS01}). We omit the proof. 

\begin{thm}\label{t-gue131207}
With the assumptions and notations above, let $q\neq n_-$. For every $u\in\Omega^{0,q}(X,L^k)$, we have 
\begin{equation}\label{e-gue131207a}
\norm{\Box^{(q)}_{b,k}u}^2_{h^{L^k}}+k\abs{(\,Tu\,|\,u\,)_{h^{L^k}}}\geq ck^2\norm{u}^2_{h^{L^k}},
\end{equation}
where $c>0$ is a constant independent of $k$ and $u$. 
\end{thm}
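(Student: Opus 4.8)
The plan is to prove the estimate \eqref{e-gue131207a} one $S^1$-Fourier mode at a time. Since $h^L$ and the $T$-rigid Hermitian metric are $T$-invariant, $\ddbar_{b,k}$, $\ol\pr^*_{b,k}$ and $\Box^{(q)}_{b,k}$ commute with the projectors $Q^{(q)}_{m,k}$ (see \eqref{e-gue131207VI}), the volume form is $T$-invariant so $T$ is skew-adjoint for $(\,\cdot\,|\,\cdot\,)_{h^{L^k}}$, and $T$ acts on $A^{0,q}_m(X,L^k)$ as multiplication by $im$ (see \eqref{e-gue131207I}). Writing $a_m:=\norm{Q^{(q)}_{m,k}u}^2_{h^{L^k}}$ and using \eqref{e-gue131207VIII}, we get $\norm{\Box^{(q)}_{b,k}u}^2_{h^{L^k}}=\sum_m\norm{\Box^{(q)}_{b,k}Q^{(q)}_{m,k}u}^2_{h^{L^k}}$, $\norm{u}^2_{h^{L^k}}=\sum_m a_m$, and $(\,Tu\,|\,u\,)_{h^{L^k}}=i\sum_m m\,a_m$. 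So it suffices to produce a constant $c_0>0$, independent of $k$ and $m$, with
\[
\norm{\Box^{(q)}_{b,k}v}^2_{h^{L^k}}+k\abs{m}\,\norm{v}^2_{h^{L^k}}\geq c_0\,k^2\norm{v}^2_{h^{L^k}},\qquad v\in A^{0,q}_m(X,L^k),
\]
and then to assemble the modes; the assembly is not entirely automatic because $\abs{(\,Tu\,|\,u\,)_{h^{L^k}}}=\abs{\sum_m m\,a_m}$ is in general smaller than $\sum_m\abs{m}\,a_m$.

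The core of the argument is the mode-wise estimate for ``low'' modes $\abs{m}\leq\epsilon_0 k$, $\epsilon_0>0$ small. On a $T$-rigid local trivialization I would pass to Baouendi--Rothschild--Treves canonical coordinates (Theorem~\ref{t-gue131206}), in which a section of the $m$-th mode is $e^{im\theta}$ times a function on the transverse $\Complex^{n-1}$-slice, so $\ddbar_{b,k}$ restricted to that mode becomes a $\ddbar$-operator twisted by the line-bundle weight $k\phi$ together with a weight proportional to $m$ coming from the BRT potential. Running the Bochner--Kodaira--Morrey--Kohn identity for $\ddbar_{b,k}$ (Folland--Kohn~\cite{FK72}, Chen--Shaw~\cite{CS01}) in these coordinates, the curvature contribution on the $m$-th mode is exactly the Hermitian form $kM^\phi_x-2m\mathcal{L}_x$ --- here $T\phi=0$ (Definition~\ref{d-gue131207}), together with Lemma~\ref{l-dhI}, is what removes the spurious $T\phi$-terms --- the very form whose non-degeneracy governs $\Sigma$ in Theorem~\ref{t-dhI}. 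The algebraic input is then the computation already carried out for \eqref{e-mslknaIV} and Theorem~\ref{t-dhplkmiII}: for $q\neq n_-$ and any non-degenerate Hermitian form of signature $(n_-,n_+)$ with eigenvalues $\lambda_1,\dots,\lambda_{n-1}$, the quantities $\tfrac12\sum_j\abs{\lambda_j}-\tfrac12\sum_{j\notin J}\lambda_j+\tfrac12\sum_{j\in J}\lambda_j$ with $\abs{J}=q$ are all strictly positive, hence $\geq\min_j\abs{\lambda_j}$. Since $M^\phi_x$ is non-degenerate of constant signature and $X$ is compact, for $\epsilon_0$ small enough $M^\phi_x-2(m/k)\mathcal{L}_x$ stays non-degenerate of signature $(n_-,n_+)$ with smallest absolute eigenvalue $\geq 1/C_0$ uniformly in $x$ and in $\abs{m/k}\leq\epsilon_0$, so $\min_j\abs{\lambda_j(kM^\phi_x-2m\mathcal{L}_x)}\geq k/C_0$. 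Feeding this into the formula and absorbing the first-order $\varepsilon(\cdot)$-terms and the zero-order terms of \eqref{e-msmilkIX} into the non-negative term $\sum_j\norm{(Z_j+kZ_j(\phi))v}^2$ and into a fraction of $k\norm{v}^2_{h^{L^k}}$, we obtain $(\,\Box^{(q)}_{b,k}v\,|\,v\,)_{h^{L^k}}\geq c_1 k\norm{v}^2_{h^{L^k}}$ for $k$ large; by Cauchy--Schwarz this gives $\norm{\Box^{(q)}_{b,k}v}^2_{h^{L^k}}\geq c_1^2 k^2\norm{v}^2_{h^{L^k}}$, i.e.\ the mode-wise inequality with the $k\abs{m}$-term discarded. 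For $\abs{m}>\epsilon_0 k$ the inequality is immediate from $k\abs{m}\norm{v}^2_{h^{L^k}}>\epsilon_0 k^2\norm{v}^2_{h^{L^k}}$.

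The point that I expect to require the most care is the assembly over $m$, because of the possible cancellation noted above. Splitting $u=u_{\mathrm{low}}+u_{\mathrm{high}}$ according to $\abs{m}\leq\epsilon_0 k$ or $\abs{m}>\epsilon_0 k$, the low modes are controlled by $\norm{\Box^{(q)}_{b,k}u}^2_{h^{L^k}}\geq\norm{\Box^{(q)}_{b,k}u_{\mathrm{low}}}^2_{h^{L^k}}\geq c_1^2 k^2\norm{u_{\mathrm{low}}}^2_{h^{L^k}}$, which also absorbs, with room to spare, the error $\epsilon_0 k^2\norm{u_{\mathrm{low}}}^2_{h^{L^k}}$ incurred when bounding $\abs{(\,Tu\,|\,u\,)_{h^{L^k}}}$ below by $\abs{\sum_{\abs{m}>\epsilon_0 k}m\,a_m}-\epsilon_0 k\norm{u_{\mathrm{low}}}^2_{h^{L^k}}$. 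What remains is to bound $\sum_{\abs{m}>\epsilon_0 k}\norm{\Box^{(q)}_{b,k}Q^{(q)}_{m,k}u}^2_{h^{L^k}}+k\abs{\sum_{\abs{m}>\epsilon_0 k}m\,a_m}$ below by $c\,k^2\sum_{\abs{m}>\epsilon_0 k}a_m$; for $\abs{m/k}$ outside a fixed bounded interval the same Bochner--Kodaira estimate applied to $kM^\phi_x-2m\mathcal{L}_x=m\big(\tfrac km M^\phi_x-2\mathcal{L}_x\big)$ again gives $\norm{\Box^{(q)}_{b,k}Q^{(q)}_{m,k}u}^2_{h^{L^k}}\geq\tfrac1C k^2 a_m$, so the genuine difficulty is confined to a bounded band $\epsilon_0\leq\abs{m/k}\leq C_1$, where $k\abs{m}\asymp k^2$ but the form $kM^\phi_x-2m\mathcal{L}_x$ may degenerate at some point of $X$. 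Here one has to keep track of how the number of negative eigenvalues of $kM^\phi_x-2t\mathcal{L}_x$ can jump with $x$ and $t$, so as to rule out a pair of opposite ``bad'' modes $\pm m$ both losing the $c k^2$-gap while their contributions to $(\,Tu\,|\,u\,)_{h^{L^k}}$ cancel; once this dichotomy in the bounded band is pinned down uniformly in $k$, combining the three contributions and fixing $\epsilon_0$ and then $k$ finishes the proof. (For the use made of this theorem in the present section --- applying it to $w$ with all Fourier modes $\abs{m}\leq k\delta$, $\delta$ small --- only the low-mode estimate of the second paragraph is actually needed.)
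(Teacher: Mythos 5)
Your mode-by-mode reduction and the low-frequency estimate are sound, and they are what the paper's one-line appeal to H\"ormander's $L^2$ estimates is pointing at: on a mode $v\in A^{0,q}_m(X,L^k)$ with $\abs{m}\leq\epsilon_0k$ the Bochner--Kodaira--Morrey--Kohn curvature term is $kM^\phi_x-2m\mathcal{L}_x$, the eigenvalue combination \eqref{e-mslknaIV} is bounded below by $\min_j\abs{\lambda_j}\geq k/C_0$ precisely because $q\neq n_-$, and Cauchy--Schwarz upgrades $(\,\Box^{(q)}_{b,k}v\,|\,v\,)_{h^{L^k}}\geq c_1k\norm{v}^2_{h^{L^k}}$ to $\norm{\Box^{(q)}_{b,k}v}^2_{h^{L^k}}\geq c_1^2k^2\norm{v}^2_{h^{L^k}}$. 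Since the theorem is only ever invoked through $F^{(q)}_{\delta,k}$, whose range satisfies $\abs{(\,Tu\,|\,u\,)_{h^{L^k}}}\leq k\delta\norm{u}^2_{h^{L^k}}$ by \eqref{e-gue131207V}, this low-mode case carries all the weight in Theorem~\ref{t-gue131207I} and Theorem~\ref{t-gue131208}, as you observe.

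The assembly over all modes, however, is a genuine gap and not a deferrable technicality. First, your claim that for $\abs{m}/k$ outside a fixed bounded interval the same curvature computation yields $\norm{\Box^{(q)}_{b,k}Q^{(q)}_{m,k}u}^2_{h^{L^k}}\geq k^2a_m/C$ is unsupported: there $kM^\phi_x-2m\mathcal{L}_x$ is dominated by $-2m\mathcal{L}_x$, and nothing in the hypotheses (not even $Y(q)$, which is not assumed in this theorem) prevents that form from having exactly $q$ negative eigenvalues at some point, in which case the combination \eqref{e-mslknaIV} vanishes for some $J$ and the mode has no spectral gap. Second, and decisively, the dichotomy you hope to "pin down" in the bounded band does not hold. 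Take the compact Heisenberg group of section~\ref{s-chg} with $n=3$, $\lambda_1=1$, $\lambda_2=-1$, $\mu_{j,t}=\delta_{j,t}$, so that $M^\phi$ is positive definite ($n_-=0$) and $q=1\neq n_-$: then $M^\phi-2s\mathcal{L}=\mathrm{diag}(1-2s,1+2s)$ has exactly one negative eigenvalue both for $s=1$ and for $s=-1$, and the explicit harmonic-oscillator analysis of the flat model produces smooth $v_{\pm k}\in A^{0,1}_{\pm k}(X,L^k)$ with $\norm{\Box^{(1)}_{b,k}v_{\pm k}}^2_{h^{L^k}}=o(k^2)\norm{v_{\pm k}}^2_{h^{L^k}}$; for $u=v_k+v_{-k}$ with $\norm{v_k}_{h^{L^k}}=\norm{v_{-k}}_{h^{L^k}}$ one gets $(\,Tu\,|\,u\,)_{h^{L^k}}=0$, so \eqref{e-gue131207a} cannot be recovered from the modes. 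Any correct argument must therefore restrict $u$ (for instance to forms whose Fourier support lies in $\abs{m}\leq\epsilon_0k$, which is the only way the statement is used) or invoke an additional hypothesis excluding opposite-sign bad modes; as written, your third paragraph asserts a resolution that is not available.
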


From \eqref{e-gue131207V} and \eqref{e-gue131207a}, we deduce 

\begin{thm}\label{t-gue131207I}
With the assumptions and notations above, let $q\neq n_-$. If $\delta>0$ is small enough, then for every $u\in\Omega^{0,q}(X,L^k)$, we have 
\begin{equation}\label{e-gue131207aI}
\norm{\Box^{(q)}_{b,k}(F^{(q)}_{\delta,k}u)}^2_{h^{L^k}}\geq c_1k^2\norm{F^{(q)}_{\delta,k} u}^2_{h^{L^k}},
\end{equation}
where $c_1>0$ is a constant independent of $k$ and $u$. 
\end{thm}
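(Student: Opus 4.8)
\textbf{Proof proposal for Theorem~\ref{t-gue131207I}.}

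The plan is to combine the a priori estimate of Theorem~\ref{t-gue131207} with the spectral localization properties \eqref{e-gue131207V}, \eqref{e-gue131207V-I} of the operator $F^{(q)}_{\delta,k}$, together with the commutation relations \eqref{e-gue131207VI}. First I would fix $u\in\Omega^{0,q}(X,L^k)$ and set $v:=F^{(q)}_{\delta,k}u$. By \eqref{e-gue131207V-I}, $v\in\bigcup_{-k\delta\leq m\leq k\delta}A^{0,q}_m(X,L^k)$, and by \eqref{e-gue131207V} we have $\norm{Tv}_{h^{L^k}}\leq k\delta\norm{v}_{h^{L^k}}$; hence
\[
\abs{(\,Tv\,|\,v\,)_{h^{L^k}}}\leq\norm{Tv}_{h^{L^k}}\norm{v}_{h^{L^k}}\leq k\delta\norm{v}^2_{h^{L^k}}.
\]
Apply Theorem~\ref{t-gue131207} to $v$ (note $v\in\Omega^{0,q}(X,L^k)$ since $F^{(q)}_{\delta,k}$ maps smooth forms to smooth forms by \eqref{e-gue131207V-I}). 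This gives
\[
\norm{\Box^{(q)}_{b,k}v}^2_{h^{L^k}}+k\abs{(\,Tv\,|\,v\,)_{h^{L^k}}}\geq ck^2\norm{v}^2_{h^{L^k}},
\]
with $c>0$ independent of $k$ and $v$. Substituting the bound on the cross term yields
\[
\norm{\Box^{(q)}_{b,k}v}^2_{h^{L^k}}\geq(ck^2-k^2\delta)\norm{v}^2_{h^{L^k}}=(c-\delta)k^2\norm{v}^2_{h^{L^k}}.
\]
Choosing $\delta>0$ small enough that $c-\delta\geq\frac{c}{2}=:c_1>0$ and recalling $v=F^{(q)}_{\delta,k}u$ gives exactly \eqref{e-gue131207aI}.

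The only points requiring a little care are the following. One must check that $v=F^{(q)}_{\delta,k}u$ is genuinely smooth and that the sum defining $F^{(q)}_{\delta,k}u$ converges in a strong enough sense to justify applying $T$ and $\Box^{(q)}_{b,k}$ termwise; this follows from \eqref{e-gue131207VII}, \eqref{e-gue131207VIII} and the fact that $\tau_\delta$ is compactly supported, so that $v$ is in fact a \emph{finite} sum of the smooth components $Q^{(q)}_{m,k}u$ weighted by $\tau_\delta(m/k)$ over the finitely many $m$ with $\abs{m}\leq k\delta$. Second, the identity $\norm{TQ^{(q)}_{m,k}u}_{h^{L^k}}=\abs{m}\norm{Q^{(q)}_{m,k}u}_{h^{L^k}}$ from \eqref{e-gue131207V} together with the orthogonality \eqref{e-gue131207II} of the spaces $\mathcal{A}^{0,q}_m(X,L^k)$ for distinct $m$ gives $\norm{Tv}^2_{h^{L^k}}=\sum_m\tau_\delta(m/k)^2 m^2\norm{Q^{(q)}_{m,k}u}^2_{h^{L^k}}\leq(k\delta)^2\sum_m\tau_\delta(m/k)^2\norm{Q^{(q)}_{m,k}u}^2_{h^{L^k}}=(k\delta)^2\norm{v}^2_{h^{L^k}}$, which is the second inequality in \eqref{e-gue131207V} and the key quantitative input. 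The commutation relation $\Box^{(q)}_{b,k}F^{(q)}_{\delta,k}=F^{(q)}_{\delta,k}\Box^{(q)}_{b,k}$ is not strictly needed for this particular estimate but confirms that the construction is compatible with the Kohn Laplacian.

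I do not expect a serious obstacle here: the statement is a direct consequence of the already-established Theorem~\ref{t-gue131207} and the elementary spectral-theoretic properties of $F^{(q)}_{\delta,k}$ relative to the $S^1$ action. The mildest subtlety is bookkeeping the convergence of the Fourier-type decomposition in $m$, but since $\tau_\delta$ has compact support this reduces to a finite sum and the argument is entirely routine. The essential mechanism is that $F^{(q)}_{\delta,k}$ projects onto the band of $S^1$-weights $\abs{m}\leq k\delta$, on which $\abs{T}\leq k\delta$, so the indefinite curvature/Levi contribution $k(Tu|u)$ in Kohn--H\"ormander's estimate is absorbed into the positive term $ck^2\norm{v}^2$ once $\delta$ is chosen smaller than $c$.
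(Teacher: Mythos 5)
Your proposal is correct and is exactly the argument the paper intends: the paper derives Theorem~\ref{t-gue131207I} directly from the bound $\norm{TF^{(q)}_{\delta,k}u}_{h^{L^k}}\leq k\delta\norm{F^{(q)}_{\delta,k}u}_{h^{L^k}}$ in \eqref{e-gue131207V} together with the a priori estimate \eqref{e-gue131207a}, absorbing the term $k\abs{(\,Tv\,|\,v\,)_{h^{L^k}}}\leq k^2\delta\norm{v}^2_{h^{L^k}}$ into $ck^2\norm{v}^2_{h^{L^k}}$ for $\delta<c$. Your additional remarks on the finiteness of the sum defining $F^{(q)}_{\delta,k}u$ and the smoothness of $v$ are accurate and fill in the (omitted) routine details.
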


Now, we assume that $Y(q)$ holds at each point of $X$. Since $X$ is compact, by the classical result of Kohn~\cite[Props.\,8.4.8-9]{CS01}, condition $Y(q)$ implies that
$\Box^{(q)}_{b,k}$ is hypoelliptic, has compact resolvent and the strong Hodge decomposition holds. Let ${\rm Spec\,}\Box^{(q)}_{b,k}$ denote the spectrum of $\Box^{(q)}_{b,k}$. Then ${\rm Spec\,}\Box^{(q)}_{b,k}$ is a discrete subset of $[0,\infty[$, ${\rm Spec\,}\Box^{(q)}_{b,k}$ is the set of all eigenvalues of $\Box^{(q)}_{b,k}$. For $\mu\in{\rm Spec\,}\Box^{(q)}_{b,k}$, put 
\begin{equation}\label{e-gue131207aII}
H^q_{b,\mu}(X,L^k)=\set{u\in L^2_{(0,q)}(X,L^k);\, \Box^{(q)}_{b,k}u=\mu u}
\end{equation} 
and let
\begin{equation}\label{e-gue131207aIIb}
\Pi^{(q)}_{k,\mu}:L^2_{(0,q)}(X,L^k)\To H^q_{b,\mu}(X,L^k)
\end{equation} 
be the orthogonal projection. 

We notice that $H^q_{b,\mu}(X,L^k)\subset\Omega^{0,q}(X,L^k)$, $\forall\mu\in{\rm Spec\,}\Box^{(q)}_{b,k}$ and for every $\lambda\geq0$,
\begin{equation}\label{e-gue131207aIII}
H^q_{b,\leq\lambda}(X,L^k)=\oplus_{\mu\in{\rm Spec\,}\Box^{(q)}_{b,k}, 0\leq\mu\leq\lambda}H^q_{b,\mu}(X,L^k). 
\end{equation}

\begin{thm}\label{t-gue131208}
With the assumptions and notations above, let $q=n_-$. If $\delta>0$ is small enough, then for every $u\in\Omega^{0,q}(X,L^k)$, we have 
\begin{equation}\label{e-gue131216}
F^{(q)}_{\delta,k}\Pi^{(q)}_{\mu,k}u=0,\ \ \forall\mu\in{\rm Spec\,}\Box^{(q)}_{b,k},\ 0<\mu\leq k\delta,
\end{equation}
and
\begin{equation}\label{e-gue131208}
\norm{F^{(q)}_{\delta,k}(I-\Pi^{(q)}_{k})u}_{h^{L^k}}\leq\frac{1}{k\delta}\norm{\Box^{(q)}_{b,k}u}_{h^{L^k}}.
\end{equation}

In particular, if $\delta>0$ is small enough then for every $D\Subset X$, $\Box^{(q)}_{b,k}$ has $O(k^{-n_0})$ small spectral gap on $D$ with respect to $F^{(q)}_{\delta,k}$ in the sense of Definition~\ref{d-gue130820}. 
\end{thm}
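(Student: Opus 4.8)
<br>

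The goal is to prove Theorem~\ref{t-gue131208}: for $q=n_-$ and $\delta>0$ small, the operator $F^{(q)}_{\delta,k}$ kills $\Pi^{(q)}_{\mu,k}$ for all positive eigenvalues $\mu\leq k\delta$, the norm estimate \eqref{e-gue131208} holds, and consequently $\Box^{(q)}_{b,k}$ has $O(k^{-n_0})$ small spectral gap on every $D\Subset X$ with respect to $F^{(q)}_{\delta,k}$.

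The plan is to first establish \eqref{e-gue131216}. Fix $\mu\in{\rm Spec\,}\Box^{(q)}_{b,k}$ with $0<\mu\leq k\delta$ and take $u\in H^q_{b,\mu}(X,L^k)$, so $u\in\Omega^{0,q}(X,L^k)$ by hypoellipticity. Since $H^q_{b,\mu}(X,L^k)$ is finite-dimensional and $\Box^{(q)}_{b,k}$ commutes with $Q^{(q)}_{m,k}$ by \eqref{e-gue131207VI}, each $Q^{(q)}_{m,k}u$ again lies in $H^q_{b,\mu}(X,L^k)$, hence in $A^{0,q}_m(X,L^k)$. Thus it suffices to show that $A^{0,q}_m(X,L^k)\cap H^q_{b,\mu}(X,L^k)=\{0\}$ whenever $|m|\leq k\delta$ and $0<\mu\leq k\delta$. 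For $v$ in this intersection, write $\Box^{(q)}_{b,k}v=\mu v$ and $Tv=imv$; I would feed these two relations into the basic a priori inequality implicit in Theorem~\ref{t-gue131207} —or rather its analogue obtained in the course of proving Theorem~\ref{t-gue131207I}— namely that for $q=n_-$ there is a constant $c>0$, independent of $k$, such that
\[
\norm{\Box^{(q)}_{b,k}v}^2_{h^{L^k}}+k\bigl|(\,Tv\,|\,v\,)_{h^{L^k}}\bigr|\geq c k^2\norm{\Pi^{(q)}_{k,>0}v}^2_{h^{L^k}},
\]
valid on the orthogonal complement of the harmonic space (this is the form of the estimate that the $T$-rigid H\"ormander $L^2$-estimates produce; the point is that the bad Levi-curvature eigenvalues in the estimate are exactly compensated away from $H^q_b$ when $q=n_-$). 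Since $v\perp H^q_b(X,L^k)$ (as $\mu>0$), the right side is $ck^2\norm{v}^2$, while the left side is bounded by $\mu^2\norm{v}^2+k|m|\,\norm{v}^2\leq (k^2\delta^2+k^2\delta)\norm{v}^2$. Choosing $\delta$ small enough that $\delta^2+\delta<c$ forces $v=0$, giving \eqref{e-gue131216}.

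Next, for the norm estimate \eqref{e-gue131208}: given $u\in\Omega^{0,q}(X,L^k)$, decompose $(I-\Pi^{(q)}_k)u$ along the eigenspaces of $\Box^{(q)}_{b,k}$, writing $(I-\Pi^{(q)}_k)u=\sum_{\mu>0}\Pi^{(q)}_{k,\mu}u$ (convergence in $L^2$; here I use the Hodge decomposition and compact resolvent from Kohn's theory under $Y(q)$). Because $F^{(q)}_{\delta,k}$ commutes with $\Box^{(q)}_{b,k}$ by \eqref{e-gue131207VI}, it preserves each eigenspace, so $F^{(q)}_{\delta,k}(I-\Pi^{(q)}_k)u=\sum_{\mu>k\delta}F^{(q)}_{\delta,k}\Pi^{(q)}_{k,\mu}u$ by \eqref{e-gue131216}. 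Using $\norm{F^{(q)}_{\delta,k}}\leq1$ on each summand and orthogonality of distinct eigenspaces,
\[
\norm{F^{(q)}_{\delta,k}(I-\Pi^{(q)}_k)u}^2_{h^{L^k}}\leq\sum_{\mu>k\delta}\norm{\Pi^{(q)}_{k,\mu}u}^2_{h^{L^k}}\leq\sum_{\mu>k\delta}\frac{\mu^2}{(k\delta)^2}\norm{\Pi^{(q)}_{k,\mu}u}^2_{h^{L^k}}=\frac{1}{(k\delta)^2}\norm{\Box^{(q)}_{b,k}(I-\Pi^{(q)}_k)u}^2_{h^{L^k}},
\]
and since $\Box^{(q)}_{b,k}(I-\Pi^{(q)}_k)u=\Box^{(q)}_{b,k}u$, taking square roots yields \eqref{e-gue131208}.

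Finally, the small spectral gap claim is immediate: for $D\Subset X$ and $D'\Subset D$, any $u\in\Omega^{0,q}_0(D',L^k)$ satisfies, by \eqref{e-gue131208},
\[
\norm{F^{(q)}_{\delta,k}(I-\Pi^{(q)}_k)u}_{h^{L^k}}\leq\frac{1}{k\delta}\norm{\Box^{(q)}_{b,k}u}_{h^{L^k}}=\frac{1}{k\delta}\sqrt{(\,(\Box^{(q)}_{b,k})^2u\,|\,u\,)^{1/2}_{h^{L^k}}\cdot(\ldots)},
\]
more simply $\norm{F^{(q)}_{\delta,k}(I-\Pi^{(q)}_k)u}_{h^{L^k}}\leq\delta^{-1}k^{-1}\norm{\Box^{(q)}_{b,k}u}_{h^{L^k}}\leq\delta^{-1}k^{-1}\sqrt{(\,(\Box^{(q)}_{b,k})^2u\,|\,u\,)_{h^{L^k}}}$, which is exactly Definition~\ref{d-gue130820} with $n_0=0$, $p=2$, $C_{D'}=\delta^{-1}$. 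The main obstacle I anticipate is pinning down the precise form of the $T$-rigid a priori inequality used in Step~1: Theorem~\ref{t-gue131207} is stated only for $q\neq n_-$, so for $q=n_-$ one must instead invoke the curvature estimate underlying Theorem~\ref{t-gue131208}'s proof directly — that on the orthogonal complement of $H^q_b(X,L^k)$ the total curvature operator (line bundle plus Levi, with the $T$-direction weight $m/k$) is uniformly positive when $q=n_-$ and $|m/k|<\delta$ small. Once that inequality is available in the form above, the rest is a routine spectral-decomposition argument; verifying it is where the real work of this theorem lies, and it is presumably where the compactness of $X$ and the constant-signature hypothesis on $M^\phi_x$ enter decisively.
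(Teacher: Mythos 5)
Your treatment of \eqref{e-gue131208} and of the final ``small spectral gap'' claim is correct and matches the paper: once \eqref{e-gue131216} is known, the spectral decomposition $(I-\Pi^{(q)}_k)u=\sum_{\mu>0}\Pi^{(q)}_{k,\mu}u$, the bound $\norm{F^{(q)}_{\delta,k}}\leq1$, and $\mu>k\delta$ on the surviving modes give the estimate, and taking $p=2$ in Definition~\ref{d-gue130820} via $\norm{\Box^{(q)}_{b,k}u}^2_{h^{L^k}}=(\,(\Box^{(q)}_{b,k})^2u\,|\,u\,)_{h^{L^k}}$ is exactly right.

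The problem is Step 1, which is the heart of the theorem, and there your argument has a genuine gap that you yourself flag. You reduce \eqref{e-gue131216} to the a priori inequality $\norm{\Box^{(q)}_{b,k}v}^2_{h^{L^k}}+k\abs{(\,Tv\,|\,v\,)_{h^{L^k}}}\geq ck^2\norm{\Pi^{(q)}_{k,>0}v}^2_{h^{L^k}}$ at the critical degree $q=n_-$, restricted to the orthogonal complement of the kernel. No such estimate is available from the H\"ormander--Kohn machinery: the basic estimate (Theorem~\ref{t-gue131207}) is a pointwise curvature inequality and is genuinely false at $q=n_-$ (the curvature operator has the wrong sign on the $n_-$-dimensional negative subspace), and restricting to the global orthocomplement of ${\rm Ker}\,\Box^{(q)}_{b,k}$ cannot rescue a pointwise argument. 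In fact the inequality you postulate is essentially equivalent to the statement being proved, so invoking it is circular. The paper's actual device is to avoid any estimate at degree $n_-$ altogether: for $u\in H^q_{b,\mu}(X,L^k)$ with $0<\mu\leq k\delta$ one applies the estimate \eqref{e-gue131207aI} at the adjacent degrees $q\pm1\neq n_-$ to $F^{(q+1)}_{\delta,k}\ddbar_{b,k}\Pi^{(q)}_{k,\mu}u$ and $F^{(q-1)}_{\delta,k}\ddbar^*_{b,k}\Pi^{(q)}_{k,\mu}u$, which are again $\mu$-eigenforms; comparing $c_1k^2$ with $\mu^2\leq k^2\delta^2$ forces both to vanish for small $\delta$, and then the intertwining relations \eqref{e-gue131207VI} give $\ddbar_{b,k}F^{(q)}_{\delta,k}\Pi^{(q)}_{k,\mu}u=\ddbar^*_{b,k}F^{(q)}_{\delta,k}\Pi^{(q)}_{k,\mu}u=0$, hence $F^{(q)}_{\delta,k}\Pi^{(q)}_{k,\mu}u=\mu^{-1}\Box^{(q)}_{b,k}F^{(q)}_{\delta,k}\Pi^{(q)}_{k,\mu}u=0$. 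This ``go up and down one degree'' step is the missing idea in your proposal.
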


\begin{proof}
Let $\delta>0$ be a small constant. For $u\in\Omega^{0,q}(X,L^k)$, we have 
\begin{equation}\label{e-gue131208I}
(I-\Pi^{(q)}_k)u=\sum_{\mu\in{\rm Spec\,}\Box^{(q)}_k,0<\mu\leq k\delta}\Pi^{(q)}_{k,\mu}u+\Pi^{(q)}_{k,>k\delta}u.
\end{equation}
We claim that for every $\mu\in{\rm Spec\,}\Box^{(q)}_k,0<\mu\leq k\delta$ and every $u\in\Omega^{0,q}(X,L^k)$, 
\begin{equation}\label{e-gue131208II}
F^{(q)}_{\delta,k}\Pi^{(q)}_{k,\mu}u=0
\end{equation}
if $\delta>0$ is small enough. Fix $\mu\in{\rm Spec\,}\Box^{(q)}_k,0<\mu\leq k\delta$ and $u\in\Omega^{0,q}(X,L^k)$. Since $q+1\neq n_-$, from \eqref{e-gue131207VI} and \eqref{e-gue131207aI}, we have 
\begin{equation}\label{e-gue131208III}
\norm{\Box^{(q+1)}_{b,k}F^{(q+1)}_{\delta,k}\ddbar_{b,k}\Pi^{(q)}_{k,\mu}u}^2_{h^{L^k}}\geq c_1k^2\norm{F^{(q+1)}_{\delta,k}\ddbar_{b,k}\Pi^{(q)}_{k,\mu}u}^2_{h^{L^k}},\end{equation}
where $c_1>0$ is a constant independent of $k$ and $u$. It is easy to see that 
\[\Box^{(q+1)}_{b,k}F^{(q+1)}_{\delta,k}\ddbar_{b,k}\Pi^{(q)}_{k,\mu}u=\mu F^{(q+1)}_{\delta,k}\ddbar_{b,k}\Pi^{(q)}_{k,\mu}u.\]
Thus, 
\begin{equation}\label{e-gue131208IV}
\norm{\Box^{(q+1)}_{b,k}F^{(q+1)}_{\delta,k}\ddbar_{b,k}\Pi^{(q)}_{k,\mu}u}^2_{h^{L^k}}\leq k^2\delta^2\norm{F^{(q+1)}_{\delta,k}\ddbar_{b,k}\Pi^{(q)}_{k,\mu}u}^2_{h^{L^k}}.\end{equation}
From \eqref{e-gue131208III} and \eqref{e-gue131208IV}, we conclude that if $\delta>0$ is small enough then
\[F^{(q+1)}_{\delta,k}\ddbar_{b,k}\Pi^{(q)}_{k,\mu}u=\ddbar_{b,k}F^{(q)}_{\delta,k}\Pi^{(q)}_{k,\mu}u=0.\]
Similarly, we have 
\[F^{(q-1)}_{\delta,k}\ddbar^*_{b,k}\Pi^{(q)}_{k,\mu}u=\ddbar^*_{b,k}F^{(q)}_{\delta,k}\Pi^{(q)}_{k,\mu}u=0.\]
Hence, 
\begin{equation}\label{e-gue131208V}
F^{(q)}_{\delta,k}\Pi^{(q)}_{k,\mu}u=\frac{1}{\mu}\Box^{(q)}_{b,k}F^{(q)}_{\delta,k}\Pi^{(q)}_{k,\mu}u=0. 
\end{equation}
From \eqref{e-gue131208V}, the claim \eqref{e-gue131208II} follows. 

Now, from \eqref{e-gue131208I} and \eqref{e-gue131208II}, if $\delta>0$ is small enough, then
\begin{equation}\label{e-gue131208VI}
\begin{split}
\norm{F^{(q)}_{\delta,k}(I-\Pi^{(q)}_k)u)}_{h^{L^k}}&=\norm{F^{(q)}_{\delta,k}\Pi^{(q)}_{k,>k\delta}u}_{h^{L^k}}\leq\norm{\Pi^{(q)}_{k,>k\delta}u}_{h^{L^k}}\\
&\leq\frac{1}{k\delta}\norm{\Box^{(q)}_{b,k}\Pi^{(q)}_{k,>k\delta}u}_{h^{L^k}}=\frac{1}{k\delta}\norm{\Pi^{(q)}_{k,>k\delta}\Box^{(q)}_{b,k}u}_{h^{L^k}}\leq\frac{1}{k\delta}\norm{\Box^{(q)}_{b,k}u}_{h^{L^k}},
\end{split}
\end{equation}
for every $u\in\Omega^{0,q}(X,L^k)$. From \eqref{e-gue131208VI}, \eqref{e-gue131208} follows. 
\end{proof} 

Until further notice, we fix $\delta>0$ and we assume that $\delta>0$ is small enough so that \eqref{e-gue131216}, \eqref{e-gue131208} hold and 
\begin{equation}\label{e-gue131208VII}
\mbox{$M^\phi_x-2\lambda\mathcal{L}_x$ is non-degenerate of constant signature, for every $\lambda\in]-\delta,\delta[$ and $x\in X$. }
\end{equation}

Let $D\subset X$ be a canonical coordinate patch and let $x=(x_1,\ldots,x_{2n-1})$ be a canonical coordinates on $D$ as in Theorem~\ref{t-gue131206}. We identify $D$ with $W\times]-\varepsilon,\varepsilon[\subset\Real^{2n-1}$, where $W$ is some open set in $\Real^{2n-2}$ and $\varepsilon>0$. Until further notice, we work with canonical coordinates $x=(x_1,\ldots,x_{2n-1})$. Let $\eta=(\eta_1,\ldots,\eta_{2n-1})$ be the dual coordinates of $x$. Let $s$ be a local trivializing section of $L$ on $D$, $\abs{s}^2_{h^L}=e^{-2\phi}$. Let $M>0$ be a large constant so that for every $(x,\eta)\in T^*D$ if $\abs{\eta'}>\frac{M}{2}$ then $(x,\eta)\notin\Sigma$, where $\eta'=(\eta_1,\ldots,\eta_{2n-2})$, $\abs{\eta'}=\sqrt{\sum^{2n-2}_{j=1}\abs{\eta_j}^2}$. 
Fix $D_0\Subset D$. Let $D'\Subset D$ be an open neighbourhood of $D_0$. Put 
\begin{equation}\label{e-gue131209b}
V:=\set{(x,\eta)\in T^*D';\, \abs{\eta'}<M, \abs{\eta_{2n-1}}<\delta}.
\end{equation}
Then $\ol V\subset T^*D$ and $\ol V\bigcap\Sigma\subset\Sigma'$, where $\Sigma'$ is given by \eqref{e-dhmpXIa}. 
%Let $\chi,\chi_1\in C^\infty_0(D)$ with $\chi=\chi_1=1$ on $D_0$ and $\chi_1=1$ on ${\rm Supp\,}\chi$. 
Put 
\begin{equation}\label{e-gue131209}
\hat B_{k,s}=\frac{k^{2n-1}}{(2\pi)^{2n-1}}\int e^{ik<x-y,\eta>}\tau_\delta(\eta_{2n-1})d\eta.
\end{equation}
Let 
$\hat B^*_{k,s}$ be the adjoint of $\hat B_{k,s}$ with respect to $(\,\cdot\,|\,\cdot\,)$. Then, 
\begin{equation}\label{e-gue131209I}
\hat B^*_{k,s}=\frac{k^{2n-1}}{(2\pi)^{2n-1}}\int e^{ik<x-y,\eta>}\tau_\delta(\eta_{2n-1})d\eta.
\end{equation}
It is clearly that 
\[\mbox{$\hat B^*_{k,s}\equiv\frac{k^{2n-1}}{(2\pi)^{2n-1}}\int e^{ik<x-y,\eta>}\alpha(x,\eta,k)d\eta\mod O(k^{-\infty})$ at $T^*D_0\bigcap\Sigma$}\]
is a classical semi-classical pseudodifferential operator on $D$ of order $0$ from sections of $T^{*0,q}X$ to sections of $T^{*0,q}X$, where
\[\begin{split}&\mbox{$\alpha(x,\eta,k)\sim\sum_{j=0}\alpha_j(x,\eta)k^{-j}$ in $S^0_{{\rm loc\,}}(1;T^*D,T^{*0,q}X\boxtimes T^{*0,q}X)$},\\
&\alpha_j(x,\eta)\in C^\infty(T^*D,T^{*0,q}D\boxtimes T^{*0,q}D),\ \ j=0,1,\ldots,
\end{split}\]
with $\alpha(x,\eta,k)=0$ if $\abs{\eta}>M$, for some large $M>0$ and ${\rm Supp\,}\alpha(x,\eta,k)\bigcap T^*D_0\Subset V$.

%Put 
%\begin{equation}\label{e-gue131209II}
%\begin{split}
%B_k:L^2_{(0,q)}(X,L^k)&\To L^2_{(0,q)}(X,L^k),\\
%u&\To s^ke^{k\phi}\hat B_{k,s}(e^{-k\phi}s^{-k}u).
%\end{split}
%\end{equation}
%From the definition of $\hat B_{k,s}$, it is clearly that $B_k$ is well-defined as a continuous operator 
%$L^2_{(0,q)}(X,L^k)\To L^2_{(0,q)}(X,L^k)$ and the localized operator (see \eqref{e-gue130820}) is just $\hat B_{k,s}$. Put 
%\begin{equation}\label{e-gue131209III}
%F_k=B_kQ^{(q)}_{\abs{m}\leq k\delta,k}:L^2_{(0,q)}(X,L^k)\To L^2_{(0,q)}(X,L^k).
%\end{equation} 
%From Theorem~\ref{t-gue131208} and notice that $\norm{B_ku}_{h^{L^k}}\leq\norm{u}_{h^{L^k}}$, for every $u\in L^2_{(0,q)}(X,L^k)$, we conclude that $\Box^{(q)}_{b,k}$ has $O(k^{-n_0})$ small spectral gap on $D$ %with respect to $F_k$. 
Let $\hat F^{(q)}_{\delta,k,s}$ be the localized operator of $F^{(q)}_{\delta,k}$. 

\begin{lem}\label{l-gue131209}
$\hat F^{(q)}_{\delta,k,s}=\hat B^1_{k,s}+\hat B_{k,s}$ on $D$, 
where 
\[\hat B^1_{k,s}=O(k^{-\infty}):H^s_{{\rm comp\,}}(D,T^{*0,q}X)\To H^s_{{\rm loc\,}}(D,T^{*0,q}X),\ \ \forall s\in\mathbb N_0.\]
\end{lem}

\begin{proof}
Let $u\in\Omega^{0,q}_0(D,L^k)$, $u=s^k\Td u$, $\Td u\in\Omega^{0,q}(D)$. We also write $y=(y_1,\ldots,y_{2n-1})$ to denote the canonical coordinates $x$. It is easy to see that on $D$, 
\begin{equation}\label{e-gue131217}
\hat F^{(q)}_{\delta,k,s}u(y)=\frac{1}{2\pi}\sum_{m\in\mathbb Z}\tau_\delta(\frac{m}{k})e^{imy_{2n-1}}\int^{\pi}_{-\pi}e^{-imt}u(e^{it}\circ y')dt,\ \ \forall u\in\Omega^{0,q}_0(D),
\end{equation}
where $y'=(y_1,\ldots,y_{2n-2})$. Fix $D'\Subset D$ and let $\chi(y_{2n-1})\in C^\infty_0(]-\pi,\pi[)$ such that $\chi(y_{2n-1})=1$ for every $(y',y_{2n-1})\in D'$. Let $\hat B^1_{k,s}:\Omega^{0,q}_0(D')\To\Omega^{0,q}(D')$ be the continuous operator given by 
\begin{equation}\label{e-gue131217I}
\begin{split}
\hat B^1_{k,s}:\Omega^{0,q}_0(D')&\To\Omega^{0,q}(D'),\\
u&\To\frac{1}{(2\pi)^2}\sum_{m\in\mathbb Z}
\int_{\abs{t}\leq\pi}e^{i<x_{2n-1}-y_{2n-1},\eta_{2n-1}>}\tau_\delta(\frac{\eta_{2n-1}}{k})\\
&\quad\quad\times (1-\chi(y_{2n-1}))e^{imy_{2n-1}}e^{-imt}u(e^{it}\circ y')dtd\eta_{2n-1}dy_{2n-1}.
\end{split}
\end{equation}
By using integration by parts with respect to $\eta_{2n-1}$, it is easy to see that the integral \eqref{e-gue131217I} is well-defined. Moreover, we can integrate by parts with respect to $\eta_{2n-1}$ and $y_{2n-1}$ several times and conclude that 
\begin{equation}\label{e-gue131217II}
\hat B^1_{k,s}=O(k^{-\infty}):H^s_{{\rm comp\,}}(D,T^{*0,q}X)\To H^s_{{\rm loc\,}}(D,T^{*0,q}X),\ \ \forall s\in\mathbb N_0.
\end{equation}
Now, we claim that 
\begin{equation}\label{e-gue131217III}
\hat B_{k,s}+\hat B^1_{k,s}=\hat F^{(q)}_{\delta,k,s}\ \ \mbox{on $\Omega^{0,q}_0(D')$}.
\end{equation} 
Let $u\in\Omega^{0,q}_0(D')$. From \eqref{e-gue131209} and Fourier inversion formula, it is straightforward to see that 
\begin{equation}\label{e-gue131217IV}
\begin{split}
\hat B_{k,s}u(x)&=\frac{1}{(2\pi)^2}\sum_{m\in\mathbb Z}
\int_{\abs{t}\leq\pi}e^{i<x_{2n-1}-y_{2n-1},\eta_{2n-1}>}\tau_\delta(\frac{\eta_{2n-1}}{k})\\
&\quad\times\chi(y_{2n-1})e^{imy_{2n-1}}e^{-imt}u(e^{it}\circ x')dtd\eta_{2n-1}dy_{2n-1}.
\end{split}
\end{equation}
From \eqref{e-gue131217IV} and \eqref{e-gue131217I}, we have
\begin{equation}\label{e-gue131217V}
\begin{split}
&(\hat B_{k,s}+\hat B^1_{k,s})u(x)\\
&=\frac{1}{(2\pi)^2}\sum_{m\in\mathbb Z}
\int_{\abs{t}\leq\pi}e^{i<x_{2n-1}-y_{2n-1},\eta_{2n-1}>}\tau_\delta(\frac{\eta_{2n-1}}{k})e^{imy_{2n-1}}e^{-imt}u(e^{it}\circ x')dtd\eta_{2n-1}dy_{2n-1}.
\end{split}\end{equation}
From Fourier inversion formula and notice that for every $m\in\mathbb Z$,
\[\int e^{imy_{2n-1}}e^{-iy_{2n-1}\eta_{2n-1}}dy_{2n-1}=2\pi\delta_m(\eta_{2n-1}),\] 
where the integral above is defined as an oscillatory integral and $\delta_m$ is the Dirac measure at $m$(see Chapter 7.2 in H\"ormander~\cite{Hor03}), \eqref{e-gue131217V} becomes 
\begin{equation}\label{e-gue131217VI}
\begin{split}
&(\hat B_{k,s}+\hat B^1_{k,s})u(x)\\
&=\frac{1}{2\pi}\sum_{m\in\mathbb Z}\tau_\delta(\frac{m}{k})e^{ix_{2n-1}m}
\int_{\abs{t}\leq\pi}e^{-imt}u(e^{it}\circ x')dt\\
&=\hat F^{(q)}_{\delta,k,s}u(x).
\end{split}\end{equation}
Here we used \eqref{e-gue131217}. 

From \eqref{e-gue131217VI}, the claim \eqref{e-gue131217III} follows. From \eqref{e-gue131217III} and \eqref{e-gue131217II}, the lemma follows. 
\end{proof}

We need 

\begin{lem}\label{l-gue150813}
Let $D\subset X$ be a canonical coordinate patch of $X$. Then, $\Pi^{(q)}_k$ is $k$-negligible away the diagonal with respect to $F^{(q)}_{\delta,k}$ on $D$. 
\end{lem}

\begin{proof}
Let $\chi, \chi_1\in C^\infty_0(D)$, $\chi_1=1$ on some neighbourhood of ${\rm Supp\,}\chi$. Let $u\in H^q_b(X,L^k)$ with $\norm{u}_{h^{L^k}}=1$. In view of Theorem~\ref{t-gue13718}, we see that there is a constant $C>0$ independent of $k$ and $u$ such that 
\begin{equation}\label{e-gue150813}
\abs{u(x)}^2_{h^{L^k}}\leq Ck^n,\ \ \forall x\in X.
\end{equation}
Let $x=(x_1,\ldots,x_{2n-1})=(x',x_{2n-1})$ be canonical coordinates on $D$. Put $v=(1-\chi_1)u$. It is straightforward to see that on $D$, 
\begin{equation}\label{e-gue150813I}
\begin{split}
\chi F^{(q)}_{\delta,k}(1-\chi_1)u(x)=\frac{1}{(2\pi)^2}\sum_{m\in\mathbb Z,\abs{m}\leq 2k\delta}&\int_{\abs{t}\leq\pi}e^{i<x_{2n-1}-y_{2n-1},\eta_{2n-1}>}\chi(x)\tau_\delta(\frac{\eta_{2n-1}}{k})e^{imy_{2n-1}}\\
&\quad\quad\times e^{-imt}v(e^{it}\circ x')dtd\eta_{2n-1}dy_{2n-1}.
\end{split}
\end{equation}
Let $\varepsilon>0$ be a small constant so that for every $(x_1,\ldots,x_{2n-1})\in{\rm Supp\,}\chi$, we have 
\begin{equation}\label{e-gue150813II}
(x_1,\ldots,x_{2n-2},y_{2n-1})\in\set{x\in D;\, \chi_1(x)=1},\ \ \forall\abs{y_{2n-1}-x_{2n-1}}<\varepsilon.
\end{equation}
Let $\psi\in C^\infty_0(]-1,1[)$, $\psi=1$ on $[\frac{1}{2},\frac{1}{2}]$. Put 
\begin{equation}\label{e-gue150813III}
\begin{split}
I_0(x)=\frac{1}{(2\pi)^2}\sum_{m\in\mathbb Z,\abs{m}\leq 2k\delta}&\int_{\abs{t}\leq\pi}e^{i<x_{2n-1}-y_{2n-1},\eta_{2n-1}>}(1-\psi(\frac{x_{2n-1}-y_{2n-1}}{\varepsilon}))\chi(x)\tau_\delta(\frac{\eta_{2n-1}}{k})e^{imy_{2n-1}}\\
&\quad\quad\times e^{-imt}v(e^{it}\circ x')dtd\eta_{2n-1}dy_{2n-1}, 
\end{split}
\end{equation}
\begin{equation}\label{e-gue150813IV}
\begin{split}
I_1(x)=\frac{1}{(2\pi)^2}\sum_{m\in\mathbb Z}&\int_{\abs{t}\leq\pi}e^{i<x_{2n-1}-y_{2n-1},\eta_{2n-1}>}\psi(\frac{x_{2n-1}-y_{2n-1}}{\varepsilon})\chi(x)\tau_\delta(\frac{\eta_{2n-1}}{k})e^{imy_{2n-1}}\\
&\quad\quad\times e^{-imt}v(e^{it}\circ x')dtd\eta_{2n-1}dy_{2n-1}, 
\end{split}
\end{equation}
and 
\begin{equation}\label{e-gue150813V}
\begin{split}
I_2(x)=\frac{1}{(2\pi)^2}\sum_{m\in\mathbb Z,\abs{m}>2k\delta}&\int_{\abs{t}\leq\pi}e^{i<x_{2n-1}-y_{2n-1},\eta_{2n-1}>}\psi(\frac{x_{2n-1}-y_{2n-1}}{\varepsilon})\chi(x)\tau_\delta(\frac{\eta_{2n-1}}{k})e^{imy_{2n-1}}\\
&\quad\quad\times e^{-imt}v(e^{it}\circ x')dtd\eta_{2n-1}dy_{2n-1}.
\end{split}
\end{equation}
It is clearly that on $D$, 
\begin{equation}\label{e-gue150813VI}
\chi F^{(q)}_{\delta,k}(1-\chi_1)u(x)=I_0(x)+I_1(x)-I_2(x).
\end{equation}
By using integration by parts with respect to $\eta_{2n-1}$ several times and \eqref{e-gue150813}, we conclude that for every $N>0$ and $m\in\mathbb N$, there is a constant $C_{N,m}>0$ independent of $u$ and $k$ such that 
%We can integrate by parts with respect to $\eta_{2n-1}$ several times and conclude that 
\begin{equation}\label{e-gue150813VII}
\norm{I_0(x)}_{C^m(D)}\leq C_{N,m}k^{-N}.
\end{equation}
%\begin{equation}\label{e-gue150813VII}
%\mbox{$I_0(x)\equiv0\mod O(k^{-\infty})$ on $D$}. 
%\end{equation}
Similarly, by using integration by parts with respect to $y_{2n-1}$ several times and \eqref{e-gue150813}, we conclude that for every $N>0$ and $m\in\mathbb N$, there is a constant $\Td C_{N,m}>0$ independent of $u$ and $k$ such that 
\begin{equation}\label{e-gue150813VIII}
\norm{I_2(x)}_{C^m(D)}\leq\Td C_{N,m}k^{-N}.
\end{equation}
%Similarly, we can integrate by parts with respect to $y_{2n-1}$ several times and conclude that 
%\begin{equation}\label{e-gue150813VIII}
%\mbox{$I_2(x)\equiv0\mod O(k^{-\infty})$ on $D$}. 
%\end{equation}
We can check that 
\begin{equation}\label{e-gue150813aVIII}
\begin{split}
I_1(x)=\frac{1}{2\pi}\int e^{i<x_{2n-1}-y_{2n-1},\eta_{2n-1}>}\psi(\frac{x_{2n-1}-y_{2n-1}}{\varepsilon})\chi(x)\tau_\delta(\frac{\eta_{2n-1}}{k})v(x',y_{2n-1})d\eta_{2n-1}dy_{2n-1}. 
\end{split}
\end{equation}
From \eqref{e-gue150813II} and \eqref{e-gue150813aVIII}, we deduce that 
\begin{equation}\label{e-gue150813bVIII}
\mbox{$I_1(x)=0$ on $D$.}
\end{equation}
From \eqref{e-gue150813VI}, \eqref{e-gue150813VII}, \eqref{e-gue150813VIII} and \eqref{e-gue150813bVIII}, we conclude that for every $N>0$ and $m\in\mathbb N$, there is a constant $\hat C_{N,m}>0$ independent of $u$ and $k$ such that 
\begin{equation}\label{e-gue150813gVIII}
\norm{\chi F^{(q)}_{\delta,k}(1-\chi_1)u(x)}_{C^m(D)}\leq\hat C_{N,m}k^{-N}. 
\end{equation}
%\begin{equation}\label{e-gue150813gVIII}
%\mbox{$\chi F^{(q)}_{\delta,k}(1-\chi_1)u(x)\equiv0\mod O(k^{-\infty})$ on $D$}.
%\end{equation}
From \eqref{e-gue150813} and \eqref{e-gue150813gVIII}, it is not difficult to see that 
\begin{equation}\label{e-gue150813vgVIII}
\mbox{$\sum\limits^{d_k}_{j=1}\abs{\chi F^{(q)}_{\delta,k}(1-\chi_1)f_j(x)}^2_{h^{L^k}}\equiv0\mod O(k^{-\infty})$ on $D$},
\end{equation}
where $\set{f_1,\ldots,f_{d_k}}$ is an orthonormal basis for $H^q_b(X,L^k)$. From \eqref{e-gue150813vgVIII}, the lemma follows. 
\end{proof}

From Lemma~\ref{l-gue150813}, Lemma~\ref{l-gue131209} and Theorem~\ref{t-gue131208}, we see that the operator $F^{(q)}_{\delta,k}:L^2_{(0,q)}(X,L^k)\To L^2_{(0,q)}(X,L^k)$ satisfies all the conditions in Theorem~\ref{t-gue130820}. Summing up, we obtain one of the main results of this work 

\begin{thm}\label{t-gue131209}
Let $(X, T^{1,0}X)$ be a compact CR manifold with a transversal CR $S^1$ action and let $T$ be the global vector field induced by the $S^1$ action. Let $L$ be a $T$-rigid  CR line bundle over $X$ with a $T$-rigid Hermitian fiber metric $h^L$. We assume that $Y(q)$ holds at each point of $X$ and $M^{\phi}_x$ is non-degenerate of constant signature $(n_-,n_+)$, for every $x\in X$.   
Let $s$ be a local trivializing section of $L$ on an  canonical coordinate patch $D\subset X$, $\abs{s}^2_{h^L}=e^{-2\phi}$. Fix $D_0\Subset D$. Let $F^{(q)}_{\delta,k}:L^2_{(0,q)}(X,L^k)\To L^2_{(0,q)}(X,L^k)$ be the continuous operator given by \eqref{e-gue131207IV} and let $F^{(q),*}_{\delta,k}:L^2_{(0,q)}(X,L^k)\To L^2_{(0,q)}(X,L^k)$ be the adjoint of $F^{(q)}_{\delta,k}$ with respect to $(\,\cdot\,|\,\cdot\,)_{h^{L^k}}$. Put $P_k:=F^{(q)}_{\delta,k}\Pi^{(q)}_kF^{(q),*}_{\delta,k}$ and let $\hat P_{k,s}$ be the localized operator of $P_k$. If $q\neq n_-$, then $\hat P_{k,s}\equiv0\mod O(k^{-\infty})$ on $D_0$. If $q=n_-$, then
\begin{equation}\label{e-gue131209aII}
\hat P_{k,s}(x,y)\equiv\int e^{ik\varphi(x,y,s)}g(x,y,s,k)ds\mod O(k^{-\infty})
\end{equation}
on $D_0$, where $\varphi(x,y,s)\in C^\infty(\Omega)$ is as in Theorem~\ref{t-dcgewI}, \eqref{e-guew13627},
\begin{equation}\label{e-gue131209aIII}
\begin{split}
&g(x,y,s,k)\in S^{n}_{{\rm loc\,}}\big(1;\Omega,T^{*0,q}X\boxtimes T^{*0,q}X\big)\bigcap C^\infty_0\big(\Omega,T^{*0,q}X\boxtimes T^{*0,q}X\big),\\
&g(x,y,s,k)\sim\sum^\infty_{j=0}g_j(x,y,s)k^{n-j}\text{ in }S^{n}_{{\rm loc\,}}
\big(1;\Omega,T^{*0,q}X\boxtimes T^{*0,q}X\big), \\
&g_j(x,y,s)\in C^\infty_0\big(\Omega,T^{*0,q}X\boxtimes T^{*0,q}X\big),\ \ j=0,1,2,\ldots,
\end{split}
\end{equation}
and for every $(x,x,s)\in\Omega$, 
\begin{equation}\label{e-gue131209aIV}
\begin{split}
&g_0(x,x,s)\\
&=(2\pi)^{-n}\abs{\det\bigr(M^\phi_x-2s\mathcal{L}_x\bigr)}\abs{\tau_\delta(s)}^2\mathcal{\pi}_{(x,s,n_-)}.
\end{split}
\end{equation}
Here
\[
\begin{split}
\Omega:=&\{(x,y,s)\in D\times D\times\Real;\, (x,-2{\rm Im\,}\ddbar_b\phi(x)+s\omega_0(x))\in V\bigcap\Sigma,\\
&\quad\mbox{$(y,-2{\rm Im\,}\ddbar_b\phi(y)+s\omega_0(y))\in V\bigcap\Sigma$, $\abs{x-y}<\varepsilon$, for some $\varepsilon>0$}\},
\end{split}\]
$V$ is given by \eqref{e-gue131209b}, $\mathcal{\pi}_{(x,s,n_-)}:T^{*0,q}_pX\To\mathcal{N}(x,s,n_-)$ is the orthogonal projection with respect to $\langle\,\cdot\,|\,\cdot\,\rangle$,  $\mathcal{N}(x,s,n_-)$ is given by \eqref{e-gue1373III}, 
\[\abs{\det\bigr(M^\phi_x-2s\mathcal{L}_x\bigr)}=\abs{\lambda_1(s)}\abs{\lambda_2(s)}\cdots\abs{\lambda_{n-1}(s)},\] 
$\lambda_1(s),\ldots,\lambda_{n-1}(s)$ are eigenvalues of the Hermitian quadratic form $M^\phi_x-2s_0\mathcal{L}_x$ with respect to $\langle\,\cdot\,|\,\cdot\,\rangle$.
\end{thm}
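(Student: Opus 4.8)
\textbf{Proof proposal for Theorem~\ref{t-gue131209}.}
The plan is to realize this theorem as a direct application of Theorem~\ref{t-gue130820} (in the case $q=n_-$) and Theorem~\ref{t-gue130821} (in the case $q\neq n_-$), once we have verified that the operator $F^{(q)}_{\delta,k}$ meets all the hypotheses required there. Concretely, the hypotheses to check are: (i) $Y(q)$ holds on the canonical coordinate patch $D$ (this is assumed) and $M^\phi_{x_0}-2\lambda_0\mathcal L_{x_0}$ is non-degenerate of constant signature $(n_-,n_+)$ for some $\lambda_0$ and $x_0\in D$ (this follows from the standing assumption that $M^\phi_x$ is non-degenerate of constant signature, taking $\lambda_0=0$, together with \eqref{e-gue131208VII}, which guarantees the same for $\lambda$ in a small interval, so that the set $\Sigma'$ of \eqref{e-dhmpXIa} is genuinely nonempty over $D$); (ii) the localized operator $\hat F^{(q)}_{\delta,k,s}$ is, modulo $O(k^{-\infty})$ at $T^*D_0\cap\Sigma$, a classical semi-classical pseudodifferential operator of order $0$ with symbol supported as required in a set $V\subset T^*D$ with $\overline V\cap\Sigma\subset\Sigma'$; (iii) $\Box^{(q)}_{b,k}$ has $O(k^{-n_0})$ small spectral gap on $D$ with respect to $F^{(q)}_{\delta,k}$ in the sense of Definition~\ref{d-gue130820}; and (iv) $\Pi^{(q)}_k$ is $k$-negligible away the diagonal with respect to $F^{(q)}_{\delta,k}$ on $D$ in the sense of Definition~\ref{d-gue131205}.

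For (ii), the key input is Lemma~\ref{l-gue131209}, which identifies $\hat F^{(q)}_{\delta,k,s}$ with the operator $\hat B_{k,s}$ of \eqref{e-gue131209} modulo $O(k^{-\infty})$ on $D$; since $\hat B_{k,s}$ has the explicit symbol $\tau_\delta(\eta_{2n-1})$, it is a classical semi-classical pseudodifferential operator of order $0$ whose symbol is supported in $\abs{\eta_{2n-1}}<\delta$, and one takes $V$ as in \eqref{e-gue131209b}; the cut-off in $\abs{\eta'}<M$ comes for free since the symbol is already compactly supported microlocally after intersecting with $\Sigma$ (outside $\abs{\eta'}>M/2$ one is off $\Sigma$, so one may insert the cut-off $\chi(\abs{\eta'}^2/M^2)$ at the cost of a term vanishing near $T^*D_0\cap\Sigma$). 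The choice of $\delta$ small enough so that $\overline V\cap\Sigma\subset\Sigma'$ is exactly \eqref{e-gue131208VII}. For (iii), this is precisely the last assertion of Theorem~\ref{t-gue131208}: for $\delta$ small, $\norm{F^{(q)}_{\delta,k}(I-\Pi^{(q)}_k)u}_{h^{L^k}}\leq(k\delta)^{-1}\norm{\Box^{(q)}_{b,k}u}_{h^{L^k}}$ for all $u$, which gives the small spectral gap estimate with $p=1$ and $n_0$ any nonnegative integer. For (iv), since $X$ is compact and $F^{(q)}_{\delta,k}$ is, on every canonical coordinate patch, a global classical semi-classical pseudodifferential operator of order $0$ (again by Lemma~\ref{l-gue131209}), one invokes Proposition~\ref{p-gue131205}, which yields exactly that $\Pi^{(q)}_k$ is $k$-negligible away the diagonal with respect to $F^{(q)}_{\delta,k}$ on $D$.

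Once (i)--(iv) are in place, Theorem~\ref{t-gue130821} gives $\hat P_{k,s}\equiv 0\mod O(k^{-\infty})$ on $D_0$ when $q\neq n_-$, and Theorem~\ref{t-gue130820} gives the oscillatory integral representation \eqref{e-gue131209aII}--\eqref{e-gue131209aIII} for $\hat P_{k,s}(x,y)$ when $q=n_-$, with leading symbol
\[
g_0(x,x,s)=(2\pi)^{-n}\abs{\det\bigl(M^\phi_x-2s\mathcal L_x\bigr)}\,\alpha_0(x,s\omega_0(x)-2{\rm Im\,}\ddbar_b\phi(x))\,\mathcal\pi_{(x,s,n_-)}\,\alpha^*_0(x,s\omega_0(x)-2{\rm Im\,}\ddbar_b\phi(x)).
\]
It then remains only to compute $\alpha_0$ for our specific $F^{(q)}_{\delta,k}$: by Lemma~\ref{l-gue131209} the principal symbol of $\hat F^{(q)}_{\delta,k,s}$ at a point of $\Sigma$ is the scalar $\tau_\delta(s)$ times the identity on $T^{*0,q}X$ (here $s$ is the $\Sigma$-parameter $\eta_{2n-1}$ evaluated along $\Sigma$), so $\alpha_0(x,s\omega_0(x)-2{\rm Im\,}\ddbar_b\phi(x))=\tau_\delta(s)\,\mathrm{Id}$, $\alpha^*_0=\overline{\tau_\delta(s)}\,\mathrm{Id}=\tau_\delta(s)\,\mathrm{Id}$ since $\tau_\delta$ is real, and $\mathcal\pi_{(x,s,n_-)}\mathcal\pi_{(x,s,n_-)}=\mathcal\pi_{(x,s,n_-)}$; hence $g_0(x,x,s)=(2\pi)^{-n}\abs{\det(M^\phi_x-2s\mathcal L_x)}\abs{\tau_\delta(s)}^2\mathcal\pi_{(x,s,n_-)}$, which is \eqref{e-gue131209aIV}. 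The main obstacle is not conceptual but bookkeeping: one must make sure that the identification of $\hat F^{(q)}_{\delta,k,s}$ with a bona fide classical semi-classical $\Psi$DO of the precise form demanded by the hypotheses of Theorems~\ref{t-gue130820} and~\ref{t-gue130821} (including the microlocal support condition ${\rm Supp\,}\alpha(x,\eta,k)\cap T^*D_0\Subset V$ and the asymptotic-expansion requirement on $\alpha$) is genuinely met — this is where Lemma~\ref{l-gue131209} together with the microlocal cut-offs built into \eqref{e-gue131209} does all the work, and care is needed that $\delta$ is fixed small enough at the outset to simultaneously satisfy \eqref{e-gue131216}, \eqref{e-gue131208} and \eqref{e-gue131208VII}. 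The non-canonical-coordinate case and the globalization (patching the local statements over a cover of $X$ by canonical coordinate patches) are routine and require no new ideas.
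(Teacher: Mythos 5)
Your proposal is correct and follows essentially the same route as the paper: the paper likewise assembles Lemma~\ref{l-gue131209} (identifying $\hat F^{(q)}_{\delta,k,s}$ with $\hat B_{k,s}$ modulo $O(k^{-\infty})$), Theorem~\ref{t-gue131208} (the small spectral gap), and Proposition~\ref{p-gue131205} (the $k$-negligibility away the diagonal, using compactness of $X$), and then invokes Theorem~\ref{t-gue130820} for $q=n_-$ and Theorem~\ref{t-gue130821} for $q\neq n_-$, with the same evaluation $\alpha_0=\tau_\delta(s)\,\mathrm{Id}$ yielding \eqref{e-gue131209aIV}.
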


We recall "$T$-rigid positive CR line bundle"(see Definition~\ref{d-gue131209})
%From Theorem~\ref{t-gue131209}, we can repeat the proof of Theorem~\ref{t-gue131109} and deduce Kodaira %embedding theorems on CR manifolds with transversal $S^1$ actions: 

\begin{thm}\label{t-gue131209I}
Let $(X, T^{1,0}X)$ be a compact CR manifold with a transversal CR $S^1$ action and let $T$ be the global vector field induced by the $S^1$ action. If there is a $T$-rigid positive CR line bundle over $X$, then $X$ can be CR embedded into $\mathbb C\mathbb P^N$, for some $N\in\mathbb N$. 
\end{thm}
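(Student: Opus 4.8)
\textbf{Proof proposal for Theorem~\ref{t-gue131209I}.}
The plan is to mimic the proof of the almost Kodaira embedding theorem (Theorem~\ref{t-gue131109}), replacing every use of Theorem~\ref{t-gue130816} (i.e.\ the lower-energy Szeg\"o kernel asymptotics $\hat\Pi^{(0)}_{k,\leq k^{-N_0},s}$) by the genuine Szeg\"o kernel asymptotics coming from Theorem~\ref{t-gue131209}. The first step is to fix data: by hypothesis $L$ is a $T$-rigid CR line bundle over $X$ carrying a $T$-rigid Hermitian fiber metric $h^L$ with $M^\phi_x>0$ for every $x\in X$. Positivity of $M^\phi_x$ forces $Y(0)$ to hold at each point of $X$ (the Levi form must then have at least one eigenvalue of each sign, or $n-1$ eigenvalues of the same sign; in all cases $Y(0)$ holds), and with the sign conventions of the paper $n_-=0$. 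Moreover $M^\phi_x$ is non-degenerate of constant signature $(0,n-1)$. Thus all the hypotheses of Theorem~\ref{t-gue131209} are satisfied with $q=0=n_-$, for $\delta>0$ small. Fix such a $\delta$ and consider, for each $k$, the operator $F^{(0)}_{\delta,k}$ of \eqref{e-gue131207IV}; set $P_k=F^{(0)}_{\delta,k}\Pi^{(0)}_kF^{(0),*}_{\delta,k}$. By Theorem~\ref{t-gue131209}, in any canonical coordinate patch $D$ and on $D_0\Subset D$ the localized kernel $\hat P_{k,s}(x,y)$ has a full asymptotic expansion $\int e^{ik\varphi(x,y,s)}g(x,y,s,k)\,ds$ with leading term $g_0(x,x,s)=(2\pi)^{-n}|\det(M^\phi_x-2s\mathcal L_x)|\,|\tau_\delta(s)|^2\,\pi_{(x,s,0)}$; since $n_-=0$, $\mathcal N(x,s,0)=T^{*0,0}_xX=\Complex$ and $\pi_{(x,s,0)}=\mathrm{id}$, so $g_0(x,x,s)$ is a strictly positive scalar for $s$ near $0$.

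Next I would define, for $N_0>2n+1$ large, the modified Kodaira-type map. Let $\{f_1,\dots,f_{d_k}\}$ be an orthonormal frame of $H^0_b(X,L^k)=\Ker\Box^{(0)}_{b,k}$ (here $d_k<\infty$ because $X$ is compact and $Y(0)$ holds) and set $g_j^k=F^{(0)}_{\delta,k}f_j$; these span the range of $F^{(0)}_{\delta,k}$ on $H^0_b(X,L^k)$, and by \eqref{e-gue131207V-I} each $g_j^k\in\bigcup_{-k\delta\le m\le k\delta}A^{0,0}_m(X,L^k)$. Define $\Phi_k(x)=[g_1^k(x),\dots,g_{d_k}^k(x)]$. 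The key point is that, by Remark~\ref{r-gue140126} style computations, $\sum_j|(F^{(0)}_{\delta,k}f_j)(x)|^2_{h^{L^k}}=\hat P_{k,s}(x,x)\equiv\int g_0(x,x,s)\,ds\,k^n+\cdots$, which is $\ge c_0k^n>0$ uniformly in $x$; this is the analog of Theorem~\ref{t-gue131007I}. Then I would repeat verbatim the three injectivity arguments of Section~\ref{s-aket}: (i) a spike construction à la Theorem~\ref{t-gue131007} producing, near any $p$, a section in the range of $F^{(0)}_{\delta,k}$ concentrated at $p$ with controlled derivatives (built from the oscillatory integral $\int e^{ik\varphi(x,p,s)}a(x,p,s,k)\,ds$ of the $\hat P_{k,s}$ expansion, cut off as in \eqref{e-gue131001}); (ii) injectivity of $d\Phi_k$ (the argument of Theorem~\ref{t-gue131019}, using first-derivative spikes $z_j$-type and $x_{2n-1}$-type, now living in the range of $F^{(0)}_{\delta,k}$); (iii) injectivity of $\Phi_k$ itself (the off-diagonal separation argument of Theorem~\ref{t-gue131103}, split into the three cases on the scaled distance between two points). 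Every estimate needed there—pointwise upper bounds, $k$-negligibility away the diagonal, the leading-coefficient formula—has an exact counterpart available: the spectral-gap/negligibility hypotheses of Theorem~\ref{t-gue130820} hold for $F^{(0)}_{\delta,k}$ by Theorem~\ref{t-gue131208} and Lemma~\ref{l-gue131209}, and the needed a priori bounds follow from Theorem~\ref{t-gue13718} and Proposition~\ref{p-gue13717I} applied to $f_j$ together with the boundedness $\norm{F^{(0)}_{\delta,k}u}_{h^{L^k}}\le\norm u_{h^{L^k}}$.

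Finally, one must upgrade the embedding $\Phi_k\colon X\hookrightarrow\Complex\mathbb P^{d_k-1}$ from "embedding" to "CR embedding," i.e.\ show $d\Phi_k(T^{1,0}X)=\Complex T\Phi_k(X)\cap T^{1,0}\Complex\mathbb P^{d_k-1}$ exactly, not just approximately. This is where the transversal $S^1$ action does the essential extra work beyond the almost-embedding theorem: because $f_j\in\Ker\Box^{(0)}_{b,k}$ are genuine CR sections ($\ddbar_{b,k}f_j=0$) and $F^{(0)}_{\delta,k}$ commutes with $\ddbar_{b,k}$ by \eqref{e-gue131207VI}, the sections $g_j^k=F^{(0)}_{\delta,k}f_j$ are again genuine CR sections of $L^k$. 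Hence the components $\widetilde g_j^k/\widetilde g_{j_0}^k$ of $\Phi_k$ in a local frame are CR functions, so $\Phi_k^*$ pulls back $T^{*1,0}\Complex\mathbb P^{d_k-1}$ into $T^{*1,0}X$; combined with $d\Phi_k$ injective this yields $d\Phi_k(T^{1,0}X)\subset T^{1,0}\Complex\mathbb P^{d_k-1}$, and a dimension count (using injectivity of $d\Phi_k$ on the $(n-1)$-dimensional $T^{1,0}_xX$ and on the real line spanned by $T$) forces equality, i.e.\ $\Phi_k$ is a CR embedding. I expect this last step to be conceptually the cleanest but to require care in the dimension/transversality bookkeeping; the genuine main obstacle is rather step (iii), the global injectivity of $\Phi_k$, where one must control $\hat P_{k,s}$ and its low-order derivatives along the segment joining two nearby points and extract a strict concavity contradiction exactly as in the proof of Theorem~\ref{t-gue131103}—this is the longest and most delicate part, and it relies crucially on the strict positivity and non-degeneracy of the leading symbol $g_0(x,x,s)$ together with the lower bound \eqref{e-dgugeXI} on $\mathrm{Im}\,\varphi$. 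Once all of this is in place, taking $k$ large and composing with a linear projection $\Complex\mathbb P^{d_k-1}\dashrightarrow\Complex\mathbb P^N$ if one wishes a fixed target (or simply keeping $N=d_k-1$) gives the desired CR embedding into $\Complex\mathbb P^N$, which is Theorem~\ref{t-gue131209I}.
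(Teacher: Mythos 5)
Your overall route is the paper's: produce genuine peak sections in $H^0_b(X,L^k)$ by composing the approximate peak sections of Lemma~\ref{l-gue131001} with $F^{(0)}_{\delta,k}$ (via \eqref{e-gue131216} and \eqref{e-gue131207VI} this kills the small nonzero eigenmodes and lands in $\Ker\Box^{(0)}_{b,k}$), then rerun the three steps of Section~\ref{s-aket} with Theorem~\ref{t-gue131209} in place of Theorem~\ref{t-gue130816}. One structural difference: the paper defines the Kodaira map from a full orthonormal frame $f_1,\dots,f_{d_k}$ of $H^0_b(X,L^k)$ and uses $F^{(0)}_{\delta,k}\Pi^{(0)}_{k,\le k^{-N_0}}u_k$ only as extremal test sections to lower-bound $\sum_j\abs{f_j(x)}^2_{h^{L^k}}$, whereas you build $\Phi_k$ directly from $g^k_j=F^{(0)}_{\delta,k}f_j$. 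Both are legitimate (the $g^k_j$ are again in $H^0_b$ since $F^{(0)}_{\delta,k}$ commutes with $\ddbar_{b,k}$), and your choice has the advantage that the reproducing kernel governing the separation arguments is literally $P_k=F^{(0)}_{\delta,k}\Pi^{(0)}_kF^{(0),*}_{\delta,k}$, which is exactly what Theorem~\ref{t-gue131209} expands. Your closing observation that the components are genuine CR functions, so that injectivity of $d\Phi_k$ plus the dimension count $2\dim_{\Complex}\bigl(\Complex T\Phi_k(X)\cap T^{1,0}\Complex\mathbb P^{d_k-1}\bigr)\le 2n-1$ upgrades the embedding to a CR embedding, is the correct (and in the paper unstated) final step.

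There is, however, one concretely false assertion. You claim that positivity of $M^\phi_x$ forces $Y(0)$ to hold, adding that $Y(0)$ holds when the Levi form has $n-1$ eigenvalues of the same sign. Neither part is right. First, $Y(0)$ (Definition~\ref{d-suII}) is a condition on the Levi form $\mathcal L_x$ alone, and $M^\phi_x$ and $\mathcal L_x$ are independent data, so positivity of the bundle curvature gives no information about $\mathcal L_x$. Second, for $q=0$ the alternative ``$\max(1,n)=n$ eigenvalues of the same sign'' is vacuous because $\dim_{\Complex}T^{1,0}X=n-1$, so $Y(0)$ holds if and only if $\mathcal L_x$ has at least one positive and one negative eigenvalue; it \emph{fails} exactly when the Levi form is definite. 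The circle bundle of a negative line bundle over a projective manifold is a transversal CR $S^1$-manifold carrying a $T$-rigid positive CR line bundle on which $Y(0)$ fails everywhere. Since $Y(0)$ is a hypothesis of Theorem~\ref{t-gue131209} (and is needed for the discreteness of ${\rm Spec\,}\Box^{(0)}_{b,k}$, the finiteness of $d_k$, and the regularity results you invoke), you must either add $Y(0)$ as a standing assumption — which is what the paper does implicitly throughout Sections~\ref{s-aket} and \ref{s-saak} — or supply a separate argument in the Levi-definite case; it cannot be derived from positivity of $L$.
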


\begin{proof}
The proof is essentially the same as the the proof of Theorem~\ref{t-gue131109}. We only give the outline of the proof. 

Fix $p\in X$. Let $u_k\in C^\infty(X,L^k)$ be as in Lemma~\ref{l-gue131001} and put $u^0_k=\Pi^{(q)}_{k,\leq k^{-N_0}}u_k$. From the proof Theorem~\ref{t-gue131007}, we see that $u_k\equiv u^0_k\mod O(k^{-\infty})$ and hence 
\begin{equation}\label{e-gue131218}
F^{(q)}_{\delta,k}u_k\equiv F^{(q)}_{\delta,k}u^0_k\mod O(k^{-\infty}).
\end{equation}
From \eqref{e-gue131216} and \eqref{e-gue131207VI}, we see that 
\begin{equation}\label{e-gue131218I}
F^{(q)}_{\delta,k}u^0_k\in H^{(0)}_b(X,L^k).
\end{equation}
Moreover, from the construction of $u_k$(see \eqref{e-gue131001}) and \eqref{e-gue131218}, it is straightforward to see that there exist $C>1$ and $k_0>0$ independent of $k$ and the point $p$ such that for every $k\geq k_0$, we have 
\begin{equation}\label{e-gue131218II}
\begin{split}
&\frac{1}{C}\leq\norm{F^{(q)}_{\delta,k}u^0_k}_{h^{L^k}}\leq C,\\
&\abs{(F^{(q)}_{\delta,k}u^0_k)(p)}^2_{h^{L^k}}\geq\frac{1}{C}k^n.
\end{split}
\end{equation} 
From \eqref{e-gue131218II}, we can repeat the procedure in section~\ref{s-aket} and conclude that for $k$ large, the Kodaira map
\[\Phi_k:X\To\Complex\mathbb P^{d_k-1}\]
is well-defined as a smooth map. Here $\Phi_k$ is defined as follows. Let $f_1,\ldots,f_{d_k}$ be orthonormal frame for $H^0_b(X,L^k)$. For $x_0\in X$, let $s$ be a local trivializing section of $L$ on an open neighbourhood $D\subset X$ of $x_0$, $\abs{s(x)}^2_{h^{L^k}}=e^{-2\phi}$. On $D$, put $f_j(x)=s^k\Td f_j(x)$, $\Td f_j(x)\in C^\infty(D)$, $j=1,\ldots,d_k$. Then, 
\[\Phi_{k}(x_0)=[\Td f_1(x_0),\ldots,\Td f_{d_k}(x_0)]\in\Complex\mathbb P^{d_k-1}.\]
Moreover, with similar modifications, we can repeat the proof of Theorem~\ref{t-gue131019} and conclude that for $k$ large, the differential map
\[d\Phi_{k}(x):T_xX\To T_{\Phi_{k}(x)}\Complex\mathbb P^{d_k-1}\]
is injective, for every $x\in X$. 

Finally, by using Theorem~\ref{t-gue131209}, we can repeat the proof of Theorem~\ref{t-gue131103} and deduce that for $k$ large, $\Phi_k$ is injective.  
\end{proof}

We now offer some examples of "$T$-rigid CR line bundles over CR manifolds with transversal CR $S^1$ actions". 

\subsection{CR manifolds in projective spaces}\label{s-cmips}

We consider $\Complex\mathbb P^{N-1}$, $N\geq4$. Let $[z]=[z_1,\ldots,z_N]$ be the homogeneous coordinates of $\Complex\mathbb P^{N-1}$. Put 
\[X:=\set{[z_1,\ldots,z_N]\in\Complex\mathbb P^{N-1};\, \lambda_1\abs{z_1}^2+\cdots+\lambda_m\abs{z_m}^2+\lambda_{m+1}\abs{z_{m+1}}^2+\cdots+\lambda_N\abs{z_N}^2=0},\]
where $m\in\mathbb N$ and $\lambda_j\in\Real$, $j=1,\ldots,N$. Then, $X$ is a compact CR manifold of dimension $2(N-1)-1$ with CR structure $T^{1,0}X:=T^{1,0}\Complex\mathbb P^{N-1}\bigcap\Complex TX$. Now, we assume that $\lambda_1<0,\lambda_2<0,\ldots,\lambda_m<0$, $\lambda_{m+1}>0,\lambda_{m+2}>0,\ldots,\lambda_N>0$, where $m\geq2$, $N-m\geq2$. Then, it is easy to see that the Levi form has at least one negative and one positive eigenvalues at each point of $X$. Thus, $Y(0)$ holds at each point of $X$. $X$ admits a $S^1$ action: 
\begin{equation}\label{e-gue131218III}
\begin{split}
S^1\times X&\To X,\\
e^{i\theta}\circ[z_1,\ldots,z_m,z_{m+1},\ldots,z_N]&\To[e^{i\theta}z_1,\ldots,e^{i\theta}z_m,z_{m+1},\ldots,z_N],\ \ \theta\in[-\pi,\pi[.
\end{split}
\end{equation}
Since $(z_1,\ldots,z_m)\neq0$ on $X$, this $S^1$ action is well-defined. Moreover, it is straightforward to check that this $S^1$ action is CR and transversal. Let $T$ be the global vector field induced by the $S^1$ action. 

Let $E\To\Complex\mathbb P^{N-1}$ be the canonical line bundle with respect to the Fubini-Study metric. For $j=1,2,\ldots,N$, put $W_j=\set{[z_1,\ldots,z_N]\in\Complex\mathbb P^{N-1};\, z_j\neq0}$. Then, $E$ is trivial on $W_j$, $j=1,\ldots,N$, and we can find local trivializing section $e_j$ of $E$ on $W_j$, $j=1,\ldots,N$, such that for every $j, t=1,\ldots,N$,
\begin{equation}\label{e-gue131218III-I}
e_j(z)=\frac{z_j}{z_t}e_t(z)\ \ \mbox{on $W_j\bigcap W_t$},\ \ z=[z_1,\ldots,z_N]\in W_j\bigcap W_t.
\end{equation}
Consider $L:=E|_X$. Then, $L$ is a CR line bundle over $(X,T^{1,0}X)$. It is easy to see that $X$ can be covered with open sets $U_j:=W_j|_X$, $j=1,2,\ldots,m$, with trivializing sections $s_j:=e_j|_X$, $j=1,2,\ldots,m$, such that the corresponding transition functions are $T$-rigid CR functions. Thus, $L$ is a $T$-rigid CR line bundle over $(X,T^{1,0}X)$. Let $h^L$ be the Hermitian fiber metric on $L$ given by 
\[\abs{s_j(z_1,\ldots,z_N)}^2_{h^L}:=e^{-\log\bigr(\frac{\abs{z_1}^2+\cdots+\abs{z_N}^2}{\abs{z_j}^2}\bigr)},\ \ j=1,\ldots,m.\]
It is not difficult to check that $h^L$ is well-defined and $h^L$ is a $T$-rigid positive CR line bindle.

\subsection{Compact Heisenberg groups} \label{s-chg}

Let $\lambda_1,\ldots,\lambda_{n-1}$ be given non-zero integers.
Let $\mathscr CH_n=(\Complex^{n-1}\times\Real)/_\sim$\,, where
$(z, t)\sim(\Td z, \Td t)$ if
\[\begin{split}
&\Td z-z=(\alpha_1,\ldots,\alpha_{n-1})\in\sqrt{2\pi}\mathbb Z^{n-1}+i\sqrt{2\pi}\mathbb Z^{n-1},\\
&\Td t-t-i\sum^{n-1}_{j=1}\lambda_j(z_j\ol\alpha_j-\ol z_j\alpha_j)\in 2\pi\mathbb Z.
\end{split}\]
We can check that $\sim$ is an equivalence relation
and $\mathscr CH_n$ is a compact manifold of dimension $2n-1$. The equivalence class of $(z,t)\in\Complex^{n-1}\times\Real$ is denoted by
$[(z, t)]$. For a given point $p=[(z, t)]$, we define
$T^{1,0}_p\mathscr CH_n$ to be the space spanned by
\[
\textstyle
\big\{\frac{\pr}{\pr z_j}+i\lambda_j\ol z_j\frac{\pr}{\pr t},\ \ j=1,\ldots,n-1\big\}.
\]
It is easy to see that the definition above is independent of the choice of a representative $(z,t)$ for $[(z,t)]$.
Moreover, we can check that $T^{1,0}\mathscr CH_n$ is a CR structure. $\mathscr CH_n$ admits the natural $S^1$ action: $e^{i\theta}\circ [z,t]\To [z,t+\theta]$, $0\leq\theta<2\pi$. Let $T$ be the global vector field induced by this $S^1$ action. We can check that this $S^1$ action is CR and transversal and  
$T=\frac{\pr}{\pr t}$.  We take a Hermitian metric $\langle\,\cdot\,|\,\cdot\,\rangle$ on the complexified tangent bundle $\Complex T\mathscr CH_n$ such that
\[
\Big\lbrace
\tfrac{\pr}{\pr z_j}+i\lambda_j\ol z_j\tfrac{\pr}{\pr t}\,, \tfrac{\pr}{\pr\ol z_j}-i\lambda_jz_j\tfrac{\pr}{\pr t}\,, -\tfrac{\pr}{\pr t}\,;\, j=1,\ldots,n-1\Big\rbrace
\]
 is an orthonormal basis. The dual basis of the complexified cotangent bundle is
\[
\Big\lbrace
dz_j\,,\, d\ol z_j\,,\, \omega_0:=-dt+\textstyle\sum^{n-1}_{j=1}(i\lambda_j\ol z_jdz_j-i\lambda_jz_jd\ol z_j); j=1,\ldots,n-1
\Big\rbrace\,.
\]
The Levi form $\mathcal{L}_p$ of $\mathscr CH_n$ at $p\in\mathscr CH_n$ is given by
$\mathcal{L}_p=\sum^{n-1}_{j=1}\lambda_jdz_j\wedge d\ol z_j$. 

Now, we construct a $T$-rigid CR line bundle $L$ over $\mathscr CH_n$. Let $L=(\Complex^{n-1}\times\Real\times\Complex)/_\equiv$ where $(z,\theta,\eta)\equiv(\Td z, \Td\theta, \Td\eta)$ if
\[\begin{split}
&(z,\theta)\sim(\Td z, \Td\theta),\\
&\Td\eta=\eta\exp(\sum^{n-1}_{j,t=1}\mu_{j,t}(z_j\ol\alpha_t+\frac{1}{2}\alpha_j\ol\alpha_t)),
\end{split}\]
where $\alpha=(\alpha_1,\ldots,\alpha_{n-1})=\Td z-z$, $\mu_{j,t}=\mu_{t,j}$, $j, t=1,\ldots,n-1$, are given integers. We can check that $\equiv$ is an equivalence relation and 
$L$ is a $T$-rigid CR line bundle over $\mathscr CH_n$. For $(z, \theta, \eta)\in\Complex^{n-1}\times\Real\times\Complex$, we denote
$[(z, \theta, \eta)]$ its equivalence class.
It is straightforward to see that the pointwise norm
\[
\big\lvert[(z, \theta, \eta)]\big\rvert^2_{h^L}:=\abs{\eta}^2\exp\big(-\textstyle\sum^{n-1}_{j,t=1}\mu_{j,t}z_j\ol z_t\big)
\]
is well-defined. In local coordinates $(z, \theta, \eta)$, the weight function of this metric is
\[\phi=\frac{1}{2}\sum^{n-1}_{j,t=1}\mu_{j,t}z_j\ol z_t.\] 
Thus, $L$ is a $T$-rigid CR line bundle over $\mathscr CH_n$ with $T$-rigid Hermitian metric $h^L$. 
Note that
\[
\textstyle\ddbar_b=\sum^{n-1}_{j=1}d\ol z_j\wedge(\frac{\pr}{\pr\ol z_j}-i\lambda_jz_j\frac{\pr}{\pr\theta})\,,\quad
\pr_b=\sum^{n-1}_{j=1}dz_j\wedge(\frac{\pr}{\pr z_j}+i\lambda_j\ol z_j\frac{\pr}{\pr\theta}).
\]
Thus
$d(\ddbar_b\phi-\pr_b\phi)=\sum^{n-1}_{j,t=1}\mu_{j,t}dz_j\wedge d\ol z_t$ and for any $p\in\mathscr CH_n$, 
\[M^\phi_p=\sum^{n-1}_{j,t=1}\mu_{j,t}dz_j\wedge d\ol z_t.\]
Thus, if $\left(\mu_{j,t}\right)^{n-1}_{j,t=1}$ is positive definite, then $L$ is a $T$-rigid positive CR line bundle. From this and Theorem~\ref{t-gue131209I}, we conclude that 

\begin{thm}\label{t-gue131221}
Assume that $\lambda_1<0$ and $\lambda_2>0$ (then $Y(0)$ holds on $\mathscr CH_n$). Then, $\mathscr CH_n$ can be CR embedded into $\Complex\mathbb P^N$, for some $N\in\mathbb N$. 
\end{thm}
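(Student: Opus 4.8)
The plan is to obtain Theorem~\ref{t-gue131221} as a direct application of Theorem~\ref{t-gue131209I} to the concrete data assembled in section~\ref{s-chg}. First I would recall that $\mathscr CH_n=(\Complex^{n-1}\times\Real)/_\sim$ is a compact CR manifold of dimension $2n-1$ equipped with the $S^1$ action $e^{i\theta}\circ[z,t]=[z,t+\theta]$, whose induced global vector field is $T=\frac{\pr}{\pr t}$, and that this action is CR and transversal; all of this is already verified in section~\ref{s-chg}. Next I would check condition $Y(0)$: the Levi form is $\mathcal L_p=\sum^{n-1}_{j=1}\lambda_jdz_j\wedge d\ol z_j$, and under the hypothesis $\lambda_1<0$, $\lambda_2>0$ it has, at every point $p$, at least one pair of eigenvalues of opposite sign. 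Since for $q=0$ the requirement in Definition~\ref{d-suII} reads $\max(1,n)=n$ eigenvalues of the same sign \emph{or} $\min(1,n)=1$ pair of opposite signs, the second alternative holds, so $Y(0)$ is satisfied at each point of $\mathscr CH_n$.

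Then I would fix the integers $\mu_{j,t}$ entering the construction of $L$ so that the symmetric integer matrix $(\mu_{j,t})^{n-1}_{j,t=1}$ is positive definite; the simplest choice, $\mu_{j,t}=\delta_{j,t}$, works and keeps the entries integral as required for $L$ to be well defined. With this choice the computation in section~\ref{s-chg} gives $M^\phi_p=\sum^{n-1}_{j,t=1}\mu_{j,t}dz_j\wedge d\ol z_t$, which is then a positive definite Hermitian quadratic form on $T^{1,0}_p\mathscr CH_n$ for every $p\in\mathscr CH_n$. Combined with the fact, also established in section~\ref{s-chg}, that $L$ is a $T$-rigid CR line bundle over $\mathscr CH_n$ and that $h^L$ is a $T$-rigid Hermitian fiber metric on $L$, this shows that $L$ is a $T$-rigid positive CR line bundle over $\mathscr CH_n$ in the sense of Definition~\ref{d-gue131209}.

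Finally I would invoke Theorem~\ref{t-gue131209I}: $\mathscr CH_n$ is compact, carries a transversal CR $S^1$ action, and admits a $T$-rigid positive CR line bundle, so it can be CR embedded into $\Complex\mathbb P^N$ for some $N\in\mathbb N$, which is the assertion. The substantive content at this last stage is essentially nil — all the analysis has been done upstream — so there is no real obstacle here beyond bookkeeping; the weight of the argument lies entirely in Theorem~\ref{t-gue131209I}, whose proof (outlined in the excerpt) rests on the Szeg\"o kernel expansion for $F^{(q)}_{\delta,k}\Pi^{(q)}_kF^{(q),*}_{\delta,k}$ in Theorem~\ref{t-gue131209} together with the scaling and linear-algebra arguments of section~\ref{s-aket}. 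The only minor point requiring care is the compatibility of integrality of the $\mu_{j,t}$ with positive definiteness, which is trivially arranged by taking the identity matrix.
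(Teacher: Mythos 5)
Your proposal is correct and follows exactly the route the paper takes: verify $Y(0)$ from the sign hypothesis on $\lambda_1,\lambda_2$, choose the integer matrix $(\mu_{j,t})$ positive definite (e.g.\ the identity) so that $M^\phi_p$ is positive definite and $L$ becomes a $T$-rigid positive CR line bundle, and then apply Theorem~\ref{t-gue131209I}. The paper's own argument in section~\ref{s-chg} is precisely this two-line reduction, so there is nothing to add.
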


\subsection{Holomorphic line bundles over a complex torus}\label{s-hlbo}

Let
\[T_n:=\Complex^n/(\sqrt{2\pi}\mathbb Z^n+i\sqrt{2\pi}\mathbb Z^n)\]
be the flat torus. Let $\lambda=\left(\lambda_{j,t}\right)^{n}_{j,t=1}$, where $\lambda_{j,t}=\lambda_{t,j}$, 
$j, t=1,\ldots,n$, are given integers. Let $L_\lambda$ be the holomorphic
line bundle over $T_n$
%determined by the constant curvature form
with curvature the $(1,1)$-form
$\Theta_\lambda=\sum^n_{j,t=1}\lambda_{j,t}dz_j\wedge d\ol z_t$.
More precisely, $L_\lambda:=(\Complex^n\times\Complex)/_\sim$\,, where
%is the set of equivalence classes of $\Complex^n\times\Complex$ under the equivalence relation
$(z, \theta)\sim(\Td z, \Td\theta)$ if
\[
\Td z-z=(\alpha_1,\ldots,\alpha_n)\in \sqrt{2\pi}\mathbb Z^n+i\sqrt{2\pi}\mathbb Z^n\,,\quad
\Td\theta=\textstyle\exp\big(\sum^n_{j,t=1}\lambda_{j,t}(z_j\ol\alpha_t+\tfrac{1}{2}\alpha_j\ol\alpha_t\,)\big)\theta\,.
\]
%$\Td z=z+\alpha$, $\alpha=(\alpha_1,\ldots,\alpha_n)\in\sqrt{2\pi}\mathbb Z^n+i\sqrt{2\pi}\mathbb Z^n$,
%$\Td\theta=e^{\sum^n_{j=1}\lambda_j(z_j\ol\alpha_j+\frac{1}{2}\abs{\alpha_j}^2)}\theta$.
We can check that $\sim$ is an equivalence relation and $L_\lambda$ is a holomorphic line bundle over $T_n$.
%For $(z, \theta)\in\Complex^n\times\Complex$, put $[(z, \theta)]=\{(\Td z,
%\Td\theta)\in\Complex^n\times\Complex;\, (\Td z, \Td\theta)\sim (z, \theta)\}$. Then $L_\lambda=\bigcup_{(z, \theta)\in\Complex^n\times\Complex}[(z, \theta)]$.
For $[(z, \theta)]\in L_\lambda$
%it is easy to see that the pointwise norm
we define the Hermitian metric by
\[
\big\vert[(z, \theta)]\big\vert^2:=\abs{\theta}^2\textstyle\exp(-\sum^n_{j,t=1}\lambda_{j,t}z_j\ol z_t)
\]
and it is easy to see that this definition is independent of the choice of a representative $(z, \theta)$ of $[(z, \theta)]$. We denote by $\phi_\lambda(z)$ the weight of this Hermitian fiber metric. Note that $\frac{1}{2}\pr\ddbar\phi_\lambda=\Theta_\lambda$.

Let $L^*_\lambda$ be the
dual bundle of $L_\lambda$ and let $\norm{\,\cdot\,}_{L^*_\lambda}$ be the norm of $L^*_\lambda$ induced by the Hermitian fiber metric on $L_\lambda$. Consider the compact CR manifold of dimension $2n+1$: $X=\{v\in L^*_\lambda;\, \norm{v}_{L^*_\lambda}=1\}$; this is the boundary of the Grauert tube associated to $L^*_\lambda$. The manifold $X$ is equipped with a natural $S^1$-action.
Locally $X$ can be represented in local holomorphic coordinates $(z,\eta)$, where $\eta$ is the fiber coordinate, as the set of all $(z,\eta)$ such that $\abs{\eta}^2e^{2\phi_\lambda(z)}=1$.
The $S^1$-action on $X$ is given by $e^{i\theta}\circ (z,\eta)=(z,e^{i\theta}\eta)$, $e^{i\theta}\in S^1$, $(z,\eta)\in X$. Let $T$ be the global vector field on $X$ induced by this $S^1$ action. We can check that this $S^1$ action is CR and transversal. 

Let $\pi:L^*_\lambda\To T_n$
be the natural projection from $L^*_\lambda$ onto $T_n$. Let $\mu=\left(\mu_{j,t}\right)^{n}_{j,t=1}$, where $\mu_{j,t}=\mu_{t,j}$, $j, t=1,\ldots,n$, are given integers. Let $L_\mu$ be another holomorphic
line bundle over $T_n$ determined by the constant curvature form
$\Theta_\mu=\sum^n_{j,t=1}\mu_{j,t}dz_j\wedge d\ol z_t$ as above. 
The pullback line bundle $\pi^*L_\mu$ is a holomorphic line bundle over $L^*_\lambda$. If we restrict $\pi^*L_\mu$ on $X$, then we can check that $\pi^*L_\mu$ is a $T$-rigid CR line bundle over $X$. 

The Hermitian fiber metric on $L_\mu$ induced by $\phi_\mu$ induces a Hermitian fiber metric on $\pi^*L_\mu$ that we shall denote by $h^{\pi^*L_\mu}$. We let $\psi$ to denote the weight of $h^{\pi^*L_\mu}$. 
The part of $X$ that lies over a fundamental domain of $T_n$ can be represented in local holomorphic coordinates
$(z, \xi)$, where $\xi$ is the fiber coordinate, as the set of all $(z, \xi)$ such that
$r(z, \xi):=\abs{\xi}^2\exp(\sum^n_{j,t=1}\lambda_{j,t}z_j\ol z_t)-1=0$
and the weight $\psi$ may be written as $\psi(z, \xi)=\frac{1}{2}\sum^n_{j,t=1}\mu_{j,t}z_j\ol z_t$. 
From this we see that $\pi^*L_\mu$ is a $T$-rigid CR line bundle over $X$ with $T$-rigid Hermitian fiber metric $h^{\pi^*L_\mu}$. It is straightforward to check that for any $p\in X$, we have 
$M^\psi_p=d(\ddbar_b\psi-\pr_b\psi)(p)|_{T^{1, 0}X}=\sum^n_{j,t=1}\mu_{j,t}dz_j\wedge d\ol z_t$.
Thus, if $\left(\mu_{j,t}\right)^{n-1}_{j,t=1}$ is positive definite, then $L$ is a $T$-rigid positive CR line bundle. From this and Theorem~\ref{t-gue131209I}, we conclude that 

\begin{thm}\label{t-gue131221I}
Assume that the matrix $\lambda=\left(\lambda_{j,t}\right)^n_{j,t=1}$ has at least one negative and one positive eigenvalues. Then, $X=\{v\in L^*_\lambda;\, \norm{v}_{L^*_\lambda}=1\}$ can be CR embedded into $\Complex\mathbb P^N$, for some $N\in\mathbb N$. 
\end{thm}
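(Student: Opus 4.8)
The plan is to exhibit $X$ as a compact CR manifold with a transversal CR $S^1$ action carrying a $T$-rigid positive CR line bundle, and then to invoke Theorem~\ref{t-gue131209I}. Almost everything needed has already been assembled in Section~\ref{s-hlbo} preceding the statement: $X=\set{v\in L^*_\lambda;\,\norm{v}_{L^*_\lambda}=1}$ is a compact CR manifold of real dimension $2n+1$, the fibrewise rotation $e^{i\theta}\circ(z,\xi)=(z,e^{i\theta}\xi)$ is a CR and transversal $S^1$ action on $X$ with induced vector field $T$, and for any symmetric integer matrix $\mu=(\mu_{j,t})^n_{j,t=1}$ the pullback $\pi^*L_\mu$, restricted to $X$ and equipped with the $T$-rigid Hermitian fibre metric $h^{\pi^*L_\mu}$ of local weight $\psi(z,\xi)=\frac{1}{2}\sum_{j,t}\mu_{j,t}z_j\ol z_t$, is a $T$-rigid CR line bundle over $X$ with $M^\psi_p=\sum^n_{j,t=1}\mu_{j,t}dz_j\wedge d\ol z_t$ at every $p\in X$. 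Two points remain: choosing $\mu$ so that $M^\psi$ is positive, and verifying condition $Y(0)$ on $X$.

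For the first point, take $\mu$ to be any positive definite symmetric integer matrix, for instance $\mu_{j,t}=\delta_{j,t}$; then $M^\psi_p$ is a positive definite Hermitian form at every $p\in X$, so $\pi^*L_\mu$ is a $T$-rigid positive CR line bundle over $X$ in the sense of Definition~\ref{d-gue131209}. This choice is available whatever $\lambda$ is, since $M^\psi$ depends only on $\mu$.

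For the second point, I would compute the Levi form of the Grauert tube boundary $X$. In the local holomorphic coordinates $(z,\xi)$ used in Section~\ref{s-hlbo}, $X=\set{r=0}$ with $r(z,\xi)=\abs{\xi}^2\exp(\sum_{j,t}\lambda_{j,t}z_j\ol z_t)-1$, and $d\pi$ restricts to an isomorphism $T^{1,0}_pX\To T^{1,0}_{\pi(p)}T_n$ at every $p$, since $dr(\pr_\xi)=\ol\xi\exp(\sum_{j,t}\lambda_{j,t}z_j\ol z_t)\neq0$ on $X$. A standard computation of $\pr\ddbar r$ restricted to $T^{1,0}X$, using that $\abs{\xi}^2\exp(\sum_{j,t}\lambda_{j,t}z_j\ol z_t)=1$ on $X$ and that the terms involving the fibre differential cancel, shows that under this isomorphism the Levi form of $X$ equals a positive multiple of $\sum^n_{j,t=1}\lambda_{j,t}dz_j\wedge d\ol z_t$; in particular its $n$ eigenvalues have the same signs as the eigenvalues of $\lambda$. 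Since $\dim_\Real X=2n+1$, the Levi form of $X$ has exactly $n$ eigenvalues, so by Definition~\ref{d-suII} condition $Y(0)$ reduces to the requirement that the Levi form have at least one pair of eigenvalues of opposite signs. By hypothesis $\lambda$ has at least one negative and one positive eigenvalue, whence $Y(0)$ holds at each point of $X$; this is precisely the condition that the Szeg\"o kernel asymptotics of Theorem~\ref{t-gue131209}, underlying Theorem~\ref{t-gue131209I}, require.

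With both points established, Theorem~\ref{t-gue131209I} applies and yields a CR embedding of $X$ into $\mathbb C\mathbb P^N$ for some $N\in\N$, which is the assertion. The only step beyond bookkeeping of the definitions already set up in Section~\ref{s-hlbo} is the identification of the Levi form of $X$ with a positive multiple of $\Theta_\lambda$; this is a short computation with the defining function $r$, and it is the one place where the hypothesis on $\lambda$ enters. I do not anticipate a serious obstacle.
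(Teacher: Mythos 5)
Your argument is exactly the paper's: Section~\ref{s-hlbo} already furnishes the transversal CR $S^1$ action and the $T$-rigid positive CR line bundle $\pi^*L_\mu$ for any positive definite integer matrix $\mu$, and the theorem is then read off from Theorem~\ref{t-gue131209I}. The only part the paper leaves implicit is your Levi form computation identifying the eigenvalue signs of $\mathcal{L}_p$ with those of $\lambda$ (so that the hypothesis on $\lambda$ gives $Y(0)$, exactly as in the Heisenberg example of Theorem~\ref{t-gue131221}); your computation of $\pr\ddbar r|_{T^{1,0}X}$ is correct, so the proposal is complete.
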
 

\section{Szeg\"o kernel asymptotics on some non-compact CR manifolds} \label{s-skaohg}

By using Theorem~\ref{t-gue130820}, we will establish Szeg\"o kernel asymptotics on some non-compact CR manifolds. Let $\Gamma$ be an open set in $\Complex^{n-1}$, $n\geq 2$. Consider $X:=\Gamma\times\Real$. Let $(z,t)$ be the coordinates of $X$, where $z=(z_1,\ldots,z_{n-1})$ denote the
coordinates of $\Complex^{n-1}$ and $t$ is the coordinate of $\Real$. We write $z_j=x_{2j-1}+ix_{2j}$, $j=1,\ldots,n-1$. We also write $(z,t)=x=(x_1,\ldots,x_{2n-1})$ and let $\eta=(\eta_1,\ldots,\eta_{2n-1})$ be the dual variables of $x$. Let $\mu(z)\in C^\infty(\Gamma,\Real)$. We define
$T^{1,0}X$ to be the space spanned by
\[
\textstyle
\big\{\frac{\pr}{\pr z_j}+i\frac{\pr\mu}{\pr z_j}\frac{\pr}{\pr t},\ \ j=1,\ldots,n-1\big\}.
\]
Then $(X,T^{1,0}X)$ is a non-compact CR manifold of dimension $2n-1$. We take a Hermitian metric $\langle\,\cdot\,|\,\cdot\,\rangle$ on the complexified tangent bundle $\Complex TX$ such that
\[
\Big\lbrace
\tfrac{\pr}{\pr z_j}+i\frac{\pr\mu}{\pr z_j}\tfrac{\pr}{\pr t}\,, \tfrac{\pr}{\pr\ol z_j}-i\frac{\pr\mu}{\pr\ol z_j}\tfrac{\pr}{\pr t}\,, T:=\tfrac{\pr}{\pr t}\,;\, j=1,\ldots,n-1\Big\rbrace
\]
 is an orthonormal basis. The dual basis of the complexified cotangent bundle $\Complex T^*X$ is
\[
\Big\lbrace
dz_j\,,\, d\ol z_j\,,\, -\omega_0:=dt+\textstyle\sum^{n-1}_{j=1}(-i\frac{\pr\mu}{\pr z_j}dz_j+i\frac{\pr\mu}{\pr\ol z_j}d\ol z_j); j=1,\ldots,n-1
\Big\rbrace\,.
\]
The Hermitian metric $\langle\,\cdot\,|\,\cdot\,\rangle$ on the $\Complex TX$ induces Hermitian metrics on the bundle of $(0,q)$ forms $T^{*0,q}X$, $q=1,\ldots,n-1$, we shall also denote these Hermitian metrics by $\langle\,\cdot\,|\,\cdot\,\rangle$. For $V\in T^{*0,q}X$, we write $\abs{V}^2:=\langle\,V\,|\,V\,\rangle$. 

The Levi form $\mathcal{L}_p$ of $X$ at $p\in X$ is given by
\[\mathcal{L}_p=\sum^{n-1}_{j,\ell=1}\frac{\pr^2\mu}{\pr z_j\pr\ol z_{\ell}}(p)dz_j\wedge d\ol z_{\ell}.\]

Let $L$ be the trivial line bundle over $X$ with non-trivial Hermitian fiber metric $\abs{1}^2_{h^L}=e^{-2\phi}$, where $\phi=\phi(z)\in C^\infty(\Gamma)$ is a real valued function. Then, we can check that
\begin{equation}\label{e-gue131222}
M^\phi_p=2\sum^{n-1}_{j,\ell=1}\frac{\pr^2\phi(z)}{\pr z_j\pr\ol z_{\ell}}dz_j\wedge d\ol z_{\ell},\ \ p=(z,t)\in X.
\end{equation}

We shall consider the $k$-th power of $L$ and we will use the same notations as before. Let $(\,\cdot\,|\,\cdot\,)_{h^{L^k}}$ be the $L^2$ inner product on $\Omega^{0,q}_0(X)$ given by 
\[(\,u\,|\,v\,)_{h^{L^k}}=\int_{X}\langle\,u\,|\,v\,\rangle e^{-2k\phi(z)}d\lambda(z)dt,\ \ u, v\in\Omega^{0,q}_0(X),\]
where $d\lambda(z)dt=m(z)dx_1\cdots dx_{2n-2}dt$, $m(z)\in C^\infty(\Gamma)$, is the induced volume form. Let $\norm{\cdot}_{h^{L^k}}$ denote the corresponding $L^2$ norm. Let $L^2_{(0,q)}(X,L^k)$ be the completion of $\Omega^{0,q}_0(X)$ with respect to $(\,\cdot\,|\,\cdot\,)_{h^{L^k}}$ and let 
\[\Box^{(q)}_{b,k}:{\rm Dom\,}\Box^{(q)}_{b,k}\subset L^2_{(0,q)}(X,L^k)\To L^2_{(0,q)}(X,L^k)\] 
be the Gaffney extension of Kohn Laplacian with respect to $(\,\cdot\,|\,\cdot\,)_{h^{L^k}}$ (see \eqref{e-suIX}).

\subsection{The partial Fourier transform and the operator $F^{(q)}_{\delta,k}$}\label{s-tof}

Let $u\in\Omega^{0,q}_0(X,L^k)$. Put
\begin{equation} \label{e-gue131222tI}
(\mathcal{F}u)(z, \eta)=\int_{\Real}e^{-i\eta t}u(z,t)dt.
\end{equation}
From Parseval's formula, we have
\begin{equation}\label{e-gue131225}
\begin{split}
&\norm{\mathcal{F}u}^2_{h^{L^k}}=\int_{X}\abs{(\mathcal{F}u)(z,\eta)}^2e^{-2k\phi(z)}d\eta d\lambda(z)\\
&=(2\pi)\int_{X}\abs{u(z,t)}^2e^{-2k\phi(z)}dtd\lambda(z)=(2\pi)\norm{u}^2_{h^{L^k}}.
\end{split}
\end{equation}
Thus, we can extend the operator $\mathcal{F}$ to $L^2_{(0,q)}(X,L^k)$ and 
\begin{equation}\label{e-gue131225I-I}
\begin{split}
&\mbox{$\mathcal{F}:L^2_{(0,q)}(X,L^k)\To L^2_{(0,q)}(X,L^k)$ is continuous},\\
&\norm{\mathcal{F}u}_{h^{L^k}}=\sqrt{2\pi}\norm{u}_{h^{L^k}},\ \ \forall u\in L^2_{(0,q)}(X,L^k).
\end{split}
\end{equation}
For $u\in L^2_{(0,q)}(X,L^k)$, we call $\mathcal{F}u$ the partial Fourier transform of $u$ with respect to $t$. 

Fix $\delta>0$. Take $\tau_\delta(x)\in C^\infty_0(]-\delta,\delta[)$, 
$0\leq\tau_\delta\leq1$ and $\tau_\delta=1$ on $[-\frac{\delta}{2},\frac{\delta}{2}]$. We also write $\theta$ to denote the $t$ variable. 
Let $F^{(q)}_{\delta,k}:\Omega^{0,q}_0(X,L^k)\To\Omega^{0,q}(X,L^k)$ be the operator given by 
\begin{equation}\label{e-gue131222t}
F^{(q)}_{\delta,k}u(z,t):=\frac{1}{2\pi}\int e^{i<t-\theta,\eta>}u(z,\theta)\tau_\delta(\frac{\eta}{k})d\eta d\theta\in\Omega^{0,q}(X,L^k),\ \ u(z,t)\in\Omega^{0,q}_0(X,L^k).
\end{equation}
From Parseval's formula and \eqref{e-gue131225}, we have
%\begin{equation}\label{e-gue131222tII}
%\begin{split}
%&\int_{H_n}\abs{\hat u(z,\eta)}^2e^{-2k\phi(z)}m(z)d\eta d\lambda(z)=2\pi\int_{H_n}\abs{u(z,t)}^2e^{-2k%\phi(z)}m(z)dtd\lambda(z),\\
%&\int\abs{\int e^{i<t,\eta>}\hat u(z,\eta)\tau_\delta(\frac{\eta}{k})d\eta}^2dt=(2\pi)\int\abs{\hat u(z,%\eta)}^2\abs{\tau_\delta(\frac{\eta}{k})}^2d\eta.
%\end{split}
%\end{equation}
%From \eqref{e-gue131222tII}, we have 
\begin{equation}\label{e-gue131222tIII}
\begin{split}
\norm{F^{(q)}_{\delta,k}u}^2_{h^{L^k}}&=\frac{1}{4\pi^2}\int_{X}\abs{\int e^{i<t-\theta,\eta>}u(z,\theta)\tau_\delta(\frac{\eta}{k})d\eta d\theta}^2e^{-2k\phi(z)}d\lambda(z)dt\\
&=\frac{1}{4\pi^2}\int_{X}\abs{\int e^{i<t,\eta>}(\mathcal{F}u)(z,\eta)\tau_\delta(\frac{\eta}{k})d\eta}^2e^{-2k\phi(z)}d\lambda(z)dt\\
&=\frac{1}{2\pi}\int\abs{(\mathcal{F}u)(z,\eta)}^2\abs{\tau_\delta(\frac{\eta}{k})}^2e^{-2k\phi(z)}d\eta d\lambda(z)\\
&\leq\frac{1}{2\pi}\int\abs{(\mathcal{F}u)(z,\eta)}^2e^{-2k\phi(z)}d\eta d\lambda(z)=\norm{u}^2_{h^{L^k}},
\end{split}
\end{equation}
where $u\in\Omega^{0,q}_0(X,L^k)$.
Thus, we can extend $F^{(q)}_{\delta,k}$ to $L^2_{(0,q)}(X,L^k)$ and 
\begin{equation}\label{e-gue131222tIV}
\begin{split}
&\mbox{$F^{(q)}_{\delta,k}:L^2_{(0,q)}(X,L^k)\To L^2_{(0,q)}(X,L^k)$ is continuous},\\
&\norm{F^{(q)}_{\delta,k}u}_{h^{L^k}}\leq\norm{u}_{h^{L^k}},\ \ \forall u\in L^2_{(0,q)}(X,L^k).
\end{split}
\end{equation}
We need 

\begin{lem}\label{l-gue131225}
Let $u\in L^2_{(0,q)}(X,L^k)$. Then, 
\begin{equation}\label{e-gue131225I}
(\mathcal{F}F^{(q)}_{\delta,k}u)(z,\eta)=(\mathcal{F}u)(z,\eta)\tau_\delta(\frac{\eta}{k}).
\end{equation}
\end{lem}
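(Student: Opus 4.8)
\textbf{Proof plan for Lemma \ref{l-gue131225}.} The plan is to verify the identity first on the dense subspace $\Omega^{0,q}_0(X,L^k)$ by a direct Fourier-analytic computation, and then extend to all of $L^2_{(0,q)}(X,L^k)$ by continuity. So first I would take $u\in\Omega^{0,q}_0(X,L^k)$ and compute $\mathcal{F}F^{(q)}_{\delta,k}u$ directly from the definitions \eqref{e-gue131222tI} and \eqref{e-gue131222t}: writing
\[
(\mathcal{F}F^{(q)}_{\delta,k}u)(z,\eta)=\int_\Real e^{-i\eta t}\Bigl(\frac{1}{2\pi}\int e^{i<t-\theta,\eta'>}u(z,\theta)\tau_\delta(\tfrac{\eta'}{k})d\eta'd\theta\Bigr)dt,
\]
one recognizes the inner double integral as the composition of an inverse and a forward partial Fourier transform in the $t$-variable cut off by the multiplier $\tau_\delta(\eta'/k)$. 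Carrying out the $t$-integration produces $\int e^{-i\theta\eta'}u(z,\theta)\tau_\delta(\eta'/k)\,2\pi\delta(\eta-\eta')\,d\eta'd\theta$ (Fourier inversion formula, cf. the oscillatory-integral identity $\int e^{i(t-\theta)\eta'}e^{-it\eta}dt=2\pi\delta(\eta-\eta')$ used earlier in the paper, e.g. in the proof of Lemma \ref{l-gue131209}), and the $\eta'$-integration then collapses to $\tau_\delta(\eta/k)\int e^{-i\theta\eta}u(z,\theta)d\theta=(\mathcal{F}u)(z,\eta)\tau_\delta(\eta/k)$, which is exactly \eqref{e-gue131225I}.

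Second, I would pass from compactly supported forms to general $u\in L^2_{(0,q)}(X,L^k)$. Both sides of \eqref{e-gue131225I} define continuous operators on $L^2_{(0,q)}(X,L^k)$: the left side is $\mathcal{F}\circ F^{(q)}_{\delta,k}$, continuous by \eqref{e-gue131225I-I} and \eqref{e-gue131222tIV}; the right side is the operator $u\mapsto \tau_\delta(\tfrac{\cdot}{k})\,\mathcal{F}u$, which is continuous because $\tau_\delta$ is bounded and $\mathcal{F}$ is continuous by \eqref{e-gue131225I-I}. Since $\Omega^{0,q}_0(X,L^k)$ is dense in $L^2_{(0,q)}(X,L^k)$ and the two continuous operators agree on this dense subspace, they agree on all of $L^2_{(0,q)}(X,L^k)$. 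This gives \eqref{e-gue131225I} in general.

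I do not anticipate a serious obstacle here; the only point requiring a little care is justifying the interchange of the $t$- and $\eta'$-integrations in the first step, since the $\eta'$-integral is only conditionally convergent as an oscillatory integral. This is handled exactly as in the standard Fourier inversion argument (and as already done for the operator $\hat B_{k,s}$ in the proof of Lemma \ref{l-gue131209}): regularize by inserting a cutoff $\chi(\varepsilon\eta')$, apply Fubini on the regularized integrals where everything is absolutely integrable because $u(z,\cdot)$ has compact support, and let $\varepsilon\to 0$. Alternatively, since $u\in\Omega^{0,q}_0$, one observes $F^{(q)}_{\delta,k}u(z,t)=\frac{1}{2\pi}\int e^{it\eta'}(\mathcal{F}u)(z,\eta')\tau_\delta(\eta'/k)d\eta'$ directly, i.e. $F^{(q)}_{\delta,k}$ is the inverse partial Fourier transform of $\eta'\mapsto(\mathcal{F}u)(z,\eta')\tau_\delta(\eta'/k)$, which is Schwartz in $\eta'$ for fixed $z$; then \eqref{e-gue131225I} is immediate from $\mathcal{F}\circ\mathcal{F}^{-1}=\mathrm{Id}$ on the partial transform in $t$.
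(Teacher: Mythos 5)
Your proposal is correct and follows essentially the same route as the paper: the paper also establishes \eqref{e-gue131225I} on $\Omega^{0,q}_0(X,L^k)$ via the Fourier inversion formula and then passes to general $u\in L^2_{(0,q)}(X,L^k)$ by approximating with a sequence of compactly supported smooth forms and using the continuity statements \eqref{e-gue131225I-I} and \eqref{e-gue131222tIV}. Your extra care with the oscillatory-integral justification of the inversion step is a fuller version of what the paper compresses into the phrase "from Fourier inversion formula".
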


\begin{proof}
Let $u_j\in\Omega^{0,q}_0(X,L^k)$, $j=1,2,\ldots$, with 
$\lim_{j\To\infty}\norm{u_j-u}_{h^{L^k}}=0$. From \eqref{e-gue131222tIV} and \eqref{e-gue131225I-I}, we see that 
\begin{equation}\label{e-gue131225II}
\mbox{$\mathcal{F}F^{(q)}_{\delta,k}u_j\To\mathcal{F}F^{(q)}_{\delta,k}u$ in $L^2_{(0,q)}(X,L^k)$ as $j\To\infty$}.
\end{equation}
From Fourier inversion formula, we have
\begin{equation}\label{e-gue131225III}
(\mathcal{F}F^{(q)}_{\delta,k}u_j)(z,\eta)=(\mathcal{F}u_j)(z,\eta)\tau_\delta(\frac{\eta}{k}),\ \ j=1,\ldots.
\end{equation}
Note that  $(\mathcal{F}u_j)(z,\eta)\tau_\delta(\frac{\eta}{k})\To(\mathcal{F}u)(z,\eta)\tau_\delta(\frac{\eta}{k})$ in $L^2_{(0,q)}(X,L^k)$ as $j\To\infty$. From this observation, \eqref{e-gue131225III} and \eqref{e-gue131225II}, we obtain \eqref{e-gue131225I}. 
\end{proof} 

The following is straightforward. We omit the proofs. 

\begin{lem}\label{l-gue131225I}
We have
\begin{equation}\label{e-gue131225IV}
\begin{split}
&F^{(q)}_{\delta,k}:{\rm Dom\,}\ddbar_{b,k}\To{\rm Dom\,}\ddbar_{b,k},\ \ q=0,1,\ldots,n-2,\\
&\mbox{$F^{(q+1)}_{\delta,k}\ddbar_{b,k}=\ddbar_{b,k}F^{(q)}_{\delta,k}$ on ${\rm Dom\,}\ddbar_{b,k}$},\ \ q=0,1,\ldots,n-2,
\end{split}
\end{equation} 
and 
\begin{equation}\label{e-gue131225V}
\mbox{$F^{(q)}_{\delta,k}\Pi^{(q)}_k=\Pi^{(q)}_kF^{(q)}_{\delta,k}$ on $L^2_{(0,q)}(X,L^k)$, $q=0,1,\ldots,n-1$}.
\end{equation}

Moreover, for $u\in C^\infty_0(X,L^k)$, we have 
\begin{equation}\label{e-gue131225VI}
\ddbar_z\bigr((\mathcal{F}u)(z,\eta)e^{\eta\mu(z)}\bigr)e^{-\eta\mu(z)}=(\mathcal{F}\ddbar_{b,k}u)(z,\eta),
\end{equation}
where $\mu\in C^\infty(\Gamma,\Real)$ is as in the beginning of section~\ref{s-skaohg}
\end{lem}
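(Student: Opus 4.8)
The final statement to prove is Lemma~\ref{l-gue131225I}, which collects three facts: (i) $F^{(q)}_{\delta,k}$ preserves $\mathrm{Dom\,}\ddbar_{b,k}$ and commutes with $\ddbar_{b,k}$; (ii) $F^{(q)}_{\delta,k}$ commutes with the Szeg\"o projection $\Pi^{(q)}_k$; and (iii) the partial Fourier transform in $t$ conjugates $\ddbar_{b,k}$ on $X=\Gamma\times\Real$ into the fiberwise $\ddbar_z$ twisted by the weight $e^{\eta\mu(z)}$.

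The plan is to prove (iii) first, since it drives the other two. I would start from the explicit formula for $\ddbar_b$ on $X$, namely $\ddbar_b=\sum_{j=1}^{n-1}d\ol z_j\wedge\bigl(\frac{\pr}{\pr\ol z_j}-i\frac{\pr\mu}{\pr\ol z_j}\frac{\pr}{\pr t}\bigr)$ (this is the conjugate of the $\pr_b$ formula recorded in section~\ref{s-chg}, specialized to the present setting, or can be read off directly from the spanning frame of $T^{1,0}X$). For $u\in C^\infty_0(X,L^k)$ and its partial Fourier transform $(\mathcal Fu)(z,\eta)=\int e^{-i\eta t}u(z,t)\,dt$, the operator $\frac{\pr}{\pr t}$ becomes multiplication by $i\eta$ under $\mathcal F$, so $\mathcal F\bigl((\frac{\pr}{\pr\ol z_j}-i\frac{\pr\mu}{\pr\ol z_j}\frac{\pr}{\pr t})u\bigr)=\frac{\pr}{\pr\ol z_j}(\mathcal Fu)+\eta\frac{\pr\mu}{\pr\ol z_j}(\mathcal Fu)$, and one recognizes the right-hand side as $e^{-\eta\mu(z)}\frac{\pr}{\pr\ol z_j}\bigl(e^{\eta\mu(z)}(\mathcal Fu)\bigr)$. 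Summing over $j$ and wedging with $d\ol z_j$ gives exactly \eqref{e-gue131225VI}. This is a routine but careful differentiation-under-the-integral computation, legitimate on $C^\infty_0$; no subtlety beyond keeping track of $\mu$ being a function of $z$ only.

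Next, for (i): since $F^{(q)}_{\delta,k}$ acts as a Fourier multiplier in $t$ (Lemma~\ref{l-gue131225} gives $(\mathcal FF^{(q)}_{\delta,k}u)(z,\eta)=(\mathcal Fu)(z,\eta)\tau_\delta(\eta/k)$), and since under $\mathcal F$ the operator $\ddbar_{b,k}$ becomes the fiberwise twisted operator of (iii) which acts only in the $z$ variables, the two commute on $C^\infty_0(X,L^k)$: multiplying $(\mathcal Fu)e^{\eta\mu}$ by the $z$-independent factor $\tau_\delta(\eta/k)$ commutes with applying $\ddbar_z$. To pass from $C^\infty_0$ to $\mathrm{Dom\,}\ddbar_{b,k}$ one uses that $\ddbar_{b,k}$ is defined in the distribution sense: for $u\in\mathrm{Dom\,}\ddbar_{b,k}$, I would show $F^{(q)}_{\delta,k}u\in L^2$ (from \eqref{e-gue131222tIV}), compute $\ddbar_{b,k}(F^{(q)}_{\delta,k}u)$ distributionally by testing against forms in $\Omega^{0,q+1}_0(X)$ and moving $F^{(q)}_{\delta,k}$ (which is essentially self-adjoint-ish, or rather whose transpose is again a $t$-Fourier multiplier) onto the test form, reducing to the $C^\infty_0$ identity; this shows $\ddbar_{b,k}(F^{(q)}_{\delta,k}u)=F^{(q+1)}_{\delta,k}\ddbar_{b,k}u\in L^2$, hence $F^{(q)}_{\delta,k}u\in\mathrm{Dom\,}\ddbar_{b,k}$. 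For (ii): by the Gaffney extension and the definition of $\Pi^{(q)}_k$ as the spectral projection $E((-\infty,0])$ of $\Box^{(q)}_{b,k}$, it suffices to show $F^{(q)}_{\delta,k}$ commutes with $\Box^{(q)}_{b,k}$, which follows once one knows it commutes with both $\ddbar_{b,k}$ (just proved) and $\ddbar^{*}_{b,k}$; the latter is obtained by taking Hilbert-space adjoints, using that $(F^{(q)}_{\delta,k})^*$ is again a $t$-Fourier multiplier with the real symbol $\tau_\delta(\eta/k)$ — in fact $F^{(q)}_{\delta,k}$ is self-adjoint since $\tau_\delta$ is real — so the commutation relation with $\ddbar_{b,k}$ dualizes. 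A clean way to package (ii) is: $F^{(q)}_{\delta,k}$ commutes with $\Box^{(q)}_{b,k}$, hence with every bounded Borel function of $\Box^{(q)}_{b,k}$, in particular with $E((-\infty,0])=\Pi^{(q)}_k$; commutation with the (unbounded, self-adjoint) $\Box^{(q)}_{b,k}$ is checked on the core $\Omega^{0,q}_0(X)$ and extended by the graph argument above.

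The main obstacle is the domain bookkeeping in (i) and (ii): the operators $\ddbar_{b,k}$, $\ddbar^*_{b,k}$, $\Box^{(q)}_{b,k}$ are unbounded with Gaffney domains on a non-compact manifold, so "commutes" requires showing both that $F^{(q)}_{\delta,k}$ maps the domain into itself and that the algebraic identity holds there — one cannot simply manipulate symbols. The key technical point is that $F^{(q)}_{\delta,k}$ is bounded on $L^2_{(0,q)}(X,L^k)$ with norm $\le 1$ and is (formally and genuinely) self-adjoint because its symbol $\tau_\delta(\eta/k)$ is real and $z$-independent; this self-adjointness plus boundedness is what lets the distributional/adjoint argument go through cleanly and transfers commutation from $\ddbar_{b,k}$ to $\ddbar^*_{b,k}$ and then to $\Box^{(q)}_{b,k}$ and $\Pi^{(q)}_k$. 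Everything else — the Fourier-multiplier computation \eqref{e-gue131225VI} and Lemma~\ref{l-gue131225} — is elementary.
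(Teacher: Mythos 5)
The paper gives no proof of this lemma (it is dismissed with ``The following is straightforward. We omit the proofs.''), so there is nothing to compare against; judged on its own, your argument is essentially correct. The computation for \eqref{e-gue131225VI} is right: under $\mathcal F$ the vector field $\tfrac{\pr}{\pr t}$ becomes multiplication by $i\eta$, so $\mathcal F\bigl((\tfrac{\pr}{\pr\ol z_j}-i\tfrac{\pr\mu}{\pr\ol z_j}\tfrac{\pr}{\pr t})u\bigr)=\tfrac{\pr}{\pr\ol z_j}(\mathcal Fu)+\eta\tfrac{\pr\mu}{\pr\ol z_j}(\mathcal Fu)=e^{-\eta\mu}\tfrac{\pr}{\pr\ol z_j}\bigl(e^{\eta\mu}\mathcal Fu\bigr)$, and since the frame forms $d\ol z_j$ are closed this assembles into \eqref{e-gue131225VI}. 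Your treatment of \eqref{e-gue131225IV} via the distributional definition of the maximal extension, moving the self-adjoint bounded multiplier $F^{(q)}_{\delta,k}$ onto test forms, is the standard and correct way to handle the domains.

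The one step I would not let stand as written is the packaging of \eqref{e-gue131225V} through the spectral theorem: you assert that $\Omega^{0,q}_0(X)$ is a core for the Gaffney extension of $\Box^{(q)}_{b,k}$. On $X=\Gamma\times\Real$ with $\Gamma$ a bounded strongly pseudoconvex domain the metric is incomplete, and compactly supported smooth forms are a core for the \emph{minimal}, not in general the maximal/Gaffney, extension; so this claim needs either a proof or to be avoided. It can be avoided entirely: once you know $F^{(q)}_{\delta,k}$ is bounded, self-adjoint for $(\,\cdot\,|\,\cdot\,)_{h^{L^k}}$ (its symbol $\tau_\delta(\eta/k)$ is real and the weight $e^{-2k\phi(z)}$ is $t$-independent), and commutes with $\ddbar_{b,k}$ and $\ddbar^{*}_{b,k}$ on their maximal domains (the latter by dualizing the former), it preserves the closed subspace $\Ker\Box^{(q)}_{b,k}=\Ker\ddbar_{b,k}\cap\Ker\ddbar^{*}_{b,k}$; a bounded self-adjoint operator preserving a closed subspace also preserves its orthogonal complement and hence commutes with the orthogonal projection onto it, which is exactly \eqref{e-gue131225V}. (For $q=0$, the only case the paper actually uses, this is even simpler: $\Ker\Box^{(0)}_{b,k}=\Ker\ddbar_{b,k}$ and \eqref{e-gue131225IV} alone suffices.) With that substitution your proof is complete.
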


\subsection{The small spectral gap property for $\Box^{(0)}_{b,k}$ with respect to $F^{(0)}_{\delta,k}$}\label{s-tssgtp}

We pause and introduce some notations. Let $\Omega^{0,q}(\Gamma)$ be the space of all smooth $(0,q)$ forms on $\Gamma$ and let $\Omega^{0,q}_0(\Gamma)$ be the subspace of $\Omega^{0,q}(\Gamma)$ whose elements have compact support in $\Gamma$. We take the Hermitian metric $\langle\,\cdot\,|\,\cdot\,\rangle$ on $T^{*0,q}\Gamma$ the bundle of $(0, q)$ forms of $\Gamma$ so that 
\[\{d\ol z_{j_1}\wedge d\ol z_{j_2}\wedge\cdots\wedge d\ol z_{j_q}; 1\leq j_1<j_2\cdots<j_q\leq n-1\}\] 
is an orthonormal basis. Let $\Upsilon\in C^\infty(\Gamma,\Real)$ and let $(\,\cdot\,|\,\cdot)_{\Upsilon}$ be the $L^2$ inner product on $\Omega^{0,q}_0(\Gamma)$ given by 
\[(f\,|\,g\,)_{\Upsilon}=\int\langle\,f\,|\,g\,\rangle e^{-2\Upsilon(z)}d\lambda(z),\ \ f, g\in\Omega^{0,q}_0(\Gamma).\]
Let $L^2_{(0,q)}(\Gamma,\Upsilon)$ denote the completion of $\Omega^{0,q}_0(\Gamma)$ with respect to the inner product $(\cdot\,|\,\cdot\,)_{\Upsilon}$. We write $L^2(\Gamma,\Upsilon):=L^2_{(0,0)}(\Gamma,\Upsilon)$. Put 
\[H^0(\Gamma,\Upsilon):=\set{f\in L^2(\Gamma,\Upsilon);\, \ddbar f=0}.\]

Now, we return to our situation. We first consider $\Gamma=\Complex^{n-1}$.

\begin{thm}\label{t-gue131226}
Let $\Gamma=\Complex^{n-1}$. We assume that there are constants $C_0\geq 1$ and $\epsilon_0>0$ such that
\begin{equation}\label{e-gue131226}
\sum^{n-1}_{j,\ell=1}\frac{\pr^2(\phi+\eta\mu)}{\pr z_j\pr\ol z_\ell}(z)w_j\ol w_\ell\geq\frac{1}{C_0}\sum^{n-1}_{j=1}\abs{w_j}^2,\ \ \forall (w_1,\ldots,w_{n-1})\in\Complex^{n-1},\ \ z\in\Complex^{n-1},\ \ \abs{\eta}\leq\epsilon_0,
\end{equation}
and
\begin{equation}\label{e-gue131226I}
\phi(z)+\eta\mu(z)\geq\frac{1}{C_0}\abs{z}^2,\ \ \forall\abs{z}\geq M,\ \ \abs{\eta}\leq\epsilon_0,
\end{equation}
where $\abs{z}^2=\sum^{n-1}_{j=1}\abs{z_j}^2$ and $M>0$ is a constant independent of $\eta$. 
Then, for every $0<\delta\leq\epsilon_0$, we have
\begin{equation}\label{e-gue131226II}
\norm{F^{(0)}_{\delta,k}(I-\Pi^{(0)}_k)u}^2_{h^{L^k}}\leq\frac{C}{k}\norm{\ddbar_{b,k}u}^2_{h^{L^k}},\ \ \forall u\in C^\infty_0(X,L^k),
\end{equation}
where $C>0$ is a constant independent of $k$, $\delta$ and $u$.

In particular, $\Box^{(0)}_{b,k}$ has small spectral gap on $X$. 
\end{thm}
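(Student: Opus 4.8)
\textbf{Proof proposal for Theorem~\ref{t-gue131226}.}

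The plan is to reduce the estimate \eqref{e-gue131226II} on $X=\Complex^{n-1}\times\Real$ to a family of weighted $L^2$-estimates on $\Complex^{n-1}$ parametrized by the partial Fourier variable $\eta$, and to solve those by H\"ormander's $L^2$-estimates for $\ddbar$. First I would apply the partial Fourier transform $\mathcal F$ in the $t$-variable. By \eqref{e-gue131225I} and \eqref{e-gue131225VI}, $\mathcal F$ conjugates $F^{(0)}_{\delta,k}$ into multiplication by $\tau_\delta(\eta/k)$ and conjugates $\ddbar_{b,k}$ (after the gauge transform $v\mapsto (\mathcal F v)e^{\eta\mu(z)}$) into the ordinary $\ddbar$ on $\Complex^{n-1}$ with weight $\phi(z)+\eta\mu(z)$; also, since $F^{(0)}_{\delta,k}$ commutes with $\Pi^{(0)}_k$ by \eqref{e-gue131225V}, it suffices to control, for each fixed $\eta$ with $|\eta|\leq\delta\leq\epsilon_0$, the quantity $\|(I-\Pi^0_{\Gamma,\eta})g_\eta\|^2_{k\Upsilon_\eta}$ where $g_\eta(z)=(\mathcal F u)(z,\eta)e^{\eta\mu(z)}$, $\Upsilon_\eta=\phi+\eta\mu$, and $\Pi^0_{\Gamma,\eta}$ is the Bergman projection onto $H^0(\Complex^{n-1},k\Upsilon_\eta)$. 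Indeed $(I-\Pi^0_{\Gamma,\eta})g_\eta$ is the minimal $L^2$ solution of $\ddbar w=\ddbar g_\eta=(\mathcal F\ddbar_{b,k}u)(z,\eta)e^{\eta\mu(z)}$ (times the appropriate factor), so by \eqref{e-gue131225} and Parseval it is enough to produce, for each such $\eta$, a solution of $\ddbar w_\eta=f_\eta$ with $\|w_\eta\|^2_{k\Upsilon_\eta}\leq\frac{C}{k}\|f_\eta\|^2_{k\Upsilon_\eta}$, with $C$ uniform in $\eta$ and $k$.

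The second step is exactly H\"ormander's estimate. The weight $k\Upsilon_\eta=k(\phi+\eta\mu)$ has complex Hessian bounded below by $\frac{k}{C_0}\,\mathrm{Id}$ by hypothesis \eqref{e-gue131226}, uniformly for $|\eta|\leq\epsilon_0$. Hence for any $\ddbar$-closed $(0,1)$-form $f_\eta$ with $\|f_\eta\|_{k\Upsilon_\eta}<\infty$ there is $w_\eta$ with $\ddbar w_\eta=f_\eta$ and
\[
\int_{\Complex^{n-1}}|w_\eta|^2 e^{-2k\Upsilon_\eta}\,d\lambda\leq \frac{C_0}{k}\int_{\Complex^{n-1}}|f_\eta|^2 e^{-2k\Upsilon_\eta}\,d\lambda,
\]
with $C_0$ the constant from \eqref{e-gue131226}, independent of $\eta$ and $k$. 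The growth condition \eqref{e-gue131226I} guarantees completeness/applicability of the estimate on all of $\Complex^{n-1}$ and ensures that $C^\infty_0$-data give finite weighted norms, so no boundary subtlety arises. Integrating the inequality $\|(I-\Pi^0_{\Gamma,\eta})g_\eta\|^2_{k\Upsilon_\eta}\leq\frac{C_0}{k}\|\ddbar g_\eta\|^2_{k\Upsilon_\eta}$ over $|\eta|\leq\delta$ against $d\eta$, undoing the gauge transform and invoking \eqref{e-gue131225} and \eqref{e-gue131225I} once more, yields \eqref{e-gue131226II}.

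Finally, the statement that $\Box^{(0)}_{b,k}$ has small spectral gap on $X$ follows by combining \eqref{e-gue131226II} with Lemma~\ref{l-gue131225I}: for $u\in\Omega^{0,0}_0(D',L^k)$ one has $\ddbar_{b,k}u=\ol{\pr}^*_{b,k}$-type bounds giving $\|\ddbar_{b,k}u\|^2_{h^{L^k}}\leq(\Box^{(0)}_{b,k}u\,|\,u)_{h^{L^k}}$, so \eqref{e-gue131226II} becomes precisely the inequality in Definition~\ref{d-gue130820} with $n_0=0$, $p=1$, $A_k=F^{(0)}_{\delta,k}$, $C_{D'}=\sqrt{C}$. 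The main obstacle I anticipate is purely bookkeeping rather than conceptual: one must verify that the conjugation identities of Lemma~\ref{l-gue131225I} extend from $C^\infty_0$ to the relevant $L^2$/Sobolev setting and that the $\eta$-integrated minimal solution obtained fiberwise actually coincides with $F^{(0)}_{\delta,k}(I-\Pi^{(0)}_k)u$ — i.e.\ that the fiberwise Bergman projections assemble measurably in $\eta$ into $\mathcal F\Pi^{(0)}_k\mathcal F^{-1}$ on the support of $\tau_\delta(\eta/k)$. This is where one uses that $\ddbar_{b,k}$ and $T=\pr/\pr t$ (hence $\mathcal F$) commute and that $\Pi^{(0)}_k$ is the orthogonal projection onto $\Ker\ddbar_{b,k}$ intersected with $L^2$; the direct-integral decomposition of $L^2_{(0,q)}(X,L^k)$ over $\eta$ then identifies $\Pi^{(0)}_k$ fiberwise, and the uniform-in-$\eta$ constant from H\"ormander does the rest.
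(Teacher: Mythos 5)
Your strategy coincides with the paper's: partial Fourier transform in $t$, gauge transform by $e^{\eta\mu}$, fiberwise H\"ormander $L^2$ estimates with a constant uniform in $\eta$ on the support of $\tau_\delta(\eta/k)$, then Parseval. The fiberwise H\"ormander step and the final integration over $\eta$ are fine, as is the deduction of the small spectral gap. But the point you set aside as ``bookkeeping'' --- that $v(z,\eta):=\bigl(\mathcal F F^{(0)}_{\delta,k}(I-\Pi^{(0)}_k)u\bigr)(z,\eta)e^{\eta\mu(z)}$ is, for almost every $\eta$, orthogonal to $H^0(\Complex^{n-1},\eta\mu+k\phi)$ and hence the \emph{minimal} solution of the fiberwise $\ddbar$-equation --- is the actual content of the proof, and asserting a direct-integral decomposition of $\Pi^{(0)}_k$ does not dispose of it. The difficulty is that you need an almost-everywhere-in-$\eta$ statement about a function that is only defined up to null sets as an $L^2$ limit, and you need it precisely on $\set{\abs{\eta}\leq k\delta}$; outside that range the fiberwise weight $k\phi+\eta\mu$ need not be plurisubharmonic and the fiberwise Bergman spaces are not under control, so a global fiberwise identification of $\Pi^{(0)}_k$ over all $\eta\in\Real$ is not the right statement to aim for, and in any case is not proved by invoking the words ``direct integral.''

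The paper's argument for the orthogonality is concrete and uses both hypotheses in an essential way. The growth condition \eqref{e-gue131226I} implies that $\set{z^\alpha;\,\alpha\in\mathbb N_0^{n-1}}$ spans $H^0(\Complex^{n-1},\eta\mu+k\phi)$ for $\abs{\eta}\leq k\epsilon_0$, so it suffices to show that $f_\alpha(\eta):=\int v(z,\eta)\,\ol z^{\alpha}e^{-2\eta\mu(z)-2k\phi(z)}d\lambda(z)$ vanishes for a.e.\ $\eta$. This is done by computing its Fourier transform $\hat f_\alpha(\xi)$ and recognizing it as the pairing of $(I-\Pi^{(0)}_k)u$ against $\int z^\alpha\tau_\delta(\eta/k)e^{-\eta\mu(z)+i\xi\eta+i\eta t}d\eta$, which is a global element of ${\rm Ker}\,\ddbar_{b,k}\bigcap L^2(X,L^k)$ --- again because of \eqref{e-gue131226I} and because $\tau_\delta$ confines $\eta/k$ to $[-\epsilon_0,\epsilon_0]$. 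Hence $\hat f_\alpha\equiv0$, so $f_\alpha=0$ a.e., and only then does the H\"ormander bound transfer to $v(\cdot,\eta)$ by minimality. Without an argument of this kind (or an equivalent measurable-selection argument), your chain ``minimal solution $\Rightarrow$ the $C/k$ bound applies to $F^{(0)}_{\delta,k}(I-\Pi^{(0)}_k)u$'' is unsupported; supplying it is what closes the proof.
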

%In the rest of this subsection, we will prove Theorem~\ref{t-gue131226}. 
%We first introduce some notations. Let $\Omega^{0,q}(\Complex^{n-1})$ be the space of all smooth $(0,q)$ %forms on $\Complex^{n-1}$ and let $\Omega^{0,q}_0(\Complex^{n-1})$ be the subspace of $\Omega^{0,q}%(\Complex^{n-1})$ whose elements have compact support in $\Complex^{n-1}$. We take the Hermitian metric $%\langle\,\cdot\,,\cdot\,\rangle$ on $T^{*0,q}\Complex^{n-1}$ the bundle of $(0, q)$ forms of $%\Complex^{n-1}$ so that 
%\[\{d\ol z_{j_1}\wedge d\ol z_{j_2}\wedge\cdots\wedge d\ol z_{j_q}; 1\leq j_1<j_2\cdots<j_q\leq n-1\}\] 
%is an orthonormal basis. Let $\Upsilon\in C^\infty(\Complex^{n-1},\Real)$ and let $(\,\cdot\,|\,%\cdot)_{\Upsilon}$ be the $L^2$ inner product on $\Omega^{0,q}_0(\Complex^{n-1})$ given by 
%\[(f\,|\,g\,)_{\Upsilon}=\int\langle\,f\,|\,g\,\rangle e^{-2\Upsilon(z)}d\lambda(z),\ \ f, g\in%\Omega^{0,q}_0(\Complex^{n-1}).\]
%Let $L^2_{(0,q)}(\Complex^{n-1},\Upsilon)$ denote the completion of $\Omega^{0,q}_0(\Complex^{n-1})$ with %respect to the inner product $(\cdot\,|\,\cdot\,)_{\Upsilon}$. We write $L^2(\Complex^{n-1},%\Upsilon):=L^2_{(0,0)}(\Complex^{n-1},\Upsilon)$. Put 
%\[H^0(\Complex^{n-1},\Upsilon):=\set{f\in L^2(\Complex^{n-1},\Upsilon);\, \ddbar f=0}.\]
%Since $\mu$ is uniformly bounded in $\Complex^{n-1}$ (see \eqref{e-gue131226I}), it is clearly that for %every $\eta\in\Real$, 
%\begin{equation}\label{e-gue131226a}
%H^0(\Complex^{n-1},k\phi)=H^0(\Complex^{n-1},\eta\mu+k\phi).
%\end{equation}

\begin{proof}
Let $u\in C^\infty_0(X,L^k)$. We consider $F^{(0)}_{\delta,k}(I-\Pi^{(0)}_k)u$. In view of \eqref{e-gue131225V}, we see that $F^{(0)}_{\delta,k}(I-\Pi^{(0)}_k)u=(I-\Pi^{(0)}_k)F^{(0)}_{\delta,k}u$. Put 
\begin{equation}\label{e-gue131226aI}
v(z,\eta)=\mathcal{F}F^{(0)}_{\delta,k}(I-\Pi^{(0)}_k)u(z,\eta)e^{\eta\mu(z)}.
\end{equation}
From \eqref{e-gue131222tIV}, \eqref{e-gue131225I-I} and \eqref{e-gue131225I}, we see that $\int\abs{v(z,\eta)}^2e^{-2\eta\mu(z)-2k\phi(z)}d\lambda(z)d\eta<\infty$ and $v(z,\eta)=0$ if $\eta\notin{\rm Supp\,}\tau_\delta(\frac{\eta}{k})$. From Fubini's Theorem and some elementary real analysis, we know that for every $\eta\in\Real$, $v(z,\eta)$ is a measurable function of $z$ and for almost every $\eta\in\Real$, $v(z,\eta)\in L^2(\Complex^{n-1},\eta\mu+k\phi)$ and for every $z\in\Complex^{n-1}$, $v(z,\eta)$ is a measurable function of $\eta$ and for almost every $z\in\Complex^{n-1}$, $\int\abs{v(z,\eta)}^2d\eta<\infty$. Moreover, let $\beta\in L^2(\Complex^{n-1},\eta\mu+k\phi)$, then the function
\[f(\eta):=\eta\To\int v(z,\eta)\ol\beta(z)e^{-2\eta\mu(z)-2k\phi(z)}d\lambda(z)\]
is measurable and  $f(\eta)$ is finite for almost every $\eta\in\Real$, $f(\eta)=0$ if $\eta\notin{\rm Supp\,}\tau_\delta(\frac{\eta}{k})$ and $f(\eta)\in L^1(\Real)\bigcap L^2(\Real)$. We claim that 
\begin{equation}\label{e-gue131229}
\begin{split}
&\mbox{if $\delta\leq\epsilon_0$, then for almost every $\eta\in\Real$, $v(z,\eta)\in L^2(\Complex^{n-1},\eta\mu+k\phi)$ and}\\
&(\,v(z,\eta)\,|\,\beta\,)_{\eta\mu+k\phi}=0,\ \ \forall\beta\in  H^0(\Complex^{n-1},\eta\mu+k\phi).
\end{split}
\end{equation}
From the discussion after \eqref{e-gue131226aI}, we know that there is a measurable set $A_0$ in $\Real$ with $\abs{A_0}=0$ such that for every $\eta\notin A_0$, 
$v(z,\eta)\in L^2(\Complex^{n-1},\eta\mu+k\phi)$, where $\abs{A_0}$ denote the Lebesgue measure of $A_0$. 
%Let $\beta_1,\beta_2,\ldots,\beta_{d_k}$ be any basis of $H^0(\Complex^{n-1},k\phi)=H^0(\Complex^{n-1},%\eta\mu+k\phi)$, where $d_k\in\mathbb N_0\bigcap\set{\infty}$. 
From \eqref{e-gue131226I}, we see that $\set{z^\alpha;\, \alpha\in\mathbb N^{n-1}_0}$ is a basis for 
$H^0(\Complex^{n-1},\eta\mu+k\phi)$, for every $\abs{\eta}\leq\epsilon_0$, where $\epsilon_0$ is as in \eqref{e-gue131226I}. 
Let $\eta\notin A_0$. Fix $\alpha\in\mathbb N^{n-1}_0$. We consider 
\[f_\alpha(\eta)=\int v(z,\eta)\ol z^\alpha e^{-2\eta\mu(z)-2k\phi(z)}d\lambda(z).\]
From the discussion after \eqref{e-gue131226aI}, we know that $f_\alpha(\eta)\in L^1(\Real)\bigcap L^2(\Real)$. 
We consider the Fourier transform 
\[\hat f_\alpha(\xi)=\int e^{-i\xi\eta}f_\alpha(\eta)d\eta\]
of $f_\alpha(\eta)$. Let $g_\ell\in C^\infty_0(X,L^k)$, $\ell=1,2,\ldots$, such that 
\[\mbox{$g_\ell\To(I-\Pi^{(0)}_k)u$ in $L^2(X,L^k)$ as $\ell\To\infty$.}\] From \eqref{e-gue131225I-I}, \eqref{e-gue131222tIV} and \eqref{e-gue131225I}, we see that for every $\xi\in\Real$, 
\begin{equation}\label{e-gue131226b}
\lim_{\ell\To\infty}\int\mathcal{F}F^{(0)}_{\delta,k}g_\ell(z,\eta)\ol z^\alpha e^{-\eta\mu(z)-2k\phi(z)}e^{-i\xi\eta}d\lambda(z)d\eta\To\hat f_\alpha(\xi).
\end{equation}
From \eqref{e-gue131225I} and Parseval's formula, we can check that 
\begin{equation}\label{e-gue131226bI}
\begin{split}
&\int\mathcal{F}F^{(0)}_{\delta,k}g_\ell(z,\eta)\ol z^\alpha e^{-\eta\mu(z)-2k\phi(z)}e^{-i\xi\eta}d\lambda(z)d\eta\\
&=\int\mathcal{F}g_\ell(z,\eta)\tau_\delta(\frac{\eta}{k})\ol z^\alpha e^{-\eta\mu(z)-2k\phi(z)}e^{-i\xi\eta}d\lambda(z)d\eta\\
&=\int g_\ell(z,t)(\int\ol z^\alpha\tau_\delta(\frac{\eta}{k})e^{-\eta\mu(z)-i\xi\eta-i\eta t}d\eta)e^{-2k\phi(z)}d\lambda(z)dt\\
&\To\int(I-\Pi^{(0)}_k)u(z,t)(\int \ol z^\alpha\tau_\delta(\frac{\eta}{k})e^{-\eta\mu(z)-i\xi\eta-i\eta t}d\eta)e^{-2k\phi(z)}d\lambda(z)dt\ \ \mbox{as $\ell\To\infty$}.
\end{split}
\end{equation}
It is straightforward to check that the function 
\[\int z^\alpha\tau_\delta(\frac{\eta}{k})e^{-\eta\mu(z)+i\xi\eta+i\eta t}d\eta\in{\rm Ker\,}\ddbar_{b,k}\bigcap L^2(X,L^k).\]
Thus, 
\begin{equation}\label{e-gue131226bII}
\int(I-\Pi^{(0)}_k)u(z,t)(\int \ol z^\alpha\tau_\delta(\frac{\eta}{k})e^{-\eta\mu(z)-i\xi\eta-i\eta t}d\eta)e^{-2k\phi(z)}d\lambda(z)dt=0.
\end{equation}
From \eqref{e-gue131226bII}, \eqref{e-gue131226bI} and \eqref{e-gue131226b}, we conclude that for every $\xi\in\Real$, $\hat f_\alpha(\xi)=0$. By $L^2$ Fourier inversion formula, we conclude that $f_\alpha(\eta)=0$ almost everywhere. Thus, there is a measurable set $A_\alpha\supset A_0$ in $\Real$ with $\abs{A_\alpha}=0$ such that for every $\eta\notin A_\alpha$, 
\[(\,v(z,\eta)\,|\,z^\alpha\,)_{\eta\mu+k\phi}=0.\]

Put $A=\bigcup_{\alpha\in\mathbb N^{n-1}_0}A_\alpha$. Then, $\abs{A}=0$. Note that $\set{z^\alpha;\, \alpha\in\mathbb N^{n-1}_0}$ is a basis for $H^0(\Complex^{n-1},\eta\mu+k\phi)$ if $\abs{\eta}\leq\epsilon_0$. From this observation, we conclude that for every  $\eta\notin A$, 
\[(\,v(z,\eta)\,|\,\beta\,)_{\eta\mu+k\phi}=0,\ \ \forall \beta\in  H^0(\Complex^{n-1},\eta\mu+k\phi).\]
The claim \eqref{e-gue131229} follows. 

Now, we can prove Theorem~\ref{t-gue131226}. We assume that $\delta\leq\epsilon_0$. Let $u\in C^\infty_0(X,L^k)$. From \eqref{e-gue131225IV} and \eqref{e-gue131225VI}, we have 
\begin{equation}\label{e-gue131226III}
\begin{split}
&\ddbar_{b,k}F^{(0)}_{\delta,k}(I-\Pi^{(0)}_k)u=F^{(1)}_{\delta,k}\ddbar_{b,k}u,\\
&(\mathcal{F}F^{(1)}_{\delta,k}\ddbar_{b,k}u)(z,\eta)=\ddbar_z(\mathcal{F}F^{(0)}_{\delta,k}u(z,\eta)e^{\eta\mu(z)})e^{-\eta\mu(z)}.
\end{split}
\end{equation}
As before, we put $v(z,\eta)=\bigr(\mathcal{F}F^{(0)}_{\delta,k}(I-\Pi^{(0)}_k)u\bigr)(z,\eta)e^{\eta\mu(z)}$ and set
\[\ddbar_z\Bigr(\bigr(\mathcal{F}F^{(0)}_{\delta,k}(I-\Pi^{(0)}_k)u\bigr)(z,\eta)e^{\eta\mu(z)}\Bigr)=\ddbar_zv(z,\eta):=g(z,\eta).\] 
It is easy to see that
\begin{equation}\label{e-gue131226IV}
\begin{split}
&\ddbar_zg(z,\eta)=0,\\
&\mbox{$g(z,\eta)=0$ if $\eta\notin{\rm Supp\,}\tau_\delta(\frac{\eta}{k})$},\\
&\int\abs{g(z,\eta)}^2e^{-2\eta\mu(z)-2k\phi(z)}d\lambda(z)<\infty,\ \ \forall \eta\in{\rm Supp\,}\tau_\delta(\frac{\eta}{k}). 
\end{split}
\end{equation}
From \eqref{e-gue131226}, we see that there is a $C>0$ such that 
\begin{equation}\label{e-gue131226V}
\sum^{n-1}_{j,\ell=1}\frac{\pr^2(k\phi+\eta\mu)}{\pr z_j\pr\ol z_\ell}(z)w_j\ol w_\ell\geq\frac{k}{C}\sum^{n-1}_{j=1}\abs{w_j}^2,\ \ \forall (w_1,\ldots,w_{n-1})\in\Complex^{n-1}, z\in\Complex^{n-1}, \eta\in
{\rm Supp\,}\tau_\delta(\frac{\eta}{k}). 
\end{equation}

From \eqref{e-gue131226V} and H\"ormander's $L^2$ estimates (see Lemma 4.4.1. in H\"ormander~\cite{Hor90}), we conclude that for every $\eta\in{\rm Supp\,}\tau_\delta(\frac{\eta}{k})$, we can find a $\beta_\eta(z)\in L^2_{(0,1)}(\Complex^{n-1},\eta\tau+k\phi)$ such that 
\begin{equation}\label{e-gue131226VI}
\ddbar_z\beta_\eta(z)=g(z,\eta)
\end{equation}
and 
\begin{equation}\label{e-gue131226VII}
\int\abs{\beta_\eta(z)}^2e^{-2\eta\mu(z)-2k\phi(z)}d\lambda(z)\leq\frac{C}{k}\int\abs{g(z,\eta)}^2e^{-2\eta\mu(z)-2k\phi(z)}d\lambda(z).
\end{equation}
In view of \eqref{e-gue131229}, we see that there is a measurable set $A$ in $\Real$ with Lebesgue measure zero in $\Real$ such that for every $\eta\notin A$, $v(z,\eta)\perp H^0(\Complex^{n-1},\eta\mu+k\phi)$. Thus, for every $\eta\notin A$,  $v(z,\eta)$ has the minimum $L^2$ norm with respect to $(\,|\,)_{\eta\mu+k\phi}$ of the solutions $\ddbar\alpha=\ddbar_zv(z,\eta)$. From this observation and \eqref{e-gue131226VII}, we conclude that 
\begin{equation}\label{e-gue131227}
\int\abs{v(z,\eta)}^2e^{-2\eta\mu(z)-2k\phi(z)}d\lambda(z)\leq\frac{C}{k}\int\abs{\ddbar_zv(z,\eta)}^2e^{-2\eta\mu(z)-2k\phi(z)}d\lambda(z),\ \ \forall\eta\notin A. 
\end{equation}
Thus, 
\begin{equation}\label{e-gue131227I}
\int\abs{v(z,\eta)}^2e^{-2\eta\mu(z)-2k\phi(z)}d\lambda(z)d\eta\leq\frac{C}{k}\int\abs{\ddbar_zv(z,\eta)}^2e^{-2\eta\mu(z)-2k\phi(z)}d\lambda(z)d\eta.
\end{equation}
From the definition of $v(z,\eta)$, \eqref{e-gue131225I-I}, \eqref{e-gue131226III} and \eqref{e-gue131222tIV}, it is straightforward to see that 
\begin{equation}\label{e-gue131227II}
\int\abs{v(z,\eta)}^2e^{-2\eta\mu(z)-2k\phi(z)}d\lambda(z)d\eta
=(2\pi)\int\abs{F^{(0)}_{\delta,k}(I-\Pi^{(0)}_k)u(z,t)}^2e^{-2k\phi(z)}d\lambda(z)dt
\end{equation}
and 
\begin{equation}\label{e-gue131227III}
\begin{split}
&\int\abs{\ddbar_zv(z,\eta)}^2e^{-2\eta\mu(z)-2k\phi(z)}d\lambda(z)d\eta\\
&=(2\pi)\int\abs{F^{(1)}_{\delta,k}\ddbar_{b,k}u(z,t)}^2e^{-2k\phi(z)}d\lambda(z)dt\\
&\leq(2\pi)\int\abs{\ddbar_{b,k}u(z,t)}^2e^{-2k\phi(z)}d\lambda(z)dt.
\end{split}
\end{equation}
From \eqref{e-gue131227I}, \eqref{e-gue131227II} and \eqref{e-gue131227III}, we conclude that 
\[\begin{split}
&\norm{F^{(0)}_{\delta,k}(I-\Pi^{(0)}_k)u}^2_{h^{L^k}}=
\int\abs{F^{(0)}_{\delta,k}(I-\Pi^{(0)}_k)u(z,t)}^2e^{-2k\phi(z)}d\lambda(z)dt\\
&\leq\frac{C}{k}\int\abs{\ddbar_{b,k}u(z,t)}^2e^{-2k\phi(z)}d\lambda(z)dt=\frac{C}{k}\norm{\ddbar_{b,k}u}^2_{h^{L^k}}.\end{split}\]
Theorem~\ref{t-gue131226} follows. 
\end{proof}

Now, we consider $\Gamma$ is a bounded strongly pseudoconvex domain in $\Complex^{n-1}$. 

\begin{thm}\label{t-gue131229}
Let $\Gamma$ be a bounded strongly pseudoconvex domain in $\Complex^{n-1}$. Assume that $\mu, \phi\in C^\infty(\Td\Gamma,\Real)$, where $\Td\Gamma$ is an open neighbourhood of $\ol\Gamma$. Suppose that 
$\left(\frac{\pr^2\phi}{\pr z_j\pr\ol z_\ell}(z)\right)^{n-1}_{j,\ell=1}$ is positive definite at each point $z$ of $\Td\Gamma$. If $\delta$ is small enough then 
\begin{equation}\label{e-gue131229I}
\norm{F^{(0)}_{\delta,k}(I-\Pi^{(0)}_k)u}^2_{h^{L^k}}\leq\frac{C}{k}\norm{\ddbar_{b,k}u}^2_{h^{L^k}},\ \ \forall u\in C^\infty_0(X,L^k),
\end{equation}
where $C>0$ is a constant independent of $k$, $\delta$ and $u$.

In particular, $\Box^{(0)}_{b,k}$ has small spectral gap on $X$. 
\end{thm}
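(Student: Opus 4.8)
The plan is to follow the scheme of the proof of Theorem~\ref{t-gue131226}, replacing the role played there by polynomials (the natural generating family of the weighted Fock-type space on $\Complex^{n-1}$) with holomorphic functions smooth up to $\pr\Gamma$, and replacing the global H\"ormander estimate on $\Complex^{n-1}$ by the H\"ormander estimate on the bounded pseudoconvex domain $\Gamma$. First I would apply the partial Fourier transform $\mathcal F$ in the $t$-variable to $F^{(0)}_{\delta,k}(I-\Pi^{(0)}_k)u$, $u\in C^\infty_0(X,L^k)$, and set $v(z,\eta):=(\mathcal F F^{(0)}_{\delta,k}(I-\Pi^{(0)}_k)u)(z,\eta)e^{\eta\mu(z)}$. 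By Lemma~\ref{l-gue131225} and Lemma~\ref{l-gue131225I}, $v(z,\eta)$ is supported in $\eta\in{\rm Supp\,}\tau_\delta(\tfrac{\cdot}{k})$, lies in $L^2(\Gamma,\eta\mu+k\phi)$ for a.e.\ $\eta$, and satisfies $\ddbar_z v(z,\eta)=g(z,\eta):=(\mathcal F F^{(1)}_{\delta,k}\ddbar_{b,k}u)(z,\eta)e^{\eta\mu(z)}$ with $\ddbar_z g=0$; moreover $\int\abs{g(\cdot,\eta)}^2e^{-2\eta\mu-2k\phi}d\lambda<\infty$ for each $\eta$ in the support.

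The curvature input is now elementary. Since $\abs{\eta}\le k\delta$ on the support of $\tau_\delta(\tfrac{\cdot}{k})$, writing $\eta=ks$ with $\abs{s}\le\delta$ gives $\pr\ddbar(k\phi+\eta\mu)=k(\pr\ddbar\phi+s\,\pr\ddbar\mu)$, and because $\bigl(\tfrac{\pr^2\phi}{\pr z_j\pr\ol z_\ell}\bigr)$ is positive definite on the compact set $\ol\Gamma$ while $\mu\in C^\infty(\ol\Gamma)$, there is $\delta_0>0$ such that for $0<\delta\le\delta_0$ the form $\pr\ddbar\phi+s\,\pr\ddbar\mu$ is uniformly positive definite on $\ol\Gamma$ for all $\abs{s}\le\delta$. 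Hence $\pr\ddbar(k\phi+\eta\mu)\ge\tfrac{k}{C}\sum\abs{w_j}^2$ uniformly. Since $\Gamma$ is bounded and pseudoconvex, H\"ormander's $L^2$ estimate (Lemma~4.4.1 in H\"ormander~\cite{Hor90}), applied fibrewise, produces for each such $\eta$ a solution $\beta_\eta$ of $\ddbar_z\beta_\eta=g(\cdot,\eta)$ with
\[\int\abs{\beta_\eta}^2e^{-2\eta\mu-2k\phi}d\lambda\le\frac{C}{k}\int\abs{g(\cdot,\eta)}^2e^{-2\eta\mu-2k\phi}d\lambda,\]
the constant $C$ being uniform in $\eta$ over the compact parameter range.

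It remains to show that $v(\cdot,\eta)$ is, for a.e.\ $\eta$, the minimal $L^2(\Gamma,\eta\mu+k\phi)$-solution of $\ddbar\alpha=g(\cdot,\eta)$, i.e.\ that $v(\cdot,\eta)\perp H^0(\Gamma,\eta\mu+k\phi)$. Since $\ol\Gamma$ is compact and $\mu,\phi\in C^\infty(\ol\Gamma)$, the weight $e^{-2(\eta\mu+k\phi)}$ is bounded above and below on $\Gamma$, so $H^0(\Gamma,\eta\mu+k\phi)$ coincides with the ordinary Bergman space $A^2(\Gamma)$ with an equivalent norm; by strong pseudoconvexity and smoothness of $\pr\Gamma$, functions holomorphic in a neighbourhood of $\ol\Gamma$ form a dense subspace. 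For such a $\beta$ and any $\xi\in\Real$, the function $u_{\beta,\xi}$ with $(\mathcal F u_{\beta,\xi})(z,\eta)=\beta(z)\tau_\delta(\tfrac{\eta}{k})e^{-\eta\mu(z)}e^{i\xi\eta}$ lies in ${\rm Ker\,}\ddbar_{b,k}\cap L^2(X,L^k)$ (using Lemma~\ref{l-gue131225I} and the boundedness of $\Gamma$), hence is orthogonal to $(I-\Pi^{(0)}_k)u$; taking the Fourier transform in $\xi$ of the resulting pairing and running the computation in \eqref{e-gue131226b}--\eqref{e-gue131226bII} verbatim shows $(v(\cdot,\eta)\,|\,\beta)_{\eta\mu+k\phi}=0$ for $\eta$ outside a null set depending on $\beta$; a countable dense family of such $\beta$ then gives $v(\cdot,\eta)\perp H^0(\Gamma,\eta\mu+k\phi)$ a.e. Combining this minimality with the H\"ormander bound, integrating over $\eta$, and applying Parseval exactly as in \eqref{e-gue131227}--\eqref{e-gue131227III} yields \eqref{e-gue131229I}; the small spectral gap statement then follows as in Theorem~\ref{t-gue131226}.

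The main obstacle I expect is the density/approximation step together with the measurability bookkeeping in $\eta$: one must verify that holomorphic functions smooth up to $\pr\Gamma$ are dense in the weighted Bergman space, select a single null set valid for a countable dense family, and check that the fibrewise H\"ormander constant is independent of $\eta$ on the compact parameter range. These are routine given strong pseudoconvexity, but they use the compactness of $\ol\Gamma$ essentially --- this is precisely where the hypothesis $\mu,\phi\in C^\infty(\ol\Gamma)$ and positivity of $\pr\ddbar\phi$ on all of $\ol\Gamma$, rather than merely on $\Gamma$, enter.
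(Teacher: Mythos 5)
Your argument is correct, but it is not the route the paper takes for this theorem. You transplant the fibrewise-orthogonality scheme of Theorem~\ref{t-gue131226}: show $(\,v(\cdot,\eta)\,|\,\beta\,)_{\eta\mu+k\phi}=0$ for a.e.\ $\eta$ and all $\beta$ in the weighted Bergman space, deduce that $v(\cdot,\eta)$ is the fibrewise minimal solution, and then invoke the H\"ormander bound fibre by fibre. The paper instead avoids the a.e.-in-$\eta$ orthogonality altogether: it chooses the H\"ormander solutions $\beta_\eta$ of $\ddbar_z\beta_\eta=g(\cdot,\eta)$ to depend \emph{smoothly} on the parameter $\eta$ (citing Berndtsson), assembles from them a single global section $\alpha(z,t)=\frac{1}{2\pi}\int\beta_\eta(z)e^{-\eta\mu(z)}e^{i\eta t}1_{[-k\delta,k\delta]}(\eta)\,d\eta$ on $X$, verifies $\ddbar_{b,k}\alpha=F^{(1)}_{\delta,k}\ddbar_{b,k}u$ and $\norm{\alpha}^2_{h^{L^k}}\leq\frac{C}{k}\norm{\ddbar_{b,k}u}^2_{h^{L^k}}$ by Parseval, and concludes by the \emph{global} minimality of $(I-\Pi^{(0)}_k)F^{(0)}_{\delta,k}u$ among solutions of $\ddbar_{b,k}f=F^{(1)}_{\delta,k}\ddbar_{b,k}u$. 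The paper's route is shorter and dodges the measurability bookkeeping, at the price of needing smooth parameter dependence of the canonical $\ddbar$-solution; your route keeps everything fibrewise and parallel to the $\Complex^{n-1}$ case. One simplification to your version: the density of functions holomorphic past $\pr\Gamma$ is not actually needed. Since $\Gamma$ is bounded and the weight $e^{-2(\eta\mu+k\phi)}$ is bounded above and below on $\Gamma$, any $\beta\in A^2(\Gamma)$ already yields $u_{\beta,\xi}\in{\rm Ker\,}\ddbar_{b,k}\cap L^2(X,L^k)$, so a countable dense subset of the separable space $A^2(\Gamma)$ suffices for the null-set argument, and no boundary regularity of the test functions (hence no approximation theorem for strongly pseudoconvex domains) is required.
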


\begin{proof}
Let $u\in C^\infty_0(X,L^k)$. We have 
\begin{equation}\label{e-gue131229II}
\begin{split}
&\ddbar_{b,k}F^{(0)}_{\delta,k}(I-\Pi^{(0)}_k)u=F^{(1)}_{\delta,k}\ddbar_{b,k}u,\\
&(\mathcal{F}F^{(1)}_{\delta,k}\ddbar_{b,k}u)(z,\eta)=\ddbar_z(\mathcal{F}F^{(0)}_{\delta,k}u(z,\eta)e^{\eta\mu(z)})e^{-\eta\mu(z)}.
\end{split}
\end{equation}
As before, we put $v(z,\eta)=\bigr(\mathcal{F}F^{(0)}_{\delta,k}(I-\Pi^{(0)}_k)u\bigr)(z,\eta)e^{\eta\mu(z)}$ and set
\[\ddbar_z(\mathcal{F}F^{(0)}_{\delta,k}u(z,\eta)e^{\eta\mu(z)})=\ddbar_zv(z,\eta):=g(z,\eta).\] 
Then,
\begin{equation}\label{e-gue131229III}
\begin{split}
&\ddbar_zg(z,\eta)=0,\\
&\mbox{$g(z,\eta)=0$ if $\eta\notin[-k\delta,k\delta]$},\\
&\int\abs{g(z,\eta)}^2e^{-2\eta\mu(z)-2k\phi(z)}d\lambda(z)<\infty,\ \ \forall \eta\in[-k\delta,k\delta]. 
\end{split}
\end{equation}
Since $\left(\frac{\pr^2\phi}{\pr z_j\pr\ol z_\ell}(z)\right)^{n-1}_{j,\ell=1}$ is positive definite at each point $z$ of $\Td\Gamma$, if $\delta>0$ is small enough then there is a $C>0$ such that 
\begin{equation}\label{e-gue131229IV}
\sum^{n-1}_{j,\ell=1}\frac{\pr^2(k\phi+\eta\mu)}{\pr z_j\pr\ol z_\ell}(z)w_j\ol w_\ell\geq\frac{k}{C}\sum^{n-1}_{j=1}\abs{w_j}^2,\ \ \forall (w_1,\ldots,w_{n-1})\in\Complex^{n-1}, z\in\Gamma, \eta\in
{\rm Supp\,}\tau_\delta(\frac{\eta}{k}). 
\end{equation}
We assume that $\delta>0$ is small enough so that \eqref{e-gue131229IV} holds. From \eqref{e-gue131229IV} and H\"ormander's $L^2$ estimates, we conclude that for every $\eta\in[-k\delta,k\delta]$, we can find a $\beta_\eta(z)\in L^2_{(0,1)}(\Gamma,\eta\mu+k\phi)$ such that 
\begin{equation}\label{e-gue131229V}
\ddbar_z\beta_\eta(z)=g(z,\eta)
\end{equation}
and 
\begin{equation}\label{e-gue131229VI}
\int\abs{\beta_\eta(z)}^2e^{-2\eta\mu(z)-2k\phi(z)}d\lambda(z)\leq\frac{C}{k}\int\abs{g(z,\eta)}^2e^{-2\eta\mu(z)-2k\phi(z)}d\lambda(z).
\end{equation}
Moreover, since $g(z,\eta)$ is smooth, it is well-known that $\beta_\eta(z)$ can be taken to be dependent smoothly on $\eta$ and $z$ (see the proof of Lemma 2.1 in Berndtsson~\cite{Bo06}). Put 
\[\alpha(z,t)=\frac{1}{2\pi}\int\beta_\eta(z)e^{-\eta\mu(z)}e^{i\eta t}1_{[-k\delta,k\delta]}(\eta)d\eta.\]
Since  $\beta_\eta(z)$ is smooth with respect to $\eta$, $\alpha(z,t)$ is well-defined and $\alpha(z,t)\in C^\infty(X,L^k)$. Moreover, from \eqref{e-gue131225I-I}, \eqref{e-gue131222tIV}, \eqref{e-gue131229VI}, \eqref{e-gue131229II} and Parseval's formula, we can check that 
\begin{equation}\label{e-gue131229VII}
\begin{split}
\norm{\alpha}^2_{h^{L^k}}&=\int\abs{\alpha(z,t)}^2e^{-2k\phi(z)}d\lambda(z)dt\\
&=\int\abs{\frac{1}{2\pi}\int\beta_\eta(z)e^{-\eta\mu(z)}e^{i\eta t}1_{[-k\delta,k\delta]}(\eta)d\eta}^2e^{-2\phi(z)}dtd\lambda(z)\\
&=\frac{1}{2\pi}\int\abs{\beta_\eta(z)}^2e^{-2\eta\mu(z)-2k\phi(z)}d\eta d\lambda(z)\\
&\leq\frac{C}{(2\pi)k}\int\abs{g(z,\eta)}^2e^{-2\eta\mu(z)-2k\phi(z)}d\eta d\lambda(z)\\
&=\frac{C}{(2\pi)k}\int\abs{\mathcal{F}F^{(1)}_{\delta,k}\ddbar_{b,k}u(z,\eta)}^2e^{-2k\phi(z)}d\eta d\lambda(z)\\
&=\frac{C}{k}\int\abs{F^{(1)}_{\delta,k}\ddbar_{b,k}u(z,t)}^2e^{-2k\phi(z)}dtd\lambda(z)\\
&\leq \frac{C}{k}\int\abs{\ddbar_{b,k}u(z,t)}^2e^{-2k\phi(z)}dtd\lambda(z)= \frac{C}{k}\norm{\ddbar_{b,k}u}^2_{h^{L^k}}<\infty.
\end{split}
\end{equation}
Furthermore, it is straightforward to see that 
\begin{equation}\label{e-gue131229VIII}
\begin{split}
\ddbar_{b,k}\alpha(z,t)&=\frac{1}{2\pi}\int g(z,\eta)e^{i\eta t}1_{[-k\delta,k\delta]}(\eta)e^{-\eta\mu(z)}d\eta \\
&=\frac{1}{2\pi}\int\mathcal{F}F^{(1)}_{\delta,k}\ddbar_{b,k}u(z,\eta)e^{i\eta t}d\eta \\
&=F^{(1)}_{\delta,k}\ddbar_{b,k}u(z,t).
\end{split}
\end{equation}
From \eqref{e-gue131229VII} and \eqref{e-gue131229VIII}, we conclude that 
$\ddbar_{b,k}\alpha(z,t)=F^{(1)}_{\delta,k}\ddbar_{b,k}u(z,t)$ and $\norm{\alpha}^2_{h^{L^k}}\leq\frac{C}{k}\norm{\ddbar_{b,k}u}^2_{h^{L^k}}$. Since $(I-\Pi^{(0)}_k)F^{(0)}_{\delta,k}u$ has the minimum $L^2$ norm of the solutions of $\ddbar_{b,k}f=F^{(1)}_{\delta,k}\ddbar_{b,k}u(z,t)$, we conclude that 
\[\norm{(I-\Pi^{(0)}_k)F^{(0)}_{\delta,k}u}^2_{h^{L^k}}=\norm{F^{(0)}_{\delta,k}(I-\Pi^{(0)}_k)u}^2_{h^{L^k}}\leq\norm{\alpha}^2_{h^{L^k}}\leq\frac{C}{k}\norm{\ddbar_{b,k}u}^2_{h^{L^k}}.\]
The theorem follows. 
\end{proof}

\subsection{Szeg\"o kernel asymptotics on $\Gamma\times\Real$, where $\Gamma=\Complex^{n-1}$ or $\Gamma$ is a bounded strongly pseudoconvex domain in $\Complex^{n-1}$}\label{s-saoh}

%We fix $0<\delta\leq\epsilon_0$, where $\epsilon_0>0$ is as in \eqref{e-gue131226}, \eqref{e-gue131226I}.  
We fix $0<\delta$, $\delta$ is small. 
Let $D\Subset X$ be any open set. Let $M>0$ be a large constant so that for every $(x,\eta)\in T^*D$ if $\abs{\eta'}>\frac{M}{2}$ then $(x,\eta)\notin\Sigma$, where $\eta'=(\eta_1,\ldots,\eta_{2n-2})$, $\abs{\eta'}=\sqrt{\sum^{2n-2}_{j=1}\abs{\eta_j}^2}$. 
Fix $D_0\Subset D$. Let $D'\Subset D$ be an open neighbourhood of $D_0$. Put 
\begin{equation}\label{e-gue131227f}
V:=\set{(x,\eta)\in T^*D';\, \abs{\eta'}<M, \abs{\eta_{2n-1}}<\delta}.
\end{equation}
Then $\ol V\subset T^*D$. Moreover, if $\delta>0$ is small enough, then $\ol V\bigcap\Sigma\subset\Sigma'$, where $\Sigma'$ is given by \eqref{e-dhmpXIa}. Let $F^{(0)}_{\delta,k}:L^2(X,L^k)\To L^2(X,L^k)$ be as in \eqref{e-gue131222t} and \eqref{e-gue131222tIV}. It is clearly that 
\[\mbox{$F^{(0)}_{\delta,k}\equiv\frac{k^{2n-1}}{(2\pi)^{2n-1}}\int e^{ik<x-y,\eta>}\alpha(x,\eta,k)d\eta\mod O(k^{-\infty})$ at $T^*D_0\bigcap\Sigma$}\]
is a classical semi-classical pseudodifferential operator on $D$ of order $0$, where 
\[\begin{split}&\mbox{$\alpha(x,\eta,k)\sim\sum_{j=0}\alpha_j(x,\eta)k^{-j}$ in $S^0_{{\rm loc\,}}(1;T^*D)$},\\
&\alpha_j(x,\eta)\in C^\infty(T^*D),\ \ j=0,1,\ldots,\\
&\alpha_0(x,\eta)=\tau_\delta(\eta_{2n-1}),
\end{split}\]
with $\alpha(x,\eta,k)=0$ if $\abs{\eta}>M$, for some large $M>0$ and ${\rm Supp\,}\alpha(x,\eta,k)\bigcap T^*D_0\Subset V$. 
Now, we assume that $\pr\ddbar\phi$ and $\pr\ddbar\mu$ are uniformly bounded on $\Gamma$, that is, there is a 
constant $C_0>0$ such that $\abs{\frac{\pr^2\phi}{\pr z_j\pr\ol z_\ell}(z)}\leq C_0$, $\abs{\frac{\pr^2\mu}{\pr z_j\pr\ol z_\ell}(z)}\leq C_0$, $\forall z\in\Gamma$, $j, \ell=1,\ldots,n-1$. By using the technique in section~\ref{s-sub} and~\cite{HM09}, we can show that for every $\alpha, \beta\in\mathbb N^{2n-1}_0$, there is a constant $C_{\alpha,\beta}>0$ independent of $k$ such that 
\begin{equation}\label{e-gue140115a}
\abs{\pr^\alpha_x\pr^\beta_y\bigr(e^{-k\phi(x)}\Pi^{(0)}_k(x,y)e^{k\phi(y)}\bigr)}\leq C_{\alpha,\beta}k^{n+\abs{\alpha}+\abs{\beta}},\ \ \forall (x,y)\in(\Gamma\times\Real)\times(\Gamma\times\Real).
\end{equation}
By using \eqref{e-gue140115a} and integration by parts, it is not difficult to see that $\Pi^{(0)}_k$ is $k$-negligible away the diagonal with respect to $F^{(0)}_{\delta,k}$ on $\Gamma\times\Real$. 

From the discussion above, Theorem~\ref{t-gue131226}, Theorem~\ref{t-gue131229} and Theorem~\ref{t-gue130820}, we obtain one of the main results of this work

\begin{thm}\label{t-gue131227}
With the notations above, we assume that $\Gamma=\Complex^{n-1}$ or $\Gamma$ is a bounded strongly pseudoconvex domain in $\Complex^{n-1}$ and condition $Y(0)$ holds on $X$. When $\Gamma=\Complex^{n-1}$, we assume that there are constants $C_0\geq 1$ and $\epsilon_0>0$ such that
\[\begin{split}
&\sum^{n-1}_{j,\ell=1}\frac{\pr^2(\phi+\eta\mu)}{\pr z_j\pr\ol z_\ell}(z)w_j\ol w_\ell\geq\frac{1}{C_0}\sum^{n-1}_{j=1}\abs{w_j}^2,\ \ \forall (w_1,\ldots,w_{n-1})\in\Complex^{n-1},\ \ z\in\Complex^{n-1},\ \ \abs{\eta}\leq\epsilon_0,\\
&\abs{\frac{\pr^2\phi}{\pr z_j\pr\ol z_\ell}(z)}\leq C_0,\ \ \abs{\frac{\pr^2\mu}{\pr z_j\pr\ol z_\ell}(z)}\leq C_0,\ \ \forall z\in\Gamma,\ \ j, \ell=1,\ldots,n-1,\end{split}\]
and
\[\phi(z)+\eta\mu(z)\geq\frac{1}{C_0}\abs{z}^2,\ \ \forall \abs{z}\geq M,\ \ \abs{\eta}\leq\epsilon_0,\]
where $M>0$ is a constant independent of $\eta$. When $\Gamma$ is a bounded strongly pseudoconvex domain in $\Complex^{n-1}$, we assume that $\mu, \phi\in C^\infty(\Td\Gamma,\Real)$ and $\left(\frac{\pr^2\phi}{\pr z_j\pr\ol z_\ell}(z)\right)^{n-1}_{j,\ell=1}$ is positive definite at each point $z$ of $\Td\Gamma$, where $\Td\Gamma$ is an open neighbourhood of $\ol\Gamma$. 
Let $F^{(0)}_{\delta,k}:L^2(X,L^k)\To L^2(X,L^k)$ be the continuous operator given by \eqref{e-gue131222t} and \eqref{e-gue131222tIV} and let $F^{(0),*}_{\delta,k}:L^2(X,L^k)\To L^2(X,L^k)$ be the adjoint of $F^{(0)}_{\delta,k}$ with respect to $(\,\cdot\,|\,\cdot\,)_{h^{L^k}}$.
Let $D\Subset X$ be any open set and we fix any $D_0\Subset D$. Put $P_k:=F^{(q)}_{\delta,k}\Pi^{(q)}_kF^{(q),*}_{\delta,k}$. If $\delta$ is small, then 
\[
\hat P_{k,s}(x,y)\equiv\int e^{ik\varphi(x,y,s)}g(x,y,s,k)ds\mod O(k^{-\infty})\]
on $D_0$, where $\varphi(x,y,s)\in C^\infty(\Omega)$ is as in Theorem~\ref{t-dcgewI}, \eqref{e-guew13627},
\[
\begin{split}
&g(x,y,s,k)\in S^{n}_{{\rm loc\,}}(1;\Omega)\bigcap C^\infty_0(\Omega),\\
&g(x,y,s,k)\sim\sum^\infty_{j=0}g_j(x,y,s)k^{n-j}\text{ in }S^{n}_{{\rm loc\,}}
(1;\Omega), \\
&g_j(x,y,s)\in C^\infty_0(\Omega),\ \ j=0,1,2,\ldots,
\end{split}
\]
and for every $(x,x,s)\in\Omega$, $x\in D_0$, $x=(x_1,\ldots,x_{2n-2})=(z,t)$,
\[g_0(x,x,s)=(2\pi)^{-n}2^{n-1}\abs{\det\Bigr(\left(\frac{\pr^2(\phi-s\mu)}{\pr z_j\pr\ol z_\ell}(z)\right)^{n-1}_{j,\ell=1}\Bigr)}\abs{\tau_\delta(s)}^2.\]
Here
\[
\begin{split}
\Omega:=&\{(x,y,s)\in D\times D\times\Real;\, (x,-2{\rm Im\,}\ddbar_b\phi(x)+s\omega_0(x))\in V\bigcap\Sigma,\\
&\quad\mbox{$(y,-2{\rm Im\,}\ddbar_b\phi(y)+s\omega_0(y))\in V\bigcap\Sigma$, $\abs{x-y}<\varepsilon$, for some $\varepsilon>0$}\},
\end{split}\]
$V$ is given by \eqref{e-gue131227f}, 
\end{thm}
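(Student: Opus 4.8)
The plan is to realize Theorem~\ref{t-gue140124II} as a direct consequence of Theorem~\ref{t-gue130820} (the local Szeg\"o kernel asymptotics for $P_k = F_k\Pi^{(q)}_k F^*_k$), exactly as was done in the compact $S^1$-action case in Section~\ref{s-saak}. Thus the work is entirely in verifying the three hypotheses of Theorem~\ref{t-gue130820} with $q = 0$ and $F_k = F^{(0)}_{\delta,k}$: (i) the geometric hypothesis that $M^\phi_{x_0} - 2\lambda_0 \mathcal{L}_{x_0}$ is non-degenerate of constant signature $(n_-, n_+)$ with $n_- = 0$; (ii) that $\Box^{(0)}_{b,k}$ has $O(k^{-n_0})$ small spectral gap on $D$ with respect to $F^{(0)}_{\delta,k}$; and (iii) that $\Pi^{(0)}_k$ is $k$-negligible away the diagonal with respect to $F^{(0)}_{\delta,k}$ on $D$. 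Once all three hold, Theorem~\ref{t-gue130820} gives \eqref{e-gue130820II}--\eqref{e-gue130820IV} with $n_- = 0$, so $\mathcal{N}(x,s,0) = T^{*0,0}_x X$ is the whole (one-dimensional) fiber, $\pi_{(x,s,0)}$ is the identity, $\alpha_0(x,\eta) = \tau_\delta(\eta_{2n-1})$ by the symbol computation in the text before Theorem~\ref{t-gue131227}, and using \eqref{e-gue131222} together with $m(p) = 2^{n-1}$ one reads off $g_0(x,x,s) = (2\pi)^{-n} 2^{n-1} \lvert\det((\pr^2(\phi - s\mu)/\pr z_j \pr\ol z_\ell)(z))\rvert \lvert\tau_\delta(s)\rvert^2$, which is the claimed leading term.

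For step (i): one first checks that $Y(0)$ on $X$ together with the assumed positivity of $(\pr^2\phi/\pr z_j\pr\ol z_\ell)$ (either on all of $\Complex^{n-1}$ after adding $\eta\mu$ for $\lvert\eta\rvert \le \epsilon_0$, or on $\Td\Gamma$) forces $M^\phi_x - 2\lambda\mathcal{L}_x = 2(\pr^2(\phi - \lambda\mu)/\pr z_j\pr\ol z_\ell)$ to be positive definite at $x_0$ for an appropriate $\lambda_0$ (take $\lambda_0 = 0$, or $\lvert\lambda_0\rvert$ small), so $n_- = 0$, $n_+ = n-1$; then one must verify that the bounded open set $V$ of \eqref{e-gue131227f}, built so that $\lvert\eta_{2n-1}\rvert < \delta$ and $\lvert\eta'\rvert < M$, has $\ol V \cap \Sigma \subset \Sigma'$ for $\delta$ small — this is exactly the remark following \eqref{e-gue131227f}, and it uses continuity of the eigenvalues of $M^\phi_x - 2s\mathcal{L}_x$ in $(x,s)$ plus compactness of $\ol{D'}$. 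For step (iii): as indicated in the text, one establishes the pointwise bound \eqref{e-gue140115a} on the derivatives of $e^{-k\phi}\Pi^{(0)}_k(x,y)e^{k\phi}$ by running the scaling/localization machinery of Section~\ref{s-sub} (Theorem~\ref{t-gue13718}, Proposition~\ref{p-gue13717I}) — this is where the uniform bounds $\lvert\pr^2\phi\rvert, \lvert\pr^2\mu\rvert \le C_0$ are needed so the scaled operators have coefficients uniformly bounded in $k$ and in the base point — and then, since $F^{(0)}_{\delta,k}$ is a partial Fourier multiplier in $t$ cutting $\lvert\eta\rvert \le k\delta$, one uses \eqref{e-gue140115a} and integration by parts to see that $(\chi F^{(0)}_{\delta,k}(1-\chi_1))\Pi^{(0)}_k(\chi F^{(0)}_{\delta,k}(1-\chi_1))^*$ is $O(k^{-\infty})$ on $D$.

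The main obstacle — and the genuine content — is step (ii), the small spectral gap. Here I would invoke Theorem~\ref{t-gue131226} (when $\Gamma = \Complex^{n-1}$) and Theorem~\ref{t-gue131229} (when $\Gamma$ is bounded strongly pseudoconvex): both give, for $\delta$ small, $\norm{F^{(0)}_{\delta,k}(I - \Pi^{(0)}_k)u}^2_{h^{L^k}} \le (C/k)\norm{\ddbar_{b,k}u}^2_{h^{L^k}}$ for all $u \in C^\infty_0(X, L^k)$. To convert this into the form demanded by Definition~\ref{d-gue130820m} I would note that for $u \in \Omega^{0,0}_0(D', L^k)$ one has $\norm{\ddbar_{b,k}u}^2_{h^{L^k}} = (\ddbar^*_{b,k}\ddbar_{b,k}u \mid u)_{h^{L^k}} = (\Box^{(0)}_{b,k}u \mid u)_{h^{L^k}}$, so the estimate reads $\norm{F^{(0)}_{\delta,k}(I-\Pi^{(0)}_k)u}_{h^{L^k}} \le C k^{-1/2}\sqrt{(\Box^{(0)}_{b,k}u \mid u)_{h^{L^k}}}$, which is precisely the required inequality with $n_0 = 0$ and $p = 1$ (indeed with a favorable negative power of $k$). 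The delicate point inside Theorems~\ref{t-gue131226}/\ref{t-gue131229} is the use of H\"ormander's $L^2$ estimates fiberwise in the partial Fourier variable $\eta$: one must know that for $\lvert\eta\rvert \le k\delta$ the weight $k\phi + \eta\mu$ is uniformly strictly plurisubharmonic (which is where $\delta$ small and the curvature hypotheses enter), that the $\eta$-dependence of the minimal $\ddbar$-solution is measurable/smooth enough to reassemble an $L^2$ (resp.\ smooth) solution on $X$ via inverse Fourier transform, and that the minimality of $(I-\Pi^{(0)}_k)F^{(0)}_{\delta,k}u$ lets one compare norms — all of which is carried out in the proofs of those two theorems, so for this statement I would simply cite them. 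Finally I assemble: Theorem~\ref{t-gue130820} applies, giving \eqref{e-gue130820II} on $D_0$, and substituting the explicit $\alpha_0$, $n_- = 0$, and \eqref{e-gue131222} into \eqref{e-gue130820IV} yields the stated $g_0$, completing the proof.
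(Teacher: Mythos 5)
Your proposal follows exactly the route the paper takes: the paper's proof of this theorem is precisely the assembly of Theorem~\ref{t-gue131226} (resp.\ Theorem~\ref{t-gue131229}) for the small spectral gap, the bound \eqref{e-gue140115a} plus integration by parts for off-diagonal $k$-negligibility, the observation that $F^{(0)}_{\delta,k}$ is a semi-classical pseudodifferential operator with $\alpha_0=\tau_\delta(\eta_{2n-1})$ and $\ol V\cap\Sigma\subset\Sigma'$ for $\delta$ small, and then an application of Theorem~\ref{t-gue130820} with $n_-=0$, and your conversion of the gap estimate via $\norm{\ddbar_{b,k}u}^2_{h^{L^k}}=(\Box^{(0)}_{b,k}u\,|\,u)_{h^{L^k}}$ is the correct way to match Definition~\ref{d-gue130820m}. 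The only cosmetic slip is attributing the factor $2^{n-1}$ in $g_0$ to $m(p)=2^{n-1}$; it actually comes from $\abs{\det(M^\phi_x-2s\mathcal{L}_x)}=2^{n-1}\abs{\det\bigl(\pr^2(\phi-s\mu)/\pr z_j\pr\ol z_\ell\bigr)}$ via \eqref{e-gue131222m}, the volume factor having already been absorbed in the general leading-term formula.
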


%Now, let's see a simple example. Let 
%\[\mu(z)=\lambda_1\log(1+\abs{z_1}^2)+\lambda_2\log(1+\abs{z_2}^2)+\cdots+\lambda_{n-1}\log(1+\abs{z_n}%^2)\]
%and 
%\[\phi(z)=\abs{z}^2+\log(1+\abs{z_1}^4)+\log(1+\abs{z_2}^4)+\cdots+\log(1+\abs{z_{n-1}}^4),\]
%where $\lambda_j\in\Real$, $j=1,2,\ldots,n-1$. It is straightforward to see that $\mu$ and $\phi$ satisfy %\eqref{e-gue131226} and \eqref{e-gue131226I}. Moreover, if $\lambda_1<0$ and $\lambda_2>0$, then %condition $Y(0)$ holds at each point of $\mathbb H_n$.  

\section{The proof of Theorem~\ref{t-dgudmgeaI}} \label{sub-pf}

We now prove Theorem~\ref{t-dgudmgeaI}. We will use the same notations and assumptions as section~\ref{s-mhdt}. Fix $(\hat x_0,\hat x_0,s_0)\in\hat\Omega$, $t_0>0$. Put 
\[\begin{split}
&\hat x_0=(x_0,x_{n,0}),\ \ x_0\in\Real^{2n-1},\ \ (x_0,x_0,s_0)\in\Omega,\\
&(\hat x_0,\hat\xi_0)=(\hat x_0,t_0\frac{\pr\Phi}{\pr\hat x}(\hat x_0,\hat x_0,s_0))=(\hat x_0,t_0\frac{\pr\Phi_1}{\pr\hat x}(\hat x_0,\hat x_0,s_0))\in\Bigr(U\bigcap\hat\Sigma\Bigr)\bigcap T^*\hat D.
\end{split}\] 
From Lemma~\ref{l-gue130804}, we know that 
there are $\hat\varphi(x,y,s)$, $\hat\varphi_1(x,y,s)\in C^\infty(\Lambda)$, where $\Lambda$ is a small neighbourhood of $(x_0,x_0,s_0)$, such that  $\hat\varphi(x,y,s)$ and $\hat\varphi_1(x,y,s)$
satisfy \eqref{e-dgugeIX}, \eqref{e-dugeX}, \eqref{e-dugeXI}, \eqref{e-dgugeXI} and \eqref{e-gue1373VIIa} and $\frac{\pr\hat\varphi}{\pr y_{2n-1}}(x,y,s)-(\alpha_{2n-1}(y)+s\beta_{2n-1}(y))$, $\frac{\pr\hat\varphi_1}{\pr y_{2n-1}}(x,y,s)-(\alpha_{2n-1}(y)+s\beta_{2n-1}(y))$ vanish to infinite order at $x=y$, and $t\hat\Phi(\hat x,\hat y,s):=t(x_{2n}-y_{2n}+\hat\varphi(x,y,s))$ and $t\Phi(\hat x,\hat y,s)$ are equivalent for classical symbols at every point of 
\[{\rm diag\,}'\Bigr((U\bigcap\hat\Sigma)\times(U\bigcap\hat\Sigma)\Bigr)\bigcap\set{(\hat x,\hat x,td_{\hat x}\Phi(\hat x,\hat x,s),-td_{\hat x}\Phi(\hat x,\hat x,s))\in T^*\hat D;\, (x,x,s)\in\Lambda, t>0},\]
$t\hat\Phi_1(\hat x,\hat y,s):=t(x_{2n}-y_{2n}+\hat\varphi_1(x,y,s))$ and $t\Phi_1(\hat x,\hat y,s)$ are equivalent for classical symbols at every point of 
\[{\rm diag\,}'\Bigr((U\bigcap\hat\Sigma)\times(U\bigcap\hat\Sigma)\Bigr)\bigcap\set{(\hat x,\hat x,td_{\hat x}\Phi_1(\hat x,\hat x,s),-td_{\hat x}\Phi_1(\hat x,\hat x,s))\in T^*\hat D;\, (x,x,s)\in\Lambda, t>0}.\] 

Let $\hat W$ be a small neighbourhood of $(\hat x_0,\hat x_0,s_0)$ and let $I_0$ be a neighbourhood of $t_0$ in $\Real_+$. Put 
\begin{equation}\label{e-gmedmdhIV}
\begin{split}
\Lambda_{\Td{t\hat\Phi}}:=&\{(\Td{\hat x},\Td{\hat y},\Td t\frac{\pr\Td{\hat\Phi}}{\pr\Td x}(\Td{\hat x},\Td{\hat y},\Td s),\Td t\frac{\pr\Td{\hat\Phi}}{\pr\Td y}(\Td{\hat x},\Td{\hat y},\Td s))\in\Complex^{2n}\times\Complex^{2n}\times\Complex^{2n}\times\Complex^{2n};\, \\
&\quad\Td{\hat\Phi}(\Td{\hat x},\Td{\hat y},\Td s)=0, \frac{\pr\Td{\hat\Phi}}{\pr\Td s}(\Td{\hat x},\Td{\hat y},\Td s)=0, (\Td{\hat x},\Td{\hat y},\Td s)\in\hat W^\Complex,  \Td t\in I_0^\Complex\},\\
\Lambda_{\Td{t\hat\Phi}_1}:=&\{(\Td{\hat x},\Td{\hat y},\Td t\frac{\pr\Td{\hat\Phi}_1}{\pr\Td x}(\Td{\hat x},\Td{\hat y},\Td s),\Td t\frac{\pr\Td{\hat\Phi}_1}{\pr\Td y}(\Td{\hat x},\Td{\hat y},\Td s))\in\Complex^{2n}\times\Complex^{2n}\times\Complex^{2n}\times\Complex^{2n};\, \\
&\quad\Td{\hat\Phi}_1(\Td{\hat x},\Td{\hat y},\Td s)=0, \frac{\pr\Td{\hat\Phi}_1}{\pr\Td s}(\Td{\hat x},\Td{\hat y},\Td s)=0, (\Td{\hat x},\Td{\hat y},\Td s)\in\hat W^\Complex,  \Td t\in I_0^\Complex\}.\\
\end{split}
\end{equation} 
From global theory of complex Fourier integral operators of Melin-Sj\"ostrand~\cite{MS74}, we know that $t\hat\Phi(\hat x,\hat y,s)$ and $t\hat\Phi_1(\hat x,\hat y,s)$ are equivalent for classical symbols at $(\hat x_0,\hat x_0,\hat\xi_0,-\hat\xi_0)\in{\rm diag\,}'\Bigr((U\bigcap\hat\Sigma)\times(U\bigcap\hat\Sigma)\Bigr)$ in the sense of Melin-Sj\"{o}strand~\cite{MS74} if and only if $\Lambda_{\Td{t\hat\Phi}}$ and $\Lambda_{\Td{t\hat\Phi}_1}$ are equivalent in the sense that there is a neighbourhood $Q$ of $(\hat x_0,\hat x_0,\hat\xi_0,-\hat\xi_0)$ in $\Complex^{2n}\times\Complex^{2n}\times\Complex^{2n}\times\Complex^{2n}$, such that for every $N>0$, we have 
\begin{equation}\label{e-gmedmdhV-b}
\begin{split}
&{\rm dist\,}(z,\Lambda_{\Td{t\hat\Phi}_1})\leq C_N\abs{{\rm Im\,}z}^N,\ \ \forall z\in Q\bigcap\Lambda_{\Td{t\hat\Phi}},\\
&{\rm dist\,}(z_1,\Lambda_{\Td{t\hat\Phi}})\leq C_N\abs{{\rm Im\,}z_1}^N,\ \ \forall z_1\in Q\bigcap\Lambda_{\Td{t\hat\Phi}_1},
\end{split}
\end{equation}
where $C_N>0$ is a constant independent of $z$ and $z_1$. 

We first assume that $\varphi(x,y,s)$ and $\varphi_1(x,y,s)$ are equivalent at each point of $\Omega$ in the sense 
of Definition~\ref{d-geudhdc13619}. 
%Let $\hat\varphi(x,y,s), \hat\varphi_1(x,y,s)\in C^\infty(\Lambda)$ be as in Definition~\ref{d-geudhdc13619}, where %$\Lambda\subset\Omega$ is a small neighbourhood of $(x_0,x_0,s_0)$. 
%Put 
%\[\hat\Phi(\hat x,\hat y,s)=x_{2n}-y_{2n}+\hat\varphi(x,y,s),\ \ \hat\Phi_1(\hat x,\hat y,s)=x_{2n}-%y_{2n}+\hat\varphi_1(x,y,s).\]
We take almost analytic extensions of $t\hat\Phi$ and $t\hat\Phi_1$ such that
\begin{equation}\label{e-gmedmdhIII}
\begin{split}
&\Td{t\hat\Phi}(\Td{\hat x},\Td{\hat y},\Td s)=\Td t\Td{\hat\Phi}(\Td{\hat x},\Td{\hat y},\Td s)=\Td t(\Td{x}_{2n}-\Td{y}_{2n})+\Td t\Td{\hat\varphi}(\Td x,\Td y,\Td s),\\
&\Td{t{\hat\Phi}_1}(\Td{\hat x},\Td{\hat y},\Td s)=\Td t\Td{\hat\Phi}_1(\Td{\hat x},\Td{\hat y},\Td s)=\Td t(\Td{x}_{2n}-\Td{y}_{2n})+\Td t\Td{\hat\varphi}_1(\Td x,\Td y,\Td s)
%&\frac{\pr\Td\varphi}{\pr\Td s}(\Td x,\Td y,\Td s)=(\Td{x}_{2n-1}-\Td h(\Td{x}',\Td y,\Td s))\Td g(\Td x,\Td y,\Td s)\ %\ \mbox{at ${\rm Im\,}\Td s=0$},\\
%&\frac{\pr\Td\varphi_1}{\pr\Td s}(\Td x,\Td y,\Td s)=(\Td{x}_{2n-1}-\Td h_1(\Td{x}',\Td y,\Td s))\Td g_1(\Td x,\Td y,%\Td s),\ \ \mbox{at ${\rm Im\,}\Td s=0$}.,
\end{split}
\end{equation}
on $\hat\Lambda^\Complex$. Here $\hat\Lambda=\Lambda\times I_1$, $I_1$ is a small open neighbourhood of $x_{n,0}$. 
%and near $(\hat x_0,\hat x_0,\hat\xi_0,-\hat\xi_0)$, we have
%\begin{equation}\label{e-gue130807}
%\Lambda_{\Td{t\Phi}}=\Lambda_{\Td{t\hat\Phi}},\ \ \Lambda_{\Td{t\Phi_1}}=\Lambda_{\Td{t\hat\Phi_1}},
%\end{equation}
%where $\Lambda_{\Td{t\hat\Phi}}$ and $\Lambda_{\Td{t\hat\Phi_1}}$ are defined as in \eqref{e-gmedmdhIV}, $(\Td{\hat x},\Td{\hat y},\Td s)\in\hat\Lambda^\Complex$, $\Td t\in I_0^\Complex$. Here $\hat\Lambda=\Lambda\times I_1$, $I_1$ is a small open neighbourhood of $x_{n,0}$. 
We are going to prove that $\Lambda_{\Td{t\hat\Phi}}$ and $\Lambda_{\Td{t\hat\Phi}_1}$ are equivalent in the sense of \eqref{e-gmedmdhV-b}.
%$t\hat\Phi(\hat x,\hat y,s)$ and $t\hat\Phi_1(\hat x,\hat y,s)$ are equivalent at $(\hat x_0,\hat x_0,%\hat\xi_0,-\hat\xi_0)\in{\rm diag\,}'\Bigr((U\bigcap\hat\Sigma)\times(U\bigcap\hat\Sigma)\Bigr)$. 
%From global theory of complex Fourier integral operators of Melin-Sj\"ostrand~\cite{MS74} (see \eqref{e-gmedmdhV-b} %below), we see that $t\hat\Phi(\hat x,\hat y,s)$ and $t\Phi(\hat x,\hat y,s)$ are equivalent at $(\hat x_0,\hat x_0,%\hat\xi_0,-\hat\xi_0)\in{\rm diag\,}'\Bigr((U\bigcap\hat\Sigma)\times(U\bigcap\hat\Sigma)\Bigr)$, $t\hat\Phi_1(\hat x,%\hat y,s)$ and $t\Phi_1(\hat x,\hat y,s)$ are equivalent at $(\hat x_0,\hat x_0,\hat\xi_0,-\hat\xi_0)\in{\rm %diag\,}'\Bigr((U\bigcap\hat\Sigma)\times(U\bigcap\hat\Sigma)\Bigr)$, Thus, we only need to prove that
%$t\hat\Phi(\hat x,\hat y,s)$ and $t\hat\Phi_1(\hat x,\hat y,s)$ are equivalent at $(\hat x_0,\hat x_0,%\hat\xi_0,-\hat\xi_0)\in{\rm diag\,}'\Bigr((U\bigcap\hat\Sigma)\times(U\bigcap\hat\Sigma)\Bigr)$. 

Since $\frac{\pr^2\hat\varphi}{\pr s\pr x_{2n-1}}(x,x,s)\neq0$, $\frac{\pr^2\hat\varphi_1}{\pr s\pr x_{2n-1}}(x,x,s)\neq0$(see \eqref{e-gmedmdhI}), from Malgrange preparation theorem (see Theorem 7.57 in~\cite{Hor03}), we have
\begin{equation}\label{e-gmedmdhII}
\begin{split}
&\frac{\pr\hat\varphi}{\pr s}(x,y,s)=(x_{2n-1}-\beta(x',y,s))g(x,y,s),\\
&\frac{\pr\hat\varphi_1}{\pr s}(x,y,s)=(x_{2n-1}-\beta_1(x',y,s))g_1(x,y,s)
\end{split}
\end{equation}
in some small neighbourhood of $(x_0,x_0,s_0)$, where $\beta, \beta_1, g, g_1\in C^\infty$, $\beta(x',x,s)=\beta_1(x',x,s)=x_{2n-1}$, $g(x,x,s)\neq0$, $g_1(x,x,s)\neq0$. We may take $\Lambda$ small enough so that \eqref{e-gmedmdhII} hold on $\Lambda$. It is easy to check that 
$\frac{\pr^2\hat\varphi}{\pr x_{2n-1}\pr s}(x,x,s)=g(x,x,s)$, 
$\frac{\pr^2\hat\varphi}{\pr x_j\pr s}(x,x,s)=-\frac{\pr\beta}{\pr x_j}(x',x,s)g(x,x,s)$, $j=1,\ldots,2n-2$.
From this observation and notice that $\frac{\pr^2\hat\varphi}{\pr x_j\pr s}(x,x,s)$ is real, $j=1,\ldots,2n-1$, we conclude that 
\begin{equation}\label{e-gmedmdhII-II}
\begin{split}
&{\rm Re\,}g(x,x,s)\neq0,\ \ {\rm Im\,}g(x,x,s)=0,\\
&-\frac{\pr{\rm Im\,}\beta}{\pr x_j}(x',x,s)=0,\ \ j=1,\ldots,2n-2.
\end{split}
\end{equation}

From \eqref{e-gmedmdhII}, we conclude that for every $N>0$, there is a constant $C_N>0$ such that 
for all $(\Td x,\Td y,\Td s)\in\hat W^\Complex$, 
\begin{equation}\label{e-gmedmdhIII-I}
\begin{split}
&\abs{\frac{\pr\Td{\hat\varphi}}{\pr\Td s}(\Td x,\Td y,\Td s)-\bigr((\Td{x}_{2n-1}-\Td\beta(\Td{x}',\Td y,\Td s))\Td g(\Td x,\Td y,\Td s)\bigr)}\leq C_N\abs{{\rm Im\,}(\Td x,\Td y,\Td s)}^N,\\
&\abs{\frac{\pr\Td{\hat\varphi}_1}{\pr\Td s}(\Td x,\Td y,\Td s)-\bigr((\Td{x}_{2n-1}-\Td\beta_1(\Td{x}',\Td y,\Td s))\Td g_1(\Td x,\Td y,\Td s)\bigr)}\leq C_N\abs{{\rm Im\,}(\Td x,\Td y,\Td s)}^N.
\end{split}
\end{equation}

In view of \eqref{e-gmedmdhIII}, we may take $\hat\Lambda$ and $\hat\Lambda^\Complex$ small enough so that on $\hat\Lambda^\Complex$, $\frac{\pr^2\Td{\hat\varphi}}{\pr\Td x_{2n-1}\pr\Td s}\neq0$, $\frac{\pr^2\Td{\hat\varphi}_1}{\pr\Td x_{2n-1}\pr\Td s}\neq0$, and 
%From \eqref{e-gmedmdhIII} and the implicit function theorem, it is not difficult to see that in some small %neighbourhood $\hat W_1\subset\hat W$ of $(\hat x_0,\hat x_0,s_0)$, 
there are $\delta(\Td x',\Td y,\Td s)\in C^\infty(\hat\Lambda^\Complex)$, $\delta_1(\Td x',\Td y,\Td s)\in C^\infty(\hat\Lambda^\Complex)$ such that 
\begin{equation}\label{e-gmedmdhV}
\frac{\pr\Td{\hat\varphi}}{\pr\Td s}((\Td x',\delta(\Td x',\Td y,\Td s)),\Td y,\Td s)=\frac{\pr\Td{\hat\varphi}_1}{\pr\Td s}((\Td x', \delta_1(\Td x',\Td y,\Td s)),\Td y,\Td s)=0\ \ \mbox{on $\hat\Lambda^\Complex$}.
\end{equation}
From \eqref{e-gmedmdhV} and \eqref{e-gmedmdhIII-I}, we see that for every $N>0$ there is a constant $D_N>0$ such that for all $(\Td x',\Td y,\Td s)\in\hat\Lambda^\Complex$, 
\begin{equation}\label{e-gmedmdhVI}
\begin{split}
&\abs{\delta(\Td x',\Td y,\Td s)-\Td\beta(\Td x',\Td y,\Td s)}\leq D_N\abs{{\rm Im\,}(\Td x',\delta(\Td x',\Td y,\Td s),\Td y,\Td s)}^N,\\
&\abs{\delta_1(\Td x',\Td y,\Td s)-\Td\beta_1(\Td x',\Td y,\Td s)}\leq D_N\abs{{\rm Im\,}(\Td x',\delta_1(\Td x',\Td y,\Td s),\Td y,\Td s)}^N.
\end{split}
\end{equation}
Since $\frac{\pr\Td{\hat\varphi}}{\pr\Td s}(y',y_{2n-1},y,s)=\frac{\pr\Td{\hat\varphi}_1}{\pr\Td s}(y',y_{2n-1},y,s)=0$, $\delta_1(y',y,s)=\delta(y',y,s)=y_{2n-1}$. Hence there is a constant $C_0>0$ such that for all $(\Td x',\Td y,\Td s)\in\hat\Lambda^\Complex$,
\begin{equation}\label{e-gmedmdhVI-I}
\begin{split}
&\abs{{\rm Im\,}\delta(\Td x',\Td y,\Td s)}\leq C_0(\abs{{\rm Im\,}(\Td x',\Td y,\Td s)}+\abs{x'-y'}),\\
&\abs{{\rm Im\,}\delta_1(\Td x',\Td y,\Td s)}\leq C_0(\abs{{\rm Im\,}(\Td x',\Td y,\Td s)}+\abs{x'-y'}).
\end{split}
\end{equation}

From \eqref{e-gmedmdhIII} and \eqref{e-gmedmdhV}, we can check that 
\begin{equation}\label{e-gmedmdhVII}
\begin{split}
\Lambda_{\Td{t\hat\Phi}}=&\{(\Td{\hat x},\Td{\hat y},\Td t\frac{\pr\Td{\hat\Phi}}{\pr\Td x}(\Td{\hat x},\Td{\hat y},\Td s),\Td t\frac{\pr\Td{\Phi}}{\pr\Td y}(\Td{\hat x},\Td{\hat y},\Td s);\, \\
&\Td x_{2n-1}=\delta(\Td x',\Td y,\Td s), \Td x_{2n}=\Td y_{2n}-\Td{\hat\varphi}(\Td x',\delta(\Td x',\Td y,\Td s),\Td y,\Td s), (\Td{\hat x},\Td{\hat y},\Td s)\in\hat\Lambda^\Complex,  \Td t\in I_0^\Complex\},\\
\Lambda_{\Td{t\hat\Phi}_1}=&\{(\Td{\hat x},\Td{\hat y},\Td t\frac{\pr\Td{\hat\Phi}_1}{\pr\Td x}(\Td{\hat x},\Td{\hat y},\Td s),\Td t\frac{\pr\Td{\hat\Phi}_1}{\pr\Td y}(\Td{\hat x},\Td{\hat y},\Td s);\, \\
&\Td x_{2n-1}=\delta_1(\Td x',\Td y,\Td s), \Td x_{2n}=\Td y_{2n}-\Td{\hat\varphi}_1(\Td x',\delta_1(\Td x',\Td y,\Td s),\Td y,\Td s), (\Td{\hat x},\Td{\hat y},\Td s)\in\hat\Lambda^\Complex,  \Td t\in I_0^\Complex\}.
\end{split}
\end{equation}
From Definition~\ref{d-geudhdc13619} and \eqref{e-gmedmdhII}, it is easy to see that
\begin{equation}\label{e-gmedmdhVIII}
\begin{split}
&\beta(x',y,s)-\beta_1(x',y,s),\ \ \Td{\hat\varphi}(x',\beta(x',y,s),y,s)-\Td{\hat\varphi}_1(x',\beta(x',y,s),y,s),\\  
&\frac{\pr\Td{\hat\varphi}}{\pr x_j}(x',\beta(x',y,s),y,s)-\frac{\pr\Td{\hat\varphi}_1}{\pr x_j}(x',\beta_1(x',y,s),y,s),\ \ j=1,\ldots,2n-1,\\
&\frac{\pr\Td{\hat\varphi}}{\pr y_j}(x',\beta(x',y,s),y,s)-\frac{\pr\Td{\hat\varphi}_1}{\pr y_j}(x',\beta_1(x',y,s),y,s),\ \ j=1,\ldots,2n-1
\end{split}
\end{equation}
vanish to infinite order at $x'=y'$. Thus, if $\hat\Lambda^\Complex$ is small, then for every $N>0$, there is a constant $E_N>0$ such that
\begin{equation}\label{e-gmedmdhIX}
\begin{split}
&\abs{\Td\beta(\Td x',\Td y,\Td s)-\Td\beta_1(\Td x',\Td y,\Td s)}\leq E_N(\abs{{\rm Im\,}(\Td x',\Td y,\Td s)}^N+\abs{{\rm Re\,}\Td x'-{\rm Re\,}\Td y'}^N),\\ 
&\abs{\Td{\hat\varphi}(\Td x',\Td\beta(\Td x',\Td y,\Td s),\Td y,\Td s)-\Td{\hat\varphi}_1(\Td x',\Td\beta_1(\Td x',\Td y,\Td s),\Td y,\Td s)}\\
&\quad\leq E_N(\abs{{\rm Im\,}(\Td x',\Td y,\Td s)}^N+\abs{{\rm Re\,}\Td x'-{\rm Re\,}\Td y'}^N),\\ 
&\abs{\pr_{\Td x}\Td{\hat\varphi}(\Td x',\Td\beta(\Td x',\Td y,\Td s),\Td y,\Td s)-\pr_{\Td x}\Td{\hat\varphi}_1(\Td x',\Td\beta_1(\Td x',\Td y,\Td s),\Td y,\Td s)}\\
&\quad\leq E_N(\abs{{\rm Im\,}(\Td x',\Td y,\Td s)}^N+\abs{{\rm Re\,}\Td x'-{\rm Re\,}\Td y'}^N),\\
&\abs{\pr_{\Td y}\Td{\hat\varphi}(\Td x',\Td\beta(\Td x',\Td y,\Td s),\Td y,\Td s)-\pr_{\Td y}\Td{\hat\varphi}_1(\Td x',\Td \beta_1(\Td x',\Td y,\Td s),\Td y,\Td s)}\\
&\quad\leq E_N(\abs{{\rm Im\,}(\Td x',\Td y,\Td s)}^N+\abs{{\rm Re\,}\Td x'-{\rm Re\,}\Td y'}^N),
\end{split}
\end{equation}
where $\pr_{\Td x}=(\frac{\pr}{\pr\Td x_1},\ldots,\frac{\pr}{\pr\Td x_{2n-1}})$, $\pr_{\Td y}=(\frac{\pr}{\pr\Td y_1},\ldots,\frac{\pr}{\pr\Td y_{2n-1}})$. From \eqref{e-gmedmdhVI}, \eqref{e-gmedmdhVI-I}, \eqref{e-gmedmdhVII} and \eqref{e-gmedmdhIX}, we see that there is a small neighbourhood $Q$ of $(\hat x_0,\hat x_0,\hat\xi_0,-\hat\xi_0)$ in $\Complex^{2n}\times\Complex^{2n}\times\Complex^{2n}\times\Complex^{2n}$ such that for all $N>0$ and every $z\in Q\bigcap\Lambda_{\Td{t\hat\Phi}}$, $z=(\Td{\hat x},\Td{\hat y},\Td t\frac{\pr\Td{\hat\Phi}}{\pr\Td x}(\Td{\hat x},\Td{\hat y},\Td s),\Td t\frac{\pr\Td{\hat\Phi}}{\pr\Td y}(\Td{\hat x},\Td{\hat y},\Td s))$, $\Td x_{2n-1}=\Td\delta(\Td x',\Td y,\Td s)$, $\Td x_{2n}=\Td y_{2n}-\Td{\hat\varphi}(\Td x',\Td\delta(\Td x',\Td y,\Td s),\Td y,\Td s)$, we have 
\begin{equation}\label{e-gmedmdhX}
{\rm dist\,}(z,\Lambda_{\Td{t\hat\Phi}_1})\leq F_N(\abs{{\rm Im\,}(\Td x',\Td y,\Td s)}^N+\abs{{\rm Re\,}\Td x'-{\rm Re\,}\Td y'}^N),
\end{equation}
where $F_N>0$ is a constant independent of $z\in Q$. We claim that if $Q$ is small enough then there is a constant $C_1>0$ independent of $z\in Q$ such that 
\begin{equation}\label{e-gmedmdhXI}
\begin{split}
&\abs{{\rm Im\,}(\Td x',\Td y,\Td s)}^2+\abs{{\rm Re\,}\Td x'-{\rm Re\,}\Td y'}^2\leq C_1\abs{{\rm Im\,}z},\\ 
&\quad\forall z=(\Td{\hat x},\Td{\hat y},\Td t\frac{\pr\Td{\hat\Phi}}{\pr\Td x}(\Td{\hat x},\Td{\hat y},\Td s),\Td t\frac{\pr\Td{\hat\Phi}}{\pr\Td y}(\Td{\hat x},\Td{\hat y},\Td s))\in Q\bigcap\Lambda_{\Td{t\hat\Phi}}. 
\end{split}
\end{equation}
From Taylor expansion, it is easy to see that
\[\begin{split}
&{\rm Im\,}\Td{\hat\varphi}(x',\delta(x',y,s),y,s)\\
&={\rm Im\,}\hat\varphi(x',{\rm Re\,}\delta(x',y,s),y,s)+\frac{\pr{\rm Im\,}\hat\varphi}{\pr x_{2n-1}}(x',{\rm Re\,}\delta(x',y,s),y,s){\rm Im\,}\delta(x',y,s)+(\abs{{\rm Im\,}\delta(x',y,s)}^2).
\end{split}\]
From this and note that ${\rm Im\,}d_x\hat\varphi(x',{\rm Re\,}\delta(x',y,s),y,s)|_{x'=y'}=0$, $\delta(x',y,s)=y_{2n-1}$ if $y=(x',y_{2n-1})$, we see that there is a constant $c_0>0$ such that 
\begin{equation}\label{e-gelkmidhmpI-b}
c_0\abs{x'-y'}^2\leq\abs{{\rm Im\,}\Td{\hat\varphi}(x',\delta(x',y,s),y,s)}+\abs{{\rm Im\,}\delta(x',y,s)}\leq\frac{1}{c_0}\abs{x'-y'}^2,
\end{equation}
for all $(\hat x,\hat y,s)$ in a small neighbourhood of $(\hat x_0,\hat y_0,\hat s_0)$. Thus, if $\hat\Lambda$ and $\hat \Lambda^\Complex$ are small, then
\begin{equation}\label{e-gelkmidhmpI}
\begin{split}
&\abs{{\rm Im\,}\Td y_{2n}-{\rm Im\,}\Td{\hat\varphi}(\Td x',\delta(\Td x',\Td y,\Td s),\Td y,\Td s)}+\abs{{\rm Im\,}\Td y_{2n}}+\abs{{\rm Im\,}\delta(\Td x',\Td y,\Td s)}+\abs{{\rm Im\,}(\Td x',\Td y,\Td s)}\\
&\geq c_1\abs{{\rm Re\,}\Td x'-{\rm Re\,}\Td y'}^2\ \ \mbox{on $\hat\Lambda^\Complex$},
\end{split}
\end{equation}
where $c_1>0$ is a constant. 

We consider Taylor expansion of $t\frac{\pr\Td{\hat\Phi}}{\pr\Td x_{2n-1}}(y',\delta(y',y,\Td s),y,\Td s)$ at ${\rm Im\,}s=0$: 
\begin{equation}\label{e-gelkmidhmpII}
\begin{split}
&t\frac{\pr\Td{\hat\Phi}}{\pr\Td x_{2n-1}}(y',\delta(y',y,\Td s),y,\Td s)=t\frac{\pr\Td{\hat\Phi}}{\pr\Td x_{2n-1}}(y',\delta(y',y,{\rm Re\,}\Td s),y,{\rm Re\,}\Td s)\\
&\quad+t\Bigr(\frac{\pr^2\Td{\hat\Phi}}{\pr\Td x_{2n-1}\pr\Td s}(y',\delta(y',y,{\rm Re\,}\Td s),y,{\rm Re\,}\Td s)\Bigr)i{\rm Im\,}\Td s+O(\abs{{\rm Im\,}\Td s}^2).
\end{split}
\end{equation}
Here we used the fact that $\frac{\pr\delta}{\pr\Td s}(y',y,{\rm Re\,}\Td s)=0$ since $\delta(y',y,{\rm Re\,}\Td s)=y_{2n-1}$. Since 
\[t\frac{\pr\Td{\hat\Phi}}{\pr\Td x_{2n-1}}(y',\delta(y',y,{\rm Re\,}\Td s),y,{\rm Re\,}\Td s),\ \  t\frac{\pr^2\Td{\hat\Phi}}{\pr\Td x_{2n-1}\pr\Td s}(y',\delta(y',y,{\rm Re\,}\Td s),y,{\rm Re\,}\Td s)\] 
are real and $t\frac{\pr^2\Td\phi}{\pr\Td x_{2n-1}\pr\Td s}(y',\delta(y',y,{\rm Re\,}\Td s),y,{\rm Re\,}\Td s)\neq0$, we conclude that there is a constant $c_1>0$ such that 
\[\abs{{\rm Im\,}\Bigr(t\frac{\pr\Td{\hat\Phi}}{\pr\Td x_{2n-1}}(y',\delta(y',y,\Td s),y,\Td s)\Bigr)}\geq c_1\abs{{\rm Im\,}\Td s}\]
for $\abs{{\rm Im\,}\Td s}$ is small. Thus, if $\hat\Lambda$ and $\hat\Lambda^\Complex$ are small, then
\begin{equation}\label{e-gelkmidhmpIII}
\begin{split}
&\abs{{\rm Im\,}\Bigr(\Td t\frac{\pr\Td{\hat\Phi}}{\pr\Td x_{2n-1}}(\Td x',\delta(\Td x',\Td y,\Td s),\Td y,\Td s)\Bigr)}+C_0\abs{{\rm Im\,}\Bigr(\Td t\frac{\pr\Td{\hat\Phi}}{\pr\Td x_{2n}}(\Td x',\delta(\Td x',\Td y,\Td s),\Td y,\Td s)\Bigr)}\\
&\quad+\abs{{\rm Im\,}(\Td x',\Td y)}+\abs{x'-y'}
\geq c_2\abs{{\rm Im\,}\Td s}\ \ \mbox{on $\hat\Lambda^\Complex$},
\end{split}
\end{equation}
where $C_0>0$ and $c_2>0$ are constants. 
Note that ${\rm Im\,}\Bigr(\Td t\frac{\pr\Td{\hat\Phi}}{\pr\Td x_{2n}}(\Td x',\delta(\Td x',\Td y,\Td s),\Td y,\Td s)\Bigr)={\rm Im\,}\Td t$. 

From \eqref{e-gmedmdhVI-I}, \eqref{e-gmedmdhVII}, \eqref{e-gelkmidhmpI} and \eqref{e-gelkmidhmpIII}, the claim \eqref{e-gmedmdhXI} follows. 
From \eqref{e-gmedmdhXI} and \eqref{e-gmedmdhX}, we conclude that there is a neighbourhood $Q$ of $(\hat x_0,\hat x_0,\hat\xi_0,-\hat\xi_0)$ 
in $\Complex^{2n}\times\Complex^{2n}\times\Complex^{2n}\times\Complex^{2n}$, such that for every $N$, 
we have ${\rm dist\,}(z,\Lambda_{\Td{t\hat\Phi}_1})\leq C_N\abs{{\rm Im\,}z}^N$,
for all $z\in Q\bigcap\Lambda_{\Td{t\hat\Phi}}$. We can repeat the procedure above and conclude that there is a neighbourhood $Q_1$ of $(\hat x_0,\hat x_0,\hat\xi_0,-\hat\xi_0)$ in $\Complex^{2n}\times\Complex^{2n}\times\Complex^{2n}\times\Complex^{2n}$, such that for every $N$, we have ${\rm dist\,}(z_1,\Lambda_{\Td{t\hat\Phi}})\leq C_N\abs{{\rm Im\,}z_1}^N$, $\forall z_1\in Q_1\bigcap\Lambda_{\Td{t\hat\Phi}_1}$. We 
obtain that $t\hat\Phi(\hat x,\hat y,s)$ and $t\hat\Phi_1(\hat x,\hat y,s)$ are equivalent for classical symbols at $(\hat x_0,\hat x_0,\hat\xi_0,-\hat\xi_0)$ in the sense of Melin-Sj\"{o}strand~\cite{MS74}.

Now, we assume that $t\Phi(\hat x,\hat y,s)$ and $t\Phi_1(\hat x,\hat y,s)$ are equivalent for classical symbols at 
each point of 
\[{\rm diag\,}'\Bigr((U\bigcap\hat\Sigma)\times (U\bigcap\hat\Sigma)\Bigr)\bigcap\set{(\hat x,\hat x,\hat\xi,-\hat\xi);\, (\hat x,\hat\xi)\in T^*\hat D}.\] 
%Let $\hat\varphi(x,y,s), \hat\varphi_1(x,y,s)\in C^\infty(\Lambda)$ be as in Definition~\ref{d-geudhdc13619}, where %$\Lambda\subset\Omega$ is a small neighbourhood of $(x_0,x_0,s_0)$. As above,
%put 
%\[\hat\Phi(\hat x,\hat y,s)=x_{2n}-y_{2n}+\hat\varphi(x,y,s),\ \ \hat\Phi_1(\hat x,\hat y,s)=x_{2n}-%y_{2n}+\hat\varphi_1(x,y,s),\]
%and take almost analytic extensions of $t\hat\Phi$, $t\hat\Phi_1$ and $t\Phi$, $t\Phi_1$, such that \eqref{e-gue130807} hold. 
From global theory of complex Fourier integral operators of Melin-Sj\"ostrand~\cite{MS74}, we see that $\Lambda_{\Td{t\hat\Phi}}$ and $\Lambda_{\Td{t\Phi}_1}$ are equivalent in the sense of \eqref{e-gmedmdhV-b}. 

Since $\frac{\pr^2\hat\varphi}{\pr s\pr x_{2n-1}}(x,x,s)\neq0$, $\frac{\pr^2\hat\varphi_1}{\pr s\pr x_{2n-1}}(x,x,s)\neq0$, we may assume that \eqref{e-gmedmdhII} hold on $\Lambda$. We may take $\hat\Lambda$ and $\hat\Lambda^\Complex$ small enough so that \eqref{e-gmedmdhV},
\eqref{e-gmedmdhVI}, \eqref{e-gmedmdhVI-I} and \eqref{e-gmedmdhVII} hold on $\hat\Lambda^\Complex$. Since $\frac{\pr\hat\varphi}{\pr y_{2n-1}}(x,y,s)-(\alpha_{2n-1}(y)+s\beta_{2n-1}(y))$ and $\frac{\pr\hat\varphi_1}{\pr y_{2n-1}}(x,y,s)-(\alpha_{2n-1}(y)+s\beta_{2n-1}(y))$ vanish to infinite order at $x=y$, it is straightforward to see that if $\hat\Lambda$ is small enough then for every $N>0$, every $z\in\hat\Lambda\bigcap\Lambda_{\Td{t\hat\Phi}}$, $z=(\hat x,\hat y,t\frac{\pr\Td{\hat\Phi}}{\pr x}(\hat x,\hat y,s),t\frac{\pr\Td{\Phi}}{\pr y}(\hat x,\hat y,s))\in\Lambda_{\Td{t\hat\Phi}}$, $x_{2n-1}=\delta(x',y,s)$, $x_{2n}=y_{2n}-\Td{\hat\varphi}(x',\delta(x',y,s),y,s)$, there is a constant $c_N>0$ such that 
\begin{equation} \label{e-gueama13620I}
\begin{split}
&{\rm dist\,}(z,\Lambda_{\Td{t\hat\Phi}_1})+\abs{x'-y'}^N \\&\quad\geq c_N\Bigr(\abs{\delta(x',y,s)-\delta_1(x',y,s)}+\abs{\Td{\hat\varphi}(x',\delta(x',y,s),y,s)-\Td{\hat\varphi}_1(x',\delta_1(x',y,s),y,s)}\\
&\quad+\abs{d_x\Bigr(\Td{\hat\varphi}(x',\delta(x',y,s),y,s)-\Td{\hat\varphi}_1(x',\delta_1(x',y,s),y,s)\Bigr)}\\
&\quad+\abs{d_y\Bigr(\Td{\hat\varphi}(x',\delta(x',y,s),y,s)-\Td{\hat\varphi}_1(x',\delta_1(x',y,s),y,s)\Bigr)}\Bigr).
\end{split}
\end{equation}

%From \eqref{e-gelkmidhmpI} and
%\[{\rm Im\,}\frac{\pr\Td{\hat\Phi}}{\pr x_j}(y',\delta(y',y,s),y,s)={\rm Im\,}\frac{\pr\Td{\hat%\Phi}}{\pr y_j}(y',\delta(y',y,s),y,s)=0,\ \ j=1,\ldots,2n-1,\]
%we conclude that there is a constant $c>0$ such that for every 
It is easy to see that there is a constant $c>0$ such that for every 
\[\begin{split}
&z=(\hat x,\hat y,t\frac{\pr\Td{\hat\Phi}}{\pr y}(\hat x,\hat y,s),t\frac{\pr\Td{\Phi}}{\pr y}(\hat x,\hat y,s))\in\Lambda_{\Td{t\hat\Phi}},\\
&x_{2n-1}=\delta(x',y,s),\ \ x_{2n}=y_{2n}-\Td{\hat\varphi}(x',\delta(x',y,s),y,s),\end{split}\]
$(\hat x,\hat y,s)$ is in a small real neighbourhood of $(\hat x_0,\hat x_0,s_0)$, $t$ is in a small real neighbourhood of $t_0$, 
we have
\begin{equation}\label{e-gelkmidhmpIII-I}
\abs{{\rm Im\,}z}\leq\frac{1}{c_0}\abs{x'-y'}.
\end{equation}
 
From \eqref{e-gmedmdhV-b}, \eqref{e-gmedmdhVI}, \eqref{e-gmedmdhVI-I}, \eqref{e-gmedmdhVIII}, \eqref{e-gueama13620I} and \eqref{e-gelkmidhmpIII-I}, we see that if $\Lambda$ and $\hat\Lambda$ are small enough then for every $N>0$, there is a constant $B_N>0$ such that on $\hat\Lambda$, 
\begin{equation}\label{e-gelkmidhmpIII-II}
\begin{split}
&\abs{\beta(x',y,s)-\beta_1(x',y,s)}\leq B_N\abs{x'-y'}^N,\\
&\abs{\Td{\hat\varphi}(x',\beta(x',y,s),y,s)-\Td{\hat\varphi}_1(x',\beta_1(x',y,s),y,s)}\leq B_N\abs{x'-y'}^N,\\
&\abs{d_x\Td{\hat\varphi}(x',\beta(x',y,s),y,s)-d_x\Td{\hat\varphi}_1(x',\beta_1(x',y,s),y,s)}\leq B_N\abs{x'-y'}^N,\\
&\abs{d_y\Td{\hat\varphi}(x',\beta(x',y,s),y,s)-d_y\Td{\hat\varphi}_1(x',\beta_1(x',y,s),y,s)}\leq B_N\abs{x'-y'}^N.
\end{split}
\end{equation}
From \eqref{e-gmedmdhII} and \eqref{e-gelkmidhmpIII-II}, it is not difficult to see that $\varphi(x,y,s)$ and $\varphi_1(x,y,s)$ are equivalent at each point of $\Omega$ in the sense of Definition~\ref{d-geudhdc13619}. 
Theorem~\ref{t-dgudmgeaI} follows.

\end{document}